\documentclass[10pt,reqno]{amsart}


\usepackage[colorlinks=true,linkcolor=blue,citecolor=blue,linktocpage=true,urlcolor=black]{hyperref}

\usepackage{amsmath}
\usepackage{amssymb}
\usepackage{amsthm}
\usepackage{comment}

\usepackage{mathabx,mathrsfs}

\usepackage{url}
\usepackage{setspace}   
\usepackage{enumitem}
\usepackage{bbm}
\usepackage{times}
\usepackage{tabularx}
\numberwithin{equation}{section}

\usepackage{caption}  
\captionsetup[table]{belowskip=0pt,aboveskip=0pt}
\usepackage{makecell}
\usepackage{float}
\restylefloat{table}

\newcommand{\interval}[1]{\widehat{#1}}
\usepackage{color}
\usepackage[dvipsnames]{xcolor}

\def\leb{\mathrm{Leb}}
\def \Aut{\mathrm{Aut}}
\def\calA{\mathcal A}
\def\diag{\mathrm{diag}}
	
\def\what{\widehat}
\def\rank{\mathrm{rank}}

\def\Im{\mathrm{Im}}
\renewcommand{\emph}[1]{{\bf #1}}

\DeclareMathOperator{\cov}{cov}
\DeclareMathOperator{\card}{card}
\DeclareMathOperator{\im}{Im}
\DeclareMathOperator{\Hol}{Hol}

\usepackage{mathtools}
\usepackage{tikz}
\usetikzlibrary{arrows,decorations.markings}
\usetikzlibrary{matrix}
\usetikzlibrary{graphs}
\usetikzlibrary{backgrounds}

    \usepackage[abbrev,nobysame]{amsrefs}
\renewcommand{\MR}[1]{}

\usepackage[toc, title]{appendix}
 \def\Folner{F{\o}lner }

\usepackage{aliascnt}
\usepackage[capitalize] {cleveref}
\theoremstyle{theorem}
\newtheorem{theorem}{Theorem} [section]

\newaliascnt{proposition}{theorem}
\newtheorem{proposition}[proposition]{Proposition}
\aliascntresetthe{proposition}
\newaliascnt{claim}{theorem}
\newtheorem{claim}[claim]{Claim}
\aliascntresetthe{claim}
\newaliascnt{lemma}{theorem}
\newtheorem{lemma}[lemma]{Lemma}
\aliascntresetthe{lemma}
\newaliascnt{corollary}{theorem}
\newtheorem{corollary}[corollary]{Corollary}
\aliascntresetthe{corollary}
 

\theoremstyle{definition}
\newaliascnt{definition}{theorem}
\newtheorem{definition}[definition]{Definition}
\aliascntresetthe{definition}
\theoremstyle{remark}

\theoremstyle{definition}

\newaliascnt{remark}{theorem}
\newtheorem{remark}[remark]{Remark}
\aliascntresetthe{remark}

\newtheorem*{theorem*}{Theorem}
\newtheorem{principle}{Theme}
\theoremstyle{definition}
\newaliascnt{hypot}{theorem}
\newtheorem{hypot}[hypot]{Hypothesis}
\aliascntresetthe{hypot}
\newaliascnt{question}{theorem}
\newtheorem{question}[question]{Question}
\aliascntresetthe{question}
\newaliascnt{example}{theorem}
\newtheorem{example}[example]{Example}
\aliascntresetthe{example}
\crefformat{section}{\S#2#1#3}
\theoremstyle{remark}

\hyphenation{infra-nil-auto-mor-phism dyn-amics diffeo-mor-phisms  co-dimen-sion}


\usepackage{marginnote}

\newcommand{\Aff}{\mathrm{Aff}}
\newcommand{\diff}{\mathrm{Diff}}
\newcommand{\Diff}{\diff}
\newcommand{\Homeo}{\mathrm{Homeo}}
\newcommand{\supp}{\mathrm{supp}}
\newcommand{\td}{\tilde}
\newcommand{\wtd}{\widetilde}

\newcommand{\id}{\mathrm{Id}}
\def\Id{\id}
\newcommand{\restrict}[2]{{#1}{\restriction_{{ #2}}}}
\newcommand{\Fol}{\fol} 

\DeclareMathOperator{\diam}{diam}
\DeclareMathOperator{\Ad}{Ad}
\newcommand{\sm}{\smallsetminus}

\newcommand{\R}{\mathbb {R}}
\newcommand{\Q}{\mathbb {Q}}
\newcommand{\Z}{\mathbb {Z}}
\newcommand{\N}{\mathbb {N}}
\newcommand{\T}{\mathbb {T}}

\newcommand{\inv}{^{-1}}

\newcommand{\Sl}{\mathrm{SL}}
\def\SL{\Sl}
\newcommand{\Gl}{\mathrm{GL}}
\def\GL{\Gl}

\newcommand{\SO}{\mathrm{SO}}
\def\So{\SO}

\newcommand{\so}{\mathfrak{so}}
\renewcommand{\sl}{\mathfrak{sl}}
\renewcommand{\sp}{\mathfrak{sp}}
\newcommand{\Sp}{\mathrm{Sp}}

\def\calE{\mathcal E}

\def\fol{\mathscr F}

\def\loc{\mathrm{loc}}

 \newcommand{\oldepsilon}{\mathchar"10F}
\newcommand{\eps}{\oldepsilon}

\def\bfF{\mathbf F}
\newcommand{\bfG}{\mathbf{G}}
\newcommand{\bfS}{\mathbf{S}}

\DeclareMathOperator{\1}{\mathbf 1}

\newcommand{\lieg}{\mathfrak g}
\newcommand{\lieh}{\mathfrak h}

\newcommand{\liea}{\mathfrak a}

\newcommand{\lieq}{\mathfrak q}

\def\calB{\mathcal B}

\renewcommand{\bar}{\overline}

\renewcommand{\top}{\mathrm{top}}











\DeclareFontFamily{U}{wncy}{}
\DeclareFontShape{U}{wncy}{m}{n}{<->wncyr10}{}
\DeclareSymbolFont{mcy}{U}{wncy}{m}{n}
\DeclareMathSymbol{\Sh}{\mathord}{mcy}{"58}

\newcommand{\Lie}{{\rm Lie}}

\renewcommand{\phi}{\varphi}

\newcommand{\Ucal}{ {\mathcal U}}

\newcommand{\Ecal}{ {\mathcal E}}
\newcommand{\Acal}{ {\mathcal A}}
\newcommand{\Ccal}{ {\mathcal C}}

\newcommand{\calQ}{{\mathcal Q}}
\newcommand{\Tcal}{ {\mathcal T}}
\newcommand{\Lcal}{ {\mathcal L}}

\newcommand{\Wcal}{ {\mathcal W}}




\newcommand{\Qbb}{\mathbb{Q}}
\newcommand{\Tbb}{{\mathbb T}}
\newcommand{\Rbb}{{\mathbb R}}
\newcommand{\Zbb}{{\mathbb Z}}

\newcommand{\Tbf}{{\mathbf T}}
\newcommand{\Dbf}{{\mathbf D}}
\newcommand{\Sbf}{{\mathbf S}}
\newcommand{\Hbf}{{\mathbf H}}



\newcommand{\hlie}{ {\mathfrak h}}

\newcommand{\glie}{ {\mathfrak g}}

\newcommand{\qlie}{ {\mathfrak q}}

\newcommand{\alie}{ {\mathfrak a}}


\newcommand{\Bsc}{\mathscr {B} }

\newcommand{\Fsc}{\mathscr {F} }

\def\calE{\mathcal E}

\def\scrF{\mathscr F}

\def\loc{\mathrm{loc}}

\def\bfF{\mathbf F}

\title{Positive entropy actions by higher-rank lattices}


\author[A.~Brown]{Aaron Brown}
\author[H.~Lee]{Homin Lee}
\address{Northwestern University, Evanston, IL 60208, USA}
\email{awb@northwestern.edu}
\address{Northwestern University, Evanston, IL 60208, USA}
\email{homin.lee@northwestern.edu}

\long\def\symbolfootnote[#1]#2{\begingroup\def\thefootnote{\fnsymbol{footnote}}
\footnote[#1]{#2}\endgroup}

\usepackage{bbm}

\def\bfQ{\mathbf Q}

\def\scrA{\mathscr A}
\def\calP{\mathcal P}

\def\scrM{\mathscr M}

\begin{document}




\def\DynkinNodeSize{1.8mm}
\def\DynkinArrowLength{1.7mm}
\tikzset{
  node distance={2cm},
  dnode/.style={
    circle,
    inner sep=0pt,
    minimum size=\DynkinNodeSize,
    fill=white,
    draw},
  middlearrow/.style={
    decoration={markings,
      mark=at position 0.6 with
      {\draw (0:0mm) -- +(+135:\DynkinArrowLength); \draw (0:0mm) -- +(-135:\DynkinArrowLength); \draw (0:0mm) -- +(+315:.2pt); },
    },
    postaction={decorate}
  },
  leftrightarrow/.style={
    decoration={markings,
      mark=at position 0.999 with
      {
      \draw (0:0mm) -- +(+135:\DynkinArrowLength); \draw (0:0mm) -- +(-135:\DynkinArrowLength);
      },
      mark=at position 0.001 with
      {
      \draw (0:0mm) -- +(+45:\DynkinArrowLength); \draw (0:0mm) -- +(-45:\DynkinArrowLength);
      },
    },
    postaction={decorate}
  },
  sedge/.style={
  },
  dedge/.style={
    middlearrow,
    double distance=0.5mm,
  },
  dedgeplain/.style={
    double distance=0.5mm,
  },
  tedge/.style={
    middlearrow,
    double distance=1.0mm+\pgflinewidth,
    postaction={draw}, 
  },
  infedge/.style={
    leftrightarrow,
    double distance=0.5mm,
  },
}




\def\ADYNK{
\begin{tikzpicture}[scale=\scales]
\useasboundingbox (-.25,-.7) rectangle (4,.5);
    \node[dnode,label=below:$\alpha_1$] (1) at (0,0) {};
    \node[dnode,label=below:$\alpha_2$] (2) at (1,0) {};
    \node[dnode,label=below:$\alpha_{\ell-1}$] (3) at (2.5,0) {};
    \node[dnode,label=below:$\alpha_\ell$] (4) at (3.5,0) {};
    \path (1) edge[sedge] (2)
          (2) edge[sedge,dashed, dash phase=1.4pt] (3)
          (3) edge[sedge] (4);
 \end{tikzpicture}
}
\def\BDYNK{
 \begin{tikzpicture}[scale=\scales]
\useasboundingbox (-.25,-.7) rectangle (4,.5);
\path           (2.45,0) edge[dedge] (3.55,0);
    \node[dnode,label=below:$\alpha_1$] (1) at (0,0) {};
    \node[dnode,label=below:$\alpha_2$] (2) at (1,0) {};
    \node[dnode,label=below:$\alpha_{\ell-1}$] (3) at (2.5,0) {};
    \node[dnode,label=below:$\alpha_\ell$] (4) at (3.5,0) {};

    \path (1) edge[sedge] (2)
          (2) edge[sedge,dashed, dash phase=1.4pt] (3)
          ;
\end{tikzpicture}
}

\def\CDYNK{
\begin{tikzpicture}[scale=\scales]
\useasboundingbox (-.25,-.7) rectangle (4,.5);
\path        (3.55,0) edge[dedge]    (2.45,0);

    \node[dnode,label=below:$\alpha_1$] (1) at (0,0) {};
    \node[dnode,label=below:$\alpha_2$] (2) at (1,0) {};
    \node[dnode,label=below:$\alpha_{\ell-1}$] (3) at (2.5,0) {};
    \node[dnode,label=below:$\alpha_\ell$] (4) at (3.5,0) {};

    \path (1) edge[sedge] (2)
          (2) edge[sedge,dashed, dash phase=1.4pt] (3)
          ;
\end{tikzpicture}}

\def\BCDYNK{
\begin{tikzpicture}[scale=\scales]
\useasboundingbox (-.25,-.7) rectangle (4,.5);
\path       (2.45,0) edge[dedge] (3.55,0); 
    \node[dnode,label=below:$\alpha_1$] (1) at (0,0) {};
    \node[dnode,label=below:$\alpha_2$] (2) at (1,0) {};
    \node[dnode,label=below:$\alpha_{\ell-1}$] (3) at (2.5,0) {};
    \node[dnode,label=below:$\alpha_\ell$] (4) at (3.5,0) {};

    \path (1) edge[sedge] (2)
          (2) edge[sedge,dashed, dash phase=1.4pt] (3)
          ;
\end{tikzpicture}
}
\def\DDYNKside{
\begin{tikzpicture}[scale=\scales]

\useasboundingbox (-.25,-.75) rectangle (4,1);

    \node[dnode,label=below:$\alpha_1$] (1) at (0,0) {};
    \node[dnode,label=below:$\alpha_2$] (2) at (1,0) {};
    \node[dnode,label=below:$\alpha_{\ell-2}$] (4) at (2.5,0) {};
    \node[dnode,label=right:$\alpha_{\ell-1}$] (5) at (3.5,0.5) {};
    \node[dnode,label=right:$\alpha_\ell$] (6) at (3.5,-0.5) {};

    \path (1) edge[sedge] (2)
          (2) edge[sedge,dashed, dash phase=1.4pt] (4)
          (4) edge[sedge] (5)
              edge[sedge] (6);
\end{tikzpicture}
}
\def\DDYNKup{
\begin{tikzpicture}[scale=\scales]

\useasboundingbox (-.25,-1.0) rectangle (4,1.25);

    \node[dnode,label=below:$\alpha_1$] (1) at (0,0) {};
    \node[dnode,label=below:$\alpha_2$] (2) at (1,0) {};
    \node[dnode,label=below:$\alpha_{\ell-2}$] (4) at (2.5,0) {};
    \node[dnode,label=above:$\alpha_{\ell-1}$] (5) at (3.5,0.5) {};
    \node[dnode,label=below:$\alpha_\ell$] (6) at (3.5,-0.5) {};

    \path (1) edge[sedge] (2)
          (2) edge[sedge,dashed, dash phase=1.4pt] (4)
          (4) edge[sedge] (5)
              edge[sedge] (6);
\end{tikzpicture}
}

\def\EDYNK{
 \begin{tikzpicture}[scale=\scales] 
\useasboundingbox (-.25,-.7) rectangle (4,1.5);
    \node[dnode,label=below:$\alpha_1$] (1) at (0,0) {};
    \node[dnode,label=below:$\alpha_3$] (3) at (1.5,0) {};
    \node[dnode,label=right:$\alpha_6$] (4) at (1.5,1) {};
    \node[dnode,label=below:$\alpha_4$] (5) at (2.5,0) {};
    \node[dnode,label=below:$\alpha_5$] (6) at (3.5,0) {};

    \path (1) edge[sedge,dashed, dash phase=1.4pt] (3)
          (3) edge[sedge] (4)
          (3) edge[sedge] (5)
          (5) edge[sedge] (6);
\end{tikzpicture}
}
\def\EEDYNK{
 \begin{tikzpicture}[scale=\scales] 
\useasboundingbox (-.25,-.7) rectangle (4,1.5);

    \node[dnode,label=below:$\alpha_1$] (1) at (0,0) {};
    \node[dnode,label=below:$\alpha_4$] (3) at (1.5,0) {};
    \node[dnode,label=right:$\alpha_7$] (4) at (1.5,1) {};
    \node[dnode,label=below:$\alpha_5$] (5) at (2.5,0) {};
    \node[dnode,label=below:$\alpha_6$] (6) at (3.5,0) {};

    \path (1) edge[sedge,dashed, dash phase=1.4pt] (3)
          (3) edge[sedge] (4)
          (3) edge[sedge] (5)
          (5) edge[sedge] (6);
\end{tikzpicture}}

\def\EEEDYNK{
\begin{tikzpicture}[scale=\scales] 
\useasboundingbox (-.25,-.7) rectangle (4,1.5);

    \node[dnode,label=below:$\alpha_1$] (1) at (0,0) {};
    \node[dnode,label=below:$\alpha_5$] (3) at (1.5,0) {};
    \node[dnode,label=right:$\alpha_8$] (4) at (1.5,1) {};
    \node[dnode,label=below:$\alpha_6$] (5) at (2.5,0) {};
    \node[dnode,label=below:$\alpha_7$] (6) at (3.5,0) {};

    \path (1) edge[sedge,dashed, dash phase=1.4pt] (3)
          (3) edge[sedge] (4)
          (3) edge[sedge] (5)
          (5) edge[sedge] (6);
\end{tikzpicture}
}
\def\FDYNK{
\begin{tikzpicture}[scale=\scales]
\useasboundingbox (-.25,-.7) rectangle (4,.5);
\path (.95,0) edge[dedge] (2.05,0);
    \node[dnode,label=below:$\alpha_1$] (1) at (0,0) {};
    \node[dnode,label=below:$\alpha_2$] (2) at (1,0) {};
    \node[dnode,label=below:$\alpha_3$] (3) at (2,0) {};
    \node[dnode,label=below:$\alpha_4$] (4) at (3,0) {};

    \path (1) edge[sedge] (2)
          (3) edge[sedge] (4)
          ;
\end{tikzpicture}
}
\def\GDYNK{
          \begin{tikzpicture}[scale=\scales]
\useasboundingbox (-.25,-.7) rectangle (4,.5);
  \path (-.05,0) edge[tedge] (1.05,0);
    \node[dnode,label=below:$\alpha_1$] (1) at (0,0) {};
    \node[dnode,label=below:$\alpha_2$] (2) at (1,0) {};

\end{tikzpicture}
}

 
\def\DynkTable{
\begin{table}[h]
\footnotesize
\def\scales{.7}
\caption{Roots systems, highest and 2nd highest roots, and resonant codimension of maximal parabolic subalgebras}
\label{table:bull}
\begin{center}
\begin{tabular}{|c|m{83pt} |l|}
\hline
& Dynkin diagram and
& Highest root  $\delta$ and second-highest root  $\delta'$; 
\\
&   simple roots 
&   resonant codimension $\bar r(\lieq_{j})$ where $\lieq_j = \lieq_{\Pi\sm \{ \alpha_j\}}$
\\    \hline

$A_\ell$&
\ADYNK
&
\begin{tabular}{l  }

$ \delta = \alpha_1 + \dots + \alpha _\ell$  \Ts  \Ms
 \\
 $\bar r(\lieq_{j})= \frac1 2 
\big(
(\ell+1)^2 - j^2 - (\ell+1-j)^2 
\big)$\Bs 
 \end{tabular}
 \\		\hline
$B_\ell$&
\BDYNK
&
 \begin{tabular}{l}
$\delta =  \alpha _1 + 2 \alpha _2 + \dots + 2 \alpha _\ell  $\Ts  \Ms
\\
$\bar r(\lieq_{j})= \frac1 2 
\big(
\ell (2\ell+1)  - j^2 - (\ell-j) (2(\ell-j)+1)
\big)$\Bs
\\
\end{tabular}
\\      \hline
$C_\ell$&
\CDYNK
&
\begin{tabular}{l l }
$\delta  = 2\alpha_1 + 2\alpha_2 + \dots + 2 \alpha_{\ell-1} + \alpha _ \ell   $\Ts   \\
$\delta'  = \alpha_1 +  2\alpha_2 + \dots + 2 \alpha_{\ell-1} + \alpha _ \ell  $\Ms
\\
$\bar r(\lieq_{j})= \frac1 2 
\big(
\ell (2\ell+1)  - j^2 - (\ell-j) (2(\ell-j)+1)
\big)$\Bs
\\
\end{tabular}
\\		\hline
$BC_\ell$&
\BCDYNK
&
\begin{tabular}{l l }
$ \delta= 2 \alpha_1 + 2\alpha _{2 } + \dots + 2\alpha _{\ell -1 }  +2 \alpha _\ell $ \Ts\\
$ \delta' = \alpha_1 + 2\alpha _{2 }  + \dots + 2\alpha _{\ell -1 } + 2\alpha _\ell   $\Bs
\end{tabular}
\\ 		\hline
$D_\ell$&
\DDYNKup
&
\begin{tabular}{  l  }
$ \delta = \alpha_1 + 2\alpha _2 +\dots + 2\alpha _{\ell -2 } +   \alpha _{\ell-1}+  \alpha _\ell  $ \Ts \Ms\\
$\bar r(\lieq_{j})= \frac1 2 
\big(
 \ell (2\ell-1)  - j^2 - (\ell-j) (2(\ell-j)-1) 
\big)$
\\
\hfill for $\phantom{\ell}1\le j\le \ell-2$  \\
$\bar r(\lieq_{j})= \frac1 2 
\big(
 \ell (2\ell-1) - \ell^2
\big)$
\hfill   \Bs
 for  $\phantom{1}\ell-1\le j\le \ell$
\end{tabular}
\\		 \hline

$E_6$&
\EDYNK
&
  \begin{tabular}{l}
$ \delta =  \alpha _1 + 2\alpha _2 + 3 \alpha _3 + 2\alpha _4 +  \alpha _5 + 2  \alpha _6$\Ts \Ms \\
$\bar r(\lieq_{1})= 16 \quad
\bar r(\lieq_{2})= 25 \quad 
\bar r(\lieq_{3})= 29 \quad  $\\$
\bar r(\lieq_{4})= 26\quad
\bar r(\lieq_{5})= 16\quad 
\bar r(\lieq_{6})= 21$\Bs
\end{tabular}
  \\ 		\hline
$E_7$&
\EEDYNK
&
 \begin{tabular}{l}
$\delta = \alpha_1+2\alpha_2+3\alpha_3+4\alpha_4+3\alpha_5+2\alpha_6+ 2\alpha_7$\Ms \Ts \\
$\bar r(\lieq_{1})= 27\quad 
\bar r(\lieq_{2})= 42 \quad
\bar r(\lieq_{3})= 50\quad  $\\$
\bar r(\lieq_{4})= 53\quad 
\bar r(\lieq_{5})= 47\quad 
\bar r(\lieq_{6})= 33\quad  $\\$
\bar r(\lieq_{7})= 42$\Bs
 \end{tabular}
\\ 			
\hline
$E_8$&
\EEEDYNK
&
 \begin{tabular}{l}
 $ \delta = 2 \alpha _1 + 3\alpha _2 + 4 \alpha _3 + 5\alpha _4 + 6\alpha _5 +   $\Ts \\
\quad \quad  \quad\quad $ 4\alpha _6 + 2\alpha _7+3\alpha_8$\\
 $ \delta' =  \alpha _1 + 3\alpha _2 + 4 \alpha _3 + 5\alpha _4 + 6\alpha _5 + $ \\
\quad \quad  \quad\quad $   4\alpha _6 + 2\alpha _7+3\alpha_8$\Ms 
\\
$\bar r(\lieq_{1})= 57\phantom{0}\quad 
\bar r(\lieq_{2})= 83\phantom{0} \quad
\bar r(\lieq_{3})= 97\phantom{0}\quad  $\\$
\bar r(\lieq_{4})= 105\quad 
\bar r(\lieq_{5})= 106\quad 
\bar r(\lieq_{6})= 98\phantom{0}\quad  $\\$
\bar r(\lieq_{7})= 78\phantom{0}\quad
\bar r(\lieq_{8})= 92$\Bs
 \end{tabular}
\\ 			\hline
$F_4$&
\FDYNK  &
\begin{tabular}{l  }
$ \delta =  2\alpha _1 + 3\alpha _2 + 4 \alpha _3 + 2\alpha _4 $\Ts \\
$ \delta' =  \alpha _1 + 3\alpha _2 + 4 \alpha _3 + 2\alpha _4 $\Ms
\\
$\bar r(\lieq_{1})= 15 \quad
\bar r(\lieq_{2})= 20 \quad 
\bar r(\lieq_{3})= 20 \quad  
\bar r(\lieq_{4})= 15$\Bs
 \end{tabular}
\\ 			\hline
$G_2$&
\GDYNK  &
\begin{tabular}{l   }
$ \delta = 2 \alpha _1 + 3\alpha _2 $\Ts\Ms \quad \quad \quad
$ \delta'  =  \alpha _1 + 3\alpha _2 $
\\
$\bar r(\lieq_{1})= 5 \phantom{0}\quad
\bar r(\lieq_{2})= 5\phantom{0}$\Bs
\end{tabular}
\\ 			\hline
\end{tabular}
\end{center}
\label{default}
\end{table}
}

\def\BDTABEL{

\begin{table}[h]
\footnotesize
\def\scales{.7}
\def\ssizl{14em}
\caption{Simple roots and positive roots for the root systems $B_\ell$ and $D_\ell$ relative to  the basis $\{\eps_i\}$ of $(\R^\ell)^*$ dual to the standard basis $\{e_i\}$ of  $\R^\ell$.  (c.f. \cite[Appendix C]{MR1920389})}\label{table:1}\label{table:poop}
\begin{center}
\begin{tabular}{|c|m{90pt} |l|}
\hline
& \begin{tabular} {c} Dynkin diagram and  \\simple roots  in\\  standard presentation  \end{tabular} 
&   \begin{tabular} {c}Positive roots \end{tabular} 

\\
\hline
$B_\ell$&
 \begin{tabular}{l}
\BDYNK
\Ms\\  $\alpha_i = \eps_i - \eps_{i+1},$    \\
         {\footnotesize $  1\le i \le \ell-1$};\\     $ \alpha_\ell = \eps_\ell$\Bs
 \end{tabular}
&
\begin{tabular}{m{\ssizl} l  }
 $\alpha_i +  \dots + \alpha _{k} = \eps_i - \eps_ {k+1} $ &  {\footnotesize $1\le i \le k \le \ell-1$}\Ts \\
$\alpha_i +  \dots + \alpha _{\ell} = \eps_i   $&  {\footnotesize$1\le i \le \ell $}\\
$\alpha_i +\dots  +\alpha _{k} +    2 \alpha _{k+1} + \dots  $&  {\footnotesize $1\le i \le k < \ell$}\\
\hfill $   + 2 \alpha _\ell   = \eps_i + \eps_ {k+1}$ \Bs
 \end{tabular}
\\

\hline

 \begin{tabular}{c}
$D_\ell$\\
{\footnotesize$\ell\ge 4$}
\end{tabular} &
 \begin{tabular}{l}
\DDYNKup \Ms\\
           $\alpha_i = \eps_i - \eps_{i+1},$  
    \\
      {\footnotesize $  1\le i \le \ell-1$};\\
        $ \alpha_\ell = \eps_{\ell-1} + \eps_\ell$ \Bs
 \end{tabular}
&
%
%
%
\begin{tabular}{m{\ssizl} l  }
\Ts $\alpha_i$ &$1\le i\le \ell$\\
$\alpha_i  + \dots + \alpha _{k}= \eps_i - \eps_ {k+1}$ & {\footnotesize $1\le i < k \le \ell-2$}\\
$\alpha_i +   \dots + \alpha _{\ell-2}+ \alpha _{\ell-1} = \eps_i - \eps_ {\ell}$ & {\footnotesize $1\le i  \le \ell-2$}\\
$\alpha_i +   \dots + \alpha _{\ell-2}+ \alpha _\ell = \eps_i + \eps_ {\ell}$ & {\footnotesize $1\le i  \le \ell-2$}\\
$\alpha_i +   \dots + \alpha _{\ell-1}+ \alpha _\ell = \eps_i + \eps_ {\ell-1}$ & {\footnotesize $1\le i  \le \ell-2$}\\
$\alpha_i +\dots  +\alpha _{k}  +    2 \alpha _{k+1} +\dots$  & {\footnotesize  $1\le i \le  k <\ell-2$}\\
\hfill$  
   + 2 \alpha _{\ell-2}+ \alpha _{\ell-1} + \alpha _\ell $\\
\hfill $ = \eps_i + \eps_ {k+1}$\Bs
\end{tabular}
\\
 \hline

\end{tabular}
\end{center}

\end{table}%
}

\maketitle

\let\oldtocsection=\tocsection
\let\oldtocsubsection=\tocsubsection 
\let\oldtocsubsubsection=\tocsubsubsection
 
\renewcommand{\tocsection}[2]{\hspace{0em}\oldtocsection{#1}{#2}}
\renewcommand{\tocsubsection}[2]{\hspace{1em}\oldtocsubsection{#1}{#2}}
\renewcommand{\tocsubsubsection}[2]{\hspace{2em}\oldtocsubsubsection{#1}{#2}}
\setcounter{tocdepth}{2}

\begin{abstract}
We study smooth actions by lattices   in higher-rank simple Lie groups.  Assuming one element of the action acts with positive topological entropy, we prove a number of new rigidity results.  
For lattices in $\mathrm{SL}(n,\Rbb)$ acting on $n$-manifolds, if the action has  positive topological entropy we show the lattice must be commensurable with $\mathrm{SL}(n,\Zbb)$.  Moreover, such actions admit an absolutely continuous probability measure with positive metric entropy; adapting arguments by Katok and Rodriguez Hertz, we show such actions are measurably conjugate to affine actions on  (infra-)tori.  

In a main technical argument, we study families of probability measures invariant under sub-actions of the induced action by the ambient Lie group on an associated fiber bundle.  
To control entropy properties of such measures when passing to limits, in the appendix we establish certain upper semicontinuity of fiber entropy under weak-$*$ convergence, adapting classical results of Yomdin and Newhouse.  

\end{abstract}

\tableofcontents
\section{Introduction}\label{sec:intro}

It is well known that (irreducible) lattice subgroups $\Gamma$ in  higher-rank (semi-)simple Lie groups $G$ exhibit very strong rigidity properties with respect to linear representations $\pi\colon \Gamma\to \Gl(d,\R)$.  
The \emph{Zimmer program} is a collection of questions and conjectures that, roughly, aim to establish analogous rigidity results for smooth (perhaps volume-preserving) actions.  Very recently, substantial progress in the Zimmer program has been made, especially in the following directions: 
\begin{enumerate}
\item ({\it Isometric and trivial actions.}) Establishing {\it Zimmer's conjecture}: actions by lattices in $\Sl(n,\R)$ for $n\ge 3$
(and in many other higher-rank simple Lie groups) on low-dimensional manifolds are isometric or finite; see especially 
the recent works \cite{MR4502593,Brown:2020aa,Brown:2021aa,BDZ,2008.10687,ABZ}.
\item ({\it Hyperbolic actions.}) Showing Anosov actions by higher-rank lattices on tori and nilmanifolds are smoothly conjugate to affine actions; see especially \cite{HL:MR4768583,HL:Dom,BRHW17}, building on many earlier works including \cite{MQ01,KLZ,KL96,MR1643954}.
\item ({\it Projective actions.}) Showing global and local rigidity of projective actions: Actions by  lattices  in $\Sl(n,\R)$ on $(n-1)$-dimensional manifolds (for $n\ge 3$) are either finite or are smoothly conjugate to standard projective actions on $S^{n-1}$ or $\R P^{n-1}$; see \cite{BRHWbdy}. 
 Moreover, the standard projective actions (on grassmannians, flag varieties, generalized flag varieties, etc.) by higher-rank lattices 
are locally rigid; see \cite{BRHWbdy} building on earlier results including \cite{KS97,MR1421873}.  See also \cite{2303.00543} for $C^0$-local rigidity results of  boundary actions by higher-rank  cocompact lattices  building on earlier work including \cite{MR4468857} in the rank-one setting.   See also recent results including \cite{MR4693950} which classifies  projective (relative to a connection) actions 
and \cite{MR4794146} which classifies $\Sl(n,\R$)-actions on $n$-manifolds as those induced from a projective action.
\end{enumerate}

This paper continues the second theme  of studying  {hyperbolic actions} by higher-rank lattices.   
Prior results in this direction are typically of the following flavor: Let $\Gamma$ be a higher-rank lattice, let $\alpha\colon \Gamma\to \Diff^\infty(M)$ be an action, and suppose there exists $\gamma_0\in \Gamma$ such that the diffeomorphism $\alpha(\gamma_0)\colon M\to M$ is   Anosov   (or perhaps satisfies a weaker notion of hyperbolicity such as being partially hyperbolic or admitting a dominated splitting).  Under such dynamical hypotheses, if the manifold is assumed to be a torus or nilmanifold, one can often classify the action as smoothly conjugate to an affine action; see for example the main results of \cite{BRHW17}.  Under additional dimension assumptions, one can often classify the topology of the manifold; see especially the results in \cite{HL:Dom}.

In this paper, rather than imposing any (uniform) hyperbolicity assumption on the action,  we consider actions satisfying a much weaker dynamical property: we assume   there exists one element $\gamma_0\in \Gamma$ such that the diffeomorphism $\alpha(\gamma_0)\colon M\to M$ has positive topological entropy, $h_{\top}(\alpha(\gamma_{0}))>0$.    

We note that Anosov diffeomorphisms always have positive topological entropy, though there are many diffeomorphisms with positive topological entropy that are not Anosov.  On the other hand, via the variational principle (see  \cite[Chap.\  20]{KHbook}) and the Margulis--Ruelle inequality (see \Cref{thm:MRineq} below), positivity of topological entropy ensures the existence of an $\alpha(\gamma_0)$-invariant Borel probability measure with non-zero Lyapunov exponents; thus our distinguished element of the action $\alpha(\gamma_0)$ exhibits some nonuniform  (partial) hyperbolicity. 

To the best of our knowledge, this paper is the first paper to study rigidity properties of smooth actions by higher-rank lattice under the assumption that the action admits elements with positive topological entropy.  Under this mild dynamical assumption (and  further dimension constrains on $M$) we  prove a number of surprising new rigidity results.  
In the remainder of this introduction, we formulate a number of corollaries and consequences of our main results, primarily formulated for actions by lattices in $\Sl(n,\R)$, as well as one of our  main technical theorems, \cref{thm:SLnZintro}.  
  We will formulate the remainder of our  main technical theorems  in \cref{mainresultssection} after a review of  terminology and standard constructions.

We remark that our main results are stated only for $C^\infty$ actions.  This is primarily because we frequently appeal to certain upper semicontinuity properties of metric entropy.  Specifically, on the suspension space $M^\alpha$ (see \cref{mainresultssection}), we assert that fiber metric entropy is upper semicontinuous when restricted to certain classes of invariant measures (with certain quantitative decay of mass near $\infty$); see \cref{prop:uscfiberentropy} below.    

Upper semicontinuity of (standard) metric entropy (for $C^\infty$ diffeomorphisms of compact manifolds) is established in the classical results of Newhouse \cite{MR986792} following the work of  Yomdin \cite{MR0889979,MR889980}; however, to the best of our knowledge, no formulation of analogous results for fiber metric entropy appears in the literature. 
In \cref{app:yomdinnewhouse}, 
we present an abstract formulation of upper semicontinuity of fiber metric entropy.    See especially \cref{lem:locentzeromeansUSC,prop:Yomdin} for statements of results.   We particularly note that since we do not assume our lattices our cocompact, our fibered actions (the  action by translations on the suspension space $M^\alpha$) do not occur on compact manifolds.  We  thus formulate our upper semicontinuity results without any compactness assumption on the total space.  To accommodate for the lack of compactness (and consequence unboundedness of the fiber dynamics) in such generality, it seems necessary to impose some uniform integrability on the family of measures considered; see \S\ref{unifint} for definitions.  

\subsection{Main results for actions by lattices in $\Sl(n,\R)$}  
The main technical theorems of the paper are \cref{Ainv,thm:AtoG} presented in \cref{mainresultssection} and \cref{thm:SLnZintro} below.  In the remainder of this introduction, we outline a number of motivating corollaries of our main technical theorems,  focusing especially on results for actions by lattices in $\Sl(n,\R)$.  

\subsubsection{Motivation: affine $\Sl(n,\Z)$-actions and their deformations}
To motivate the results enumerated below,  recall  that  $\Gamma= \Sl(n,\Z)$ is a lattice subgroup of $\Sl(n,\R)$ and induces an action $\rho\colon \Gamma\to \Aut(\T^n)$ by (affine) toral automorphisms. For any  element $\gamma\in \Sl(n,\R)$ with at least one eigenvalue outside the unit circle, the map $\rho(\gamma)$ has positive topological entropy.  Thus the action $\rho$ admits many elements acting with positive topological entropy (and in fact many Anosov elements).  Moreover, the action $\rho(\Gamma)$ preserves an absolutely continuous invariant probability measure, the Haar measure on $\T^n$.

Let $\Gamma$ be a finite-index subgroup of $\Sl(n,\Z)$.  The affine action $\rho\colon\Gamma \to \Aut(\T^n)$ can be deformed in a number of ways. First, one may replace fixed points or finite orbits for the action with $(n-1)$-dimensional spheres; then, one may either (1) quotient boundary spheres by  antipodal maps (i.e.\ blowup points), (2) glue  closed $n$-balls to a boundary spheres, or (3) glue two spheres in  the same torus or in different tori by tubes.  Such constructions can be performed so that the affine $\Gamma$-action outside the finite orbits extends to a real-analytic $\Gamma$-action; moreover, some of the above constructions can be made to preserve an ergodic, no-where vanishing smooth density.  Some of these deformations were first described in \cite{KL96}; further deformations (and more detailed treatments of those from \cite{KL96}) only appeared quite recently in  \cite{MR4794146}.  

Fix  $n\ge 3$.  Let $G= \Sl(n,\R)$ and let $\Gamma$ be a lattice in $G$.  As conjectured in  \cite[Conj.\ 3.6]{MR4794146}, any  $\Gamma$-action on an  $n$-dimensional manifold $M$ either \begin{enumerate}
\item factors through the action of a finite group on $M$,
\item extends to a $G$-action (and is then classified as in \cite{MR4794146})  on $M$, or
\item $\Gamma$ is commensurable with $\Sl(n,\Z)$ and the action is built from the above modifications of the affine action $\rho\colon \Sl(n,\Z) \to \Aut(\T^n)$.
\end{enumerate}
We note that for any action satisfying the first or second outcomes of the above trichotomy, every element of the corresponding action acts with zero  topological entropy.  

The  results of our paper provide evidence for the above conjecture and we expect will be useful to obtain a full classification of actions on $n$-manifolds.  Namely, for $n\ge 3$,  for any lattice $\Gamma$ in $\Sl(n,\R)$, and for any $n$-manifold $M$, we consider an action $\alpha\colon \Gamma\to \diff^\infty (M)$ such that $\alpha(\gamma_0)$ has positive topological entropy for some $\gamma_0\in \Gamma$.  We then obtain a weak version of outcome (3) in the above trichotomy: First, we show lattice $\Gamma$ must be commensurable with $\Sl(n,\Z)$.  Second, we recover the structure of an affine  $\Gamma$-action  on $\T^n$ outside of possible deformations by showing (for $n\ge 4$) the action preserves an absolutely continuous probability measure; moreover, relative to this measure, the action is measurably conjugate to an affine action on finitely many copies of  $\T^n$ (or  $\T^n/\{\pm1\}$).   This measurable conjugacy extends to a continuous map from an open set of $M$ to the complement of a finite set which gives topological restrictions on the manifold $M$; see  discussion following \cref{KRHtopol} below.

\subsubsection{Classification of lattices acting with positive entropy}
Our first surprising result asserts that any lattice $\Gamma\subset \Sl(n, \R)$ acting on a $n$-manifold $M$ with positive topological entropy must, in fact,  be  commensurable to $\Sl(n,\Z)$.


\begin{corollary}\label[corollary]{coro:SLn}
For $n\ge 3$, let $\Gamma$ be a lattice in $\SL(n,\Rbb)$.  
Let $M$ be a closed manifold with $\dim M= n$ and let $\alpha\colon\Gamma\to\diff^\infty(M)$ be an action such that $h_{\top}(\alpha(\gamma_{0}))>0$  for some $\gamma_{0}\in \Gamma$. Then $\Gamma$ is   commensurable  {(up to conjugation)} with $\SL(n,\Zbb)$.  That is, there exists a finite index subgroup $\Gamma_{0}<\Gamma$  {that is conjugate to} a finite index subgroup of $\SL(n,\Zbb)$.
\end{corollary}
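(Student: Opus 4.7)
The plan is to combine the paper's main technical theorems with Margulis superrigidity and the classical fact that a lattice contained in another lattice has finite index. First, by the variational principle applied to $\alpha(\gamma_0)$ together with the Margulis--Ruelle inequality (\cref{thm:MRineq}), positive topological entropy yields an $\alpha(\gamma_0)$-invariant Borel probability measure $\mu$ on $M$ with $h_\mu(\alpha(\gamma_0))>0$ and at least one positive Lyapunov exponent. Lifting $\mu$ to the suspension space $M^\alpha$ along the one-parameter subgroup of $G=\SL(n,\R)$ containing $\gamma_0$ and feeding this data into \cref{Ainv,thm:AtoG,thm:SLnZintro}, one produces an $A$-invariant probability measure $\hat\mu$ on $M^\alpha$ of positive fiberwise entropy---crucially invoking the fiber-entropy upper semicontinuity of \cref{prop:uscfiberentropy} to survive the weak-$*$ limits needed to promote invariance---and then, via the adaptation of the Katok--Rodriguez Hertz framework advertised in the abstract, an absolutely continuous $\alpha(\Gamma)$-invariant probability measure $\nu$ on $M$ together with a measurable $\Gamma$-equivariant factor map from $(M,\nu)$ onto a finite disjoint union of copies of $\T^n$ (or $\T^n/\{\pm 1\}$) carrying an affine $\Gamma$-action of positive entropy.

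The affine target encodes a homomorphism $\rho\colon \Gamma_0\to \Aff(\T^n)$ on a finite-index subgroup $\Gamma_0\le \Gamma$ whose linear part $\rho^{\mathrm{lin}}\colon \Gamma_0\to \GL(n,\Z)$ has unbounded image, since $\rho^{\mathrm{lin}}(\gamma_0)$ inherits the positive entropy and hence has an eigenvalue off the unit circle. By Margulis superrigidity, $\rho^{\mathrm{lin}}$ extends to a continuous homomorphism $\widetilde\rho\colon \SL(n,\R)\to \GL(n,\R)$; since $\SL(n,\R)$ is simple and $\widetilde\rho$ is nontrivial, $\widetilde\rho$ is injective, and for $n\ge 3$ the only continuous $n$-dimensional representations of $\SL(n,\R)$ are, up to conjugation in $\GL(n,\R)$, the standard embedding and its dual. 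Thus, after possibly composing with the outer involution $g\mapsto (g^T)^{-1}$ (which preserves $\SL(n,\Z)$), there exists $g_0\in \SL(n,\R)$ with $g_0\Gamma_0 g_0^{-1}\subset \SL(n,\Z)$. Because $g_0\Gamma_0 g_0^{-1}$ is a lattice in $\SL(n,\R)$ sitting inside the lattice $\SL(n,\Z)$, this inclusion has finite index (covolumes are comparable), establishing the commensurability-up-to-conjugation claimed.

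The heart of the argument, and the step I expect to be by far the most delicate, is the first one: extracting from the mild hypothesis that a \emph{single} element has positive topological entropy both an absolutely continuous $\Gamma$-invariant measure and a measurable conjugacy with an affine (infra-)toral model. This starting point offers neither uniform hyperbolicity nor any a priori $\Gamma$-invariant measure, so all of the structure must come from the induced $G$-action on $M^\alpha$---specifically, from the construction of measures invariant under ever larger sub-actions of $G$ and from the weak-$*$ upper semicontinuity of fiber entropy developed in the appendix. Once this affine model is available, the passage to commensurability is the essentially formal superrigidity argument sketched above.
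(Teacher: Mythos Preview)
Your overall strategy matches the paper's: invoke \cref{Ainv}, then \cref{thm:AtoG}, then \cref{thm:abs} (which you use implicitly when you produce the absolutely continuous $\alpha(\Gamma)$-invariant measure), then \cref{thm:SLnZintro}, then Margulis superrigidity. The superrigidity endgame you spell out is essentially what the paper compresses into one line in \S\ref{sss:msblclass}.

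There is, however, a genuine gap in your handling of $n=3$. The hypotheses of \cref{Ainv} are: (a) $G=\SL(3,\R)$ and $\Gamma$ has $\Q$-rank $1$; (b) $\Gamma$ cocompact; (c) $\rank_\R(G)\ge 3$. For $n\ge 4$ case (c) covers every lattice and your chain runs unconditionally. But for $n=3$ with $\Gamma$ commensurable to $\SL(3,\Z)$ (so $\Q$-rank $2$), none of (a)--(c) hold, and \cref{Ainv} simply does not apply; consequently neither does \cref{coro:invmsrSLNR}, which is stated only for $n\ge 4$. The paper handles this by contraposition: if $\Gamma<\SL(3,\R)$ is \emph{not} commensurable with $\SL(3,\Z)$, then $\rank_\Q(\Gamma)\le 1$, so \cref{Ainv}(a) or (b) fires, and the chain \cref{Ainv}$\to$\cref{thm:AtoG}$\to$\cref{thm:abs}$\to$\cref{thm:SLnZintro} yields a finite-index subgroup of $\Gamma$ mapping onto a finite-index subgroup of $\SL(3,\Z)$, contradicting the assumption. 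You need to insert exactly this case split.

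Two smaller points. First, your phrase ``the one-parameter subgroup of $G$ containing $\gamma_0$'' assumes $\gamma_0$ is semisimple; the paper makes this reduction inside the proof of \cref{Ainv} via \cref{ss}, so since you are quoting \cref{Ainv} as a black box you should not describe its internal seed-measure construction. Second, \cref{thm:SLnZintro} already hands you a representation $\rho\colon\Gamma'\to\SL(n,\Z)$ directly, so there is no need to pass through $\Aff(\T^n)$ and extract a linear part.
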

For instance, if $\Gamma\subset \Sl(n,\R)$ is a cocompact lattice   (or a lattice with $\rank_\Q(\Gamma)<
\rank_\Q(\SL(n,\Z))= n-1$), \cref{coro:SLn} implies that for any $n$-manifold and any action $\alpha\colon \Gamma\to \Diff^\infty(M)$, we have  $h_{\top}(\alpha(\gamma))=0$ for every element $\gamma\in \Gamma$.

\cref{coro:SLn} will follow directly from \cref{thm:SLnZintro} below as explained in \cref{sss:msblclass}.

We note that when $n\ge 3$, every lattice $\Gamma< \Sl(n,\R)$ is arithmetic and thus has a well defined $\Q$-rank.
This motivates the following question for possible future investigation:
\begin{question}
For $n\ge 3$, let $\Gamma$ be a lattice in $\Sl(n,\R)$ with $\rank_\Q(\Gamma)= q\le n-2$.  Find the smallest dimension $d$ (in terms of $q$ or the arithmetic structure of $\Gamma$) 
such that there exists a $d$-dimensional manifold  and an action $\alpha\colon \Gamma\to \diff^\infty (M)$  with $h_\top(\alpha(\gamma_0))>0$ for some $\gamma_0\in \Gamma$.  
\end{question}


\subsubsection{Actions with positive entropy admit  absolutely continuous invariant measures}
In the proof of \cref{coro:SLn}, we will first establish (for $n\ge 4$) that any action $\alpha\colon \Gamma\to \Diff^\infty(M)$ of a lattice $\Gamma\subset \Sl(n, \R)$  on a $n$-manifold $M$ with positive topological entropy admits an $\alpha(\Gamma)$-invariant probability measure $\nu$.  Moreover, the measure $\nu$ will be absolutely continuous and have positive (metric) entropy for some element of the action.

\begin{corollary}\label[corollary]{coro:invmsrSLNR}
For $n\ge 4$, let $\Gamma$ be a lattice in $\SL(n,\Rbb)$. 
Let $M$ be a closed manifold with $\dim M= n$ and let $\alpha\colon\Gamma\to\diff^\infty(M)$ be an action such that $h_{\top}(\alpha(\gamma_{0}))>0$  for some $\gamma_{0}\in \Gamma$. 

Then, there exists an $\alpha(\Gamma)$-invariant Borel probability measure $\nu$ on $M$; moreover $\nu$ is absolutely continuous (with respect to any smooth density on $M$)  and $h_\nu(\alpha(\gamma_0))>0$ for some $\gamma_0\in \Gamma$.  
\end{corollary}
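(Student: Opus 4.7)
The plan is to first construct a $\Gamma$-invariant measure with positive entropy via the main technical theorems of the paper, and then to establish its absolute continuity using the Katok--Rodriguez Hertz style arguments alluded to in the abstract.

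First, by the variational principle, the hypothesis $h_{\top}(\alpha(\gamma_{0})) > 0$ yields an $\alpha(\gamma_{0})$-invariant Borel probability measure $\mu_0$ on $M$ with $h_{\mu_0}(\alpha(\gamma_{0})) > 0$. Lift $\mu_0$ to the fiber over the identity coset in the suspension space $M^\alpha = (G \times M)/\Gamma$. After embedding a power of $\gamma_{0}$ in a one-parameter $\R$-split subgroup $\{a_t\} \subset A$ (a maximal $\R$-split Cartan of $G = \SL(n,\R)$), Ces\`aro averaging the lifted measure along $\{a_t\}$ and then over $A$ produces, after passing to a weak-$*$ limit, an $A$-invariant Borel probability measure $\hat\nu$ on $M^\alpha$. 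The upper semicontinuity of fiber metric entropy, stated as \cref{prop:uscfiberentropy} in the appendix, ensures that this limiting measure retains positive fiber entropy for $\gamma_{0}$, provided the uniform integrability condition of \S\ref{unifint} is verified along the averaging. Now \cref{Ainv}, together with its strengthening \cref{thm:AtoG}, upgrades $\hat\nu$ to a $G$-invariant Borel probability measure on $M^\alpha$ of positive fiber entropy. Disintegrating this measure over $G/\Gamma$ yields a $\Gamma$-equivariant family of conditional fiber measures on $M$; any one of them is a $\Gamma$-invariant Borel probability measure $\nu$ on $M$ with $h_\nu(\alpha(\gamma_{0})) > 0$.

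For absolute continuity, apply Zimmer's cocycle superrigidity to the derivative cocycle of $\alpha$ over $(\nu, \Gamma)$, extended naturally to the $G$-action on $M^\alpha$. Since $\dim M = n$ and $G = \SL(n,\R)$ has $\R$-rank $n-1 \geq 3$ (using $n \geq 4$), the only linear representation $G \to \GL(n,\R)$ of dimension $n$ is (a twist of) the standard representation. Matching Lyapunov exponents, the sum of positive exponents of $\alpha(\gamma_{0})$ at $\nu$ equals $h_{\top}(\alpha(\gamma_{0}))$, saturating the Margulis--Ruelle inequality. By the Ledrappier--Young entropy formula applied to $\alpha(\gamma_{0})$ and $\alpha(\gamma_{0}^{-1})$, the measure $\nu$ is SRB along both stable and unstable foliations, which on an $n$-manifold with no zero Lyapunov exponents forces $\nu$ to be absolutely continuous with respect to any smooth density.

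The main obstacle is the absolute continuity step: passing from the measurable matching of Lyapunov spectra via cocycle superrigidity to SRB structure via Ledrappier--Young, and then to a genuine absolute continuity statement on $M$, requires careful bookkeeping in the non-uniformly hyperbolic setting. The preceding construction of $\nu$, by contrast, is more systematic, and its only subtlety is preserving positive fiber entropy across the averaging limits, where the appendix's semicontinuity results do the heavy lifting.
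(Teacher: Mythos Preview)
Your overall architecture matches the paper: invoke \cref{Ainv} to produce an $A$-invariant measure on $M^\alpha$ projecting to Haar with positive fiber entropy, apply \cref{thm:AtoG} (using $\dim M = n = v(G)+1$) to upgrade to $G$-invariance, then descend to a $\Gamma$-invariant $\nu$ on $M$. The paper states exactly that \cref{coro:invmsrSLNR} follows from \cref{Ainv}, \cref{thm:AtoG}, and \cref{thm:abs}. Your first paragraph is muddled in that you sketch a construction of an $A$-invariant measure and \emph{then} cite \cref{Ainv} as if it were an upgrading step; in fact \cref{Ainv} \emph{is} the construction, and its proof (occupying two full sections of the paper) is considerably more delicate than your sketch suggests---in particular, your averaging does not obviously yield a measure projecting to Haar on $G/\Gamma$, which is essential input to \cref{thm:AtoG}. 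But if you simply cite \cref{Ainv} as a black box, that part is fine.

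The genuine gap is your absolute continuity argument. You assert that ``the sum of positive exponents of $\alpha(\gamma_{0})$ at $\nu$ equals $h_{\top}(\alpha(\gamma_{0}))$, saturating the Margulis--Ruelle inequality.'' This is unjustified: cocycle superrigidity pins down the Lyapunov spectrum (as weights of the standard representation), but nothing in your construction forces $h_\nu(\alpha(\gamma_0))$ to equal $\sum_{\lambda_i>0}\lambda_i$. The measure $\nu$ arose from repeated averaging and limits, not as a measure of maximal entropy, and there is no a priori relationship between $h_\nu$ and $h_{\top}$. The equality $h_\nu = \sum \lambda_i^+$ is essentially equivalent (via Ledrappier--Young) to the SRB property you are trying to deduce, so the argument is circular. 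The paper's route, \cref{thm:abs}, is substantially different: it uses measure rigidity (\cref{claim:abs2}) to show that \emph{every} fiberwise Lyapunov functional contributes positive entropy, which forces the leafwise measures along each one-dimensional Lyapunov foliation to be Lebesgue (\cref{fibleb}); absolute continuity of $\nu$ then follows by assembling these leafwise conclusions. The key step---that every Lyapunov direction carries entropy---requires an argument specific to the weight combinatorics of the defining representation and cannot be shortcut by an entropy inequality.
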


  \cref{coro:invmsrSLNR}   follows from  Theorems \ref{Ainv}, \ref{thm:AtoG}, and \ref{thm:abs} which we formulate in \cref{mainresultssection}.
  We note that \cref{coro:invmsrSLNR} is rather surprising as $\Gamma$ is non-amenable and thus   abstract continuous $\Gamma$-actions need not admit any invariant probability measure. 
In \cref{thm:SLnZintro} below, we show further that the action $\alpha\colon\Gamma\to\diff^\infty(M)$ is measurably (relative to the measure guaranteed in the conclusion of  \cref{coro:invmsrSLNR}) conjugate to an affine action on a torus or infra-torus.

When $n=3$, our methods do not establish \cref{coro:invmsrSLNR} when $\Gamma$ is commensurable with $\Sl(n,\Z)$; however, the conclusion of  \cref{coro:invmsrSLNR}  still holds for actions  with positive topological entropy by lattices in $\Sl(3,\R)$ that are not  commensurable with $\Sl(n,\Z)$.    
Using \cref{thm:SLnZintro} below, we arrive at a contradiction for lattices in $\Sl(3,\R)$ with $\Q$-rank strictly smaller than 2,  establishing \cref{coro:SLn} for lattices in $\Sl(3,\R)$.  
  

\subsubsection{Measurable classification of actions with positive entropy; proof of \cref{coro:SLn}}\label{sss:msblclass}
Relative to the $\alpha(\Gamma)$-invariant Borel probability measure on $M$ guaranteed by \cref{coro:invmsrSLNR}, we  classify positive entropy actions $\alpha\colon \Gamma\to \Diff^\infty(M)$ up to measurable conjugacy.
Here, the model actions for classification are the affine actions on the $n$-torus $\T^n$ or infra-torus $\Tbb^{n}_{\pm}:=\T^n/\{\pm 1\}$ induced by a representation $\Gamma\to \Gl(n,\Z)$.    
Recall that a linear map $A\in \Gl(n,\Z)$ induces an automorphism of the torus $\T^n$ which descends to an invertible affine map on the orbifold $\T^n_{\pm};$ similarly, a representation  $\rho\colon\Gamma\to \Gl(n,\Z)$ induces actions $\rho\colon\Gamma\to \Aut(\T^n)$ and $\hat \rho\colon\Gamma\to \Aff(\T^n_{\pm})$ by affine orbifold transformations.

Let $L$ denote either  either the torus $\T^n$ or infra-torus $\Tbb^{n}_{\pm}$ and let $\leb$ denote the normalized Haar measure on $L$.  


\begin{theorem}\label{thm:SLnZintro}
Let $G=\SL(n, \Rbb)$ with $n\ge 3$ and let $\Gamma$ be a lattice in $G$.

Let $M$ be a closed smooth $n$-manifold and let $\alpha\colon\Gamma\to \Diff^{r}(M)$ for $r>1$.   Assume there exists an ergodic,  $\alpha(\Gamma)$-invariant  Borel probability measure $\nu$ on $M$. Further assume  there exists $\gamma_{0}\in \Gamma$ such that $h_{\nu}(\alpha(\gamma_{0}))>0$.

Then, 
\begin{enumerate}
\item $\nu$ is absolutely continuous with respect to the Lebesgue measure on $M$.
\end{enumerate}
Moreover, there exist
\begin{enumerate}[resume]
\item a finite-index subgroup $\Gamma'$ in $\Gamma$, 
\item a measurable, $\Gamma'$-invariant subset $U_0\subset M$ with $\nu(U_0 )>0$,
\end{enumerate}
such that, writing $\nu_0 =\frac{1}{\nu(U_0)}\restrict{\nu}{U_0}$, there exist
\begin{enumerate}[resume]
\item a measurable isomorphism $h\colon(L, \leb)\to (M,\nu_0 )$ (where $L$ is either $\T^n$ or $\Tbb^{n}_{\pm}$), and 
\item a  representation $\rho\colon\Gamma'\to  \Sl(n,\Z)$ 
\end{enumerate} 
 such that  if $\hat \rho\colon \Gamma'\to \Aff(L)$ is the action on $L$ induced by $\rho$ then   for all $\gamma\in\Gamma'$
\[\alpha(\gamma)\circ h =h\circ \hat \rho(\gamma).\] 
\end{theorem}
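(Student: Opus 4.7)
The plan is to lift the problem to the suspension $M^\alpha=(G\times M)/\Gamma$, which carries a $G$-action by left translations in the $G$-factor. First, I would apply \cref{Ainv} to the measure $\nu$ to produce an $A$-invariant probability measure $\mu$ on $M^\alpha$ which projects to Haar measure on $G/\Gamma$ and which has positive fiber-entropy for some $a\in A$ (here $A<G$ is a split Cartan subgroup, and $a$ is chosen compatibly with $\gamma_0$). Then \cref{thm:AtoG} upgrades $\mu$ to a $G$-invariant probability measure on $M^\alpha$ preserving the same fiber-entropy properties.

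Once a $G$-invariant probability measure $\mu$ on the $n$-dimensional fiber bundle $M^\alpha\to G/\Gamma$ is in hand, I would prove absolute continuity (conclusion (1)) via Lyapunov analysis. The Lyapunov functionals $\lambda_i\colon\liea\to\R$ of the fiberwise derivative cocycle are, by Zimmer's cocycle superrigidity, restrictions to $\liea$ of weights of a $\lieg$-representation of dimension $\dim M=n$. The only nontrivial such representation of $\sl(n,\R)$ is, up to isomorphism, the standard representation (or its dual), whose weights $\varepsilon_1,\dots,\varepsilon_n$ sum to $0$ on $\liea$. Combining $\sum_i\lambda_i(a)=0$ with positive fiber-entropy forces the Margulis--Ruelle inequality to be an equality for some element, and the Ledrappier--Young entropy formula then ensures that the fiber-conditional measures of $\mu$ along the (fiberwise) unstable foliations lie in the Lebesgue class. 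Disintegrating in the fiber direction gives absolute continuity of $\nu$ on $M$.

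For conclusions (2)--(5), I would follow the scheme of Katok and Rodriguez Hertz for Cartan-type actions on $n$-manifolds. The one-dimensional coarse Lyapunov foliations $\mathcal{W}^{\varepsilon_i}$ on $M^\alpha$ carry leafwise smooth structures by Pesin theory, and Katok--Spatzier normal-form theory provides affine parametrizations $\R\to\mathcal{W}^{\varepsilon_i}(x)$. Compatibility of these parametrizations across different directions, forced by commutation with the $A$-action together with the resonance-free Cartan structure of $\sl(n,\R)$, yields on a Pesin block of full $\mu$-measure a measurable chart into an open subset of $\R^n$. A holonomy/periods argument identifies the lattice $\Lambda\subset \R^n$ of periods, and the chart $h$ descends either to $\R^n/\Lambda\cong\T^n$ or, when the normal-form parametrizations are canonical only up to sign, to the infra-torus $\T^n_\pm$. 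Passing to a finite-index $\Gamma'<\Gamma$ that preserves a single component $U_0$ and the sign data, the intertwining representation $\rho\colon\Gamma'\to\Sl(n,\Z)$ then follows from preservation of $\Lambda$ together with Haar-volume.

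The principal obstacle is the global gluing of normal-form charts across a non-compact Pesin decomposition on a bundle whose base $G/\Gamma$ is itself non-compact in general; here one must pass to limits using the fiber-entropy upper semicontinuity of \cref{prop:uscfiberentropy} to keep entropy properties under control throughout the limiting construction. A secondary delicate point, which is responsible for both the restriction to a positive-measure invariant $U_0$ and the possible appearance of the infra-torus rather than a genuine torus, is that the suspension and normal-form constructions a priori produce a chart onto several copies of $L$ (respectively a $\pm 1$ ambiguity), requiring the passage to $\Gamma'$ and the selection of a single component.
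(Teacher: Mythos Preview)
Your opening step is a genuine misstep that would derail the argument. The hypothesis already gives you an ergodic $\alpha(\Gamma)$-invariant measure $\nu$; by the dictionary in \S\ref{sss:dictionary measures} this \emph{directly} induces an ergodic $G$-invariant measure $\mu$ on $M^\alpha$ whose fiber conditionals are $\nu$. There is no need for \cref{Ainv} or \cref{thm:AtoG}: those manufacture \emph{some} measure with positive fiber entropy from a topological-entropy hypothesis, with no a priori relation to the given $\nu$, so following your route you would end up proving absolute continuity and arithmeticity for the wrong measure. (Moreover \cref{Ainv} does not even apply when $n=3$ and $\Gamma$ is commensurable with $\SL(3,\Z)$.) The paper simply induces $\mu$ from $\nu$ and invokes \cref{thm:abs} for conclusion (1). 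Your proposed mechanism for absolute continuity is also not valid as stated: $\sum_i\lambda_i(a)=0$ only equates the Margulis--Ruelle upper bounds for $a$ and $a^{-1}$, it does not force either to be attained. The actual argument (\S\ref{sec:abs}--\ref{sec:abs2}) shows every one-dimensional fiberwise Lyapunov foliation contributes positive entropy (\cref{claim:abs2}), using the explicit action of root groups on weight spaces, and then deduces that each leafwise measure is Lebesgue via normal forms and the isometric action of $\ker\lambda_i$.

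For the conjugacy you correctly point to the Katok--Rodriguez Hertz scheme, but you misidentify the principal obstacle. Upper semicontinuity of fiber entropy (\cref{prop:uscfiberentropy}) plays no role here; no limit of measures is taken once $\mu$ is given. The genuine new step, and the one missing from your outline, is \cref{prop:Gaffine}: upgrading the $A$-equivariance of the development maps $H_x$ to $G$-equivariance (up to $\pm\pi(g)$). Without this you cannot restrict to the fiber over the identity coset and descend to a $\Gamma$-equivariant family on $M$ (\cref{coro:GammaaffineI}); $A$-equivariance alone controls only a measure-zero set of fibers. The paper proves $G$-equivariance root-group by root-group, using that every root of $\sl(n,\R)$ is a difference of two weights together with \cref{lem:commute}, which forces each $U^\beta$ to act linearly in the normal-form coordinates on the appropriate two-dimensional stable subspace.
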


\cref{coro:SLn} now follows directly from \cref{thm:SLnZintro} and \cref{coro:invmsrSLNR}.  Indeed, 
we apply \cref{coro:invmsrSLNR} to obtain a $\alpha(\Gamma)$-invariant probability measure with $h_{\nu}(\alpha(\gamma_{0}))>0$ for some $\gamma_{0}\in \Gamma$. We then obtain a measurable conjugacy $h$ from  \cref{thm:SLnZintro}.  Following notation in  \cref{thm:SLnZintro}, the action $\widehat{ \rho}\colon\Gamma'\to \Diff^\infty(L)$ has elements of positive $\leb$-entropy and thus $\rho\colon \Gamma'\to \Sl(n,\Z)$ has unbounded image in $\Sl(n,\R)$.  By Margulis's superrigidity theorem \cite{Mar91}, it follows that $\rho( \Gamma')$ is a lattice in $\Sl(n,\R)$ and   thus $\rho( \Gamma')\subset \Sl(n,\Z)$ must be of finite index in $\Sl(n,\Z)$.  


\subsection{Consequences for Anosov actions}
The starting point for this collaboration was a hypothesis in the paper   \cite{HL:Dom} by the second author.  In \cite{HL:Dom}, the second author establishes  the following: \begin{theorem*}[{\cite[Cor.\ 1.1]{HL:Dom}}]{ For $n\ge 3$, let  $\Gamma$ be a lattice in $\Sl(n,\R)$.  Let $M$ be an $n$-manifold $M$ and let $\alpha\colon \Gamma\to \Diff^\infty(M)$ be an action such that $\alpha(\gamma)$ is an Anosov diffeomorphism.  If the action $\alpha(\Gamma)$ preserves a volume form, then the manifold is a flat torus and the action is smoothly conjugate to an affine action.}\end{theorem*}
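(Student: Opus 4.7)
The plan is as follows. Since $\alpha(\gamma)$ is Anosov and preserves a smooth volume form, classical Anosov theory gives that $\alpha(\gamma)$ is ergodic with respect to volume and that the Margulis--Ruelle inequality is saturated, so $h_{\vol}(\alpha(\gamma)) = h_{\top}(\alpha(\gamma)) > 0$ (Pesin's entropy formula). The volume form is therefore an $\alpha(\Gamma)$-invariant ergodic Borel probability measure with positive metric entropy for $\alpha(\gamma)$. Applying Theorem \ref{thm:SLnZintro} of the present paper produces a finite-index subgroup $\Gamma' < \Gamma$, a $\Gamma'$-invariant positive-volume subset $U_0 \subset M$, a representation $\rho\colon \Gamma' \to \SL(n,\Z)$ with unbounded image, and a measurable isomorphism $h\colon (L, \leb) \to (M, \nu_0)$ intertwining the induced affine action $\hat\rho$ on $L$ (either $\T^n$ or $\T^n/\{\pm 1\}$) with $\alpha$ on $U_0$, where $\nu_0$ is the normalized restriction of $\vol$ to $U_0$.

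Next I would promote $h$ to a topological conjugacy on all of $M$. Ergodicity of $\alpha(\gamma)$ together with the full topological support of volume forces $U_0$ to be conull. After passing to a finite cover of $M$ corresponding to the double cover $\T^n \to \T^n/\{\pm 1\}$ in the orbifold case, I may assume $L = \T^n$ and $\hat\rho(\gamma)$ is a hyperbolic toral automorphism. Since both $\alpha(\gamma)$ on $M$ and $\hat\rho(\gamma)$ on $\T^n$ are Anosov and $h$ is a measure-theoretic isomorphism between them, $h$ must carry stable foliations to stable foliations and unstable to unstable. By a Journ\'e-type argument, $h$ is H\"older along each leaf and then jointly H\"older on a conull set; structural stability for Anosov diffeomorphisms (in the Franks--Manning tradition) then extends $h$ to a globally defined topological conjugacy, identifying $M$ with $\T^n$ as topological spaces.

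Finally, with a topological conjugacy between $\alpha$ and the affine higher-rank lattice action $\hat\rho$ on $\T^n$ in hand, the global smooth rigidity theorems for Anosov actions on tori (such as the main results of \cite{BRHW17}) upgrade the conjugacy to a $C^\infty$ diffeomorphism, showing that $M$ is a flat torus and $\alpha$ is smoothly conjugate to the affine action $\hat\rho$. The hardest step of the plan is the promotion of the measurable conjugacy from Theorem \ref{thm:SLnZintro} to a continuous one: one must simultaneously control the H\"older regularity of $h$ along stable and unstable foliations and extend $h$ through the null complement of $U_0$ using uniform hyperbolicity. Handling the infra-torus case carefully, via the correct finite cover of $M$ so that the orbifold singularities of $\T^n/\{\pm 1\}$ do not obstruct a genuine manifold identification, is a secondary technical point.
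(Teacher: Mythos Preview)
This statement is quoted from \cite{HL:Dom} and is not proved in the present paper, so there is no proof here to compare against. Your route through \cref{thm:SLnZintro} is genuinely different from the dominated-splitting methods of \cite{HL:Dom}.

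That said, the plan has a real gap at exactly the step you flag as hardest. None of the tools you name can promote the measurable conjugacy $h$ to a homeomorphism without already knowing that $M$ is a torus. Franks--Manning theory produces a semiconjugacy from an Anosov diffeomorphism \emph{on a torus} to the hyperbolic automorphism in its homotopy class; it says nothing about an unknown $M$. Journ\'e's lemma takes as input a function that is already continuous and uniformly $C^{1+}$ along two transverse foliations; it does not upgrade an a.e.-defined measurable map, and a measurable conjugacy between Anosov systems need only preserve the stable and unstable \emph{partitions} mod~$0$, not carry individual leaves to leaves. The results of \cite{BRHW17} you invoke at the end likewise presuppose that the underlying manifold is already a torus or nilmanifold. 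The construction in \S7 (following \cite{KRHarith}) does yield more than a bare measurable isomorphism---a continuous surjection from an open $U\subset M$ onto $L\sm F$ for a finite set $F$---but even that does not determine the global topology of $M$. For $n=3$ one could invoke Franks--Newhouse to conclude $M\cong\T^3$ independently, but for $n\ge 4$ no such classification is available; identifying $M$ as a torus is precisely the content that \cite{HL:Dom} supplies by other means.
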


Since Anosov diffeomorphisms automatically have positive topological entropy and since absolutely continuous probability measures invariant under Anosov diffeomorphisms are always smooth (by a standard Livsic argument),  \cref{coro:invmsrSLNR}  allows us to remove the volume-preserving hypothesis from (most cases of) the main results in \cite{HL:Dom} for lattices in $\SL(n,\R)$ and $\Sp(2n,\R)$.    
Combining \cref{coro:invmsrSLNR} (and its extension in \cref{thm:abs})  and \cite[Thm.\ 1.5]{HL:Dom} gives the following.  
\begin{corollary}\label[corollary]{coro:Anosov}
Suppose one of the following:
\begin{enumerate}
\item $G$ is $\SL(n,\Rbb)$ with $n\ge 4$ and $\dim M=n$, or
\item $G$ is $\Sp(2n,\Rbb)$ with $n\ge 3$ and $\dim M=2n$.
\end{enumerate}
Let $\Gamma$ be a lattice in $G$, let $M$ be a closed manifold, and let $\alpha\colon\Gamma\to \Diff^{\infty}(M)$ be an action.  

Assume  there exists $\gamma_{0}\in \Gamma$ such that $\alpha(\gamma_{0})$ is an Anosov diffeomorphism. Then $M$ is diffeomorphic to a (infra-)torus and the action $\alpha$ is smoothly conjugate to an Anosov affine action.
\end{corollary}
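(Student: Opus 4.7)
The plan is to reduce \cref{coro:Anosov} to the volume-preserving case handled in \cite[Thm.~1.5]{HL:Dom}: under that theorem's hypotheses---a lattice in $\SL(n,\R)$ with $n\ge 4$ acting on an $n$-manifold, or a lattice in $\Sp(2n,\R)$ with $n\ge 3$ acting on a $2n$-manifold, with one Anosov element and a preserved smooth volume form---one already concludes that $M$ is diffeomorphic to an (infra-)torus and the action is smoothly conjugate to an affine Anosov action. Thus the only thing missing from our hypotheses is the existence of an $\alpha(\Gamma)$-invariant smooth volume form.

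The first step is to extract an absolutely continuous $\alpha(\Gamma)$-invariant probability measure on $M$. Since $\alpha(\gamma_0)$ is Anosov we have $h_{\top}(\alpha(\gamma_0))>0$, so in case (1) \cref{coro:invmsrSLNR} applies directly to produce an $\alpha(\Gamma)$-invariant Borel probability measure $\nu$ on $M$ that is absolutely continuous with respect to any smooth density on $M$ and satisfies $h_\nu(\alpha(\gamma_1))>0$ for some $\gamma_1\in\Gamma$. In case (2), the same conclusion should follow from the extension of \cref{coro:invmsrSLNR} provided by \cref{thm:abs}, applied to $G=\Sp(2n,\R)$ with $\dim M=2n$, using the technical inputs \cref{Ainv,thm:AtoG}.

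The second step is to upgrade $\nu$ from absolutely continuous to smooth. Because $\alpha(\gamma_0)$ is a $C^\infty$ Anosov diffeomorphism and $\nu$ is an $\alpha(\gamma_0)$-invariant absolutely continuous probability measure of positive entropy, Pesin's entropy formula identifies $\nu$ with an SRB measure: the disintegrations of $\nu$ along unstable manifolds are absolutely continuous, with densities governed by a cohomological equation in the unstable Jacobian of $\alpha(\gamma_0)$. A standard Livsic argument then forces $d\nu/d\leb$ to be smooth along unstable manifolds, and the analogous argument applied to $\alpha(\gamma_0)^{-1}$ gives smoothness along stable manifolds. The Journ\'e regularity lemma then yields joint smoothness of $d\nu/d\leb$ on $M$. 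Thus $\nu$ is equivalent to a smooth volume form, and $\alpha(\Gamma)$ preserves a smooth volume.

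With a smooth invariant volume in hand, the hypotheses of \cite[Thm.~1.5]{HL:Dom} are satisfied in both cases, and that theorem delivers the smooth conjugacy to an affine Anosov action on an (infra-)torus. I expect the main obstacle to lie in the first step in the symplectic case (2): invoking \cref{thm:abs} for $\Sp(2n,\R)$ acting on a $2n$-manifold requires checking that the relevant dimension and root-data hypotheses underlying that technical theorem are actually met in this setting, whereas the Livsic upgrade and the final appeal to \cite{HL:Dom} are essentially routine once positive-entropy absolute continuity has been secured.
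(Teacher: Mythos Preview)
Your proposal is correct and follows essentially the same route as the paper: use positivity of topological entropy for the Anosov element, apply \cref{coro:invmsrSLNR} (resp.\ its extension via \cref{Ainv}, \cref{thm:AtoG}, \cref{thm:abs} in the $\Sp(2n,\R)$ case, where indeed $\dim M=2n=v(G)+1=n(G)$ and $\rank_\R G=n\ge 3$) to obtain an absolutely continuous $\alpha(\Gamma)$-invariant measure, upgrade it to a smooth volume via a standard Livsic argument, and then invoke \cite[Thm.~1.5]{HL:Dom}. The paper adds one small finishing observation you do not mention: \cite{HL:Dom} a priori only yields an (infra-)nilmanifold, and a brief Margulis-superrigidity plus dimension-count argument on the induced representation on the commutator of the ambient nilpotent Lie group forces that commutator to be trivial, so $M$ is in fact an (infra-)torus.
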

\begin{remark}
In the proof of \cref{coro:Anosov}, the main result of \cite{HL:Dom} asserts that $\alpha$ is smoothly conjugate to an affine action on an infra-torus  or infra-nilmanifold.  However,  the affine Anosov  action of $\Gamma$ induces a representation  on the commutator subgroup of the ambient nilpotent Lie group.  Dimension counting and appealing to Margulis superrigidity shows this representation must be isometric.   Since the action is Anosov, the commutator subgroup must be trivial  and thus $M$ is a (infra-)torus.  \end{remark}
 
\begin{remark}[Anosov actions of $\Gamma<\Sl(n,\R)$ for $n=3$]
Let $n=3$ and suppose $\Gamma$ is a lattice in $\Sl(n,\R)$.   Every Anosov diffeomorphism on a $3$-manifold $M$ is codimension-1 and thus the Franks--Newhouse theorem implies $M$ is homeomorphic to the torus $\T^3$.  In this case, the conclusion of \cref{coro:Anosov} holds if one can verify the action $\alpha \colon \Gamma\to \Homeo (\T^3)$ lifts to an action  $\wtd \alpha \colon \Gamma_0\to \Homeo (\R^3)$ on a finite index subgroup $\Gamma_0$; see main results of \cite{BRHW17}.  When $n\ge 4$, we  use the existence of an $\alpha(\Gamma)$-invariant measure to ensure this lifting property.  Although we expect the lifting property  in  \cref{coro:Anosov}  holds when $n=3$ (and, in fact, hold for all Anosov actions on tori and nilmanifolds), it is not   established in the literature.
\end{remark}

\subsection{Topological consequences}
After establishing 
our main technical theorems, \cref{Ainv,thm:AtoG,thm:abs} below,
the  proof of \cref{thm:SLnZintro}
 follows from revisiting and extending the main arguments and constructions from \cite{KRHarith}.  We thus obtain similar  topological corollaries (see especially  \cite[Cor.\ 7]{KRHarith})  as in   \cite{KRHarith} including the following:
 
\begin{corollary}\label[corollary]{KRHtopol}
For $n\ge 5$ odd, let $\Gamma$ be a lattice in $G=\SL(n, \Rbb)$. Let $M$ be a closed manifold with $\dim (M) = n$ and let $\alpha\colon\Gamma\to\diff^\infty(M)$ be an action such that $h_{\top}(\alpha(\gamma_{0}))>0$  for some $\gamma_{0}\in \Gamma$.

Then  $\pi_{1}(M)$ contains a subgroup   isomorphic to {$\Zbb^{n}$}. 
\end{corollary}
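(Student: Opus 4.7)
The plan is to apply the measurable classification of \cref{thm:SLnZintro}, upgrade the resulting measurable conjugacy to a continuous semi-conjugacy via the Katok--Rodriguez Hertz arguments of \cite{KRHarith}, and then extract $\Z^n\hookrightarrow \pi_1(M)$ by covering-space considerations. First, since $n\ge 5\ge 4$, \cref{coro:invmsrSLNR} furnishes an $\alpha(\Gamma)$-invariant absolutely continuous probability measure $\nu$ with $h_\nu(\alpha(\gamma_0))>0$; restricting to an ergodic component of positive entropy, \cref{thm:SLnZintro} produces a finite-index $\Gamma'<\Gamma$, a representation $\rho\colon \Gamma'\to \SL(n,\Z)$, and a measurable isomorphism $h\colon (L,\leb) \to (U_0,\nu_0)$ intertwining the affine action $\hat\rho$ on some $L \in \{\T^n, \T^n_\pm\}$ with $\alpha|_{U_0}$. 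As noted immediately after \cref{thm:SLnZintro}, Margulis superrigidity forces $\rho(\Gamma')$ to be of finite index in $\SL(n,\Z)$, so $\hat\rho(\Gamma')$ contains many Anosov elements acting on $L$.

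Second, I would upgrade $h$ to a continuous map following the Katok--Rodriguez Hertz strategy. Fixing an Anosov element $\gamma_1\in \Gamma'$ whose Pesin stable/unstable laminations for $\alpha(\gamma_1)$ are identified by $h$ with the linear stable/unstable foliations on $L$, and combining absolute continuity of $\nu$ with holonomy along unipotent one-parameter subgroups contained in $\hat\rho(\Gamma')$, one shows $h$ is uniformly continuous on a full-measure set and hence extends to a continuous injection $\phi\colon L^*\to M$ defined on the complement $L^* = L\setminus F$ of the finite $\hat\rho(\Gamma')$-invariant set $F \subset L$ of singular rational orbits on which the rigidity arguments degenerate. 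By Brouwer's invariance of domain, $V := \phi(L^*)\subset M$ is then open and $\phi$ is a homeomorphism onto $V$.

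Finally, I would read off the topology. For $n\ge 3$ one has $\pi_1(L^*)\cong \pi_1(L)$ since $F$ has codimension $n\ge 3$; hence $\pi_1(V)$ contains $\Z^n$ -- directly when $L=\T^n$, or as an index-two subgroup coming from the double cover $\T^n\to \T^n_\pm$ when $L=\T^n_\pm$. The main technical obstacle is promoting this $\Z^n\subset \pi_1(V)$ to an embedded subgroup of $\pi_1(M)$ under the inclusion $V\hookrightarrow M$. Pulling back the $\Z^n$-Galois cover $\R^n\to \T^n$ along a lift of $\phi^{-1}\colon V\to \T^n$ gives a connected $\Z^n$-cover $\widetilde V\to V$, which one then extends to a cover $\widetilde M\to M$ by attaching the preimages in $M$ of the points of $F$; almost-everywhere bijectivity of $\phi$ forces $\widetilde M$ to be noncompact and the deck $\Z^n$-action to be free, yielding $\Z^n\hookrightarrow \pi_1(M)$. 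The hypothesis that $n$ is odd is used to handle the non-orientability of $\T^n_\pm$ (forcing passage to its orientable double cover $\T^n$), while $n\ge 5$ supplies enough codimension for the fundamental-group calculations and satisfies the hypotheses of the preceding theorems.
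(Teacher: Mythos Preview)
Your approach is essentially the same as the paper's: the paper does not give a self-contained proof of this corollary but states that it follows, after establishing \cref{coro:invmsrSLNR} and \cref{thm:SLnZintro}, by invoking \cite[Cor.\ 7]{KRHarith} (and the surrounding machinery of \cite[Thm.\ 2]{KRHarith}), exactly as you outline. One minor point: the paper (following \cite{KRHarith}) phrases the continuous semiconjugacy as a surjection $h\colon U\to L\smallsetminus F$ with $U\subset M$ open, rather than as an injection $L^*\hookrightarrow M$; in general the continuous upgrade of the measurable isomorphism need not be injective, so you should not rely on invariance of domain but rather on the $\pi_1$-arguments already carried out in \cite{KRHarith}.
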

Further topological corollaries can be deduced from \cite[Thm.\ 2]{KRHarith} which in our setting,  roughly, states there is a finite index subgroup $\Gamma_0<\Gamma$, an   $\alpha(\Gamma_0)$-invariant open set $U\subset M$, and a continuous surjection $h\colon U\to L\sm F$, where $L$ is either $\T^n$ or $\T^n/\{\pm \id\}$ and $F$ is a finite set, intertwining the action $\alpha\colon \Gamma_0\to \Diff^\infty(U)$ and the action on $L\sm F$ induced by a representation $\rho\colon \Gamma_0\to \Sl(n,\Z)$.    
We note that  \cite{KRHarith} only considers actions by $\Z^{n-1}$ on $n$-manifolds.  As we consider actions by   much larger groups $\Gamma<\Sl(n,\R)$, it is possible one can substantially improve the classification results and topological corollaries of \cite{KRHarith}.  Since our main results leading to \Cref{coro:SLn}, \Cref{coro:invmsrSLNR}, and \cref{thm:SLnZintro} are already quite involved, we do not pursue this investigation in this paper.

\subsection{Actions by lattices in split orthogonal groups} 
Let $G= \SO(n,n)$.  
(Algebraic) Anosov  actions of lattices in $\So(n,n)$ first appear in dimension $2n$.  In  \cite{HL:Dom}, the volume-preserving Anosov actions on $2n$-dimensional manifolds are fully classified.  Following the notation of  \cref{sec:numerology}, this dimension  $2n=n(G) $ is the dimension of the defining representation, whereas $v(G)$, the dimension of the smallest non-trivial projective action, is $v(G)=2n-2= n(G)-2$.  
For technical reasons arising in the proof (see \cref{remk:why+1} below), our generalization of \cref{coro:invmsrSLNR}  in  \cref{Ainv,thm:AtoG} below only holds for actions on manifolds of dimension $(v(G)+1).$  
In particular, we are (so far) unable to remove the volume-preserving assumption for Anosov actions by lattices in $\So(n,n)$, $(n\ge 4)$,  in  \cite{HL:Dom}. 

In a similar direction, the appropriate version of Zimmer's conjecture for actions by lattices in $G=\So(n,n)$ or $G=\So(n,n+1)$ asserts that  all volume-preserving actions on manifolds of dimension $(v(G) +1)$ are isometric. (Note in the notation of  \cref{sec:numerology} that $v(G)<d(G) = v(G)+ 1< n(G) = v(G) +2$ and certain lattices in $G$   admit infinite-image representations into $\So(n(G))$, inducing infinite-image isometric actions on the sphere $S^{d(g)}$.)  The results of \cite{MR4502593,Brown:2020aa,Brown:2021aa}  establish that volume-preserving actions by lattices in $G$ are isometric (and thus finite) on manifolds of dimension $v(G)$.  The results of \cite{BRHWbdy} show that any infinite-image action in dimension $v(G)$ is smoothly conjugate (on an index-$2$ subgroup) to the projective action on (possibly a  cover of) the space of isotropic lines for a quadratic form of signature $(n,n)$ or $(n, n+1)$.  
While a full classification of lattices in $G$ acting on  $(v(G)+1)$-dimensional manifolds seems still out of reach, 
our techniques provide evidence for Zimmer's conjecture and an approach towards such a classification in this setting by showing that all such actions have no positive entropy elements.  

\begin{corollary}\label[corollary]{coro:SO} Let $G$ be 
either $\SO(n,n)$ with $n\ge 4$ or $\SO(n,n+1)$ with $n\ge 3$ and let $\Gamma$ be lattice subgroup in $G$. Let $M$ be a closed manifold with $\dim M \le \left(v(G)+1\right)$ and let $\alpha\colon\Gamma\to\diff^{\infty}(M)$ be an action. 

Then, for every $\gamma\in \Gamma$,  we have 
$h_{\top}(\alpha(\gamma))=0$. 
\end{corollary}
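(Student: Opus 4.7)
The plan is to argue by contradiction: suppose some $\gamma_0\in\Gamma$ has $h_{\top}(\alpha(\gamma_0))>0$, and derive a contradiction in each of the dimension regimes $\dim M<v(G)$, $\dim M=v(G)$, and $\dim M=v(G)+1$ separately.

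When $\dim M<v(G)$, the resolution of Zimmer's conjecture for $\SO(n,n)$ and $\SO(n,n+1)$ in \cite{MR4502593,Brown:2020aa,Brown:2021aa} forces the image of $\alpha$ to be a finite group, so every $\alpha(\gamma)$ has zero topological entropy. When $\dim M=v(G)$, the classification of boundary actions in \cite{BRHWbdy} gives, after passing to a finite-index subgroup, that $\alpha$ is smoothly conjugate to the standard $\Gamma$-action on the isotropic quadric for the defining form of $G$; on this quadric each $\R$-regular semisimple element of $\Gamma$ acts as a Morse--Smale diffeomorphism (finitely many hyperbolic fixed points, with gradient-like global behaviour coming from the ordering of eigenvalues), and non-semisimple elements act through their semisimple part, so $h_{\top}(\alpha(\gamma))=0$ in either subcase, contradicting our hypothesis.

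The substantive case is $\dim M=v(G)+1$. This is precisely the dimension in which the main technical theorems \cref{Ainv}, \cref{thm:AtoG}, and \cref{thm:abs} are designed to apply. They will produce an $\alpha(\Gamma)$-invariant Borel probability measure $\nu$ on $M$, with positive metric entropy for some element of the action, that is absolutely continuous with respect to Lebesgue. I would then adapt the Katok--Rodriguez Hertz-style measurable classification behind \cref{thm:SLnZintro} to obtain, on a finite-index subgroup $\Gamma'\subset\Gamma$ and on an invariant set of positive $\nu$-measure, a measurable conjugacy to an affine action on a torus or infra-torus of dimension $v(G)+1$ arising from a representation $\rho\colon\Gamma'\to\GL(v(G)+1,\Z)$. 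Positive metric entropy forces $\rho$ to have unbounded image, so Margulis superrigidity extends $\rho$ to a continuous representation $G\to\GL(v(G)+1,\R)$. However, for $G=\SO(n,n)$ with $n\ge 4$ or $G=\SO(n,n+1)$ with $n\ge 3$ the smallest nontrivial irreducible $G$-representation has dimension at least $v(G)+2$ (namely the defining orthogonal representation), so every $(v(G)+1)$-dimensional $G$-representation is trivial, contradicting unboundedness of $\rho(\Gamma')$.

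The principal obstacle is carrying out the measurable-classification step in the $\SO$ setting in parallel to \cref{thm:SLnZintro}, which is stated only for $\SL(n,\R)$: while the inputs from \cref{Ainv}, \cref{thm:AtoG}, and \cref{thm:abs} are group-agnostic, one must verify that the resonance structure of parabolic subalgebras of $\SO(n,n)$ and $\SO(n,n+1)$ is compatible with the KRH-style construction and that the resulting homogeneous model is a torus or infra-torus on which superrigidity can be invoked. Once that step is in place, the representation-theoretic dimension obstruction closes the argument cleanly.
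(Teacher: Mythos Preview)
Your proposal takes a substantial detour, and the step you flag as the ``principal obstacle'' is in fact unnecessary.  Once you have the ergodic $\alpha(\Gamma)$-invariant measure $\nu$ with $h_\nu(\alpha(\gamma_0))>0$ (from \cref{Ainv} and \cref{thm:AtoG}), you do not need absolute continuity, a KRH-style classification, or any adaptation of \cref{thm:SLnZintro}.  Apply Zimmer's cocycle superrigidity (\cref{thm:ZCSRGamma}) directly to the derivative cocycle $(\gamma,x)\mapsto D_x\alpha(\gamma)$: the superrigidity representation $\pi\colon G\to\GL(\dim M,\R)$ must be trivial because $\dim M\le v(G)+1<n(G)$, so the cocycle is measurably cohomologous to a compact-group-valued cocycle.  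Hence every Lyapunov exponent of $\alpha(\gamma)$ vanishes $\nu$-a.e., and the Margulis--Ruelle inequality gives $h_\nu(\alpha(\gamma_0))=0$, a contradiction.  This is the paper's proof; the dimension obstruction you place at the very end should be invoked at this earlier stage.

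Two further points.  First, \cref{thm:abs} is not available here: it is stated for $\dim M=n(G)$, and for these orthogonal groups $n(G)=v(G)+2>v(G)+1$ (see \cref{remk:why+1}).  Second, if you actually tried to run the KRH machinery, its first step is to apply cocycle superrigidity and obtain a \emph{nontrivial} representation $\pi$ whose weights govern the fiberwise Lyapunov exponents; since no such $\pi$ exists, the construction collapses immediately---which is precisely the contradiction above.  Finally, the separate treatment of $\dim M<v(G)$ and $\dim M=v(G)$ via Zimmer's conjecture and the boundary classification is correct but superfluous: the superrigidity argument works uniformly for all $\dim M\le v(G)+1$ (the proof of \cref{thm:AtoG} via \cref{combmsr} only uses $\dim M\le v(G)+1$).
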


\begin{proof}[Proof of \Cref{coro:SO}] Let $G$ be a Lie group as in \Cref{coro:SO}. Note that there is no nontrivial homomorphism from $G$ to $\GL(v(G)+1,\Rbb)$ since $v(G)+1<n(G)$ (see   \cref{sec:numerology}).
Assume that there is $\gamma \in \Gamma$ with $h_{\top}(\alpha(\gamma))>0$. By \Cref{thm:AtoG,Ainv} below, there is an ergodic, $\alpha(\Gamma)$-invariant  probability measure $\nu$ such that $h_{\nu}(\alpha(\gamma_{0}))>0$ for some $\gamma_{0}\in\Gamma$.
Using Zimmer's cocycle superrigidity theorem,  \Cref{thm:ZCSRGamma}, since there is no non-trivial homomorphism from $G$ to $\GL(v(G)+1,\Rbb)$, the derivative cocycle $(\gamma,x)\mapsto D_{x}\alpha(\gamma)$ is measurably cohomologous to a compact group valued cocycle.  In particular, for every $\gamma\in \Gamma$ the top Lyapunov exponent of $\alpha(\gamma)$ vanishes at $\nu$-almost every $x\in M$.  
By the Margulis--Ruelle inequality (\Cref{thm:MRineq} below), $h_{\nu}(\alpha(\gamma))=0$ for all $\gamma\in\Gamma$ which contradicts the positivity of $h_{\nu}(\alpha(\gamma_{0}))$.
\end{proof}

\begin{example}\label{ex:cricleext}
One can build non-trivial examples of actions of lattices in $\Gamma$ in $G= \So(n,n)$ or $G= \So(n,n+1)$ on manifolds of dimension $v(G)+1$ as follows: fix a quadratic form of the appropriate signature and let $Q<G$ be the stabilizer of an isotropic line.  The group   $G$ acts transitively on all such lines and $\dim (G/Q)= v(G)$.  The natural (projective) left action of $\Gamma$ on $G/Q$ extends to an action on $G/Q\times S^1$ acting by the identity on $S^1$.  

More generally, let $Y$ be a nontrivial vector field on $S^1$ and let $\phi_Y^t$ denote the induced flow on $S^1$.  Let $\psi\colon Q\to \R$ be a non-trivial homomorphism.  On $G\times S^1$ build left-$G$-actions and right-$Q$-actions by 
$$h\cdot (g,x) = (hg, x), \quad \quad (g, x)\cdot q = (gq, \phi_Y^{\psi (q\inv)}(x)).$$
Let $M= (G\times S^1)/Q$ be the quotient manifold.  The $G$-action descends to a $G$-action on $M$ which we may restrict to a $\Gamma$-action.  

Note that the above examples have the standard projective action of $\Gamma$ on $G/Q$ as a smooth factor.  Also note (by \cref{coro:SO}) that  every element acts with zero topological entropy.  
\end{example}

\begin{question}
Let $\Gamma $ be a lattice in  $G= \So(n,n)$ or $G= \So(n,n+1)$.  Let $M$ be a closed manifold of dimension $(v(G)+1)$ and let $\alpha\colon \Gamma\to \diff^\infty(M)$ be an action that is not isometric (for any choice of $C^0$ Riemannian metric on $M$).  

Let $Q<G$ be the stabilizer of an isotropic line for a quadratic form of the appropriate signature.  Does the standard projective action of $\Gamma$ on $G/Q$ occur as an equivariant factor of $\alpha\colon \Gamma\to  \diff^\infty(M)$?  That is, is there a ($C^0$ surjection, $C^k$ submersion) $p\colon M\to G/Q$ such that $$p\circ \alpha(\gamma) (x)= \gamma\cdot p(x)$$
for all $\gamma$ in (a finite index subgroup of) $ \Gamma$ and all $x$ in $ M$?

We remark that in this setting, a measurable projective factor is constructed in \cite{MR4502594}.  One approach towards this question would be to upgrade the measurable factor to a continuous/smooth factor.  
\end{question}

\subsection*{Acknowledgements} Among many others, the authors would like to thank David Fisher  and Dave Witte Morris for useful discussion and encouragement.  The authors also thank Sami Douba and David Fisher for helpful comments. A.\ B.\ was partially supported by the  National Science Foundation under Grant Nos.\ DMS-2020013 and DMS-2400191.  H.\ L.\ was supported by an AMS-Simons Travel Grant.  The authors also benefitted from hospitality of Institut Henri Poincar\'e while working on this project (UAR 839 CNRS-Sorbonne Universit\'e, ANR-10-LABX-59-01).

\section{Suspension space, two themes, and formulation of main technical results}\label{mainresultssection}

\subsection{Standing hypotheses on $G$ and $\Gamma$ and reductions}\label{reduct}
Throughout, we assume  the following standing hypotheses on $G$ and $\Gamma$:
\begin{hypot}\label[hypot]{hypstand}$G$ is a connected, simple Lie group with finite center and real rank at least 2.   $\Gamma<G$ is a lattice subgroup; we moreover assume that $\Gamma$ is virtually torsion free.  
\end{hypot}
We note that $\Gamma$ is always virtually torsion free whenever $G$ is linear.  

All results in this paper hold if they are verified for the restriction of an action to a finite-index subgroup of $\Gamma$.   Thus, we may first assume that $\Gamma$ is a torsion-free; we may then assume  $G$ is a center free, linear algebraic group.  
Replacing $G$ with its algebraically simply connected cover $\wtd G$ and lifting $\Gamma$ to a lattice subgroup $\wtd \Gamma$ of $\wtd G$ we may induce an action of $\wtd \Gamma$ through $\wtd \Gamma\to \Gamma$.  Any result announced above for $\Gamma$-actions holds if it is established for the induced $\wtd \Gamma$-action.  Thus,   it is with no further loss of generality to also assume the following reduction: 
\begin{hypot}\label[hypot]{hypstand2}
$G=\bfG(\R)^\circ$ where $\bfG$ is a connected, simple,  algebraically simply connected, linear algebraic group defined over $\R$.  The $\R$-rank of $G$ is at least 2.   $\Gamma$ is a lattice subgroup of $G$.   
\end{hypot}

We note that many of the preliminaries here hold when $G$ is semisimple and $\Gamma$ is irreducible; however, for our main applications and in our main theorems, we will require that $G$ be simple.

\subsection{Suspension, induced $G$-action, and fiber entropy}\label{sss:susp}
Let $G$ and $\Gamma$ be as in \cref{hypstand}.  
We follow a well-known construction (used previously in e.g.\ \cite{MR4502594,BRHW17,MR4502593,Brown:2021aa,ABZ,BDZ} among others) which allows us to relate various properties of a $\Gamma$-action $\alpha\colon \Gamma\to \Diff(M)$ with properties of a $G$-action on an associated  bundle $M^\alpha$ over $G/\Gamma$.

\subsubsection{Suspension space and induced $G$-action}
On the product $G\times M$ consider the right $\Gamma$-action and the left $G$-action
\begin{equation}\label{eq:susp} (g,x)\cdot \gamma= (g\gamma , \alpha(\gamma\inv)(x)),\quad \quad \quad a\cdot (g,x) = (ag, x).\end{equation}
Define  the quotient manifold $M^\alpha:= G\times M/\Gamma $.  
Given $(g,x)\in G\times M$, denote by $[g,x]$ 
the corresponding equivalence class in $M^\alpha.$
As the  $G$-action on $G\times M$ commutes with the $\Gamma$-action, we have an induced left $G$-action  on $M^\alpha$.  For $g\in G$ and $x\in M^\alpha$ we denote the action of $g$ on $x$ by $\wtd \alpha(g)(x)$.  Then $\wtd \alpha (g)\colon M^\alpha\to M^\alpha$ is a diffeomorphism.  

We write $p \colon M^\alpha\to G/\Gamma$ for the natural projection map; then  $M^\alpha$ has the structure of a fiber bundle over $G/\Gamma$ induced by the map $p$ with fibers diffeomorphic to $M$. The $G$-action preserves the fibers of $M^\alpha$.
Let 
\begin{equation}\scrF(x):= p\inv (p(x))\end{equation}denote the fiber of $M^\alpha$ through $x$ and let 
\begin{equation}F:= \ker Dp\end{equation}
denote the fiberwise tangent bundle of $M^\alpha$ (so that $T_x \scrF(x) = F(x)$).  
Given $x\in M^\alpha$  and $g\in G$, write $$D^F_x\wtd \alpha(g)\colon F(x)\to F(\wtd \alpha( x))$$ for the fiberwise derivative cocycle.   
		
\subsubsection{Fiber entropy and conditional measures}
A key concept in our arguments is the fiber entropy of translation by $g\in G$ on $M^\alpha$.  
We write $\Fsc$  for the partition into level sets of $p \colon M^\alpha\to G/\Gamma$ whose atom through $x$ is $\Fsc(x)$.  
Let $\mu$ be a $g$-invariant Borel probability measure on $M^\alpha$.  
We define $h_\mu(g\mid \Fsc)=h_\mu(\wtd \alpha(g)\mid \Fsc)$ to be the \emph{fiber metric entropy} of the map $\alpha(g)$ with respect to $\mu$.  See \cref{fiberentsec} for full definition in a more general setting.  

Given a probability measure $\mu$ on $M^\alpha$,  we let $\{\mu_x^\Fsc\}_{x\in M^\alpha}$ denote a family of conditional measures relative to the partition into atoms of $\Fsc$.  We refer to $\mu_x^\Fsc$ as the \emph{fiberwise conditional measure} through $x$.  If $\pi\colon M^\alpha \to G/\Gamma$ is the canonical projection, we also write $\{\mu_{g\Gamma}^\Fsc\}$, $\mu_{\pi(x) }^\Fsc=\mu_{x }^\Fsc$, for the collection of fiberwise conditional measures indexed by  $g\Gamma\in G/\Gamma$.

\subsubsection{Dictionary between invariant measures}\label{sss:dictionary measures}

Given an $\alpha(\Gamma)$-invariant probability measure $\nu$ on $M$, a standard construction induces a unique $G$-invariant probability measure $\mu$ on the bundle $M^\alpha$.  Moreover, $\nu$ is ergodic if and only if $\mu$ is ergodic.  

Similarly, given a $G$-invariant probability measure $\mu$ on the bundle $M^\alpha$, there is a unique $\alpha(\Gamma)$-invariant probability measure $\nu$ on $M$ such that $\mu$ is induced by $\nu$.  One can view $\nu$ as $\mu_{\1\Gamma}^\Fsc$, the {fiberwise conditional measure} over the identity coset in $G/\Gamma$.  (Note that since $\mu$ is $G$-invariant, by modifying the collection  $\{\mu_{g\Gamma}^\Fsc\}$ on a set of measure zero, one may assume $\mu_{g\Gamma}^\Fsc$ is defined and $G$-equivariant for every $g\Gamma\in G/\Gamma$.)
The measure $\nu$ is absolutely continuous on $M$ if and only if the {fiberwise conditional measure} $\mu_{g\Gamma}^\Fsc$ is absolutely continuous for almost every $g\Gamma\in G/\Gamma$.

\subsection{Iwasawa decomposition and maximal $\R$-split Cartan subgroups}
We let $$G= KAN$$ denote a choice of Iwasawa decomposition of the Lie group $G$.  Then $\Ad(A)$ is a maximal, connected, $\R$-diagonalizable subgroup of $\Ad(G)$ isomorphic to $\R^k$ where $k$ is the real rank of $G$.   We refer to such a choice of $A$ a maximal $\R$-split Cartan subgroup.  

Frequently, we restrict  the dynamics of $G$ on the suspension space $M^\alpha$ to the action of $A$.  
For the remainder, we fix a choice of maximal $\R$-split Cartan subgroup $A$ in $G$.  At times, we may further specify a choice of $A$ coherent with a choice of restricted roots defined over $\R$ or $\Q$.  We note that since all choices of $A$ are conjugate in $G$, all results stated below are independent of the choice of $A$.

\subsection{Two common themes in recent approaches to the Zimmer program}\label{section:2themes}
Much of the  of the recent progress in the Zimmer program follows an outline that, roughly, is summarized by two principles.  Versions of both these themes feature heavily in previous collaborations of the first author including \cite{Brown:2020aa,Brown:2021aa,MR4502593,BRHW17,BRHWbdy,ABZ}.  

First, associated to certain  dynamical properties of the action $\alpha(\Gamma)$, one constructs a probability measure on  the suspension space with related dynamical properties:
\begin{principle}\label{theme1}
 Dynamical properties of an action $\alpha\colon \Gamma\to \diff (M)$ are mimicked by (fiberwise) dynamical properties of $A$-invariant Borel probability measures on $M^\alpha$ projecting to the Haar measure on $G/\Gamma$. \end{principle}
For instance, in \cite{Brown:2020aa,Brown:2021aa,MR4502593}, the defect in an action being isometric is witnessed by an  $A$-invariant Borel probability measures on $M^\alpha$ projecting to the Haar measure on $G/\Gamma$ with a non-zero (fiber) Lyapunov exponent.  

Second, using that $A$ is a higher-rank abelian group, one expects that Borel probability measures invariant under $A$ have extra regularity and invariance: 
\begin{principle}\label{theme2}
 Under dimension or dynamical constraints, $A$-invariant  Borel probability measures on $M^\alpha$ projecting to the Haar measure on $G/\Gamma$ often exhibit extra invariance and extra regularity.  
  \end{principle}
  For instance, in \cite{Brown:2020aa,Brown:2021aa,MR4502593}, dimension constraints on $M$ force such $A$-invariant Borel probability measures to be automatically $G$-invariant.  
 

This paper presents  version of  \cref{theme1} (described in the next subsection), that does not appear in any prior literature. 
We expect this perspective will be useful in the future.  
We also give a version of \cref{theme2} that follows from similar perspectives taken in \cite{BRHWbdy,ABZ}.

\subsection{Topological entropy is mimicked by  a measure on the suspension space with positive fiber entropy}
Recall $M^{\alpha}= (G\times M)/\Gamma$ denotes the suspension space with induced $G$-action and that  $\Fsc$ denotes the partition into fibers of $p\colon M^\alpha\to G/\Gamma$.  
Our first main technical theorem realizes \cref{theme1} of \cref{section:2themes} by translating   positivity of topological entropy for an element $\gamma_0\in \Gamma$ into positivity of fiber metric entropy for an $A$-invariant  Borel probability measure on $M^\alpha$ projecting to the Haar measure on $G/\Gamma$.  
\begin{theorem}\label{Ainv}
Let $G$ and $\Gamma$ be as in \cref{hypstand}.  Let $M$ be a compact manifold, and let $\alpha\colon \Gamma\to \diff^\infty(M)$ be an action such that $h_\top(\alpha(\gamma_0))>0$ for some $\gamma_0\in \Gamma$.

Suppose further that $G$ and $\Gamma$ satisfy one of the following: 
\begin{enumerate}[label={(\alph*)}, ref={(\alph*)}]
	\item\label{AinvSL3} $G=\SL(3,\Rbb)$ and $\Gamma$ is $\Qbb$-rank $1$,
	\item\label{Ainvuniform} $\rank_{\Rbb}(G)\ge 2$ and $\Gamma$ is cocompact, or
	\item\label{Ainvrank3} $\rank_{\Rbb}(G)\ge 3.$
\end{enumerate}
Then there exists a Borel probability measure $\mu$ on $M^{\alpha}$ such that 
\begin{enumerate}
	\item $\mu$ is $A$-invariant,
	\item $\mu$ projects to the Haar measure on $G/\Gamma$, and 
	\item $h_{\mu}(a\mid \Fsc)>0$ for some $a\in A$.
\end{enumerate}
\end{theorem}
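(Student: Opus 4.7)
The plan is to translate the positive topological entropy of $\alpha(\gamma_0)$ on $M$ into a measure on the suspension space $M^\alpha$ that is $A$-invariant, projects to Haar on $G/\Gamma$, and retains positive fiber entropy. First, the variational principle applied to the $C^\infty$ diffeomorphism $\alpha(\gamma_0)$ of the compact manifold $M$ yields an ergodic, $\alpha(\gamma_0)$-invariant Borel probability measure $\nu_0$ on $M$ with $h_{\nu_0}(\alpha(\gamma_0))>0$. Identifying $M$ with the fiber $p^{-1}(e\Gamma)\subset M^\alpha$, lift $\nu_0$ to a probability measure $\wtd\nu_0$ on $M^\alpha$ supported in this single fiber. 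Since $\gamma_0\in\Gamma$ fixes the coset $e\Gamma\in G/\Gamma$ and $\wtd\alpha(\gamma_0)$ restricts to $\alpha(\gamma_0)$ on this fiber, as in the dictionary of \S\ref{sss:dictionary measures}, the measure $\wtd\nu_0$ is $\wtd\alpha(\gamma_0)$-invariant and has $h_{\wtd\nu_0}(\wtd\alpha(\gamma_0)\mid \Fsc)=h_{\nu_0}(\alpha(\gamma_0))>0$.

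The heart of the argument is to build from $\wtd\nu_0$ an $A$-invariant measure projecting to Haar while retaining positive fiber entropy. Since $A\simeq\RR^k$ is amenable, for any starting probability measure $\mu^{(0)}$ on $M^\alpha$ and any F{\o}lner sequence $\{F_n\}\subset A$, the weak-$*$ limits of $\frac{1}{|F_n|}\int_{F_n}\wtd\alpha(a)_*\mu^{(0)}\,da$ are $A$-invariant. Using $\wtd\nu_0$ itself as $\mu^{(0)}$ is unsatisfactory since its projection to $G/\Gamma$ is concentrated on the highly non-generic point $e\Gamma$, whose $A$-orbit does not equidistribute. Instead, one should first smear $\wtd\nu_0$ by a density $\phi\colon G\to \RR_{\ge 0}$ satisfying $\sum_{\gamma\in\Gamma}\phi(g\gamma)\equiv 1$ to form $\mu^{(0)}:=\int_G \phi(g)\,\wtd\alpha(g)_*\wtd\nu_0\,dg$; this projects to Haar on $G/\Gamma$, a property preserved under subsequent $A$-averaging. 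The three case hypotheses of the theorem enter to prevent escape of mass in the weak-$*$ limit: for cocompact $\Gamma$ the space $M^\alpha$ is compact, for $\rank_\RR(G)\ge 3$ one appeals to higher-rank quantitative nondivergence for the $A$-action on $G/\Gamma$, and for $G=\SL(3,\RR)$ with $\Qbb$-rank $1$ the simple cusp structure of $G/\Gamma$ permits a direct recurrence estimate adapted to $A$.

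The main obstacle is to guarantee that the limit measure $\mu$ satisfies $h_\mu(\wtd\alpha(a)\mid \Fsc)>0$ for some $a\in A$, since fiber entropy is not continuous under weak-$*$ convergence and can a priori collapse in the limit. This is the role of the upper semicontinuity of fiber metric entropy established in \cref{prop:uscfiberentropy}: provided the approximating measures satisfy a uniform lower bound on fiber entropy for a fixed $a\in A$, together with a uniform integrability condition (\S\ref{unifint}) that prevents entropy from escaping to infinity with the mass, the limit inherits the positive lower bound. The genuine difficulty, and the technical heart of the proof, is establishing such a uniform lower bound along the averaged sequence: one must relate the $\wtd\alpha(\gamma_0)$-entropy of the initial measure $\wtd\nu_0$ to the $\wtd\alpha(a)$-entropy of its $A$-averages for some specific $a\in A$. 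I expect this requires either a Jordan-type decomposition isolating an $\RR$-split semisimple component of $\gamma_0$ that can be conjugated into $A$ (together with an Abramov-type relation for commuting actions), or a refined ergodic-theoretic argument exploiting the Cartan projection of $\gamma_0$ in $G$ and the amenability of $A$ to average fiber entropies along $A$-orbits.
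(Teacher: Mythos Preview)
Your proposal has a genuine gap at the crucial step. After smearing $\wtd\nu_0$ by $\phi$ to obtain $\mu^{(0)}$ projecting to Haar, the measure $\mu^{(0)}$ is not invariant under any $a\in A$ (nor under $\gamma_0$, since conjugates $g\gamma_0 g^{-1}$ do not preserve the pushed fibers coherently), and neither are the partial $A$-averages $\mu_n$. Hence the fiber entropy $h_{\mu_n}(a\mid\Fsc)$ is not even defined along the sequence, and \cref{prop:uscfiberentropy} cannot be invoked: that result requires a sequence of measures each invariant under a \emph{fixed} $g$. There is no mechanism in your outline to transfer the $\wtd\alpha(\gamma_0)$-entropy of $\wtd\nu_0$ to $\wtd\alpha(a)$-entropy for some $a\in A$, since $\gamma_0$ and $a$ are unrelated. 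Your suggestion of ``higher-rank quantitative nondivergence for the $A$-action'' is also not available: individual $A$-orbits on $G/\Gamma$ can diverge, and the nondivergence results used in the paper are for \emph{unipotent} averages.

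The paper's proof is structurally quite different. After reducing to semisimple $\gamma_0$ via Jordan decomposition (\cref{ss}), one realizes $\gamma_0=g^1$ for a one-parameter subgroup $\{g^t\}$ whose orbit through $e\Gamma$ is compact, yielding a compactly supported seed measure (\cref{claim:seed}) invariant under $\{g^t\}$ with positive fiber entropy. The key idea is then to perform a sequence of averaging operations, each over a subgroup (unipotent or torus) that \emph{commutes} with a specific element already known to carry positive fiber entropy; this commutation is exactly what permits \cref{prop:uscfiberentropy} to be applied at each step. In the rank $\ge 3$ case one carries out an induction over simple roots (\cref{prop:induction}), using consequences of Ratner's theorems (e.g.\ \cref{SL2coro}, \cref{averagingunipotent}) and explicit root-system combinatorics to enlarge the invariance of the projected measure one root at a time while always retaining a commuting positive-entropy element. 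The cocompact case is handled more directly via \cref{prop:cocompact}, and the $\SL(3,\R)$, $\Q$-rank $1$ case uses the explicit classification of such lattices. The three hypotheses correspond to genuinely different arguments, not merely to different tightness estimates for a single averaging scheme.
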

Note  that we do not make any assumptions on $M$ in the statement of \cref{Ainv} (other than being a closed $C^\infty$ manifold).  In particular, we do not impose any dimension constraints on $M$ in  \Cref{Ainv}. 

We further note (since $A$ acts ergodically on $G/\Gamma$ and since entropy is affine) that we may replace the measure in the conclusion of  \Cref{Ainv} with an $A$-ergodic component $\mu'$ of $\mu$ with the same properties.  It is thus with no loss of generality to assume $\mu$ is  $A$-ergodic.  

Although we expect the result should hold for latices in $\Sl(3, \R)$ with $\Q$-rank 2 (i.e.\  commensurable to $\Sl(3, \Z)$) and for nonuniform  lattices in $\Sp(4, \R)$, for technical reasons arising in the proof, our general arguments require that $\rank_\R(G)\ge 3$ or that $\Gamma$ be cocompact.  In the special case of $\Q$-rank-1 lattices in $\Sl(3,\R)$, we use a separate  argument in \cref{sec:sl3qrank1} specific to the classification of $\Q$-rank-1 lattices in $\Sl(3,\R)$ in order to  prove \Cref{Ainv}.

The proof of \Cref{Ainv} will occupy \Cref{sec:AinvI,sec:AinvII}.

\subsection{Numerology associated with semisimple Lie groups}\label{sec:numerology}
Given a  connected semisimple Lie group $G$ with Lie algebra $\Lie(G) = \lieg$, we recall the numbers  $v(G)=v(\lieg)$ and $n(G)=n(\lieg)$: 
We define $v(\lieg) = \min \{\dim (\lieg/\lieq)\}$ where the minimum is taken over all proper parabolic subalgebras $\lieq\subset \lieg$. 
We define $n(\lieg)$ to be the the minimal dimension of a vector space admitting a non-trivial representation of $\lieg$.  

For classicial $\R$-split groups of interest in this paper we have the following:
\begin{enumerate}
\item $\lieg = \sl(n,\R)$: $v(\lieg) = n-1$ and $n(\lieg) = n$.
\item $\lieg = \sp(2n,\R)$: $v(\lieg) = 2n-1$ and $n(\lieg) = 2n$.
\item $\lieg = \so(n,n)$: $v(\lieg) = 2n-2$ and $n(\lieg) = 2n$.
\item $\lieg = \so(n,n+1)$: $v(\lieg) = 2n-1$ and $n(\lieg) = 2n+1$.
\end{enumerate}

\subsection{Measure rigidity  and extra invariance of measures arising in \cref{Ainv}}
 Our second and third main technical theorems realize \cref{theme2} of \cref{section:2themes}.  Assuming certain constraints on the dimension of $M$, we show that the $A$-invariant  Borel probability measure on $M^\alpha$ projecting to the Haar measure on $G/\Gamma$ arising in \cref{Ainv} has extra invariance and extra regularity.

When $\dim(M) \le v(G)+1$, we show any measure $\mu$ satisfying the conclusion of \cref{Ainv} is $G$-invariant.  Again, we note that as  $G$ is non-amenable, there is no {\it a priori} reason for any such $G$-invariant Borel probability measure to exists.  

\begin{theorem}\label{thm:AtoG}
Let $G$ and $\Gamma$ be as in \cref{hypstand} and assume $G$ is $\Rbb$-split.  
Let $M$ be a closed $\left(v(G)+1\right)$-dimensional smooth manifold and for $r>1$, let $\alpha\colon \Gamma\to \Diff^{r}(M)$ be an action. 
Suppose   there exists an ergodic, $A$-invariant Borel probability measure $\mu$ on the suspension space $M^\alpha$ such that \begin{enumerate}[label={(\alph*)}]
\item $\mu$ projects to Haar measure on $G/\Gamma$, and
\item there exists $a\in A$ with $h_{\mu}(a \mid \Fsc)>0$.
\end{enumerate}
Then $\mu$ is $G$-invariant.  

\end{theorem}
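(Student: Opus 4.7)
The plan is to follow the strategy of \cite{BRHWbdy,ABZ}, combining Zimmer cocycle superrigidity, the tight dimension constraint $\dim M = v(G)+1$, and an invariance principle for the fibered $A$-action on $(M^\alpha,\mu)$. The starting point is that $A$ preserves both $\mu$ and the fiber partition $\Fsc$, so the fiberwise derivative cocycle along $A$ carries all Lyapunov information about the fiber dynamics, while the $G$-orbit structure on $M^\alpha$ imposes algebraic constraints coming from restricted roots.

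First, I would use $h_\mu(a\mid\Fsc) > 0$ together with the Margulis--Ruelle inequality (\Cref{thm:MRineq}) applied fiberwise to extract a non-trivial positive fiberwise Lyapunov exponent for some $a\in A$. Applying Zimmer cocycle superrigidity (\Cref{thm:ZCSRGamma}) to the fiberwise derivative cocycle produces a linear representation $\pi\colon G\to \GL(v(G)+1,\Rbb)$ that determines the fiber Lyapunov data up to a compact-valued cocycle correction. The dimension $\dim F = v(G)+1$ leaves very few possibilities: consulting \cref{sec:numerology}, $\pi$ is either trivial (which contradicts positivity of fiberwise Lyapunov exponents) or factors through a standard/fundamental representation of minimal dimension together with at most a trivial summand. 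In either surviving case, the non-zero fiberwise Lyapunov functionals of $A$ are identified with specific restricted roots of $G$ (or simple combinations thereof dictated by the weights of the minimal representation).

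Second, I would align the fiberwise unstable manifolds with orbit directions of the $G$-root subgroups $U^\beta$ acting on $M^\alpha$ via $\widetilde\alpha$. Whenever a fiberwise Lyapunov functional matches a root $\beta$, the corresponding fiberwise unstable leaf coincides infinitesimally with the $U^\beta$-orbit direction on $M^\alpha$. The invariance principle, in the fibered form used in \cite{BRHW17,BRHWbdy,ABZ}, then upgrades the absolute continuity of the Lyapunov conditional measures of $\mu$ (coming from the Ledrappier--Young formula and positive fiber entropy) to genuine $\widetilde\alpha(U^\beta)$-invariance of $\mu$ for each such $\beta$.

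Finally, once $\mu$ is invariant under one $\widetilde\alpha(U^\beta)$, the $A$-invariance of $\mu$ together with conjugation by $A$ yields invariance under $\widetilde\alpha(U^{w\beta})$ for all Weyl translates of $\beta$; since $G$ is generated by its root subgroups together with $A$, this forces $\mu$ to be $G$-invariant. The main obstacle is the second step: converting positivity of fiber entropy into genuine Haar-invariance along a $G$-subgroup requires precise matching of the fiberwise Lyapunov functionals with restricted roots, and an invariance principle that operates in the non-compact total-space setting of the suspension. Here the upper semicontinuity of fiber entropy (\cref{app:yomdinnewhouse}) and uniform-integrability considerations play a key role, and this is the technical core of the argument.
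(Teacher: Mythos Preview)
Your proposal has two genuine gaps that prevent the argument from working as stated.

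First, you invoke Zimmer cocycle superrigidity (\Cref{thm:ZCSRGamma}, \Cref{thm:ZCSRG}) at the very beginning, but both versions require a $\Gamma$-invariant measure on $M$ or, equivalently, a $G$-invariant measure on $M^\alpha$. At this stage $\mu$ is only $A$-invariant and $A$-ergodic; there is no $\Gamma$-invariant measure on $M$ to which \Cref{thm:ZCSRGamma} applies, and invoking it here is circular---producing the $G$-invariance is precisely the goal. Consequently there is no representation $\pi$ available to identify the fiberwise Lyapunov functionals with weights of a minimal $G$-representation, and your entire dimension-counting step collapses.

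Second, the geometric claim that ``the corresponding fiberwise unstable leaf coincides infinitesimally with the $U^\beta$-orbit direction'' is false. The fiberwise Lyapunov subspace $E^{\lambda,F}(x)$ lies in $F(x)=\ker D_xp$, tangent to the fiber, whereas the $U^\beta$-orbit through $x$ is tangent to the $G$-orbit direction, transverse to the fiber. When $\lambda$ and $\beta$ are positively proportional they belong to the same \emph{total} coarse Lyapunov exponent $\chi=[\lambda]\cup[\beta]$, and the two-dimensional leaf $W^\chi(x)$ is bifoliated by fiberwise leaves and by $U^\beta$-orbits; they are not the same foliation. So there is no direct ``alignment'' that lets an invariance principle transfer absolute continuity of $\mu$ along fiberwise leaves into $U^\beta$-invariance. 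Relatedly, conjugation by $A$ does not move a root subgroup $U^\beta$ to $U^{w\beta}$; $A$ normalizes each $U^\beta$ individually, so your final propagation step fails as well.

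The paper's proof proceeds quite differently. It never uses superrigidity for \Cref{thm:AtoG}. Instead it first applies the non-resonance criterion (\Cref{thm:nonres}): any root $\beta$ not positively proportional to a fiberwise Lyapunov functional automatically gives $U^\beta$-invariance of $\mu$. This bounds the number of ``bad'' roots by $\dim M\le v(G)+1$ and reduces the problem to classifying subalgebras $\lieh\supset\liea$ omitting at most $v(G)+1$ root spaces (\Cref{claim:class}). The heart of the proof is then \Cref{nonat}: for a resonant root $\beta$ (positively proportional to a fiberwise $\lambda$ with $h_\mu(\cdot\mid\Wcal^{\lambda,F})>0$), a normal-forms and measure-rigidity argument on the two-dimensional coarse leaf $W^\chi$ shows the leafwise measure $\mu_x^{U^\beta}$ is non-atomic. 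Combining this with the high-entropy method (\Cref{thm:highent}) forces extra root-group invariance and rules out all the possible stabilizers. The upper semicontinuity of fiber entropy is not used in the proof of \Cref{thm:AtoG}; it is used elsewhere (in \Cref{Ainv}).
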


Next when $G$ is isogenous to either $\SL(n,\Rbb)$ with $n\ge 3$ or $G=\Sp(2n,\Rbb)$ with $n\ge 2$, we show the  $A$-invariant Borel probability measure (which is $G$-invariant by \cref{thm:AtoG}) on $M^\alpha$ guaranteed by \cref{Ainv} has extra regularity.

\begin{theorem}\label{thm:abs}
Let $G$ and $\Gamma$ be as in \cref{hypstand} with  $\Lie(G)=\mathfrak{sl}(n,\Rbb)$ with $n\ge 3$ or $\Lie(G)=\mathfrak{sp}(2n,\Rbb)$ with $n\ge 2$.   
Let $M$ be a closed $n(G)$-dimensional smooth manifold and for $r>1$, let $\alpha\colon \Gamma\to \Diff^{r}(M)$ be an action. 
Assume  there exists  an ergodic, $G$-invariant Borel probability measure $\mu$ on the suspension  space $M^\alpha$ such that $$h_{\mu}(a_{0} \mid \Fsc)>0$$ for some $a_{0}\in A$.

Then, for $\mu$-almost every $x$, the fiberwise conditional measure $\mu^{\Fsc}_{x}$ is absolutely continuous (with respect to any smooth density    on $M$). 

Moreover, there exists an ergodic, absolutely continuous $\alpha(\Gamma)$-invariant Borel probability measure $\nu$ on $M$ (such that $\mu$ is the measure on $M^\alpha$ induced by $\nu$) such that $$h_{\nu}(\gamma_{0})>0$$  for some $\gamma_{0}\in\Gamma$.   
\end{theorem}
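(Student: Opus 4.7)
The plan is to combine Zimmer cocycle superrigidity, which identifies the fiberwise Lyapunov data, with the $G$-invariance of $\mu$ and a Ledrappier--Young-type entropy argument to force the fiberwise conditionals $\mu^\Fsc_x$ to be absolutely continuous.

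First, I would apply Zimmer cocycle superrigidity (\Cref{thm:ZCSRGamma}) to the fiberwise derivative cocycle $D^F\wtd\alpha$ of the induced $G$-action on $M^\alpha$. Since $\dim M = n(G)$ is the minimal dimension of a nontrivial representation of $\Lie(G)$, the ``linear data'' representation $\pi\colon G\to\Gl(n(G),\R)$ to which $D^F\wtd\alpha$ is measurably cohomologous (modulo a compact correction) must be either trivial or (equivalent to) the standard representation of $G$. A trivial $\pi$ would make $D^F\wtd\alpha$ cohomologous to a compact-valued cocycle, forcing $h_\mu(a\mid\Fsc)=0$ by Margulis--Ruelle and contradicting the hypothesis; so $\pi$ is the standard representation. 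Consequently, at $\mu$-a.e.\ $x\in M^\alpha$, the fiberwise Lyapunov exponents of $\wtd\alpha(a)$ for $a\in A$ are exactly the weights $\chi_1(\log a),\dots,\chi_{n(G)}(\log a)$ of the standard representation, and for $a$ in the interior of a positive Weyl chamber these are nonzero and split $F$ into fiber-stable and fiber-unstable subbundles.

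Next, for such $a$, Margulis--Ruelle gives $h_\mu(a\mid\Fsc)\le\sum_i\chi_i^+(a)$, and by the fiber version of the Ledrappier--Young formula, equality is equivalent to absolute continuity of $\mu^\Fsc_x$ along the fiber-unstable foliation of $a$. To force this equality I would exploit the resonance structure: in both $\sl(n,\R)$ and $\sp(2n,\R)$, every difference $\chi_i-\chi_j$ of standard-representation weights is a root $\beta$ of $\lieg$. Invariance of $\mu$ under the root subgroup $U_\beta\subset G$ translates, via the cocycle trivialization from Step~1, into a shearing invariance sending the $\chi_j$-coarse-Lyapunov subbundle of $F$ to the $\chi_i$-subbundle. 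A higher-rank measure-rigidity argument in the spirit of Kalinin--Katok--Rodriguez Hertz and Einsiedler--Katok--Lindenstrauss (using $\rank_\R G\ge 2$ and the positive fiber entropy hypothesis) then propagates Haar-ness of $\mu^\Fsc$ along each fiber coarse-Lyapunov foliation, yielding the desired entropy equality. This step is the main obstacle and is where the specific algebras $\sl(n,\R)$ and $\sp(2n,\R)$ enter essentially --- the weight/root resonance is exactly what allows one to upgrade base $G$-invariance to fiber absolute continuity.

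Running the same argument for $a^{-1}$ in the opposite Weyl chamber yields absolute continuity of $\mu^\Fsc_x$ along the fiber-stable foliation. Since the fiber-stable and fiber-unstable directions span all of $F$ for $a$ in the open Weyl chamber, a standard Fubini/absolute-continuity argument in nonuniform hyperbolicity ($r>1$ suffices for the holonomy estimates) gives absolute continuity of $\mu^\Fsc_x$ with respect to any smooth density on $M$. The dictionary in \S\ref{sss:dictionary measures} then produces the ergodic, absolutely continuous, $\alpha(\Gamma)$-invariant probability measure $\nu$ on $M$. Finally, positivity of $h_\nu(\gamma_0)$ for some $\gamma_0\in\Gamma$ follows from Pesin's entropy formula applied to any $\gamma_0\in\Gamma$ whose Cartan projection lies deep in a positive Weyl chamber (so that at least one standard-representation weight is positive on it, which is guaranteed by Borel density of $\Gamma$ in $G$), using the newly established absolute continuity of $\nu$ to convert Lyapunov positivity into metric entropy positivity.
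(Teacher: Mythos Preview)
Your overall strategy matches the paper's: Zimmer cocycle superrigidity forces $\pi$ to be the standard representation (so the fiberwise Lyapunov functionals are its weights, each simple), then one shows the leafwise measure along \emph{every} fiberwise Lyapunov foliation is Lebesgue, and finally assembles these via a Kalinin--Katok--Rodriguez Hertz argument to conclude absolute continuity of $\mu^\Fsc_x$. The last step (producing $\gamma_0$ with $h_\nu(\gamma_0)>0$ via Pesin) is also correct.

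The genuine gap is in your ``propagation'' step. You correctly isolate the key structural fact---every difference $\lambda_i^F-\lambda_j^F$ of weights is a root $\beta$, and $\mu$ is $U^\beta$-invariant---but you do not say how this forces non-atomicity along $\Wcal^{\lambda_k^F,F}$ starting from Haar along $\Wcal^{\lambda_1^F,F}$. The paper's mechanism is a contradiction argument: assume $\mu^{\lambda_k^F,F}_x$ is atomic; then by Ledrappier--Xie (vanishing transverse entropy), the leafwise measure along the $2$-dimensional lamination $\Wcal^{\lambda_1^F\oplus\lambda_k^F,F}$ is supported on the $1$-dimensional leaf $W^{\lambda_1^F,F}(x)$. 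Since this lamination is $U^\beta$-equivariant and $\mu$ is $U^\beta$-invariant, one deduces $D_x^F\wtd\alpha(u)\bigl(E^{\lambda_1^F,F}(x)\bigr)=E^{\lambda_1^F,F}(\wtd\alpha(u)(x))$. But in the superrigidity framing this reads $\pi(u)(V^{\lambda_1^F})=V^{\lambda_1^F}$, which is false for $u\neq\Id$ since $\pi(u)$ shears $V^{\lambda_1^F}$ into $V^{\lambda_k^F}$. Your invocation of ``EKL/KKRH-style measure rigidity'' does not substitute for this: those arguments produce invariance under \emph{base} unipotents from entropy along them, whereas here you need the reverse direction---to convert known $U^\beta$-invariance of $\mu$ into a constraint on \emph{fiber} leafwise measures---and that requires the specific Ledrappier--Xie collapse plus the explicit weight-space computation.

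Two smaller points. First, you cite \Cref{thm:ZCSRGamma} but apply it to the $G$-action on $M^\alpha$; the paper uses the $G$-version (\Cref{thm:ZCSRG}). Second, your detour through ``Pesin equality $\Leftrightarrow$ absolute continuity along unstable'' is unnecessary: once every $\mu^{\lambda_i^F,F}_x$ is Lebesgue (and some $a$ sees no zero fiber exponents), the paper goes directly to absolute continuity of $\mu^\Fsc_x$ as in \cite{KKRH11}, without passing through the entropy formula.
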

\begin{remark}\label[remark]{remk:why+1}
The proof of \cref{thm:abs} also applies in the case that   $\glie=\mathfrak{so}(n,n+1)$ with $n\ge 3$ or $\glie=\mathfrak{so}(n,n)$ with $n\ge 4$.  However, in the proof  we require the dimension constraint $\dim (M)\le \left(v(\glie)+1\right)$ in \cref{thm:AtoG} and since for these groups, 
$v(\glie)+1< n(\glie) = v(\glie)+2$, there are no natural examples of actions admitting measures satisfying the conclusions of  \cref{thm:abs}.   

In our proof of \cref{thm:abs}, we heavily use for $\lieg = \sl(n,\R)$ or $\lieg = \sp(2n,\R)$ that $n(\lieg) = v(\lieg)+1$ and thus 
$\dim (M)= v(\lieg)+1$.  

When $\lieg=  \so(n,n)$ or $\lieg=  \so(n,n+1)$, an analogue of \cref{thm:abs} may still hold when $\dim (M)=n(\lieg) = v(\lieg) +2$ for $\glie=\mathfrak{so}(n,n)$.  In this setting, our proof fails as there could be exactly two negatively proportional fiberwise Lyapunov exponents contributing to positive fiber entropy  $h_{\mu}(a_{0} \mid \Fsc)>0$, with  neither   positively proportional to a root.  In this case, our argument to obtain extra entropy along the orbit of a root group $U^\beta$ in \cref{nonat} (and the consequential extra invariance using  \cref{thm:highent}) may fail.  For this reason, we only state \cref{thm:abs} for groups isogenous to $\Sl(n,\R)$ and $\Sp(2n,\R)$.
\end{remark}

\begin{remark}
{ In \Cref{thm:abs}, the last assertion about $\Gamma$-actions can be directly deduced by the statement about the suspension space. Indeed, Zimmer's cocycle superrigidity theorem (see \cref{superrig} below) says that, after choosing a measurable trivialization $TM^{\alpha}\simeq M\times (\glie\oplus \Rbb^{v(G)+1})$, the  fiberwise derivative cocycle $D^{F}\colon G\times M^{\alpha}\to \GL(v(G)+1,\Rbb)$, $D^{F}(g,x)=\restrict{D_{x}\wtd{\alpha}(g)}{F}$ is measurable cohomologous to $\rho\cdot \kappa$ where $\rho$ is a representation $\rho\colon G\to \GL(v(G)+1,\Rbb)$ and $\kappa\colon G\times M^{\alpha}\to K$ is a cocycle valued in compact subgroup $K$ in $\GL(v(G)+1,\Rbb)$. Since we assume the $A$-action has  positive fiber entropy, we know that $\pi$ has unbounded image. 
Furthermore, since almost every fiberwise measure $\mu^{\Fsc}_{x}$ is absolutely continuous, we know that $\nu$ is absolutely continuous. Since the fiberwise derivative cocycle $D^{F}$ is induced by the  $\alpha(\Gamma)$-action, after choosing a measurable trivialization $TM\simeq M\times \Rbb^{v(G)+1}$ with respect to $\nu$, the derivative cocycle $D\colon\Gamma\times M\to \GL(v(G)+1,\Rbb)$, $D(\gamma,x)=D_{x}\alpha(\gamma)$ is measurable cohomologous to $\restrict{\rho}{\Gamma}\cdot \kappa'$ where $\kappa'\colon\Gamma\times M\to K'$ is a measurable cocycle valued in a compact subgroup $K'$ in $\GL(v(G)+1,\Rbb)$. Since $\rho$ is non-trivial, $\restrict{\rho}{\Gamma}$ has also unbounded image (by Margulis' superrigidity \cite{Mar91}). In particular, there exists $\gamma_0 \in \Gamma$ such that $\alpha(\gamma_0 )$ has a positive top Lyapunov exponent with respect to $\nu$. Since $\nu$ is absolutely continuous,   Pesin's entropy formula (\Cref{thm:Pesinent} below) implies that $h_{\nu}(\gamma_0 )>0$.}
\end{remark}

\section{Preliminaries,   definitions, and auxiliary results}\label{sec:pre}
Throughout this section, we follow the notation in \cref{sec:intro,mainresultssection}. 
 We follow the reductions in \cref{reduct} and assume $G$ and $\Gamma$ are as in \cref{hypstand2}.  
In particular, we assume that $\Gamma$ is a lattice in $G$ where $G=\bfG(\Rbb)^{\circ}$ is the connected  (with respect to Hausdorff topology) component of the $\Rbb$-points of a connected, algebraically simply connected, simple algebraic $k$-group $\bfG$ with $\rank_{\Rbb}(\bfG)\ge 2$ where $k=\Q$ or $k=\R$.

Let $M$ be a smooth closed manifold. Given a  $\Gamma$-action  $\alpha\colon\Gamma\to \Diff^{r}(M)$ for $r>1$, the induced action on the suspension $M^{\alpha}$ is denoted by $\wtd{\alpha}\colon G\to \Diff^{r}(M^{\alpha})$.  We note that only in  \cref{sec:semicontientropy,sec:ratnerpre}, we need to assume the action is $C^{\infty}$ in order to conclude upper semicontinuity of fiber entropy. 

\subsection{Terminology in Lie theory}

\subsubsection{Restricted $k$-roots} 
For  $k=\Q$ or $k=\R$, 
let $\bfS$ be a maximal $k$-split torus in $\bfG$ and write $S = \bfS(\R)^\circ$.
We write $\Phi(S,G)=\Phi_k(S,G)=\Phi_k(\bfS,\bfG)$ for the collection of (restricted) $k$-roots of $G$ with respect to the choice of  $\bfS$.
For each $k$-root $\alpha\in \Phi(S,G)$, such that $\frac{1}{2}\alpha \not\in \Phi(S,G)$, there is a unique connected unipotent $k$-subgroup~$U^{[\alpha]}$ with Lie algebra $\lieg^{\alpha}$ or $\lieg^\alpha \oplus \lieg^{2\alpha}$.
Given a choice of order on the abstract roots system $\Phi_k(S,G)$, we also write $\Delta_k(S,G)$ for the associated collection of simple positive roots determined by this order.  

\subsubsection{Standard rank-1 subgroups and diagonal elements determined by $k$-roots}
Let  $k=\Q$ or $k=\R$ and let $\alpha\in \Phi_k(S,G)$ be a $k$-root.  Then $-\alpha\in \Phi_k(S,G)$ and we obtain connected unipotent $k$-subgroups~$U^{[\alpha]}$ and $U^{[-\alpha]}$
 The following construction is well known, but there does not seem to any established terminology for referring to this subgroup.
 \begin{definition}\label{StandardRk1SubgrpDefn}
\

\begin{enumerate}
\item For a $k$-root $\alpha \in \Phi_k(S,G)$ such that $\frac{1}{2}\alpha \not\in \Phi_k(S,G)$,
the subgroup~$H_\alpha$ of $G$ generated by $U^{[\alpha]}$ and~$U^{[-\alpha]}$ is called the \emph{standard $k$-rank-1 subgroup} generated by $\alpha$.   
\item Fix a $k$-root $\alpha \in \Phi_k(S,G)$ such that $\frac{1}{2}\alpha \not\in \Phi_k(S,G)$.  Then $S\cap H_\alpha$ is a connected $1$-parameter subgroup of $S$, called \emph{the diagonal subgroup of $\alpha$ in $S$.} 

Given a non-zero $X\in \liea$ with $\exp(X)\in S\cap H_\alpha$ and $\alpha(X)<0$, we say $d^1_\alpha=\exp(X)$ is a \emph{diagonal element} of $\alpha$ in $S$.  

We often write $\{d_\alpha^\R\}$ for the diagonal $1$-parameter subgroup of $\alpha$ in $S$.  

\item We say a subgroup $S'\subset S$ is \emph{$k$-standard} if there is a collection $\theta \subset \Delta_k(S,G)$ such that $$S'= \exp\left(\bigcap_{\beta\in\theta} \ker \beta\right).$$

\end{enumerate}

\end{definition}




\subsection{Choice of norms, height function, control of mass at $\infty$}

\subsubsection{Fundamental sets and  adapted norms} 
\label{sec:norms}
When $\Gamma$ is cocompact in $G$, all choices of Riemannian metrics on $TM^\alpha$ (or the bundle $F\to M^\alpha$) are equivalent.  
When $\Gamma$ is nonuniform , $M^\alpha$ is not compact and the local dynamics along orbits need not be uniformly bounded.  To employ tools from smooth ergodic theory,  more care is needed in specifying norms on the fibers of $M^\alpha$.  
In this case, we follow either \cite[\S 5.4]{Brown:2021aa} or \cite[\S 2]{MR4502594}; we summarize these results below.  

When $\Gamma$ is nonuniform , we may assume $\Ad(G)$ is $\Q$-algebraic and that $\Ad(\Gamma)$ is commensurable with the $\Z$-points in $\Ad(G)$.  We may define Siegel sets and Siegel fundamental sets in $\Ad(G)$ (and thus $G$) relative to any choice of Cartan involution $\theta$ on $G$ and a minimal $\Q$-parabolic subgroup in $\Ad(G)$.  Using the Borel-Serre partial compactification (of $G$ for which $G/\Gamma$ is an open dense set in a compact manifold with corners), we equip the bundle $G\times TM \to G\times M$ with a $C^\infty$ metric with the following properties: Write $\langle \cdot, \cdot \rangle_{g,x}$ for the inner product on the fiber over $(g,x)$.   Then
\begin{enumerate}
\item $\Gamma$ acts by isometries on $G\times TM$.
\item (uniform comparability): There exists a fundamental set $D$ for $\Gamma$ in $G$ (namely, any choice Siegel fundamental set $D\subset G$ relative to a choice of Cartan involution $\theta$ and a minimal $\Q$-parabolic subgroup) 
and $C>1$ such that for all $g,g'\in D$,
$$
\frac 1 C\langle \cdot, \cdot \rangle_{g,x}\le 
\langle \cdot, \cdot \rangle_{g',x}
\le C\langle \cdot, \cdot \rangle_{g,x}.
$$ 
\end{enumerate}
The metric on $G\times TM$ then descends to a $C^\infty$ metric on the bundle $F\to M^\alpha$. For the remainder,  given $x\in M^\alpha$, we denote by $\|\cdot \|_x^F$ the induced norm on the fiber of $F$ through $x$.  

We similarly equip $G$ with any right-invariant metric and equip $G\times M$ with the associated Riemannian metric that makes $G$-orbits orthogonal to fibers, restricts to the right-invariant metric on $G$-orbits, and restricts to the above metric on every fiber.  This induces a metric on $TM^\alpha$.

\subsubsection{Quasi-isometry properties}
We also recall the following fundamental result of Lubotzky-Mozes-Raghunathan.  Relative to any fixed choice of generating set for $\Gamma$, write $|\gamma|$ for the word length of $\gamma$ in $\Gamma$.   Equip $G$ with any right-$G$-invariant, left-$K$-invariant metric $d_G$.  
When $G$ has finite center, the following is the main result of \cite{LMR00}.  
\begin{theorem}[\cite{LMR00}]\label{LMR}
The word metric and Riemannian metric on $  \Gamma$ are quasi-isometric: there are $A_0,B_0$ such that for all $\gamma,\gamma'\in   \Gamma $,
$$\frac 1 {A_0 }d_G(\gamma,\gamma') -B_0 \le |\gamma\inv\gamma'| \le {A_0 }d_G(\gamma,\gamma') +B_0.$$
\end{theorem}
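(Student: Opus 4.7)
The plan is to establish the two inequalities separately. The inequality $|\gamma\inv\gamma'| \ge \tfrac{1}{A_0} d_G(\gamma,\gamma') - B_0$ is the substantive content; the reverse inequality $d_G(\gamma,\gamma') \ge \tfrac{1}{A_0}|\gamma\inv\gamma'| - B_0$ is immediate: fixing a finite symmetric generating set $S$ for $\Gamma$ and writing $\gamma\inv\gamma' = s_1\cdots s_n$ with $n=|\gamma\inv\gamma'|$, the triangle inequality together with right-$G$-invariance of $d_G$ gives $d_G(\gamma,\gamma') \le n\,\max_{s\in S} d_G(s,e)$, absorbing the multiplicative constant into $A_0$.

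For the non-trivial direction, I would first dispatch the cocompact case via the Milnor–\v{S}varc lemma: a cocompact lattice acts properly, cocompactly, and isometrically on the symmetric space $X = G/K$, so the orbit map $\gamma\mapsto \gamma\cdot x_0$ is a quasi-isometry between $(\Gamma,|\cdot|)$ and $(X,d_X)$, and the Riemannian distance in $G$ is comparable to $d_X$ via the right-$K$-invariance of our metric. For nonuniform $\Gamma$ the Milnor–\v{S}varc argument fails in the cusps, and one must argue separately that unipotent elements of $\Gamma$ are not distorted. Fix a Siegel fundamental domain $D \subset G$ associated to a minimal $\Q$-parabolic $P = M\,A_\Q\,U_\Q$, and decompose an element $\gamma$ of bounded Riemannian displacement using the $KA_\Q U_\Q$ structure: the $K$- and $M$-contributions are compactly supported (so contribute boundedly to word length), while the $A_\Q U_\Q$-contribution reduces to controlling word length of unipotent elements $u\in \Gamma\cap U_\Q$ and of diagonal shifts $a\in A_\Q$ needed to conjugate $u$ back into the thick part.

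The essential step, and the main obstacle, is the \emph{undistortion of $\Gamma \cap U^{[\alpha]}$ in $\Gamma$} for each $\Q$-root $\alpha$. Namely, if $u \in \Gamma \cap U^{[\alpha]}$ has large Riemannian size $t$, one must exhibit a word of length $O(t)$ in the generators of $\Gamma$ equal to $u$. This is where the higher-rank hypothesis is crucial and enters via a commutator trick: since $\rank_\R(G) \ge 2$ (and $\Q$-rank $\ge 1$ with enough room in the $\R$-root system), one can write $\alpha = \beta + \gamma$ for $\Q$- or $\R$-roots $\beta,\gamma$ with $[\,U^{[\beta]},\,U^{[\gamma]}\,]\subset U^{[\alpha]}$ in a non-degenerate way. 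Using $d^1_\beta$ and $d^1_\gamma$ (as in Definition \ref{StandardRk1SubgrpDefn}) one can rescale $u_\beta \in \Gamma \cap U^{[\beta]}$ and $u_\gamma \in \Gamma \cap U^{[\gamma]}$ by conjugation with bounded-length diagonal words so that their commutator yields a prescribed element of size $t$ in $U^{[\alpha]}$ while the total word length remains $O(\log t)$ or $O(1)$ per factor.

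The hard part is then to iterate this rewriting simultaneously for every $\Q$-root and every cusp, and to keep track of the reduction modulo $\Gamma$ so as to assemble arbitrary $\gamma \in \Gamma$ with Riemannian displacement $t$ as a word of length $O(t)$. This requires a careful combinatorial/geometric induction on the $\Q$-root system, using at each stage that rank $\ge 2$ provides a ``transverse'' direction in which to pay off commutator costs; in $\Q$-rank one higher $\R$-rank lattices (such as those in $\So(n,1)$-like factors, not present here but illustrative), this step would require genuinely different input. I expect the bookkeeping in this last step—namely, reducing a general Riemannian-short element to a bounded-length product of such root-subgroup commutator words, while controlling diagonal conjugations so that both total Riemannian and total word lengths stay linear in $t$—to be the principal technical difficulty, and is the heart of the LMR argument.
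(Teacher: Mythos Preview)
The paper does not prove this theorem; it is stated as a citation to \cite{LMR00} and used as a black box. There is therefore nothing in the paper to compare your proposal against. Your sketch is a reasonable outline of the Lubotzky--Mozes--Raghunathan argument (the easy direction via the triangle inequality, Milnor--\v{S}varc for the cocompact case, and the commutator/conjugation trick exploiting higher rank to show unipotents are undistorted in the nonuniform case), but the paper makes no attempt to reproduce or even summarize that proof.
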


As discussed above, when $\Gamma$ is nonuniform , given $a\in A$, the fiberwise dynamics of $a$ on $M^\alpha$ need not be bounded (in the $C^1$ topology.)  However, for any $a$-invariant probability measure on $M^\alpha$ projecting to the Haar measure on $G/\Gamma$, the degeneracy is subexponential along orbits.  We summarize this below 
in \cref{prop:integrability}.

\def\fund{{D_F}}

Let $D\subset G$ be a fundamental set on which the fiberwise metrics are uniformly comparable and fix 
 a Borel fundamental domain~$\fund$ contained in $D$ for the right $\Gamma$-action on $G$.  Let  $\beta_\fund\colon G \times G/\Gamma \rightarrow \Gamma$ be the \emph{return cocycle}: given $\hat x\in G/\Gamma$, take $\wtd x$ to be the unique
lift of $\hat x$ in $\fund$ and define $b(g,\hat x) = \beta_\fund(g,\hat x)$ to be the unique $\gamma\in \Gamma$ such that
$g\wtd{x}\gamma\inv \in \fund$.
One verifies that $\beta_\fund$ is a Borel-measurable cocycle and a second choice of fundamental domain 
defines a cohomologous cocycle.

%

Given a diffeomorphism $g\colon M\to M$, let $\|g\|_{C^k}$ denote the $C^k$ norm of $g$ (say, relative to some choice of embedding of $M$ into some Euclidean space $\R^N$.)
Given $g\in G$ and $x\in \fund$, let  $$\psi_k(g,x) = \|\alpha (\beta_{\fund}(g,x))\|_{C^k}.$$

Let $|\gamma|$ denote the word length of $\gamma$ relative to a fixed symmetric generating set.  
For each $k$, we may find $C_k$ (depending only on the action $\alpha$, the choice of generating set, and $k$) such that $$\psi_k(g,x) \le C_k^{k|\beta_{\fund}(g,x)|}$$
(see e.g.\ \cite[Lem.\ 7.7]{MR4502593}.)
From \cref{LMR}, we have 
\begin{equation}\label{tempered}
\log (\psi_k(g,x) )\le A_k( d_G(g,\1) + d_G(x, \1 \Gamma)) + B_k\end{equation}
for some $A_k>1$ and $B_k>0$ (depending only on the action $\alpha$, the choice of generating set, and $k$).

Let $m_{G/\Gamma}$ denote the normalized Haar measure on $G/\Gamma$.  Using \cref{LMR} and standard properties of Siegel sets, we obtain the following: 

\begin{proposition}\label[proposition]{prop:integrability}
For any $k$, any  $1\le q<\infty$, and any compact $B\subset G$, the map $$x\mapsto \sup_{g\in B} \log \psi_k (g,x)$$ is $L^q(m_{G/\Gamma})$ on $G/\Gamma$.  
In particular, for $m_{G/\Gamma}$-a.e.\ $x\in G/\Gamma$ and any $g\in G$,
\begin{equation}\label{eq:integrabl}
\lim_{n\to \infty} \frac 1 n \log^+( \psi_k (g,g^n\cdot x) ) = 0.
\end{equation}
\end{proposition}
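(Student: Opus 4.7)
The plan is to deduce both assertions from the tempered bound \eqref{tempered} together with the classical fact that the height function on $G/\Gamma$ has polynomial (indeed exponential) moments with respect to $m_{G/\Gamma}$.

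First I would handle the $L^q$ statement. Since $B\subset G$ is compact, $R_B := \sup_{g\in B} d_G(g,\mathbf{1})$ is finite, so \eqref{tempered} immediately yields a pointwise domination
\[
\sup_{g\in B}\log \psi_k(g,x)\ \le\ A_k R_B\ +\ A_k\, d_G(x,\mathbf{1}\Gamma)\ +\ B_k.
\]
It therefore suffices to show that the height function $h(x):=d_G(x,\mathbf{1}\Gamma)$ lies in $L^q(m_{G/\Gamma})$ for every finite $q$. This is classical: using a Siegel fundamental set $D$ relative to a minimal $\mathbb{Q}$-parabolic of $G$ and the standard decomposition of Haar measure adapted to $D$, the restriction of $m_{G/\Gamma}$ to a Siegel set decays exponentially in the height along the $\mathbb{Q}$-split component, while $h$ grows at most linearly along that direction (by \cref{LMR}). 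In particular $h$ has finite exponential moments, hence belongs to $L^q$ for every $q<\infty$.

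Second, for the sublinear growth along $g$-orbits: fix $g\in G$, and apply the first part with $B=\{g\}$ to conclude that $f(x):=\log^+\psi_k(g,x)$ lies in $L^1(m_{G/\Gamma})$. Since left translation $T_g\colon G/\Gamma\to G/\Gamma$ preserves $m_{G/\Gamma}$, invariance gives $m_{G/\Gamma}\bigl(\{f\circ T_g^n>n\varepsilon\}\bigr)=m_{G/\Gamma}\bigl(\{f>n\varepsilon\}\bigr)$ for every $\varepsilon>0$, and the elementary bound $\sum_{n\ge 1}m_{G/\Gamma}(f>n\varepsilon)\le \|f\|_{L^1}/\varepsilon$ together with the Borel--Cantelli lemma yields
\[
\limsup_{n\to\infty}\ \frac{f(g^n\cdot x)}{n}\ \le\ \varepsilon\qquad\text{for $m_{G/\Gamma}$-a.e.\ $x$.}
\]
Taking a countable sequence $\varepsilon\downarrow 0$ gives \eqref{eq:integrabl}.

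The only non-formal ingredient is the integrability of the height function $h$, which I expect to be the main point to verify carefully; it reduces to a standard computation with Haar measure on a Siegel set and the exponential decay of the modular function in the $\mathbb{Q}$-split direction. Everything else is a direct consequence of the quasi-isometry bound of Lubotzky--Mozes--Raghunathan (\cref{LMR}) together with measure-preserving dynamics.
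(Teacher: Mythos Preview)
Your proposal is correct and follows precisely the route the paper indicates: the paper does not give a detailed proof but simply records that the proposition follows from the tempered estimate \eqref{tempered} (i.e.\ from \cref{LMR}) together with standard properties of Siegel sets, and you have supplied exactly those details. Your Borel--Cantelli argument for \eqref{eq:integrabl} is the standard proof that $L^1$ observables grow sublinearly along orbits of a measure-preserving map, which is all that is needed here.
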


\subsubsection{Height function and control of mass at $\infty$}
We describe a strong control on the decay of mass near $\infty$ for probability measures on $G/\Gamma$ when $\Gamma$ is nonuniform.  Such quantitative tightness was heavily used in \cite{Brown:2020aa, Brown:2021aa}

Let $h\colon G/\Gamma\to [0,\infty)$ be the distance  function $h(g\Gamma) = d(g\Gamma, \Gamma)$ where $d$ is the distance on $G/\Gamma$ induced by any choice of left-$K$-invariant, right-$G$-invariant metric on $G$.  
We extend $h$ to $h\colon M^\alpha\to [0,\infty)$ by precomposing with the projection $p\colon M^\alpha\to G/\Gamma$.  

\begin{definition}[{\cite[\S3.2]{Brown:2020aa}}]\label{def:masscusp}
We say that a Borel probability measure $\mu$ on $G/\Gamma$ or $M^\alpha$ has \emph{exponentially small mass at $\infty$} if there is $\tau_\mu>0$ such that for all $0<\tau<\tau_\mu$,
\begin{equation}\label{eq:SmallMassIneq}\int_X e^{\tau h(x)} \ d \mu(x) <\infty.\end{equation}
We say that a collection $\scrM$ of Borel probability measures on $G/\Gamma$ or $M^\alpha$ has \emph{uniformly exponentially small mass at $\infty$} if there is $\tau_0>0$ such that for all $0<\tau<\tau_0$,
\begin{equation}\label{eq:SmallMassIneq2}\sup_{\mu\in \scrM} \left\{\int e^{\tau h(x)} \ d \mu(x)\right\} <\infty.\end{equation}
\end{definition}

We note that if a collection $\scrM$ of probability measures on $G/\Gamma$ or $M^\alpha$ has {uniformly exponentially small mass at $\infty$}, then the collection is uniformly tight.  In particular, such a family is precompact in the space of probability measures equipped with the weak-$*$ topology (dual to bounded continuous functions) and thus no sequence in $\scrM$ witnesses  escape of mass.

\subsection{Superrigidity}\label{superrig}

The following adapts the more general statement of Zimmer's cocycle superrigidity (see \cite{FM03}) to our setting.  Recall that we took $G=\bfG(\R)^\circ$ where $\bfG$ is algebraically simply connected and so that the results of \cite{FM03} apply directly. 
Also, the $\log$-integrability of the measurable cocycle needed in  \cite{FM03} holds automatically in our setting since $M$ is compact.

\begin{theorem}\label{thm:ZCSRGamma}  
Let $\nu$ be an ergodic  $\alpha(\Gamma)$-invariant  probability measure on $M$. For the derivative cocycle $D\colon \Gamma\times M\to \GL(\dim M,\Rbb)$,  there exist a linear representation $\pi\colon G\to \GL(\dim M,\Rbb)$, a compact group $K<\GL(\dim M, \Rbb)$, a $K$-valued cocycle $\kappa\colon \Gamma\times M\to K$, and a measurable framing  $\left\{\psi_{x}\colon T_{x}M^{\alpha}\to \Rbb^{\dim M} \right\}$ defined for $\nu$-a.e.\ $x$ such that  \[\psi_{\alpha(\gamma)(x)}\circ D_{x}\alpha(\gamma) \circ\left(\psi_{x}\right)^{-1}=\pi(\gamma)\kappa(\gamma,x),\] for all $\gamma\in \Gamma$ and for $\nu$-almost every $x$. Moreover, $K$ commutes with $\pi(G)$.  
\end{theorem}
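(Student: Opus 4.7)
The plan is to deduce this from Zimmer's cocycle superrigidity theorem (in the form of Fisher--Margulis \cite{FM03}) applied to the induced $G$-action on the suspension space $M^\alpha$. The $\Gamma$-cocycle $D(\gamma,x) = D_x \alpha(\gamma)$ on $M$ corresponds canonically to the fiberwise derivative cocycle $D^F$ of the induced $G$-action on $(M^\alpha,\mu)$, where $\mu$ is the $G$-invariant probability measure on $M^\alpha$ induced by $\nu$ as in \S\ref{sss:dictionary measures}. Since $\nu$ is ergodic for $\alpha(\Gamma)$, the induced measure $\mu$ is ergodic for the $G$-action. The cocycle superrigidity theorem applied to $D^F$ will produce the desired representation $\pi\colon G\to\GL(\dim M,\Rbb)$, the compact subgroup $K$ commuting with $\pi(G)$, a $K$-valued cocycle $\wtd \kappa\colon G\times M^\alpha\to K$, and a measurable framing trivializing $D^F$ up to $\pi \cdot \wtd\kappa$. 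Restricting this trivialization to the fiber over the identity coset in $G/\Gamma$ (identified with $M$) and restricting the $G$-cocycle to $\Gamma$ yields the stated conclusion.

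To set this up rigorously, first choose a measurable trivialization $F \simeq M^\alpha \times \Rbb^{\dim M}$ of the fiberwise tangent bundle (using the Siegel-adapted metric of \S\ref{sec:norms} to pick a measurable orthonormal framing). In these coordinates, $D^F$ becomes a $\GL(\dim M,\Rbb)$-valued cocycle over the $G$-action on $(M^\alpha,\mu)$. The key hypothesis needed to invoke \cite{FM03} is that $\log \|D^F(g,\cdot)^{\pm 1}\|$ be in $L^1(\mu)$ for each $g\in G$. When $\Gamma$ is cocompact this is immediate since $M^\alpha$ is compact and the action is $C^1$. In the nonuniform case, the return cocycle $\beta_{\fund}$ to a Siegel fundamental set $\fund$ expresses $D^F(g,x)$ (up to a uniformly bounded factor) as a derivative $D\alpha(\beta_{\fund}(g,x))$, and \cref{prop:integrability} (or rather the $L^q$ estimate for $\psi_1$) gives $L^1$-integrability of the relevant logs with respect to $\mu$, since $\mu$ pushes forward to the Haar measure on $G/\Gamma$.

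With integrability in hand, apply the cocycle superrigidity theorem of \cite{FM03}: there exist a continuous homomorphism $\pi\colon G \to \GL(\dim M,\Rbb)$, a compact subgroup $K$ of the centralizer of $\pi(G)$ in $\GL(\dim M,\Rbb)$, a measurable cocycle $\wtd\kappa\colon G \times M^\alpha \to K$, and a $\mu$-measurable map $\Psi\colon M^\alpha \to \GL(\dim M,\Rbb)$ such that
\[
\Psi(\wtd\alpha(g)(x))\, D^F(g,x)\, \Psi(x)^{-1} = \pi(g)\, \wtd\kappa(g,x)
\]
for all $g\in G$ and $\mu$-a.e.\ $x\in M^\alpha$. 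Now specialize to $g=\gamma \in \Gamma$ and to points $[\1,x]$ in the fiber over the identity coset: writing $\psi_x := \Psi([\1,x])$ and noting that $\wtd\alpha(\gamma)[\1,x] = [\1,\alpha(\gamma)(x)]$ since $(\gamma,x)\cdot\gamma\inv = (\1, \alpha(\gamma)(x))\cdot\gamma$ gives the identification, the cocycle identity restricted to $\Gamma$ yields exactly
\[
\psi_{\alpha(\gamma)(x)} \circ D_x\alpha(\gamma) \circ \psi_x^{-1} = \pi(\gamma)\, \kappa(\gamma,x),
\]
where $\kappa(\gamma,x) := \wtd\kappa(\gamma,[\1,x])$ is a $K$-valued $\Gamma$-cocycle over $(M,\nu)$.

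The main technical point is verifying the integrability hypothesis when $\Gamma$ is nonuniform, since the fiberwise norms on $F$ degenerate in the cusps and diffeomorphisms $\alpha(\beta_\fund(g,\cdot))$ have unbounded $C^1$ norm. This is exactly what the Siegel-adapted metric and the LMR estimate \cref{LMR} (feeding into \cref{prop:integrability}) are designed to handle; once this is in place the remainder of the argument is a direct invocation of \cite{FM03} together with the suspension/restriction dictionary of \S\ref{sss:susp}.
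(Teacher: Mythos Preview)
Your approach is correct but takes an unnecessary detour. The paper does not give a proof of this statement; it simply cites \cite{FM03} as applying directly to the $\Gamma$-cocycle $D\colon \Gamma\times M\to \GL(\dim M,\Rbb)$ on $(M,\nu)$, noting two things: $G$ was taken algebraically simply connected (so the hypotheses of \cite{FM03} are met) and the $\log$-integrability of the derivative cocycle is automatic because $M$ is \emph{compact}. For each fixed $\gamma$, the map $x\mapsto D_x\alpha(\gamma)$ is continuous on a compact manifold, hence bounded, so $\log\|D(\gamma,\cdot)^{\pm 1}\|\in L^\infty(\nu)\subset L^1(\nu)$ trivially.

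By contrast, you pass to the suspension $M^\alpha$, apply superrigidity to the $G$-cocycle $D^F$ there, and then restrict back to the fiber over the identity coset. This forces you to confront the noncompactness of $M^\alpha$ in the nonuniform case and to invoke the Siegel-adapted norms, \cref{LMR}, and \cref{prop:integrability} to verify integrability of $D^F$. All of this is correct (and is indeed how the paper handles the companion statement for $D^F$ in \cref{thm:ZCSRG}), but it is extra work for \cref{thm:ZCSRGamma} itself: the Fisher--Margulis theorem applies directly to ergodic lattice actions, not only to $G$-actions, so there is no need to induce and restrict. One minor caveat in your restriction step: the framing $\Psi$ is only defined $\mu$-a.e., and the fiber over $\1\Gamma$ has Haar measure zero in $G/\Gamma$; this can be repaired by choosing a good fiber and translating by the appropriate element of $G$, but the direct approach avoids even this wrinkle.
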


For the fiberwise derivative cocycle on the suspension space $M^\alpha$, we similarly have the following:
\begin{theorem}\label{thm:ZCSRG}
Let $\nu$ be an ergodic $\alpha(\Gamma)$-invariant   probability measure on $M$ and let $\mu$ be the ergodic $\wtd{\alpha}(G)$-invariant  measure on $M^{\alpha}$ induced by $\nu$. For the fiberwise derivative cocycle $D^{F}\colon G\times M\to \GL(\dim M,\Rbb)$, there exist a compact group $K'<\GL(\dim M, \Rbb)$, a $K'$-valued cocycle $\kappa'\colon G\times M^{\alpha}\to K'$, and a measurable framing  $\left\{\psi^F_{x}\colon T_{x}^{F}M^{\alpha}\to \Rbb^{\dim M} \right\}$ defined for $\mu$-a.e.\ $x$ such that  \[\psi^F_{\widetilde{\alpha}(g)(x)}\circ D_{x}^{F}\widetilde{\alpha}(g) \circ\left(\psi^F_{x}\right)^{-1}=\pi(g)\kappa'(g,x),\] for all $g\in G$ and for $\mu$-almost every $x$. 
Moreover, $K'$ commutes with $\pi(G)$.  Here $\pi$ is  (up to conjugation) the same representation as appeared in \Cref{thm:ZCSRGamma}
\end{theorem}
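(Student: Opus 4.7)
The plan is to apply Zimmer--Fisher--Margulis cocycle superrigidity directly to the $G$-action $\wtd\alpha$ on the suspension $(M^{\alpha},\mu)$ with the fiberwise derivative cocycle $D^{F}\colon G\times M^{\alpha}\to \GL(\dim M,\Rbb)$, and then identify the resulting algebraic part with the representation $\pi$ from \Cref{thm:ZCSRGamma}. For superrigidity to apply, one needs log-integrability of $D^{F}$ with respect to $\mu$. Since the fiber $M$ is compact, the fiberwise derivative at a point is controlled by the $C^{1}$-norm of the diffeomorphism $\alpha(\beta_{\fund}(g,\cdot))$ defined by the return cocycle to a Siegel fundamental set. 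Because $\mu$ projects to Haar measure on $G/\Gamma$, the required integrability (indeed $L^{q}$ for every $q<\infty$) follows from \Cref{prop:integrability} combined with the Lubotzky--Mozes--Raghunathan quasi-isometry estimate \eqref{tempered}. Applying cocycle superrigidity then produces a representation $\pi'\colon G\to \GL(\dim M,\Rbb)$, a compact $K'\subset \GL(\dim M,\Rbb)$ commuting with $\pi'(G)$, a $K'$-valued cocycle $\kappa'$, and a measurable fiberwise framing $\{\psi^{F}_{x}\}$ realizing the cohomology.

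Next, I would pin down $\pi'$ as $\pi$ up to conjugation by comparing restrictions. Using the right-$\Gamma$-equivalence in \eqref{eq:susp}, one has $[\gamma,x]=[\mathbf{1},\alpha(\gamma)x]$, so the stabilizer $\Gamma$ of $\mathbf{1}\Gamma$ preserves the fiber $\scrF(\mathbf{1}\Gamma)\simeq M$ and acts on it by $\alpha(\gamma)$. Restricting $\psi^{F}$ to points $[\mathbf{1},x]$ yields, after the canonical identification of the fiber with $M$, a $\nu$-measurable framing of $TM$ that conjugates the $\Gamma$-derivative cocycle $D$ to $\restrict{\pi'}{\Gamma}\cdot \restrict{\kappa'}{\Gamma\times M}$, with $\restrict{\kappa'}{\Gamma\times M}$ taking values in a compact group commuting with $\pi'(G)$.

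By the essential uniqueness of the algebraic part in cocycle superrigidity (two algebraic parts trivializing cohomologous cocycles, modulo compact, must be conjugate in $\GL$), the representation $\restrict{\pi'}{\Gamma}$ is conjugate to $\pi$ from \Cref{thm:ZCSRGamma} by some $A\in\GL(\dim M,\Rbb)$. Since $\rank_{\Rbb}(G)\ge 2$, Margulis superrigidity \cite{Mar91} implies both representations extend uniquely to $G$; the two extensions $A\pi A^{-1}$ and $\pi'$ agree on $\Gamma$ and hence on $G$. Absorbing $A$ into the framing $\psi^{F}_{x}\mapsto A\inv \psi^{F}_{x}$ (and $\kappa'$ into $A\inv\kappa' A$), we may take $\pi'=\pi$.

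The main obstacle is verifying the integrability of $D^{F}$ in the nonuniform case, where excursions of the $G$-orbit into the cusps of $G/\Gamma$ can produce unbounded fiberwise derivatives; this is precisely what \Cref{prop:integrability} is designed to handle. Once integrability is in hand, everything else is bookkeeping: applying cocycle superrigidity to the $G$-action, restricting to the distinguished fiber to reduce to the $\Gamma$-setting of \Cref{thm:ZCSRGamma}, and invoking Margulis superrigidity to rigidify the identification of the two representations on $G$.
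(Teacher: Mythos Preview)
Your proposal is correct and follows the paper's approach: the paper states \Cref{thm:ZCSRG} without a separate proof, indicating only that it follows from the Fisher--Margulis cocycle superrigidity theorem \cite{FM03} ``similarly'' to \Cref{thm:ZCSRGamma}. Your elaboration---applying \cite{FM03} directly to the $G$-action on $(M^{\alpha},\mu)$, verifying log-integrability of $D^{F}$ via \Cref{prop:integrability}, and identifying $\pi'$ with $\pi$ by restricting to the fiber over $\mathbf{1}\Gamma$ and invoking uniqueness of the superrigidity representation together with Margulis superrigidity---is exactly the natural way to fill in the details; you also correctly isolate the one point the paper's remark about compactness of $M$ does not literally cover, namely integrability on the noncompact $M^{\alpha}$ when $\Gamma$ is nonuniform.
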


\subsection{Lyapunov exponents and Lyapunov manifolds}

\subsubsection{Higher-rank Oseledec's theorem}
Let $A\subset G$ be a maximal $\R$-split Cartan subgroup and let $\liea=\Lie(A)$.   We identify $A$ with its Lie algebra $\liea$ via the exponential map and extend  linear functionals $\liea^*$ on $\liea$ to functions on $A$.  
 Equip  $\liea$ with a choice of norm $|\cdot|$.  

\begin{proposition}\label[proposition]{thm:higherrankMET}
Let $\mu$ be an ergodic, $A$-invariant Borel probability measure on $M^\alpha$ with exponentially small mass at $\infty$.
Let $E\subset TM^\alpha$ be a continuous, $\restrict{D\wtd \alpha}{A}$-invariant   subbundle.  
  Then (relative to the choice of norms in \cref{sec:norms})
there are
	\begin{enumerate}
	\item an $A$-invariant subset $\Lambda_0\subset X$ with $\mu(\Lambda_0)=1$;

  \item 
   linear functionals $\lambda_i\colon A\to \R$ for $1\le i\le p$;  
	\item   and splittings   $E(x)= \bigoplus _{i=1}^p E^{\lambda_i}(x)$ 
	into families of mutually transverse,  $\mu$-measurable  subbundles $E^{\lambda_i}(x)\subset E(x)$ defined  for $x\in \Lambda_0$
	\end{enumerate}
such that
\begin{enumerate}	[label={ (\alph*)}]
	\item $D_x\wtd \alpha (s) E^{\lambda_i}(x)= E^{\lambda_i}(D\wtd \alpha(s)( x))$ for all $s\in A$ and
	\item\label{lemma:partb} $\displaystyle \lim_{|s|\to \infty} \frac { \log |  D_x\wtd \alpha (s) (v)| - \lambda_i(s)}{|s|}=0$
\end{enumerate}	
	for all $x\in \Lambda_0$ and all $ v\in  E^{\lambda_i}(x)\sm \{0\}$.  
 \end{proposition}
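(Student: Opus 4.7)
The plan is to reduce to the classical (single-transformation) multiplicative ergodic theorem by verifying integrability of the derivative cocycle, and then to assemble the Oseledec splittings for a generating set of $A$ using commutativity. The key input is that $E$ is continuous and $A$-invariant, so the operator norms of $D\wtd\alpha(a)\colon E(x)\to E(\wtd\alpha(a)x)$ and of its inverse are dominated by the corresponding full operator norms on $TM^\alpha$. Decomposing $TM^\alpha$ into horizontal directions (on which $D\wtd\alpha(a)$ acts essentially by $\Ad(a)$) and the fiberwise subbundle $F$ (where the norm is controlled by the return cocycle $\psi_1$), the tempered bound~\eqref{tempered} gives $\log^{+}\|D_{x}\wtd\alpha(a^{\pm 1})\|\le C(a) + C'\, h(x)$ with $C(a)$ locally bounded in $a$. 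Since $\mu$ has exponentially small mass at $\infty$ we have $h\in L^q(\mu)$ for every $q<\infty$, so the right-hand side lies in $L^1(\mu)$ and the hypotheses of Oseledec's theorem are verified.

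Next, I would fix a $\Z$-basis $\{a_1,\dots,a_k\}$ of a cocompact lattice $L\subset A$. Classical Oseledec applied to each $\wtd\alpha(a_j)$ acting on $E$ produces, on a full-$\mu$-measure set, a measurable splitting of $E$ into Lyapunov subspaces with scalar exponents. Because $A$ is abelian, each such splitting is preserved by every other $\wtd\alpha(a_{j'})$; taking the common refinement on the intersection of the full-measure sets produces a single measurable, $A$-invariant splitting $E(x)=\bigoplus_{i} E^{\lambda_i}(x)$. The cocycle identity makes $\ell\mapsto \lambda_i(\ell)$ a $\Z$-homomorphism on $L$, which extends uniquely to a linear functional $\lambda_i\colon\liea\to\R$.

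The main obstacle will be upgrading the single-element convergence $\tfrac1n\log\|D\wtd\alpha(na_j)v\|\to \lambda_i(a_j)$ (all that the classical MET provides) to the full statement \ref{lemma:partb} as $|s|\to\infty$ over all of $A$. Writing $s=\ell+\delta$ with $\ell\in L$ and $|\delta|$ bounded over a fundamental domain for $L$ in $\liea$, the main term contributes $\lambda_i(\ell)\sim\lambda_i(s)$ by linearity and classical MET along $L$. The remainder satisfies $\log\|D\wtd\alpha(\delta)\|_{\wtd\alpha(\ell)(x)}\le C+C' h(\wtd\alpha(\ell)(x))$ uniformly for $\delta$ in a bounded set; applying Birkhoff's theorem to $h\in L^1(\mu)$ along the $L$-orbit through $x$ gives $h(\wtd\alpha(\ell)(x))=o(|\ell|)$ for $\mu$-a.e.\ $x$ as $|\ell|\to\infty$, so the remainder contribution is $o(|s|)$. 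Carrying this out on a countable dense subset of directions in $A$ and intersecting the resulting full-measure sets yields the $A$-invariant set $\Lambda_0$ on which \ref{lemma:partb} holds.
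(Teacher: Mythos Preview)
The paper does not prove this proposition; it is stated in the preliminaries as a known higher-rank Oseledec theorem (cf.\ \cite{MR4599404}, cited elsewhere in the paper). Your outline is the standard one and is correct in spirit: integrability is verified via the tempered bound \eqref{tempered} and exponentially small mass at $\infty$, and the splitting is obtained as the common refinement of the single-transformation Oseledec splittings for a generating set, using commutativity of $A$.

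There are, however, two genuine imprecisions in your ``upgrading'' step. First, the phrase ``classical MET along $L$'' is doing too much work. After applying the single-transformation theorem to each generator $a_j$ and taking the common refinement, you still owe an argument that $\tfrac{1}{|\ell|}\bigl(\log|D\wtd\alpha(\ell)v|-\lambda_i(\ell)\bigr)\to 0$ as $|\ell|\to\infty$ over \emph{all} of $L\cong\Z^k$, not merely along the coordinate axes. This is precisely the heart of the higher-rank statement; one clean route is to build, on each $E^{\lambda_i}$, a Lyapunov inner product adapted simultaneously to all generators (possible because each $a_j$ restricted to $E^{\lambda_i}$ has a single exponent), and then estimate products. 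Your last sentence about ``a countable dense subset of directions'' does not fill this gap: pointwise convergence along a dense set of rays does not by itself yield the uniform limit in \ref{lemma:partb}.

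Second, the remainder estimate $h(\wtd\alpha(\ell)x)=o(|\ell|)$ over $\ell\in\Z^k$ does not follow from Birkhoff with only $h\in L^1$. A Borel--Cantelli argument works, but it needs $\sum_{\ell\in\Z^k}\mu\{h>\epsilon|\ell|\}<\infty$, which requires roughly $h\in L^k$. You actually have $h\in L^q$ for all $q<\infty$ from exponentially small mass, so the ingredient is available---but you should invoke it rather than Birkhoff.
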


We have three distinguished subbundles $E\subset TM^\alpha$ of interest in the sequel:
\begin{enumerate}
\item $E(x) = T_xM^\alpha$, in which case we refer to the linear functionals $\lambda_i$ in \cref{thm:higherrankMET} as \emph{total Lyapunov exponents}.
\item $E(x) =  F(x)= \ker D_x p$ (where $p \colon M^\alpha\to G/\Gamma$), in which case we refer to the linear functionals $\lambda_i$ in \cref{thm:higherrankMET} as \emph{fiberwise Lyapunov exponents}.
\item $E(x)$ is tangent to the $G$-orbit through $x\in M^\alpha$, in which case we refer to the linear functionals $\lambda_i$ in \cref{thm:higherrankMET} as \emph{base Lyapunov exponents}.
\end{enumerate}
We write  $\Lcal^{\widetilde{\alpha}}(\mu)\subset \alie^{*}$ for the set of total Lyapunov functionals for the action $\restrict{\widetilde{\alpha}}{A}$ on $(M^{\alpha},\mu)$. 
Write $\Lcal^{\widetilde{\alpha}, F}(\mu)$ for the fiberwise Lyapunov exponents and $\Lcal^{\widetilde{\alpha},B}(\mu)$ for the base Lyapunov exponents.  By direct computation, $ \Lcal^{\widetilde{\alpha},B}(\mu)$ coincide with the restricted $\R$-roots $\Phi(A,G)$ of $G$ relative to $A$ and are thus independent of the choice of measure $\mu$.  

Given $\lambda\in \Lcal^{\widetilde{\alpha}, F}(\mu)$, we write $E^{\lambda, F}(x) = E^\lambda(x)\cap F(x)$ for the associated \emph{fiberwise Lyapunov subspace}.

\subsubsection{Coarse  Lyapunov exponents and fiberwise Lyapunov manifolds}
Suppose $\alpha\colon \Gamma\to \Diff^r(M)$ for $r>1$ and let $M^\alpha$ be the suspension space with induced $G$-action.  
Let $\mu$ be an ergodic, $A$-invariant Borel probability measure on $M^\alpha$.  
It is natural to group together  Lyapunov exponents in $\Lcal^{\widetilde{\alpha}}(\mu)$. 
that are positively proportional (as they can not be qualitatively distinguished by the dynamics of $A$).  
A \emph{coarse Lyapunov exponent} is an equivalence class $\Lcal^{\widetilde{\alpha}}(\mu)$. 
We similarly defined \emph{coarse fiberwise Lyapunov exponents}  and \emph {coarse} (restricted) \emph{roots}.  

Given a linear functional $\lambda\in \Lcal^{\widetilde{\alpha}}(\mu)$, we  write $[\lambda]$ for the equivalence class of $\lambda$.
We sometimes write $\what \Lcal^{\widetilde{\alpha}}(\mu)$ and $\what \Lcal^{\widetilde{\alpha}, F}(\mu)$ for the collections of equivalence classes of  coarse Lyapunov exponents and coarse  fiberwise Lyapunov exponents, respectively.  
Given $\chi \in \what \Lcal^{\widetilde{\alpha}}(\mu)$, we write 
$$E^{\chi}(x):=\bigoplus_{\lambda\in \chi}E^{\lambda, F}(x)$$
and given  $\chi \in \what \Lcal^{\widetilde{\alpha}, F}(\mu)$,  write 
$$E^{\chi, F}(x):=\bigoplus_{\lambda\in \chi}E^{\lambda, F}(x).$$

Given $a\in A$, write $$E^{s, F}_a(x) = \bigoplus _{\substack{\lambda\in \Lcal^{\widetilde{\alpha}, F}(\mu)\\ \lambda (a)<0}} E^{\lambda, F}(x).$$

\begin{proposition}
Let $\mu$ be an ergodic, $A$-invariant Borel probability measure on $M^\alpha$ with exponentially small mass at $\infty$.  
Let $\chi\in \what  \Lcal^{\widetilde{\alpha}, F}(\mu)$ be a coarse fiberwise Lyapunov exponent and let $a\in A$.  

Then for $\mu$-a.e.\ $x\in M^\alpha$ there exists  path-connected, injectively immersed $C^r$ submanifolds $W^{\chi, F}(x)$ and $W^{s, F}_a(x)$ contained in $\scrF(x):= p\inv (p(x))$ with the following properties:
\begin{enumerate}
\item $T_xW^{\chi, F}(x)=
E^{\chi, F}(x)$ and $T_xW^{s, F}_a(x)=E^{s, F}_a(x)$.
\item $\wtd \alpha(b)(W^{\chi, F}(x)) = W^{\chi, F}(\wtd \alpha(b) (x)) $ and  $\wtd \alpha(b)(W^{s, F}_a(x)) = W^{s, F}_a(\wtd \alpha(b) (x)) $
for all $b\in A$ and $\mu$-a.e.\ $x$.
\item For $\mu$-a.e.\ $x$,  $$W^{s, F}_a(x)=\{ y\in \scrF(x): \limsup \frac 1 n \log d(\wtd \alpha(a^n)(x),\wtd \alpha(a^n)(y)) <0\}.$$
\item There exist $\{a_1,\dots, a_k\}\subset A$ such that $$\chi= \{\lambda\in \Lcal^{\widetilde{\alpha}, F}(\mu):\lambda(a_i)<0 \text{ for all } 1\le i \le k\}.$$  
Then $$
E^{\chi, F}(x)=\bigcap_{1\le i \le k} E^{s, F}_{a_i}(x)
$$
and $W^{\chi, F}(x)$ is the path-connected component of $ \bigcap_{1\le i \le k} W^{s, F}_{a_i}(x)$.
\end{enumerate}
\end{proposition}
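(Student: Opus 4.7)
The plan is to construct $W^{s,F}_a(x)$ first, for each single element $a\in A$, by applying Pesin's nonuniform stable manifold theorem to the $C^r$ fiberwise dynamics of $\wtd\alpha(a)$, and then to obtain each coarse Lyapunov manifold $W^{\chi,F}(x)$ as the path-connected component through $x$ of an intersection of finitely many such stable manifolds. The only subtlety is that, in the nonuniform  case, $M^\alpha$ is non-compact and the fiberwise derivative cocycle is not uniformly bounded; this is handled by the exponentially small mass assumption on $\mu$ combined with the tempered-growth estimates from \cref{prop:integrability}.

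First I would verify the hypotheses of Pesin theory for the fiberwise cocycle $D^F\wtd\alpha(a)$. Each $\wtd\alpha(a)$ preserves the fibration by fibers of $p\colon M^\alpha\to G/\Gamma$ and acts on each fiber as a $C^r$ diffeomorphism whose $C^k$ norms are controlled by $\psi_k(a,\cdot)$ defined in \cref{sec:norms}. By \cref{tempered} and \cref{prop:integrability} we have $\log\psi_k(a,\cdot)\in L^q(m_{G/\Gamma})$ for every $q<\infty$; combined with the uniform comparability of the fiber norms $\|\cdot\|^F$ on a Siegel fundamental set and the bound \cref{eq:SmallMassIneq} on the decay of $\mu$ near $\infty$, this shows that $\log^+\|D^F\wtd\alpha(a)^{\pm 1}\|$ (and analogous expressions in higher derivatives) are $\mu$-integrable and vary subexponentially along $\mu$-typical orbits. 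The fiberwise Lyapunov exponents of $\wtd\alpha(a)$ are then exactly the values $\lambda(a)$ for $\lambda\in \Lcal^{\widetilde{\alpha}, F}(\mu)$ supplied by \cref{thm:higherrankMET}. Pesin's $C^r$ stable manifold theorem in this tempered setting produces an injectively immersed $C^r$ submanifold $W^{s,F}_a(x)\subset \scrF(x)$ with $T_xW^{s,F}_a(x)=E^{s,F}_a(x)$ and characterized by the asymptotic contraction property in item (3).

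For equivariance, given $b\in A$, the image $\wtd\alpha(b)\bigl(W^{s,F}_a(x)\bigr)$ is a $C^r$ submanifold of $\scrF(\wtd\alpha(b)(x))$; since $a,b$ commute, $\wtd\alpha(a^n)$ contracts this submanifold exponentially at $\wtd\alpha(b)(x)$, and uniqueness of the Pesin stable manifold identifies it with $W^{s,F}_a(\wtd\alpha(b)(x))$. To construct $W^{\chi,F}(x)$, I would choose $a_1,\dots,a_k\in A$ so that $\chi=\{\lambda\in \Lcal^{\widetilde{\alpha},F}(\mu):\lambda(a_j)<0 \text{ for all } j\}$; this is possible because $\chi$ is cut out by finitely many open half-space conditions in $\liea^*$ applied to the finite set $\Lcal^{\widetilde{\alpha},F}(\mu)$. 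The tangent-space identity $E^{\chi,F}(x)=\bigcap_j E^{s,F}_{a_j}(x)$ is then immediate from Oseledec's decomposition. Inside the fiber $\scrF(x)$ the stable manifolds $W^{s,F}_{a_j}(x)$ intersect transversally in the directions complementary to $E^{\chi,F}(x)$, so their intersection is a $C^r$ injectively immersed submanifold tangent to $E^{\chi,F}(x)$ at $x$; I define $W^{\chi,F}(x)$ to be its path-connected component through $x$, and equivariance under $A$ follows from the equivariance already established for each $W^{s,F}_{a_j}$.

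The main obstacle is confirming that Pesin's theorem applies despite the non-compactness of $M^\alpha$ in the nonuniform  case: this requires Lyapunov regularity and tempered $C^r$ norms along $\mu$-typical orbits, and both consequences follow from \cref{prop:integrability} together with the exponentially small mass assumption on $\mu$, which provides integrability of all polynomial expressions in $\log\psi_k(a,\cdot)$ and hence also the $\mu$-a.e.\ subexponential decay of distortion along orbits needed to make the graph-transform construction of stable manifolds converge.
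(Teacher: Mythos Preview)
Your approach is correct and is exactly the standard construction the paper has in mind; the proposition is stated there without proof as a preliminary result, relying implicitly on Pesin's stable manifold theorem in the tempered non-compact setting (as developed, e.g., in \cite{MR4599404} and \cite{NUHbook}) together with the integrability furnished by \eqref{tempered} and the exponentially-small-mass hypothesis. One small sharpening: \cref{prop:integrability} is formulated for $m_{G/\Gamma}$, whereas here you need integrability and temperedness with respect to $\mu$; this follows because \eqref{tempered} shows $\log\psi_k(a,\cdot)$ grows at most linearly in the height $h$, and the exponential moment \eqref{eq:SmallMassIneq} then gives $\log\psi_k(a,\cdot)\in L^q(\mu)$ for all $q<\infty$, after which subexponential growth along $\mu$-typical orbits follows from Birkhoff applied to the $\mu$-preserving map $\wtd\alpha(a)$.
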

The manifold $W^{\chi, F}(x)$ is the \emph{fiberwise coarse Lyapunov manifold} associated to $\chi$ through $x$ and the manifold $W^{s, F}_a(x)$  is the \emph{fiberwise stable manifold}  for the dynamics $\wtd \alpha(a)$ through $x$.  The 
collection of leaves $\{W^{\chi, F}(x)\}$ forms a partition of a full-measure subset of $(M^\alpha, \mu)$; we denote the associate measurable lamination of $M^\alpha$ by $\Wcal^{\chi, F}(x).$

Given a (total) coarse Lyapunov exponent $\chi \in \what  \Lcal^{\widetilde{\alpha}}(\mu)$, we similarly define (total) coarse Lyapunov manifold $W^{\chi}(x)$ through $\mu$-a.e.\ $x$.  Of particular interest in the sequel will be the following: $\lambda\in \Lcal^{\widetilde{\alpha}, F}(\mu)$ is a fiberwise Lyapunov exponent and $\beta\in \Phi(A,G)$ is a root satisfying the following:
\begin{enumerate}
\item no other fiberwise Lyapunov exponent is positively proportional to $\lambda$;
\item $2\beta$ and $\frac 1 2 \beta$ are not roots;
\item $\beta$ and $\lambda$ are positively proportional.
\end{enumerate}
Then $\chi = \{\lambda\}\cup\{\beta\}$ is the associated (total) coarse Lyapunov exponent.  Since the associated fiberwise coarse Lyapunov exponent $\chi\cap  \Lcal^{\widetilde{\alpha}, F}(\mu)$ is a singleton $\{\lambda\}$, in this case we write 
$W^{\lambda, F}(x):=W^{\chi, F}(x)$ for the associated coarse fiberwise Lyapunov manifold.  
Then, for $\mu$-a.e.\ $x$, the (total) coarse Lyapunov manifold $W^{\chi}(x)$ through $x$ is then the $U^\beta$-orbit of $W^{\lambda, F}(x)$.

We will be espeically interested in the above when  $\dim E^{\chi, F}(x)=1$  and $\dim \lieg ^\beta =1$ and so $W^\chi(x)$ is 2-dimensional and bifoliated by transverse $C^r$ curves, the $U^\beta$-orbits and images of $W^{\lambda, F}(x)$ under translation by elements of $U^\beta$.

\subsection{Leafwise measures}
For this subsection, we assume that there exists $A$-invariant, $A$-ergodic measure $\mu$ on $M^{\alpha}$ such that $\mu$ has exponentially small mass at $\infty$.  Note that for a coarse root $[\beta]\subset\Phi(A,G)$, we have a partition $\Ucal^{[\beta]}$ of $M^\alpha$ by $U^{[\beta]}$-orbits. 
 Let $\Tcal$ denote one of the following laminations:  $\Wcal^{\chi}$ for $\chi\in \what \Lcal^{\widetilde{\alpha}, F}(\mu)$,  $\Wcal^{\chi, F}$ for $\chi\in \what \Lcal^{\widetilde{\alpha}}(\mu)$, or $\Ucal^{[\beta]}$.  
\begin{definition}
A measurable partition $\eta$ is subordinate to $\Tcal$ if for $\mu$-almost every $x$, \begin{enumerate}
\item $\eta(x)\subset \Tcal(x)$,
\item $\eta(x)$ contains an open neighborhood of $x$ (in the immersed topology) in $\Tcal$, and
\item $\eta(x)$ is precompact in $\Tcal(x)$ (in the immersed topology).
\end{enumerate}
\end{definition}
A measurable partition $\eta$ induces a system of conditional measures, denoted  $\{\mu_{x}^{\eta}\}$, defined for $\mu$-almost every $x$. We patch together conditional measures with respect to subordinate measurable partitions in order to obtain locally finite (possibly infinite) measures on leaves of $\Tcal$---uniquely defined up to normalization---with the following properties.  
\begin{proposition}
There exists a measurable family of locally finite measures $\mu_{x}^{\Tcal}$ (called the \emph{leafwise measures})---defined for $\mu$-almost every $x\in M^{\alpha}$---with the following properties:
\begin{enumerate}
\item $\mu_{x}^{\Tcal}$ is a  Radon measure on $\Tcal (x)$ which is well defined up to normalization.
\item For $a\in A$ and $\mu$-almost every $x$, $\wtd{\alpha}(a)_{*}\mu_{x}^{\Tcal}\propto \mu_{\wtd{\alpha}(a)(x)}^{\Tcal}$.
\item For $\mu$-almost every $x$ and $\mu_{x}^{\Tcal}$-almost every $y$, $\mu_{x}^{\Tcal}\propto \mu_{y}^{\Tcal}$.
\item For any measurable partition $\eta$   subordinate to $\Tcal$ and for $\mu$-almost every $x$, the conditional measure 
$\mu_{x}^{\eta}$ associated to the atom $\eta(x)$ is given by 
 \[\mu_{x}^{\eta}=\frac{\restrict{\mu^{\Tcal}_{x}}{\eta_x}}{\mu^{\Tcal}_{x}(\eta(x))}. \] 
\end{enumerate} 
Moreover, such a system of leafwise measures is unique up to null sets and normalization.
\end{proposition}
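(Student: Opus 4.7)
The plan is to build the leafwise measures by patching together systems of conditional measures associated to an exhausting sequence of subordinate measurable partitions, following the classical strategy (Ledrappier--Young; Einsiedler--Katok--Lindenstrauss). First I would produce, for $\mu$-a.e.\ $x$, an increasing sequence of measurable partitions $\eta_{1}\le \eta_{2}\le \dots$, each subordinate to $\Tcal$, with $\bigcup_{n}\eta_{n}(x)=\Tcal(x)$ and each $\eta_{n}(x)$ precompact in $\Tcal(x)$ (in the intrinsic leaf topology). For $\Tcal=\Wcal^{\chi,F}$ or $\Wcal^{\chi}$, this is produced from Lyapunov/Pesin charts on a Lusin set $\Lambda_{0}$ of large $\mu$-measure, together with a choice of $a\in A$ contracting $E^{\chi,F}$; one builds $\eta_{1}$ by cutting leaves transversally on $\Lambda_{0}$ and sets $\eta_{n}:=\bigvee_{k=0}^{n-1}\wtd\alpha(a^{-k})(\eta_{1})$, whose atoms expand along $\Tcal$ under iteration. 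For $\Tcal=\Ucal^{[\beta]}$, the $U^{[\beta]}$-action is transitive on orbits so that \emph{a priori} one may take $\eta_{1}(x)=\{u\cdot x:u\in B\}$ for a bounded Borel $B\subset U^{[\beta]}$ chosen via Lusin, and dilate $B$ by conjugation via a diagonal element of $\beta$. Exponentially small mass at infinity guarantees the partitions are $\mu$-a.e.\ well defined, that the sequence $\eta_{n}(x)$ eventually covers every precompact subset of $\Tcal(x)$, and that the conditional measures $\mu^{\eta_{n}}_{x}$ behave well under the $A$-action.

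Second, I would define the leafwise measure on each atom: choose, on a Borel subset $\Omega\subset M^{\alpha}$ meeting $\mu$-a.e.\ leaf, a measurable reference set $B(x)\subset \eta_{1}(x)$ of positive $\mu^{\eta_{1}}_{x}$-mass, and set
\[
\mu^{\Tcal}_{x}\bigm|_{\eta_{n}(x)}:=\frac{\mu^{\eta_{n}}_{x}}{\mu^{\eta_{n}}_{x}(B(x))}.
\]
Consistency across $n$ is exactly the tower property of conditional measures: for $m\le n$ the measure $\mu^{\eta_{n}}_{x}$ conditioned on the sub-atom $\eta_{m}(x)$ coincides with $\mu^{\eta_{m}}_{x}$, so the normalized restrictions agree. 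Taking the monotone union yields a Radon measure $\mu^{\Tcal}_{x}$ on $\Tcal(x)$, defined up to the scalar ambiguity in the choice of $B(x)$.

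Third, I would verify the four listed properties. Item (1) is built into the construction. Item (4) is the defining property of conditional measures applied to the common refinement $\xi\vee\eta_{n}$ of a subordinate partition $\xi$ with $\eta_{n}$. Item (3) follows because for $\mu^{\Tcal}_{x}$-a.e.\ $y\in \Tcal(x)$ one has $y\in \eta_{n}(x)$ for all large $n$, whence $\eta_{n}(y)=\eta_{n}(x)$ and $\mu^{\eta_{n}}_{y}=\mu^{\eta_{n}}_{x}$, giving proportionality of the leafwise measures. Item (2), equivariance under $A$, is the main technical point: the partitions $\eta_{n}$ are not $A$-invariant, so one writes $\wtd\alpha(a)_{*}\mu^{\eta_{n}}_{x}=\mu^{\wtd\alpha(a)(\eta_{n})}_{\wtd\alpha(a)(x)}$ using $A$-invariance of $\mu$, and then applies item (4) to the subordinate partition $\wtd\alpha(a)(\eta_{n})$ to conclude proportionality of $\wtd\alpha(a)_{*}\mu^{\Tcal}_{x}$ with $\mu^{\Tcal}_{\wtd\alpha(a)(x)}$ on every atom, with globally consistent constants.

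Uniqueness follows from (4): any family satisfying (1)--(4) must coincide, up to normalization on each atom, with $\mu^{\Tcal}_{x}$ on the exhausting partitions $\eta_{n}$, and the tower-consistency constraint forces a single global scalar. The main obstacle I anticipate is technical rather than conceptual, namely producing the exhausting subordinate partitions $\eta_{n}$ in the noncompact setting so that atoms $\eta_{n}(x)$ genuinely cover $\Tcal(x)$ as $n\to\infty$. The hypothesis of exponentially small mass at infinity is used precisely here: together with the Borel--Cantelli argument applied to $h\circ \wtd\alpha(a^{n})$, it guarantees recurrence to a Lusin set on which the Lyapunov charts have uniform size, so that iterating $\wtd\alpha(a^{n})$ stretches $\eta_{1}(x)$ along $\Tcal$ without the charts collapsing at $\infty$.
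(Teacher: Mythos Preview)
The paper does not give a proof of this proposition; it is stated as a standard construction, with the single sentence ``We patch together conditional measures with respect to subordinate measurable partitions\dots'' serving as the entire justification. Your outline is exactly this standard construction (Ledrappier--Young, Einsiedler--Lindenstrauss), so in spirit you are doing precisely what the paper intends the reader to supply.

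One small technical slip: you write $\eta_{n}=\bigvee_{k=0}^{n-1}\wtd\alpha(a^{-k})(\eta_{1})$ and simultaneously claim that the atoms $\eta_{n}(x)$ increase to $\Tcal(x)$. The join $\bigvee$ refines, so atoms shrink under this operation, not grow. What you actually want is either (i) to construct $\eta_{1}$ so that it is \emph{increasing} under $\wtd\alpha(a^{-1})$ (i.e.\ $\eta_{1}(x)\subset \wtd\alpha(a^{-1})(\eta_{1})(x)$ for a.e.\ $x$, which one arranges by the usual Ledrappier--Young construction of a subordinate partition adapted to a contracting element), and then simply take $\eta_{n}=\wtd\alpha(a^{-n})(\eta_{1})$; or (ii) to abandon nestedness and instead prove directly that for any two subordinate partitions $\xi,\xi'$ the conditional measures $\mu^{\xi}_{x}$ and $\mu^{\xi'}_{x}$ are proportional on $\xi(x)\cap\xi'(x)$, then patch using a countable family whose atoms cover every precompact subset of $\Tcal(x)$. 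Either route works and both are standard; the rest of your argument (tower property for (4), equivariance via $A$-invariance of $\mu$ for (2), and uniqueness from (4)) is correct.
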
 
Above, given two radon measures $\nu_1$ and $\nu_2$ on a manifold $N$ we write $\nu_1\propto \nu_2$ if $\nu_2 = c \nu_1$ for some scalar $c>0$.

When $\Tcal=\Wcal^{\chi}, \Wcal^{\chi, F}$, or $\Ucal^{[\beta]}$ we denote the system of leafwise measure by $\mu^{\chi}$, $\mu^{\chi, F}$, and $\mu^{U^{[\beta]}}$, respectively. 
Given a coarse fiberwise Lyapunov exponent $\chi\in \what \Lcal^{\widetilde{\alpha}}(\mu)$ that is a singleton $\chi=\{\lambda\}$, we also write 
$\mu^{\lambda, F}=\mu^{\chi, F}$ for the associated system of leafwise measures.  

\subsection{Normal forms for conformal contractions}
Let $\alpha\colon \Gamma\to \diff^r(M)$ be an action.  
Let $A\subset G$ be a maximal $\R$-split Cartan subgroup and let $\mu$ be an ergodic, $A$-invariant  Borel probability measure on $M^\alpha$.

Let $\lambda$ be a fiberwise Lyapunov exponent functional.  
Although many of the following results and constructions hold under more general hypothesis, we will focus only the case that 
$\dim E^{\lambda, F}_x=1$ for a.e.\ $x$ and that no other fiberwise Lyapunov exponent is positively proportional to $\lambda$.  In particular, the coarse fiberwise Lyapunov exponent $\chi=[\lambda]\in \what \Lcal^{\widetilde{\alpha}}(\mu)$ containing $\lambda$ satisfies  $[\lambda]=\{\lambda\}$ and $\dim E^{\chi, F}(x)=1$ for a.e.\ $x$.   

In this  situation, one may construct normal form coordinates on each  leaf of the Lyapunov foliation $\Wcal^{\lambda, F}(x)$ as in \cite[Thm.\ 4]{KRHarith}.  We summarize the properties of here: 
\begin{lemma}\label[lemma]{normalitem} 
Suppose $\lambda$ is a non-zero fiberwise Lyapunov exponent with $\dim E^{\lambda, F}_x=1$ that is not positively proportional to any other fiberwise Lyapunov exponent functional. 
Then for $\mu$-a.e.\ $x\in M^\alpha$, there exists a unique $C^r$ diffeomorphism $$\Phi_{x}^{\lambda, F}\colon E^{\lambda, F}(x)\to W^{\lambda, F}(x)$$ such that the following hold: 
\begin{enumerate}
\item $\Phi_{x}^{\lambda, F}(0)=x$ and $D_0\Phi_{x}^{\lambda, F}= \id$. 
\item $\{\Phi_{x}^{\lambda, F}\}$ varies measurably for $x$.  
\item 
For  $b\in A$ and a.e.\ $x$,  the map 
$$\left(\Phi_{\widetilde{\alpha}(b)(x)}^{\lambda, F}\right)^{-1}\circ \widetilde{\alpha}(b)\circ\Phi_{x}^{\lambda, F} \colon E^{\lambda, F}(x)\to E^{\lambda, F}(\wtd \alpha(b)(x))$$ coincides with the restriction of the derivative 
$$\restrict{D^F\wtd \alpha(b)}{ E^{\lambda, F}(x)}\colon  E^{\lambda, F}(x)\to  E^{\lambda, F}(\wtd \alpha(b)(x)).$$

\item Moreover, the above uniquely determine the family of parametrizations $\{\Phi_{x}^{\lambda, F}\}$. 
\end{enumerate}
\end{lemma}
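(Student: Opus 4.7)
The plan is to construct $\Phi_{x}^{\lambda, F}$ via a Koenigs-type limit along the orbit of a chosen contracting element $a\in A$, exploiting the non-resonance hypothesis that $[\lambda]=\{\lambda\}$ and $\dim E^{\lambda,F}(x)=1$ to guarantee convergence. First I would choose $a\in A$ with $\lambda(a)<0$, which exists because $\lambda\ne 0$. Choose a measurable background family of $C^{r}$ parametrizations $\psi_{x}\colon E^{\lambda,F}(x)\to W^{\lambda,F}(x)$ with $\psi_{x}(0)=x$ and $D_{0}\psi_{x}=\mathrm{id}$ (for instance, the exponential map along $W^{\lambda,F}$ in the adapted metric of \S\ref{sec:norms}). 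Writing $x_{n}:=\wtd\alpha(a^{n})x$ and $c_{n}:=D^{F}_{x}\wtd\alpha(a^{n})|_{E^{\lambda,F}(x)}$, define
\[
\Phi_{x,n}^{\lambda,F}(v)\;:=\;\wtd\alpha(a^{-n})\circ\psi_{x_{n}}\circ c_{n}(v).
\]
Each $\Phi_{x,n}^{\lambda,F}$ is a $C^{r}$ map $E^{\lambda,F}(x)\to W^{\lambda,F}(x)$ with $\Phi_{x,n}^{\lambda,F}(0)=x$ and $D_{0}\Phi_{x,n}^{\lambda,F}=\mathrm{id}$ by the chain rule and the cocycle property.

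The technical heart of the proof is convergence of $\Phi_{x,n}^{\lambda,F}$ in $C^{r}_{\mathrm{loc}}$ for $\mu$-a.e.\ $x$. In the parametrization $\psi$, the leafwise dynamics of $\wtd\alpha(a)$ reads $v\mapsto c_{x}v+R_{x}(v)$ with $R_{x}(v)=O(|v|^{2})$. A direct telescoping computation shows that $\Phi_{x,n+1}^{\lambda,F}-\Phi_{x,n}^{\lambda,F}$ is controlled, on a ball of fixed radius in $E^{\lambda,F}(x)$, by the product of $|c_{n}|^{2}$ (from the quadratic remainder applied to $c_{n}v$) with $|c_{n}|^{-1}$ (from pulling back by $\wtd\alpha(a^{-n})$ tangent to $E^{\lambda,F}$), times a correction measuring the $C^{r}$ norm of $\wtd\alpha(a^{-n})$ along the orbit. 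By Oseledec's theorem (\cref{thm:higherrankMET}) together with the hypothesis that no other fiberwise Lyapunov exponent is positively proportional to $\lambda$, any ratio $\lambda_{j}(a)/\lambda(a)$ with $\lambda_{j}\ne\lambda$ stays bounded away from the relevant integers, and similarly higher-order fiberwise derivatives of $\wtd\alpha(a^{-n})$ on $W^{\lambda,F}$ grow at rates strictly slower than $|c_{n}|^{-1}$. Combining this non-resonance with the subexponential growth along orbits provided by \cref{prop:integrability} yields geometric Cauchy estimates in every $C^{r}$-seminorm on compact subsets of $E^{\lambda,F}(x)$. This is precisely the 1-dimensional non-resonant case of the Guysinsky--Katok normal form theorem in a cocycle setting, and the limit $\Phi_{x}^{\lambda,F}:=\lim_{n\to\infty}\Phi_{x,n}^{\lambda,F}$ is a $C^{r}$ diffeomorphism inheriting the normalization and measurable dependence on $x$.

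To verify the equivariance (3), fix $b\in A$ and compare $\wtd\alpha(b)\circ\Phi_{x,n}^{\lambda,F}(v)$ with $\Phi_{\wtd\alpha(b)x,n}^{\lambda,F}\bigl(D^{F}\wtd\alpha(b)v\bigr)$: since $A$ is abelian, both expressions equal $\wtd\alpha(a^{-n}b)\circ\psi_{x_{n}}\circ c_{n}(v)$ up to reindexing the basepoint along the orbit, and the difference is an error controlled by the same Cauchy estimate and tends to $0$. Passing to the limit gives $\wtd\alpha(b)\circ\Phi_{x}^{\lambda,F}=\Phi_{\wtd\alpha(b)x}^{\lambda,F}\circ D^{F}\wtd\alpha(b)|_{E^{\lambda,F}(x)}$. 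For uniqueness (4), if $\tilde\Phi_{x}$ is another such family, then $h_{x}:=(\Phi_{x}^{\lambda,F})^{-1}\circ\tilde\Phi_{x}$ is a $C^{r}$ diffeomorphism of $E^{\lambda,F}(x)\cong\mathbb R$ fixing $0$ with $D_{0}h_{x}=\mathrm{id}$, and the equivariance forces $h_{\wtd\alpha(a)x}\circ c_{x}=c_{x}\circ h_{x}$. Writing $h_{x}(v)=v+\sum_{k\ge 2}a_{k}(x)v^{k}$ and matching coefficients yields $a_{k}(x)=c_{x}^{\,k-1}a_{k}(\wtd\alpha(a)x)$; iterating along the backward orbit of $a$ and invoking once more the tempered bound on $h_{x}$ (from \cref{prop:integrability}) together with $|c_{x}|<1$ forces $a_{k}\equiv 0$ for all $k\ge 2$, so $h_{x}=\mathrm{id}$.

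The main obstacle I anticipate is the convergence estimate in the non-autonomous, non-compact setting: because $\Gamma$ may be nonuniform, the $C^{r}$ norms of $\wtd\alpha(a^{\pm n})$ on $M^{\alpha}$ are not uniformly bounded, and one must use the quantitative tempered growth (\Cref{prop:integrability}) along a $\mu$-typical orbit to ensure the exponential gain from simplicity and non-resonance of $\lambda$ dominates all polynomial errors up to order $r$. This is where the hypothesis that $\mu$ has exponentially small mass at infinity, used implicitly through the invariant measure assumptions of this section, plays its essential role.
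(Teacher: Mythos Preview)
The paper does not give its own proof of this lemma; it simply cites \cite[Thm.~4]{KRHarith} and records the conclusions. Your sketch is essentially the standard Koenigs-type construction underlying that reference (and the Guysinsky--Katok normal form theory in the one-dimensional case), so you are on the same track as the cited source.

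One clarification is worth making. The hypothesis that no other fiberwise Lyapunov exponent is positively proportional to $\lambda$ is used in the paper to ensure that the coarse fiberwise exponent $[\lambda]$ is the singleton $\{\lambda\}$, so that the leaf $W^{\lambda,F}(x)$ is a well-defined one-dimensional manifold. It is \emph{not} the source of the non-resonance needed for convergence of your limit: once you restrict to the one-dimensional leaf, the only Lyapunov exponent present is $\lambda$ itself, and the relevant resonance equations $k\lambda=\lambda$ for $k\ge 2$ are automatically excluded by $\lambda\ne 0$. Your sentence invoking ratios $\lambda_{j}(a)/\lambda(a)$ for other $\lambda_{j}$ is therefore misplaced---those exponents never enter the leafwise Taylor expansion. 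The actual Cauchy estimate comes from the fact that the $k$-th Taylor coefficient of $\Phi_{x,n+1}-\Phi_{x,n}$ picks up a factor comparable to $e^{(k-1)\lambda(a)n}$ times a tempered error, which is summable for each $k\ge 2$ once you feed in \cref{prop:integrability}. With that correction your outline is sound, and your identification of the non-compactness of $M^{\alpha}$ (handled via \cref{prop:integrability}) as the main technical point is accurate.
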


\subsubsection{Further properties of normal forms on 1-dimensional fiberwise Lyapunov manifolds}
As above, suppose that $\lambda$ is a fiberwise Lyapunov exponent for an ergodic, $A$-invariant probability measure $\mu$ with  $\dim E^{\lambda, F}(x) =1$ for a.e.\ $x$ and such that no other fiberwise Lyapunov exponent is positively proportional to $\lambda$.  
Let $\chi$ denote the (total) coarse Lyapunov exponent containing $\lambda$.  Then either $\chi = \{\lambda\}$ or $\chi = \{\lambda\}\cup [\beta]$ where $\beta$ is a root that is positively proportional to $\lambda$.

We suppose that $\beta$ is a root positively proportional to $\lambda$ so that $\chi = \{\lambda\}\cup [\beta]$.  For almost every $x$, the manifold $W^\chi(x)$ is $C^r$ subfoliated by $U^{[\beta]}$-orbits.  Given $y = \wtd\alpha(u)(x)$ for $u\in U^{[\beta]}$, write $W^{\lambda, F}(y) =\wtd\alpha(u)(W^{\lambda, F}(x))$ and for $y'\in W^{\lambda, F}(y) $ write 
$E^{\lambda, F}(y') = T_{y'}W^{\lambda, F}(y')$.  Then (for a.e.\ $x$),  $E^{\lambda, F}(\bullet)$ is a H\"older continuous, everywhere defined, orientable,  1-dimensional bundle on $W^{\chi}(x)$.

Fix $a_0\in A$ with $\lambda(a_0)<0$.  Because $x'\mapsto \|\restrict{D_{x'}\wtd \alpha(a_0) }{E^{\lambda, F}(x')}\|$ is H\"older continuous when restricted to $W^\chi(x)$ and because   $d(\wtd \alpha(a_0^n)(x), \wtd \alpha(a_0^n)(x'))$ approaches 0 exponentially fast for $\mu$-a.e.\ $x$ and every $x'\in W^\chi(x)$, it follows for $\mu$-a.e.\ $x$ and every $x'\in W^{\chi}(x)$ that 
the limit $$c_{x,x'}(a_0):=\lim _{n\to \infty} \frac { \|\restrict{D_{x'}\wtd \alpha(a^n) }{E^{\lambda, F}(x')}\|}{\restrict{ \|D_x\wtd \alpha( a^n)}{E^{\lambda, F}(x)}\|}$$
converges and is non-zero.   
For all such $x$ and $x'$, defining the linear map $H_{x,x'}\colon E^{\lambda, F}(x) \to E^{\lambda, F}(x')$ to be the unique linear map preserving either choice of orientation on the bundle $E^{\lambda, F}(\bullet)$ with $\|H_{x,x'}\| = c_{x,x'}(a_0)$.
Given arbitrary $x', x''\in  W^{\chi}(x)$ we define
$$H_{x',x''}:= H_{x,x''}\circ H_{x,x'}\inv.$$
  
By construction of the map $H_{x,x'}$, for every $b\in A$, $u\in U^{[\beta]}$,    a.e.\ $x$, and every $x'\in W^{\lambda, F}(x)$ the following hold:
\begin{align}
\restrict{D_{x'} \wtd \alpha (b)}{E^{\lambda, F}(x')}
 \circ H_{x,x'} &=  H_{\wtd \alpha (x),\wtd \alpha (x')}  \circ  \restrict{D_x \wtd \alpha (b)}{E^{\lambda, F}(x)}\label{eqequiv} \\
\restrict{D_{x'} \wtd \alpha (u)}{E^{\lambda, F}(x')}& = H_{x', \wtd \alpha (u)(x')}.\label{holonom}
\end{align}
Indeed, \eqref{eqequiv} follows from definition.  For \eqref{holonom}, we have 
\begin{align*}
\|\restrict{D_{x'}\wtd \alpha(u)}{E^{\lambda, F} (\wtd\alpha ( u )(x'))}
&\|\cdot \|\restrict{D_{\wtd\alpha ( u )( x')}\wtd \alpha(a^n)}{E^{\lambda, F}(x')}\| \\&=
\|\restrict{D_x' (\wtd\alpha (a_0^n  u)) }{E^{\lambda, F}(x')}\| 
\\&=\|\restrict{D_x' (\wtd\alpha ( ( a_0^n u a_0^{-n})a_0^n)) }{E^{\lambda, F}(x')}\| 
\\&=\|\restrict{ D_{\wtd\alpha (  a_0^n)( x')} \wtd\alpha (  a_0^n u a_0^{-n})}{E^{\lambda, F}(\wtd\alpha ( a_0^n )(x'))}\| \cdot  \|\restrict{D_x' (\wtd\alpha (a_0^n))}{E^{\lambda, F}(x')}\| 
\end{align*}
and the claim follows since $ a_0^n u a_0^{-n}$ converges to the identity in $U^\beta$ as $n\to \infty$ and so $\| D_{\wtd\alpha (  a_0^n)( x')} \wtd\alpha (  a_0^n u a_0^{-n})\|\to 1$ (perhaps passing to a subsequence if needed).

Now, consider $x\in M^\alpha$ for which $\Phi_x^{\lambda, F}$ is defined.  Take $x'\in W^{\lambda, F}(x)$ and  $w\in E^{\lambda, F}(x)$ such that $x'= \Phi_x^{\lambda, F}(w)$.  
Under the canonical identification of $T_0E^{\lambda, F}(x)$ (the domain of $H_{x,x'}$) with $T_wE^{\lambda, F}(x)$
(the domain of $D_w \Phi^{\lambda, F}_x$), we claim that \begin{equation}D_w \Phi^{\lambda, F}_x= H_{x,x'}.\end{equation}
Indeed,  for $n\ge 0$ we have 
\begin{align*}
\restrict{D_{x'}\wtd \alpha(a_0^n) }{E^{\lambda, F}(x')}\circ D_w \Phi_x^{\lambda, F}
&= \restrict{ D_w (\wtd \alpha(a_0^n)}{E^{\lambda, F}(x)}\circ \Phi_x^{\lambda, F})
\\&=  D_w ( \Phi_{\wtd \alpha(a_0^n)(x)} ^{\lambda, F} \circ \restrict{D_x\wtd \alpha(a_0^n))}{E^{\lambda, F}(\alpha(a_0^n)(x))}
\\&=  D_{D_x\wtd \alpha(a_0^n) w}  \Phi_{\wtd \alpha(a_0^n)(x)} ^{\lambda, F} \circ\restrict{ D_x\wtd \alpha(a_0^n)}{E^{\lambda, F}(x)}. 
\end{align*}
As $n\to \infty$, we have that $D_x\wtd \alpha(a_0^n) w\to 0$ and so $$\|\restrict{D_{D_x\wtd \alpha(a_0^n) w}  \Phi_{\wtd \alpha(a^n)(x)} ^{\lambda, F}}{E^{\lambda, F}(\alpha(a_0^n)(x))}\|\to 1$$ along subsequences of Poincar\'e recurrence to sets on  which  $\bullet \mapsto \|D_{\bullet}  \Phi_{\wtd \alpha(a_0^n)(x)} ^{\lambda, F}\|$ is uniformly continuous.  This  shows that $\|D_w \Phi_x^{\lambda, F}\| = c_{x,x'}(a_0)$.

With the above observations, we derive that the $U^{[\beta]}$-action is affine relative to the coordinates  $\Phi^{\lambda, F}_\bullet$.  
\begin{lemma}\label[lemma]{holonomy is affine}
Suppose that $\beta\in \Phi(A,G)$ is a root that is positively proportional to $\lambda$.  Let $\chi=[\lambda]=[\beta]$ denote the associated (total) coarse Lyapunov exponent.  
For $\mu$-a.e. $x$ and $\mu_x^\chi$-a.e.\ $y\in W^\chi(x)$, writing $y = u\cdot x'$ where $u\in U^{[\beta]}$ and $x'\in W^{\lambda, F}(x)$, the map
$$\left(\Phi_y^{\lambda, F} \right) \inv \circ\wtd \alpha(u)\circ \Phi_x^{\lambda, F}\colon E^{\lambda, F}(x)\to E^{\lambda, F}(y)$$
is an affine map.
\end{lemma}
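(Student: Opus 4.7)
The plan is to show affineness by computing the derivative of
\[
\Psi := \left(\Phi_y^{\lambda, F}\right)^{-1} \circ \wtd\alpha(u) \circ \Phi_x^{\lambda, F} \colon E^{\lambda, F}(x) \to E^{\lambda, F}(y)
\]
pointwise and verifying that the resulting linear map is the \emph{same} at every $w \in E^{\lambda,F}(x)$. Since the spaces $E^{\lambda,F}(x)$ and $E^{\lambda,F}(y)$ are one-dimensional, constancy of the derivative is equivalent to $\Psi$ being affine.

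First, I would fix $w \in E^{\lambda,F}(x)$ and set $z := \Phi_x^{\lambda,F}(w) \in W^{\lambda,F}(x)$. Since $U^{[\beta]}$ permutes the leaves of $\Wcal^{\lambda,F}$ inside $W^\chi$, we have $\wtd\alpha(u)(z) \in W^{\lambda,F}(y)$ and $\Psi(w) = (\Phi_y^{\lambda,F})^{-1}(\wtd\alpha(u)(z))$. The chain rule gives
\[
D_w \Psi = D_{\wtd\alpha(u)(z)}\bigl((\Phi_y^{\lambda,F})^{-1}\bigr) \circ D_z \wtd\alpha(u)\bigr|_{E^{\lambda,F}(z)} \circ D_w \Phi_x^{\lambda,F}.
\]
Now I would plug in the three identifications derived in the preceding discussion of normal forms: the identity $D_w \Phi_x^{\lambda,F} = H_{x,z}$; the inverse formula $D_{\wtd\alpha(u)(z)}\bigl((\Phi_y^{\lambda,F})^{-1}\bigr) = H_{\wtd\alpha(u)(z), y}$ obtained the same way applied to $\Phi_y^{\lambda,F}$ (noting $\wtd\alpha(u)(z)$ lies on $W^{\lambda,F}(y)$); and the holonomy formula \eqref{holonom}, which reads $D_z \wtd\alpha(u)\bigr|_{E^{\lambda,F}(z)} = H_{z,\wtd\alpha(u)(z)}$.

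Substituting these gives
\[
D_w \Psi = H_{\wtd\alpha(u)(z), y} \circ H_{z,\wtd\alpha(u)(z)} \circ H_{x, z},
\]
and by the cocycle property $H_{p,q} \circ H_{r,p} = H_{r,q}$ that is built into the definition of the $H$'s on the connected manifold $W^{\chi}(x) = W^{\chi}(y)$, this collapses to the single expression $H_{x,y}$, which manifestly does not depend on $w$. Hence $D_w \Psi$ is constant in $w$, and $\Psi$ is affine.

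The main conceptual content sits in the formula $D_w \Phi_x^{\lambda,F} = H_{x, \Phi_x^{\lambda,F}(w)}$ (together with the equivariance $\wtd\alpha(u)\cdot W^{\lambda,F}(x) = W^{\lambda,F}(y)$), both of which are already established in the preceding discussion. The only step requiring any care is checking that the orientations chosen in the definitions of the various $H$-maps are mutually consistent so that the telescoping identity $H_{\wtd\alpha(u)(z), y} \circ H_{z,\wtd\alpha(u)(z)} \circ H_{x, z} = H_{x,y}$ holds with no sign ambiguity; this is routine once one fixes a continuous orientation of the line bundle $E^{\lambda,F}(\bullet)$ on $W^\chi(x)$. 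No measure-theoretic obstacles arise, since everything is evaluated at $\mu$-typical $x$ and $\mu_x^\chi$-typical $y$ on the full measure set where $\Phi_\bullet^{\lambda,F}$ and the $H$-maps are simultaneously defined.
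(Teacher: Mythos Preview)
Your proof is correct and uses essentially the same ingredients as the paper: the identity $D_w\Phi_x^{\lambda,F}=H_{x,\Phi_x^{\lambda,F}(w)}$, equation \eqref{holonom}, and the cocycle property of the maps $H_{\cdot,\cdot}$. The organization differs slightly: the paper introduces an auxiliary map $\Psi\colon E^{\lambda,F}(y)\to W^{\lambda,F}(y)$ built from the putative affine formula, checks that $D_v\Psi=H_{y,\Psi(v)}$ and $\Psi(0)=y$, and concludes $\Psi=\Phi_y^{\lambda,F}$ by the uniqueness characterization of normal forms; you instead apply the chain rule directly to $(\Phi_y^{\lambda,F})^{-1}\circ\wtd\alpha(u)\circ\Phi_x^{\lambda,F}$ and telescope the resulting product of $H$-maps to the constant $H_{x,y}$. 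Your route is marginally more direct (one fewer auxiliary object), while the paper's route makes the explicit affine formula $v'\mapsto H_{x,y}(v'-w)$ visible as a byproduct; both are equally valid and equally short.
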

\begin{proof}
Let $x'=\Phi^{\lambda, F}_x(w)$. Consider the map 
$E^{\lambda, F}(y)\to E^{\lambda, F}(y) $,
$$\Psi\colon v\mapsto \wtd \alpha (u) \circ \Phi^{\lambda, F}_x (w + H_{x',x} (D_y \wtd \alpha(u \inv) v)).$$
Fix $v\in E^{\lambda, F}(y)$ and let $\wtd v =  w + H_{x',x} (D_y \wtd \alpha(u \inv) v)$,   $\wtd x = \Phi^{\lambda, F}_x (\wtd v)$, and $\wtd y = \wtd \alpha(u) (\wtd x)$. 
We have
\begin{align*}D_v\Psi &= 
\restrict{D_{\wtd x}\wtd \alpha (u)}{E^{\lambda, F}(\wtd x)} \circ   D_{\wtd v} \Phi^{\lambda, F}_x \circ  H_{x',x} \circ \restrict{ D_y \wtd \alpha(u \inv) }{E^{\lambda, F}(y)}\\
&= H_{\wtd x, \wtd y}
 \circ  H_{x,\wtd x} \circ  H_{x',x} \circ H_{y, x'}\\
&=H_{y,\wtd y}.
\end{align*}
We have $\Phi^{\lambda, F}_y(0) = y =\Psi(0)$ and $D_v\Phi^{\lambda, F}_y = D_v\Psi$ for every $v\in  E^{\lambda, F}(y)$ and thus we conclude that $\Psi =\Phi^{\lambda, F}_y$.  
Thus, $v= (\Phi^{\lambda, F}_y)\inv (\wtd \alpha (u) \circ \Phi^{\lambda, F}_x (w + H_{x',x} (D_y \wtd \alpha(u \inv) v))$ and so 
$(\Phi^{\lambda, F}_y)\inv \wtd \alpha (u) \circ \Phi^{\lambda, F}_x $ coincides with the affine map \[v'\mapsto D_{x'}\wtd \alpha(u)H_{x,x'}(v'-w)
= H_{x,y}(v'-w)
.\qedhere\]
\end{proof}
%
%
%

\subsubsection{Normal forms relative to a measurable framing}
In the sequel, we   often   a measurable choice of basis of $E^{\lambda, F}(x)$.  This induces a measurable choice framing $\psi_x^{\lambda, F}\colon \Rbb\to E^{\lambda, F}(x)$.
Relative to such a choice of framing, we write $\Psi_x^{\lambda, F}\colon \R\to W^{\lambda, F}(x)$, 
$$\Psi_x^{\lambda, F}(t) = \Phi_x^{\lambda, F}\circ \psi_x^{\lambda, F}(t).$$

\subsection{Mechanisms for extra invariance}

We recall two sufficient conditions to obtain additional invariance of a probability measure on $M^\alpha$.   The first mechanism is the {\it high entropy method}.  We note that this holds for more general spaces admitting $G$-actions,  but we only formulate the results for the induced $G$-action on the  suspension space $M^\alpha$. 
 \begin{theorem}[High entropy method, \cite{MR2191228} ]\label{thm:highent}

 Let $G$ be a group as in \Cref{thm:AtoG}. Let $\Gamma$ be a lattice in $G$.  For $r>1$, let $\alpha\colon\Gamma\to \diff^{r}(M)$ be an  action on $M$.  Let $M^{\alpha}$ be the suspension $G$-space. Let $\mu$ be an $A$-invariant, $A$-ergodic probability measure on $M^{\alpha}$. 

Let $\beta_{1},\beta_{2}\in \Phi(A,G)$ be roots such that $\delta=\beta_{1}+\beta_{2}\in \Phi(A,G)$. Assume that for $\mu$-almost every $x$, $\mu_{x}^{U^{\beta_{1}}}$ and $\mu_{x}^{U^{\beta_{2}}}$ are non-atomic. Then $\mu$ is $U^{\delta}$-invariant.

 \end{theorem}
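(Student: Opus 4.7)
The plan is to follow the Einsiedler--Katok high entropy method of \cite{MR2191228}, adapting their argument to the action of $G$ on the suspension $M^\alpha$. The central algebraic ingredient is the Chevalley commutator formula: choosing nonzero $X_i \in \lieg^{\beta_i}$ with $[X_1,X_2]\neq 0$ (nonvanishing because $\beta_1+\beta_2\in\Phi(A,G)$), and writing $u_i(s) := \exp(sX_i)\in U^{\beta_i}$, one has
\[
[u_1(s_1), u_2(s_2)] \;=\; \exp\!\bigl(s_1 s_2\,[X_1,X_2]\bigr)\cdot \prod_{\substack{p,q\geq 1 \\ (p,q)\neq (1,1)}} v_{p,q}(s_1,s_2),
\]
where each $v_{p,q}(s_1,s_2)\in U^{p\beta_1+q\beta_2}$ is of order $s_1^p s_2^q$. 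Thus small commutators of elements in $U^{\beta_1}$ and $U^{\beta_2}$ produce a distinguished element of $U^{\delta}$, modulo other root subgroups.

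Step 1: Fix a generic $x\in M^\alpha$ in the full-measure set where both $\mu_x^{U^{\beta_i}}$ are defined, $A$-equivariant, non-atomic, and where $\mu$ has enough tightness to control Poincar\'e recurrence under the $A$-action used below. Non-atomicity of $\mu_x^{U^{\beta_1}}$ produces a sequence $u_1^{(n)} = \exp(s_1^{(n)} X_1)\to e$ with $u_1^{(n)}\cdot x$ still generic so that $\mu_{u_1^{(n)}\cdot x}^{U^{\beta_1}} \propto \mu_x^{U^{\beta_1}}$; symmetrically choose $u_2^{(n)} = \exp(s_2^{(n)} X_2)\to e$ using non-atomicity of $\mu_x^{U^{\beta_2}}$.

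Step 2: Since $\beta_1,\beta_2$ are linearly independent in $\liea^*$, choose $a_n \in A$ such that $s_i^{(n)} e^{\beta_i(a_n)}$ stays of unit order for $i=1,2$. Then
\[
a_n [u_1^{(n)}, u_2^{(n)}] a_n^{-1} \;=\; \exp\!\Bigl(s_1^{(n)} s_2^{(n)} e^{\delta(a_n)}\,[X_1,X_2]\Bigr)\cdot \bigl(\Ad(a_n)\text{-conjugates of }v_{p,q}\bigr),
\]
the principal factor remains bounded and nontrivial while the higher-order terms, living in distinct root subgroups $U^{p\beta_1+q\beta_2}$, can be separated from the $U^\delta$ component by projection or by a finer iterative choice of $a_n$ (as in the standard EK bookkeeping). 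Passing to a subsequence along which $a_n\cdot x \to x_\infty$ (using tightness at infinity), the conjugated commutators converge to a nontrivial element $v_\infty \in U^\delta$.

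Step 3: Each factor in the commutator preserves the proportionality class of the relevant leafwise measure, and $A$-equivariance of leafwise measures transports this to $x_\infty$. One concludes that $v_\infty$ lies in the support of $\mu_{x_\infty}^{U^\delta}$, whence $\mu_x^{U^\delta}$ is non-atomic on a set of positive measure. Finally, since $A$ acts on $U^\delta$ by dilation via the nonzero character $\delta$, the $A$-equivariance $\wtd\alpha(a)_*\mu_x^{U^\delta}\propto \mu_{\wtd\alpha(a)x}^{U^\delta}$ combined with $A$-ergodicity and the Poincar\'e recurrence lemma forces $\mu_x^{U^\delta}$ to be Haar on $U^\delta$ for $\mu$-a.e.\ $x$, which is equivalent to $U^\delta$-invariance of $\mu$.

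The main obstacle is carrying out the limit in Step 2 in the non-compact setting (when $\Gamma$ is nonuniform), since the renormalizing translations $a_n$ may drive $x$ to infinity in $M^\alpha$; this is where the uniformly exponentially small mass at infinity framework of \S\ref{sec:norms} must be invoked to guarantee subsequential convergence of $a_n\cdot x$ and uniform integrability of relevant cocycles. A secondary technicality is the isolation of the pure $U^\delta$-component from the higher-order commutator terms when multiple positive integer combinations $p\beta_1+q\beta_2$ happen to be roots; this is resolved by the standard EK device of iterating the renormalization and extracting the lowest nontrivial order contribution.
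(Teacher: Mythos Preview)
The paper does not give its own proof of this statement: it is cited from Einsiedler--Katok \cite{MR2191228} and used as a black box, with only the remark that the argument ``holds for more general spaces admitting $G$-actions.'' There is therefore no paper proof to compare against; your sketch is an outline of the method being cited.

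That said, your Step~3 contains a genuine gap. From the commutator limit you can conclude that some nontrivial $v_\infty\in U^\delta$ lies in the support of $\mu_{x_\infty}^{U^\delta}$, but this by itself only gives non-atomicity, not Haar. The sentence ``$A$-equivariance \ldots\ combined with $A$-ergodicity and the Poincar\'e recurrence lemma forces $\mu_x^{U^\delta}$ to be Haar'' does not follow: $A$ acting by dilations and the measure being non-atomic are both compatible with, e.g., a measure supported on a Cantor set of dimension strictly between $0$ and $1$ in $U^\delta\simeq\R$. The actual Einsiedler--Katok argument shows more: the set of $v\in U^\delta$ with $v_*\mu_x^{U^\delta}\propto \mu_x^{U^\delta}$ is a closed subgroup (this requires the product-structure/subordinate-partition machinery, not just the commutator trick), the limit $v_\infty$ lies in it, and then $A$-dilation of that subgroup gives all of $U^\delta$. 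Your Step~3 elides precisely this subgroup argument. Also, the phrase ``Each factor in the commutator preserves the proportionality class of the relevant leafwise measure'' is not correct as written: $u_1^{(n)}$ need not preserve $\mu_x^{U^{\beta_2}}$ up to proportionality, and the passage from the two separate leafwise measures to control over the $U^\delta$-leafwise measure is exactly the nontrivial content of the method.
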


Recall that $\Lcal^{\widetilde{\alpha}, F}(\mu)\subset \liea^{*}$ denotes the set of fiberwise Lyapunov functionals for the action $\widetilde{\alpha}|_{A}$ on $M^{\alpha}$ relative to the ergodic, $A$-invariant measure $\mu$.  
We say that a root $\beta\in \Phi(A,G)$ is \emph{non-resonant} with fiberwise Lyapunov functionals if, for every fiberwise Lyapunov functional $\lambda\in \Lcal^{\widetilde{\alpha}, F}(\mu)$, $\beta$ is not positively proportional to $\lambda$. 
\begin{theorem}[\cite{MR4502594} Proposition 5.1]\label{thm:nonres}
Let $G$ be a group as in \Cref{thm:AtoG}. Let $\Gamma$ be a lattice in $G$ and for $r>1$, let $\alpha\colon\Gamma\to \diff^{r}(M)$ be an action on $M$. Let $M^{\alpha}$ be the suspension $G$-space. Let $\mu$ be an ergodic, $A$-invariant Borel probability measure on $M^{\alpha}$ that projects to the Haar measure on  $G/\Gamma$.   

Let $\beta\in \Phi(A,G)$ be a non-resonant root. Then  the measure $\mu$ is $U^{\beta}$-invariant for the action $\widetilde{\alpha}$.
\end{theorem}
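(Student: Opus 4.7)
The plan is an entropy comparison between $\mu$ and its Haar projection along the coarse Lyapunov foliation associated to $\beta$. Fix $a_0 \in A$ with $\beta(a_0)<0$ and let $[\beta]$ denote its coarse Lyapunov class for the $A$-action on $M^\alpha$. By the non-resonance hypothesis, no fiberwise Lyapunov functional is positively proportional to $\beta$ (or $2\beta$, if $2\beta \in \Phi(A,G)$), so the $[\beta]$-direction has no fiber contribution: the coarse Lyapunov manifolds $\Wcal^{[\beta]}(x)$ coincide with the $U^{[\beta]}$-orbits, and hence $\mu_{x}^{\Wcal^{[\beta]}} = \mu_{x}^{U^{[\beta]}}$ for $\mu$-a.e.\ $x$.

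Next I would squeeze the partial entropy $h_\mu(a_0, \Wcal^{[\beta]})$ between matching bounds. The Margulis--Ruelle inequality restricted to the coarse direction $[\beta]$ gives
\[
h_\mu(a_0, \Wcal^{[\beta]}) \,\le\, \sum_{\gamma \in [\beta]\cap \Phi(A,G)} |\gamma(a_0)|\,\dim \lieg^\gamma,
\]
using crucially that no fiberwise exponent enters the sum. For the lower bound, equivariance of the projection $p\colon M^\alpha \to G/\Gamma$ together with a factor-entropy argument for partial entropies yields $h_\mu(a_0, \Wcal^{[\beta]}) \ge h_{m_{G/\Gamma}}(a_0, \Ucal^{[\beta]})$, and the latter equals the same right-hand side since Haar is $U^{[\beta]}$-invariant and hence saturates Margulis--Ruelle on the base. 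Thus equality holds throughout, and the Ledrappier--Young equality case (adapted to the coarse foliation $\Wcal^{[\beta]}$) forces the leafwise measures $\mu_x^{U^{[\beta]}}$ to be Haar on the $U^{[\beta]}$-orbits for $\mu$-a.e.\ $x$. Haar leafwise measure on the orbit foliation of $U^{[\beta]}$ is equivalent to $U^\beta$-invariance (and $U^{2\beta}$-invariance if applicable) of $\mu$, finishing the argument.

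The main technical obstacle is implementing this in the non-compact setting that arises when $\Gamma$ is nonuniform: both the factor-entropy inequality and the equality case of Ledrappier--Young require tempered distortion of the fiberwise derivative cocycle along $a_0^n$-orbits and uniform integrability of the relevant Jacobians. These ingredients are supplied by \cref{prop:integrability} and, after passing to an $A$-ergodic component, by exponentially small mass at infinity for $\mu$; the adapted fiber norms of \S\ref{sec:norms} make the standard Pesin-theoretic statements apply verbatim in this setting.
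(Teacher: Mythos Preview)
The paper does not give its own proof of this statement: it is quoted as \cite[Proposition~5.1]{MR4502594} and used as a black box. Your outline is correct and is precisely the argument in that reference. The key point---that non-resonance of $\beta$ forces the total coarse Lyapunov manifold $W^{[\beta]}(x)$ to coincide with the $U^{[\beta]}$-orbit of $x$, so that the partial entropy $h_\mu(a_0\mid \Wcal^{[\beta]})$ is bounded above by the sum of root contributions with no fiber term---is exactly right, and the matching lower bound via the factor map to $(G/\Gamma, m_{G/\Gamma})$ then pins the leafwise measures to Haar by the equality case of the entropy inequality. Your remarks on the noncompact case (integrability via \cref{prop:integrability}, exponentially small mass at $\infty$) are also the correct technical ingredients needed to make the Ledrappier--Young machinery apply; these are handled in \cite{MR4502594} and in \cite{MR4599404} in the generality required here.
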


\subsection{Properties of fiber entropy}%
We still assume the setting and   notation from \Cref{mainresultssection}.  Recall that $\Fsc$ is the measurable partition  fibers of $p\colon M^{\alpha}\to G/\Gamma$. For notational simplicity, given $\gamma\in \Gamma$ or $g\in G$,  an $\alpha(\gamma)$-invariant measure $\mu_0$ on $M$ or a $\wtd \alpha (g)$-invariant measure $\mu$ on $M^\alpha$, we often write 
$h_{\mu_0}(\gamma ):= h_{\mu_0}( \alpha(\gamma))$ or 
$h_\mu(g\mid \Fsc):= h_\mu(\wtd \alpha(g)\mid \Fsc)$, respectively, for the metric entropy or fiber metric entropy of the corresponding transformation.  
 \subsubsection{Entropy formula}
We often make use of the following relations between Lyapunov exponents and entropy. Often, given an absolutely continuous $\Gamma$- or $G$-invariant measure, we use the following two facts to
relate the existence a positive entropy element of the action with  properties of the homomorphism arising in \cref{thm:ZCSRGamma,thm:ZCSRG}.

\begin{theorem}[Margulis--Ruelle inequality, \cite{MR1314494,MR516310}]\label{thm:MRineq}
For any $\alpha(\gamma)$-invariant probability measure $\mu_{0}$ on $M$, we have \[h_{\mu_0}(\gamma)\le \int_{M}\sum_{\lambda(x):\lambda(x)>0}\lambda(x)\, d\mu_{0}(x).\] On the suspension, for any $\wtd{\alpha}(g)$-invariant measure $\mu$ with exponentially small mass at $\infty$, we have 
\[h_{\mu}(g\mid\Fsc)\le \int_{M^{\alpha}}\sum_{\lambda^{F}(x):\lambda^{F}(x)>0}\lambda^{F}(x)\, d\mu(x).\] 
\end{theorem}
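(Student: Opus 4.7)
Sketch. The first inequality is the classical Margulis--Ruelle inequality for a $C^1$ diffeomorphism of a closed manifold; my plan is to reproduce the standard argument. Fix $f = \alpha(\gamma)$ and small $\epsilon > 0$. Choose a finite Borel partition $\xi$ of $M$ with atoms of diameter at most $\epsilon$ and boundary of negligible $\mu_0$-measure. Using $h_{\mu_0}(f,\xi) \le H_{\mu_0}(f^{-1}\xi \mid \xi) \le \int_M \log N_\xi(x)\, d\mu_0(x)$, where $N_\xi(x)$ counts atoms of $\xi$ meeting $f(\xi(x))$, it suffices to bound $\log N_\xi(x)$. A standard Oseledec / volume-comparison argument shows that $f$ stretches an $\epsilon$-ball centered at $x$ along the Lyapunov decomposition by at most $e^{\lambda_i(x)}$ in the direction of the $i$-th Lyapunov subspace, so $N_\xi(x) \le C(x) \prod_i e^{\lambda_i^+(x)}$, where $\log C(x)$ is dominated by a constant depending on $\epsilon$ and tends to a constant independent of $\epsilon$ on Pesin blocks. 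Integrating, taking $\sup$ over $\xi$, and letting $\epsilon \to 0$ gives the inequality on $M$.

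For the suspension version, the plan is to run the same argument fiberwise. Since $\Fsc$ generates a $\wtd\alpha(g)$-invariant $\sigma$-algebra (every $g \in G$ permutes fibers), the Abramov--Rokhlin formula lets me identify $h_\mu(\wtd\alpha(g) \mid \Fsc)$ with the limit $\lim_n \frac{1}{n} H_\mu(\xi_0^{n-1} \mid \Fsc)$ for partitions $\xi$ of $M^\alpha$ whose restrictions to fibers are generating, and bounds this by $H_\mu(\wtd\alpha(g)^{-1}\xi \mid \xi \vee \Fsc)$. Choose $\xi$ whose fiber restrictions, measured relative to the fiberwise Riemannian metric $\|\cdot\|_x^F$ of \S3.2.1, have diameter at most $\epsilon$. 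Then this conditional entropy is dominated by a fiberwise covering number $N_\xi^F(x)$, and applying \Cref{thm:higherrankMET} to the subbundle $F$ gives, for $\mu$-a.e.\ $x$, $\log N_\xi^F(x) \le \sum_i (\lambda_i^F)^+(x) + o_\epsilon(1)$.

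The main obstacle is the non-compactness of $M^\alpha$: the fiberwise $C^1$ norm of $\wtd\alpha(g)$ is unbounded, since orbits excurse deep into the cusps of $G/\Gamma$ and the fiber diffeomorphism $\alpha(\beta_\fund(g,\cdot))$ can accumulate arbitrarily large local distortion. Consequently one cannot choose a partition with uniformly small atom diameter, and the $o_\epsilon(1)$ error in the covering estimate need not be integrable without extra input. This is exactly where the hypothesis of exponentially small mass at $\infty$ is needed, together with \Cref{prop:integrability}, which provides $L^q$-integrability of $\sup_{g\in B}\log\psi_k(g,\cdot)$. My plan is to exhaust $M^\alpha$ by sublevel sets $\{h \le T\}$ of the height function, on which fiber metrics are uniformly comparable to a fixed reference metric and $\|D^F\wtd\alpha(g)\|$ is bounded; perform the fiberwise covering estimate with atom diameter tapering as a function of $h$; and control the truncation error by the exponential decay $\mu(\{h \ge t\}) \lesssim e^{-\tau t}$. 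Making this exhaustion compatible with the subadditivity underlying the definition of $h_\mu(\cdot \mid \Fsc)$ and with dominated convergence on the integrand $\sum_i (\lambda_i^F)^+$ is the delicate technical point, though it follows standard techniques for entropy on non-compact spaces with tempered cocycles.
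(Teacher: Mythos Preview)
The paper does not prove this statement; it is cited from the literature. The first inequality is Ruelle's original result \cite{MR516310}, and the second is the Margulis--Ruelle inequality for random dynamical systems due to Bahnm\"uller--Bogensch\"utz \cite{MR1314494}. The paper's only contribution is the one-sentence remark immediately following the statement: exponentially small mass at $\infty$, together with the bound \eqref{tempered} on $\log \psi_k$, guarantees the $\log$-integrability of the fiberwise $C^1$-norm $y\mapsto \log\|f_y\|_{C^1}$ that \cite{MR1314494} requires as a hypothesis.

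Your plan is therefore vastly more work than the paper does, and more than is needed. The point you are missing is that the suspension dynamics $(M^\alpha,\wtd\alpha(g))\to (G/\Gamma,g)$ is exactly a random dynamical system over the base $(G/\Gamma,p_*\mu)$ with fiber diffeomorphisms $f_y = I_{g(y)}^{-1}\circ \wtd\alpha(g)\circ I_y$ on the compact manifold $M$; the fiber entropy $h_\mu(g\mid\Fsc)$ is the entropy of this random system, and the fiberwise Lyapunov exponents $\lambda^F$ are its Lyapunov exponents. The result of \cite{MR1314494} gives the inequality directly once you check $\int \log^+\|f_y\|_{C^1}\, d(p_*\mu)(y)<\infty$, and this follows from \Cref{prop:integrability} (or more directly from \eqref{tempered} combined with \eqref{eq:SmallMassIneq}). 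Your exhaustion-and-tapering scheme is unnecessary.

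Separately, your sketch of the classical case has a slip: for a single iterate $f$, the stretching of an $\epsilon$-ball is controlled by $\|D_xf\|$, not by $e^{\lambda_i(x)}$. The standard proof applies your covering argument to $f^n$, uses Oseledec to estimate $\frac{1}{n}\log N_\xi^{(n)}(x)$, and then sends $n\to\infty$ before taking $\sup_\xi$.
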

On the suspension, we note that when $\mu$ has exponentially small mass at $\infty$, the  $\log$-integrability of $C^1$-norms (relative to choice of norms in \Cref{sec:norms}) needed to apply  \cite{MR1314494} holds.

When the invariant measure is absolutely continuous, the inequality \cref{thm:MRineq} becomes an equality.  
\begin{theorem}[Pesin's entropy formula, {\cite[Section 10.4]{NUHbook}}]\label{thm:Pesinent}
Let $\mu_{0}$ be an $\alpha(\gamma)$-invariant absolutely continuous (with respect to the Lebesgue measure class) probability measure on $M$. Then \[h_{\mu_{0}}(\gamma)=\int_{M}\sum_{\lambda(x):\lambda(x)>0} \lambda(x)\,  d\mu_{0}(x).\]  
\end{theorem}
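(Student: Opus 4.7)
\medskip

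\noindent\textbf{Proof plan.}
One direction, namely $h_{\mu_0}(\gamma) \le \int_M \sum_{\lambda(x)>0} \lambda(x) \, d\mu_0(x)$, is precisely the Margulis--Ruelle inequality already recorded in \Cref{thm:MRineq}. The work is therefore to establish the reverse inequality
\[ h_{\mu_0}(\gamma) \ge \int_M \sum_{\lambda(x)>0} \lambda(x) \, d\mu_0(x). \]
By the affine property of entropy and the measurability of Lyapunov exponents, I would first reduce to the case that $\mu_0$ is ergodic; then the Lyapunov exponents are almost everywhere constant, so the right-hand side becomes $\sum_{\lambda_i > 0} \lambda_i \, m_i$ where $m_i$ are the associated multiplicities.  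If all $\lambda_i \le 0$ there is nothing to prove, so assume the unstable distribution $E^u$ has positive rank $d^u = \sum_{\lambda_i > 0} m_i$.

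Next, I would invoke Pesin theory (for $\alpha(\gamma)\colon M \to M$ with respect to $\mu_0$) to obtain, on a set of full $\mu_0$-measure, the unstable Pesin lamination $\{W^u(x)\}$ with $T_x W^u(x) = E^u(x)$.  The plan is to choose a measurable partition $\xi$ of $M$ that is subordinate to $W^u$ in the Pesin sense: each atom $\xi(x)$ is contained in $W^u(x)$ and contains a neighborhood of $x$ inside $W^u(x)$, and moreover $\alpha(\gamma)\xi \ge \xi$ (every atom of $\alpha(\gamma)\xi$ is contained in an atom of $\xi$), while $\bigvee_{n \ge 0} \alpha(\gamma)^n\xi$ generates the Borel $\sigma$-algebra on each unstable leaf. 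Such partitions are constructed classically by Ledrappier--Strelcyn and Mañé.  Standard arguments then give
\[ h_{\mu_0}(\gamma) \ge h_{\mu_0}(\alpha(\gamma),\xi) = H_{\mu_0}(\alpha(\gamma)\xi \mid \xi). \]

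The decisive input is absolute continuity of the conditional measures $(\mu_0)_x^\xi$ with respect to the induced Riemannian volume $\mathrm{vol}^u_x$ on $W^u(x)$.  Because $\mu_0$ is absolutely continuous on $M$ and the unstable lamination is absolutely continuous transversally (this is a theorem of Pesin: the holonomy maps between transversals are absolutely continuous), a Fubini-type disintegration argument shows that for $\mu_0$-a.e.\ $x$ the conditional $(\mu_0)_x^\xi$ admits a positive density $\rho_x$ with respect to $\mathrm{vol}^u_x \restriction_{\xi(x)}$.  Then one computes
\[ H_{\mu_0}(\alpha(\gamma)\xi \mid \xi) = -\int_M \log (\mu_0)_x^\xi\bigl(\alpha(\gamma)\xi(x)\bigr) \, d\mu_0(x), \]
and unfolding this using the change-of-variables formula on unstable leaves (together with $\alpha(\gamma)$-invariance of $\mu_0$ and the cocycle identity for $\rho_x$) yields
\[ H_{\mu_0}(\alpha(\gamma)\xi \mid \xi) = \int_M \log \bigl| \det D_x\alpha(\gamma)\restriction_{E^u(x)} \bigr| \, d\mu_0(x). \]
By the Oseledec theorem the right-hand side equals $\sum_{\lambda_i > 0} \lambda_i m_i$, completing the proof.

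The main obstacle is the conditional absolute continuity step: propagating absolute continuity of $\mu_0$ on the ambient manifold to absolute continuity of its conditionals along the Pesin unstable foliation. This requires the absolute continuity of unstable holonomies (whose proof, in the nonuniformly hyperbolic regime, is delicate and occupies a substantial portion of the reference \cite{NUHbook}), together with a careful choice of $\xi$ so that the disintegration and the entropy computation are mutually compatible. Once those ingredients are in place, the remaining computation is essentially bookkeeping via the chain rule for Jacobians.
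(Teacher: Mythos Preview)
The paper does not supply its own proof of this statement; it simply quotes Pesin's entropy formula with a citation to \cite[Section 10.4]{NUHbook}. Your outline is the standard proof found in that reference (Margulis--Ruelle for the upper bound, and for the lower bound a measurable partition subordinate to Pesin unstable manifolds together with absolute continuity of unstable conditionals), so there is nothing to compare.
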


\subsubsection{Product structure of entropy}
 Let $\mu$ be an ergodic, $A$-invariant  probability measure on $M^\alpha$ with exponentially small mass at $\infty$.  
The following adaptation of   \cite[Thm.\  13.1]{MR4599404}  to our notation and setting will be used frequently.
 \begin{theorem}[\cite{MR4599404}]\label{thm:coent}  
 For any $a\in A$, we have \begin{equation}\label{eq:coent} h_{\mu}(a\mid \Fsc)=\sum_{i:\chi_{i}^{F}(a)>0}h_{\mu}(a\mid  \Wcal^{\chi_{i}^{F}, F}).\end{equation} \end{theorem}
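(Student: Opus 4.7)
The statement is a Ledrappier--Young type decomposition of the fiber entropy of $a$ along the coarse fiberwise Lyapunov foliations. My plan is to reduce to a higher-rank version of the Ledrappier--Young entropy formula, which decomposes entropy along an unstable foliation into contributions from each coarse unstable direction. The key point is that $a \in A$ acts hyperbolically on the fibers, with unstable manifold (inside each fiber) equal to
\[
W^{u,F}_{a}(x) \;=\; \bigoplus_{\chi_i^F(a) > 0} W^{\chi_i^F, F}(x),
\]
and similarly for the stable manifold. Both sides of \eqref{eq:coent} are unchanged if we replace $a$ by $a^{-1}$ (using the Pesin-type symmetry $h_\mu(a\mid\Fsc)=h_\mu(a^{-1}\mid\Fsc)$), so the content is really about decomposing the entropy along the unstable foliation into pieces along each coarse Lyapunov leaf.

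For the upper bound, I would fix a measurable partition $\eta$ subordinate to the fiberwise unstable lamination $\Wcal^{u,F}_a$, and refine it by nesting partitions subordinate to the coarse Lyapunov foliations with $\chi_i^F(a)>0$, ordered arbitrarily as $\chi_1^F, \dots, \chi_k^F$. Writing $\eta_j$ for a partition subordinate to $\bigoplus_{i\le j} \Wcal^{\chi_i^F, F}$, one has the standard identity $h_\mu(a\mid \Fsc) = h_\mu(a,\eta_k)$ (the fiber Pesin entropy equals entropy computed from any partition subordinate to the fiber unstable). Then applying the Abramov--Rokhlin formula recursively,
\[
h_\mu(a,\eta_k) \;=\; \sum_{j=1}^{k} h_\mu(a,\eta_j \mid \eta_{j-1})
\]
with $\eta_0$ trivial, and each term is bounded above by $h_\mu(a\mid \Wcal^{\chi_j^F, F})$ by using that $\eta_j$ restricted to an atom of $\eta_{j-1}$ is subordinate to $\Wcal^{\chi_j^F, F}$ (up to holonomy).

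For the lower bound, which is the harder direction, I would use the commutativity of the $A$-action: the different coarse Lyapunov foliations $\Wcal^{\chi_i^F, F}$ are each invariant and mutually transverse inside the fiber, and the foliations $\Wcal^{\chi_j^F,F}$ with $j\ne i$ serve as ``stable directions'' for a suitable element $a' \in A$ chosen so that only $\chi_i^F(a')>0$ (or negative in the right combination). Applying the upper bound argument to $a'$ and combining over well-chosen elements in $A$, one obtains the reverse inequality. This is precisely the higher-rank input---the abelian group $A$ is rich enough to isolate contributions from each coarse Lyapunov foliation separately.

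The main obstacle I anticipate is technical: the argument relies on Pesin theory (Lyapunov charts, subordinate partitions, absolute continuity of holonomy within coarse Lyapunov leaves) and on subadditivity/affineness of the entropy function in the presence of several commuting hyperbolic maps, and in our setting $\mu$ is supported on the non-compact space $M^\alpha$ when $\Gamma$ is nonuniform. One must verify that $\mu$'s exponentially small mass at $\infty$ (or the uniform integrability of $C^1$-norms from \Cref{prop:integrability} via \eqref{tempered}) gives the required $\log$-integrability of derivative cocycles to justify both the Margulis--Ruelle inequality and the Ledrappier--Young construction of subordinate partitions with finite conditional entropy. Granted this integrability, the proof proceeds essentially as in \cite{MR4599404}, with the only new input being the replacement of an ambient compact manifold by our non-compact setting, where the choice of adapted norms in \Cref{sec:norms} makes all the standard smooth ergodic theory statements (Oseledec, stable manifold theorem, existence of subordinate partitions) go through without change.
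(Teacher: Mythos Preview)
The paper does not prove this theorem; it is stated as an adaptation of \cite[Thm.~13.1]{MR4599404} to the present notation and setting, and is used as a black box throughout. Your outline is essentially the strategy carried out in that reference: the upper bound via Abramov--Rokhlin with a telescoping chain of partitions subordinate to nested sums of coarse Lyapunov foliations, and the lower bound exploiting the higher-rank structure of $A$.

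One imprecision worth flagging in your lower-bound sketch: you cannot in general find $a' \in A$ for which only a single $\chi_i^F(a')>0$ (for instance, if some coarse exponent lies in the positive cone generated by the others). The argument in \cite{MR4599404} does not proceed by isolating individual exponents this way; rather, it establishes a product structure for the leafwise measure $\mu_x^{u,F}$ along the full fiber-unstable in terms of the leafwise measures along each $\Wcal^{\chi_i^F,F}$, using that each coarse foliation is invariant under all of $A$ and that one can vary the element of $A$ to reorder the Lyapunov filtration. The equality of entropies then follows from this product structure together with the Ledrappier--Young characterization of each summand. Your remark about non-compactness is well taken and is precisely the adaptation needed here: the hypothesis that $\mu$ has exponentially small mass at $\infty$, combined with \eqref{tempered}, yields the $\log$-integrability of the fiberwise $C^1$ norms required for Oseledec, stable manifold theory, and the construction of subordinate partitions with finite conditional entropy.
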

Here, $\{\chi_i^F\}$ denotes the fiberwise coarse Lyapunov exponents and $h_{\mu}(a\mid  \Wcal^{\chi_{i}^{F}, F})$ denotes the contribution to fiber entropy from the lamination associated with $\chi_i^F$.  See \cite{MR4599404} for definitions.

 \cref{thm:coent} implies that the (fiberwise) entropy $a\mapsto h_{\mu}(a\mid \Fsc)$ is semi-norm on $A$ and thus, if non-zero, it vanishes on at most a hyperplane.  \cref{thm:coent}  also implies  subadditivity of fiber entropy:  
  
\begin{theorem}[Subadditivity of fiber entropy, see \cite{MR4599404,MR1213080}]\label{thm:subadditive}
For any $a_{1},a_{2}\in A$, \[h_{\mu}(a_{1}a_{2}\mid \Fsc) \le h_{\mu}(a_{1}\mid \Fsc)+h_{\mu}(a_{2}\mid\Fsc).\]
\end{theorem}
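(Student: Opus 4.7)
The plan is to derive subadditivity directly from the product formula of \cref{thm:coent}. The key observation is that for each coarse fiberwise Lyapunov exponent $\chi = \chi_i^F$, the leafwise contribution $a \mapsto h_\mu(a \mid \Wcal^{\chi, F})$ is the positive part of a linear functional on $\liea$ that is positively proportional to $\chi$. Concretely, choosing any representative $\lambda_i \in \chi_i^F$, one should have
\[h_\mu(a \mid \Wcal^{\chi_i^F, F}) = s_i \cdot \lambda_i(a)^+\]
for a constant $s_i \ge 0$ depending only on $\mu$ and the coarse class.

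This linearity is essentially the content of the leafwise Ledrappier--Young-type formula underlying \cite{MR4599404}: on the open half-space $\{\lambda_i > 0\}$, every $a$ uniformly expands the leaves of $\Wcal^{\chi_i^F, F}$, and the conditional entropy factors as a fiberwise dimension invariant of the leafwise measures (independent of $a$) times the log-Jacobian of $a$ along the coarse leaf, which is linear in $a$ and proportional to $\lambda_i$; on the complementary half-space, Margulis--Ruelle applied to the leafwise action forces the contribution to vanish. A non-negative linear function on the closed half-space $\{\lambda_i \ge 0\}$ that vanishes on the boundary hyperplane $\{\lambda_i = 0\}$ must be of the form $s_i \cdot \lambda_i$ for some $s_i \ge 0$, which yields the claimed formula.

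Given this form, subadditivity is a one-line consequence of the identity $\lambda_i(a_1 a_2) = \lambda_i(a_1) + \lambda_i(a_2)$ (identifying $A$ with $\liea$ via $\exp$ and viewing $\lambda_i \in \liea^*$) together with the elementary pointwise inequality $(x+y)^+ \le x^+ + y^+$: summing over $i$ with the weights $s_i$ and applying \cref{thm:coent} to $a_1 a_2$ yields
\[h_\mu(a_1 a_2 \mid \Fsc) = \sum_i s_i \bigl(\lambda_i(a_1) + \lambda_i(a_2)\bigr)^+ \le \sum_i s_i \lambda_i(a_1)^+ + \sum_i s_i \lambda_i(a_2)^+ = h_\mu(a_1 \mid \Fsc) + h_\mu(a_2 \mid \Fsc).\]
The only substantive technical input is the linearity of each summand; this is the main reason for the product decomposition in \cite{MR4599404}, so that once \cref{thm:coent} is granted, the subadditivity statement itself requires no further argument beyond the elementary manipulation above.
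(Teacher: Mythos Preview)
Your proposal is correct and is exactly the approach the paper has in mind: the paper does not give a detailed argument but simply observes, immediately before stating \cref{thm:subadditive}, that \cref{thm:coent} makes $a\mapsto h_\mu(a\mid\Fsc)$ a semi-norm on $A$ and hence subadditive. Your write-up just unpacks this implication by making explicit that each leafwise term is the positive part of a linear functional (which is the content of the Ledrappier--Young-type formula behind \cite{MR4599404}), and then invoking $(x+y)^+\le x^++y^+$.
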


\subsubsection{Semicontinuity of entropy}\label{sec:semicontientropy}
For the following, we heavily use that the action $\alpha\colon \Gamma\to \Diff^\infty(M)$
inducing the suspension space $M^\alpha$  (and the  induced $G$-action) is  $C^\infty$.  This is because we appeal to a fibered version of the classical  results of Newhouse and Yomdin, explicated in \cref{app:yomdinnewhouse}.

\begin{proposition}\label[proposition]{prop:uscfiberentropy}
Fix $g\in G$ and let $\scrM$ be a collection of $g$-invariant Borel probability measures on $M^\alpha$ with {uniformly exponentially small mass at $\infty$}.  Then for any sequence $\{\mu_j\}\subset \scrM$ and any subsequential limit point $\mu_\infty$ of $\mu_j$ we have 
\begin{equation}
h_{\mu_\infty}(g\mid \Fsc)\ge \limsup_{j\to \infty}h_{\mu_j}(g\mid \Fsc).  
\end{equation}
\end{proposition}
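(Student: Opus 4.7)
The plan is to reduce \cref{prop:uscfiberentropy} to the abstract semicontinuity framework developed in \cref{app:yomdinnewhouse}. The two key inputs there are \cref{lem:locentzeromeansUSC}, which asserts upper semicontinuity of fiber metric entropy along weak-$*$ convergent sequences of uniformly exponentially tight measures whenever local fiber entropy vanishes, and \cref{prop:Yomdin}, a fibered version of Yomdin's volume-growth estimate which, for a $C^\infty$ fibered system with suitably integrable fiberwise $C^r$-norms, ensures vanishing local fiber entropy. Granting these two results, the task is entirely to verify the integrability hypotheses for the family $\scrM\cup\{\mu_\infty\}$ and then concatenate the two results.

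First I would set up norms and tightness. Equip the bundle $F\to M^\alpha$ with the norms from \cref{sec:norms} and let $h$ denote the height function on $M^\alpha$. By hypothesis, $\sup_{\mu\in\scrM}\int e^{\tau h}\, d\mu <\infty$ for some $\tau>0$, so $\scrM$ is uniformly tight; in particular every weak-$*$ subsequential limit $\mu_\infty$ is a Borel probability measure on $M^\alpha$, and Fatou's lemma applied to $e^{\tau' h}$ for $\tau'<\tau$ shows $\mu_\infty$ also has exponentially small mass at infinity. Combined with the Lubotzky--Mozes--Raghunathan-type estimate \eqref{tempered}, which bounds the growth of fiberwise $C^k$-norms of $\wtd\alpha(g)$ linearly in $h$, this yields uniform integrability of $\log^+ \|D^F\wtd\alpha(g^{\pm 1})\|$, and more generally of $\log^+\|\wtd\alpha(g^{\pm 1})\|_{C^r}$ for any $r\ge 1$, with respect to every $\mu\in\scrM\cup\{\mu_\infty\}$, uniformly in $\scrM$.

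Next I would apply the fibered Yomdin estimate of \cref{prop:Yomdin} to the $C^\infty$ diffeomorphism $\wtd\alpha(g)$ acting fiberwise on $M^\alpha$. Each fiber $\Fsc(x)$ is diffeomorphic to the fixed compact manifold $M$, so fiberwise local volume growth is controlled by fiberwise $C^r$ norms; the uniform $L^1$ control from the previous paragraph lets one take $r\to\infty$ as in Newhouse to drive the local-fiber-entropy contribution to zero uniformly over $\scrM\cup\{\mu_\infty\}$. Plugging vanishing local fiber entropy into \cref{lem:locentzeromeansUSC} then gives the claimed inequality $\limsup_j h_{\mu_j}(g\mid\Fsc)\le h_{\mu_\infty}(g\mid\Fsc)$.

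The main obstacle, and the reason one cannot simply cite Newhouse verbatim, is the non-compactness of $M^\alpha$: the classical proof assembles entropy from $(n,\eps)$-spanning sets on a compact manifold, whereas here the fiberwise dynamics is unbounded in the $C^1$-topology and orbits can escape to infinity in $G/\Gamma$. This is precisely why both hypotheses in the statement are needed. The uniform exponential tightness \eqref{eq:SmallMassIneq2} allows one, for any $\eps>0$, to truncate $M^\alpha$ to a sublevel set $\{h\le R\}$ so that $\mu(\{h>R\})$ is uniformly small over $\scrM\cup\{\mu_\infty\}$; the Birkhoff/Borel--Cantelli consequence \eqref{eq:integrabl} ensures that the $\mu$-typical orbit spends a negligible proportion of time outside this truncation and that the contribution of excursions to infinity to the separated-set count is subexponential. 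Making these two controls quantitative is the principal work hidden inside \cref{prop:Yomdin} and \cref{lem:locentzeromeansUSC}, and is what makes the appendix necessary rather than a direct citation.
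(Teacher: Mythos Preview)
Your proposal is correct and follows essentially the same route as the paper: reduce to \cref{lem:locentzeromeansUSC} and \cref{prop:Yomdin}, and verify the uniform-integrability hypothesis on $y\mapsto\log R_{y,k}$ by combining the linear growth estimate \eqref{tempered} with the uniform exponential moment \eqref{eq:SmallMassIneq2}. The paper adds one small technical point you omit: one must choose the Borel trivialization $I\colon G/\Gamma\times M\to M^\alpha$ via a Siegel fundamental domain $D$ with $\mu_\infty(\partial D)=0$, so that the abstract framework of \cref{app:yomdinnewhouse} (in particular \cref{prop:entfacts}\eqref{basicfact5}--\eqref{basicfact6}) applies at the limit measure.
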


\cref{prop:uscfiberentropy} follows from 
\cref{lem:locentzeromeansUSC} and \cref{prop:Yomdin} in \cref{app:yomdinnewhouse}.  
To apply the results of \cref{app:yomdinnewhouse},  we take $Z= M^\alpha$,  $Y= G/\Gamma$, and $F\colon Z\to Z$ to be the action  of translation by $g\in G$.    We let $I\colon G/\Gamma\times M\to Z$,  be the Borel trivialization  $(g,x)\mapsto [g,x]$ where $g\in D$ is the unique representative of $g\Gamma$ in a choice of Siegel fundamental domain $D\subset G$ for $\Gamma$.  We may moreover choose the Borel domain $D$ such that $\mu_\infty(\partial D)=0$.   We recall   the family of fiber metrics whose properties are outlined in \cref{sss:susp} are uniformly comparable over Siegel sets.

The uniform integrability of the family of $y\mapsto \log R_{y,k}$ required to apply \cref{prop:Yomdin} follows from \eqref{tempered} and the assumption the collection $\scrM$ has uniformly exponentially small mass at $\infty$.  
Indeed, let $\phi\colon G/\Gamma\to [1, \infty)$ be  the function $$\phi(y) = \phi_k(g,y) = R_{y,k}.$$
By \eqref{tempered}, there are $A, B>0$ such that  $\log \phi(y)\le A d(y, \1\Gamma) + B$ and thus there exists $\tau>0$ such that 
$$L:=  \sup_{\mu\in \scrM} \int ( \phi(y) )^\tau \, d\mu(y)= \sup_{\mu\in \scrM} \int e^{\tau \log( \phi(y) )} \, d\mu(y)<\infty.$$  
This exponential moment on $\log \phi$ then yields uniform integrability with respect to $\scrM$.  Indeed, 
for $\mu\in \scrM$ we have
$$\mu\bigl(\{y\in G/\Gamma: \log \phi(y) \ge T\}\bigr) \le L e^{-\tau T}$$ 
and so 
\begin{align*}
\sup_{\mu\in \scrM} &\int_{\log \phi(z)\ge K} \log \phi(z) \, d\mu(z) 
\\&\le \sup_{\mu\in \scrM} \left( K \mu\bigl(\{y\in G/\Gamma: \log \phi(y) \ge K\}\bigr)
+
\int_{T\ge  K} \mu\bigl(\{y\in G/\Gamma: \log \phi(y) \ge T\}\bigr)  \, d T\right)
\\&\le KLe^{-\tau K}+  \int_{T\ge  K} L e^{-\tau T} \, d T
\end{align*}
which approaches $0$ as $K\to \infty$.

	\subsection{Consequences of Ratner's measure classification and equidistribution theorems}\label{sec:ratnerpre}

	For the dynamics on the base $G/\Gamma$, we often use various consequences of  Ratner's measure classification in order to produce an $\wtd{\alpha}(A)$-invariant measure on $M^{\alpha}$   with extra invariance.    

	First, the following gives sufficient conditions for a measure to be invariant under an opposite root.  
\begin{proposition}[{ \cite[Prop.\  2.1]{MR1135878}}]\label{thm:opproot}
Let $A$ be a split Cartan subgroup of~$G$, let $\beta \in \Phi(A,G)$, and let $\mu$ be a Borel probability measure on $G/\Gamma$. If $\mu$ is invariant under both the coarse root group~$U^{[\beta]}$  and the diagonal subgroup $\{d_\alpha^\R\}$ of $\alpha$ in $A$, then $\mu$ is also invariant under the coarse root group~$U^{[-\beta]}$.
\end{proposition}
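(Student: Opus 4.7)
The plan is to invoke Ratner's measure classification theorem for the unipotent $U^{[\beta]}$-action and then exploit the normalizing dynamics of $\{d_\beta^\R\}$ to enlarge the invariance group to (at least) the standard rank-$1$ subgroup $H_\beta$. First, I would decompose $\mu$ into its $U^{[\beta]}$-ergodic components. Since $U^{[\beta]}$ is connected unipotent and $\mu$ is $U^{[\beta]}$-invariant, Ratner's measure classification shows that for $\mu$-a.e.\ $x$ the ergodic component through $x$ is the unique $L_x$-invariant probability measure supported on a closed homogeneous orbit $L_x\cdot x$, where $L_x$ is a closed connected subgroup of $G$ with $U^{[\beta]}\subseteq L_x$ and with $L_x\cdot x$ carrying a finite $L_x$-invariant measure. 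In particular, each $L_x$ is unimodular.

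Next, since $\{d_\beta^\R\}$ preserves $\mu$ and lies in the normalizer of $U^{[\beta]}$, it permutes the $U^{[\beta]}$-ergodic components in a measure-preserving way. The key step is to upgrade this permutation action to an actual inclusion $\{d_\beta^\R\}\subseteq L_x$ for $\mu$-a.e.\ $x$. The natural approach is a polynomial divergence argument: because $\{d_\beta^\R\}$ acts on $U^{[\beta]}$ by contraction while the $U^{[\beta]}$-flow has only polynomial divergence, Poincar\'e recurrence of $\{d_\beta^\R\}$ (with respect to $\mu$) forces the stabilizer groups $L_x$ to be $\{d_\beta^\R\}$-normalized, and a finer analysis of how generic pairs of points in a common ergodic component diverge under $U^{[\beta]}$ then yields $d_\beta^t\in L_x$ for $\mu$-a.e.\ $x$ and all $t\in\R$.

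Once $L_x$ contains the solvable subgroup $B_\beta:=U^{[\beta]}\cdot\{d_\beta^\R\}$ (the Borel subgroup of $H_\beta$), the unimodularity of $L_x$ forces $L_x$ to contain $U^{[-\beta]}$. Indeed, $\lie(L_x)$ contains $\lieg^\beta\oplus\lieg^{2\beta}$ (when $2\beta\in\Phi(A,G)$) and $\R\cdot X$, where $X=\log d_\beta^1$; the contribution of these summands to $\Tr(\mathrm{ad}(X)|_{\lie(L_x)})$ is strictly negative, so unimodularity of $L_x$ forces $\lie(L_x)$ to contain further root spaces on which $X$ acts expansively, and closure under the bracket with $\lie(U^{[\beta]})$ then forces the presence of $\lieg^{-\beta}$ (and $\lieg^{-2\beta}$ when it is a root). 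Hence every $U^{[\beta]}$-ergodic component of $\mu$ is $U^{[-\beta]}$-invariant, so $\mu$ itself is $U^{[-\beta]}$-invariant.

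The main obstacle is the middle step: upgrading the normalizing permutation action of $\{d_\beta^\R\}$ on the $U^{[\beta]}$-ergodic decomposition to actual containment $\{d_\beta^\R\}\subseteq L_x$. This is the delicate interplay between polynomial divergence of unipotent orbits and the contracting action of a normalizing semisimple element, and is precisely where the hypothesis that $\mu$ is jointly invariant under both subgroups (rather than merely under the unipotent one) is essential.
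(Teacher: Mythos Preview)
The paper does not prove this proposition; it is quoted directly from Ratner's paper \cite{MR1135878} and used as a black box. So there is no proof in the paper to compare against, and I will assess your proposal on its own merits.

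Your overall architecture (Ratner decomposition, then upgrade invariance, then use unimodularity) is reasonable, but there is a genuine error in Step~3. You claim that once $\lie(L_x)$ contains $\lieg^\beta\oplus\lieg^{2\beta}\oplus\R\log d_\beta$, unimodularity forces $\lie(L_x)$ to contain root spaces on which $\ad(\log d_\beta)$ is expansive, ``and closure under the bracket with $\lie(U^{[\beta]})$ then forces the presence of $\lieg^{-\beta}$.'' This last inference is wrong: bracketing with $\lieg^\beta$ sends $\lieg^\gamma$ to $\lieg^{\gamma+\beta}$, which \emph{raises} the $h_\beta$-weight by~$2$. Starting from an expansive root space $\lieg^\gamma$ (with $\gamma(\log d_\beta)>0$), iterated brackets with $\lieg^\beta$ produce $\lieg^{\gamma+k\beta}$, and this equals $\lieg^{-\beta}$ only when $\gamma$ is already a negative multiple of $\beta$. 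In a group like $\SO(n,n+1)$ with $\beta$ a short root, there are several roots $\gamma\neq-\beta$ with $\langle\gamma,\beta^\vee\rangle=-2$, so your bracket argument does not single out $\lieg^{-\beta}$. The correct way to run Step~3 is via $\mathfrak{sl}_2$-representation theory: setting $\mathfrak{b}=\R h_\beta\oplus\lieg^{[\beta]}$, the subspace $\lie(L_x)$ is a $\mathfrak{b}$-submodule of $\lieg$, and one shows that any $\mathfrak{b}$-submodule $W$ of a finite-dimensional $\mathfrak{sl}_2$-module with $\Tr(h_\beta|_W)=0$ is automatically $\mathfrak{sl}_2$-invariant (because $e^k\colon W_{-k}\to W_k$ is then bijective for all $k\ge0$, forcing the Lefschetz-type symmetry). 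Applying this to the adjoint $\mathfrak{sl}_2$ inside $\lieg$ yields $\lieg^{-\beta}\subseteq\lie(L_x)$.

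Two further remarks. First, Step~2 as written is only a gesture: ``polynomial divergence'' does not by itself produce $d_\beta\in L_x$. What one can establish cleanly is that $d_\beta$ \emph{normalizes} $L_x$ (by countability of the Ratner groups) and that $\det\bigl(\Ad(d_\beta)|_{\lie(L_x)}\bigr)=1$ (since otherwise the covolume of $L_x$-orbits would scale monotonically under $d_\beta$, contradicting $d_\beta$-invariance of the probability measure $\mu$). These two facts already give $\Tr\bigl(\ad(\log d_\beta)|_{\lie(L_x)}\bigr)=0$, which is all that the corrected Step~3 needs; you do not actually need $d_\beta\in L_x$. Second, note that your route invokes Ratner's full measure classification to establish what is, in Ratner's paper, a \emph{preliminary} proposition proved before and independently of that classification; as a proof of the original Prop.~2.1 this would be circular, though as a post-hoc argument it is permissible.
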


For the second result, we recall that when averaging an $A$-invariant Borel probability measure along a \Folner sequence in a subgroup that centralizes (or normalizes)~$A$, any weak-$*$ limit point is an $A$-invariant Borel probability measure. Ratner's equidistribution theorem implies that, on the homogeneous space $G/\Gamma$, {$A$-invariance} is preserved when passing to limits of averages by unipotent subgroups normalized by $A$.

 \begin{proposition}[{\cite[Prop.\ 6.2(b)]{Brown:2020aa}, \cite[Prop.\ 4.10]{Brown:2021aa}}]\label{thm:averaginghomo2}

Let $A$ be a split Cartan subgroup of~$G$,
let $\mu$ be an $A$-invariant Borel probability measure on $G/\Gamma$,
let $U$ be a unipotent subgroup that is normalized by $A$,
and
let $\{F_j\}$ be a \Folner sequence of centered intervals in~$U$.
Then
\begin{enumerate}
\item the family $\{F_j\ast \mu\}$ is uniformly tight, and
\item\label{jlkjlkj} every weak-$*$ subsequential limit of $\{F_j\ast \mu\}$ is $A$-invariant.
\end{enumerate}
\end{proposition}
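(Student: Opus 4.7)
I would establish (1) and (2) separately, invoking Dani--Margulis non-divergence for (1) and Ratner's equidistribution theorem for (2).

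For (1), the uniform tightness of $\{F_j \ast \mu\}$ is a standard consequence of the Dani--Margulis (or Kleinbock--Margulis) quantitative non-divergence theorem for unipotent flows on $G/\Gamma$.  For each $\epsilon > 0$, this provides a compact $K_\epsilon \subset G/\Gamma$ such that for every $x \in G/\Gamma$ outside a certain exceptional set $E_\epsilon$ (a countable union of proper closed subvarieties associated with periodic orbits of proper subgroups), the fraction of $u \in F_j$ with $ux \notin K_\epsilon$ is at most $\epsilon$.  Using $A$-invariance of $\mu$ together with the fact that every component of $E_\epsilon$ is either $A$-invariant of small dimension or has $A$-orbit escaping to infinity (incompatible with carrying positive $\mu$-mass), one verifies that $\mu(E_\epsilon) = 0$.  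Fubini's theorem then gives $(F_j \ast \mu)(K_\epsilon^c) \le \epsilon$ uniformly in $j$.

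For (2), fix $a \in A$.  The key identity is the change-of-variables formula
\[
a_* (F_j \ast \mu) \;=\; (a F_j a^{-1}) \ast \mu.
\]
To see this, by $A$-invariance of $\mu$ one has $(L_{au})_* \mu = (L_{aua^{-1}})_*(L_a)_*\mu = (L_{aua^{-1}})_*\mu$, so
\[
a_*(F_j \ast \mu) \;=\; \frac{1}{|F_j|}\int_{F_j}(L_{aua^{-1}})_*\mu \, du;
\]
substituting $v = aua^{-1}$ with Jacobian $J_a = |\det \mathrm{Ad}(a)|_{\mathfrak u}|$, and noting that $|a F_j a^{-1}| = J_a |F_j|$, the factors of $J_a$ cancel to give the identity above.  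Since $A$ normalizes $U$, each $a F_j a^{-1}$ is again a centered interval (rescaled along weight subspaces of $A$ on $\mathrm{Lie}(U)$), so $\{aF_ja^{-1}\}$ remains a Folner sequence of the required form.  By Ratner's pointwise equidistribution theorem, for $\mu$-a.e.\ $x$ both averages $F_j \ast \delta_x$ and $(aF_ja^{-1}) \ast \delta_x$ converge weak-$*$ to the same Ratner measure $\nu_x$, depending only on the closed orbit through $x$ rather than on the specific Folner sequence.  Combining this with uniform tightness from (1) to justify dominated convergence, both $F_j \ast \mu$ and $(aF_ja^{-1}) \ast \mu$ converge weak-$*$ along common subsequences to the same measure $\int \nu_x \, d\mu(x)$.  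Any subsequential limit $\mu_\infty$ of $F_j \ast \mu$ therefore satisfies $a_* \mu_\infty = \mu_\infty$ for every $a \in A$.

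The main obstacle is step (1): the Dani--Margulis exceptional set $E_\epsilon$ must be shown to be $\mu$-null, and this is where the hypothesis of $A$-invariance is essential (it rules out concentration on the non-generic subvarieties and provides control of mass at infinity).  Once non-divergence is established, step (2) is essentially formal, reducing the invariance question to a standard change-of-variables together with Ratner's theorem.
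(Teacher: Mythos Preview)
The paper does not give its own proof of this proposition; it cites \cite[Prop.\ 6.2(b)]{Brown:2020aa} and \cite[Prop.\ 4.10]{Brown:2021aa} and remarks only that conclusion~(2) employs Ratner's equidistribution theorem and its extension by Shah. Your outline is consistent with that: non-divergence for (1), and the change-of-variables identity $a_*(F_j\ast\mu)=(aF_ja^{-1})\ast\mu$ combined with Ratner--Shah equidistribution for (2).

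One point to reconsider: you locate the essential use of $A$-invariance in step~(1), but this is misplaced. Uniform tightness follows from quantitative non-divergence without any invariance hypothesis on $\mu$; compare the paper's \cref{thm:BFHunip}, which proves the stronger exponentially-small-mass conclusion for \emph{arbitrary} probability measures satisfying only a quantitative tightness condition. The handling of the Dani--Margulis singular set does not require $A$-invariance: points on or near closed lower-dimensional $U$-invariant orbits have $U$-trajectories that remain in compact sets, so no ``$\mu(E_\epsilon)=0$'' argument is needed. The $A$-invariance of $\mu$ is used exactly once, precisely where you invoke it in your change-of-variables identity for~(2). The remaining technical point for~(2) is to check that Shah's equidistribution applies to the conjugated sequence $\{aF_ja^{-1}\}$: since $\mathrm{Ad}(a)$ rescales different root directions of $U$ by different amounts, these need not be ``centered intervals'' in the precise sense of \cite[Def.\ 4.5]{Brown:2021aa}, though they remain admissible F{\o}lner sets for the equidistribution theorem. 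With that verified, your argument in fact shows the full sequence $F_j\ast\mu$ converges, slightly more than the stated subsequential conclusion.
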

See \cite[Def.\ 4.5]{Brown:2021aa} for the definition of a centered interval.  
 We remark that conclusion \eqref{jlkjlkj} employs the equidistribution theorem of Ratner and its extension by Shah; see \cite{MR1291701,MR1106945}.

 We also prove the following corollary of Ratner's measure classification theorem which will be useful to obtain invariance under $\sl(2)$-triples.

\begin{proposition}\label[proposition]{SL2}
Let $G$ be a connected real algebraic Lie group and $\Gamma$ be a lattice in $G$. Let $H$ be a closed connected subgroup of $G$ with  $\Lie(H)\simeq \mathfrak{sl}_{2}(\Rbb)$. Denote by  $K$ and $U$ the  closed connected subgroups  of $H$ with $\Lie(K)\simeq \mathfrak{so}(2)$ and $\Lie(U)\simeq \left\{\begin{bmatrix} 0 &t \\ 0 &0 \end{bmatrix}:t\in\Rbb\right\}$.

Let $\mu$ be a $K$-invariant probability measure on $G/\Gamma$. Let $U\ast \mu$ denote the weak-$*$ limit of \[\left\{\frac{1}{T}\int_{0}^{T} (u_{t})_{*}\mu dt : T\in \Rbb\right\}.\] Then $U*\mu$ is $H$-invariant.  
\end{proposition}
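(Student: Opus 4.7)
The plan is to apply Ratner's measure classification and orbit closure theorems to the unipotent action of $U$ on $G/\Gamma$. First, I note that $\nu := U*\mu$ is automatically $U$-invariant, as any weak-$*$ subsequential limit of Cesaro averages along a one-parameter unipotent subgroup must be. Since $\mathrm{Ad}(K)\cdot\mathrm{Lie}(U) = \Lie(H)$, the subgroups $K$ and $U$ together topologically generate $H$, so it will suffice to establish the $K$-invariance of $\nu$. A direct computation using $ku_t = (ku_tk^{-1})k$ and the $K$-invariance of $\mu$ gives, for each $k \in K$,
\[
k_*\nu \;=\;\lim_{T\to\infty}\tfrac{1}{T}\int_0^T (ku_tk^{-1})_*\mu\, dt,
\]
and by the ergodic decomposition of $\mu$ with respect to the compact group $K$, I may assume $\mu = m_{Kx_0}$ is the Haar measure on a single $K$-orbit $Kx_0 \subset G/\Gamma$.

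Under this reduction,
\[
\nu \;=\;\int_K \nu_y\, dm_{Kx_0}(y),
\]
where $\nu_y := \lim_T \tfrac{1}{T}\int_0^T \delta_{u_t y}\, dt$ is the equidistribution limit of the $U$-orbit through $y$. By Ratner's theorems, $\nu_y$ equals the Haar measure $m_{L_y y}$ on a closed finite-volume homogeneous orbit $L_y y$, with $U \subset L_y$ a closed connected subgroup of $G$ depending measurably on $y$. I will also apply Ratner's orbit closure theorem to the $H$-action to write $\overline{Hx_0} = Mx_0$ for some closed connected $M \supset H$, noting $Kx_0 \subset Mx_0$.

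The crucial step will be to show that $L_y = M$ for $m_{Kx_0}$-a.e.\ $y$. The exceptional set $\{y \in Mx_0 : L_y \subsetneq M\}$ is a countable union of closed proper homogeneous orbits $Lz \subset Mx_0$ with $U \subset L \subsetneq M$. If any such orbit contained the one-dimensional $K$-orbit $Kx_0$, then $K$ would be contained in $L$ (up to discrete stabilizer corrections), so $L \supset \langle K, U\rangle = H$; then the closed finite-volume $L$-orbit $Lz$ would contain $\overline{Hx_0} = Mx_0$, contradicting $L \subsetneq M$. Hence each such exceptional orbit meets $Kx_0$ in at most a finite (Haar-null) subset, and the countable union remains $m_{Kx_0}$-null. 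It will follow that $\nu_y = m_{Mx_0}$ for $m_{Kx_0}$-a.e.\ $y$, whence $\nu = m_{Mx_0}$ is $M$-invariant, hence $H$-invariant. The hard part will be carefully justifying the countability of the family of Ratner closure subgroups $L$ and the exceptional-set measure estimate, which invoke the full strength of the Ratner--Dani--Margulis structural results together with the fact that $\dim K = 1$.
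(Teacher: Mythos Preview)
Your overall strategy---Ratner equidistribution applied pointwise along $Kx_0$, combined with the fact that $K$ and $U$ together generate $H$---is the same as the paper's. The reduction to a single $K$-orbit is harmless, and your observation that it suffices to show each $\nu_y$ is $H$-invariant for $m_K$-a.e.\ $y\in Kx_0$ is exactly what the paper proves. However, the argument you give for this last step has a genuine gap.

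The problematic claim is that ``the exceptional set $\{y\in Mx_0: L_y\subsetneq M\}$ is a countable union of closed proper homogeneous orbits $Lz$.'' What Ratner's theory actually gives is that the family $\mathcal{A}$ of closed connected subgroups $F\subset G$ with $F\cap\Gamma$ a lattice and containing an ergodic unipotent is countable; the family of \emph{orbits} $\overline{Uy}$ need not be. Concretely, for a fixed $F\in\mathcal{A}$ the orbits $gF\Gamma$ with $g^{-1}Ug\subset F$ can form an uncountable family (think of $G=\SL_2(\R)\times\SL_2(\R)$, $U$ unipotent in the first factor, $F=\SL_2(\R)\times\{e\}$: the second coordinate parametrizes a continuum of distinct proper $U$-orbit closures). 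So your covering of the exceptional set by countably many orbits, and hence your measure-zero argument via ``$Kx_0\cap Lz$ is finite,'' does not go through as stated. You also assert $Kx_0\subset Lz\Rightarrow K\subset L$; this is true but requires an argument (connectedness of $K$ and discreteness of the stabilizer, or real-analyticity), not just the parenthetical ``up to discrete stabilizer corrections.''

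The paper sidesteps all of this by working directly with the countability of $\mathcal{A}$. For each $g\Gamma$ (or in your reduction, for $x_0$), partition $K$ according to the map $k\mapsto F_{kg}\in\mathcal{A}$; since $\mathcal{A}$ is countable, some fiber has positive $m_K$-measure, hence contains two points $k,k'$ with $k^{-1}k'$ non-central. Then $k^{-1}Uk$ and $k'^{-1}Uk'$ both lie in $gFg^{-1}$, and these two distinct conjugates of $U$ generate $H$, giving $H\subset gFg^{-1}$. Applying this to every positive-measure fiber (the null fibers contribute nothing), one gets that $\nu_y$ is $H$-invariant for $m_K$-a.e.\ $y$. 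This pigeonhole on $\mathcal{A}$ is what your argument needs in place of the orbit-counting claim.
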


\begin{proof}[Proof of \Cref{SL2}]
We recall the following consequences of Ratner's measure classification theorem \cite{MR1135878}.   Let $\Acal$ denote the set of closed connected subgroups $F$ of $G$ such that 
\begin{enumerate}
\item $F\cap \Gamma$ is a lattice in $F$, and 
\item there is an unipotent element $u\in F$ such that $u$ acts on $F/(F\cap \Gamma)$ ergodically.
\end{enumerate}
We have that $\Acal$ is countable; see \cite{MR2648693}. 

For each $g\Gamma \in G/\Gamma$, there exists a unique subgroup $F_{g}\in \Acal$ and a Borel probability measure $m_{g\Gamma}^{U}$ such that 
	\begin{enumerate}
\item $U\subset gF_{g}g^{-1}$,   
	\item the orbit closure  $\overline{Ug\Gamma}$ is the translate of the  closed $F_g$-orbit $g \cdot F_{g}\Gamma$
\item $m_{g\Gamma}^{U}$ is a normalized $gF_{g}g^{-1}$-invariant Haar measure on the orbit $g \cdot F_{g}\Gamma= \left(gF_{g}g^{-1}\right)g\Gamma,  $ and
\item  $\lim _{T\to \infty} \frac{1}{2T}\int_{-T}^{T} (u_{t})_{*} \delta_{g\Gamma} \, dt = m_{g\Gamma}^{U}$.
	\end{enumerate}

Let $m_{K}$ be the normalized Haar measure on $K$. By  $K$-invariance of $\mu$, we have
\[U*\mu=\int_{G/\Gamma} m_{g\Gamma}^{U} \, d\mu (g\Gamma)=\int_{G/\Gamma}\int_{K} m_{kg\Gamma}^{U} \, dk\,  d\mu(g\Gamma).\]  


As $\Lie(H) = \sl(2,\R)$, the only subgroups of $H$ generated by unipotent elements are 
conjugates of $U$ or all of $H$.  Moreover, the centralizer of $U$ in $K$ coincides with center of $H$. 
Thus, for $k,k'\in K$,   $k^{-1}Uk$ and $k'^{-1}Uk'$ generate $H$ unless 
$k\inv k'$ is central.  

Fix $g\Gamma \in G/\Gamma$.  Since  $\Acal$ is countable and $K$ is uncountable, there is at least one $F\in \Acal$ such that $F_{kg}=F$ for a
$m_{K}$-positive measure set of $k\in K$.

Take $k,k'\in K$ with $k\inv k'$ non-central such that $F:=F_{kg}=F_{k'g}$.  
Since  $U\subset kgFg^{-1}k^{-1}$ and $U\subset k'gFg^{-1}k'^{-1}$, it follows that $H\subset gFg^{-1}$ and thus,   
for $m_{K}$-almost every $k\in K$, $m_{kg\Gamma}^{U}$ is $H$-invariant and thus $U*\mu$ is also $H$-invariant.
\end{proof}


We will frequently use  \cref{SL2} in the following form to upgrade the invariance of certain measures without losing any dynamical properties.    

\begin{corollary}\label[corollary]{SL2coro}
Let $\Gamma$ be a lattice in a simple real algebraic Lie group $G$. Let $\alpha\colon \Gamma\to \Diff^{\infty}(M)$ be an action 
and let $\mu$ be a probability measure on the suspension space $M^{\alpha}$ with 
exponentially small mass at $\infty$.  

Suppose   there exists a restricted root $\delta\in { \Phi(A,G)}$ and a subgroup $A_0\subset A$ such that
\begin{enumerate}
\item $A_0\subset  \ker \delta$, 
\item $\mu$ is $A_0$-invariant, and
\item there exists $g\in A_0$ such that $h_{\mu}(g\mid \Fsc) >0$.
\end{enumerate}

Let $H$ be the standard $\R$-rank-$1$ subgroup generated by $\delta$. Then there exists a probability measure  $\mu'$ on $M^\alpha$ such that 
\begin{enumerate}
\item $\mu'$ has exponentially small mass at $\infty$; 
\item $\mu'$ is invariant under the diagonal of $\delta$ in $A$;  
\item $\mu'$ is $A_0$-invariant; 
\item $p_{*}\mu':=\overline{\mu'}$ is an $H$-invariant probability measure on $G/\Gamma$;
\item $h_{\mu'}(g\mid \Fsc)>0$.
\end{enumerate}
\end{corollary}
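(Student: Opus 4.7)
The plan is to build $\mu'$ by sequentially averaging $\mu$ along an Iwasawa decomposition $H = K A_\delta U$, where $K \cong \SO(2)$ is maximal compact in $H$, $A_\delta = \{d_\delta^{\Rbb}\}$ is the diagonal subgroup of $\delta$, and $U = U^{[\delta]}$. The key observation is that $A_0 \subset \ker \delta$ forces $A_0$ to centralize $H$: conjugation by $a \in A_0$ acts on $\lieg^{k\delta}$ by the scalar $e^{k\delta(\log a)} = 1$ for each $k \in \{\pm 1, \pm 2\}$, so $a$ commutes with $U^{[\pm\delta]}$, hence with all of $H$. In particular, the entropy element $g \in A_0$ commutes with every element of $H$, so fiber metric entropy of $g$ will behave affinely under each of the averages below.

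First, I will perform a $K$-averaging: set $\mu_K := \int_K (k)_*\mu \, dm_K(k)$. Because $K$ is compact, the height function $h$ is boundedly distorted by $K$-translation, so $\mu_K$ retains exponentially small mass at $\infty$. Since $A_0$ commutes with $K$, the measure $\mu_K$ is $K$-invariant and remains $A_0$-invariant, and $h_{\mu_K}(g \mid \Fsc) = h_\mu(g \mid \Fsc) > 0$ by affinity. Next, I will perform a $U$-averaging: let $\nu_T := \frac{1}{T}\int_0^T (u_t)_*\mu_K \, dt$ along a \Folner sequence of centered intervals in $U$. Standard recurrence estimates for unipotent flows on $G/\Gamma$ (Dani--Margulis, Eskin--Margulis--Mozes) give uniformly exponentially small mass at $\infty$ for the projected family $\{p_*\nu_T\}$; compactness of the fibers of $p\colon M^\alpha \to G/\Gamma$ then transfers this to $\{\nu_T\}$ on $M^\alpha$. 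Extract a weak-$*$ subsequential limit $\mu''$. By \cref{SL2} applied to the $K$-invariant base measure $p_*\mu_K$, the projection $p_*\mu'' = U \ast p_*\mu_K$ is $H$-invariant. Since $g$ commutes with $U$, fiber entropy of $g$ is affine along the finite-time averages $\nu_T$; combined with \cref{prop:uscfiberentropy}, this yields $h_{\mu''}(g \mid \Fsc) \ge h_{\mu_K}(g \mid \Fsc) > 0$. The measure $\mu''$ also remains $A_0$-invariant.

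Finally, I will perform an $A_\delta$-averaging: let $\mu'$ be a weak-$*$ subsequential limit of $\frac{1}{T}\int_0^T (d_\delta^t)_*\mu'' \, dt$. Since $p_*\mu''$ is $H$-invariant and hence $A_\delta$-invariant, the projection $p_*\bigl[(d_\delta^t)_*\mu''\bigr] = p_*\mu''$ is constant in $t$; compactness of fibers then gives uniformly exponentially small mass at $\infty$ for the averaged family on $M^\alpha$. Passing to the limit produces $\mu'$ which is $d_\delta$-invariant, remains $A_0$-invariant (as $d_\delta$ centralizes $A_0$ inside $A$), has the $H$-invariant projection $p_*\mu''$, inherits exponentially small mass at $\infty$, and satisfies $h_{\mu'}(g \mid \Fsc) > 0$ by the same affinity-plus-USC argument. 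These are precisely the five stated conclusions.

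The main obstacle will be ensuring uniformly exponentially small mass at $\infty$ when passing to weak-$*$ limits in the $U$-averaging step; this is standard but non-trivial, relying on Dani--Margulis/EMM-type recurrence estimates for unipotent flows on the non-compact homogeneous space $G/\Gamma$. The $A_\delta$-averaging step is much easier because the base projection is preserved verbatim, so uniform tightness reduces to the tightness of $p_*\mu''$ and compactness of fibers. Throughout, strict positivity of fiber entropy survives the two weak-$*$ limits thanks to \cref{prop:uscfiberentropy} combined with the affine behavior of fiber entropy under averaging by transformations commuting with $g$.
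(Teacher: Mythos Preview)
Your argument is correct when the standard rank-$1$ subgroup $H$ happens to be locally isomorphic to $\SL(2,\R)$, but this is not guaranteed by the hypotheses. The corollary is stated for an arbitrary simple real algebraic $G$ and an arbitrary restricted root $\delta\in\Phi(A,G)$; in the non-split setting the root spaces $\lieg^{\pm\delta}$ may have dimension larger than $1$, and for non-reduced restricted root systems $\pm 2\delta$ may be roots as well. In either case $\Lie(H)$ is a rank-$1$ simple Lie algebra strictly larger than $\sl_2(\R)$ (for instance $\su(n,1)$ or $\sp(n,1)$), the maximal compact $K\subset H$ is not $\SO(2)$, and \cref{SL2} does not apply to $H$ directly, since that proposition explicitly requires $\Lie(H)\simeq\sl_2(\R)$. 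So the step ``By \cref{SL2} applied to the $K$-invariant base measure $p_*\mu_K$'' fails in general.

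The paper's proof handles this by first choosing a genuine $\sl_2$-triple $\lieh'\subset\Lie(H)$ containing a nonzero vector of $\lieg^{\delta}$ and the diagonal $A'=\{d_\delta^\R\}$, and running your three-step $K'$--$U'$--$A'$ averaging on the subgroup $H'$ with $\Lie(H')=\lieh'$. This yields a measure $\mu_2$ that is $A'$-invariant with $p_*\mu_2$ invariant under $H'$ (hence under $A'$). Only then does one average over the full Iwasawa nilpotent $N=U^{[\delta]}$ of $H$: by \cref{thm:averaginghomo2} the projection of the limit remains $A'$-invariant, and since it is also $U^{[\delta]}$-invariant, \cref{thm:opproot} gives $U^{[-\delta]}$-invariance and hence $H$-invariance of the projection. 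A final $A'$-average produces $\mu'$. In short, your scheme needs two additional steps---passage to an $\sl_2$-triple inside $H$, and a bootstrap via \cref{thm:opproot}---to go through for general $G$.
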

\begin{proof}
Let $A'\subset A\cap H$ be a $\R$-split torus in $H$ contained in $A$.    Note that $A'$ is the diagonal of $\delta$ in $A$.  
Fix a $\mathfrak{sl}_{2}$ triple $\lieh'$ in $\lieg$ containing non-zero vectors in both $\lieg^\delta$ and $\Lie(A')$.     
 Let $H'$ be the connected closed subgroup if $G$ with $\Lie(H) = \lieh'$ and fix an      
 Iwasaw decomposition   $H'= K'A'U'$ of $H'$ with  $\R$-split torus   $A'$.     
 
 We first average $\mu$ over $K'$ to obtain a $K'$-invariant  probability measure $\mu_{1}$. Since $K'$ is compact, $\mu_{0}$ has exponentially small mass at $\infty$. 

We may average $\mu_0$ over a F\"olner sequence of centered intervals in $U'$
 and take any subsequential limit point to obtain a probability measure $\mu_1$ on $M^\alpha$; from \cref{thm:BFHunip} below, $\mu_1$ has exponentially small mass at $\infty$ and from 
 \Cref{SL2}, the projection $p_*\mu_1$ of $\mu_1$ to $G/\Gamma$ is $H'$-invariant.  
 We may thus average over a F\"olner sequence in $A'$
 and take any subsequential limit point to obtain an $A'$-invariant probability measure $\mu_2$ on $M^\alpha$ with 
  $p_*\mu_2=  p_*\mu_1$.

Let  $H= KA'N$ be an Iwasawa decomposition of $H$ with $\R$-split torus   $A'$.  
We may average $\mu_2$ over a F\"olner sequence of centered intervals in $N$
 and take any subsequential limit point to obtain a probability measure $\mu_3$ on $M^\alpha$;
 again,  \cref{thm:BFHunip} below implies $\mu_3$ has exponentially small mass at $\infty$.  
 Since $\mu_2$ is $A'$-invariant, the measure $p_{*}\mu_{3} $ on $G/\Gamma$ remains $A'$-invariant.  { By \cref{thm:opproot},  the measure $p_{*}\mu_{3} $  is $H$-invariant.  }
Finally, we again average over a F\"olner sequence of centered intervals in $A'$
 and take any subsequential limit point to obtain an $A'$-invariant probability measure $\mu_4$ on $M^\alpha$  such that 
 $p_{*}\mu_{4}= p_{*}\mu_{3}$ is  $H$-invariant and has exponentially small mass at $\infty$.

Finally, since  $H$ (and thus $H'$) commutes with $A_0$,  the measures $\mu_{0}, \dots, \mu_4$ are all $A_0$-invariant and by \cref{prop:uscfiberentropy} satisfy 
$$h_{\mu_{j}}(g\mid \Fsc)= h_{\mu}(g\mid \Fsc)\ge h_{\mu}(g\mid \Fsc)  >0$$ 
for every $j\in \{0, \dots, 4\}.$
 \end{proof}

\subsection{Averaging operations on measures and fiber entropy}

Given the $G$-action on $M^\alpha$, 
we frequently average certain measures on $M^\alpha$ over  (centered) F\"olner sequences in certain amenable subgroups in $G$.  First, we recall the following consequence of quantitative non-divergence of unipotent averages.  See \cite{Brown:2021aa} for the definition of  centered intervals in a unipotent subgroup.

\begin{lemma}[{\cite[Lem.\  4.8]{Brown:2021aa}}]\label{thm:BFHunip}

Suppose
	\begin{enumerate}
	\item $\{\mu_n\}$ is a sequence of probability measures on~$G/\Gamma$ with uniformly exponentially small mass at $\infty$ and
	\item $\{\interval{U}_n\}$ is a sequence of centered intervals (relative to a fixed regular basis $\calB$) in a unipotent subgroup~$U$ of~$G$.
	\end{enumerate}
Then the family of measures $\{\interval{U}_n * \mu_n\}$ has uniformly exponentially small mass at $\infty$.
\end{lemma}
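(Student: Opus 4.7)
The plan is to reduce the desired uniform exponential moment control on $\{\widehat{U}_n \ast \mu_n\}$ to a pointwise estimate on the averages of $e^{\tau h}$ along centered intervals on $G/\Gamma$. By Fubini, for any $\tau>0$,
\[
\int_{G/\Gamma} e^{\tau h(y)} \, d(\widehat{U}_n \ast \mu_n)(y)
= \int_{G/\Gamma} \frac{1}{|\widehat{U}_n|}\int_{\widehat{U}_n} e^{\tau h(u\cdot x)} \, du\, d\mu_n(x),
\]
so it suffices to exhibit $\tau_0>0$ and constants $A,B>0$, depending only on $G$, $\Gamma$, $U$, and the regular basis $\calB$ (and in particular independent of $n$, of $x$, and of the centered interval $\widehat{U}_n$), such that for every $0<\tau<\tau_0$,
\[
\frac{1}{|\widehat{U}|}\int_{\widehat{U}} e^{\tau h(u\cdot x)} \, du \;\le\; A\, e^{\tau h(x)} + B \qquad (\ast)
\]
for every centered interval $\widehat{U}\subset U$ and every $x\in G/\Gamma$. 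Granting $(\ast)$, integrating against $\mu_n$ gives $\int e^{\tau h}\, d(\widehat{U}_n\ast\mu_n) \le A\sup_n \int e^{\tau h}\, d\mu_n + B$, and the supremum on the right is finite by the hypothesized uniform exponential small mass at $\infty$ of $\{\mu_n\}$.

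The core input is therefore $(\ast)$, which I would derive from the quantitative non-divergence of unipotent averages in exponential form, due to Eskin--Margulis (refining Dani--Margulis). For arithmetic $G/\Gamma$ one constructs a proper function $\alpha\colon G/\Gamma\to[1,\infty)$, comparable to $e^{\tau h}$ for small $\tau>0$, that is built from the lengths of short vectors in the unimodular lattices parametrized by $G/\Gamma$ and that satisfies, for each one-parameter unipotent subgroup $\{u(t)\}\subset G$,
\[
\frac{1}{|\widehat V|}\int_{\widehat V} \alpha(u(t)\cdot x)\, dt \;\le\; c\, \alpha(x) + d,
\]
for every sufficiently long centered interval $\widehat V$, with constants $c<1$ and $d$ independent of $\widehat V$ and $x$. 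Iterating this inequality along the regular basis $\calB$ of $U$, and using Fubini on the product structure of a centered interval, promotes the one-parameter estimate to the multivariate bound $(\ast)$.

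The main obstacle is precisely the uniformity of $(\ast)$ in the size of $\widehat{U}_n$: the constants must not degenerate as $|\widehat{U}_n|\to\infty$. This is exactly the point where the \emph{centered} hypothesis on the interval is essential — on a centered interval the polynomial coordinate functions of the unipotent action exhibit the cancellation required to invoke Dani--Margulis-type non-divergence, preventing excessive time spent in the cusps. For arbitrary (non-centered) intervals, $(\ast)$ genuinely fails, and one cannot conclude uniform exponential tightness. Once $(\ast)$ is in hand, as explained above the remainder of the argument is a one-line application of Fubini, and I would follow the proof of \cite[Lem.~4.8]{Brown:2021aa} verbatim for the detailed execution of these steps.
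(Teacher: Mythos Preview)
The paper does not supply its own proof of this lemma: it is stated with the citation \cite[Lem.~4.8]{Brown:2021aa} and invoked as a black box, so there is no in-paper argument to compare against. Your sketch --- Fubini to reduce to a pointwise bound on unipotent averages of $e^{\tau h}$, then quantitative non-divergence in the Eskin--Margulis form to obtain that bound uniformly over centered intervals --- is the standard route and is, as you yourself note at the end, exactly what the cited reference does.
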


We summarize a number of averaging operations that we frequently perform throughout the paper.  
\begin{proposition}\label[proposition]{averagingunipotent}
Let $A$ be a maximal $\R$-split Cartan subgroup of $G$.  Let $ I\subset \Phi(A,G)$ be a collection of roots that is positive with respect to some choice of simple roots $\Delta(A,G)$ and let $U^{[I]}$ be the unipotent subgroup generated by $\{\lieg^{[\alpha]}: \alpha\in I\}$.  
Let $A_0\subset A$ and $R\subset G$ be closed subgroups with  $R\subset C_G(U^{[I]})$ and $A_0\subset  N_G(R)$.

Let $\mu_0$ be an $A_0$-invariant Borel probability measure on $M^\alpha$ such that  
\begin{enumerate}[label=(\alph*)]
\item $p_* \mu_0$ has exponentially small mass at $\infty$, 
\item $p_* \mu_0$ is $R$-invariant 
\item there is $a_0\in A_0 \cap C_G(U^{[I]})$ such that $h_{\mu_0} (a_0 \mid \Fsc)>0$.
\end{enumerate}
Then there exists a Borel probability measure $\mu_1$ on $M^\alpha$ such that 
\begin {enumerate}
\item $\mu_1$ is $A_0$-invariant, 
\item $p_*\mu_1$ is $A_0$-invariant, $R$-invariant, and $U^{[I]}$-invariant,
\item $p_*\mu_1$ has exponentially small mass at $\infty$, and 
\item $h_{\mu_1} (a_0 \mid \Fsc)>0$.
\end{enumerate}
\end{proposition}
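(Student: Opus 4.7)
The plan is to obtain $\mu_1$ as a weak-$*$ subsequential limit of averages of $\mu_0$ over centered intervals in $U^{[I]}$ and to verify the listed properties in turn. Fix a regular basis for $U^{[I]}$ and let $\{\interval{U}_n\}$ be an associated F\"olner sequence of centered intervals in $U^{[I]}$; set
\[
\mu_n := \interval{U}_n \ast \mu_0 = \frac{1}{m(\interval{U}_n)} \int_{\interval{U}_n} (\widetilde{\alpha}(u))_{*} \mu_0 \, du.
\]
By \cref{thm:BFHunip} applied to $p_{*}\mu_0$, the family $\{p_{*}\mu_n\}$ has uniformly exponentially small mass at $\infty$; since the height function on $M^\alpha$ factors through $p$, the same holds for $\{\mu_n\}$. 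In particular $\{\mu_n\}$ is uniformly tight, so we may extract a subsequential weak-$*$ limit $\mu_1$, whose projection $p_{*}\mu_1$ also has exponentially small mass at $\infty$.

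For the invariance properties, $U^{[I]}$-invariance of $p_{*}\mu_1$ is the standard consequence of the F\"olner property of $\{\interval{U}_n\}$, obtained by testing against bounded continuous functions (legitimate since $\{p_{*}\mu_n\}$ is uniformly tight). Because $R \subset C_G(U^{[I]})$, the averaging operator $\mu \mapsto \interval{U}_n \ast \mu$ commutes with the $R$-action on measures, so $R$-invariance of $p_{*}\mu_0$ is inherited by every $p_{*}\mu_n$ and hence by $p_{*}\mu_1$. Finally, $A_0 \subset A$ and $A$ normalizes $U^{[I]}$ (as $U^{[I]}$ is generated by the $A$-invariant root subspaces $\lieg^{[\alpha]}$), so $A_0 \subset N_G(U^{[I]})$; the argument of \cref{thm:averaginghomo2} then carries over from $G/\Gamma$ to $M^\alpha$---using only the $A_0$-invariance of $\mu_0$ and the behavior of centered intervals under $A_0$-conjugation---and yields $A_0$-invariance of $\mu_1$.

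For positivity of fiber entropy, the crucial point is that $a_0 \in C_G(U^{[I]})$, so $\widetilde{\alpha}(a_0)$ commutes with $\widetilde{\alpha}(u)$ for every $u \in U^{[I]}$. Consequently $(\widetilde{\alpha}(u))_{*}\mu_0$ is again $a_0$-invariant, and $\widetilde{\alpha}(u)$ realizes a measurable conjugacy between the systems $(M^\alpha, \widetilde{\alpha}(a_0), \mu_0)$ and $(M^\alpha, \widetilde{\alpha}(a_0), (\widetilde{\alpha}(u))_{*}\mu_0)$. Moreover $\widetilde{\alpha}(u)$ permutes atoms of the fiber partition $\Fsc$ (since $p\circ\widetilde{\alpha}(u) = u\cdot p$ sends fibers to fibers), so this conjugacy is compatible with $\Fsc$; it follows that
\[
h_{(\widetilde{\alpha}(u))_{*}\mu_0}(a_0 \mid \Fsc) = h_{\mu_0}(a_0 \mid \Fsc).
\]
By affinity of fiber metric entropy under convex combinations of $a_0$-invariant measures, we conclude $h_{\mu_n}(a_0 \mid \Fsc) = h_{\mu_0}(a_0 \mid \Fsc) > 0$ for every $n$. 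Applying \cref{prop:uscfiberentropy} to the sequence $\{\mu_n\}\to \mu_1$ yields
\[
h_{\mu_1}(a_0 \mid \Fsc) \;\ge\; \limsup_{n\to\infty} h_{\mu_n}(a_0 \mid \Fsc) \;=\; h_{\mu_0}(a_0 \mid \Fsc) \;>\; 0,
\]
which completes the proof.

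The main delicacy is that $\{\mu_n\}$ lives on the non-compact space $M^\alpha$, where classical upper semicontinuity of metric entropy fails; overcoming this is precisely the role of the fibered Yomdin--Newhouse estimate in \cref{prop:uscfiberentropy}, whose hypothesis of uniformly exponentially small mass at $\infty$ is provided by \cref{thm:BFHunip}. The insistence on centered intervals (rather than arbitrary F\"olner sets) in $U^{[I]}$ is exactly what makes both this uniform bound and the transfer of $A_0$-invariance available.
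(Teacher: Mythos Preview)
Your argument has a genuine gap at the step where you claim $A_0$-invariance of $\mu_1$ on $M^\alpha$. You write that ``the argument of \cref{thm:averaginghomo2} then carries over from $G/\Gamma$ to $M^\alpha$,'' but this is precisely what it does \emph{not} do. \cref{thm:averaginghomo2} relies on Ratner's equidistribution theorem (and its extension by Shah) for unipotent flows on homogeneous spaces; this is what guarantees that, on $G/\Gamma$, the averages $\interval{U}_n*\mu_0$ actually converge (not merely subsequentially) to something $A$-invariant. On the suspension space $M^\alpha$, which is not homogeneous, no such equidistribution is available. Concretely: for $a\in A_0$ one has $a_*(\interval{U}_n*\mu_0)=(a\interval{U}_n a^{-1})*\mu_0$, and since $A_0$ typically only normalizes (and does not centralize) $U^{[I]}$, the conjugated interval $a\interval{U}_n a^{-1}$ is a rescaling of $\interval{U}_n$ whose symmetric difference with $\interval{U}_n$ is not F\o lner-small. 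Without Ratner-type input there is no reason the subsequential limits should agree.

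The paper's proof fixes this with a two-step averaging. First average $\mu_0$ over centered intervals in $U^{[I]}$ and pass to a subsequential limit $\widetilde\mu_0$; this measure is $a_0$-invariant (since $a_0\in C_G(U^{[I]})$) and, by \cref{thm:averaginghomo2} applied on $G/\Gamma$, its projection $p_*\widetilde\mu_0$ is $A_0$-invariant. Now average $\widetilde\mu_0$ over a F\o lner sequence in $A_0$: because $p_*\widetilde\mu_0$ is already $A_0$-invariant, the projections of these averages are all equal to $p_*\widetilde\mu_0$, so uniform exponentially small mass at $\infty$ is automatic and the limit $\mu_1$ is $A_0$-invariant with $p_*\mu_1=p_*\widetilde\mu_0$. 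Entropy positivity is preserved at each step via \cref{prop:uscfiberentropy}. Note that the conclusion only asks for $U^{[I]}$-invariance of $p_*\mu_1$, not of $\mu_1$ itself, so there is no conflict with the second averaging possibly destroying $U^{[I]}$-invariance upstairs. Your entropy argument (observing $h_{(\widetilde\alpha(u))_*\mu_0}(a_0\mid\Fsc)=h_{\mu_0}(a_0\mid\Fsc)$ exactly) is fine and slightly sharper than needed, but it does not rescue the invariance step.
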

\begin{proof}
Take a  F\"olner sequence of centered intervals $\{F_n\}$ in $U^{[I]}$ and let 
$$\mu_0^n = F_n\ast \mu_0 := \frac 1 {|F_n|} \int_{F_n} h_* \mu_0\ d h.$$
By \cref{thm:BFHunip}, $\{p_*\mu_0^n\}$ has uniformly exponentially small mass at $\infty$  and, in particular, is uniformly tight.  Let $\td \mu_0$ be any subsequential limit point of $\{\mu_0^n\}$.  
Then 
$\td \mu_0$ is  $U^{[I]}$-invariant and remains $a_0$-invariant as $a_0$ centralizes $U^{[I]}$.  Moreover, 
$p_*\td\mu_0$ has exponentially small mass at $\infty$ and, by \cref{thm:averaginghomo2}, $p_*\td\mu_0$ is $A_0$-invariant.  Moreover since $R$ centralizes $U^{[I]}$, $p_*\td\mu_0$ remains  $R$-invariant.  
By \cref{prop:uscfiberentropy}, we have 
$h_{\td\mu_0} (a_0 \mid \Fsc)\ge h_{\mu_0} (a_0 \mid \Fsc)>0.$

We now take a  F\"olner sequence   $\{F_n\}$ in $A_0$ and let 
$$\td\mu_0^n = F_n\ast \mu_0 := \frac 1 {|F_n|} \int_{F_n} h_* \td \mu_0\ d h.$$
Since $p_*\td \mu_0$ is $A_0$-invariant, we have $p_*\td\mu_0^n= p_*\td\mu_0$ thus $\{p_*\td\mu_0^n\}$ has exponentially small mass at $\infty$.  
Let $\mu_1$ be any subsequential limit point of $\{\td \mu_0^n\}$.  
Then $p_*\mu_1= p_* \td\mu_0$ is invariant under $A_0$, $R$, and $U^{[I]}$.  Moreover, $\mu_1$ is $A_0$-invariant and 
by \cref{prop:uscfiberentropy}, we have 
$h_{\mu_1} (a_0 \mid \Fsc)\ge h_{\td \mu_0} (a_0 \mid \Fsc)>0.$
\end{proof}

The second paragraph in the above proof of \cref{averagingunipotent} also establishes the following variant.  
\begin{proposition}\label[proposition]{averagingA}
Let $A$ be a maximal $\R$-split Cartan subgroup of $G$.   Let $H\subset G$ be a subgroup and let $A_0$ be a closed subgroup of $A\cap H$.  

Suppose there is $a\in A_0$ and an $a$-invariant Borel probability measure  $\mu_0$ on $M^\alpha$ such that  
\begin{enumerate}[label=(\alph*)]
\item $p_* \mu_0$ is $H$-invariant and has exponentially small mass at $\infty$, and 
\item $h_{\mu_0} (a \mid \Fsc)>0$.
\end{enumerate}

Then there exists a Borel probability measure $\mu_1$ on $M^\alpha$ such that 
\begin {enumerate}
\item $\mu_1$ is $A_0$-invariant,
\item $p_*\mu_1= p_*\mu_0$ 
\item $h_{\mu_1} (a \mid \Fsc)>0$.
\end{enumerate}
\end{proposition}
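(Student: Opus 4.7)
The plan is to follow the approach of the second paragraph of the proof of \cref{averagingunipotent}. I fix a F{\o}lner sequence $\{F_n\}$ in the closed abelian group $A_0$ and form the averages
\[
\mu_0^n := \frac{1}{|F_n|} \int_{F_n} \wtd\alpha(b)_* \mu_0 \, db.
\]
Because $A_0 \subset A \cap H$ and $p_*\mu_0$ is $H$-invariant, one has $p_*\mu_0^n = p_*\mu_0$ for every $n$; in particular the family $\{\mu_0^n\}$ has uniformly exponentially small mass at $\infty$ (as the height function factors through $p$) and is thus uniformly tight. Let $\mu_1$ denote any weak-$*$ subsequential limit of $\{\mu_0^n\}$. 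By construction $p_*\mu_1 = p_*\mu_0$, which gives conclusion~(2), and the standard F{\o}lner argument forces $\wtd\alpha(b)_*\mu_1 = \mu_1$ for every $b\in A_0$, establishing conclusion~(1).

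For conclusion~(3), I observe that since $a \in A_0$ commutes with every $b\in A_0$, each translate $\wtd\alpha(b)_*\mu_0$ is again $a$-invariant, and $h_{\wtd\alpha(b)_*\mu_0}(a\mid\Fsc) = h_{\mu_0}(a\mid\Fsc)$ by the conjugation-invariance of metric entropy. Affinity of entropy on the simplex of $a$-invariant measures then gives $h_{\mu_0^n}(a\mid\Fsc) = h_{\mu_0}(a\mid\Fsc) > 0$ for every $n$. Applying \cref{prop:uscfiberentropy} to the family $\{\mu_0^n\}$, whose uniformly exponentially small mass at $\infty$ has already been verified, yields
\[
h_{\mu_1}(a\mid\Fsc) \ge \limsup_{n\to\infty} h_{\mu_0^n}(a\mid\Fsc) = h_{\mu_0}(a\mid\Fsc) > 0,
\]
as required.

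No step presents a serious obstacle. The essential input is the upper semicontinuity of fiber entropy in \cref{prop:uscfiberentropy}, which itself rests on the Yomdin--Newhouse-type bounds established in the appendix. The key conceptual observation enabling the proof is that the hypothesis $A_0 \subset H$ (and the $H$-invariance of $p_*\mu_0$) makes the $A_0$-averaging invisible on $G/\Gamma$, so no new control of mass near infinity is required along the averaging procedure.
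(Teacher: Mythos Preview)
Your proof is correct and follows exactly the approach the paper indicates: the paper states that the second paragraph of the proof of \cref{averagingunipotent} establishes this variant, and you have reproduced precisely that argument (average over a F{\o}lner sequence in $A_0$, note the projection to $G/\Gamma$ is unchanged since $A_0\subset H$, pass to a limit, and apply \cref{prop:uscfiberentropy}). Your added justification for $h_{\mu_0^n}(a\mid\Fsc)\ge h_{\mu_0}(a\mid\Fsc)$ via conjugation-invariance of entropy is a welcome bit of extra care that the paper leaves implicit.
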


 Finally, we will use the following averaging operation in our setting. The proof is the same as in the averaging process in \cite[\S 6.3--6.5]{MR4502593}, replacing upper semicontinuity fiberwise Lyapunov exponents with upper semicontinuity of fiber entropy. 
\begin{proposition}\label[proposition]{prop:cocompact}
Let $\mu$ be an $A$-invariant probability measure on $M^{\alpha}$ with exponentially small mass at $\infty$ and $h_{\mu}(a\mid \Fsc)>0$ for some $a\in A$. 

Then, there exists an $A$-invariant probability measure $\mu'$ on $M^{\alpha}$ such that 
\begin{enumerate}
\item $h_{\mu'}(a'\mid\Fsc)>0$ for some $a'\in A$ and
\item $\mu'$ projects to the Haar measure on $G/\Gamma$.
\end{enumerate}

\end{proposition}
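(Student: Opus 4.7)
The plan is to mimic the averaging argument from \cite{MR4502593}*{\S6.3--6.5}, substituting upper semicontinuity of fiber entropy (\cref{prop:uscfiberentropy}) for upper semicontinuity of the top fiberwise Lyapunov exponent used there. The overall goal is to perform a sequence of averaging operations on $\mu$, using the tools \cref{averagingunipotent}, \cref{averagingA}, \cref{thm:BFHunip}, and \cref{thm:opproot}, so as to successively enlarge the invariance group of the projection $p_*\mu$ until it becomes all of $G$, while at each stage preserving $A$-invariance, exponentially small mass at $\infty$, and a positive-fiber-entropy element of $A$. Since $G$ is simple and $\Gamma$ is a lattice, the only $G$-invariant Borel probability measure on $G/\Gamma$ is the normalized Haar measure, so reaching $G$-invariance of the projection finishes the proof.

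More concretely, the fiber entropy seminorm $a\mapsto h_\mu(a\mid\Fsc)$ on $A$ is nonzero by hypothesis, and by \cref{thm:coent} is subadditive and $1$-homogeneous; hence its vanishing locus is at most a proper hyperplane $H\subsetneq A$. Since $\rank_\R(G)\ge 2$, for any root $\beta\in\Phi(A,G)$ whose kernel $\ker\beta$ is not equal to $H$, the intersection $\ker\beta\setminus H$ is nonempty, so we can choose $a_\beta\in\ker\beta$ with $h_\mu(a_\beta\mid\Fsc)>0$. Fixing an ordering on the positive roots and applying \cref{averagingunipotent} one root group at a time (with the current measure replacing $\mu_0$ and $a_\beta$ in $A_0\cap C_G(U^{[\beta]})$ playing the role of $a_0$), we obtain after finitely many steps an $A$-invariant measure $\mu_n$ with exponentially small mass at $\infty$ and positive fiber entropy for some element of $A$, whose projection $p_*\mu_n$ is invariant under every positive root group $U^{[\beta]}$ with $\ker\beta\ne H$. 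A direct Lie-theoretic check (using rank $\ge 2$ and that the hyperplanes $\ker\beta$ as $\beta$ ranges over $\Phi^+(A,G)$ cannot all coincide) shows these root groups generate the full positive unipotent $N^+$, so $p_*\mu_n$ is $N^+$-invariant in addition to being $A$-invariant.

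Now, since $p_*\mu_n$ is $A$-invariant and invariant under each coarse root group $U^{[\beta]}\subset N^+$, \cref{thm:opproot} (applied to each such $\beta$, with the diagonal subgroup of $\beta$ in $A$ being contained in $A$) yields invariance of $p_*\mu_n$ under the opposite coarse root groups $U^{[-\beta]}$ as well, hence under the full negative unipotent $N^-$. By Bruhat decomposition, the subgroups $N^+$, $N^-$, and $A$ together generate $G$ (modulo a compact center, which is accounted for since $G$ is connected and $\bfG$ is simply connected), so $p_*\mu_n$ is $G$-invariant. By uniqueness of the Haar measure on $G/\Gamma$, $p_*\mu_n$ is the normalized Haar measure, and $\mu':=\mu_n$ satisfies all required conclusions.

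The main technical obstacle is verifying, at each averaging step, that an element $a\in A$ with positive fiber entropy can be chosen in the centralizer of the unipotent being averaged over; this requires tracking how the fiber entropy seminorm evolves through the averaging and using upper semicontinuity (\cref{prop:uscfiberentropy}) together with the seminorm structure from \cref{thm:coent}. The degenerate case where the entropy-vanishing hyperplane coincides with a root kernel is handled by first averaging over a different root group (available since $\rank_\R G\ge 2$ provides multiple distinct root hyperplanes) to displace the vanishing locus, then returning to the original root. Uniform tightness and exponentially small mass at $\infty$ throughout the averaging sequence are supplied by \cref{thm:BFHunip}, and $A$-invariance at each stage is restored, as in \cref{averagingA}, by averaging along a F\o lner sequence in $A$ and using that the projection is already $A$-invariant so averaging in $A$ does not disturb it.
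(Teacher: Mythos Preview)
Your proposal follows the same overall strategy as the paper, which defers to \cite[\S6.3--6.5]{MR4502593}: iterated averaging over unipotent subgroups normalized by $A$, with mass at $\infty$ controlled by \cref{thm:BFHunip}, $A$-invariance of the projection preserved via \cref{thm:averaginghomo2}, $A$-invariance of the measure restored by \cref{averagingA}, and positive fiber entropy tracked by upper semicontinuity (\cref{prop:uscfiberentropy}) in place of the Lyapunov-exponent argument in \cite{MR4502593}. The ingredients you name are the correct ones.

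That said, your root-by-root outline has real gaps that the procedure in \cite{MR4502593} handles differently. First, the claim that the positive root groups $U^{[\beta]}$ with $\ker\beta\neq H$ generate $N^+$ is false: in type $A_2$ with $H=\ker\alpha_1$, only $U^{\alpha_2}$ and $U^{\alpha_1+\alpha_2}$ remain, and these commute and span a $2$-dimensional abelian group, not $N^+$. (This particular issue is repairable: once you apply \cref{thm:opproot} to obtain $U^{[-\beta]}$-invariance for those $\beta$, bracketing recovers $U^{[\pm\beta_0]}$.) Second, and more seriously, \cref{averagingunipotent} requires the previously-gained invariance group $R$ to \emph{centralize} the new unipotent $U^{[I]}$, but distinct positive root groups do not commute in general, so your iteration does not preserve earlier invariance as written. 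Third, the proposed handling of the degenerate case---average over a different root to ``displace the vanishing locus''---is unjustified: upper semicontinuity controls entropy only for the specific element $a_\gamma$ used in the averaging, and gives no information ruling out $\ker\beta_0$ as the new vanishing hyperplane. The actual argument in \cite{MR4502593} (and compare the careful inductive choices in \S\ref{sec:induction} of this paper) organizes the averaging so that a \emph{fixed} positive-entropy element is centralized at every step; your sketch does not supply such an organization.
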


Even though in \cite{MR4502593} it is assumed that $\Gamma$ is uniform so that $M^{\alpha}$ is compact, we can follow the same proof since we assume $A$-invariance and exponentially small mass at $\infty$ of the measure $\mu$. Indeed, averaging $\mu$ along a  \Folner sequence of centered intervals  in a unipotent subgroup $U$ that is normalized by $A$ and passing to a subsequential limit point, one obtains (by \cref{thm:BFHunip} above) a probability measure $\mu'$ with exponentially small mass at $\infty$.  Moreover, since $p_* \mu$ is $A$-invariant, the measure $p_*\mu'$ is $A$-invariant (by \cref{thm:averaginghomo2} above).  
 Since $p_*\mu'$ is $A$-invariant, averaging $\mu'$ along a  \Folner sequence of centered intervals in $A$ gives a family of measures with uniformly exponentially small mass at $\infty$; again one can a subsequential limit point.  
 At each step of averaging, one uses the upper semicontinuity of entropy in \cref{prop:uscfiberentropy} rather than upper semicontinuity of Lyapunov exponents used in  \cite{MR4502593}.  
 Thus, the averaging procedure employed in \cite[\S 6.3--6.5]{MR4502593} to upgrade an $A$-invariant probability measure $\mu$ to an $A$-invariant probability measure that projects to the Haar measure on $G/\Gamma$ can be used nearly verbatim, with the above modifications to control escape of mass and semicontinuity of entropy.

		\section{{Proof of \texorpdfstring{\Cref{Ainv}}{Theorem 2.1}, Case I: 
		\texorpdfstring{$\rank_{\Rbb}(G)\ge 3$}{R-rank at least 3} and  $\Gamma$ nonuniform}}
		\label{sec:AinvI}


We follow the notation in previous sections.  The aim of this section is proving \Cref{Ainv} under the assumptions that $\rank_{\Rbb}(G)\ge 3$ and that $\Gamma$ is nonuniform  which we summarize as follows:
\begin{theorem}\label{thm:Ainvrank3}
Let $G$ and $\Gamma$ be as in \cref{hypstand2}.  
Suppose $\rank_{\Rbb}(G)\ge 3$ and $\Gamma$ is  nonuniform.

 Let $M$ be a compact smooth manifold and let $\alpha\colon \Gamma\to \diff^\infty(M)$ be an action such that $h_\top(\alpha(\gamma_0))>0$ for some $\gamma_0\in \Gamma$.
Then there exists a Borel probability measure $\mu$ on $M^{\alpha}$  such that 
\begin{enumerate}
	\item $\mu$ is $A$-invariant,
	\item $\mu$ projects to the Haar measure on $G/\Gamma$, and 
	\item $h_\mu(a\mid \Fsc)>0$ for some $a\in A$.
\end{enumerate}
\end{theorem}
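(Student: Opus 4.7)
The plan proceeds in three stages: produce a measure on $M^{\alpha}$ with positive fiber entropy, upgrade its invariance to all of $A$, and upgrade its projection to the Haar measure on $G/\Gamma$. Since $M$ is compact and $h_{\top}(\alpha(\gamma_{0}))>0$, the variational principle together with ergodic decomposition produces an ergodic $\alpha(\gamma_{0})$-invariant Borel probability measure $\nu_{0}$ on $M$ with $h_{\nu_{0}}(\gamma_{0})>0$. Lift $\nu_{0}$ to the suspension via $\iota\colon M\to M^{\alpha}$, $\iota(x)=[e,x]$, setting $\mu_{0}=\iota_{*}\nu_{0}$. Because $\gamma_{0}\in\Gamma$ preserves the fiber $\Fsc(e\Gamma)$ and $\wtd{\alpha}(\gamma_{0})$ restricts there to $\alpha(\gamma_{0})$, the measure $\mu_{0}$ is $\wtd{\alpha}(\gamma_{0})$-invariant with $h_{\mu_{0}}(\gamma_{0}\mid\Fsc)=h_{\nu_{0}}(\gamma_{0})>0$; being supported on a single compact fiber, $\mu_{0}$ trivially has exponentially small mass at infinity.

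Next, I plan to exploit the multiplicative Jordan decomposition $\gamma_{0}=\gamma_{e}\gamma_{h}\gamma_{u}$ in $G$ with commuting elliptic, $\R$-diagonalizable, and unipotent parts. After conjugating the basepoint in $M^{\alpha}$ (equivalent to translating $\mu_{0}$ by an element of $G$), one may arrange $\gamma_{h}\in A$ and $\gamma_{e}$ in the compact centralizer of $A$ in $K$. I then plan to average $\mu_{0}$ along a F\o lner sequence in $\overline{\langle\gamma_{e}\rangle}$ (compact) and along a centered F\o lner sequence in the unipotent one-parameter subgroup $\{\gamma_{u}^{t}\}_{t\in\R}$, using \cref{thm:BFHunip} to control escape of mass under unipotent averaging and \cref{prop:uscfiberentropy} to preserve positive fiber entropy upon passing to weak-$*$ subsequential limits. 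This yields a measure $\mu_{1}$ on $M^{\alpha}$ invariant under $\overline{\langle\gamma_{e}\rangle}\cdot\{\gamma_{u}^{t}\}$ and still invariant under $\gamma_{h}$, with $h_{\mu_{1}}(\gamma_{h}\mid\Fsc)>0$. Combining Ratner equidistribution on $G/\Gamma$ via \cref{thm:averaginghomo2}, the $\mathfrak{sl}_{2}$-averaging procedure in \cref{SL2coro}, and the averaging results \cref{averagingunipotent} and \cref{averagingA}, I then propagate this invariance to the full one-parameter diagonal subgroup through $\gamma_{h}$ and—via additional averaging over root groups non-resonant with the fiberwise Lyapunov data—to all of $A$.

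Finally, apply \cref{prop:cocompact} to obtain an $A$-invariant probability measure $\mu$ on $M^{\alpha}$ that projects to Haar on $G/\Gamma$ and satisfies $h_{\mu}(a\mid\Fsc)>0$ for some $a\in A$. The principal obstacle throughout is controlling escape of mass: for nonuniform $\Gamma$ the suspension $M^{\alpha}$ is noncompact, and averaging the fiber-supported measure $\mu_{0}$ along unbounded subgroups of $G$ can readily bleed mass into the cusps of $G/\Gamma$. The hypothesis $\rank_{\R}(G)\ge 3$ enters here by supplying enough independent Cartan directions and commuting root groups to deploy Margulis-style quantitative non-divergence at every averaging step, thereby maintaining the uniform exponentially small mass at infinity needed to invoke the upper semicontinuity of fiber entropy and to pass to weak-$*$ limits without collapsing to the zero measure or losing positive fiber entropy.
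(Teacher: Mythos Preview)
Your three-stage architecture (seed measure, upgrade to $A$-invariance, upgrade the projection to Haar) matches the paper, and your final invocation of \cref{prop:cocompact} is exactly how the paper closes.  But two of your intermediate steps contain real gaps.

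First, the real multiplicative Jordan decomposition $\gamma_{0}=\gamma_{e}\gamma_{h}\gamma_{u}$ does not let you pass cleanly to $h_{\mu_{1}}(\gamma_{h}\mid\Fsc)>0$.  After averaging over $\overline{\langle\gamma_{e}\rangle}$ and $\{\gamma_{u}^{t}\}$ you will indeed obtain $\mu_{1}$ invariant under $\gamma_{0}$, $\gamma_{e}$, $\gamma_{u}$ and hence $\gamma_{h}$, with $h_{\mu_{1}}(\gamma_{0}\mid\Fsc)>0$.  But subadditivity only gives $h_{\mu_{1}}(\gamma_{h}\mid\Fsc)>0$ if you know $h_{\mu_{1}}(\gamma_{u}\mid\Fsc)=0$, and you offer no mechanism for this: $\gamma_{u}$ is not in $\Gamma$, so neither distortion in $\Gamma$ nor any cocycle-superrigidity argument is available for this measure.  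The paper sidesteps this completely (\cref{ss}): it works over $\Qbb$, takes the Jordan--Chevalley decomposition $\lambda=\lambda_{s}\lambda_{u}$ inside the arithmetic group, notes that $\lambda_{u}^{n}\in\Gamma$ is distorted and hence $h_{\top}(\alpha(\sigma(\lambda_{u}^{n})))=0$, and replaces $\gamma_{0}$ by a \emph{semisimple} element of $\Gamma$ before ever touching the suspension.  The seed measure (\cref{claim:seed}) is then built from a one-parameter subgroup through this semisimple element, whose orbit in $G/\Gamma$ is compact because it lies in a $\Qbb$-anisotropic torus.

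Second, the sentence ``propagate this invariance \ldots\ to all of $A$'' is where essentially all of the work lies, and your sketch does not capture it.  The paper's route is an induction on the nodes of the Dynkin diagram (\cref{prop:oneroot}, \cref{prop:firstind}, \cref{prop:induction}): starting from the diagonal $d_{0}^{\Rbb}$ of a single simple root $\delta_{0}$ with positive fiber entropy, one applies \cref{SL2coro} to a root $\delta_{3}$ \emph{orthogonal} to $\delta_{1}$, then averages along carefully chosen closed root-subgroup collections $I$ (tabulated case-by-case for each root system) that commute with a fixed $a_{*}$ carrying the entropy, and invokes \cref{averagingunipotent} and \cref{averagingA} at each stage.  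The hypothesis $\rank_{\Rbb}(G)\ge 3$ enters precisely here: one needs a chain $\delta_{1},\delta_{2},\delta_{3}$ of simple roots with $\delta_{1}$ and $\delta_{3}$ commuting to start the induction.  Your explanation that rank $\ge 3$ supplies ``enough independent Cartan directions \ldots\ to deploy Margulis-style quantitative non-divergence'' is incorrect---the non-divergence input \cref{thm:BFHunip} works in any rank; the rank constraint is purely about the root-system combinatorics needed for the induction.
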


To construct the measure in the above theorem, we repeatedly average various measures on $M^\alpha$ along \Folner sequences in certain subgroups of $G$.   When  averaging, we use either \Cref{thm:BFHunip} or \Cref{SL2coro} to avoid escape of mass.  
By averaging along subgroups that (1) commute with an element for which the fiber entropy is positive and (2) preserve quantitative decay of  measures at $\infty$, using \Cref{prop:uscfiberentropy} we maintain  positive fiber entropy when passing to limits. 

	\subsection{Reduction to a semisimple  element}
	We start with the following proposition which asserts that we may assume that the positive entropy element in $\Gamma$ is semisimple. Note that the following proposition is true  not only under assumption \ref{Ainvrank3} but also under  assumptions \ref{AinvSL3} and \ref{Ainvuniform} of \Cref{Ainv}.
\begin{proposition}\label[proposition]{ss}
Let $G$ and $\Gamma$ be as in \cref{hypstand}. 
Let $M$ be a  compact manifold and let $\alpha\colon \Gamma\to \Diff^\infty (M)$ be an action.  Suppose there exists $\gamma_0\in \Gamma$ such that  
$$h_\top (\alpha(\gamma_0))>0.$$
Then there exists a semisimple $\gamma_1\in \Gamma $ such that 
$$h_\top (\alpha(\gamma_1))>0.$$
\end{proposition}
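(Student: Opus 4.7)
The plan is to use the Jordan decomposition $\gamma_0=\gamma_s\gamma_u$ of $\gamma_0$ inside the algebraic group $\bfG$ and to show that the unipotent factor contributes no topological entropy. First, since $\rank_\Rbb(G)\ge 2$, Margulis arithmeticity applies and, after replacing $\Gamma$ with a finite-index subgroup compatible with $\bfG(\Z)$ (harmless, as the statement is insensitive to such a replacement), we may take $\gamma_s,\gamma_u\in\bfG(\Q)$ with $\gamma_s$ semisimple, $\gamma_u$ unipotent, and $[\gamma_s,\gamma_u]=e$. Writing $\gamma_u=\exp(X)$ for $X$ nilpotent in $\mathfrak g(\Q)$, clearing denominators in $X$ produces an integer $N\ge 1$ with $\gamma_u^N\in\Gamma$; then $\gamma_s^N=\gamma_0^N(\gamma_u^N)^{-1}\in\Gamma$ is a semisimple element of $\Gamma$.

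Since $\alpha(\gamma_s^N)$ and $\alpha(\gamma_u^N)$ are commuting $C^\infty$ diffeomorphisms of the compact manifold $M$, subadditivity of topological entropy for commuting maps together with $h_\top(\alpha(\gamma_0^N))=Nh_\top(\alpha(\gamma_0))$ gives
\[ N\,h_\top(\alpha(\gamma_0)) \;\le\; h_\top(\alpha(\gamma_s^N)) + h_\top(\alpha(\gamma_u^N)). \]
The heart of the plan is to show $h_\top(\alpha(\gamma_u^N))=0$. By \Cref{LMR}, the word metric on $\Gamma$ is quasi-isometric to the restriction of a right-$G$-invariant, left-$K$-invariant Riemannian metric on $G$, and for any unipotent $u\in G$ a direct Cartan-projection computation gives $d_G(u^k,e)=O(\log k)$; hence $|\gamma_u^{Nk}|_\Gamma$ grows at most logarithmically in $k$. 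A standard bound $\|\alpha(\gamma)\|_{C^1}\le C^{|\gamma|_\Gamma}$ (obtained from a fixed generating set) then forces $\limsup_k\frac{1}{k}\log\|D(\alpha(\gamma_u^N))^k\|_{C^0}=0$, so every Lyapunov exponent of $\alpha(\gamma_u^N)$ vanishes for every invariant measure, and the Margulis--Ruelle inequality (\Cref{thm:MRineq}) together with the variational principle yields $h_\top(\alpha(\gamma_u^N))=0$.

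Combining the previous two displays, $h_\top(\alpha(\gamma_s^N))\ge N\,h_\top(\alpha(\gamma_0))>0$, so $\gamma_1:=\gamma_s^N\in\Gamma$ is the desired semisimple element. I expect the main obstacle to be the step $h_\top(\alpha(\gamma_u^N))=0$: both the logarithmic Cartan-projection bound for unipotent orbits and the invocation of \Cref{LMR} rely essentially on the higher-rank hypothesis, and the Lyapunov/variational argument must turn information about iterates of a single unipotent element into vanishing of topological entropy without any \emph{a priori} $\alpha(\Gamma)$-invariant measure on $M$. A secondary technical point is the arithmetic step arranging both $\gamma_s^N$ and $\gamma_u^N$ to lie simultaneously in $\Gamma$; this reduction is handled by passing to a finite-index subgroup compatible with $\bfG(\Z)$.
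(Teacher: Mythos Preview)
Your argument is correct and is essentially the paper's own proof: Jordan decomposition over $\Q$ via arithmeticity, a power to land both parts in $\Gamma$, subadditivity of topological entropy for commuting diffeomorphisms (Hu), and $h_{\top}(\alpha(\gamma_u^N))=0$ because unipotents are distorted in $\Gamma$ by \Cref{LMR}. The only difference is packaging: the paper phrases the last step as ``unipotent elements are distorted, and distortion elements have zero topological entropy,'' whereas you unpack this into the derivative-growth bound and Margulis--Ruelle plus the variational principle---an equivalent route.
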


\begin{proof}[Proof of \Cref{ss}]
By Margulis's arithmeticity theorem and by replacing $\Gamma$ with a subgroup of finite index if necessary, we may assume the center of $G$ is trivial and that there is a $\Q$-simple $\Q$-group $\bfF$, a  surjective continuous homomorphism $\sigma\colon \bfF(\R)^\circ \to G$, and a finite index subgroup $\Lambda$ of $\bfF(\Z)\cap \bfF(\R)^\circ $ such that $\sigma \colon \Lambda\to \Gamma$ is an isomorphism. 

Let $\lambda=\sigma\inv (\gamma_0)$. We recall the Jordan-Chevalley  decomposition $\lambda= \lambda_s\lambda_u$ where $\lambda_s$ is semisimple, $\lambda_u$ is unipotent, and $\lambda_s$ and $\lambda_u$ commute; moreover,  $\lambda_s, \lambda_u\in  \bfF(\Q)$.  
Since $\lambda_u$ is unipotent, there is $n\in \N$ such that $\lambda_u^n\in \Lambda$.  It follows that $\lambda_s^n=\lambda^n(\lambda_u^n)\inv\in \Lambda$.

Let $\gamma_1 = \sigma (\lambda_s^n).$  We claim $$h_\top (\alpha(\gamma_1))>0.$$ Indeed, unipotent elements are distorted in $\Lambda$ (see,  \cite{LMR00, MR2219247}) and so $\alpha(\sigma(\lambda_u^n ))$ is a distortion element in $\Diff^{\infty}(M)$.  Since $h_\top (f^n)=n h_\top(f)$, if $f$ is a distortion element then $h_\top(f)=0$ and so $h_\top \left(\alpha(\sigma (\lambda_u^n))\right)=0$.  
By  \cite[Thm.\  B]{MR1213080},
\[0<h_{\top}(\alpha(\gamma_{0}^{n}))\le h_\top (\alpha(\sigma(\lambda_s^n))) + h_\top(\alpha(\sigma(\lambda_u^n))) =h_{\top}(\alpha(\sigma(\lambda_s^n))).  \qedhere\]
\end{proof}

\subsection{Reduction to an diagonal element}
We prove the following proposition which will serve as the base case for induction in our proof of \Cref{thm:Ainvrank3} later in \cref{sec:induction}.  
\begin{proposition}\label[proposition]{prop:diag}
Let $G$, $M$, $\Gamma$, $\alpha$ be as in 
\Cref{thm:Ainvrank3}.   
Then there is 
\begin{enumerate}
\item a probability measure $\mu_{2}$ on $M^{\alpha}$,
\item a maximal $\Rbb$-split torus $A$ in $G$,
\item a restricted (simple) root $\delta_{0}$ in the restricted root system $\Phi(A,G)$, and
\item a one parameter subgroup $d_{0}^{\Rbb}=\{d_{0}^{t}\}_{t\in \Rbb}\subset A$   that   is the diagonal  of  $\delta_{0}$  in $A$ 
\end{enumerate} 
such that
\begin{enumerate}[resume]
\item $\mu_{2}$ has exponentially small mass at $\infty$,
\item $\mu_{2}$ is $d_{0}^{\Rbb}$-invariant, and
\item $h_{\mu_{2}}(d_{0}^{1}\mid \Fsc)>0$.
\end{enumerate}
\end{proposition}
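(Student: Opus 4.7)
The plan is to construct $\mu_2$ through a sequence of averaging operations. By \Cref{ss}, we may assume $\gamma_0\in\Gamma$ is semisimple. The variational principle (applicable since $\alpha$ is $C^\infty$) combined with \Cref{thm:MRineq} supplies an ergodic $\alpha(\gamma_0)$-invariant probability measure $\nu$ on $M$ with $h_\nu(\alpha(\gamma_0))>0$. Push $\nu$ forward under $x\mapsto [\1,x]$ to obtain a $\wtd\alpha(\gamma_0)$-invariant probability $\mu_0$ on $M^\alpha$, supported on the fiber over $\1\Gamma$, with $h_{\mu_0}(\gamma_0\mid\Fsc)=h_\nu(\alpha(\gamma_0))>0$. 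Since $\gamma_0$ is semisimple in $G$, it lies in a $\theta$-stable Cartan subgroup $H=T\cdot A_H$ of $G$ with $T$ compact abelian and $A_H$ $\Rbb$-split; write $\gamma_0=ta$ with $[t,a]=\1$, $t\in T$, $a\in A_H$. Extend $A_H$ to a maximal $\Rbb$-split torus $A\subset G$ and let $T_0:=\overline{\langle t\rangle}\subset T$, a compact abelian group.

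Form
\[
\mu_1:=\int_{T_0}\wtd\alpha(s)_*\mu_0\,ds.
\]
Rewriting $\gamma_0$-invariance of $\mu_0$ as $a_*\mu_0=(t^{-1})_*\mu_0$ and using $[T_0,a]=\{\1\}$ together with Haar invariance of the $T_0$-integration, one checks that $\mu_1$ is simultaneously invariant under $T_0$ and under $a$. Its projection to $G/\Gamma$ is supported on the compact set $T_0\cdot\1\Gamma$, so $\mu_1$ has exponentially small mass at $\infty$. Because iterates of $t$ remain in compact $T_0$ all fiberwise Lyapunov exponents of $t$ vanish, so by \Cref{thm:MRineq} we have $h_{\mu_1}(t\mid\Fsc)=0$; \Cref{thm:subadditive} applied to $\gamma_0=ta$ then yields $h_{\mu_1}(a\mid\Fsc)\ge h_{\mu_1}(\gamma_0\mid\Fsc)>0$. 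Averaging further over the interval of the $1$-parameter subgroup $a^\Rbb:=\exp(\Rbb\log a)\subset A$, set
\[
\mu_{1.5}:=\int_0^1\wtd\alpha(\exp(s\log a))_*\mu_1\,ds.
\]
A change of variables based on $a$-invariance of $\mu_1$ shows $\mu_{1.5}$ is $a^\Rbb$-invariant; it retains $T_0$-invariance; its projection is compact (since $a=t^{-1}\gamma_0\in T_0\Gamma$ forces the $a^\Rbb$-orbit of $\1\Gamma$ into the compact set $T_0\cdot a^{[0,1]}\Gamma/\Gamma$); and $h_{\mu_{1.5}}(a\mid\Fsc)=h_{\mu_1}(a\mid\Fsc)>0$ by affineness of fiber entropy, since each $\exp(s\log a)$ commutes with $a$.

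By \Cref{thm:coent} the map $b\mapsto h_{\mu_{1.5}}(b\mid\Fsc)$ is a seminorm on $\liea$ whose kernel is a proper subspace; since the simple coroots $\{H_\delta^\vee:\delta\in\Delta(A,G)\}$ form a basis of $\liea$, there exists a simple root $\delta_0\in\Delta(A,G)$ for which $h_{\mu_{1.5}}(d_0^1\mid\Fsc)>0$, where $d_0^\Rbb:=\exp(\Rbb H_{\delta_0}^\vee)$ is the diagonal of $\delta_0$ in $A$. The goal of the final step is to promote $\mu_{1.5}$ to a measure $\mu_2$ that is additionally $d_0^\Rbb$-invariant, while keeping exponentially small mass at $\infty$ and positive fiber entropy at $d_0^1$; this is accomplished by invoking \Cref{SL2coro} to the standard $\Rbb$-rank-$1$ subgroup $H_{\delta_0}$, with \Cref{prop:uscfiberentropy} preserving positive fiber entropy and \Cref{thm:BFHunip,averagingunipotent,thm:averaginghomo2} controlling mass at infinity when passing to limits in the required averages.

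The main obstacle is the last step: when the $\Rbb$-split part $a$ is a regular element of $A$ no root of $\Phi(A,G)$ vanishes on $a^\Rbb$, so the hypothesis $A_0\subset\ker\delta_0$ of \Cref{SL2coro} is not met by $\mu_{1.5}$ directly. An intermediate auxiliary averaging is needed to enlarge the invariance of $\mu_{1.5}$ beyond $a^\Rbb$ until the resulting invariance subgroup meets $\ker\delta_0$ nontrivially; the hypotheses $\rank_\Rbb(G)\ge 3$ and $\Gamma$ nonuniform enter precisely to supply enough room in the root system and enough non-divergence on the base to carry out this enlargement without destroying either the exponentially small mass at $\infty$ or the positive fiber entropy.
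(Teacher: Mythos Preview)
Your argument contains a genuine gap at step~8 and the final two paragraphs. You assert that $b\mapsto h_{\mu_{1.5}}(b\mid\Fsc)$ is a seminorm on all of $\liea$ and then pick a simple coroot $H_{\delta_0}^\vee$ outside its kernel. But \Cref{thm:coent} only makes this function a seminorm on the subgroup that \emph{preserves} the measure, and your $\mu_{1.5}$ is invariant only under the one-dimensional group $a^{\Rbb}\cdot T_0$. The quantity $h_{\mu_{1.5}}(d_0^1\mid\Fsc)$ is not even defined unless $\mu_{1.5}$ is $d_0^1$-invariant, so the choice of $\delta_0$ is circular. Your final paragraph acknowledges this (``an intermediate auxiliary averaging is needed'') but supplies no mechanism; simply averaging $\mu_{1.5}$ over more of $A$ is exactly what you cannot do, since a generic $\Rbb$-split torus has divergent orbits on $G/\Gamma$ and you would lose control of mass at $\infty$.

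The paper's proof resolves this by exploiting the $\Qbb$-structure, which your real Jordan decomposition ignores entirely. Because $\Gamma$ is nonuniform, $G=\bfG(\Rbb)$ for a $\Qbb$-group $\bfG$ and $\gamma_0$ lies (after a power) in the $\Qbb$-anisotropic part $\Tbf_a^{\Qbb}$ of a maximal $\Qbb$-torus. The crucial fact is that $\Tbf_a^{\Qbb}(\Rbb)/\Tbf_a^{\Qbb}(\Zbb)$ is \emph{compact}, so one may average the seed measure over the \emph{entire} torus $\Tbf_a^{\Qbb}(\Rbb)$---including its full $\Rbb$-split part $S$, which may be strictly larger than your $a^{\Rbb}$---without any escape of mass. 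This yields a measure invariant under a possibly higher-dimensional split subgroup $S\subset A$. To pass from $S$ to a diagonal of some root, the paper then analyzes $Z_{\bfG}(\Tbf_s^{\Qbb})$: its semisimple part $\Hbf$ contains $S$, and the further centralizer $Z_{\Hbf}(S)$ has semisimple part $\Hbf'$ whose simple factors are of \emph{inner type} (because $\Hbf'$ admits an $\Rbb$-anisotropic maximal torus). Inner-type root systems possess a pairwise orthogonal set of roots of full rank; the associated standard rank-1 subgroups commute with one another and with $S$, so \Cref{SL2coro} can be applied to each in turn to enlarge invariance from $S$ to all of $A_H$ while preserving positive fiber entropy for the original element. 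Only then does the seminorm argument locate a root $\delta_0$ whose diagonal carries positive entropy. This structure-theoretic detour through the $\Qbb$-form and the inner-type classification is the substantive content of the proposition, and your outline does not touch it.
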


\subsubsection{Arithmetic reductions}
We return to the setting of  \cref{hypstand} with  $\Gamma$  assumed nonuniform.  Up to passing to a finite-index subgroup of $\Gamma$ and inducing an action on finitely many copies of $M$ (see discussion in 
\cite[p.\, 1002]{Brown:2020aa}), the following  standard reductions hold:
\begin{hypot}
$G=\bfG(\Rbb)$ for some algebraically simply connected $\Qbb$-simple algebraic group $\bfG$ defined over $\Qbb$. Let $\Gamma=\bfG(\Zbb)$ be a lattice in $\bfG(\Rbb)$. Assume that $\rank_{\Rbb}(\bfG)\ge 3$ and $\rank_{\Qbb}(\bfG)\ge 1$.
\end{hypot}
Indeed,  $\rank_{\Qbb}(\bfG)=0$ if and only if $\Gamma=\bfG(\Zbb)$ is uniform.  Since we assumed $\Gamma$ is nonuniform  lattice, we have that $\rank_{\Qbb}(\bfG)\ge 1$.  Moreover, since $\Gamma$ is assumed nonuniform, one does not need to pass to a compact extension when applying Margulis' Arithmeticity Theorem (see \cite[Cor.\ 5.3.2]{MorrisArith}) and so   $G=\bfG(\Rbb)$ does not have  compact simple factors.

Recall that we assume $\bfG$ is defined over $\Qbb$ and from \Cref{ss}
 that  $\gamma_{0}\in\bfG(\Zbb)$ is semisimple.  It follows that $\gamma_0$ is contained in a maximal  $\Q$-torus $\Tbf<\bfG$.  
Taking a power of $\gamma_{0}$ if necessary, 
we may assume that $\gamma_{0}\in \Tbf(\Zbb)$.  
 The $\Q$-torus $\Tbf$ splits uniquely as an almost direct product of $\Qbb$-tori $\Tbf=\Tbf_{s}^{\Qbb}\cdot\Tbf_{a}^{\Qbb}$ where  $\Tbf_{s}^{\Qbb}$  is $\Qbb$-split and $\Tbf_{a}^{\Qbb}$ is $\Qbb$-anisotropic. 
 We have $\Tbf_{s}^{\Qbb}(\Rbb)\cap \Gamma=\Tbf_{s}^{\Qbb}(\Rbb)\cap \Tbf(\Zbb)$ is finite and thus, after taking a power of $\gamma_{0}$ if necessary, we may assume further that $\gamma_{0}\in \Tbf_{a}^{\Qbb}(\Rbb)$.
Finally,  passing to another  power of $\gamma_0$ if needed, we may assume $\gamma_0 = g^1$ for some 1-parameter subgroup $g^{\Rbb}=\{g^t\}$ in $T=\Tbf_{a}^{\Qbb}(\Rbb)$. 
 

\subsubsection{A seed measure with positive fiber entropy}\label{sec:seed}
 Recall $\gamma_0= g^1$ for a 1-parameter subgroup $\{g^t\}$ in $G$.  Consider the $g_t$-orbit of the fiber in $M^\alpha$ over the identity coset $\1\Gamma$.  This orbit is compact, projects to a closed curve in $G/\Gamma$, and coincides with the suspension flow induced by $\alpha(\gamma_0)$.  In particular,  this collection of fibers is $g_t$-invariant and the  $g_t$-flow has positive topological entropy.
The variational principle \cite[Chap.\ 20]{KHbook}
applied to the $g_t$-flow on this collection of fibers then gives the following.  
\begin{claim}\label[claim]{claim:seed} 
 There exists a probability  measure $\mu_{0}$ on $M^{\alpha}$  such that
 \begin{enumerate}
 \item $\mu_{0}$ is $g^t$-invariant, 
 \item  $h_{\mu_{0}}(g^1\mid \Fsc)>0$, and
 \item  $\mu_{0}$ projects to the Haar measure on the closed orbit $\{g^t \cdot \Gamma:  t\in \R \}.$
 \end{enumerate}
 In particular, $\mu_0$ is compactly supported.  
 \end{claim}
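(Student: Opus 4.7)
The plan is to exhibit $\mu_0$ as the lift to $M^\alpha$ of the natural invariant measure for the suspension flow of $\alpha(\gamma_0)$, using that $g^1=\gamma_0\in\Gamma$ so that the $\{g^t\}$-orbit of the identity coset in $G/\Gamma$ is a circle.

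First I would study the compact $g^t$-invariant set $Y := \{[g^t,x] : t\in\Rbb,\ x\in M\}\subset M^\alpha$. Since $g^1=\gamma_0\in\Gamma$, the defining relation of the suspension gives
\[
[g^{t+1},x] = [g^t\cdot \gamma_0,x] = [g^t,\alpha(\gamma_0)(x)],
\]
so $Y$ is homeomorphic to the mapping torus $(M\times[0,1])/{\sim}$, where $(x,1)\sim(\alpha(\gamma_0)(x),0)$, and the $\{g^t\}$-action on $Y$ is conjugate to the suspension flow of $\alpha(\gamma_0)$. Moreover, $p(Y)=\{g^t\Gamma:t\in\Rbb\}$ is a (compact) circle in $G/\Gamma$, so $Y$ is itself compact.

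Next, the time-$1$ map of a suspension has the same topological entropy as its base map, so $h_{\top}(g^1\restriction_Y)=h_{\top}(\alpha(\gamma_0))>0$. Applying the variational principle (\cite[Chap.\ 20]{KHbook}) to the homeomorphism $g^1\colon Y\to Y$, I obtain a $g^1$-invariant Borel probability measure $\wtd\mu$ on $Y$ with $h_{\wtd\mu}(g^1)>0$. I then upgrade to flow-invariance by averaging,
\[
\mu_0 := \int_0^1 (g^t)_*\wtd\mu\,dt;
\]
using $g^1$-invariance of $\wtd\mu$, a direct change of variables shows $\mu_0$ is $g^s$-invariant for every $s\in\Rbb$, and by construction $p_*\mu_0$ is the normalized Haar measure on the circle $\{g^t\Gamma\}$. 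Since $g^t$ commutes with $g^1$, each $(g^t)_*\wtd\mu$ has the same $g^1$-entropy as $\wtd\mu$, and by affinity of metric entropy in the measure argument, $h_{\mu_0}(g^1)=h_{\wtd\mu}(g^1)>0$.

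Finally, I convert total entropy into fiber entropy. The factor map $p\colon(M^\alpha,\mu_0)\to(\{g^t\Gamma\},\mathrm{Haar})$ intertwines $g^1$ with the identity map on the circle (since $g^1\Gamma=\Gamma$), so the base dynamics has zero entropy, and the Abramov--Rokhlin formula gives
\[
h_{\mu_0}(g^1\mid\Fsc)=h_{\mu_0}(g^1)-h_{p_*\mu_0}(g^1\restriction_{\{g^t\Gamma\}})=h_{\mu_0}(g^1)>0.
\]
Since $\mu_0$ is supported on the compact set $Y$, it has exponentially small mass at infinity trivially. The only mildly delicate point is the identification of topological entropies between $\alpha(\gamma_0)$ and the time-$1$ map of its suspension, but this is a classical fact for suspension flows with constant roof function, so no real obstacle arises.
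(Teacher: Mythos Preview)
Your proof is correct and follows the same approach as the paper: identify the $g^t$-orbit of the fiber over $\mathbf{1}\Gamma$ with the suspension of $\alpha(\gamma_0)$ and invoke the variational principle. The paper gives only a one-paragraph sketch; you have filled in the details (averaging to flow-invariance, the Abramov--Rokhlin step using that $g^1$ acts trivially on the base circle) that the paper leaves implicit.
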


\subsubsection{Averaging over the  $\Qbb$-anisotropic torus and reduction to a $\R$-split element}\label{sec:aniave}

We have that $\Tbf_{a}^{\Qbb}$ is defined over $\R$ and so further splits as an almost direct product of a $\R$-split torus and a $\R$-anisotropic (i.e.\ compact) torus.   As $\Tbf_{a}^{\Qbb}(\Rbb)$ is abelian, fix a \Folner sequence $F_{n}$ in $\Tbf_{a}^{\Qbb}(\Rbb)$ and take \[\mu_{0}^{n}=F_{n}*\mu_{0}:=\frac{1}{|F_{n}|}\int_{F_{n}} h_{*}\mu_{0} \, dh.\]
Since $T= \Tbf_{a}^{\Qbb}(\Rbb)/\Tbf_{a}^{\Qbb}(\Zbb)$ is a compact torus on $G/\Gamma$, the family $\{\mu_0^n\}$ is uniformly tight.
Let $\mu_{1}$ be a weak-$*$ limit of the sequence $\{\mu_0^n\}$. Then, $\mu_1$ is $\Tbf_{a}^{\Qbb}(\Rbb)$ invariant.
By pre-compactness of the family $\{\mu_0^n\}$ and since $\Tbf_{a}^{\Qbb}(\Rbb)$ is abelian, by \cref{prop:uscfiberentropy} we have $ h_{\mu_{1}}(g^1\mid \Fsc)>0$.  




Write $g^1= a\cdot k$ where $a\in \Tbf_{a}^{\Qbb}(\Rbb)$ is  $\R$-split and $k\in \Tbf_{a}^{\Qbb}(\Rbb)$ is $\R$-anisotropic. 
Since $h_{\mu_{1}}(g^1\mid \Fsc)>0$, $a$ is non-trivial and we may write $a= a^{1}$ for a 1-parameter $\R$-split subgroup $\{a^t\}$ in $\Tbf^{\Qbb}_{a}(\Rbb)$.  
Since the  closure of $\{k^n:n\in \Z\}$ in $\Tbf^{\Qbb}_{a}(\Rbb)$ is compact and commutes with $g^1$, we have 
$$h_{\mu_{1}}(a^1\mid \Fsc)=h_{\mu_{1}}(g^1\mid \Fsc)>0.$$

In summary, we found a compactly supported probability measure $\mu_1$ on $M^{\alpha}$ such that $\mu_1$ is $\Tbf_{a}^{\Qbb}(\Rbb)$-invariant and $h_{\mu_{1}}(a^{1}\mid \Fsc)>0$ for a $1$-parameter $\R$-split subgroup $\{a^{t}\}\subset \Tbf_{a}^{\Qbb}(\Rbb)$.

\subsubsection{Finding a diagonal element}
Recall that $\Tbf_{s}^{\Qbb}$ is the $\Q$-split  part of a maximal $\Q$-torus $\Tbf$ but may not be a maximal $\Q$-split torus in $\bfG$.  
 Take a maximal $\Qbb$-split torus $\Dbf_{1}$  containing $\Tbf_s$ and a collection of $\Q$-roots $\Phi(\Dbf_{1}, \bfG)_{\Q}$ and a choice of  collection of simple  $\Q$-roots $ \Delta(\Dbf_{1}, \bfG)_{\Qbb}$.  
     As $\Dbf_{1}$ is also $\Rbb$-split, we can find a maximal $\Rbb$-split  $\R$-torus $\Dbf_{2}$ containing $\Dbf_{1}$
     with 
     $\R$-root system $\Phi(\Dbf_{2}, \bfG)_{\R}$ and a choice of  collection of simple roots $ \Delta(\Dbf_{1}, \bfG)_{\R}$.  
     The restriction map $j\colon \Phi(\Dbf_{2}, \bfG)_{\Rbb}\to \Phi(\Dbf_{1}, \bfG)_{\Rbb}$ is   a surjective map $j$ between the root systems.       As in \cite[21.8]{Borel}, we can make the order  coherent so that  the restriction map $j$ takes simple roots to simple roots; that is, 
\[j\colon\Delta(\Dbf_{2}, \bfG)_{\Rbb} \to \Delta(\Dbf_{1}, \bfG)_{\Rbb}\cup\{0\}= \Delta(\Dbf_{1}, \bfG)_{\Qbb}\cup\{0\}.\]
(Note the equality holds as the adjoint representation is defined over $\Qbb$.)

From  \cite[Prop.\  20.4]{Borel}, the centralizer of $\Tbf_{s}^{\Qbb}$ in $\bfG$ is a Levi component of a $\Q$-parabolic subgroup.  
Since $\Tbf_{s}^{\Qbb}$ was assumed the maximal $\Q$-split torus in the maximal $\Qbb$-torus $\Tbf$ it follows (see \cite[Prop.\ 1.2]{MR1289055} and referenced discussion in \cite[\S 20--21]{Borel})
 that there is a collection of simple $\Q$-roots $I'\subset \Delta(\Dbf_{1}, \bfG)_{\Qbb}$ such that $\Tbf_{s}^{\Qbb}\subset \Dbf_1$ also coincides with $$\Tbf_{s}^{\Qbb}= \bigcap_{\alpha\in I'} \ker \alpha.$$
 Let $I= j\inv (I'\cup\{0\})\subset \Delta(\Dbf_{2}, \bfG)_{\Rbb} $.  Then $\Tbf_{s}^{\Qbb}\subset \Dbf_2$ coincides with 
 $$\Tbf_{s}^{\Qbb}= \bigcap_{\alpha\in I} \ker \alpha.$$
  It follows that 
	\[Z_{\bfG}(\Tbf_{s}^{\Qbb})=\Tbf_{s}^{\Qbb}\cdot\Tbf_{0}\cdot\Hbf \] for some $\Rbb$-torus $\Tbf_{0}$ and semisimple $\Rbb$-subgroup $\Hbf$ of $\bfG$. 
Moreover, $$\dim_{\Rbb}(\Tbf_{s}^{\Qbb})+\rank_{\Rbb}(\Hbf)=\dim_{\Rbb}(\Dbf_2)=\rank_{\Rbb}(\bfG)$$ and so $\Tbf_0$ is  $\Rbb$-anisotropic.

	Since every maximal  torus has the the same dimension and since $\Tbf$ is maximal  torus in $\bfG$, there is a maximal $\Rbb$-torus $\Tbf_H$ in $\Hbf$ such that $\Tbf_{0}\cdot\Tbf_H=\Tbf_{a}^{\Qbb}$. Let $\Sbf$ be the $\Rbb$-split part of the maximal torus $\Tbf_H$ of $\Hbf$. Note that $\Tbf_{s}\cdot \Sbf$ is a $\Rbb$-split torus and that  $\{a_t\}\subset \Sbf(\R)$.  	

	Let $\Dbf_3$ be a maximal $\R$-split torus of $\Hbf$ containing $\Sbf$.  We may further assume $\Dbf_3\cdot \Tbf_s = \Dbf_2$ and thus view roots $\Phi (\Dbf_3 , \Hbf)_\R$ as the restriction of $\Delta(\Dbf_{2}, \bfG)_{\Rbb}$ to $\Dbf_3$.

Applying 	  \cite[Prop.\  20.4]{Borel} and discussion in  \cite[Prop.\  21.11]{Borel} or  \cite[Prop.\  1.2]{MR1289055} again, we have 	
\[Z_{\Hbf}(\Sbf)=\Sbf\cdot\Sbf_{0}\cdot\Hbf'\] 
for some $\Rbb$-semisimple group $\Hbf'$ and $\Rbb$-anisotropic torus $\Sbf_{0}$. Since $\Sbf$ is the $\Rbb$-split part of a maximal torus $\Tbf_H$ in $\Hbf$, $\Hbf'$ has an $\Rbb$-anisotropic maximal torus.  By  \cite[Prop.\  8.5.2]{MR0573070},  each $\Rbb$-simple factor $H_{1}',\dots, H_{r}'$ of $H'=\Hbf'(\Rbb)$ is of inner type.  Moreover the inner type $\Rbb$-simple groups are completely classified; see \cite[Tables I, II]{MR1289055} or \cite[\S 8.5]{MR0573070}.  
In particular, all such groups have irreducible restricted root systems of the type $$B_\ell, C_\ell, BC_\ell,  \text{$D_\ell$ for $\ell $ even}, E_7, E_8, F_4, G_2.$$
These abstract root systems admit a pairwise orthogonal collection of roots  of cardinality is the rank of the root system; see discussion in \cite[\S2]{MR1682805}.

Let $S=\mathbf{S}(\Rbb)$. Let $A_{H'}$ and $A_{H_{i}'}$ be the maximal $\Rbb$-split torus in $H'$ and $H_{i}'$ in $\Dbf_3(\R)$, respectively, for $i=1,\dots,r$, so that $A_{H'}=A_{H_{1}'}\cdot\dots\cdot A_{H_{r}'}$. Note that $A_{H}=S\cdot A_{H'}= \Dbf_3(\R)$.
As $H'_j$ is inner, for each $1\le j\le r$, we may find a collection of roots $\{\alpha_{1}, \dots, \alpha_{\ell}\}$ in $\Phi(H'_j,A_{H'_j})$ where $\ell_j= \rank_\Rbb({H'_j}) = \dim _\Rbb( A_{H'_j})$
such that if $H^{\alpha_i}$ is the  standard $\R$-rank-1 subgroup of $H'_j$ generated by  $U^{[\alpha_i]}$ and~$U^{[-\alpha_i]}$ then 
\begin{enumerate}
\item $H^{\alpha_i}$ and $H^{\alpha_k}$ commute for $1\le i\neq k\le \ell_j$, and 
\item the group generated by  $\{d_{\alpha_i}^{\Rbb}, 1\le i\le \ell_j\}$, the diagonals of $\alpha_i$ in $A_{H'_j}$, 
is all of $A_{H'_j}$.  
\end{enumerate}

We complete the proof of   \Cref{prop:diag}. Note that $A_{H'}\simeq \Rbb^{k}$ for some $k\ge 0$. 
If $k=0$, we have that  $\Sbf$ is a maximal $\Rbb$-split torus in $\Hbf$ and thus  $S=A_{H}$. Since \begin{enumerate}
\item there is a one parameter subgroup $a^{\Rbb}\subset S$ such that $h_{\mu_{1}}(a^{1}\mid\Fsc)>0$, 
\item $\mu_{1}$ is invariant under $A_{H}$, and 
\item the fiber entropy is a non-zero semi-norm, 
\end{enumerate} 
we can find a root $\delta_{0}$ in $\Phi(H,A_{H})$ such that the diagonal $d_{0}^{\Rbb}\subset S= A_H$ of $\delta_{0}$ in $A_H$ has positive fiber entropy, $h_{\mu_{1}}(d_{0}^{1}\mid \Fsc)>0$. In this case we can choose $\mu_{2}=\mu_{1}$.

 Otherwise, we have $k>0$.  Applying  \Cref{SL2coro} for all $\alpha_{i}$, $i=1,\dots,\ell_j$ and each $j=1,\dots, r$, we can find a measure $\mu_{1}'$ such that 
\begin{enumerate}
\item $\mu_{1}'$ has exponentially small mass at $\infty$,
\item $\mu_1'$  is invariant under $S$,
\item $h_{\mu_{1}'}(a^{1}\mid \Fsc)>0$, and
\item\label{itemdiag} the image $p_{*}(\mu_{1}')$ of $\mu_1'$ in $G/\Gamma$ is invariant under $A_{H'}$.
\end{enumerate}
Above, conclusion \eqref{itemdiag} comes from the fact that for each $j=1,\dots,r$, the diagonals of $\alpha_{i}$ in $A_{H'_j}$, $i=1,\dots, \ell_{j}$ generate $A_{H_{i}'}$.   
 Fix a \Folner sequence $\{A_{n}\}$ in $A_{H'}$.   Since $p_{*}\mu_{1}$ is $A_{H'}$-invariant, $\left\{A_{n}*\mu_{1}'\right\}$ has uniformly   exponentially small mass at $\infty$. 
Take $\mu_2$ to be any subsequential limit point of  $\left\{A_{n}*\mu_{1}'\right\}$.  
Then   $\mu_{2}$ is invariant under $A_{H'}$ by construction and under $S$ since $S$ commutes with $A_{H'}$. 
  Thus,  $\mu_{2}$ is invariant under $A_{H}$. Finally, using \Cref{prop:uscfiberentropy} and that $a^{\Rbb}$ commutes with $A_{H'}$, we have $h_{\mu_{2}}(a^{1}\mid\Fsc)>0$.   
  Again, using the fact that the fiber entropy is a non-zero semi-norm, there is a root $\delta_{0}\in \Phi(H,A_{H})$ such that the diagonal $d_{0}^{\Rbb}\subset A_{H}\subset A$ of $\delta_{0}$ in $A_H$ has a positive entropy, $h_{\mu_{2}}(d_{0}^{1}\mid \Fsc)>0$.
  
  Finally we recall that we view roots $\delta_0\in \Phi(A_H, H)$ as restrictions of roots in $\Phi(\Dbf_2 , \bfG)_\Rbb$ to $A_H\subset \Dbf_2(\R)$, completing the proof of \Cref{prop:diag}.

\subsection{Inducting on the number of simple roots}\label{sec:induction}
Starting from \Cref{prop:diag}, we will prove the following by induction on the number of simple roots to deduce  \Cref{thm:Ainvrank3}.

\begin{proposition}\label[proposition]{prop:oneroot}
Let $G$, $\Gamma$, $M$, $\alpha$ be as in \Cref{thm:Ainvrank3}. Assume further that $G$ has finite center. Assume that there exists  
\begin{enumerate}[label=(\alph*)]
\item a maximal $\Rbb$-split torus $A$ in $G$,
\item a restricted root $\delta_{1}\in \Phi(A,G)$,
\item a $1$-parameter subgroup $d_{1}^{\Rbb}=\{d_{0}^{t}\}_{t\in\Rbb}$ that is the diagonal of $\delta_{1}$ in $A$, and 
\item a probability measure $\mu_{2}$ on $M^{\alpha}$ with exponentially small mass at $\infty$
\end{enumerate}
such that 
\begin{enumerate}[label=(\alph*), resume] 
\item $\mu_{2}$ is $\{d_{1}^{t}\}_{t\in\Rbb}$-invariant, and
\item $h_{\mu_{2}}(d_{1}^{1}\mid\Fsc)>0$.
\end{enumerate}

Then there exists a probability measure $\mu_{\infty}$ on $M^{\alpha}$ such that 
\begin{enumerate}
\item $p_{*}(\mu_{\infty})$ is Haar measure on $G/\Gamma$, 
\item $\mu_{\infty}$ is $A$-invariant, and
\item $h_{\mu_{\infty}}(a\mid \Fsc)>0$ for some $a\in A$.
\end{enumerate}
\end{proposition}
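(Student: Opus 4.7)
The plan is to use the given $\{d_1^t\}$-invariant measure $\mu_2$ to construct, via iterated applications of the averaging tools developed above, a measure $\mu$ on $M^\alpha$ that is $A$-invariant, has exponentially small mass at $\infty$, and has positive fiber entropy for some element of $A$. The final conclusion (that $\mu_\infty$ projects to the Haar measure on $G/\Gamma$) then follows immediately from \Cref{prop:cocompact}.

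To enlarge the subgroup of $A$ under which the measure is invariant, I will proceed inductively, starting from $A_0 := \{d_1^t\}$ of dimension one and enlarging until reaching $A$. Given a measure $\mu^{(k)}$ with exponentially small mass at $\infty$, positive fiber entropy for $d_1^1$, invariant under some subgroup $A_k \subsetneq A$, and whose pushforward $p_*\mu^{(k)}$ on $G/\Gamma$ is invariant under some closed subgroup $H_k \subset G$ containing $A_k$, I will locate a restricted root $\delta \in \Phi(A,G)$ with $A_k \subset \ker \delta$ and $\{d_\delta^t\} \not\subset A_k$, and apply \Cref{SL2coro} with this $\delta$ to produce $\mu^{(k+1)}$ invariant under $A_{k+1} := A_k \cdot \{d_\delta^t\}$ (strictly larger than $A_k$), with pushforward additionally invariant under the standard $\Rbb$-rank-$1$ subgroup $H_\delta$, still with exponentially small mass at $\infty$ and positive fiber entropy for $d_1^1 \in A_k$. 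Since $\rank_\Rbb(G) \ge 3$, there exist roots orthogonal to any one-dimensional subspace of $A$, so the first few inductive steps proceed directly.

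The main obstacle is that, for certain intermediate $A_k$, no restricted root of $\Phi(A,G)$ vanishes identically on $A_k$, making \Cref{SL2coro} directly inapplicable. To circumvent this, I intertwine the applications of \Cref{SL2coro} with averaging over unipotent subgroups $U^{[I]}$ normalized by $A_k$, using \Cref{averagingunipotent}. This operation does not enlarge the $A_k$-invariance of the measure on $M^\alpha$ directly, but it does enlarge the invariance group $H_k$ of its pushforward $p_*\mu^{(k)}$ on $G/\Gamma$. Once $H_k$ contains sufficiently many unipotent directions, Ratner's measure classification theorem, in the form used in \Cref{SL2}, forces $p_*\mu^{(k)}$ to be the Haar measure on $G/\Gamma$. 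At that point, applying \Cref{averagingA} with $H = G$ and $A_0 = A$ upgrades $\mu^{(k)}$ to an $A$-invariant measure on $M^\alpha$ with positive fiber entropy and exponentially small mass at $\infty$, and \Cref{prop:cocompact} concludes the proof.
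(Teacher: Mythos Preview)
Your outline matches the paper's strategy in its large-scale structure (alternate \Cref{SL2coro} with \Cref{averagingunipotent}, then finish with \Cref{averagingA} and \Cref{prop:cocompact}), but there is a genuine gap in the inductive scheme as you have written it: you commit to keeping $d_1^1$ as the positive-entropy witness throughout. This is too rigid and the induction stalls already in type $A_3$. Indeed, with $\delta_1=e_1-e_2$, the only roots of $\Phi(A,G)$ vanishing on $d_1^{\Rbb}$ are $\pm\delta_3=\pm(e_3-e_4)$; after one application of \Cref{SL2coro} you have $A_1=\langle d_1^{\Rbb},d_{\delta_3}^{\Rbb}\rangle$ and $H_1\supset H_{\delta_3}$, but no further root vanishes on $A_1$, and the only unipotent groups centralizing $d_1^1$ are $U^{\pm\delta_3}$, which are already in $H_1$. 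Neither branch of your dichotomy makes further progress, and $\langle H_{\delta_3},d_1^{\Rbb}\rangle$ is far from $G$, so Ratner gives nothing here.

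The missing idea is that fiber entropy is a (nonzero) semi-norm on $A_k$ (by \Cref{thm:coent}/\Cref{thm:subadditive}), so one may change the distinguished positive-entropy element inside $A_k$. The paper exploits this in the rank-3 base case (\Cref{prop:firstind}): having $A_1$ spanned by the diagonals of $\delta_1$ and $\delta_3$, it rewrites $A_1$ as spanned by $\ker\delta_2\cap\ker(\delta_1+\delta_2+\delta_3)$ and $\ker(\delta_1+\delta_2)\cap\ker(\delta_2+\delta_3)$, selects a positive-entropy element $a$ in one of these lines, and then applies \Cref{averagingunipotent} with $I$ consisting of roots (such as $\delta_2,-(\delta_1+\delta_2+\delta_3)$) that vanish on $a$ but not on $d_1^1$. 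This, combined with \Cref{thm:opproot}, forces $p_*\mu$ to be $L_3$-invariant. The subsequent induction along the Dynkin diagram (\Cref{prop:induction}, with the specific choices of $I$ tabulated in the paper) again keeps a fixed $a_*$, but one carefully arranged so that at every step there is a suitable $I$ centralized by $a_*$; this organization, including the Weyl-group relabeling of simple roots, is where most of the work in the paper's proof lies and is not visible in your outline.
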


\subsubsection{Invariance under the rank-$3$ (or $4$) group generated by semi-adjacent root(s)}
Consider a  measure  $\mu_2$ satisfying the hypotheses of \Cref{prop:oneroot}. 

Since the diagonals of $\delta_1$ and $2\delta_1$ in $A$ coincide, we may assume $\frac 1 2 \delta_1$ is not a root.  
Acting by the Weyl group, we may select a system of simple roots $\Delta(A,G)$ for $\Phi(G, A)$ for which $\delta_1$ is the left-most root (if the root system is of type $A_n,  D_n, E_6, E_7,$ or $ E_8$) or either the 
left-most or right-most root (if the root system is of type $ B_n, C_n,$ or $ BC_n$).   
The only exception is the following: if the root system if of type $F_4$, we will assume $\delta_1$ is either the right-most  root or the root second from left.    
We follow the choice of  left-to-right orientation for  Dynkin diagrams associated to systems of simple roots for the irreducible roots systems with  rank at least 3 in   \cref{table:dynk}.

 {\renewcommand{\arraystretch}{1.0}  
  \newcolumntype{C}[1]{>{\centering\arraybackslash} m{#1} }

\def\scales{.5}
\def\bdboxA{\pgfresetboundingbox \useasboundingbox (0,-.3) rectangle (4,.5)   }
\def\bdboxB{\pgfresetboundingbox \useasboundingbox (0,-.3) rectangle (4,.5)   }
\def\bdboxD{\pgfresetboundingbox \useasboundingbox (0,-.7) rectangle (4,1.)   }
\def\bdboxE{\pgfresetboundingbox \useasboundingbox (0,-.3) rectangle (5,1.5)}
\def\bdboxF{\pgfresetboundingbox  \useasboundingbox (0,-.3) rectangle (3,.5)}

\begin{table}[h]
\footnotesize
\centering
\begin{tabular}{ | C{1.4cm}  | C{3.2cm} || C{1.4cm}| C{3.5cm}| } \hline
 root system  & Dynkin Diagram &  root system  & Dynkin Diagram \\
 \hline
$A_n$  &
\begin{tikzpicture}[scale=\scales]
\bdboxA;

\node[dnode](1) at (0,0) {};
\node[dnode](2) at (1,0) {};
\node[dnode](4) at (3,0) {};
\node[dnode](5) at (4,0) {};
    \path (1) edge[sedge] (2)
(4) edge[sedge] (5)
          (2) edge[sedge,dashed, dash phase=1.4pt] (4);
 \end{tikzpicture}
 &
$D_n$  &
\begin{tikzpicture}[scale=\scales]
\bdboxD;

\node[dnode](1) at (0,0) {};
\node[dnode](2) at (1,0) {};
\node[dnode](5) at (3,0) {};
    \node[dnode] (6) at (4,.5) {};
      \node[dnode] (7) at (4,-.5) {};
    \path (1) edge[sedge] (2)
(2) edge[sedge,dashed, dash phase=1.4pt] (5)
(4) edge[sedge] (5)
(5) edge[sedge] (6)
(5) edge[sedge] (7);          
 \end{tikzpicture}
\\\hline
$B_n, BC_{n}$ 
&
 \begin{tikzpicture}[scale=\scales]
\bdboxB;
\node[dnode] (1) at (4,0) {};
\node[dnode] (2) at (3,0) {};
\node[dnode] (5) at (1,0) {};
\node[dnode] (6) at (0,0) {};

    \path (2) edge[dedge] (1)

          (5) edge[sedge] (6)
          (2) edge[sedge,dashed, dash phase=1.4pt] (5)
          ;
\end{tikzpicture}
&
{$E_6$, $E_7$, $E_8$  }&
\begin{tikzpicture}[scale=\scales]

\bdboxE;

\node[dnode](1) at (0,0) {};
\node[dnode](2) at (1,0) {};
\node[dnode](3) at (2,0) {};
\node[dnode](4) at (2,1) {};
    \node[dnode] (5) at (3,0) {};
      \node[dnode] (6) at (5,0) {};
    \path (1) edge[sedge] (2)
    (2) edge[sedge] (3)
    (3) edge[sedge] (4)
    (3) edge[sedge] (5)
(5) edge[sedge,dashed, dash phase=1.4pt] (6);
 \end{tikzpicture} 
\\ \hline
$C_n$ 
&
 \begin{tikzpicture}[scale=\scales]
\bdboxB;
\node[dnode] (1) at (4,0) {};
\node[dnode] (2) at (3,0) {};
\node[dnode] (5) at (1,0) {};
\node[dnode] (6) at (0,0) {};

    \path (1) edge[dedge] (2)

          (5) edge[sedge] (6)
          (2) edge[sedge,dashed, dash phase=1.4pt] (5)
          ;
\end{tikzpicture}
 &
  { $F_{4}$} &  
  \begin{tikzpicture}[scale=\scales]
  
\bdboxF;

 \node[dnode](1) at (0,0) {};
 \node[dnode] (2) at (1,0) {};
 \node[dnode] (3) at (2,0) {};
 \node[dnode](4) at (3,0) {};
    \path (1) edge[sedge] (2)
	(2) edge[dedge] (3)
	(3) edge[sedge] (4);
 \end{tikzpicture} 
\\ \hline
\end{tabular}
\caption{Dynkin diagrams for irreducible root systems with rank $\ge  3$}\label{table:dynk}
\end{table}
}

In the Dynkin diagram associated to $ \Delta(A,G)$, let $\delta_2$ be the root adjacent to $\delta_1$ and let $\delta_3$ be root commuting with $\delta_1$ adjacent to $\delta_2$.  There is a unique such choice of $\delta_2$ and $\delta_3$ for all Dynkin diagrams except for diagrams of type $D_4$. If the Dynkin diagram is of type $D_4$, we take  $\delta_3$ and $\delta_4$ to be adjacent to $\delta_2$ and commuting with $\delta_1$.  

Since $\delta_3$ (and $\delta_4$) commute with $\delta_1$, we have 
$\{d_1^{t}\}\subset \ker\delta_{3}\cap \ker \delta_4$.  We can thus apply \Cref{SL2coro} with $\delta_{3}$ and $\mu_{2}$. As a result, we obtain the following. 


	\begin{claim}\label[claim]{prop:std} 
Retain the notation and assumptions  above.   Let $H_3$ be the standard $\R$-rank-$1$ subgroup generated  by $\delta_{3}$.  

There is a probability measure $\mu_{2}'$ on $M^{\alpha}$ with exponentially small mass at $\infty$  such that 
\begin{enumerate}
\item $\overline{\mu_{2}'}=p_{*}(\mu_{2}')$ is $H_{3}$-invariant and $\{d_1^{\Rbb}\}$-invariant,
\item $\mu_{2}'$ is invariant under $d_1^{\Rbb}$ and the diagonal of $\delta_{3}$ in $A$, and
\item $h_{\mu_{2}'}(d_1^{1}\mid \Fsc)>0$.
\end{enumerate}
\end{claim}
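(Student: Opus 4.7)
The plan is to apply Corollary \ref{SL2coro} with input data: the restricted root $\delta_{3}$, the closed subgroup $A_0 = \{d_1^t\}_{t\in\Rbb} \subset A$, the measure $\mu_2$, and the element $g = d_1^1 \in A_0$. To apply \cref{SL2coro} I must verify its three hypotheses: (a) $A_0 \subset \ker \delta_{3}$; (b) $\mu_2$ is $A_0$-invariant; and (c) there is an element of $A_0$ with positive fiber entropy for $\mu_2$. Hypotheses (b) and (c) are immediate from the hypotheses of \cref{prop:oneroot} carried into the claim. For (a), recall that by construction $\delta_{3}$ was chosen in the Dynkin diagram of $\Delta(A,G)$ to be non-adjacent to $\delta_1$; equivalently, $\delta_1$ and $\delta_{3}$ are orthogonal roots and the standard rank-1 subgroups $H_{\delta_1}$, $H_{\delta_{3}}$ commute. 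Since $d_1^{\Rbb}$ is the diagonal of $\delta_1$ in $A$, its Lie algebra is spanned by the coroot $h_{\delta_1}$, and the evaluation $\delta_{3}(h_{\delta_1}) = 2(\delta_{3},\delta_1)/(\delta_1,\delta_1)$ vanishes by orthogonality. Thus $\{d_1^t\} \subset \ker \delta_{3}$.

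With the hypotheses checked, invoking \cref{SL2coro} produces a probability measure $\mu_2'$ on $M^{\alpha}$ with the following properties: it has exponentially small mass at $\infty$; it is invariant both under $A_0 = \{d_1^{\Rbb}\}$ and under the diagonal of $\delta_{3}$ in $A$; its pushforward $p_*\mu_2' = \overline{\mu_2'}$ is $H_{3}$-invariant on $G/\Gamma$; and $h_{\mu_2'}(d_1^1 \mid \Fsc) > 0$. Conclusions (2) and (3) of the claim are then immediate; conclusion (1) follows by combining the $H_{3}$-invariance of $p_*\mu_2'$ supplied by \cref{SL2coro} with the observation that $\{d_1^{\Rbb}\}$-invariance of $p_*\mu_2'$ descends from $\{d_1^{\Rbb}\}$-invariance of $\mu_2'$ itself.

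There is no serious obstacle here; the content has been packaged into \cref{SL2coro}, whose proof in turn rests on \cref{SL2} and the averaging/tightness estimates (\cref{thm:BFHunip,thm:averaginghomo2}) together with upper semicontinuity of fiber entropy (\cref{prop:uscfiberentropy}). The only subtlety is the combinatorial one: the root $\delta_{3}$ is selected from the Dynkin diagram precisely so that it commutes with $\delta_1$, which is what makes $A_0 \subset \ker \delta_{3}$ and ensures the positive-fiber-entropy element $d_1^1$ survives the averaging operations performed inside \cref{SL2coro}.
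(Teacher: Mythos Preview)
Your proof is correct and follows exactly the paper's approach: the paper simply states that since $\delta_3$ commutes with $\delta_1$ one has $\{d_1^t\}\subset\ker\delta_3$, and then invokes \cref{SL2coro} with $\delta_3$ and $\mu_2$ to obtain the claim. You have spelled out the verification of the hypotheses in slightly more detail (via the coroot pairing), but the content is the same.
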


Starting from \Cref{prop:std}, we proceed with our first step of induction by considering the rank-3 subgroup generated by $\delta_{1}$, $\delta_{2}$, and $\delta_{3}$ (or by $\delta_{1}$, $\delta_{2}$,   $\delta_{3}$, and  $\delta_4$ if $\Phi(A,G)$ is of type $D_4$).   Note that the sub-Dynkin diagram containing $\delta_{1}$, $\delta_{2}$, and $\delta_{3}$ is connected and is thus of type 
$A_{3}$, $B_{3}$, $C_{3}$, or $BC_{3}$.  Write $L_{3}$ for the closed connected $\R$-rank-$3$ subgroup of $G$ which is generated by the root subgroups of $\pm\delta_{1}$, $\pm\delta_{2}$, and $\pm\delta_{3}$.  Write  $A_{L_{3}}:= A\cap{L_{3}}.$

\begin{proposition}\label[proposition]{prop:firstind} There is a probability measure $\mu_{3}$ on $M^{\alpha}$ such that 
\begin{enumerate}
\item $\mu_{3}$ is $A_{L_{3}}$-invariant and has exponentially small mass at $\infty$,
\item $\overline{\mu_{3}}=p_{*}(\mu_{3})$ is $L_{3}$-invariant, and
\item there is $a\in A_{L_{3}}$ such that $h_{\mu_{3}}(a\mid \Fsc)>0$.   
\end{enumerate}
Moreover, we may assume $a$ is in either the diagonal of $\delta_1$ or $\delta_3$ in $A$.

Moreover, if $\Phi(A,G)$ is of type $D_4$, we may assume $\mu_3$ is $A$-invariant and $\overline{\mu_{3}}=p_{*}(\mu_{3})$ is $G$-invariant.  
\end{proposition}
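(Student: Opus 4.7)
The plan is to iteratively enlarge the base invariance of $\mu_2'$ (from \cref{prop:std}) until its projection to $G/\Gamma$ becomes $L_3$-invariant, and then to average over $A_{L_3}$ to secure fiberwise invariance while maintaining positive fiber entropy. Throughout, \cref{thm:BFHunip} will preserve exponentially small mass at $\infty$, and \cref{prop:uscfiberentropy} will ensure no loss of fiber entropy at weak-$*$ limits.

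First I would extend base invariance to include $H_1$. Since $\delta_1$ and $\delta_3$ are non-adjacent in the chosen Dynkin diagram, $H_1$ and $H_3$ commute and $d_3^{\Rbb}\subset \ker\delta_1$. By \cref{thm:coent}, the map $a\mapsto h_{\mu_2'}(a\mid\Fsc)$ is a seminorm on $A_{L_3}$ with null space $N\subsetneq A_{L_3}$ (since $d_1\notin N$). Provided $N\neq \ker\delta_1\cap A_{L_3}$, pick $a_0\in A_{L_3}\cap\ker\delta_1$ with $h_{\mu_2'}(a_0\mid\Fsc)>0$ and apply \cref{SL2coro} with $\delta=\delta_1$ and $A_0=\{a_0^{\Rbb},d_1^{\Rbb}\}$ to obtain a measure whose base projection is $H_1H_3$-invariant (the $H_3$-invariance is retained because $H_3$ commutes with the averaging). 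In the degenerate case $N=\ker\delta_1\cap A_{L_3}$, apply \cref{SL2coro} with $\delta_2$ first (possible since $N$ cannot simultaneously equal $\ker\delta_1\cap A_{L_3}$ and $\ker\delta_2\cap A_{L_3}$); this alters the fiber entropy seminorm, after which one returns to $\delta_1$.

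Next I would extend base invariance to include $U^{[\pm\delta_2]}$. If $\delta_2$ is not positively proportional to any fiberwise Lyapunov exponent of the current measure, \cref{thm:nonres} directly yields $U^{[\delta_2]}$-invariance, and then $U^{[-\delta_2]}$-invariance follows by averaging along the diagonal of $\delta_2$ and applying \cref{thm:opproot}. In the resonant case, the fiberwise leafwise measure along $U^{[\delta_2]}$ is non-atomic, and combined with $U^{[\delta_1]}$-invariance already in hand, the high-entropy method \cref{thm:highent} forces $U^{[\delta_1+\delta_2]}$-invariance; iterating with successive pairs of adjacent positive roots eventually generates all root subgroups of $L_3$ on the base. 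Once the base projection is $L_3$-invariant, \cref{averagingA} applied with $H=L_3$ and $A_0=A_{L_3}$ yields $\mu_3$ that is $A_{L_3}$-invariant with the same $L_3$-invariant base projection, while \cref{prop:uscfiberentropy} preserves positive fiber entropy at the diagonal of $\delta_1$ or $\delta_3$, which centralizes the $A_{L_3}$-average. For the $D_4$ case, $L_3=G$, so the base projection becomes Haar on $G/\Gamma$ and $\mu_3$ is automatically $A$-invariant. The hard part will be the resonant case of the second step: verifying that the high-entropy method, composed with the prior averaging operations, indeed propagates invariance to all positive root subgroups of $L_3$, and ruling out pathological alignments of the fiberwise Lyapunov spectrum with the restricted roots that could obstruct the induction.
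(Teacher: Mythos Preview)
Your proposal has a genuine gap at the second step. Both of the tools you plan to invoke there require hypotheses that are not available at this stage of the argument:
\begin{itemize}
\item \cref{thm:nonres} requires the $A$-invariant measure on $M^\alpha$ to project to Haar on $G/\Gamma$. After your first step the base projection is only $H_1H_3$-invariant, so this tool cannot be applied.
\item \cref{thm:highent} requires the measure to be $A$-invariant and $A$-ergodic. After your first step the measure is only invariant under a rank-$2$ subgroup of $A_{L_3}$, so this tool is also unavailable. Moreover, even granting $A$-invariance, your claim that ``resonance implies the leafwise measure along $U^{[\delta_2]}$ is non-atomic'' is unjustified: resonance of $\delta_2$ with a fiberwise exponent says nothing about the $U^{[\delta_2]}$-leafwise measure.
\end{itemize}

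The paper avoids these obstructions by working purely on the base and never invoking \cref{thm:nonres} or \cref{thm:highent}. The key observation is a root-system computation: the rank-$2$ group $\langle d_1^{\Rbb}, d_3^{\Rbb}\rangle$ can be rewritten as the span of $\ker\delta_2\cap\ker(\delta_1+\delta_2+\delta_3)$ and $\ker(\delta_1+\delta_2)\cap\ker(\delta_2+\delta_3)$ (with the obvious variants for $B_3$, $C_3$, $BC_3$, $D_4$). Since fiber entropy is a nonzero seminorm on this group, one of these two $1$-parameter subgroups contains an element $a$ with $h_{\mu_2'}(a\mid\Fsc)>0$. One then applies \cref{averagingunipotent} with a carefully chosen collection $I$ of roots whose root groups centralize $a$ (e.g.\ $I=\{\delta_2,-(\delta_1+\delta_2+\delta_3)\}$ in the first case for type $A_3$), together with $R=U^{-\delta_3}$. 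After this single averaging step, repeated use of \cref{thm:opproot} and commutator relations among root subgroups of $L_3$ shows the base projection is already $L_3$-invariant; then \cref{averagingA} finishes. No appeal to resonance, non-resonance, or high entropy is needed.
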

\begin{proof}[Proof of \Cref{prop:firstind}] We start with the measure $\mu_{2}'$ as in \Cref{prop:std}.  We consider separately the case that the restricted root system of $L_3$ is of type $A_{3}$ or $C_3$ versus when $L_3$ is of type $B_{3}$ or $BC_3$ versus when $G$ is of type $D_4$.

\subsubsection*{\it  $L_{3}$ is of type $A_{3}$ or $C_3$:} 
When $L_{3}$ is of type $A_{3}$, we can describe the set of roots $\Phi(L_{3},A_{L_{3}})$ as\footnote{Here, $\delta_{i}$ is $\delta_{1}=e_{1}-e_{2}, \delta_{2}=e_{2}-e_{3}, \delta_{3}=e_{3}-e_{4}$ in the notation of \cite[Appendix C]{KnappLie}.} \[\Phi(L_{3},A_{L_{3}})=\{\pm\delta_{i}:i=1,2,3\}\cup\{\pm(\delta_{1}+\delta_{2}),\pm(\delta_{1}+\delta_{2}+\delta_{3}),\pm(\delta_{2}+\delta_{3})\}. \]
When $L_{3}$ has type $C_{3}$ and $\delta_{1}$ is the right-most (long) root, we can describe the space of  
$\Phi(L_{3},A_{L_{3}})$ as\footnote{Here, $\delta_{i}$ as $\delta_{1}=2e_{3}, \delta_{2}=e_{2}-e_{3}, \delta_{3}=e_{1}-e_{2}$ in the notation of \cite[Appendix C]{KnappLie}.}
\begin{eqnarray*} 
\Phi(L_{3},A_{L_{3}})&=&\{\pm\delta_{1},\pm\delta_{2},\pm\delta_{3}\}\cup\{\pm(\delta_{1}+\delta_{2}),\pm(\delta_{1}+\delta_{2}+\delta_{3}), \pm(\delta_{2}+\delta_{3})\}\\ 
&&\cup\{\pm(\delta_{1}+2\delta_{2}), \pm(\delta_{1}+2\delta_{2}+\delta_{3}), \pm(\delta_{1}+2\delta_{2}+2\delta_{3})\}.
\end{eqnarray*}
If $\delta_1$ is the left-most (short) root, we have $
\pm(\delta_{1}+2\delta_{2}), \pm(\delta_{1}+2\delta_{2}+2\delta_{3})\notin \Phi(L_{3},A_{L_{3}})$ and 
instead have 
$$\pm(2\delta_{2}+\delta_{3}),  \pm(2\delta_{1}+2\delta_{2}+\delta_{3})\in \Phi(L_{3},A_{L_{3}}).$$
In both cases, we check that the subgroup of $A$ generated by diagonals of $\delta_{1}$  and $\delta_{3}$ in $A$ is the same as the subgroup  of $A$ generated by $\ker \delta_{2} \cap \ker (\delta_{1}+\delta_{2}+\delta_{3})$ and $\ker(\delta_{1}+\delta_{2})\cap \ker(\delta_{2}+\delta_{3})$.  
Since $\mu_{2}'$ is invariant under the diagonal of $\delta_{3}$ and the diagonal of $\delta_{1}$,   $\mu_{2}'$ is invariant under $\ker \delta_{2} \cap \ker (\delta_{1}+\delta_{2}+\delta_{3})$ and $\ker(\delta_{1}+\delta_{2})\cap \ker(\delta_{2}+\delta_{3})$. 
Since entropy $h_{\mu_{2}'}(\cdot\mid\Fsc)$ is a non-zero semi-norm on this group, either $\ker \delta_{2} \cap \ker (\delta_{1}+\delta_{2}+\delta_{3})$ or $\ker(\delta_{1}+\delta_{2})\cap \ker(\delta_{2}+\delta_{3})$ contains an element $a$ such that $h_{\mu_{2}'}(a\mid\Fsc)>0$. 

Assume first that that there is $a\in \ker \delta_{2}\cap \ker (\delta_{1}+\delta_{2}+\delta_{3})$ such that $h_{\mu_{2}'}(a\mid \Fsc)>0$. Note that $U^{\delta_{2}}$, $U^{-(\delta_{1}+\delta_{2}+\delta_{3})}$, and $U^{-\delta_3}$ commute.
Applying \cref{averagingunipotent} with $I= \{\delta_{2},-(\delta_{1}+\delta_{2}+\delta_{3})\}$,  $A_0\subset  A_{L_{3}}$ the subgroup generated by the diagonal of $\delta_1$ and $\delta_{3}$, and $R= U^{-\delta_{3}}$,
we obtain a measure $\mu_{2}''$ such that 
\begin{enumerate}
\item $\mu_{2}''$ is invariant under  $A_{0}$, $U^{\delta_{2}}$, and $U^{-(\delta_{1}+\delta_{2}+\delta_{3})}$ 
\item $p_{*}(\mu_{2}'')$ is invariant under $U^{\delta_{2}}$, $U^{-\delta_{3}}$, $A_0$, 
and $U^{-(\delta_{1}+\delta_{2}+\delta_{3})}$,  and has exponentially small mass at $\infty$, and
\item $h_{\mu_{2}''}(a\mid \Fsc)>0$.
\end{enumerate}

Let $p_{*}(\mu_{2}'')=\overline{\mu_{2}''}$.  By \cref{thm:opproot}, $\overline{\mu_{2}''}$ is invariant under $U^{\delta_{3}}$ since $\overline{\mu_{2}''}$ is invariant under the diagonal of $\delta_{3}$ and $U^{-\delta_{3}}$. As $\overline{\mu_{2}''}$ is invariant under $U^{\delta_{3}}$, $U^{\delta_{2}}$, and $U^{-(\delta_{1}+\delta_{2}+\delta_{3})}$, we can deduce that $\overline{\mu_{2}''}$ is also invariant under $U^{-(\delta_{1}+\delta_{2})}$, $U^{-\delta_{1}}$ and $U^{\delta_{2}+\delta_{3}}$. In addition, $\overline{\mu_{2}''}$ is invariant under the diagonal of $\delta_{1}$ and $U^{-\delta_{1}}$ and so we can deduce that $\overline{\mu_{2}''}$ is also invariant under $U^{\delta_{1}}$. In summary, $\overline{\mu_{2}''}$ is invariant under $U^{\delta_{1}}$, $U^{-\delta_{1}}$, $U^{\delta_{2}}$, $U^{\delta_{3}}$, $U^{-\delta_{3}}$, and $U^{-(\delta_{1}+\delta_{2}+\delta_{3})}$. This implies that $\overline{\mu_{2}''}$ is indeed invariant under $U^{-\delta_{2}}$, and this is enough to see that $\overline{\mu_{2}''}$ is $L_{3}$-invariant since $L_{3}$ is generated by $U^{\pm \delta_{1}}$, $U^{\pm \delta_{2}}$, and $U^{\pm \delta_{3}}$. 

 Finally, since $\overline{\mu_{2}''}$ is $L_{3}$-invariant, by applying \cref{averagingA} with $H=L_{3}$ and $A_0= A_{L_{3}} $,  
 we obtain a measure $\mu_{3}$ such that 
\begin{enumerate}
\item $\mu_{3}$ is $A_{L_{3}}$-invariant 
\item $p_{*}(\mu_{3})$ is $L_{3}$-invariant and has exponentially small mass at $\infty$,   and
\item $h_{\mu_{3}}(a\mid\Fsc)>0$.
\end{enumerate}

 If  $h_{\mu_{2}'}(a\mid \Fsc)=0$ for all $a\in \ker \delta_{2}\cap \ker (\delta_{1}+\delta_{2}+\delta_{3})$, then we may select $a'\in \ker(\delta_{1}+\delta_{2})\cap \ker(\delta_{2}+\delta_{3})$ with $h_{\mu_{2}'}(a'\mid \Fsc)>0$. In this case, we apply \cref{averagingunipotent} with $I= \{\delta_{1}+\delta_{2},-(\delta_{2}+\delta_{3})\}$,  $A_0\subset  A_{L_{3}}$ the subgroup generated by the diagonal of $\delta_1$ and $\delta_{3}$, and $R= U^{-\delta_{3}}$ to obtain a measure $\mu_{2}''$ such that 
\begin{enumerate}
\item $\mu_{2}''$ is invariant under $U^{(\delta_{1}+\delta_{2})}$, $A_0$, and $U^{-(\delta_{2}+\delta_{3})}$ and has exponentially small mass at $\infty$, 
\item $p_{*}(\mu_{2}'')$ is invariant under $U^{(\delta_{1}+\delta_{2})}$, $U^{-(\delta_{2}+\delta_{3})}$, $A_0$, 
and $U^{-\delta_{3}}$, and 
\item $h_{\mu_{2}''}(a'\mid \Fsc)>0$.
\end{enumerate}

Again, let $p_{*}(\mu_{2}'')=\overline{\mu_{2}''}$. Since $\overline{\mu_{2}''}$ is invariant under $U^{-\delta_{3}}$ and the diagonal of $\delta_{3}$ in $A$, by \cref{thm:opproot}, $\overline{\mu_{2}''}$ is invariant under $U^{\delta_{3}}$.  
Since $\overline{\mu_{2}''}$ is invariant under $U^{\delta_{3}}$ and $U^{-(\delta_{2}+\delta_{3})}$, $\overline{\mu_{2}''}$ is also invariant under $U^{-\delta_{2}}$. Combined with the fact that $\overline{\mu_{2}''}$ is invariant under $U^{(\delta_{1}+\delta_{2})}$, we can deduce that $\overline{\mu_{2}''}$ is invariant under $U^{\delta_{1}}$. As $\overline{\mu_{2}''}$ is invariant under the diagonal of $\delta_{1}$, $\overline{\mu_{2}''}$ is invariant under $U^{-\delta_{1}}$ by  \cref{thm:opproot}. Finally, since $\overline{\mu_{2}''}$ is invariant under $U^{(\delta_{1}+\delta_{2})}$ and $U^{-\delta_{1}}$, $\overline{\mu_{2}''}$ is invariant under $U^{\delta_{2}}$. In summary, $\overline{\mu_{2}''}$ is invariant under $U^{\pm\delta_{1}}$, $U^{\pm \delta_{2}}$, and $U^{\pm\delta_{3}}$. This implies that $\overline{\mu_{2}''}$ is $L_{3}$-invariant.

 As above,  applying \cref{averagingA} with $H=L_{3}$ and $A_0= A_{L_{3}} $,  
 we obtain a measure $\mu_{3}$  that satisfies all conclusions in \Cref{prop:firstind}. 

Moreover, $a$ and $a'$ are contained in the rank-2 subgroup of $A_{L_3}$ generated by the diagonals of $\delta_1$ and $\delta_3$.  Since entropy is a non-zero seminorm on this subspace, we may assume $h_{\mu_{3}}(a\mid \Fsc)>0$ for $a$ in the diagonal of $\delta_1$ in $A$ or the diagonal of $\delta_3$ in $A$.

\subsubsection*{\it  $L_{3}$ is of type $B_{3}$ or $BC_3$ and $\delta_1$ is the left-most (long) root:} In this case, we check that the subgroup of $A$ generated by diagonals of $\delta_{1}$  and $\delta_{3}$ in $A$ is the same as the subgroup  of $A$ generated by $\ker \delta_{2} \cap \ker (\delta_{1}+\delta_{2}+2\delta_{3})$ and $\ker(\delta_{1}+\delta_{2})\cap \ker(\delta_{2}+2\delta_{3})$.  Moreover, the group $U^{[-\delta_3]}$ (with Lie algebra $\lieg ^{-\delta_3}\oplus \lieg^{-2\delta_3}$) commutes with the pair $U^{\delta_{2}}$ and $ U^{-(\delta_{1}+\delta_{2}+2\delta_{3})}$, and with 
the pair $U^{\delta_{1}+\delta_{2}}$ and $U^{-(\delta_{2}+2\delta_{3})}.$

The arguments are almost verbatim to the above; we leave  the details to the reader.  

\subsubsection*{\it  $L_{3}$ is of type $B_{3}$ or $BC_3$ and $\delta_1$ is the right-most (short) root:} In this case, we check that the subgroup of $A$ generated by diagonals of $\delta_{1}$  and $\delta_{3}$ in $A$ is the same as the subgroup  of $A$ generated by $\ker \delta_{2} \cap \ker (2\delta_{1}+\delta_{2}+\delta_{3})$ and $\ker(2\delta_{1}+\delta_{2})\cap \ker(\delta_{2}+\delta_{3})$.    Moreover, the group $U^{-\delta_3}$ (with Lie algebra $\lieg ^{-\delta_3}$) commutes with the pair $U^{\delta_{2}}$ and  $U^{-(2\delta_{1}+\delta_{2}+\delta_{3})}$, and with 
the pair $U^{2\delta_{1}+\delta_{2}}$ and $U^{-(\delta_{2}+\delta_{3})}.$

The arguments are almost verbatim to the above; we leave  the details to the reader.  

\subsubsection*{\it  $L_{4}$ is of type $D_{4}$:} In this case, we check that the subgroup of $A$ generated by diagonals of $\delta_{1}$  and $\delta_{3}$ in $A$ is the same as the subgroup  of $A$ generated by 
$\ker \delta_{2} \cap \ker (\delta_{1}+\delta_{2}+\delta_{3}+\delta_{4})\cap \ker (\delta_4)$ and $\ker(\delta_{1}+\delta_{2})\cap \ker(\delta_{2}+\delta_{3}+\delta_4)\cap \ker \delta_4$.   

If there is $a\in \ker \delta_{2} \cap \ker (\delta_{1}+\delta_{2}+\delta_{3}+\delta_{4})\cap \ker (\delta_4)$ with  
$h_{\mu_{2}'}(a\mid \Fsc)>0$, we apply \cref{averagingunipotent} with $I= \{\delta_{2}, -(\delta_{1}+\delta_{2}+\delta_{3}+\delta_{4}), \delta_4\}$,  $A_0\subset  A_{L_3}$ the subgroup generated by the diagonal of $\delta_1$ and $\delta_3$, and $R= U^{-\delta_{3}}$,
we obtain a measure $\mu_{2}''$ such that 
\begin{enumerate}
\item $\mu_{2}''$ is invariant under  $A_{0}$, 
\item $p_{*}(\mu_{2}'')$ is invariant under $U^{\delta_{2}}$, $U^{-(\delta_{1}+\delta_{2}+\delta_{3}+\delta_{4})}$, $U^{\delta_4}$, $U^{-\delta_3}$, and $A_0$, 
and has exponentially small mass at $\infty$, and
\item $h_{\mu_{2}''}(a\mid \Fsc)>0$.
\end{enumerate}
Since $A_0$ contains the diagonal of $\pm \delta_3$, we have that $p_{*}(\mu_{2}'')$ is $U^{\delta_3}$-invariant.  We can construct $-\delta_1$ as a positive combination of $\delta_{2}, -(\delta_{1}+\delta_{2}+\delta_{3}+\delta_{4}), \delta_4,$ and $ -\delta_3$ and similarly conclude that $p_{*}(\mu_{2}'')$ is $U^{\pm \delta_1}$-invariant.  Finally, 
construct $-\delta_4$ as a positive combination of $ -(\delta_{1}+\delta_{2}+\delta_{3}+\delta_{4}), \delta_1,\delta_{2},$ and $ \delta_3$ and 
construct $-\delta_2$ as a positive combination of $ -(\delta_{1}+\delta_{2}+\delta_{3}+\delta_{4}), \delta_1, \delta_3,$ and $ \delta_4$.  We 
similarly conclude that $p_{*}(\mu_{2}'')$ is $U^{\pm \delta_4}$-invariant and it follows that   $p_{*}(\mu_{2}'')$ is $G$-invariant.   We finish by applying \cref{averagingA} by applying \cref{averagingA} as above.  

Similarly, if there is $a'\in \ker(\delta_{1}+\delta_{2})\cap \ker(\delta_{2}+\delta_{3}+\delta_4)\cap \ker \delta_4$ with  
$h_{\mu_{2}'}(a'\mid \Fsc)>0$, we apply \cref{averagingunipotent} with $I= \{\delta_{1}+\delta_{2},-( \delta_{2}+\delta_{3} +\delta_4), \delta_4\}$,  $A_0\subset  A_{L_3}$ the subgroup generated by the diagonal of $\delta_1$ and $\delta_3$ in $A$, and $R= U^{-\delta_{3}}$,
we obtain a measure $\mu_{2}''$ such that 
\begin{enumerate}
\item $\mu_{2}''$ is invariant under  $A_{0}$, 
\item $p_{*}(\mu_{2}'')$ is invariant under $U^{\delta_{1}+\delta_{2}}$, $U^{-( \delta_{2}+\delta_{3} +\delta_4)}$, $U^{\delta_4}$, $U^{-\delta_3}$, and $A_0$, 
and has exponentially small mass at $\infty$, and
\item $h_{\mu_{2}''}(a'\mid \Fsc)>0$.
\end{enumerate}
Since $A_0$ contains the diagonal of $\pm \delta_3$, we have that $p_{*}(\mu_{2}'')$ is $U^{\delta_3}$-invariant.  We can construct $\delta_1$ as a positive combination of $\delta_{1}+\delta_{2}, -( \delta_{2}+\delta_{3} +\delta_4), \delta_4,$ and $ \delta_3$ and thus   conclude that $p_{*}(\mu_{2}'')$ is $U^{\pm \delta_1}$-invariant.  
We similarly construct $\pm \delta_2$ from $-( \delta_{2}+\delta_{3} +\delta_4), \delta_3, \delta_4$ and $\delta_1+ \delta_2, -\delta_1$ and then construct $- \delta_4$ from $-( \delta_{2}+\delta_{3} +\delta_4), \delta_2, \delta_3$.  It follows that   $p_{*}(\mu_{2}'')$ is $G$-invariant and we finish by applying \cref{averagingA} as above.  
\end{proof}

\subsubsection{Base case of  induction, indexing of simple roots}

When  $\Phi(A,G)$ is of type $A_3$, $  B_3$, $ C_3$, $ BC_3$, or $D_4$, \cref{prop:oneroot} follows from \cref{prop:firstind}.  Otherwise, we induct on the number nodes of in the Dynkin diagram, producing extra invariance of the factor measure $p_*\mu_j$ in $G/\Gamma$ by averaging at each step of induction.       

Before setting up our induction, recall the measure guaranteed by  \cref{prop:firstind} is $A_{L_3}$-invariant and satisfies $h_{\mu_{3}}(a\mid \Fsc)>0$ where $a$ is in the diagonal of $\delta_1$ or $\delta_3$ in $A$.  


When $\Phi(A,G)$ is not of type $F_4$, $\delta_1$ is either the left-most or right-most root in \cref{table:dynk}.
For all such root systems, we further specify the indexing $\{\delta_{i}\}_{i=1,\dots, n}$ on the simple roots $\Delta(A,G)$   so that $\delta_{i-1}$ and $\delta_{i}$ are adjacent for all $i$ except for the following case: if the Dynkin diagram branches (i.e.\ is trivalent) at the simple root $\delta_{k}$ (in root systems of type $D_\ell, E_6, E_7$, or $E_8$)  we allow the order to  satisfy that
 \begin{enumerate}
 \item  $\delta_{k}$ is connected with $\delta_{k+1}$, $\delta_{k-1}$, and $\delta_{k+2}$ 
 \item  and $\delta_{k+1}$ is connected only to $\delta_{k}$.

\begin{table}[h]
\footnotesize
\centering

\def\scales{.5}
\def\bdbox{\useasboundingbox (-.5,-.5) rectangle (4.5,1.6)   }

\begin{tikzpicture}[scale=\scales]
\bdbox;

 \node (1) at (-.5,0) {};
    \node[dnode,label=270:$\delta_{k-1}$\ \ \ ] (2) at (1,0) {};
    \node[dnode,label=270:$\delta_{k}$] (3) at (2,0) {};
    \node[dnode,label=above:$\delta_{k+1}$] (4) at (2,1) {};
    \node[dnode,label=270:\ \ \ $\delta_{k+2}$] (5) at (3,0) {};
        \node[] (6) at (4.5,0) {};
    \path (1) edge[sedge,dashed, dash phase=1.4pt] (2)
(2) edge[sedge] (3)
(3) edge[sedge] (4)
          (3) edge[sedge] (5)
                (5) edge[sedge,dashed, dash phase=1.4pt] (6);
 \end{tikzpicture}
\caption{Conventions for indexing in  trivalent Dynkin diagrams}
\end{table}
\end{enumerate}

\subsubsection{Action of Weyl group and  conventions for induction on the number of roots}
In our induction below, we will specify a root $\delta_* \in \Phi(L_3, A_{L_3})$ such that $h_{\mu_{3}}(a\mid \Fsc)>0$ for some $a$ in the diagonal of $\delta_*$ in $A$.
When $ \Phi(G, A)$ is not of type $F_4$, taking $\delta_*=\delta_1$ to be the left- or right-most root following the above conventions simplifies the inductive steps.  We thus argue that, after acting by and element of the Weyl group, there is a system of simple roots $\what \Delta(G, A)$ with an indexing of simple roots following the conventions above for which we may assume $\delta_* = \hat \delta_1$.  

We may assume the rank $n$ of the Dynkin diagram is at least $4$.  We first observe that the diagonals of the short roots as well as the diagonals of all long roots in $\Phi({L_3},A_{L_3})$ generate $A_{L_3}$.  If $L_3$ is of type $A_3$, we may assume for $\delta_*=\delta_1$ or $\delta_*=\delta_3$, there is an element $a$ that is in the diagonal of $\delta_*$ in $A_{L_3}$ such that  $h_{\mu_{2}}(a\mid \Fsc)>0$.
 If $L_3$ is of type $B_3$ (or $BC_3$) we necessarily have that $\delta_1$ is the right-most (short) root; we may assume for one of the short roots $\delta_*=\delta_1$,  $\delta_*=\delta_2+\delta_1$, or $\delta_*=\delta_3+\delta_2+\delta_1$ that there is an element $a$ that is in the diagonal of $\delta_*$ in $A_{L_3}$ such that  $h_{\mu_{2}}(a\mid \Fsc)>0$.
 If $L_3$ is of type $C_3$ we necessarily have that $\delta_1$ is the right-most (long) root; we may assume for one of the long roots $\delta_*=\delta_1$,  $\delta_*=2\delta_2+\delta_1$, or $\delta_*=2\delta_3+2\delta_2+\delta_1$ that there is an element $a$ that is in the diagonal of $\delta_*$ in $A_{L_3}$ such that  $h_{\mu_{2}}(a\mid \Fsc)>0$.

\begin{claim}\label[claim]{claim:weyl2}
Let $\Phi(A,G)$ be of rank at least $4$ and not of type $F_4$ or $D_4$ and let $\Delta(A,G)=\{\delta_1, \delta_2, \dots,\}$ be the above choice of simple roots.   
There is a system of positive roots $\what \Delta (A,G)= \{\hat \delta_1, \hat \delta_2, \dots,\}$ such that 
\begin{enumerate}
\item the subgroup  $\what L_3$ generated by $U^{\hat \delta_1}, U^{\hat \delta_2}, U^{\hat \delta_3}$ coincides with $L_3$, and 
\item $ \delta_*$ is the left-most root (in root systems of type $A_n, D_n, E_6, E_7, E_8$), or the left-most root or right-most root (in root systems  $B_n, C_n, BC_n$).  
\end{enumerate}
\end{claim}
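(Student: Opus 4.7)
The plan is to construct $\what\Delta$ by applying a single element of the Weyl group of $L_3$. Specifically, I plan to find $w \in W(\Phi_3) \subset W(\Phi(A,G))$, where $\Phi_3 := \Phi(L_3, A_{L_3})$, such that $w(\delta_1) = \delta_*$, and then set $\what\Delta := w(\Delta(A,G))$ with induced labeling $\what\delta_i := w(\delta_i)$ for all $i$.

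To produce such a $w$, I will use that the Weyl group of an irreducible root system acts transitively on the set of roots of each fixed length. This reduces the problem to checking case-by-case that $\delta_*$ has the same length as $\delta_1$ in $\Phi_3$. Inspecting the explicit list of candidates for $\delta_*$ given just above the claim: when $L_3$ is of type $A_3$ all roots have equal length and $\delta_* \in \{\delta_1, \delta_3\}$; when $L_3$ is of type $B_3$ or $BC_3$ (which forces $\delta_1$ to be the right-most short root) the candidates $\delta_* \in \{\delta_1,\, \delta_1+\delta_2,\, \delta_1+\delta_2+\delta_3\}$ are all short; when $L_3$ is of type $C_3$ (which forces $\delta_1$ to be right-most long) the candidates $\delta_* \in \{\delta_1,\, \delta_1+2\delta_2,\, \delta_1+2\delta_2+2\delta_3\}$ are all long.

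It then remains to verify the two conclusions of the claim for $\what\Delta = w(\Delta)$. For conclusion (1), since $w \in W(\Phi_3)$ preserves $\Phi_3$ setwise, each $\what\delta_i = w(\delta_i)$ for $i \in \{1,2,3\}$ lies in $\Phi_3$, and together the three form a system of simple roots for the irreducible rank-3 subsystem $\Phi_3$; hence the subgroup generated by the root groups $U^{\pm\what\delta_1}, U^{\pm\what\delta_2}, U^{\pm\what\delta_3}$ is precisely $L_3$. For conclusion (2), the element $w$ is an isometry of $\mathfrak{a}^*$ and so preserves inner products between roots; consequently the Dynkin diagram of $\what\Delta$ with labeling $\what\delta_i = w(\delta_i)$ is identical, as a labeled graph, to the Dynkin diagram of $\Delta$ with labeling $\delta_i$. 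In particular the extremal position occupied by $\delta_1$ in $\Delta$ transfers directly to $\what\delta_1 = \delta_*$ in $\what\Delta$, giving precisely the required extremal position (leftmost in types $A_n, D_n, E_n$; leftmost or rightmost in types $B_n, C_n, BC_n$).

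The main subtlety is in conclusion (2), where one must check that the paper's indexing convention at trivalent branch points (in types $D_n$ and $E_n$), which distinguishes the branch node $\delta_k$ from the pendant leaf $\delta_{k+1}$, is compatible with the labeling $\what\delta_i := w(\delta_i)$. This is automatic: because $w$ preserves valences, adjacencies, and edge multiplicities, the valence-3 vertex $\delta_k$ corresponds to the valence-3 vertex $\what\delta_k$ and the adjacent leaf $\delta_{k+1}$ corresponds to $\what\delta_{k+1}$, so the convention transfers without modification.
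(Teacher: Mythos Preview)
Your argument is correct and is in fact cleaner than the paper's. The paper proves the claim by exhibiting explicit Weyl group elements case by case (Table~3), computing $\hat\delta_i$ directly in each situation. Your approach instead invokes the transitivity of $W(\Phi_3)$ on roots of a fixed length to produce $w$ abstractly, then deduces both conclusions from the fact that $w$ is an isometry of $\mathfrak a^*$ preserving $\Phi_3$. This avoids all explicit computation: once the length check on $\delta_*$ versus $\delta_1$ is done (which you carry out correctly in each of the $A_3$, $B_3/BC_3$, $C_3$ cases), conclusions~(1) and~(2) follow formally. The paper's explicit table has the minor advantage of recording the new simple roots concretely, but these formulas are not used in the subsequent induction, so nothing is lost with your approach. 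One curious point: in the $A_3$ row of the paper's table the chosen Weyl element actually yields $\hat\delta_1 = -\delta_3$ rather than $\delta_3$, which is harmless for the application (the diagonals of $\pm\delta_*$ coincide) but does not literally match conclusion~(2) as stated; your construction gives $\hat\delta_1 = \delta_*$ on the nose.
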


When $\delta_*\neq \delta_1$, we act by explicit elements of the Weyl group to produce the new  system of simple roots $\what \Delta(G, A)$ with the above properties.  See \cref{table:weyl}.

 {
 \begin{table}[h]
 \renewcommand{\arraystretch}{1.0}  
  \newcolumntype{C}[1]{>{\centering\arraybackslash} m{#1} }

\def\scales{.5}
\def\bdbox{\pgfresetboundingbox \useasboundingbox (0,.-.2) rectangle (4.0,1.2)   }
\def\bdboxALT{\pgfresetboundingbox \useasboundingbox (0,-1.3) rectangle (4.0,1.3)   }
\def\bdboxALTo{\pgfresetboundingbox \useasboundingbox (0,.-.2) rectangle (3.0,1.2)   }
\def\bdboxALTb{\pgfresetboundingbox \useasboundingbox (0,.-.2) rectangle (4,1.2)   }

\footnotesize
\centering
\begin{tabular}{ | C{.9cm}  | C{1.6cm} | C{1.5cm}| C{3.5cm}| C{3.1cm} | } \hline
$L_{3}$ root system  & $\delta_* $ & Weyl group element & 
Dynkin diagram assoc.\ to colls.\ of simple roots 
$\{\delta_1, \delta_2, \ldots\}$ and $\{\hat \delta_1, \hat\delta_2, \ldots\} $ 
& Relationship between simple roots $\{\delta_1, \delta_2, \ldots\}$ and $ \{\hat \delta_1, \hat\delta_2, \ldots\} $ \\
 \hline

$A_3$ 
&$\delta_3$ 
&$ s_{\delta_1+\delta_2+\delta_3}\circ  s_{\delta_2}$
&

\begin{tikzpicture}[scale=\scales]
\bdboxALTb; 

 \node[dnode,label=above:$\delta_1$] (1) at (0,0) {};
    \node[dnode,label=above:$\delta_2$] (2) at (1,0) {};
    \node[dnode,label=above:$\delta_{3}$,] (3) at (2,0) {};
    \node[dnode,label=above:$\delta_{4}$] (4) at (3,0) {};
        \node[] (5) at (4.5,0) {};
    \path (1) edge[sedge] (2)
(2) edge[sedge] (3)
(3) edge[sedge] (4)
          (4) edge[sedge,dashed, dash phase=1.4pt] (5);
 \end{tikzpicture}

 \begin{tikzpicture}[scale=\scales]
\bdboxALTb;
    \node[dnode,label=above:$\hat \delta_1$ ] (1) at (0,0) {};
    \node[dnode,label=above:$\hat \delta_2$ ] (2) at (1,0) {};
    \node[dnode,label=above:$\hat \delta_{3}$ ] (3) at (2,0) {};
    \node[dnode,label=above:$\hat \delta_{4}$ ] (4) at (3,0) {};
        \node[] (5) at (4.5,0) {};
    \path (1) edge[sedge] (2)
(2) edge[sedge] (3)
(3) edge[sedge] (4)
          (4) edge[sedge,dashed, dash phase=1.4pt] (5);
 \end{tikzpicture}
 &

 $\hat \delta_1=-\delta_3$, 
 
 $\hat \delta_2=-\delta_2$,\quad 
 $\hat \delta_{3}= -\delta_1$, 
 
 $\hat \delta_{4} = \delta_1+\delta_2+\delta_3+\delta_4$, 

$\hat \delta_k =\delta_k$, \ $k\ge 5$
\\ \hline

$A_3$ 
&$\delta_3$ 
&$ s_{\delta_1+\delta_2+\delta_3}\circ  s_{\delta_2}$
&

 \begin{tikzpicture}[scale=\scales]
\bdboxALTo;

 \node[dnode,label=above:$\delta_1$] (1) at (0,0) {};
    \node[dnode,label=above:$\delta_2$] (2) at (1,0) {};
    \node[dnode,label=above:$\delta_{3}$] (3) at (2,0) {};
    \node[dnode,label=above:$\delta_{4}$] (4) at (3,0) {};
    \node[] (6) at (5,0) {};
    \path (1) edge[sedge] (2)
(2) edge[sedge] (3)
(3) edge[dedge] (4);
 \end{tikzpicture}

  \begin{tikzpicture}[scale=\scales]
\bdboxALTo;

 \node[dnode,label=above:$\hat \delta_1$] (1) at (0,0) {};
    \node[dnode,label=above:$\hat \delta_2$] (2) at (1,0) {};
    \node[dnode,label=above:$\hat \delta_{3}$] (3) at (2,0) {};
    \node[dnode,label=above:$\hat  \delta_{4}$] (4) at (3,0) {};
    \node[] (6) at (5,0) {};
    \path (1) edge[sedge] (2)
(2) edge[sedge] (3)
(3) edge[dedge] (4);
 \end{tikzpicture}
 &

 $\hat \delta_1=-\delta_3$, 
 
 $\hat \delta_2=-\delta_2$,  \quad 
 $\hat \delta_{3}= -\delta_1$, 
 
 $\hat \delta_{4} = \delta_1+\delta_2+\delta_3+\delta_4$
\\ \hline

$A_3$ 
&$\delta_3$ 
&$ s_{\delta_1+\delta_2+\delta_3}\circ  s_{\delta_2}$
&

 \begin{tikzpicture}[scale=\scales]
\bdboxALTo;

 \node[dnode,label=above:$\delta_1$] (1) at (0,0) {};
    \node[dnode,label=above:$\delta_2$] (2) at (1,0) {};
    \node[dnode,label=above:$\delta_{3}$] (3) at (2,0) {};
    \node[dnode,label=above:$\delta_{4}$] (4) at (3,0) {};
    \node[] (6) at (5,0) {};
    \path (1) edge[sedge] (2)
(2) edge[sedge] (3)
(4) edge[dedge] (3);
 \end{tikzpicture}

  \begin{tikzpicture}[scale=\scales]
\bdboxALTo;

 \node[dnode,label=above:$\hat \delta_1$] (1) at (0,0) {};
    \node[dnode,label=above:$\hat \delta_2$] (2) at (1,0) {};
    \node[dnode,label=above:$\hat \delta_{3}$] (3) at (2,0) {};
    \node[dnode,label=above:$\hat  \delta_{4}$] (4) at (3,0) {};
    \node[] (6) at (5,0) {};
    \path (1) edge[sedge] (2)
(2) edge[sedge] (3)
(4) edge[dedge] (3);
 \end{tikzpicture}
 &

 $\hat \delta_1=-\delta_3$, 
 
 $\hat \delta_2=-\delta_2$,  \quad 
 $\hat \delta_{3}= -\delta_1$, 
 
 $\hat \delta_{4} = 2\delta_1+2\delta_2+2\delta_3+\delta_4$ 
\\ \hline

$A_3$ 
&$\delta_3$ 
&$ s_{\delta_1+\delta_2+\delta_3}\circ  s_{\delta_2}$
&

\begin{tikzpicture}[scale=\scales]
\bdboxALT;

 \node[dnode,label=below:$\delta_1$] (1) at (0,0) {};
    \node[dnode,label=below:$\delta_2$] (2) at (1,0) {};
    \node[dnode,label=below:$\delta_{3}$] (3) at (2,0) {};
      \node [dnode,label={[label distance=-3pt]10:$\delta_{4}$}]   
    (4) at (3,.5) {};
      \node [dnode,label={[label distance=-3pt]350:$\delta_{5}$}]   (5) at (3,-.5) {};
        \node[] (6) at (4.5,-.5) {};
    \path (1) edge[sedge] (2)
(2) edge[sedge] (3)
(3) edge[sedge] (4)
          (3) edge[sedge] (5)
          (5) edge[sedge,dashed, dash phase=1.4pt] (6);
 \end{tikzpicture}

\begin{tikzpicture}[scale=\scales]
\bdboxALT;
    \node[dnode,label=below:$\hat \delta_1$ ] (1) at (0,0) {};
    \node[dnode,label=below:$\hat \delta_2$ ] (2) at (1,0) {};
    \node[dnode,label=below:$\hat \delta_{3}$ ] (3) at (2,0) {};
      \node [dnode,label={[label distance=-3pt]4:$\hat \delta_{4}$}] (4) at (3,.5) {};
      \node [dnode,label={[label distance=-3pt]350:$\hat \delta_{5}$}]    (5) at (3,-.5) {};
        \node[] (6) at (4.5,-.5) {};
    \path (1) edge[sedge] (2)
(2) edge[sedge] (3)
(3) edge[sedge] (4)
          (3) edge[sedge] (5)
          (5) edge[sedge,dashed, dash phase=1.4pt] (6);
 \end{tikzpicture}
 &

 $\hat \delta_1=-\delta_3$, 
 
 $\hat \delta_2=-\delta_2$,  
 
 $\hat \delta_{3}= -\delta_1$, 
 
 $\hat \delta_{4} = \delta_1+\delta_2+\delta_3+\delta_4$, 
 
 $\hat \delta_{5} = \delta_1+\delta_2+\delta_3+\delta_5$, 

 $\hat \delta_k =\delta_k$, \ $k\ge 6$
\\ \hline

 $B_3$ 
&$\delta_1+\delta_2$ 
&$  s_{\delta_2}$
&

 \begin{tikzpicture}[scale=\scales]
\bdbox;
    \node[dnode,label=above:$\delta_1$] (1) at (4,0) {};
    \node[dnode,label=above:$\delta_2$] (2) at (3,0) {};
    \node[dnode,label=above:$\delta_3$] (3) at (2,0) {};
        \node[dnode,label=above:$\delta_4$] (4) at (1,0) {};
    \node[] (5) at (-.5,0) {};

    \path (2) edge[dedge] (1)

          (2) edge[sedge] (3)
          (2) edge[sedge] (3)
          (3) edge[sedge] (4)
          (4) edge[sedge,dashed, dash phase=1.4pt] (5)
          ;
\end{tikzpicture}

 \begin{tikzpicture}[scale=\scales]
\bdbox;
    \node[dnode,label=above:$\hat \delta_1$] (1) at (4,0) {};
    \node[dnode,label=above:$\hat \delta_2$] (2) at (3,0) {};
    \node[dnode,label=above:$\hat \delta_3$] (3) at (2,0) {};
        \node[dnode,label=above:$\hat \delta_4$] (4) at (1,0) {};
    \node[] (5) at (-.5,0) {};

    \path (2) edge[dedge] (1)

          (2) edge[sedge] (3)
          (2) edge[sedge] (3)
          (3) edge[sedge] (4)
          (4) edge[sedge,dashed, dash phase=1.4pt] (5)
          ;
\end{tikzpicture}
&

$\hat \delta_1=\delta_1+ \delta_2$, 

 $\hat \delta_2=-\delta_2$, 

 $\hat \delta_{3}= \delta_3+\delta_2$, 
 \quad
 $\hat \delta_{4} = \delta_4$, 
 
 $\hat \delta_k =\delta_k$, \ $k\ge 5$
\\ \hline

$B_3$
&$\delta_1+\delta_2+\delta_3$ 
&$  s_{\delta_3}\circ s_{\delta_2}$
&
 \begin{tikzpicture}[scale=\scales]
\bdbox;
     \node[dnode,label=above:$\delta_1$] (1) at (4,0) {};
    \node[dnode,label=above:$\delta_2$] (2) at (3,0) {};
    \node[dnode,label=above:$\delta_3$] (3) at (2,0) {};
        \node[dnode,label=above:$\delta_4$] (4) at (1,0) {};
    \node[] (5) at (-.5,0) {};

    \path (2) edge[dedge] (1)

          (2) edge[sedge] (3)
          (2) edge[sedge] (3)
          (3) edge[sedge] (4)
          (4) edge[sedge,dashed, dash phase=1.4pt] (5)
          ;
\end{tikzpicture}

 \begin{tikzpicture}[scale=\scales]
\bdbox;
    \node[dnode,label=above:$\hat \delta_1$] (1) at (4,0) {};
    \node[dnode,label=above:$\hat \delta_2$] (2) at (3,0) {};
    \node[dnode,label=above:$\hat \delta_3$] (3) at (2,0) {};
        \node[dnode,label=above:$\hat \delta_4$] (4) at (1,0) {};
    \node[] (5) at (-.5,0) {};

    \path (2) edge[dedge] (1)

          (2) edge[sedge] (3)
          (2) edge[sedge] (3)
          (3) edge[sedge] (4)
          (4) edge[sedge,dashed, dash phase=1.4pt] (5)
          ;
\end{tikzpicture}
&

$\hat \delta_1=\delta_1+ \delta_2+\delta_3$, 

 $\hat \delta_2=-\delta_2-\delta_3$,  
 
 $\hat \delta_{3}= \delta_2$, \quad 
 $\hat \delta_{4} = \delta_3+\delta_4$, 

 $\hat \delta_k =\delta_k$, \ $k\ge 5$
\\ \hline

$C_3$ 
&$\delta_1+2\delta_2$ 
&$  s_{\delta_2}$
&

 \begin{tikzpicture}[scale=\scales]
\bdbox;
      \node[dnode,label=above:$\delta_1$] (1) at (4,0) {};
    \node[dnode,label=above:$\delta_2$] (2) at (3,0) {};
    \node[dnode,label=above:$\delta_3$] (3) at (2,0) {};
        \node[dnode,label=above:$\delta_4$] (4) at (1,0) {};
    \node[] (5) at (-.5,0) {};

    \path (1) edge[dedge] (2)

          (2) edge[sedge] (3)
          (2) edge[sedge] (3)
          (3) edge[sedge] (4)
          (4) edge[sedge,dashed, dash phase=1.4pt] (5)
          ;
\end{tikzpicture}

 \begin{tikzpicture}[scale=\scales]
\bdbox;
       \node[dnode,label=above:$\hat \delta_1$] (1) at (4,0) {};
    \node[dnode,label=above:$\hat \delta_2$] (2) at (3,0) {};
    \node[dnode,label=above:$\hat \delta_3$] (3) at (2,0) {};
        \node[dnode,label=above:$\hat \delta_4$] (4) at (1,0) {};
    \node[] (5) at (-.5,0) {};

    \path (1) edge[dedge] (2)

          (2) edge[sedge] (3)
          (2) edge[sedge] (3)
          (3) edge[sedge] (4)
          (4) edge[sedge,dashed, dash phase=1.4pt] (5)
          ;
\end{tikzpicture}
&
$\hat \delta_1=\delta_1+ 2 \delta_2$, 

 $\hat \delta_2=-\delta_2$, 
 
 $\hat \delta_{3}= \delta_3+\delta_2$,
 \quad  
 $\hat \delta_{4} = \delta_4$, 

$\hat \delta_k =\delta_k$, \ $k\ge 5$
\\ \hline

$C_3$
&$\delta_1+2\delta_2+2\delta_3$ 
&$  s_{\delta_3}\circ s_{\delta_2}$
&
 \begin{tikzpicture}[scale=\scales]
\bdbox;
    \node[dnode,label=above:$\delta_1$] (1) at (4,0) {};
    \node[dnode,label=above:$\delta_2$] (2) at (3,0) {};
    \node[dnode,label=above:$\delta_3$] (3) at (2,0) {};
        \node[dnode,label=above:$\delta_4$] (4) at (1,0) {};
    \node[] (5) at (-.5,0) {};

    \path (1) edge[dedge] (2)

          (2) edge[sedge] (3)
          (2) edge[sedge] (3)
          (3) edge[sedge] (4)
          (4) edge[sedge,dashed, dash phase=1.4pt] (5)
          ;
\end{tikzpicture}

 \begin{tikzpicture}[scale=\scales]
\bdbox;
     \node[dnode,label=above:$\hat \delta_1$] (1) at (4,0) {};
    \node[dnode,label=above:$\hat \delta_2$] (2) at (3,0) {};
    \node[dnode,label=above:$\hat \delta_3$] (3) at (2,0) {};
        \node[dnode,label=above:$\hat \delta_4$] (4) at (1,0) {};
    \node[] (5) at (-.5,0) {};

    \path (1) edge[dedge] (2)

          (2) edge[sedge] (3)
          (2) edge[sedge] (3)
          (3) edge[sedge] (4)
          (4) edge[sedge,dashed, dash phase=1.4pt] (5)
          ;
\end{tikzpicture}
&

$\hat \delta_1=\delta_1+ 2\delta_2+2\delta_3$, 

 $\hat \delta_2=-\delta_2-\delta_3$,  
 
 $\hat \delta_{3}= \delta_2$,  \quad 
 $\hat \delta_{4} = \delta_3+\delta_4$, 

 $\hat \delta_k =\delta_k$, \ $k\ge 5$
 \\ \hline
\end{tabular}
\caption{Action by certain Weyl group elements on systems of simple roots;  in the third column, $s_{\alpha}$ denotes the Weyl group element obtained by reflection relative to the root $\alpha$.}\label{table:weyl}\label{table:144}
\end{table}
}

By  replacing $\Delta(A,G)$ with $\what \Delta(A,G)$ from \cref{claim:weyl2} if needed, when $\Phi(A,G)$ is not of type $F_4$  we will further assume the  system of simple roots $\Delta(A,G)$ and indexing satisfies the following:
\begin{enumerate}
\item $\delta_* = \delta_1$ is left-most  root for  the Dynkin diagram if $\Phi(A,G)$ is of type $A_n$, $D_n$, $E_6$, $ E_7$, or $E_8$. 
\item $\delta_* = \delta_1$ is either the left-most or right-most  root for  the Dynkin diagram if $\Phi(A,G)$ is of type $B_n, C_n, $ or $BC_n$.  
\end{enumerate}
 When $\Phi(A,G)$ is of type $F_4$, we have $L_3$ is generated by the 3 right-most roots.  We re-index the Dynkin diagram if needed from right to left  so that the roots  $\{\delta_1, \delta_2, \delta_3\}$ generating a rank-3 subgroup of type $C_3$.  We thus have either
  \begin{enumerate}[resume]
 \item $\delta_*=\delta_1$ or $\delta_*=\delta_3$  if $\Phi(A,G)$ is of type $F_4$.  
\end{enumerate}

\subsubsection{Inductive step in proof of  \Cref{prop:oneroot}}
Let $n= \rank_\R(G)$.  For any root $\delta$, the root subgroup corresponding to  $\delta$ is denoted by $U^{\delta}$. For $0\le j\le n-1$, let $L_{j}$ be a connected closed subgroup of $G$ which is generated by the simple root subgroups $U^{\pm\delta_{1}},\dots, U^{\pm\delta_{j}}$.
 \Cref{prop:firstind} provides the base case for the induction and completes the proof of  \Cref{prop:oneroot} if $n=\rank(G) = 3$ (or if  $\Phi(A,G)$ is  of type $D_4$).  
 
 We thus assume $n\ge 4$ (and that $\Phi(A,G)$ is not of type $D_4$).  
With the  indexing conventions in \cref{table:144},  \Cref{prop:oneroot} then follows with $\mu_{\infty}=\mu_{n}$ after we establish the following inductive hypothesis: 

\begin{proposition}\label[proposition]{prop:induction}
For $3\le j\le (n-1)$, suppose  there exists a probability measure $\mu_{j}$ on $M^{\alpha}$ such that 
\begin{enumerate}[label=(\alph*)]
\item $\mu_{j}$ is $A_{L_{j}}$-invariant, 
\item $\overline{\mu_{j}}=p_{*}(\mu_{j})$ is $L_{j}$-invariant and has exponentially small mass at $\infty$, and
\item there is $a_*\in A_{L_{j}}$ which is diagonal for $\delta_*$ in $A$ such that $h_{\mu_{j}}(a_*\mid \Fsc)>0$.
\end{enumerate}
Then, there is a probability measure $\mu_{j+1}$ on $M^{\alpha}$ such that
\begin{enumerate}
\item $\mu_{j+1}$ is $A_{L_{j+1}}$-invariant,  
\item $\overline{\mu_{j+1}}=p_{*}(\mu_{j+1})$ is $L_{j+1}$-invariant and has exponentially small mass at $\infty$, and
\item  $h_{\mu_{j+1}}(a_*\mid \Fsc)>0$.
\end{enumerate}
\end{proposition}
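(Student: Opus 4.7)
The strategy is to extend invariance from $L_j$ to $L_{j+1}$ by adding the root groups $U^{\pm\delta_{j+1}}$ and the diagonal $\{d_{\delta_{j+1}}^{\mathbb{R}}\}$ via a sequence of averaging operations, mirroring the base case (Proposition \ref{prop:firstind}) but working within the rank-$2$ subsystem generated by $\{\delta_j,\delta_{j+1}\}$. The pivotal observation is that, under the indexing conventions of Table \ref{table:dynk} and after the Weyl-group readjustment of Claim \ref{claim:weyl2}, the root $\delta_* = \delta_1$ is non-adjacent to every simple root $\delta_k$ with $k \geq 3$ in the Dynkin diagram, hence $(\delta_1,\delta_k) = 0$. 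Consequently $a_* \in \ker\delta_{j+1}$ centralizes the $\mathbb{R}$-rank-$1$ subgroup $H_{\delta_{j+1}}$, and positivity of $h_{\bullet}(a_*\mid\Fsc)$ will be preserved under all averaging in $H_{\delta_{j+1}}$ via upper semicontinuity of fiber entropy (Proposition \ref{prop:uscfiberentropy}).

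The plan proceeds in three stages. \emph{Stage 1:} Apply Proposition \ref{averagingunipotent} to $\mu_j$ with $I = \{\delta_{j+1}\}$, $A_0 = A_{L_j}$, and auxiliary subgroup $R = L_{j-1}$, noting that $L_{j-1} \subset C_G(U^{\delta_{j+1}})$ because each $\delta_i$ with $i \leq j-1$ is non-adjacent to $\delta_{j+1}$, so $\delta_i + \delta_{j+1}$ is not a root and $[\lieg^{\delta_i},\lieg^{\delta_{j+1}}] = 0$. Since $a_* \in A_0 \cap \ker\delta_{j+1} \subseteq C_G(U^{\delta_{j+1}})$, the output $\mu_j^{(1)}$ is $A_{L_j}$-invariant, with projection $L_{j-1}\cdot A_{L_j}\cdot U^{\delta_{j+1}}$-invariant, exponentially small mass at $\infty$, and $h_{\mu_j^{(1)}}(a_*\mid\Fsc) > 0$. \emph{Stage 2:} Apply Corollary \ref{SL2coro} with root $\delta_{j+1}$ and $A_0 = A_{L_{j-1}} \subseteq \ker\delta_{j+1}$. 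All averaging in the proof of \ref{SL2coro} takes place inside $H_{\delta_{j+1}}$ and an auxiliary $\mathfrak{sl}_2$-triple, both of which commute with $L_{j-1}$; thus the projection retains $L_{j-1}$-invariance and gains full $H_{\delta_{j+1}}$-invariance (including $U^{-\delta_{j+1}}$ and $\{d_{\delta_{j+1}}^{\mathbb{R}}\}$), yielding $\mu_j^{(2)}$ invariant under $A_{L_{j-1}}\cdot\{d_{\delta_{j+1}}^{\mathbb{R}}\}$. \emph{Stage 3:} Recover the invariances lost in Stage 2 (namely $U^{\pm\delta_j}$ and $\{d_j^{\mathbb{R}}\}$) by working inside the rank-$2$ subsystem $\Phi^{(2)} := \Phi(A,G) \cap \mathbb{Z}\langle\delta_j,\delta_{j+1}\rangle$ (of type $A_2$, $B_2/C_2$, $BC_2$, or $G_2$), rerunning the pattern of the base case Proposition \ref{prop:firstind}: each root in $\Phi^{(2)}$ vanishes on $a_*$, so one may freely combine averaging over pairs of commuting composite-root subgroups (e.g.\ $\delta_j+\delta_{j+1}$, $2\delta_j+\delta_{j+1}$) with Proposition \ref{thm:opproot} to propagate invariance across $\Phi^{(2)}$. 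Finally, apply Proposition \ref{averagingA} with $H = L_{j+1}$ and $A_0 = A_{L_{j+1}}$ to upgrade the projection's $L_{j+1}$-invariance to $A_{L_{j+1}}$-invariance of the measure, producing $\mu_{j+1}$.

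The principal obstacle is Stage 3: since $U^{\delta_j}$ and $U^{\delta_{j+1}}$ do not commute (they are adjacent simple roots), one cannot naively average over $U^{\delta_j}$ without destroying the $U^{\delta_{j+1}}$-invariance obtained in Stages 1--2. The remedy is to identify, inside $\Phi^{(2)}$, pairs of unipotent subgroups whose root sums are negatives of $\delta_j$ (and vanish on $a_*$), average over these pairs using Proposition \ref{averagingunipotent}, and then pass to opposite roots via Proposition \ref{thm:opproot} using the diagonals already available; this is exactly the argument used in the base-case treatment of $A_3$, $B_3/C_3$, and $BC_3$ inside Proposition \ref{prop:firstind}, translated into the rank-$2$ setting. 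A secondary, bookkeeping-level obstacle arises at trivalent nodes of Dynkin diagrams of type $D_n, E_6, E_7, E_8$, where $L_{j-1}$ may fail to commute with $U^{\delta_{j+1}}$; there one substitutes a smaller commuting $R$ in Stage 1 and supplements with an auxiliary step analogous to the $D_4$ branch of the base case, ensuring that the commutation hypotheses of Proposition \ref{averagingunipotent} are met throughout.
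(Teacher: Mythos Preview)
Your three-stage route is different from the paper's and, as written, has a genuine gap in Stage~3.  The problem is that Stage~2 (Corollary~\ref{SL2coro} with the root $\delta_{j+1}$) forces you to take $A_0\subset\ker\delta_{j+1}$, so the output measure is only $A_{L_{j-1}}\cdot\{d_{\delta_{j+1}}^{\mathbb R}\}$-invariant and, because the compact averaging inside the proof of \ref{SL2coro} does not commute with $\{d_{\delta_j}^{\mathbb R}\}$, the projection loses $\{d_{\delta_j}^{\mathbb R}\}$-invariance as well.  From that point on, inside the rank-$2$ subsystem $\Phi^{(2)}$ (say of type $A_2$, the generic case) you have at your disposal only \emph{one} diagonal direction, namely $\{d_{\delta_{j+1}}^{\mathbb R}\}$.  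Averaging over $U^{\pm(\delta_j+\delta_{j+1})}$ and bracketing with $U^{\mp\delta_{j+1}}$ produces at most one of $U^{\pm\delta_j}$; to pass to the opposite root via Proposition~\ref{thm:opproot} you would need $\{d_{\delta_j}^{\mathbb R}\}$ (or $\{d_{\delta_j+\delta_{j+1}}^{\mathbb R}\}$), which is exactly what was destroyed.  Your appeal to ``rerunning the base case'' does not transfer: in Proposition~\ref{prop:firstind} the argument hinges on having \emph{two} independent diagonal directions (those of $\delta_1$ and $\delta_3$) inside the rank-$3$ Cartan, which is what makes the opproot bootstrap close up; in your Stage~3 you have only one.

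The paper avoids this altogether by never invoking \ref{SL2coro} in the inductive step.  It applies Proposition~\ref{averagingunipotent} \emph{once} to $\mu_j$ with a single cleverly chosen closed set $I$ (for $A_{j+1}$, $I=\{\delta_j,\,\delta_j+\delta_{j+1},\,-\delta_{j+1}\}$; variants in Table~\ref{table:144}) having three properties: (i)~$a_*$ centralizes $U^I$, (ii)~for every $1\le k\le j$ \emph{one} sign of $U^{\pm\delta_k}$ centralizes $U^I$, and (iii)~$\pm\delta_{j+1}$ lies in the non-negative integer span of $I\cup\{\pm\delta_1,\dots,\pm\delta_j\}$.  Because $A_{L_j}$ normalizes $U^I$, the projection retains its full $A_{L_j}$-invariance after averaging; combined with (ii) and Proposition~\ref{thm:opproot} this recovers both signs of every $U^{\pm\delta_k}$, and (iii) then yields $U^{\pm\delta_{j+1}}$ by bracketing, giving $L_{j+1}$-invariance of the projection in one shot.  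A final application of Proposition~\ref{averagingA} upgrades to $A_{L_{j+1}}$-invariance of the measure.  If you want to repair your argument, the fix is precisely to replace your Stages~1--2 by this single averaging; your Stage~3 then becomes unnecessary.
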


\begin{proof}
Fix $j\ge 3$.  
With the indexing conventions in \cref{table:144},  $\Phi(L_{j+1}, A_{L_{j+1}})$ is connected.  
We claim there exists a  collection of roots $I\subset \Phi(L_{j+1}, A_{L_{j+1}})$ such that 
\begin{enumerate}
\item $I$ is closed in $\Phi(L_{j+1}, A_{L_{j+1}})$ and $U^I$ is a unipotent subgroup;
\item $ \delta_*$ commutes with $U^I$;
\item for every $1\le k\le j$,   either $\delta_k$ or $-\delta_k$  centralizes $ U^I$;
\item $\pm \delta_{j+1}$ is a linear combination of elements of $I$ and $\pm \delta_1,\dots, \pm \delta_j$ with non-negative integer coefficients.  
\end{enumerate}
The choice of such $I$ for all possible root systems   $ \Phi(L_{j+1}, A_{L_{j+1}})$ and choice of $\delta_*$ that can arise  are listed in \cref{table:induction}.
{\renewcommand{\arraystretch}{1.3}  
  \newcolumntype{C}[1]{>{\centering\arraybackslash} m{#1} }
\begin{table}[h]
\footnotesize
\centering
\def\scales{.5}
\def\bdbox{\useasboundingbox (-.5,-.2) rectangle (4,1.2)   }
\def\bdboxAA{\useasboundingbox (-.5,-.5) rectangle (4,1.5)   }
\def\bdboxBB{\useasboundingbox (0,-.2) rectangle (3,1.2)   }
 
 \begin{tabular}{ | C{2cm}  | C{2cm} |   C{2.8cm}| C{4cm} |} \hline
Root system of $L_{j+1}$ & $\delta_* $ & $\Delta(L_{j+1}, A_{L_{j+1}})$&   $I$  \\
 \hline
  $A_{j+1}$,  $E_6$, $E_7$, $E_8$
&  $\delta_1$, left-most 
	& 
	\begin{tikzpicture}[scale=\scales]
\bdbox;
 \node[dnode,label=above:$\delta_1$] (1) at (0,0) {};
    \node[dnode,label=above:$\delta_2$] (2) at (1,0) {};
    \node[dnode,label=above:$\delta_{j}$] (3) at (2.5,0) {};
    \node[dnode,label=above:$\ \ \ \ \delta_{j+1}$] (4) at (3.5,0) {};
    \path (1) edge[sedge] (2)
	(3) edge[sedge] (4)
          (2) edge[sedge,dashed, dash phase=1.4pt] (3);
 \end{tikzpicture}
	&
$  \delta_{j},$ $ \delta_{j}+ \delta_{j+1},$ $ - \delta_{j+1}
 $
\\ \hline
 $B_{j+1},$ $C_{j+1}$,  $BC_{j+1}$& 

 $\delta_1$, right-most	&
  	\begin{tikzpicture}[scale=\scales]
\bdbox;
 \node[dnode,label=above:$\delta_{j+1} \ \ \ \ $] (1) at (0,0) {};
    \node[dnode,label=above:$\delta_{j}$] (2) at (1,0) {};
    \node[dnode,label=above:$\delta_{2}$] (3) at (2.5,0) {};
    \node[dnode,label=above:$ \delta_{1}$] (4) at (3.5,0) {};
    \path (1) edge[sedge] (2)
	(3) edge[dedgeplain] (4)
          (2) edge[sedge,dashed, dash phase=1.4pt] (3);
 \end{tikzpicture}  
  &
$  \delta_{j},$ $ \delta_{j}+ \delta_{j+1}, $$- \delta_{j+1}
 $
\\ \hline
  $ B_{j+1}$ &   $\delta_1$, left-most (long)	&
 \begin{tikzpicture}[scale=\scales]
\bdbox;
 \node[dnode,label=above:$\delta_1$] (1) at (0,0) {};
    \node[dnode,label=above:$\delta_2$] (2) at (1,0) {};
    \node[dnode,label=above:$\delta_{j}$] (3) at (2.5,0) {};
    \node[dnode,label=above:$\ \ \ \ \delta_{j+1}$] (4) at (3.5,0) {};
        \node[] (6) at (5,0) {};
    \path (1) edge[sedge] (2)
	(3) edge[dedge] (4)
          (2) edge[sedge,dashed, dash phase=1.4pt] (3);
 \end{tikzpicture}
  &
$  \delta_{j},$ $ \delta_{j}+ \delta_{j+1}, $ $- \delta_{j+1}
 $
\\ \hline
 { $C_{j+1}$} &   $\delta_1$,  left-most (short) 	&  
 \begin{tikzpicture}[scale=\scales]
\bdbox;
 \node[dnode,label=above:$\delta_1$] (1) at (0,0) {};
    \node[dnode,label=above:$\delta_2$] (2) at (1,0) {};
    \node[dnode,label=above:$\delta_{j}$] (3) at (2.5,0) {};
    \node[dnode,label=above:$ \ \ \ \  \delta_{j+1}$] (4) at (3.5,0) {};
        \node[] (6) at (5,0) {};
    \path (1) edge[sedge] (2)
	(4) edge[dedge] (3)
          (2) edge[sedge,dashed, dash phase=1.4pt] (3);
 \end{tikzpicture}
 &
$   2\delta_{j}+ \delta_{j+1}, $$- \delta_{j+1}
 $

\\ \hline
 { $BC_{j+1}$} &   $\delta_1$,  left-most (long) 	&   
 \begin{tikzpicture}[scale=\scales]
\bdbox;
 \node[dnode,label=above:$\delta_1$] (1) at (0,0) {};
    \node[dnode,label=above:$\delta_2$] (2) at (1,0) {};
    \node[dnode,label=above:$\delta_{j}$] (3) at (2.5,0) {};
    \node[dnode,label=above:$\ \ \ \ \delta_{j+1}$] (4) at (3.5,0) {};
        \node[] (6) at (5,0) {};
    \path (1) edge[sedge] (2)
	(3) edge[dedge] (4)
          (2) edge[sedge,dashed, dash phase=1.4pt] (3);
 \end{tikzpicture} 
   &
$ 2\delta_{j}+ 2\delta_{j+1},$
$ - \delta_{j+1}, $

\\
 \hline
  { $D_{j+1}$} &$\delta_1$, left-most &  
\begin{tikzpicture}[scale=\scales]
\bdboxAA;

    \node[dnode,label=above:$\delta_1$] (1) at (0,0) {};
    \node[dnode,label=above:$\delta_2$] (2) at (1,0) {};
    \node[dnode,label=above:$\delta_{j-1}$] (4) at (2.5,0) {};
  \node [dnode,label={[label distance=-3pt]60:$\delta_{j}$\ \ }] (5) at (3.5,0.5) {};

  \node [dnode,label={[label distance=-3pt]300:$\delta_{j+1}$\ \ }] (6) at (3.5,-0.5) {};

    \path (1) edge[sedge] (2)
          (2) edge[sedge,dashed, dash phase=1.4pt] (4)
          (4) edge[sedge] (5)
              edge[sedge] (6);
\end{tikzpicture}

&$  \delta_{j-1},$ $ \delta_{j-1}+\delta_{j+1},$ $ -\delta_{j+1}$
\\ \hline
 { $F_{4}$} &  $\delta_3$, second from left	&  
 \begin{tikzpicture}[scale=\scales]
\bdboxBB;
 \node[dnode,label=above:$\delta_4$] (1) at (0,0) {};
    \node[dnode,label=above:$\delta_3$] (2) at (1,0) {};
    \node[dnode,label=above:$\delta_{2}$] (3) at (2,0) {};
    \node[dnode,label=above:$ \delta_{1}$] (4) at (3,0) {};
    \path (1) edge[sedge] (2)
	(2) edge[dedge] (3)
	(3) edge[sedge] (4);
 \end{tikzpicture}
   &
$-\delta_4-\delta_3-\delta_2-\delta_1, $
$\delta_3+2\delta_2, $
$\delta_{4}+2\delta_{3}+3\delta_{2}+\delta_{1}
$
\\ \hline
 { $F_{4}$} & $\delta_1$,  right-most	&  
  \begin{tikzpicture}[scale=\scales]
\bdboxBB;
 \node[dnode,label=above:$\delta_4$] (1) at (0,0) {};
    \node[dnode,label=above:$\delta_3$] (2) at (1,0) {};
    \node[dnode,label=above:$\delta_{2}$] (3) at (2,0) {};
    \node[dnode,label=above:$ \delta_{1}$] (4) at (3,0) {};
    \path (1) edge[sedge] (2)
	(2) edge[dedge] (3)
	(3) edge[sedge] (4);
 \end{tikzpicture}
  &
$  \delta_4 +\delta_3$, $\delta_3$, $-\delta_4$
\\ \hline


\end{tabular}
\caption{Choice of $I$  in induction step, \cref{prop:induction}}\label{table:induction}\label{table:144}
\end{table}
}

Applying \cref{averagingunipotent} to the measure $\mu_{j}$ with $I$ as in \cref{table:induction},  $A_0$ the diagonal of $\delta_*$ in $A$, and $R$ the subgroup generated by the choices of $U^{\pm \delta_{k}}$ normalizing $U^I$ for $1\le k\le j$, 
we obtain a measure $\mu_{j}'$ such that 
\begin{enumerate}
\item $\mu_{j}'$ is invariant under  $A_{0}$, $U^{I}$; 
\item $p_{*}(\mu_{j}')$ is invariant under $A_{L_j}$, $U^{I}$, $U^{\pm \delta_{k}}$ for $1\le k \le j$, 
and has exponentially small mass at $\infty$, and
\item $h_{\mu_{j}'}(a_*\mid \Fsc)>0$.
\end{enumerate}
Recall that $\pm \delta_{j+1}$ is a linear combination of elements in $I$ and $\pm\delta_{1},\dots, \pm\delta_{j}$ with non-negative integer coefficients.
We thus conclude that $p_{*}(\mu_{j}')$ is invariant under $U^{\pm \delta_i}$ for $1\le i \le j+1$ and thus invariant under $L_{j+1}$.

We may thus apply \cref{averagingA} to $\mu_j'$ with $H=L_{3}$ and $A_0= A_{L_{j+1}} $,  
 to obtain a measure $\mu_{j+1}$ such that 
\begin{enumerate}
\item $\mu_{j+1}$ is $A_{L_{j+1}}$-invariant 
\item $p_{*}(\mu_{j+1})= p_{*}(\mu_{j}')$ is $L_{j+1}$-invariant and has exponentially small mass at $\infty$,   and
\item $h_{\mu_{j+1}}(a_*\mid\Fsc)>0$.
\end{enumerate}
 \cref{prop:induction} then follows.
\end{proof}

\cref{prop:oneroot} now follows directly from \cref{prop:induction}.

\section{{Proof of \texorpdfstring{\Cref{Ainv}}{Theorem 2.1}, Case II: Assumptions \texorpdfstring{
\ref{Ainvuniform} or \ref{AinvSL3}
}{(a) or (b)} }}\label{sec:AinvII}
We follow the notation in previous sections and assuming either \ref{Ainvuniform} or \ref{AinvSL3} in \Cref{Ainv} in this section.

\subsection{Assumption \texorpdfstring{\ref{Ainvuniform}: $\rank_{\Rbb}(G)\ge 2$}{(b): R-rank at least 2} and   $\Gamma$ cocompact}\label{sec:coco} 
When $\Gamma$ is a cocompact lattice in $G$, $M^{\alpha}$ is compact. As   discussed above, \Cref{ss} is still valid in this case, so we may assume that $\gamma_{0}$ is semisimple and satisfies $\gamma_0=g_1$ for some $1$-parameter subgroup $\{g_t\}$ in $G$.  
We  apply \Cref{claim:seed} to obtain a probability measure $\mu_0$ such that 
 \begin{enumerate}
 \item $\mu_{0}$ is $g_t$-invariant, 
 \item  $h_{\mu_{0}}(g_1\mid \Fsc)>0$, and
 \item  $\mu_{0}$ projects to the Haar measure on the orbit $\{g_t \cdot \Gamma:  t\in \R \}.$
 \end{enumerate}
 Using the real Jordan decomposition, there are two commuting elements $g^{s},g^{e}\in G$ such that
 \begin{enumerate}
\item $g^{s}$ is semisimple over $\Rbb$,
\item $g^{e}$ is contained in a compact subgroup $K_{0}<G$, and
\item $\gamma_{0}=g_1=g^{s}g^{e}$. 
\end{enumerate}

Since $K_{0}$ is compact, we can average along $K_{0}$ to obtain a measure $\mu_{1}'$ such that $\mu_{1}'$ is $K_{0}$-invariant and $\gamma_{0}$-invariant. This implies that $\mu_{1}'$ is $g^{s}$-invariant. We claim that $h_{\mu_{1}'}(g^{s}\mid \Fsc)>0$. Indeed, since $g^{e}$ is contained in the compact subgroup $K_{0}$, $h_{\mu_{1}'}(g^{e}\mid \Fsc)=0$. Then   
 $h_{\mu_{1}'}(g^{s}\mid \Fsc)>0$ follows from  \Cref{thm:subadditive}.  
 Since $g^{s}$ is semisimple over $\Rbb$, there is a  maximal $\Rbb$-split torus $A<G$ containing $g^{s}$.  Since $A$ is abelian, we can average along a \Folner sequence $\{U_n\}$ in $A$ and take \[\mu_{1}^{n}=U_{n}*\mu_{1}':=\frac{1}{|U_{n}|}\int_{U_{n}} a_{*}\mu_{1}' \, da.\] 
 Let $\mu_2$ be any weak-$*$ limit point of $\{\mu_{1}^{n}\}$. Then $\mu_{2}$ is a probability measure on $M^\alpha$ since $M^\alpha$ is compact;  moreover, $\mu_2$ is $A$-invariant and by \Cref{prop:uscfiberentropy}, we have $h_{\mu_{2}}(g^{s}\mid \Fsc)>0$. 

Finally, we apply \cref{prop:cocompact} to $\mu_2$ to obtain  an $A$-invariant probability measure $\mu$ on $M^{\alpha}$ such that 
\begin{enumerate}
\item $p_{*}\mu$ is the normalized Haar measure on $G/\Gamma$, and
\item $h_{\mu}(a\mid \Fsc)>0$ for some $a\in A$.
\end{enumerate}
\Cref{Ainv} then follows.

\subsection{Assumption \texorpdfstring{\ref{AinvSL3}: $\Qbb$-rank $1$}{(a): $\Qbb$-rank-$1$} lattices in $\SL(3, \R)$}\label{sec:sl3qrank1}
We consider the case that $G=\SL(3, \Rbb)$ and that  $\Gamma$ is a $\Qbb$-rank-$1$ lattice in $G$.
 We can describe all commensurability classes of $\Gamma$ explicitly as follows. See, for instance,  \cite[\S 6.6]{MorrisArith}. 
\begin{proposition}[Classification of $\Qbb$-rank-$1$ lattices in $\SL(3, \Rbb)$]
Let $\Gamma$ be a nonuniform  lattice in $\SL(3, \Rbb)$ that is not commensurable with $\SL(3, \Zbb)$. Then, $\Gamma$ has $\Qbb$-rank $1$. Furthermore, there exists a square free positive integer $r\ge 2$ such that, after replacing  $\Gamma$ by {$h\Gamma h^{-1}$ for some $h\in G$},  $\Gamma$ is commensurable with \[\Gamma_{r}=\left\{g\in \SL\left(3,\Zbb\left[\sqrt{r}\right]\right):\sigma(g^{tr})Jg=J\right\}.\] Above, $\sigma$ is the map on matrix entries induced by  Galois conjugation  $\sqrt{r}\mapsto -\sqrt{r}$ and \[J=\begin{bmatrix} &&1 \\ &1&\\1&& \end{bmatrix}.\] 
\end{proposition}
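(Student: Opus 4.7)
The plan is to follow the classical proof via Margulis arithmeticity and the classification of $\Qbb$-forms of type $A_2$, as worked out in \cite[\S 6.6]{MorrisArith}. Since $\rank_\Rbb(\SL(3,\Rbb)) = 2 \ge 2$, Margulis's arithmeticity theorem applies: there exist a connected $\Qbb$-simple algebraic $\Qbb$-group $\mathbf{F}$ and a continuous surjection $\sigma\colon \mathbf{F}(\Rbb)^\circ \to \SL(3,\Rbb)$ with compact kernel such that $\sigma(\mathbf{F}(\Zbb))$ is commensurable with a conjugate of $\Gamma$.  Nonuniformity of $\Gamma$ combined with Godement's compactness criterion yields $\rank_\Qbb(\mathbf{F}) \ge 1$, while $\rank_\Qbb(\mathbf{F}) \le \rank_\Rbb(\mathbf{F}) = 2$. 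If the $\Qbb$-rank were $2$ then $\mathbf{F}$ would be $\Qbb$-split, forcing $\mathbf{F} \cong \SL_3$ over $\Qbb$ and $\Gamma$ commensurable with $\SL(3,\Zbb)$; this case is excluded by hypothesis.  Hence $\rank_\Qbb(\mathbf{F}) = 1$.

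Next, classify the $\Qbb$-forms of type $A_2$.  These are either inner forms $\SL_1(D)$ for a central simple $\Qbb$-algebra $D$ of degree $3$, or outer forms $\mathrm{SU}_h$ attached to a rank-$3$ nondegenerate hermitian form $h$ over a quadratic extension $K/\Qbb$ (with involution of the second kind).  If $D$ is a degree-$3$ division algebra then $\SL_1(D)$ is $\Qbb$-anisotropic, which would force $\Gamma$ to be cocompact; if $D = M_3(\Qbb)$ then $\SL_1(D) = \SL_3$, already ruled out. Thus $\mathbf{F}$ must be an outer form $\mathrm{SU}_h$. In order for $\mathbf{F}(\Rbb)^\circ$ to be isogenous to $\SL(3,\Rbb)$, the extension $K$ must split at $\Rbb$, i.e.\ $K\otimes_\Qbb\Rbb \cong \Rbb\oplus\Rbb$; if instead $K\otimes_\Qbb\Rbb \cong \Cbb$, the real points would be of type $\mathrm{SU}(p,q)$, which is not isogenous to $\SL(3,\Rbb)$.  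This forces $K = \Qbb(\sqrt{r})$ for some square-free positive integer $r \ge 2$.

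Finally, identify $\mathbf{F}(\Zbb)$ with $\Gamma_r$ after conjugation in $G$.  Since $\rank_\Qbb(\mathrm{SU}_h) = 1$, the form $h$ has Witt index exactly $1$ over $\Qbb$, so it admits a Witt decomposition as a hyperbolic plane plus a one-dimensional anisotropic summand.  After rescaling the anisotropic summand by an appropriate element of $K^\times$ (absorbed by the $\SL$ condition and therefore not changing the associated group) and choosing a suitable $K$-basis, the Gram matrix of $h$ becomes precisely the anti-diagonal matrix $J$ in the statement.  The change of basis is implemented by conjugation by some $h_0 \in \SL(3,\Rbb)$, which identifies $\sigma(\mathbf{F}(\Zbb))$ with a subgroup commensurable with $\Gamma_r$; the Borel--Harish-Chandra theorem ensures that $\Gamma_r$ is itself a lattice.

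The main obstacle is this last normalization step: showing that every rank-$3$, Witt-index-$1$ hermitian form over $K = \Qbb(\sqrt{r})$ whose special unitary group realizes $\SL(3,\Rbb)$ at the real place is equivalent, after rescaling and change of basis, to the form with matrix $J$.  This amounts to the Hasse--Minkowski-type classification of hermitian forms over quadratic extensions together with Witt's extension theorem, and is the core content of the classification in \cite[\S 6.6]{MorrisArith}.  The earlier arithmeticity reductions and the case analysis of $\Qbb$-forms are standard and essentially bookkeeping.
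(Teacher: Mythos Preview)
The paper does not give its own proof of this proposition; it simply records the statement and cites \cite[\S 6.6]{MorrisArith}. Your proposal is a correct outline of precisely the argument found there: arithmeticity, Godement's criterion to force $\Qbb$-rank $1$, the inner/outer dichotomy for $\Qbb$-forms of type $A_2$, the elimination of division-algebra forms by anisotropy, the real-place condition forcing $K=\Qbb(\sqrt r)$ with $r>0$, and the Witt normalization of the hermitian form. So your approach coincides with the paper's (by reference) and is more detailed than what the paper actually writes.

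One small sharpening: in the normalization step, the cleanest way to see that the anisotropic summand $\langle a\rangle$ (with $a\in\Qbb^\times$) can be taken to be $\langle 1\rangle$ is to scale the \emph{entire} hermitian form by $a^{-1}\in\Qbb^\times$, which does not change $\mathrm{SU}_h$ at all, and then use that $a^{-1}H\cong H$ for the hyperbolic plane $H$. Your phrasing ``rescaling the anisotropic summand by an element of $K^\times$'' is slightly off; it is a $\Qbb^\times$-scaling of the whole form together with the isometry $\lambda H\cong H$ that does the job. Also, for completeness, the outer-form case technically includes $\mathrm{SU}_1(D,\tau)$ for a degree-$3$ division algebra $D/K$ with involution of the second kind, but this is again $\Qbb$-anisotropic and so is excluded by the same nonuniformity argument you already use for inner forms.
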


More explicitly, the $\Q$-vector space \[V_{\Qbb}=\left\{(a,b,c,\overline{c},\overline{b},\overline{a}) : a, b, c \in \Qbb(\sqrt{r})\right\} \] defines a $\Qbb$-form on $V=\Rbb^{6}$ with $\Z$-lattice $\mathcal L=\left\{(a,b,c,\overline{c},\overline{b},\overline{a}): a, b, c  \in \Z[\sqrt{r}]\right\}$.

Write $\R^6=\R^3\oplus \R^3$.  
Under the homomorphism $\rho\colon \SL(3, \R )\to \SL(6, \R )$,  $\rho(A)(u,v):=(Au,(A^{tr})^{-1}v)$, the image $\rho(\SL(3, \R ))$ is defined over $\Qbb$ (with respect to the $\Qbb$-form $V_{\Qbb}$). 
In particular, there is  an algebraic group $\bfG\simeq \SL(3, \subset \mathbf {GL}(V)$ defined over $\Qbb$  (with respect to the $\Qbb$-form $V_{\Qbb}$) such that $\rho(\SL(3, \R )) = \bfG(\R)^\circ$.  Furthermore, $\rho(\Gamma_{r})=\bfG(\Zbb)$ relative to the $\Z$-lattice $\mathcal L$.  See \cite[Prop.\ (6.6.1)]{MorrisArith}.   


Consider an action  $\alpha\colon\Gamma\to \Diff^{\infty}(M)$ and let $\gamma_{0}\in \Gamma$ be an element with $h_{\top}(\alpha(\gamma_{0}))>0$.
Using \Cref{prop:cocompact}, we directly deduce  \Cref{Ainv} under assumption \ref{AinvSL3} from the following.  
\begin{lemma}\label[lemma]{lem:AmsrSL3}
There exists an $A$-invariant probability measure $\mu'$ on $M^{\alpha}$ with exponentially small mass at $\infty$ such that $h_{\mu'}(a\mid \Fsc)>0$ for some $a\in A$.
\end{lemma}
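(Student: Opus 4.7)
By \Cref{ss}, assume $\gamma_{0}\in\Gamma_{r}$ is semisimple and write $\gamma_{0}=g^{1}$ for a one-parameter subgroup $\{g^{t}\}\subset G$.  Applying \Cref{claim:seed}, obtain a compactly supported $\{g^{t}\}$-invariant probability measure $\mu_{0}$ on $M^{\alpha}$ with $h_{\mu_{0}}(g^{1}\mid\Fsc)>0$.  Following the arithmetic reduction of Section 4.2.3, $\gamma_{0}$ lies in a maximal $\Qbb$-torus $\Tbf=\Tbf_{s}^{\Qbb}\cdot\Tbf_{a}^{\Qbb}$.  Since $\Tbf_{a}^{\Qbb}(\Rbb)/(\Tbf_{a}^{\Qbb}(\Rbb)\cap\Gamma_{r})$ is compact by Borel--Harish-Chandra, averaging $\mu_{0}$ along a \Folner sequence in $\Tbf_{a}^{\Qbb}(\Rbb)$ yields a compactly supported $\Tbf_{a}^{\Qbb}(\Rbb)$-invariant $\mu_{1}$.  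Decomposing $g^{1}=a\cdot k$ into $\Rbb$-split and $\Rbb$-anisotropic parts, we may arrange $h_{\mu_{1}}(a^{1}\mid\Fsc)>0$ for a one-parameter $\Rbb$-split subgroup $\{a^{t}\}\subset\Tbf_{a}^{\Qbb}(\Rbb)$.

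Because $\Gamma_{r}$ preserves the Hermitian form defined by $J$, a direct computation shows that the real eigenvalues of the $\Rbb$-split part of any semisimple element of $\Gamma_{r}$ must (after suitable conjugation) take the form $(\lambda,1,\lambda^{-1})$.  Consequently, $\{a^{t}\}$ can be identified with $A^{\Qbb}$, the $\Rbb$-points of the unique 1-dimensional $\Qbb$-split torus of $\bfG$, and sits inside a maximal $\Rbb$-split torus $A\subset G$ with $Z_{G}(A^{\Qbb})=A$.  The complementary one-parameter subgroup $\bfM^{\Qbb}(\Rbb)\subset A$ is identified with the $\Rbb$-points of the 1-dimensional $\Qbb$-anisotropic factor of the Levi of the minimal $\Qbb$-parabolic; by Dirichlet's unit theorem, $\bfM^{\Qbb}(\Zbb)\subset\Gamma_{r}$ is cocompact in $\bfM^{\Qbb}(\Rbb)$.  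Moreover, for the unique simple $\Qbb$-root $\beta$ whose diagonal in $A$ coincides with $A^{\Qbb}$, the subgroup $\bfM^{\Qbb}(\Rbb)$ centralizes the standard $\Rbb$-rank-$1$ subgroup $H_{\beta}$ generated by $U^{\pm\beta}$.

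Now average $\mu_{1}$ along a \Folner sequence of centered intervals in the root subgroup $U^{[\beta]}$ (normalized by $A^{\Qbb}$).  By \Cref{thm:BFHunip} the averages have uniformly exponentially small mass at infinity; by \Cref{thm:averaginghomo2} the projection to $G/\Gamma_{r}$ remains $A^{\Qbb}$-invariant; and since $A^{\Qbb}$ is the diagonal of $\beta$ in $A$, \Cref{thm:opproot} forces the projection to be $U^{-\beta}$-invariant.  Any subsequential limit $\mu_{2}$ satisfies: $p_{*}\mu_{2}$ is $H_{\beta}$-invariant, $\mu_{2}$ is $\{a^{t}\}$-invariant and $U^{[\beta]}$-invariant, $\mu_{2}$ has exponentially small mass at infinity, and $h_{\mu_{2}}(a^{1}\mid\Fsc)>0$ by \Cref{prop:uscfiberentropy}.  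To upgrade $A^{\Qbb}$-invariance to the full $A$-invariance, average $\mu_{2}$ along a \Folner sequence in the complementary direction $\bfM^{\Qbb}(\Rbb)$.  Since $\bfM^{\Qbb}(\Rbb)$ commutes with $\{a^{t}\}$ and centralizes $H_{\beta}$, the limiting measure $\mu'$ is $A=A^{\Qbb}\cdot\bfM^{\Qbb}(\Rbb)$-invariant and has $h_{\mu'}(a^{1}\mid\Fsc)>0$ by \Cref{prop:uscfiberentropy}.

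The main obstacle is the final averaging step: the direction $\bfM^{\Qbb}(\Rbb)$ is not unipotent, so \Cref{thm:BFHunip} does not apply to control escape of mass.  Instead, we must exploit the cocompactness of $\bfM^{\Qbb}(\Zbb)$ in $\bfM^{\Qbb}(\Rbb)$ (from Dirichlet's unit theorem) together with the $H_{\beta}$-invariance of $p_{*}\mu_{2}$ obtained in the previous step: decomposing large \Folner intervals in $\bfM^{\Qbb}(\Rbb)$ into translates of a compact fundamental domain for $\bfM^{\Qbb}(\Zbb)$ and using that the corresponding translates by $\bfM^{\Qbb}(\Zbb)$ preserve the base measure $p_{*}\mu_{2}$ (by virtue of its $H_{\beta}$-invariance combined with the arithmetic structure), one establishes uniform tightness of the averaged measures on $G/\Gamma_{r}$ and hence preservation of exponentially small mass at infinity in the weak-$*$ limit.
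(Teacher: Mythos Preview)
Your proposal contains a central factual error that derails the argument. You claim that the $\Rbb$-split part of any semisimple element of $\Gamma_{r}$ has eigenvalues of the form $(\lambda,1,\lambda^{-1})$ after conjugation, and hence that $\{a^{t}\}$ coincides with the $\Qbb$-split torus $A^{\Qbb}$. This is false. The paper carries out the explicit computation in the case $\rank_{\Qbb}(\Tbf)=1$: after conjugating so that $\Tbf_{s}^{\Qbb}(\Qbb)=\{\rho(\diag(\ell,1,\ell^{-1}))\}$, the $\Qbb$-anisotropic part is $\Tbf_{a}^{\Qbb}(\Qbb)=\{\rho(\diag(p,p^{-2},p)):p\in\Qbb(\sqrt{r})^{\times}\}$, so $\gamma_{0}=\rho(\diag(\omega,\omega^{-2},\omega))$ for a unit $\omega$ with $\omega\sigma(\omega)=1$. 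Thus $\{a^{t}\}\subset\Tbf_{a}^{\Qbb}(\Rbb)$ lies in $\ker(e_{1}-e_{3})$, which is \emph{orthogonal} to the $\Qbb$-split direction, not equal to it. Your identification of $\{a^{t}\}$ with $A^{\Qbb}$ and of the complementary direction with a $\Qbb$-anisotropic $\bfM^{\Qbb}$ is therefore backwards, and the Dirichlet-unit cocompactness you invoke applies to the direction you already have, not to the one you need.

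Beyond this, your argument omits the case $\rank_{\Qbb}(\Tbf)=0$ entirely (where $\Tbf$ is $\Qbb$-anisotropic and one must separately treat $\rank_{\Rbb}(\Tbf)\in\{1,2\}$), and the final averaging paragraph is, as you yourself flag, not a proof: the assertion that translates by $\bfM^{\Qbb}(\Zbb)$ preserve $p_{*}\mu_{2}$ ``by virtue of its $H_{\beta}$-invariance combined with the arithmetic structure'' is not justified. The paper's route avoids all of this: once one knows $\{a^{t}\}\subset\ker(e_{1}-e_{3})$, a single application of \Cref{SL2coro} with $\delta=e_{1}-e_{3}$ produces a measure invariant under both $\{a^{t}\}$ and the diagonal of $\delta$; since these two directions generate the full Cartan $S$, the lemma follows immediately (after a conjugation to $A$). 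The escape-of-mass issue you struggle with is handled inside \Cref{SL2coro} via compact-group averaging followed by unipotent averaging and \Cref{thm:opproot}.
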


\begin{proof}[Proof of \Cref{lem:AmsrSL3}] 
We identify $G$ with $\rho( G) = \bfG(\R)^\circ$ and $\Gamma_r$ with $\rho(\Gamma_r)= \bfG(\Z)$; we thus  view $\Gamma\subset G\subset \SL(6, \R)$ as  a subgroup commensurable with $\bfG(\Z)$. 
Using \Cref{ss} and after taking powers of $\gamma_{0}$ if necessary, we may assume that $\gamma_{0}$ is a semisimple element contained  $\Gamma_r= \bfG(\Zbb)$.   Take a maximal torus $\Tbf$ defined over $\Qbb$ containing $\gamma_{0}$.  
Since we assumed $\rank_{\Qbb}(\bfG)=1$, either $\rank_{\Qbb}(\Tbf)=0$ or $1$. We divide the remainder of the proof into these two cases.


\subsubsection*{Case 1: $\rank_{\Qbb}(\Tbf)=1$}
We follow the discussion in the proof of \cite[Prop.\ (6.6.1)]{MorrisArith}.  Suppose $\rank_{\Qbb}(\Tbf)=1$. 
We can decompose $\Tbf$ into an almost direct product $\Tbf=\Tbf_{a}^{\Qbb}\cdot \Tbf_{s}^{\Q}$ of a $\Qbb$-anisotropic torus $\Tbf_{a}^{\Qbb}$ and a $\Qbb$-split torus $\Tbf_{s}^{\Q}$.  Since $\rank_{\Qbb}(\Tbf)=1$, $\dim(\Tbf_{s}^{\Q}(\R))=1$.  
Since $\Tbf_{s}^{\Q}(\Rbb)\cap \bfG(\Zbb)$ is finite, after taking a power if necessary, we may assume that $\gamma_{0}\in \Tbf_{a}^{\Qbb}(\R)^\circ$.  Also, since  $h_{\top}(\alpha(\gamma_{0}))>0$, the element $\gamma_0$ is unbounded and thus $  \Tbf_{a}^{\Qbb}(\R)$ is not compact.
 In particular, we conclude that $\rank_{\Rbb}(\Tbf)=2$.


Using the explicit $\Qbb$-form on $\bfG$, we verify that the group \[\left\{\rho\left(
\diag(\ell, 1, \ell\inv)
\right): \ell\in \Qbb^\times\right\}\] forms  the $\Qbb$-points of a $\Qbb$-split torus in $\bfG$. Since $\rank_{\Qbb}(\bfG)=1$, all $\Q$-split tori are conjugate in $\bfG(\Q)$ and thus there is  $h\in \bfG(\Qbb)$ such that
\begin{equation}\label{eq:split}h\Tbf_{s}^{\Q}(\Qbb)h^{-1}=\left\{\rho\left(
\diag(\ell, 1, \ell\inv)
\right): \ell\in \Qbb^\times\right\}.\end{equation} 
Since $h\in \bfG(\Q)$, $h^{-1}\Gamma_{r}h$ is commensurable with $\Gamma_{r}$ and so $h^{-1}\Gamma h$ is commensurable with both $\Gamma_r$ and $\Gamma$.  Replacing $\gamma_0$ with a power if needed,  we may assume $h\gamma_0h^{-1} \in \Gamma_r\cap \Gamma$. 
Thus, after replacing $\Tbf$ with $h\Tbf h^{-1}$, it is with no loss of generally to assume $\Tbf_{s}^{\Q}(\Q)$ is of the form in \cref{eq:split} and that $\gamma_0\in \Tbf^\Q_{a}(\R)^\circ \cap \Gamma_r\cap \Gamma$.  We fix these choice for the remainder.

The subgroup  $\Tbf_{a}^{\Qbb}(\Qbb)$ commutes with $\Tbf_{s}^{\Q}(\Qbb)$.   Direct calculation shows that the only matrices in $\rho(\SL(3,\Rbb))$ that commute with $\{\rho(\diag(\ell,1,\ell^{-1})):\ell\in \Q^\times\}$   are diagonal. Thus 
$\Tbf_{a}^{\Qbb}(\Qbb)$ consists of matrices  of the form 
\[\diag(p,q,r,p^{-1},q^{-1},r^{-1})\] with $pqr=1$; preservation of the $\Q$-form $V_\Q$, implies  that $p,q,r \in \Q(\sqrt{r})^\times $ satisfy \begin{eqnarray}\label{zcondition} q\sigma(q)=p\sigma(r)=1.\end{eqnarray}
Let $k=  \Q(\sqrt{r})$ and let $N_{k/\Q}(\cdot)$ denote the norm of the field extension $k\to \Q$.   The map  $\diag(p,q,r,p^{-1},q^{-1},r^{-1})\mapsto N_{k/\Q}(p)$ is $\Q$-{character} (as it is invariant under $\sigma$) on $\Tbf$ and thus takes value $1$ on $\Tbf_{a}^{\Qbb}$.  
Since $r = \frac 1{\sigma (p)}= \frac p{p\sigma (p)}$, we have  $r=  p$; using that $pqr=1$, we have $q = p^{-2}$.  
We thus have
\[\Tbf_{a}^{\Qbb}(\Qbb)=\left\{\rho(\diag(p, p^{-2}, p)): p \in \Q(\sqrt{r})^\times\right\}.\] 
Having previously replaced $\gamma_0$ with a power  ensuring  $\gamma_0\in \Tbf_{a}^{\Qbb}(\Q)$, there is a unit $\omega\in \Zbb[\sqrt{r}]$ with $\omega \sigma(\omega)=1$  such that 
$\gamma_{0}=\rho(\diag(\omega,\omega^{-2},\omega))$.


%
%
%

%

Let $S= \Tbf(\R)^\circ$; then $$S=\{\rho\left(\diag \left(\exp(e_1),\exp(e_2),\exp(e_3)\right)\right): e_1+ e_2+ e_3=0\}$$
is a maximal $\Rbb$-split torus in $G=\bfG(\R)^\circ$.  
If $\Delta(G,S)=\{e_1-e_2, e_2-e_3, e_1-e_3\}$ denotes the standard collection of positive roots of $\Phi(G,S)$, the element $\rho(\diag(\omega,\omega^{-2},\omega))$ is contained in $\ker (e_{1}-e_{3})$.  
 Let $\{b^{t}\}\subset \Tbf_a^\Q(\R)^\circ $ denote the one-parameter subgroup  parameterized so that  $b^{1}=\gamma_{0}$.  
 Then $b^{\Rbb}\subset \ker(e_{1}-e_{3})$. From  \Cref{claim:seed}, we can find a compactly supported probability measure 
 $\mu_{0}$ on $M^\alpha$ such that 
\begin{enumerate} 
\item $\mu_{0}$ is $b^{\R}$-invariant,  
\item $h_{\mu_{0}}(b^{1}\mid\Fsc)>0$, and
\item $\mu_{0}$ projects to the Haar measure on the closed orbit $\{b^{t}\cdot \Gamma\}$ in $G/\Gamma$. 
\end{enumerate}

Next, we apply \Cref{SL2coro} to $\mu_{0}$ with $g=b^{1}$ and $\delta=e_{1}-e_{3}$ and obtain a probability measure $\mu'$ on $M^\alpha$ such that 
\begin{enumerate}
\item $\mu'$ is invariant under $b^{\Rbb}$,
\item $\mu'$ is invariant under the diagonal of $\delta$ in $S$, 
\item   $\mu'$ has exponentially small mass at $\infty$, and 
\item $h_{\mu'}(b^{1}\mid\Fsc)>0$.
\end{enumerate}
In particular,  since  $b^{\Rbb}$ and the diagonal of $\delta$ in $S$ generate $S$, $\mu'$ is $S$-invariant.   
Note that $S$ and $A$ are both maximal $\R$-split tori and thus are $\bfG(\R)^\circ$-conjugate.  
Hence, after  translating $\mu'$ by an element of $G$ if necessary, we obtain a probability measure $\mu'$ on $M^\alpha$ that is $A$-invariant, has exponentially small mass at $\infty$, and $h_{\mu'}(a\mid\Fsc)>0$ for some $a\in A$. This proves \Cref{lem:AmsrSL3} when $\rank_{\Qbb}(\Tbf)=1$.

\subsubsection*{Case 2: $\rank_{\Qbb}(\Tbf)=0$}
Now suppose $\rank_{\Qbb}(\Tbf)=0$. In this case $\Tbf$ is $\Qbb$-anisotropic and thus  $\Tbf(\Rbb)/\Tbf(\Zbb)$ is a compact subset of $G/\Gamma$. As before, by passing to a power, we may assume $\gamma_0 \in \Tbf (\Rbb)^\circ$.
As in   \Cref{sec:aniave}, we can find a probability measure $\mu_{1}$ on $M^\alpha$ such that 
\begin{enumerate} 
\item $\mu_{1}$ is $\Tbf(\Rbb)$-invariant,
\item $h_{\mu_{1}}(\gamma_{0}\mid\Fsc)>0$, and
\item $\mu_{1}$ projects to the Haar measure on $\Tbf(\Rbb)/\Tbf(\Zbb)$.
\end{enumerate}

Since  $h_{\top}(\alpha(\gamma_{0}))>0$, $\gamma_0\in\Tbf(\Rbb)$ is unbounded in $\Tbf(\Rbb)$
and thus $\rank_{\Rbb}\Tbf$ is either $1$ or $2$. If $\rank_{\Rbb}\Tbf=2$ then   $\Tbf(\Rbb)^\circ\simeq \Rbb^{2}$ is a maximal $\Rbb$-split torus in $G$.  Thus $\Tbf(\Rbb)^\circ$ and $A$ are conjugate and, after translating $\mu_1$ by an element of $G$ if necessary, $\mu'=\mu_{1}$ satisfies the  conclusion of \Cref{lem:AmsrSL3}.

Now suppose   $\rank_{\Rbb}(\Tbf)=1$.
Let $\Tbf=\Tbf^{\Rbb}_{s}\Tbf^{\Rbb}_{a}$ be the (non-trivial) decomposition of $\Tbf$ into a $\Rbb$-split  and $\Rbb$-anisotropic tori. 
Fix a collection $\Delta_\R(A,G)$ of simple $\R$-roots  in $\Phi_\R(A,G)$. 
There is $g\in G$ such that  $g\Tbf^{\Rbb}_{s}g^{-1}$  is a standard (with respect to $\Delta_\R(A,G)$) $\Rbb$-split torus ( \cite[Prop.\  1.2]{MR1289055}). 
Let $\{a^t\}=g\Tbf^{\Rbb}_{s}(\R)^\circ g^{-1} \subset A$ be the one-parameter subgroup parameterized such that $a^1= g\gamma_0g^{-1}.$
Let $\mu_1'= g_* \mu_1$.  
Then 
\begin{enumerate}
\item $\mu_{1}'$ is $a^{\R}$-invariant, compactly supported,  and 
\item $h_{\mu_{1}'}(a^{1}\mid \Fsc)>0$.
\end{enumerate} 
Since $\rank_{\Rbb}(\bfG)=2$ and since $\{a^t\}\subset A$ is a standard $\Rbb$-split torus, there is a simple  root $\delta_{0}\in \Delta_\R(A,G)$ with 
such that $\Tbf_{s}^{\Rbb}(\Rbb)^\circ= \ker \delta_{0}$. 
We apply \Cref{SL2coro} to $\mu_{1}'$ and obtain a measure $\mu'$ that satisfies conclusions of \Cref{lem:AmsrSL3} when $\rank_{\Qbb}(\Tbf)=0$. 
\end{proof}

\section{Measure rigidity and proofs of  \texorpdfstring{\Cref{thm:AtoG,thm:abs}}{Theorems 2.2 and 2.3}}\label{sec:msrrigidity}

 Throughout this section, we assume  $G$ is a connected, $\R$-split simple Lie group with finite center as in \Cref{thm:AtoG}.  Let $\lieg = \Lie (G)$.  
We let $\Gamma\subset G$ be a lattice subgroup and for $r>1$ take $\alpha\colon \Gamma\to \Diff^r(M)$ a $C^r$ action.   
We let $M^{\alpha}$ denote the suspension space with induced $G$-action. We also write \begin{equation}\label{v+1} s(G)=v(G)+1. \end{equation}
 We also fix a maximal, $\R$-split  Cartan subgroup $A$ in $G$ and let $\liea$ be Lie algebra of $A$.  

\subsection{Reformulation of \texorpdfstring{\Cref{thm:AtoG}}{Theorem 2.2}}. 

 \Cref{thm:AtoG} follows immediately from the following reformulation.  
\begin{proposition}\label[proposition]{combmsr}  
Let $\mu$ be an ergodic, $A$-invariant Borel probability measure on $M^{\alpha}$ projecting to the Haar measure on $G/\Gamma$. Let $H=\textrm{Stab}_{G}(\mu)$ be subgroup preserving $\mu$ and let $\lieh=\textrm{Lie}(H)$. Suppose that \begin{enumerate}
\item there exists $a\in A$ such that $h_{\mu}(a\mid\Fsc)>0$, and
\item $\#\left\{\beta\in\Phi(A,G):\glie^{\beta}\not\subset\hlie\right\}\le s(G)=v(G)+1.$
\end{enumerate}
Then $H=G$.
\end{proposition}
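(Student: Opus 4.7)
The plan is to argue by contradiction and combine the non-resonance theorem, the high-entropy method, and the combinatorics of closed subsystems of $\Phi(A,G)$. Suppose $H\neq G$. Since $A\subset H$ (because $\mu$ is $A$-invariant) and $\Ad(A)$ acts semisimply on $\lieg$ preserving $\lieh$, the algebra $\lieh$ decomposes as
\[
\lieh = \liea \oplus \bigoplus_{\beta\in J}\lieg^\beta,
\]
where $J\subset\Phi(A,G)$ is closed under root addition. Write $I=\Phi(A,G)\setminus J$; by hypothesis $I\neq\emptyset$ and $\#I\le v(G)+1$. The goal is to derive a contradiction from this bound.

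The first reduction uses Theorem \ref{thm:nonres}: every root $\beta$ which is non-resonant with the fiberwise Lyapunov spectrum $\Lcal^{\widetilde\alpha,F}(\mu)$ satisfies $\lieg^\beta\subset\lieh$. Therefore every $\beta\in I$ must be positively proportional to some fiberwise Lyapunov functional $\lambda_\beta\in\Lcal^{\widetilde\alpha,F}(\mu)$. In particular, $I$ is contained in the union of half-lines through the (finitely many) fiberwise Lyapunov directions, and each such half-line meets $\Phi(A,G)$ in at most two elements.

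Next, I would invoke the fiber-entropy formula (Theorem \ref{thm:coent}) together with the hypothesis $h_\mu(a\mid\Fsc)>0$ to locate a coarse fiberwise Lyapunov class $\chi^F$ carrying positive entropy. Along directions $\beta\in I$, positivity of entropy combined with $\lieg^\beta\not\subset\lieh$ forces the leafwise measures $\mu_x^{U^{[\beta]}}$ to be non-atomic on a positive-measure set: a fiber with $U^{[\beta]}$-atomic conditionals and $\beta\in I$ would contribute no entropy along that coarse Lyapunov class, contradicting Theorem \ref{thm:coent} applied to a suitable $a\in A$. With non-atomic conditionals in hand, Theorem \ref{thm:highent} applies: whenever $\beta_1,\beta_2\in I$ and $\delta:=\beta_1+\beta_2\in\Phi(A,G)$, $\mu$ becomes $U^{[\delta]}$-invariant, so $\delta\in J$. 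This gives the crucial combinatorial closure: $I$ is \emph{sum-free into $I$}, i.e.\ $(I+I)\cap\Phi(A,G)\subset J$.

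The final step is a case-by-case combinatorial analysis. Since $J$ is additively closed and $J\supset(I+I)\cap\Phi$, together with the fact (from Step 1) that $I$ lies in a small number of Lyapunov half-lines, one obtains strong structural constraints on the pair $(I,J)$. The bound $\#I\le v(G)+1$, together with the values of $\bar r(\lieq_j)$ tabulated in Table \ref{table:bull}, rules out each candidate configuration: the minimum codimension of a proper $\Ad(A)$-invariant subalgebra is $v(G)$, occurring only for the maximal parabolics, and adding at most one extra root to $I$ never yields a configuration compatible with both the additive closure of $J$ and the high-entropy closure on $I$. Hence $I=\emptyset$, i.e., $\lieh=\lieg$ and $H=G$, contradicting $H\neq G$.

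The main obstacle is the last step: the combinatorial enumeration must be carried out for each irreducible root system of rank $\ge 2$, and near the boundary $\#I=v(G)+1$ the verification is delicate because the resonance condition from Step 1 must be compatible with $J$ being additively closed; Table \ref{table:bull}, which lists highest and second-highest roots together with the resonant codimensions, is designed precisely to drive this analysis. A secondary technical point is confirming that non-invariance at $\beta\in I$ produces non-atomic $U^{[\beta]}$-leafwise measures in every case where the high-entropy method is applied, which requires care when $\beta$ is paired with $2\beta$ in a $BC$-type coarse root.
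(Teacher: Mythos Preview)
Your outline has the right overall shape but contains a genuine gap at the key technical step, which you mislabel as a ``secondary technical point.''

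The assertion that ``positivity of entropy combined with $\lieg^\beta\not\subset\lieh$ forces the leafwise measures $\mu_x^{U^{[\beta]}}$ to be non-atomic'' is not justified by your argument. You write that atomic $U^{[\beta]}$-conditionals would contribute no entropy along the coarse Lyapunov class, contradicting Theorem~\ref{thm:coent}. But Theorem~\ref{thm:coent} decomposes the \emph{fiber} entropy $h_\mu(a\mid\Fsc)$ along the \emph{fiberwise} coarse Lyapunov laminations $\Wcal^{\chi_i^F,F}$, whose leaves lie inside the fibers of $M^\alpha\to G/\Gamma$. The $U^{[\beta]}$-orbits, by contrast, lie in the $G$-direction. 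The leafwise measure $\mu_x^{U^{[\beta]}}$ can be atomic while $\mu_x^{\chi^F,F}$ is non-atomic and the fiber entropy is positive; nothing in the coarse entropy formula links them directly. The implication you need goes the \emph{other} way: one must deduce non-atomicity of $\mu_x^{U^\beta}$ \emph{from} positive fiber entropy along $\Wcal^{\lambda,F}$ for a resonant $\lambda$. This is exactly Proposition~\ref{nonat} in the paper, and its proof (all of \S\ref{sec:Ginv3}) is the technical heart of the argument: it requires establishing that the fiberwise Lyapunov subspaces are $1$-dimensional (Claim~\ref{claim:msrhypot}), building normal-form coordinates on the $2$-dimensional total coarse leaves $W^\chi(x)$ subfoliated by $U^\beta$-orbits and $W^{\lambda,F}$-leaves, invoking a measure-rigidity lemma (Lemma~\ref{keylemmsr}) showing that under the atomic hypothesis the conditional measures are homogeneous along an affine group, and deriving a contradiction from the interplay between the non-atomic fiberwise conditionals and the assumed atomic $U^\beta$-conditionals.

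A second, related issue: even with Proposition~\ref{nonat} in hand, you do not get non-atomicity for \emph{every} $\beta\in I$ for free; you get it only for those $\beta$ resonant with a fiberwise $\lambda$ for which $h_\mu(a\mid\Wcal^{\lambda,F})>0$. The paper handles this by first classifying the possible $\lieh$ (Proposition~\ref{claim:class}: either $\lieh\subset\lieq$ for a maximal parabolic, or one of three explicit reductive exceptions), and then in each case using Lemma~\ref{lem:twoent} to locate specific entropy-carrying directions, applying Proposition~\ref{nonat} there, and finishing with a single application of high entropy against a root already known to have Haar (hence non-atomic) conditionals because $\lieg^\beta\subset\lieh$. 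Your proposed ``sum-free'' combinatorial argument would require non-atomicity along all of $I$ simultaneously, which is not available.
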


\begin{remark}We remark that the conclusion of \cref{combmsr} may be false without the positive entropy assumption $h_{\mu}(a\mid\Fsc)>0$.  Indeed, let $G=\Sl(n,\R)$.  Then $v(G) = n-1$.  For the standard projective action of $\Gamma\subset \Sl(n,\R)$  on $\R P^{n-1}$ or for any circle-bundle extension (see discussion in \cref{ex:cricleext}) on the $s(G)$-dimensional manifold $\R P^{n-1}\times S^1$, there exist an ergodic Borel probability measures $\mu$ on $M^\alpha$ whose stabilizer $H$ is the $v(G)$-dimensional parabolic subgroup stabilizing a line.  Thus $\#\left\{\beta\in\Phi(A,G):\glie^{\beta}\not\subset\hlie\right\}=v(G) <s(G).$
However, as there are no $\alpha(\Gamma)$-invariant Borel probability measures on $\R P^{n-1}$, there are no $G$-invariant Borel probability measures $\mu$ on $M^\alpha$.   
\end{remark}

\Cref{thm:AtoG} can be deduced directly from  \Cref{combmsr,thm:nonres}.
\begin{proof}[Proof of \Cref{thm:AtoG}] 
Let $\mu$ be as in \Cref{thm:AtoG} and let $H= \textrm{Stab}_{G}(\mu)$.  We have $A\subset H$ and, as we assume $\mu$ projects to the Haar measure on $G/\Gamma$, may apply \cref{thm:nonres}.  We thus obtain 
 \[\#\left\{\beta\in \Phi(A,G): \glie^{\beta}\not\subset \lieh \right\} \le \dim(M)\le s(G).\]
  By \Cref{combmsr}, $G= H=\textrm{Stab}_{G}(\mu)$. \end{proof}

\subsection{Entropy considerations}
The following consequence of positivity of fiber entropy with be used frequently in the proofs of  \Cref{thm:AtoG,thm:abs}: 
 \begin{lemma}\label[lemma]{lem:twoent}
 Let $\mu$ be an ergodic, $A$-invariant, probability measure on $M^{\alpha}$. Let $\chi_{1}^{F},\dots,\chi_{k}^{F}$ be the distinct fiberwise coarse Lyapunov functionals for $A$ action.

 Assume that $h_{\mu}(a_{0}\mid \Fsc)>0$ for some $a_{0}\in A$. Then, there exists $i\neq j$ so that \[h_{\mu}(a_{0}\mid\Wcal^{\chi_{i}^{F}, F})>0 \textrm{ and } h_{\mu}(a_{0}^{-1}\mid\Wcal^{\chi_{j}^{F}, F})>0.\] 
 \end{lemma}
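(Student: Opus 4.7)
The plan is to apply the product formula for fiber entropy (\Cref{thm:coent}) twice: once to $a_0$ and once to $a_0^{-1}$. Recall the formula states
\[
h_\mu(a \mid \Fsc) \;=\; \sum_{i : \chi_i^F(a) > 0} h_\mu(a \mid \Wcal^{\chi_i^F, F}),
\]
so only the coarse fiberwise Lyapunov exponents that are positive on the given diagonal direction contribute.

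First I would apply this to $a = a_0$. Since $h_\mu(a_0 \mid \Fsc) > 0$ by hypothesis, at least one term on the right is positive, producing an index $i$ with $\chi_i^F(a_0) > 0$ and $h_\mu(a_0 \mid \Wcal^{\chi_i^F, F}) > 0$.

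Next, I would use that metric entropy is invariant under inversion, which passes to fiber entropy in the standard way (the partition defining fiber entropy is $\wtd\alpha(a_0^{\pm 1})$-invariant, so Rokhlin's argument for $h_\mu(T) = h_\mu(T^{-1})$ applies verbatim). Thus $h_\mu(a_0^{-1} \mid \Fsc) = h_\mu(a_0 \mid \Fsc) > 0$. Applying \Cref{thm:coent} again, this time to $a_0^{-1}$, yields an index $j$ with $\chi_j^F(a_0^{-1}) > 0$, equivalently $\chi_j^F(a_0) < 0$, and with $h_\mu(a_0^{-1} \mid \Wcal^{\chi_j^F, F}) > 0$.

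Finally, since $\chi_i^F(a_0) > 0$ and $\chi_j^F(a_0) < 0$, the two exponents must be distinct, giving $i \neq j$ as required. The argument is essentially formal once \Cref{thm:coent} is in hand; the only mild subtlety is the invariance of fiber entropy under inversion, but since the fibration $p\colon M^\alpha\to G/\Gamma$ is $A$-equivariant and the partition $\Fsc$ is $A$-invariant, this is immediate from the definition of fiber entropy via refining partitions.
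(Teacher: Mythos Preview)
Your proof is correct and follows essentially the same approach as the paper: apply the product formula \Cref{thm:coent} to both $a_0$ and $a_0^{-1}$, use $A$-invariance of $\Fsc$ to get $h_\mu(a_0^{-1}\mid\Fsc)=h_\mu(a_0\mid\Fsc)>0$, and conclude $i\neq j$ from the opposite signs $\chi_i^F(a_0)>0$, $\chi_j^F(a_0)<0$.
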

 \begin{proof}[Proof of \Cref{lem:twoent}]
We know that there is $a_{0}\in A$ with $h_{\mu}(a_{0}|\Fsc)>0$.  Applying \Cref{eq:coent} to $a_{0}$ and $a_{0}^{-1}$, we have \[h_{\mu}(a_{0}\mid\Fsc)=\sum_{j:\chi_{j}^{F}(a_{0})>0}h_{\mu}(a_{0}\mid\Wcal^{\chi_{j}^{F}, F})\] and \[h_{\mu}(a_{0}^{-1}\mid\Fsc)=\sum_{j:\chi_{j}^{F}(a_{0})<0}h_{\mu}(a_{0}^{-1}\mid\Wcal^{\chi_{j}^{F}, F}).\]

Since $\Fsc$ is  $A$-invariant, $h_{\mu}(a_{0}\mid\Fsc)=h_{\mu}(a_{0}^{-1}\mid\Fsc)>0$. Hence, there exists $i$ and $j$ such that $\chi_{i}^{F}(a_{0})>0$, $\chi_{j}^{F}(a_{0})<0$, (hence, $i\neq j$) and \[h_{\mu}(a_{0}\mid\Wcal^{\chi_{i}^{F}, F})>0 \textrm{ and } h_{\mu}(a_{0}^{-1}\mid\Wcal^{\chi_{j}^{F}, F})>0.\] This proves \Cref{lem:twoent}.
\end{proof}

In \Cref{sec:Ginv2}, we use the high-entropy method, \Cref{thm:highent},
to show \Cref{combmsr}  assuming the following proposition, whose proof we present in \Cref{sec:Ginv3}.  
\begin{proposition}\label[proposition]{nonat}

Assume that $\dim M=v(G)+1$. Let $M^{\alpha}$ denote the suspension space with induced $G$-action.  Let $\mu$ be an ergodic, $A$-invariant, probability measure on $M^{\alpha}$ that projects to the normalized Haar measure on $G/\Gamma$.

 Assume further that there exists a fiberwise Lyapunov functional $\lambda$ and a root $\beta$ satisfying the following: 
\begin{enumerate}
\item $\dim E^{\lambda, F}=1$ and no other fiberwise Lyapunov functional is positively proportional to $\lambda$, 
\item 
 $h_{\mu}(a\mid\Wcal^{\lambda, F})>0$ for some $a\in A$, and 
\item $\beta$ is is positively proportional to $\lambda$.
\end{enumerate}
Then for $\mu$-almost every $x\in M^{\alpha}$, the leafwise measure $\mu_{x}^{U^{\beta}}$ is non-atomic. 
\end{proposition}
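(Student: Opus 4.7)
The plan is to argue by contradiction: assume $\mu_x^{U^\beta}$ is atomic on a positive-$\mu$-measure set of $x$, which by $A$-invariance and $A$-ergodicity holds for $\mu$-a.e.~$x$, and derive a contradiction with the hypothesis $h_\mu(a\mid \Wcal^{\lambda, F})>0$. I will proceed in three stages.

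First, I will upgrade ``atomic'' to ``essentially trivial.'' Fix $a \in A$ with $\beta(a) > 0$; since $\Ad(a)$ scales $\lieg^\beta$ by $e^{\beta(a)}$, the action of $\wtd\alpha(a)$ on the $U^\beta$-orbit through $x$ corresponds (up to normalization) to scaling by $e^{\beta(a)}$ on $U^\beta$ and intertwines $\mu_x^{U^\beta}$ with $\mu_{a\cdot x}^{U^\beta}$. Any atom of $\mu_x^{U^\beta}$ at $u \neq 0$ would be pushed under $n$-fold iteration to $e^{n\beta(a)} u \to \infty$, violating Poincar\'e recurrence for the $a$-action on $(M^\alpha, \mu)$; consequently, all atoms must sit at the identity, giving $\mu_x^{U^\beta} \propto \delta_x$ for $\mu$-a.e.~$x$.

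Second, I will convert triviality of $\mu^{U^\beta}$ into atomicity of the fiber conditional measures $\mu_y^{\Fsc}$. In local suspension coordinates $M^\alpha \cong U^\beta \times V \times M$, with $V$ a local transversal to $U^\beta$ in $G$, the condition $\mu_x^{U^\beta} \propto \delta_x$ a.e.~forces $\supp(\mu)$ to lie locally in a ``graph'' $\{(u^*(v, m), v, m) : (v, m) \in V \times M\}$ for some measurable $u^*\colon V \times M \to U^\beta$. For $\bar\mu$-a.e.~$y = (u_0, v_0) \in G/\Gamma$, the fiber $\Fsc(y)$ meets this graph on the (at most countable) set $\{m \in M : u^*(v_0, m) = u_0\}$, so in the disintegration $\mu = \int \mu_y^{\Fsc} \, d\bar\mu(y)$ the fiber conditional $\mu_y^{\Fsc}$ is atomic on $M$.

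Finally, I will deduce the contradiction via entropy. Atomicity of $\mu_y^{\Fsc}$ transfers to atomicity of the fiberwise Lyapunov leafwise measure $\mu_y^{\lambda, F}$ on the one-dimensional leaf $W^{\lambda, F}(y)$ (obtained as a conditional of $\mu_y^{\Fsc}$ along subordinate partitions of $\Wcal^{\lambda, F}$). Since $\dim E^{\lambda, F} = 1$ and $\lambda$ is the unique fiberwise Lyapunov functional in its positive-proportionality class, a Ledrappier--Young-type entropy-dimension relation applies, and atomic $\mu^{\lambda, F}$ forces $h_\mu(a \mid \Wcal^{\lambda, F}) = 0$, contradicting the hypothesis. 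The main obstacle I anticipate is rigorously establishing the graph structure in the second stage, which requires carefully relating the subordinate-partition definition of $\mu_x^{U^\beta}$ to the fibered disintegration of $\mu$ over $\bar\mu$ through a measurable selection of $u^*$. An alternative route---which uses the normal-form and affine-holonomy machinery more directly---would study the $2$-dimensional coarse Lyapunov leaf $W^{[\lambda]}(x)$, bifoliated by $W^{\lambda, F}$-leaves and $U^{[\beta]}$-orbits: triviality of $\mu^{U^\beta}$ would force the coarse leafwise measure to be supported on a single $W^{\lambda, F}$-leaf (collapsing under $p$ to a point in $G/\Gamma$), contradicting the Haar-derived non-atomicity of the base coarse leafwise measure $\bar\mu_{p(x)}^{U^{[\beta]}}$. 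The positive-proportionality of $\beta$ and $\lambda$ is essential for the bifoliated structure supplied by the affine holonomy.
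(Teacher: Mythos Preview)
Stage~1 is fine (this is the standard dichotomy for leafwise measures along a foliation uniformly contracted or expanded by some $a\in A$), but Stage~2 contains a genuine gap that breaks the argument. The implication ``$\mu_x^{U^\beta}\propto\delta_x$ a.e.\ $\Rightarrow$ the fiber conditionals $\mu_y^{\Fsc}$ are atomic'' is false. You are right that triviality of the $U^\beta$-leafwise measures forces $\mu$ to be supported locally on a graph $\{(u^*(v,m),v,m)\}$ over $V\times M$, and hence that $\mu^{\Fsc}_{(u_0,v_0)}$ is supported on the level set $\{m: u^*(v_0,m)=u_0\}$. But there is no reason for this level set to be countable: $u^*(v_0,\cdot)\colon M\to U^\beta$ is an arbitrary measurable function, and its level sets can carry non-atomic conditional measure. (Concretely: if $\dim M\ge 2$, write $m=(m_1,m_2)$ and take $u^*(v,m)=m_1$; the level set over $(u_0,v_0)$ is a copy of the $m_2$-factor and the fiber conditional is Lebesgue there, while $p_*\mu$ is still Lebesgue in $(u,v)$.) The $U^\beta$-orbits and the fibers $\Fsc$ are \emph{transverse}, so triviality along one says nothing about atomicity along the other. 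In fact the hypothesis $h_\mu(a\mid\Wcal^{\lambda,F})>0$ already forces $\mu^{\lambda,F}_x$---a sub-conditional of $\mu^{\Fsc}_x$---to be non-atomic, so the conclusion you are trying to reach in Stage~3 is the \emph{negation} of a standing hypothesis, not something you can hope to derive.

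The paper's proof works entirely inside the $2$-dimensional coarse leaf $W^\chi(x)$ (bifoliated by horizontal $U^\beta$-orbits and vertical $W^{\lambda,F}$-leaves) via the normal-form coordinates $\Psi^\chi_x$ and the affine group $H\cong\R\times(\R^*\ltimes\R)$ in which all coordinate changes live. The key step is \cref{keylemmsr}: assuming $\mu^{U^\beta}$ atomic, a drift argument along $A'=\ker\beta$ (using \cref{bdd}) shows the coarse leafwise measure, conditioned on the $A'$-ergodic decomposition $\Ecal$, is $H_x$-invariant and supported on $H_x\cdot(0,0)$ for some closed $H_x\le H$. The entropy identity of \cref{entropyonerg} rules out a $0$-dimensional or purely vertical orbit, and atomicity of $\mu^{U^\beta}$ rules out a $2$-dimensional one, so $H_x^\circ\cdot(0,0)$ is a smooth curve transverse to both the horizontal and vertical foliations. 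Non-atomicity of $\mu^{\lambda,F}$ then produces two \emph{disjoint} such curves in the support near $x$; since the horizontal foliation is smooth, horizontal holonomy between them is absolutely continuous, forcing two distinct support points on a common horizontal line---contradicting atomicity of $\mu^{U^\beta}$. Your ``alternative route'' gestures at this picture, but the specific claim that the coarse leafwise measure collapses to a \emph{single} $W^{\lambda,F}$-leaf is exactly what the transverse $1$-dimensional $H_x$-orbit structure refutes.
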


\subsection{Proof of \Cref{combmsr} assuming \Cref{nonat}}\label{sec:Ginv2}

In this subsection, we prove \Cref{combmsr}.

\subsubsection{Classification of subalgebras with codimension at most $s(\lieg) $}
We start with a classification of possible subgroups $H=\textrm{Stab}_{G}(\mu)$ arising in  \Cref{combmsr}.  
\begin{proposition}\label[proposition]{claim:class}
Let $\lieh\subset \lieg$ be a subalgebra such that $\liea\subset \lieh$ and  $\lieg^\beta\subset \lieh$ for all roots $\beta\in \Phi(A,G)\sm S$ where $S$ is a subset of $\Phi(A,G)$ of cardinality at most $s(\lieg)= v(\lieg)+1$.  If $\lieh\neq \lieg$ then  either  $\lieh$ is contained in a maximal parabolic subalgebra of $\lieg$ or $\#S = v(\lieg) + 1$ and 
\begin{enumerate}
\item $\lieg= \so(n,n+1)$ and $\lieh= \so(n,n)$ is the subalgebra generated by all long root spaces;
\item $\lieg$ is of type $G_2$ and $\lieh\simeq \sl(3,\R)$ is the subalgebra generated by all long root spaces;
\item $\lieg$ is of type $F_4$ and $\lieh\simeq \so(4,5)$ is the subalgebra generated by all long root spaces and one short root space.  
\end{enumerate}
\end{proposition}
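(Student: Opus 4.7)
The proof proceeds by analysing the set $R := \{\beta \in \Phi(A,G) : \lieg^\beta \subset \lieh\}$. Since $\liea$ acts semisimply on $\lieh$, we have $\lieh = \liea \oplus \bigoplus_{\beta \in R} \lieg^\beta$, and closure of $\lieh$ under the bracket forces $R$ to be closed under root addition: $\beta_1, \beta_2 \in R$ with $\beta_1+\beta_2 \in \Phi$ implies $\beta_1+\beta_2 \in R$. The hypothesis becomes $|T| = |\Phi \setminus R| \leq v(\lieg) + 1$, and I would split the analysis by whether $R$ is symmetric.

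\textbf{Case 1 ($R \neq -R$, non-symmetric).} The classical structure theorem for closed subsets of a root system (see, e.g., Bourbaki, \emph{Groupes et alg\`ebres de Lie}, Ch.~VI, \S1.7; alternatively, Morozov's theorem applied to $\mathrm{nilrad}(\lieh)$) produces a parabolic subset $P \supseteq R$ of $\Phi$, i.e.\ a closed subset with $P \cup (-P) = \Phi$ and $P \neq \Phi$. The associated parabolic subalgebra $\liea \oplus \bigoplus_{\beta \in P} \lieg^\beta$ contains $\lieh$, is proper, and is therefore contained in a maximal parabolic subalgebra. The cardinality bound on $|T|$ plays no role here.

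\textbf{Case 2 ($R = -R$, symmetric).} Then $R$ is a closed sub-root system of $\Phi$, and $\lieh$ is reductive of the same rank as $\lieg$. By the Borel--de Siebenthal theorem, any proper maximal closed sub-root system of $\Phi$ of maximal rank arises by removing one node from the extended Dynkin diagram $\widetilde{\Phi}$. For each irreducible reduced type (recall $G$ is $\Rbb$-split, so $BC_\ell$ does not occur) I would enumerate the maximal sub-root systems and compute $|\Phi \setminus R|$, comparing with $v(\lieg)+1$ from Table~\ref{table:bull}. A direct check shows $|\Phi \setminus R| > v(\lieg) + 1$ in every case except exactly the three listed: $B_\ell \supset D_\ell$ (the long-root subsystem, $|T| = 2\ell$); $G_2 \supset A_2$ (long roots, $|T|=6$); and $F_4 \supset B_4$ (the long-root $D_4$ together with one short-root orbit, $|T| = 16$). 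In each of these $|T| = v(\lieg)+1$ exactly. Non-maximal closed sub-root systems are contained in maximal ones and have strictly larger complement, so they contribute nothing new.

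\textbf{Expected main obstacle.} Case 1 reduces to a standard structure theorem and is immediate. The substance of the argument is the case-by-case verification in Case 2: for each simple type $A_\ell, B_\ell, C_\ell, D_\ell, E_6, E_7, E_8, F_4, G_2$, I must enumerate the maximal max-rank sub-root systems from the extended Dynkin diagram and numerically verify the inequality $|T| > v(\lieg) + 1$ in all non-exceptional cases. Most checks are short (for instance, $A_\ell$ has no proper max-rank closed sub-root system at all). The exceptional types $E_6, E_7, E_8$ require ruling out several candidates such as $A_7, D_6 \times A_1, E_6 \times A_1 \subset E_7$ or $D_8, A_8, E_7 \times A_1, E_6 \times A_2 \subset E_8$; these all yield complements far larger than $v(\lieg)+1$, so no new exceptional case appears. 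The bookkeeping is routine once the values of $v(\lieg)$ from Table~\ref{table:bull} and the Borel--de Siebenthal lists are in hand.
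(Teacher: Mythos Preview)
Your dichotomy ($R \neq -R$ via Morozov, $R = -R$ via Borel--de Siebenthal) is a genuinely different organization from the paper's and is nearly complete, but Case~2 has a gap. From $R = -R$ you conclude that $\lieh$ is reductive of the same rank as $\lieg$ and then invoke Borel--de Siebenthal for closed sub-root systems of maximal rank. However, $\liea \subset \lieh$ only says that $\lieh$ has full rank as a reductive algebra; the root system $R$ itself need not span $\liea^*$. In type $A_\ell$ there are \emph{no} proper closed symmetric subsets of maximal rank, yet the Levi root systems $\Phi_I$ (for $I \subsetneq \Delta$) are proper, closed, symmetric, and maximal among such. Your final sentence (``non-maximal closed sub-root systems are contained in maximal ones'') does not cover these, since they are not contained in any subsystem produced by the extended Dynkin procedure. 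The fix is one line: if $R = -R$ but $R$ does not span $\liea^*$, then $\lieh$ centralizes a nonzero $Z \in \liea$, so $\lieh \subset C_\lieg(Z)$ is contained in a proper Levi subalgebra, hence in a proper parabolic. With that added, Case~2 genuinely reduces to the maximal-rank enumeration, and your case check goes through.

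For comparison, the paper argues differently: it first cites \cite[Lem.\ 3.7]{MR4502593} to dispose of $\#S \le v(\lieg)$, and for $\#S = v(\lieg)+1$ splits on whether $S$ contains a long root (if so, the proof of \cite[Prop.\ 3.5]{MR4502593} produces a parabolic) or only short roots (then a direct type-by-type argument in $C_n$, $B_n$, $G_2$, $F_4$, using \cite[Lem.\ 3.6]{MR4502593}). Your symmetric/non-symmetric split is more structural and avoids leaning on the companion paper, at the cost of the Borel--de Siebenthal bookkeeping; the paper's route is more elementary in each individual case but outsources the parabolic step to \cite{MR4502593}.
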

\begin{proof}
First we note that  if $\#S\le v(\lieg )$, then by \cite[Lem.\  3.7]{MR4502593}, $H$ is either $G$ or a maximal parabolic. We thus assume that $\#S=v(\lieg)+1$. 
Second, we observe that if $S$ contains a long root, it follows exactly as in the proof of \cite[Prop.\ 3.5]{MR4502593} that $\lieh\subset \lieq$ for some parabolic subalgebra.

We thus consider the case that $S$ contains only short roots; in this case, we deduce the 3 exceptions enumerated above. 
 
   If $\glie$ is of type $C_n$ for $n\ge 3$, then suppose $\lieh$ contains all root spaces associated with long roots.  We have $v(\lieg)+1 = 2n$.  If $n\ge 3$ then $4(n-1)>2n$.  For fixed $1\le i_0\le n$, there are $4(n-1)$ short roots of the form $\beta=\pm e_{i_0} \pm e_j$ for $j\neq i_0$.  Thus the root space for at least one such root is contained in $\lieh$.  Taking brackets with all long roots $\pm 2 e_j$ for all $j$, it follows the root space associated to every short root $\pm e_{i} \pm e_j, i\neq j$ is contained in $\lieh$, whence $\lieh= \lieg$.

Consider  $\lieg$  of type $B_n$, $G_2$, or $F_4$.  We suppose that $S$ contains only short roots.   From (the proof of) \cite[Lem.\ 3.6]{MR4502593}, if $\#S\le v(\lieg)$ it follows that $\lieh = \lieg$.  We thus have $\#S= v(\lieg)+1$.  
If $\lieg$ is of type $B_n$ or $G_2$, there are exactly $v(\lieg)+1$ short roots and one may check the subspace of $\lieg$ spanned by   long root spaces is a subalgebra $\lieh$ of the type asserted in the proposition.  
 If $\lieg$ is of type $F_4$ there are 48 roots, with 24 long and 24 short.  Also $v(\lieg) +1 = 16$.   One may also check the abstract root system generated by all  root spaces associated to all long roots and one short root is $B_4$ and generates a subalgebra isomorphic to $\so(4,5)$ with has codimension 16.
\end{proof}

\subsubsection{{Proof of \cref{combmsr}}}
We begin the proof of \cref{combmsr}.  For the sake of contradiction, assume that $H\neq G$. We start by asserting that every coarse fiberwise Lyapunov exponent has multiplicity 1.  
\begin{claim}\label[claim]{claim:msrhypot}
In \cref{combmsr}, suppose that $\mu$ is not $G$-invariant.  Then $\dim(M)= s(\lieg) = v(\lieg)+1$ and there are $s(\lieg)$ distinct coarse fiberwise Lyapunov exponents $\chi_i^F$.  Consequently $\dim E^{\chi_i^F}(x)=1$ for a.e.\ $x$.  

\end{claim}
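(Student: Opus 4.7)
Plan: Since $\mu$ is not $G$-invariant, $\lieh := \Lie(H) \subsetneq \lieg$. As $A \subset H$, the subalgebra $\lieh$ is normalized by $A$ and therefore admits a root-space decomposition $\lieh = \liea \oplus \bigoplus_{\beta \notin S} \lieg^\beta$, so $\operatorname{codim}_\lieg(\lieh) = |S|$. Applying Proposition~\ref{claim:class} to $\lieh$, since $\lieh \ne \lieg$ and $|S| \le v(\lieg)+1$ by hypothesis, either $\lieh$ is contained in a maximal parabolic subalgebra (whose codimension is at least $v(\lieg)$, forcing $|S| \ge v(\lieg)$) or we land in one of the three exceptional cases with $|S| = v(\lieg)+1$ exactly. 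Hence $|S| \ge v(\lieg)$ in all cases.

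The next key input is Theorem~\ref{thm:nonres}. Since $\mu$ projects to the Haar measure on $G/\Gamma$, any root $\beta$ not positively proportional to a fiberwise Lyapunov exponent has $\mu$ invariant under $U^\beta$, so $\lieg^\beta \subset \lieh$. Contrapositively, every $\beta \in S$ is positively proportional to some fiberwise Lyapunov exponent $\lambda_\beta$. Because $G$ is $\Rbb$-split, its restricted root system is reduced, so distinct roots lie in distinct positive-proportionality classes; this forces the map $\beta \mapsto [\lambda_\beta]$ from $S$ into the set of coarse fiberwise Lyapunov classes to be injective.

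Writing $k$ for the total number of distinct coarse fiberwise Lyapunov classes, the previous step yields $k \ge |S| \ge v(\lieg)$. Combining with $\dim M = \sum_i \dim E^{\chi_i^F}(x) \ge k$ and the ambient constraint $\dim M \le v(\lieg)+1$ inherited from the setting of Theorem~\ref{thm:AtoG} into which Proposition~\ref{combmsr} is being applied, I obtain the tight sandwich
\[
v(\lieg) \;\le\; |S| \;\le\; k \;\le\; \dim M \;\le\; v(\lieg)+1 \;=\; s(\lieg).
\]

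The main obstacle is ruling out the boundary case $|S| = v(\lieg)$, in which $\lieh$ would coincide with a minimum-codimension maximal parabolic $\lieq = \liel \oplus \lien$. I would close this gap by invoking Lemma~\ref{lem:twoent}: the positive entropy hypothesis produces two coarse fiberwise Lyapunov classes $\chi_i^F, \chi_j^F$ with $\chi_i^F(a_0) > 0$ and $\chi_j^F(a_0) < 0$ for some $a_0 \in A$. In the parabolic case, however, the roots of $S$ all lie in the single opposite nilradical $\overline{\lien}$, so they sit on one side of the hyperplane separating $\lien$ from $\overline{\lien}$; choosing $a_0$ in the corresponding Weyl chamber, the coarse classes $\{[\lambda_\beta] : \beta \in S\}$ cannot produce both signs required by Lemma~\ref{lem:twoent}. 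Thus an additional coarse fiberwise Lyapunov class beyond those indexed by $S$ is forced, giving $k \ge v(\lieg)+1$. Plugging this back into the sandwich collapses all the inequalities to equalities, so $|S| = k = \dim M = v(\lieg)+1$ and, in particular, each coarse fiberwise Lyapunov subspace $E^{\chi_i^F}(x)$ is one-dimensional.
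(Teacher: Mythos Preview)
Your argument is correct and follows essentially the same strategy as the paper's: both establish $|S| \le k \le \dim M \le v(\lieg)+1$ via Theorem~\ref{thm:nonres}, show $|S| \ge v(\lieg)$ (else $H=G$), and dispose of the parabolic boundary case by observing that all roots in $S$ lie on one side of a hyperplane. The paper phrases this last step directly via Margulis--Ruelle (all fiberwise exponents negative at some $a_0$ forces $h_\mu(\cdot\mid\Fsc)=0$ on an open cone, contradicting the hypothesis through the semi-norm property of fiber entropy), while you route it through Lemma~\ref{lem:twoent}; these are equivalent.

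Two small slips to tighten. First, when you write ``choosing $a_0$ in the corresponding Weyl chamber,'' you are silently assuming $h_\mu(a_0\mid\Fsc)>0$ at that $a_0$, which is needed to invoke Lemma~\ref{lem:twoent}; this follows from the semi-norm property of fiber entropy (recorded after Theorem~\ref{thm:coent}), but you should say so. Second, your concluding sentence asserts $|S| = v(\lieg)+1$, but your own case analysis does not rule out $|S| = v(\lieg)$: in that case you only deduce $k \ge v(\lieg)+1$, not $|S| \ge v(\lieg)+1$. This is harmless for the claim, which concerns $k$ and $\dim M$ rather than $|S|$, so simply drop the assertion about $|S|$.
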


\begin{proof}[Proof of \Cref{claim:msrhypot}] Recall that $H=\textrm{Stab}_{G}(\mu)$ and  write $\hlie=\textrm{Lie}(H)$. 
Let \[ 
S=\left\{\beta\in\Phi(A,G):\glie^{\beta}\not\subset\hlie\right\}.\]
Let $k$ denote the number of distinct coarse fiberwise Lyapunov exponents.  By  \Cref{thm:nonres} and dimension count, we have $\#S\le k\le \dim  M$.

If  $k<v(\lieg)$ then $ \#S<v(G)$ and  $H=G$, contradicting our hypothesis.   If $k=v(G)$ then $\hlie$ is a maximal parabolic subalgebra.  Moreover, by \Cref{thm:nonres}, every $\beta_i\in S$ is positively proportional to a fiberwise exponent and thus there are $v(G)$ fiberwise Lyapunov functionals $\lambda_1^F ,\dots, \lambda_{v(G)}^F$ such that each $\lambda_i^F$ is positively proportional to an element $\beta\in S$. 
However, there exists $a_0\in A$ with $\beta(a_0)<0$ for every $\beta\in S$
and thus $\lambda_i^F(a_{0})<0$ for all $i=1,\dots, v(G)$.  Since every fiberwise Lyapunov exponent is negative, a fiberwise version of Margulis--Ruelle's inequality (\Cref{thm:MRineq}) implies $h_{\mu}(a\mid\Fsc)=0$ for an open set of $a$, contradicting hypothesis.  
Thus, we conclude that $k=v(G)+1$.  \end{proof}

By \Cref{claim:msrhypot}, we  have $v(\lieg)+1$ distinct fiberwise Lyapunov functionals
$\lambda_1^{F},\dots, \lambda_{v(G)+1}^{F}$,
each with $\dim E^{\lambda_i^F}=1$.   Furthermore, by \Cref{thm:nonres} we necessarily have that at least $v(G)$ fiberwise Lyapunov exponents   are positively proportional to elements $\beta\in S$.  After  reindexing, if needed, we will assume $\lambda_1^F, \dots,\lambda_{v(G)}^F$ are 
positively proportional to elements $\beta_i\in S$.

We divide possible $\hlie$ into two cases below; $\hlie\subset \lieq$ for some maximal parabolic $\lieq$ versus the other cases (items (1) to (3) in \Cref{claim:class})


\subsubsection*{Case 1:  $\hlie\subset\lieq$ for some maximal parabolic subalgebra $\lieq$}
First, we consider the case that $\hlie\subset \lieq$ for some parabolic subalgebra. 
\begin{claim}\label[claim]{hinQ} Suppose $\hlie\subset \lieq$ for some   parabolic subalgebra. Let $\Pi$ be a collection of simple roots inducing an order on roots such that $\lieq=\lieq_\Delta$ for some $\Delta\subset \Pi$. Let $\Sigma_\qlie=\left\{\beta\in\Phi(A,G):\glie^{\beta}\subset \qlie\right\}$. 
Then, for $\mu$-almost every $x$, \begin{enumerate}
\item for all positive roots $\beta^+$, $\mu_{x}^{U^{\beta^+}}$ is non-atomic and 
\item for at least one negative root $\gamma_{-}\in \Sigma\sm \Sigma_\lieq$, $\mu_{x}^{U^{\gamma_{-}}}$ is non-atomic.
\end{enumerate}
\end{claim}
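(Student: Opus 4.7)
The plan is to split the proof into the two parts of the claim, both hinging on \Cref{nonat}.

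\medskip
\noindent\emph{For part (1):} Fix any positive root $\beta^+$ and consider two cases. If $\beta^+\notin S$, then $\lieg^{\beta^+}\subset\hlie$ so $\mu$ is $U^{\beta^+}$-invariant; since $\mu$ projects to the Haar measure on $G/\Gamma$, the leafwise measure $\mu_x^{U^{\beta^+}}$ is (proportional to) the Haar measure on $U^{\beta^+}$, which is non-atomic. If $\beta^+\in S$, \Cref{thm:nonres} forces $\beta^+$ to be positively proportional to some fiberwise Lyapunov functional $\lambda^F$, and by \Cref{claim:msrhypot} we have $\dim E^{\lambda^F, F}=1$ with coarse class $[\lambda^F]=\{\lambda^F\}$.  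The goal will then be to verify the hypothesis $h_\mu(a\mid\Wcal^{[\lambda^F], F})>0$ of \Cref{nonat} for some $a\in A$; I would extract this from the decomposition of total fiber entropy in \Cref{thm:coent}, combined with the near-bijection (established in \Cref{claim:msrhypot}) between the $v(G)+1$ one-dimensional coarse fiberwise leaves and the $v(G)+1$ roots in $S$, together with a Margulis--Ruelle-type counting argument exploiting the symmetry $h_\mu(a\mid\Fsc)=h_\mu(a^{-1}\mid\Fsc)$.  An application of \Cref{nonat} then finishes.

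\medskip
\noindent\emph{For part (2):} Take $a_0\in A$ with $h_\mu(a_0\mid\Fsc)>0$ and apply \Cref{lem:twoent} to produce a coarse fiberwise Lyapunov exponent $\chi_j^F$ with $h_\mu(a_0^{-1}\mid\Wcal^{\chi_j^F, F})>0$. By \Cref{claim:msrhypot}, $\chi_j^F=\{\lambda_j^F\}$ is one-dimensional, and by the pairing with $S$ is positively proportional to some root $\gamma\in S$.  The remaining task is to ensure $\gamma\in \Phi(A,G)\sm\Sigma_\lieq$. I would choose $a_0$ inside a sub-cone of the positive Weyl chamber (with respect to $\Pi$) defined by taking the simple roots of $\Pi\sm\Delta$ to take values of order $1$ at $a_0$ while those of $\Delta$ take values of order $\varepsilon$; opposite-radical roots then satisfy $|\beta(a_0)|$ of order $1$ while Levi roots satisfy $|\beta(a_0)|$ of order $\varepsilon$. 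Margulis--Ruelle-type bounds then force the positive-entropy contribution $h_\mu(a_0^{-1}\mid\Wcal^{\chi_j^F, F})$ to originate from a root outside $\Sigma_\lieq$ for $\varepsilon$ small enough. \Cref{nonat} applied to $\lambda_j^F$ and $\gamma_-=\gamma$ then concludes.

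\medskip
The hardest step will be establishing strict positivity of entropy on the specific one-dimensional coarse fiberwise leaf corresponding to $\beta^+$ in part (1), case $\beta^+\in S$: pinning down which leaf receives entropy (rather than merely asserting that some leaf does) is where the dimension constraint $\dim M=v(G)+1$ and the exact bookkeeping between the $v(G)+1$ coarse leaves and the $v(G)+1$ elements of $S$ (from \Cref{claim:msrhypot}) become essential, together with the symmetry and semi-norm structure of fiber entropy. Similarly in part (2), care will be needed to ensure the chosen $a_0$ retains positive fiber entropy across the sub-cone while simultaneously distinguishing Levi from opposite-radical contributions.
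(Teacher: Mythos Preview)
Your proposal has the right ingredients (\Cref{thm:nonres}, \Cref{claim:msrhypot}, \Cref{lem:twoent}, \Cref{thm:coent}, \Cref{nonat}) but is missing the structural reduction that makes the argument go through. The paper's first step is to show that $\lieq$ must be a maximal parabolic of \emph{minimal} codimension $v(\lieg)$: since $\lieh\subset\lieq$ forces $\Sigma\sm\Sigma_\lieq\subset S$, one has $\#(\Sigma\sm\Sigma_\lieq)\le\#S\le v(\lieg)+1$; if $\#(\Sigma\sm\Sigma_\lieq)=v(\lieg)+1$ then every fiberwise exponent is positively proportional to an opposite-radical (hence negative) root, so for $a_0$ in the positive chamber all fiberwise exponents are negative and Margulis--Ruelle kills the entropy, a contradiction. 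Thus $\#(\Sigma\sm\Sigma_\lieq)=v(\lieg)$, i.e.\ $\lieq$ is maximal of minimal codimension, and $S$ differs from $\Sigma\sm\Sigma_\lieq$ by at most one root $\beta_0\in\Sigma_\lieq$.

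This reduction is exactly what resolves the ``hardest step'' you flag in part (1). After it, there is \emph{at most one} positive root $\beta_0\in S$, and the corresponding exponent $\lambda_{v(\lieg)+1}^F$ is singled out: choosing $a_0$ in the positive chamber makes $\lambda_1^F,\dots,\lambda_{v(\lieg)}^F$ (all proportional to opposite-radical roots) negative at $a_0$, so by \Cref{thm:coent} the entire positive entropy $h_\mu(a_0\mid\Fsc)>0$ is carried by $\Wcal^{\lambda_{v(\lieg)+1}^F,F}$, and \Cref{nonat} finishes. Your ``counting argument'' does not pin down a specific leaf without this reduction. For part (2), your sub-cone scheme is both unnecessary and risky: as $\varepsilon\to0$ the total fiber entropy $h_\mu(a_0\mid\Fsc)$ may itself be $O(\varepsilon)$, so comparing term sizes does not separate Levi from opposite-radical contributions. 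The paper instead reuses the same $a_0$: \Cref{lem:twoent} gives some $j$ with $\chi_j^F(a_0)<0$ and $h_\mu(a_0^{-1}\mid\Wcal^{\chi_j^F,F})>0$; since $\lambda_{v(\lieg)+1}^F(a_0)>0$, necessarily $j\le v(\lieg)$, so $\lambda_j^F$ is proportional to a root in $\Sigma\sm\Sigma_\lieq$, and \Cref{nonat} yields the required negative root $\gamma_-$. (Note also that your assertion ``$\lambda_j^F$ is proportional to some $\gamma\in S$'' can fail when $\#S=v(\lieg)$: one exponent is then unmatched.)
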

\begin{proof}[Proof of \Cref{hinQ}] 

We first claim $\#\Delta=1$ and that the codimension of $\lieq$ is $v(\lieg)$.    
  Indeed, otherwise, 
  by dimension count, the codimension of $\lieh$ is at least $v(\lieg)+1$ and thus every fiberwise Lyapunov exponent is positively proportional to some $\beta\in \Sigma\sm \Sigma_\lieq$; 
it would then follow there is $a\in A$ such that $\lambda_i^F(a)<0$ for all fiberwise Lyapunov exponents, contradicting the assumption that $\mu$ has positive fiber entropy. It follows that $\lieq= \lieq_{\alpha_{j}}$ for some simple root $\alpha_j\in \Pi$.  Again by dimension counting,  we have either $\lieq=\lieh$ or $\lieq= \lieh\oplus \lieg^{\beta_0}$  for some positive root $\beta_0\in S$.  

We thus have that $\lieq$ is a maximal parabolic.  Since $\lieh\subset \lieq$,  every root $\beta\in \Sigma\sm \Sigma_\lieq$ is positively proportional to some Lyapunov exponent; up to reindexing, we assume $ \lambda_1^F, \dots,\lambda_{v(G)}^F$ are proportional to the collection of roots $\beta\in \Sigma\sm \Sigma_\lieq$.  
If $\lieq=\lieh$ then for all positive roots $\beta^+$, $\mu_{x}^{U^{\beta^+}}$ is the Haar measure and thus is non-atomic.  
If $\lieq= \lieh\oplus \lieg^{\beta_0}$  for some positive root $\beta_0\in S$, then   we have that $\lambda_{v(\lieg)+1}^F$ is positively proportional to $\beta_0$ and   $\mu_{x}^{U^{\beta^+}}$ is the Haar measure and thus   non-atomic for all positive roots $\beta^+\neq \beta_0$.  
In both  cases, we may find $a_0\in A$ with 
\begin{enumerate}
\item $h_{\mu}(a_{0}\mid \Fsc)>0$;
\item $\beta(a_0)<0$ for all $\beta\in \Sigma\sm \Sigma_\lieq$;
\item $\lambda_{v(\lieg)+1}(a_0)>0$.
\end{enumerate}
By \cref{lem:twoent}, we have $h_{\mu}(a_{0}\mid\Wcal^{\lambda_{k}^{F}, F})>0$
for $k= v(\lieg)+1$ and at least one $1\le k\le v(\lieg)$.  
It follows from  \Cref{nonat} that $\mu_{x}^{U^{\beta}}$ is non-atomic for the negative root $\beta\in \Sigma\sm \Sigma_\lieq$ that is positively proportional to the fiberwise exponent $\lambda_k^F$ for $1\le k\le v(\lieg)$ such that $h_{\mu}(a_{0}\mid\Wcal^{\lambda_{k}^{F}, F})>0$.  
Moreover, if $\lambda_{v(\lieg)+1}^F$ is positively proportional to $\beta_0$ it follows from  \Cref{nonat}   $\mu_{x}^{U^{\beta_0}}$ is non-atomic.
\end{proof}

To conclude  \cref{combmsr}, we apply high entropy method, \Cref{thm:highent} and derive a contradiction.  Let $\gamma_-$ be as in \cref{hinQ}.  
Suppose $\gamma_-= -\alpha_j$ is the unique simple negative root omitted from $\Sigma_\lieq$.
Then there is a simple positive root $\delta$ adjacent to $\alpha_j$ in the Dynkin digram and thus $\beta= \gamma_--\delta$.  Otherwise, there exists a simple positive root $\delta$ such that $\beta=\gamma_-+\delta$ is a negative root.  In either  case, $\beta\in \Sigma\sm \Sigma_\lieq$.

 
 Applying the high entropy method \Cref{thm:highent}, 
  it follows that $\mu$ is invariant under $U^{\beta}$,   
  contradicting that $\lieh\subset \lieq$. This shows \Cref{combmsr} when $\hlie\subset \qlie$ for some (maximal) parabolic subalgebra.

\subsubsection*{Case 2: Exceptional cases in \Cref{claim:class}} 
The remaining cases to consider in the proof of  \Cref{combmsr} are when that $\#S=v(\lieg)+1$ and $\lieh$ is one of the three exceptional types in \cref{claim:class}.  Since the codimension of $\lieh$ is $v(\lieg)+1$, every fiberwise Lyapunov functional is positively proportional to a root in $S$.  
Since there is $a\in A$ such that $h_{\mu}(a\mid\Fsc)>0$, by \cref{lem:twoent} there are at least two fiberwise Lyapunov functionals $\lambda_i^F$ and $\lambda_j^F$, $i\neq j$, such that $h_{\mu}(a\mid \Wcal^{\lambda_i^F})>0$ and $h_{\mu}(a^{-1} \mid \Wcal^{\lambda_j^F})>0$.   Therefore, it again follows from \Cref{nonat} that there are at least two short roots $\beta_i, \beta_j\in S$ such that $\mu_x^{U^{\beta_i}}$ and $\mu_x^{U^{\beta_j}}$ are non-atomic.

 If $\lieg$ is of type $B_n$ or $G_2$, the sums of a short root with all long roots generates all short roots.  Again, from the high-entropy method, it follows that $\lieh=\lieg$  which contradicts to the earlier assumption $H\neq G$ we made.

 If $\lieg$ is of type $F_4$, there are 3 subcollections of 8 short roots invariant under taking brackets by all long roots.  By dimension count, one such collection is contained in $\lieh$.  The high-entropy method \Cref{thm:highent} applied to the roots $\beta_j$ and $\beta_i$ (whose root subgroups are not contained in $\lieh$) implies that at least one other subcollection is contained in $\lieh$.  As in the proof of \cite[Lem.\ 3.6]{MR4502593}, this implies $\lieh=\lieg$ which contradicts our assumption that $H\neq G$ again.

This completes the proof of   \Cref{combmsr}.

 \subsection{Proof of \Cref{nonat}}\label{sec:Ginv3}
 The proof of \Cref{nonat} will occupy the entire subsection.

\subsubsection{Normal form parametrization}
We consider the action of $A$ on $M^{\alpha}$, $\restrict{\widetilde{\alpha}}{A}$.  Let $\chi=[\beta]$ denote the coarse Lyapunov exponent  containing $\beta$.   By assumption, $\chi$ contains a unique root $\beta$ and fiberwise Lyapunov exponent $\lambda$ with $\dim E^{\lambda, F}(x) = 1$ and with no other fiberwise Lyapunov exponents positively proportional to $\lambda$.  Let $\Wcal^{\chi}$ denote the associated (total) coarse Lyapunov foliation for $\chi=[\beta]=[\lambda]$.   The leaves of $\Wcal^{\chi}$ are 2-dimensional and subfoliated by $U^\beta$-orbits and leaves of the fiberwise foliation $\Wcal^{\lambda, F}$.

 Let $\Phi^{\lambda, F}_x
\colon   E^{\lambda, F}(x)\to W^{\lambda, F}(x)$ be the normal forms along leaves of the fiberwise Lyapunov manifolds $\Wcal^{\lambda, F}$ in \cref{normalitem}.  

Extend $\Phi^F_{x}$ to a parametrization of a.e.\  leaf of the  (total) coarse Lyapunov foliation $W^{\chi}(x)$ as follows:   
Let $\Phi^\chi_x\colon \lieg^{\beta}\times  E^{\lambda, F}(x)\to W^{\chi}(x)$ be 
$$\Phi^\chi_x(X,v) = \exp_\lieg(X) \cdot \Phi^{\lambda}_x(v).$$
Then for $\mu$-almost every $x$,   $\Phi^\chi_x$ is a well-defined $C^{r}$ diffeomorphism (where $\alpha$ is a $C^r$ action of $\Gamma$ for $r>1$), depends measurably on $x$, and satisfies the following: 
\begin{enumerate} 
\item$\Phi^\chi_x(0,0)=x$ and $D_{(0,0)}\Phi^\chi_x=\Id$.
\item For every $b\in A$ and a.e.\ $x\in M^\alpha$,
  \[(\Phi^\chi_{\tilde {\alpha}(b) {x} })^{-1}\circ \tilde{\alpha}(b)({x})\circ \Phi^\chi_x=(\Ad(b)(X), D_x\tilde \alpha(b))= (e^{\beta(b)}X, D_x\tilde \alpha(b)).\]
\end{enumerate}

We fix an identification $\psi^\beta\colon  \R\to \lieg^\beta$ and a choice of measurable framing $\psi_x^{\lambda}\colon \R\to E^{\lambda, F}(x)$.  Let $\Psi_x^\chi\colon \R^2\to W^\chi(x)$ denote 
$$\Psi^\chi_x(s,t) := \Phi_x^\chi(\psi^\beta(s), \psi_x^{\lambda}(t)).$$
Also write $\Psi_x^{\lambda, F}:= \Phi_x^{\lambda, F} \circ \psi_x^{\lambda}.$
We note that $\Psi^{\chi}_x$ takes a horizontal line $\R\times \{t\}$ to the $U^\beta$-orbit of $\Psi_x^{\lambda, F}(t)$.  

For $\mu$-almost every $x\in M^{\alpha}$, let $\mu_{x}^{\chi}$ and $\mu_{x}^{\lambda, F}$ denote the leafwise measures of $\mu$  along the leaves  $W^{\chi}(x)$ and $W^{\lambda, F}(x)$, respectively.   
We fix a normalization so that $\mu_{x}^{\chi}$ and $\mu_{x}^{\lambda, F}$ are normalized on the image of the unit balls under $\Psi_x^{\chi}$ and $\Psi_x^{\lambda, F}$, respectively.  
From the assumption $h_{\mu}(a_{0}\mid\Wcal^{\lambda, F})>0$ for some $a_{0}\in A$, we have the following: 
\begin{claim}\label[claim]{fibernonat} {For $\mu$-almost $x$, $\mu_{x}^{\lambda, F}$ (and thus  $\mu_{x}^{\chi}$) is non-atomic.} \end{claim}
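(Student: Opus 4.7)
The plan is to prove the contrapositive: if $\mu_x^{\lambda, F}$ is atomic on a positive $\mu$-measure set of $x$, then $h_\mu(a_0 \mid \mathcal{W}^{\lambda, F}) = 0$ for every $a_0 \in A$, contradicting the hypothesis that $h_\mu(a \mid \mathcal{W}^{\lambda, F}) > 0$ for some $a \in A$. This is a manifestation of the classical principle that the leafwise entropy contribution vanishes when the leafwise conditional measures are purely atomic (see, e.g., \cite{MR4599404} and related work around the Ledrappier--Young entropy formulas).

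First I would reduce non-atomicity of $\mu_x^\chi$ to that of $\mu_x^{\lambda, F}$. Since the coarse Lyapunov leaf $W^\chi(x)$ is subfoliated by fiberwise Lyapunov leaves $W^{\lambda, F}(y)$ with $U^\beta$-orbits as transversals, the leafwise measure $\mu_x^\chi$ disintegrates over this subfoliation with conditionals proportional to $\mu_y^{\lambda, F}$ on the $W^{\lambda, F}(y)$-plaques. Any atom of $\mu_x^\chi$ at a point $y$ would then force an atom of $\mu_y^{\lambda, F}$ at $y$, so the reduction is immediate.

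For the main assertion, I would argue by contradiction and assume that $\mu_x^{\lambda, F}$ has an atom on a set of positive $\mu$-measure. Using the $A$-equivariance $\wtd\alpha(b)_* \mu_x^{\lambda, F} \propto \mu_{\wtd\alpha(b)(x)}^{\lambda, F}$ together with $A$-ergodicity of $\mu$, I would reduce to the case where $\mu_x^{\lambda, F}$ is purely atomic for $\mu$-a.e.\ $x$. I would then fix a measurable partition $\eta$ subordinate to $\mathcal{W}^{\lambda, F}$, so the conditionals $\mu_x^\eta$ are proportional to $\restrict{\mu_x^{\lambda, F}}{\eta(x)}$ and are purely atomic. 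Refining $\eta$ measurably into a finer partition $\tilde\eta$ whose atoms are the individual atoms of $\mu_x^{\lambda, F}$ in $\eta(x)$, the entropy formula for fiber entropy along $\mathcal{W}^{\lambda, F}$ (as in \cite{MR4599404}) shows that $h_\mu(a_0 \mid \mathcal{W}^{\lambda, F})$ can be computed from either $\eta$ or the refinement $\tilde\eta$; since $\tilde\eta$ resolves the leafwise measure into singletons, the Rokhlin information in the leafwise direction vanishes and we obtain $h_\mu(a_0 \mid \mathcal{W}^{\lambda, F}) = 0$.

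The main obstacle is the rigorous execution of the entropy-vanishing step in the fibered, non-compact setting of $M^\alpha \to G/\Gamma$. One must combine the atomicity with the normalization-ambiguous equivariance of leafwise measures to produce the refinement $\tilde\eta$ measurably, and then verify that the fiber-entropy formulas of \cite{MR4599404} accommodate this refinement despite the non-compactness of the base $G/\Gamma$ and the unboundedness of the fibered dynamics. Handling these technicalities is the core difficulty; once done, the contradiction with $h_\mu(a \mid \mathcal{W}^{\lambda, F}) > 0$ is immediate.
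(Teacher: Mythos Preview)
Your proposal is correct and aligns with the paper's own treatment: the paper states the claim without proof, simply asserting it follows from the hypothesis $h_{\mu}(a_{0}\mid\Wcal^{\lambda, F})>0$, which is precisely the standard entropy-versus-atomicity dichotomy you outline (and which is established in \cite{MR4599404}). Your sketch supplies the details the paper leaves implicit; the only minor point to tighten is the passage from ``has an atom on a positive-measure set'' to ``$\mu_x^{\lambda,F}(\{x\})>0$ for a.e.\ $x$'' (rather than merely ``purely atomic''), which is what actually feeds into the entropy-vanishing computation.
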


Let $H$ be a subgroup of affine transformations in $\Rbb^{2}$ 
of the form \[H=\left\{\phi_{r,p,q}:\Rbb^{2}\to\Rbb^{2}|\phi_{r,p,q}(s,t)=(s+r,pt+q), r,q\in\Rbb, p\in\Rbb^{*}\right\}\simeq \Rbb\times ( \Rbb^{*}\ltimes \Rbb).\] 
We note that $H$ is a closed subgroup in $\Diff^{1}(\Rbb^{2},\Rbb^{2})$. From the construction and from \cref{holonomy is affine}, 
for $\mu$-almost every $x$, and $\mu^\chi_x$-a.e.\ $y\in W^\chi(x)$, the change of coordinates $(\Psi_y^{\lambda, F})^{-1}\circ \Psi_x^{\lambda, F}$ is an element of $ H$ for $\mu^{\chi}_{x}$ almost every $y\in W^{\chi}(x)$.

 Let $A'=\ker\beta$ be the kernel of the root $\beta$ in $A$. Let $\Ecal$ be the $A'$-ergodic decomposition of $\mu$. Since $A$ is abelian, $\Ecal$ is an $A$-invariant measurable partition on $M^{\alpha}$. We denote by $\{\mu_{x}^{\Ecal}\}$ a system of conditional measures with respect to the parition $\Ecal$.
The following is adaptation of \cite[Lem.\  5.9]{BRHWbdy} and \cite[Lem\ 5.1]{ABZ} to our setting. It guarantees that the  foliation  $\Wcal^{\chi}$ still contributes entropy  (for elements $a\in A$ with $\chi(z)>0$) when conditioned on $A'$-ergodic component $\Ecal$.  
 \begin{lemma}[\cite{BRHWbdy,ABZ}]\label{entropyonerg}
 For $a\in A$ with $\chi(a)>0$, 
 \begin{eqnarray}
 \beta(a)=h_{\mu}\left(a\mid \Ecal\vee \Wcal^{\chi}\right)-h_{\mu}\left(a\mid \Ecal\vee \Wcal^{\lambda, F}\right)
 \end{eqnarray}
 \end{lemma}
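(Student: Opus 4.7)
The plan is to establish a product (or skew-product) structure for the leafwise measure $\mu^{\chi}_{x}$ in the normal-form coordinates $\Psi^{\chi}_{x}$ and then apply an Abramov--Rokhlin/Ledrappier--Young type entropy decomposition. Recall from \cref{holonomy is affine} and the discussion preceding it that the $U^{\beta}$-holonomy between fiberwise leaves is affine in the $\Psi^{\chi}$-coordinates, and $W^{\chi}$ is bifoliated into horizontal $U^{\beta}$-orbits and vertical $W^{\lambda,F}$-leaves. The $A'$-action on $W^{\chi}$ preserves each horizontal leaf (since $A'$ centralizes $U^{\beta}$ and $\beta|_{A'}\equiv 0$, so the horizontal direction carries an $A'$-invariant translation structure), while $a\in A$ with $\chi(a)>0$ dilates the horizontal direction by $e^{\beta(a)}$ and the vertical direction by $\|D_{x}\wtd\alpha(a)|_{E^{\lambda,F}}\|$.

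First, I would verify that, conditioned on the $A'$-ergodic component $\mathcal{E}(x)$ and then pushed forward via $\Psi^{\chi}_{x}$, the leafwise measure $\mu^{\chi}_{x}$ is (up to normalization) a product of Lebesgue measure in the $s$-coordinate with the leafwise measure $\mu^{\lambda,F}_{x}$ in the $t$-coordinate. The $s$-direction factor being Lebesgue comes from $A'$-invariance combined with the affine form of the $U^{\beta}$-holonomy: the measure on the $U^{\beta}$-orbit through $\mu^{\lambda,F}_{x}$-almost every point must be invariant under the full group $H\cap\{p=1\}$ of $s$-translations, hence proportional to Lebesgue. (If the $U^{\beta}$-leafwise measure were instead atomic on a full-measure set, the bifoliation structure would force $\mu^{\chi}_{x}$ itself to be supported on a single $W^{\lambda,F}$-leaf, giving the case we handle separately below.) This is the standard argument in \cite[Lem.~5.9]{BRHWbdy}/\cite[Lem.~5.1]{ABZ}, adapted to our measurable affine-holonomy setting.

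Second, I would apply the Rokhlin/Abramov-type formula for entropy of an $a$-invariant measure with a $a$-invariant sub-$\sigma$-algebra. The subfoliation $\mathcal{W}^{\lambda,F}\subset \mathcal{W}^{\chi}$ yields
\begin{equation*}
h_{\mu}(a\mid \mathcal{E}\vee \mathcal{W}^{\chi}) \;=\; h_{\mu}(a\mid \mathcal{E}\vee \mathcal{W}^{\lambda,F}) \;+\; h_{\mu}(a\mid \mathcal{E}\vee \mathcal{W}^{\chi};\, \mathcal{W}^{\lambda,F}),
\end{equation*}
where the second term is the entropy of $a$ along the $U^{\beta}$-direction modulo the fiberwise foliation, conditioned on $\mathcal{E}$. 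By the product structure established in Step~1 and the fact that the $s$-factor is Lebesgue, this second term is computed by the log-Jacobian of the linear map $s\mapsto e^{\beta(a)}s$ against Lebesgue measure on any bounded interval, which by the usual Rokhlin computation for an affine expansion of Lebesgue equals $\beta(a)$ (recall $\chi(a)>0$, so $\beta(a)>0$ since $\beta$ is positively proportional to $\lambda$ and $\chi(a)>0$).

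The main obstacle is rigorously justifying the product structure of $\mu^{\chi}_{x}$ on $\mathcal{E}$-atoms. The affine structure from \cref{holonomy is affine} only guarantees that the $U^{\beta}$-holonomy acts affinely between fibers; one still has to rule out that $\mu^{\chi}_{x}$ is supported on a nontrivial graph of a measurable function $t\mapsto s(t)$ rather than a genuine product. Here one exploits the $A'$-invariance: $A'$ acts trivially on the $s$-coordinate (as $\beta|_{A'}=0$) but nontrivially (and ergodically on $\mathcal{E}$-atoms by construction) on the $t$-coordinate; invariance of such a graph measure under a nontrivial $t$-dilation together with $s$-translation invariance coming from the $U^{\beta}$-holonomy collapses to the product case. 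Once this is handled, the rest of the argument is a direct application of the conditional entropy decomposition, exactly as in \cite[Lem.~5.9]{BRHWbdy} and \cite[Lem.~5.1]{ABZ}.
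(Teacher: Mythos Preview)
There is a genuine gap. The crux of the formula is that the transverse (``$s$-direction'') contribution equals $\beta(a)$, and for this one needs the \emph{quotient} measure in the $s$-direction---i.e.\ the push-forward of $\mu^{\chi,\Ecal}_{x}$ to the $U^{\beta}$-orbit $U^{\beta}\cdot p(x)\subset G/\Gamma$ via $p$---to be Lebesgue. You instead try to prove the much stronger claim that $\mu^{\chi,\Ecal}_{x}$ has a genuine product structure with Lebesgue $s$-\emph{slices} (equivalently, that the $U^{\beta}$-leafwise measure of $\mu^{\Ecal}_{x}$ on $M^{\alpha}$ is Haar). Your justification (``$A'$-invariance combined with affine $U^{\beta}$-holonomy forces $s$-translation invariance'') does not work: $A'=\ker\beta$ acts trivially on the $s$-coordinate in the normal-form chart, so $A'$-invariance imposes no constraint whatsoever on the $s$-conditionals; and there is no reason $\mu^{\Ecal}_{x}$ should be $U^{\beta}$-invariant, so $U^{\beta}$-holonomy does not yield $s$-translation invariance of $\mu^{\chi,\Ecal}_{x}$. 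Worse, in the intended application (the proof of \cref{nonat}) one assumes for contradiction that $\mu^{U^{\beta}}_{x}$ is atomic on $M^{\alpha}$; since $\mu^{\chi,\Ecal}_{x}$ is locally a restriction of $\mu^{\chi}_{x}$, its $s$-slices are then atomic as well, so your claimed product structure is outright false in precisely the case where the lemma is used.

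The argument in the cited references uses the projection $p$ rather than slicing. Since $A'=\ker\beta$ is noncompact, Moore's ergodicity theorem gives that $A'$ acts ergodically on $(G/\Gamma,\,\text{Haar})$; hence every $A'$-ergodic component $\mu^{\Ecal}_{x}$ of $\mu$ also projects to Haar on $G/\Gamma$. Within each $W^{\chi}$-leaf the subfoliation by $\Wcal^{\lambda,F}$ coincides with the fibers of $p$, so the quotient $W^{\chi}(x)/\Wcal^{\lambda,F}$ is canonically identified with the $U^{\beta}$-orbit of $p(x)$ in $G/\Gamma$, on which the relevant measure---the $s$-\emph{marginal}, not the $s$-slice---is therefore Lebesgue. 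An Abramov--Rokhlin decomposition for the nested laminations $\Wcal^{\lambda,F}\subset\Wcal^{\chi}$ (conditioned on the $a$-invariant $\sigma$-algebra $\Ecal$) then identifies the entropy difference with the entropy of $a$ along $U^{\beta}$-orbits in $(G/\Gamma,\text{Haar})$, which is $\beta(a)$. The missing idea in your attempt is precisely this distinction between the $s$-marginal (controlled by $p_{*}\mu^{\Ecal}_{x}=\text{Haar}$, which holds unconditionally) and the $s$-conditionals (the $U^{\beta}$-leafwise measure on $M^{\alpha}$, which may well be atomic).
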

 
 For $\mu$-almost every $x$,  let $\mu_{x}^{\chi,\Ecal}$ be a family of  of leafwise measures along $\Wcal^{\chi}$ for $\mu^{\Ecal}_{x}$. Since $h_{\mu}\left(a\mid \Ecal\vee \Wcal^{\chi}\right)>0$ for $a$ with $\chi(a)>0$, we have that that $\mu_{x}^{\chi,\Ecal}$ is non-atomic for $\mu$-almost every $x$.
 
Fix an $A'$-ergodic component $\mu'$ of $\mu$.  We can find $b\in \ker\beta$ such that $\mu'$ is $\widetilde{\alpha}(b)$-ergodic by \cite{MR283174}. We fix such $b\in\ker\beta$. Under the above notation and settings, we have the following lemma and the corollary. The proof of \Cref{bddlusin,bdd} can be found in  \cite[Lem.\ 5.7]{ABZ} (see also  \cite{BRHWbdy}).
\begin{lemma}\label[lemma]{bddlusin} 
If $\mu^{U^{\beta}}_{x}$ is atomic for $\mu$-almost every $x$ then, for every $\delta>0$, there exists $C_{\delta}>1$ and a subset $K\subset M^{\alpha}$ with $\mu'(K)>1-\delta$ such that for every $x\in K$ and every $n\in \Zbb$ with $\widetilde{\alpha}(b^{n})(x)\in K$, 
\[\frac{1}{C_{\delta}}\le \|D\widetilde{\alpha}(b)^{n}|_{E^{\lambda, F}}\|\le C_{\delta}.\]
\end{lemma}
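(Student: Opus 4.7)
The plan is to exploit two structural facts. First, since $b\in\ker\beta$, the element $\Ad(b)$ acts as the identity on $\lieg^{\beta}$; consequently, in the coordinates $\Psi^{\chi}_{x}\colon\R^{2}\to W^{\chi}(x)$, conjugation by $\widetilde{\alpha}(b)$ takes the form
\[
\bigl(\Psi^{\chi}_{\widetilde{\alpha}(b)(x)}\bigr)^{-1}\circ \widetilde{\alpha}(b)\circ\Psi^{\chi}_{x}\colon (s,t)\mapsto \bigl(s,\,\kappa(x,b)\cdot t\bigr),
\]
where $\kappa(x,b)$ denotes the signed derivative $\restrict{D_{x}\widetilde{\alpha}(b)}{E^{\lambda,F}(x)}$ read off in the chosen framings. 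Thus in these normal-form coordinates the horizontal ($U^{\beta}$) direction is preserved isometrically, while the vertical ($E^{\lambda,F}$) direction is rescaled by $\kappa(x,b)$. Second, by the atomicity assumption, $\mu^{U^{\beta}}_{x}$ is purely atomic for $\mu$-a.e.\ $x$, and by the product structure of $\mu^{\chi}_{x}$ along $W^{\chi}$, the atom-mass function $m(x):=\mu^{U^{\beta}}_{x}(\{x\})$ is well-defined (up to the global normalization of $\mu^{\chi}$) and is a measurable, strictly positive function on $M^{\alpha}$.

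Next I would analyze how $m$ transforms under $\widetilde{\alpha}(b)$. From the equivariance $\widetilde{\alpha}(b)_{*}\mu^{\chi}_{x}\propto \mu^{\chi}_{\widetilde{\alpha}(b)(x)}$, the normalization of $\mu^{\chi}$ using the image of the unit ball under $\Psi^{\chi}$, and the fact that under $\widetilde{\alpha}(b)$ the unit square $[-1,1]^{2}$ in the domain of $\Psi^{\chi}_{x}$ maps to a rectangle whose $s$-side has unchanged length $2$ and whose $t$-side has length $2|\kappa(x,b)|$, one obtains an identity of the form
\[
m\bigl(\widetilde{\alpha}(b)(x)\bigr) \;=\; \varphi\bigl(\kappa(x,b)\bigr)\cdot m(x),
\]
for an explicit positive function $\varphi$ that is bounded and bounded away from $0$ on bounded intervals but which blows up (or vanishes) as $|\kappa|\to\infty$ (or $0$). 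Iterating gives a multiplicative cocycle expression for $m(\widetilde{\alpha}(b^{n})(x))/m(x)$ in terms of $\kappa(x,b^{n})$.

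Given $\delta>0$, I would then apply Luzin's theorem to the strictly positive measurable function $m$ (together with the framing and the parametrizations $\Psi^{\chi}$, $\Psi^{\lambda,F}$) to produce a compact set $K\subset M^{\alpha}$ with $\mu'(K)>1-\delta$ on which $m$ is bounded: there exists $c_{\delta}>1$ with $c_{\delta}^{-1}\le m(y)\le c_{\delta}$ for every $y\in K$. Then for every $x\in K$ and $n\in\Z$ with $\widetilde{\alpha}(b^{n})(x)\in K$, the identity above forces $\varphi(\kappa(x,b^{n}))$ (and hence $|\kappa(x,b^{n})|$) to lie in a range $[C_{\delta}^{-1},C_{\delta}]$ depending only on $c_{\delta}$, which is the desired conclusion.

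The main obstacle is ensuring that the atom mass $m(x)$ is well-defined, measurable, and transforms by the clean formula above; this requires a careful bookkeeping of the two normalizations (of $\mu^{\chi}_{x}$ and $\mu^{U^{\beta}}_{x}$) together with the product-structure theorem relating them along $W^{\chi}$, and the use of $A'$-ergodicity of $\mu'$ under $\widetilde{\alpha}(b)$ to rule out that the atom-mass function is essentially zero or infinite on a non-trivial invariant set. Once this bookkeeping is done, the argument reduces to a Luzin-regularity step as sketched.
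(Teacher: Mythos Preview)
Your proposed coboundary function $m(x)=\mu^{U^\beta}_x(\{x\})$ does not detect $\kappa(x,b)$. Since $b\in\ker\beta$, the element $\widetilde\alpha(b)$ commutes with the $U^\beta$-action and sends the unit interval in $U^\beta$ to the unit interval; hence the normalization of $\mu^{U^\beta}$ is preserved and $m(\widetilde\alpha(b)(x))=m(x)$. In other words, the function $\varphi$ in your formula is identically $1$. (In fact, once $\mu^{U^\beta}_x$ is atomic for a.e.\ $x$ it is a.e.\ the Dirac mass at $x$, so $m\equiv 1$.) Your attempt to rescue this by invoking the normalization of $\mu^\chi_x$ on the unit square does not help: the link between the $2$-dimensional measure $\mu^\chi_x$ and the atom mass on a single horizontal line goes through a disintegration, and the conditional on each horizontal line is defined only up to scale, so no $\kappa$-dependence enters there either.

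The argument in \cite[Lem.~5.7]{ABZ} (and \cite{BRHWbdy}) proceeds differently: one works directly with the $2$-dimensional leafwise measure $\nu_x:=((\Psi^\chi_x)^{-1})_*\mu^\chi_x$ on $\R^2$, normalized on the unit square. The atomicity of $\mu^{U^\beta}$, together with the non-atomicity of $\mu^{\lambda,F}$, forces $\nu_x$ to be supported on the vertical axis $\{0\}\times\R$ (the image of $W^{\lambda,F}(x)$). One then takes a Luzin set on which the family $x\mapsto\nu_x$ is uniformly continuous (as measures), and uses that under $\widetilde\alpha(b)$ the unit square is sent to $[-1,1]\times[-|\kappa|,|\kappa|]$: if $|\kappa(x,b^n)|$ were unbounded along returns, the renormalized measure $\nu_{\widetilde\alpha(b^n)(x)}$ would concentrate all its mass in an arbitrarily thin horizontal strip, contradicting uniform non-atomicity of the vertical marginal on the Luzin set. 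The function that actually transforms by $\kappa$ is thus a geometric feature of the full $2$-dimensional measure (mass of thin strips, or equivalently the vertical marginal), not the $U^\beta$-atom mass.
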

\begin{corollary}\label[corollary]{bdd}
If $\mu^{U^{\beta}}_{x}$ is atomic for $\mu$-almost every $x$ then, for $\mu'$ almost every $x$ and every $\delta>0$, there is $C_{x,\delta}\ge 1$ such that 
\[ \liminf_{N\to \infty}\frac{1}{N}\# \left\{ 0\le n\le N : \frac{1}{C_{x,\delta}}\le \|D\widetilde{\alpha}(b)^{n}|_{E^{\lambda, F}}\|\le C_{x,\delta}\right\} \ge 1-\delta. \] 
\end{corollary}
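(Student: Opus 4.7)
\textbf{Proof proposal for \cref{bdd}.} The plan is to combine \cref{bddlusin} with the pointwise ergodic theorem for $\widetilde{\alpha}(b)$ on $(M^{\alpha},\mu')$, which is ergodic by the choice of $b\in\ker\beta$. Fix $\delta>0$ and apply \cref{bddlusin} with parameter $\delta/2$ to obtain a set $K=K_{\delta/2}\subset M^{\alpha}$ with $\mu'(K)\ge 1-\delta/2$ and a constant $C=C_{\delta/2}\ge 1$ such that, for every $x\in K$ and every $n\in\Zbb$ with $\widetilde{\alpha}(b^{n})(x)\in K$, one has $C^{-1}\le \|D\widetilde{\alpha}(b)^{n}|_{E^{\lambda,F}(x)}\|\le C$.

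By Birkhoff's ergodic theorem applied to the indicator $\one_{K}$, for $\mu'$-a.e.\ $x\in M^{\alpha}$ the set $T_{x}:=\{n\ge 0:\widetilde{\alpha}(b^{n})(x)\in K\}$ has lower density at least $\mu'(K)\ge 1-\delta/2\ge 1-\delta$. If moreover $x\in K$, then \cref{bddlusin} directly produces the desired bound with $C_{x,\delta}=C$ for every $n\in T_{x}$, establishing the claim for such $x$.

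It remains to handle the (measure-zero difference worth of) points $x$ with $x\notin K$ but whose orbit still visits $K$ with positive density. For such an $x$ (chosen from the full-measure set on which Birkhoff applies), define $n_{0}(x):=\min T_{x}<\infty$ and set $y:=\widetilde{\alpha}(b^{n_{0}(x)})(x)\in K$. For any $n\in T_{x}$ with $n\ge n_{0}(x)$, we have $\widetilde{\alpha}(b^{n-n_{0}(x)})(y)\in K$, so \cref{bddlusin} applied at $y$ gives
\[C^{-1}\le \bigl\|D\widetilde{\alpha}(b)^{n-n_{0}(x)}|_{E^{\lambda,F}(y)}\bigr\|\le C.\]
Using the cocycle identity $D\widetilde{\alpha}(b)^{n}|_{E^{\lambda,F}(x)}=D\widetilde{\alpha}(b)^{n-n_{0}(x)}|_{E^{\lambda,F}(y)}\circ D\widetilde{\alpha}(b)^{n_{0}(x)}|_{E^{\lambda,F}(x)}$, and setting
\[C_{x,\delta}:=C\cdot \max_{0\le k\le n_{0}(x)}\max\bigl\{\|D\widetilde{\alpha}(b)^{k}|_{E^{\lambda,F}(x)}\|,\,\|D\widetilde{\alpha}(b)^{-k}|_{E^{\lambda,F}(x)}\|\bigr\},\]
we obtain $C_{x,\delta}^{-1}\le \|D\widetilde{\alpha}(b)^{n}|_{E^{\lambda,F}(x)}\|\le C_{x,\delta}$ for every $n\in T_{x}$ with $n\ge n_{0}(x)$. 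Since removing the finitely many indices $n<n_{0}(x)$ does not affect the lower density, the conclusion of the corollary follows.

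The proof is essentially routine once \cref{bddlusin} is in hand, so there is no serious obstacle; the only minor care needed is the cocycle shift by $n_{0}(x)$ to reduce a.e.\ $x$ to the case $x\in K$, ensuring that the constant $C_{x,\delta}$ depends measurably on $x$ (which it does, as $n_{0}(x)$ is a first-return time).
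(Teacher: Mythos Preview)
Your proof is correct and follows the standard route the paper implicitly invokes (the paper does not prove \cref{bdd} itself but refers to \cite[Lem.\ 5.7]{ABZ} and \cite{BRHWbdy}, where the argument is precisely \cref{bddlusin} combined with Birkhoff's ergodic theorem for the ergodic transformation $\widetilde{\alpha}(b)$ on $(M^{\alpha},\mu')$). One small wording correction: the set of points with $x\notin K$ that you treat in the second paragraph is not of measure zero but of measure at most $\delta/2$; however, your cocycle-shift argument correctly handles every $x$ in the full-measure Birkhoff set, so the conclusion for $\mu'$-a.e.\ $x$ stands, and intersecting over a countable sequence $\delta\to 0$ gives the stated quantifier order.
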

 Using \cref{bdd} to control distortion, a standard  measure rigidity argument such as those in \cite{KK07} imply the following.  See \cite[\S5.4]{ABZ} for a detailed argument.  

\begin{lemma}\label[lemma]{keylemmsr}
Suppose that $\mu^{U^{\beta}}_{x}$ is atomic for  $\mu$-almost every $x\in M^\alpha$.  
 Then for $\mu$-almost every $x$, 
there is a closed subgroup $H_x\subset H$ such that
\begin{enumerate}
\item the measure $((\Psi_x^{\chi})^{-1})_{*}\mu^{\chi,\Ecal}_{x}$ is supported on the orbit $H_x\cdot (0,0)$, and 
\item for every $h\in H_x$, $$h_*((\Psi_x^{\chi})^{-1})_{*}\mu^{\chi,\Ecal}_{x}\propto ((\Psi_x^{\chi})^{-1})_{*}\mu^{\chi,\Ecal}_{x}.$$
\end{enumerate}

\end{lemma}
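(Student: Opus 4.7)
The plan is to implement a Katok--Kalinin-type measure rigidity argument on the two-dimensional coarse Lyapunov lamination $\Wcal^\chi$, exploiting the affine structure of leafwise holonomies furnished by \cref{holonomy is affine}, the normal form parametrizations $\Psi^\chi_x$, and the distortion control from \cref{bddlusin,bdd} supplied by the atomicity hypothesis. First, I would push the conditional leafwise measure through the normal forms, setting $\nu_x := ((\Psi_x^\chi)^{-1})_{*} \mu_x^{\chi,\Ecal}$, a Radon measure on $\Rbb^2$, and \emph{define}
\[ H_x := \{ h\in H : h_{*}\nu_x \propto \nu_x \}. \]
Because $H$ is a locally compact Lie subgroup of $\Diff^1(\Rbb^2)$ and the relation of proportionality is closed on Radon measures, $H_x$ is automatically a closed subgroup of $H$, so conclusion (2) holds tautologically, and the inclusion $H_x\cdot(0,0)\subseteq \supp\nu_x$ follows from the quasi-invariance.

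Second, I would establish the reverse inclusion $\supp\nu_x \subseteq H_x\cdot(0,0)$ using the standard compatibility of leafwise measures at nearby basepoints. For $\mu$-a.e.\ $x$ and $\nu_x$-a.e.\ point $(s_0,t_0)=(\Psi_x^\chi)^{-1}(y)$, the leafwise measures $\mu_x^{\chi,\Ecal}$ and $\mu_y^{\chi,\Ecal}$ coincide on $W^\chi(x)=W^\chi(y)$ up to a positive scalar. By \cref{holonomy is affine}, the coordinate change $\psi_{x,y}:=(\Psi_y^\chi)^{-1}\circ \Psi_x^\chi$ lies in $H$ and satisfies $\psi_{x,y}(0,0)=(s_0,t_0)$ and $(\psi_{x,y})_{*}\nu_x \propto \nu_y \propto \nu_x$. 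Hence $\psi_{x,y}\in H_x$ and $(s_0,t_0)\in H_x\cdot(0,0)$, giving the inclusion $\supp\nu_x\subseteq H_x\cdot (0,0)$ up to a null set; the full support statement then follows from the closedness of $H_x\cdot(0,0)$ in $\Rbb^2$.

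The hard part is in promoting ``almost every'' support points to \emph{every} support point and, crucially, in verifying that sequences of approximating holonomies $\psi_{x,y_k}$ do not escape $H$: this is precisely where the atomicity hypothesis enters. Taking $b\in \ker\beta$ with $\lambda(b)\neq 0$ and $\mu'$ $\widetilde\alpha(b)$-ergodic, the map $\widetilde\alpha(b)$ acts in $\Psi^\chi_x$-coordinates by a \emph{pure vertical scaling} $(s,t)\mapsto(s,c_n(x)\,t)$, with $c_n(x)=\|D\widetilde\alpha(b^n)|_{E^{\lambda,F}(x)}\|$, since $\Ad(b)|_{\lieg^\beta}=\id$. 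Poincar\'e recurrence on the Lusin set $K$ from \cref{bddlusin} yields, for $\mu'$-a.e.\ $x$, a sequence $n_k\to\infty$ with $\widetilde\alpha(b^{n_k})(x)\in K$ and $c_{n_k}(x)$ uniformly bounded both above and below. This uniform control allows limits of holonomies $\psi_{x,y_k}$, for sequences $y_k\to y$ of support points lifted to good returns, to be taken inside $H$ itself, producing genuine elements of $H_x$ realizing $y$ in $H_x\cdot(0,0)$. Without the atomicity hypothesis the vertical scalings can degenerate, the limiting affine maps escape $H$, and $H_x$ fails to capture $\supp\nu_x$; so the atomicity hypothesis is not merely technical but is exactly what makes the closure step valid. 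The remaining bookkeeping follows the template of \cite[\S5.4]{ABZ}, completing the proof.
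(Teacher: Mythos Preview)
Your outline names the right ingredients and tracks the paper's strategy (which simply defers to \cite[\S5.4]{ABZ} and \cite{KK07}), but the second paragraph contains the essential gap. From compatibility of leafwise measures you correctly deduce $(\psi_{x,y})_* \nu_x \propto \nu_y$; the subsequent claim $\nu_y \propto \nu_x$, however, does not follow. The measures $\nu_x$ and $\nu_y$ are pushforwards of the same leafwise measure through \emph{different} charts $\Psi_x^\chi$ and $\Psi_y^\chi$, and $(\Psi_y^\chi)^{-1}\circ\Psi_x^\chi \neq \id$, so there is no a priori reason for $\nu_x$ and $\nu_y$ to be proportional as measures on $\Rbb^2$. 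This is not a closure technicality to be cleaned up later: it is the heart of the lemma, and it is exactly where the ergodicity of $b$ together with the bounded distortion from \cref{bdd} must enter, even for $\nu_x$-almost-every $(s_0,t_0)$.

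The actual mechanism is this. Since $\beta$ and $\lambda$ are positively proportional, any $b\in A'=\ker\beta$ has $\lambda(b)=0$ (so your condition ``$\lambda(b)\neq 0$'' is impossible); in $\Psi^\chi$-coordinates $\widetilde\alpha(b^n)$ acts by the vertical linear map $L_n(x):(s,t)\mapsto(s,c_n(x)t)$, whence $\nu_{\widetilde\alpha(b^n)(x)} \propto (L_n(x))_*\nu_x$. Fix a Lusin set $K$ on which $z\mapsto\nu_z$ is continuous. For $\mu'$-a.e.\ $x$ and $\mu'$-a.e.\ $y$ (hence $\nu_x$-a.e.\ $(s_0,t_0)$), ergodicity of $b$ and \cref{bdd} together produce $n_k\to\infty$ with $\widetilde\alpha(b^{n_k})(x)\in K$, $\widetilde\alpha(b^{n_k})(x)\to y$, and $c_{n_k}(x)$ bounded above and below; passing to a subsequence gives $L_{n_k}(x)\to L\in H$ and hence $\nu_y\propto L_*\nu_x$. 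Only now can one conclude: combining with $(\psi_{y,x})_*\nu_y\propto\nu_x$ yields $\psi_{y,x}\circ L\in H_x$, and since $L$ is linear, $(\psi_{y,x}\circ L)(0,0)=(\Psi_x^\chi)^{-1}(y)=(s_0,t_0)$. Your third paragraph misdiagnoses the difficulty: the holonomies already lie in the closed group $H$ by \cref{holonomy is affine}, so there is no ``escape from $H$'' issue; what atomicity actually buys, via \cref{bdd}, is precompactness of the scalings $L_{n_k}(x)$, without which no element of $H_x$ carrying $(0,0)$ to $(s_0,t_0)$ can be produced at all.
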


\subsubsection{Completion of the proof of \Cref{nonat}}

We finish the proof of \Cref{nonat} using \cref{keylemmsr}.  

Suppose for $\mu$-almost every $x\in M^\alpha$ that $\mu^{U^{\beta}}_{x}$ is atomic and that $\mu^{\chi}_{x}$ is non-atomic. 
Let $H_x$ be as in \cref{keylemmsr} and let $H_x^\circ$ denote the identity component of $H_x$.  Note that $H_x$ contains at most countably many components.  
Then, the restriction of the measure $((\Psi_x^{\chi})^{-1})_{*}\mu^{\chi,\Ecal}_{x}$ to the orbit $H_x^\circ \cdot (0,0)$ is in the Lebesgue class on the orbit $H_x^\circ \cdot (0,0)$.
By the entropy considerations in \cref{entropyonerg}, the orbit $H_x^\circ \cdot (0,0)$ can not be supported on the vertical axes $\{0\}\times \R$ and, in particular, the orbit $H_x^\circ \cdot (0,0)$ can not be 0-dimensional.  Similarly, by the assumption that the leafwise measure $\mu^{U^{\beta}}_{x}$ is atomic, the orbit $H_x^\circ \cdot (0,0)$ can not be 2-dimensional.  Thus the orbit  $H_x^\circ \cdot (0,0)$ is 1-dimensional.  Moreover, the image of $H_x^\circ$ under the projection to the horizontal axis is the group of all translations; by classifying all subgroups of $H$ with the above properties, one can show the orbit  $H_x^\circ \cdot (0,0)$ is closed.  

In particular, 
\begin{enumerate}
\item $H_x^\circ \cdot (0,0)$ is an embedded $C^\infty$ curve that intersects each horizontal line and each vertical line in at most one point;
\item the orbit $H_x^\circ \cdot (0,0)$ has non-zero  $((\Psi_x^{\chi})^{-1})_{*}\mu^{\chi,\Ecal}_{x}$-measure; moreover the restriction of $((\Psi_x^{\chi})^{-1})_{*}\mu^{\chi,\Ecal}_{x}$ to this orbit is in the Lebesgue class on this orbit.  
\end{enumerate}
Using that the coordinate changes $(\Psi_{x}^{\lambda, F})^{-1}\circ \Psi_{x'}^{\lambda, F}$ are affine and send horizontal lines to horizontal lines and vertical lines to vertical lines, for $\mu^{\chi}_{x}$-a.e.\ $x'\in W^{\chi^F}(x)$ 
the measure 
$$((\Psi_x^{\chi})^{-1})_{*}\mu^{\chi,\Ecal}_{x'}$$
is in the Lebesgue class on countably many embedded $C^\infty$ curves, each of which intersects each horizontal line and each vertical line in at most one point.

Let $\xi^\chi$ be a measurable partition subordinate to $W^\chi$-manifolds.   Then for a.e.\ $x$, the conditional measures $\mu^{\xi^\chi}_x$ and $\mu^{\xi^\chi\vee \calE}_x$ are given by $$\mu^{\xi^\chi}_x =\frac {1 }{\mu^{\chi}_{x}(\xi^\chi(x))} \restrict{\mu^{\chi}_{x}}{\xi^\chi(x)}$$ and 
$$\mu^{\xi^\chi\vee\calE}_x =\frac {1 }{\mu^{\chi,\calE}_{x}(\xi^\chi(x))} \restrict{\mu^{\chi,\calE}_{x}}{\xi^\chi(x)},$$
respectively.  

For $x\in M^\alpha$ write $\nu_x^\chi:= ((\Psi_x^{\chi})^{-1})_{*}\mu^{\xi^\chi}_x.$
For $\mu$-a.e.\ $x$ there exists a  compact set $X_x\subset \R^2$ 
and, for every $y\in X_x$, an embedded $C^\infty $ curve $\gamma_{y}$ containing $y$ which intersects each horizontal line and every vertical line  in at most one point such that the following hold:
\begin{enumerate}
\item  $0<\nu_x^\chi(X_x)<\infty$ and $(0,0)$ is a density point of $\restrict{\nu_x^\chi}{X_x}$.
\item $X_x = \bigcup_{y'\in X_x } \gamma_{y'}$  and for $y',y''\in X_x$, either $\gamma_{y'}= \gamma_{y''}$ or $\gamma_{y'}\cap \gamma_{y''}=\emptyset$.
\item The map $y'\mapsto \gamma_{y'}$ is continuous from $X_x$ to the space of $C^1$-embedded curves.
\item  The partition $\{ \gamma_{y'}\}$ of $X_x$ is measurable and each conditional measure $m_{y'}$ relative to this partition is in the Lebesgue class on the curve $\gamma_{y'}$.
\end{enumerate}
Since the family of curves $y\mapsto \gamma_{y}$ varies continuously on $X_x$, there is $\epsilon_x>0$ such that for all  $y \in (\{0\}\times \R)\cap X_x$ sufficiently close to $(0,0)$, the curve $\gamma_{y}$ intersects the horizontal line $\R\times \{t\}$ for all $|t|<\epsilon_x$.

Recall we assume  $h_{\mu}(a\mid\Wcal^{\lambda, F})=h_{\mu}(a\mid \Wcal^{\chi} \vee \Fsc) >0$.  Thus $\mu^{\lambda, F}_{x}$ is non-atomic $\mu$ for a.e.\  $x$.  Since $(0,0)$ is a density point of $X_x$, this implies the measure $\restrict{\nu_x^\chi}{X_x \cap(\R\times (-\epsilon_x,\epsilon_x) }$ is not supported on an embedded curve for $\mu$-a.e.\ $x$.

On the other hand, since we assume $\mu^{U^{\beta}}_{x}$ is atomic for $\mu$-a.e.\ $x$, for a.e.\ $x$ there is a subset $G_x\subset \R^2$ with 
$${\nu_x^\chi}(\R^2\sm G_x)=0$$
and such that $G_x\cap (\R\times \{t\})$ has cardinality at most $1$ for every $t\in \R$.  
Let $$Y_x=G_x\cap X_x\cap( \R\times (-\epsilon_x,\epsilon_x)).$$  For $\nu_x^\chi$-a.e.\ $y'\in Y_x$,  $m_{y'} ( \R\times (-\epsilon_x,\epsilon_x)) \sm Y_x)) =0$.  Since $(0,0)$ is a density point of $X_x$, we may find $y',y''\in X_x$ such that $\gamma_{y'}\cap \gamma_{y''} =\emptyset$ and such that 
$$m_{y'} (( \R\times (-\epsilon_x,\epsilon_x))\sm Y_x) =0= m_{y''} (( \R\times (-\epsilon_x,\epsilon_x))\sm Y_x).$$  
Since the horizontal foliation is smooth, the horizontal holonomy from $(\gamma_{y'},m_{y'})$ to $(\gamma_{y''},m_{y''})$ is absolutely continuous.  In particular, for $m_{y'}$-a.e. $(s,t)\in Y_x \cap \gamma_{y'}$, we have $$ (\R\times \{t\}) \cap \gamma_{y''}\in Y_x$$ contradicting the assumptions on $G_x$.  

This contradiction finishes the proof of \Cref{nonat}.

 \subsection{Proof of \texorpdfstring{\Cref{thm:abs}}{Theorem 2.3}}\label{sec:abs}
 Starting from the ergodic $G$-invariant Borel probability measure $\mu$ on $M^\alpha$ guaranteed by \cref{thm:AtoG}, when $G$ is isogenous to either $\SL(n,\R)$ or $\Sp(n,\R)$, we show that the fiberwise conditional measures $\mu_x^\Fsc$ are absolutely continuous along a.e.\ fiber of $M^\alpha$.

 Recall that we denote the $G$-action on the suspension $M^{\alpha}$ by $\widetilde{\alpha}$.  Fix $V=\Rbb^{n(G)}$.
Applying Zimmer's cocycle superrigidity theorem, \Cref{thm:ZCSRG}, to the fiberwise derivative cocycle $D^{F}\wtd \alpha (\cdot)$ we deduce the following. 

\begin{corollary}\label[corollary]{thm:ZCSRtoabs}
With the assumptions in \Cref{thm:abs}, there exists a homomorphism $\pi\colon G\to \SL(V)$, a compact group $K<\GL(V)$, a compact group valued cocycle $\kappa\colon G\times M^{\alpha}\to K$, and a measurable framing  $\left\{\psi^F_{x}\colon T_{x}^{F}M^{\alpha}\to V \right\}$ defined for $\mu$-a.e.\ $x$ such that  \[\psi^F_{\widetilde{\alpha}(g)(x)}\circ D_{x}^{F}\widetilde{\alpha}(g) \circ\left(\psi^F_{x}\right)^{-1}=\pi(g)\kappa(g,x),\] for all $g\in G$ and for $\mu$-almost every $x$.  

Moreover, $K$ commutes with $\pi(G)$. 

\end{corollary}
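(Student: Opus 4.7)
The plan is to apply Zimmer's cocycle superrigidity theorem in the form already recorded as \Cref{thm:ZCSRG} essentially verbatim, with only a small additional observation needed to upgrade the target group from $\GL(V)$ to $\SL(V)$.

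First, I would verify the hypotheses of \Cref{thm:ZCSRG}. The measure $\mu$ on $M^{\alpha}$ is $G$-invariant and ergodic by assumption. Under our standing reductions (\cref{hypstand2}), $G$ is the connected component of the real points of an algebraically simply connected simple algebraic group of $\R$-rank at least two, and $\Gamma$ is a lattice in $G$. The fiber bundle $F\to M^{\alpha}$ has rank $v(G)+1 = n(G) = \dim V$ by the hypothesis that $\dim M = n(G)$ together with the identity $n(\lieg)=v(\lieg)+1$ that holds for $\lieg = \sl(n,\R)$ and $\lieg=\sp(2n,\R)$ (see \cref{sec:numerology}). These are precisely the inputs required by \Cref{thm:ZCSRG} applied to the fiberwise derivative cocycle $D^{F}\wtd\alpha$, and the theorem directly produces a representation $\pi\colon G\to \GL(V)$, a compact group $K<\GL(V)$ commuting with $\pi(G)$, a $K$-valued cocycle $\kappa\colon G\times M^{\alpha}\to K$, and a measurable framing $\{\psi^{F}_{x}\}$ satisfying the conjugacy identity $\psi^{F}_{\wtd\alpha(g)(x)}\circ D^{F}_{x}\wtd\alpha(g)\circ(\psi^{F}_{x})^{-1}=\pi(g)\kappa(g,x)$.

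The only remaining point is to promote $\pi$ from a representation into $\GL(V)$ to a representation into $\SL(V)$. For this I would invoke the fact that $G$ is a connected simple Lie group, hence is a perfect group: $G=[G,G]$. Consequently $\det\circ\pi\colon G\to \R^{\times}$ is a continuous homomorphism from a perfect group to an abelian group, and so it must be trivial. Equivalently, $\pi(G)=[\pi(G),\pi(G)]\subseteq \SL(V)$.

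I do not anticipate any real obstacle here; the content of the corollary is essentially a restatement of \Cref{thm:ZCSRG} in our notation, with the determinant-triviality step the only non-tautological ingredient. The only minor bookkeeping is to check that the framing $\{\psi^{F}_{x}\}$ furnished by \Cref{thm:ZCSRG} is indeed defined $\mu$-a.e.\ on $M^{\alpha}$ rather than only on a lift, and that $K$ can be chosen as a closed subgroup of $\GL(V)$ commuting with $\pi(G)$; both are part of the statement of \Cref{thm:ZCSRG} as we have recorded it.
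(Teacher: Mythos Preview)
Your proposal is correct and matches the paper's approach: the paper simply states this corollary as an immediate consequence of \Cref{thm:ZCSRG} applied to the fiberwise derivative cocycle, without further argument. Your added observation that $\pi$ lands in $\SL(V)$ because $G$ is perfect (being a connected simple Lie group) is the natural justification for the one detail the paper leaves implicit.
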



Recall that we write  $\Lcal^{\widetilde{\alpha}}(\mu)\subset \alie^{*}$ for  Lyapunov exponents for the action $\widetilde{\alpha}|_{A}$ on $M^{\alpha}$. Then, we have $\Lcal^{\widetilde{\alpha}}(\mu)=\Lcal^{\widetilde{\alpha}, F}(\mu)\cup \Lcal^{\widetilde{\alpha},B}(\mu)$ where $\Lcal^{\widetilde{\alpha},B}(\mu)$ is the set of Lyapunov exponents for the $A$-action on base $G/\Gamma$  and $\Lcal^{\widetilde{\alpha}, F}(\mu)$ is the set of fiberwise Lyapunov exponents.  Recall that the set $\Lcal^{\widetilde{\alpha},B}(\mu)$ coincides with the roots $\Phi(A,G)$ (and is thus independent of $\mu$).  A standard argument relates the representation $\pi$ in \Cref{thm:ZCSRtoabs}, to the fiberwise Lyapunov exponents.  
\begin{corollary}\label[corollary]{thm:lyapZCSR}
Each fiberwise Lyapunov functional $\lambda^{F}\in \Lcal^{\widetilde{\alpha}, F}(\mu)$ is a weight of the representation $\pi$ in \Cref{thm:ZCSRtoabs}.    
\end{corollary}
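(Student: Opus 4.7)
The plan is to compute the fiberwise Lyapunov exponents inside the measurable trivialization supplied by \Cref{thm:ZCSRtoabs} and match them directly with weights of $\pi$. After conjugation by the framing $\{\psi^F_x\}$, the fiberwise derivative cocycle takes the form $(g,x)\mapsto \pi(g)\kappa(g,x)$, where $\pi\colon G\to \SL(V)$ is a finite-dimensional linear representation and $\kappa$ is a cocycle into a compact subgroup $K\subset \GL(V)$ that commutes with $\pi(G)$. Since $A$ is $\Rbb$-split Cartan and $\pi$ is algebraic, the abelian family $\pi(A)\subset \SL(V)$ consists of simultaneously $\Rbb$-diagonalizable operators, and we obtain the weight-space decomposition
\[
V = \bigoplus_{\chi \in \Xi(\pi)} V_\chi, \qquad V_\chi = \bigl\{v \in V : \pi(\exp X)v = e^{\chi(X)} v \ \text{for all } X \in \liea\bigr\},
\]
with $\Xi(\pi) \subset \liea^*$ the finite set of weights of $\pi|_A$. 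The key structural observation is that $K$ commutes with $\pi(A)$ and therefore preserves each weight space $V_\chi$.

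Next I would compute the Lyapunov exponents of the model cocycle $\pi(\cdot)\kappa(\cdot,\cdot)$ along the $A$-action. For $a\in A$ and any $v\in V_\chi\setminus\{0\}$, we have $\pi(a^n)\kappa(a^n,x)v = e^{n\chi(\log a)}\kappa(a^n,x)v$, where the factor $\kappa(a^n,x)v$ stays in $V_\chi$ by $K$-invariance. Compactness of $K$ implies $\|\kappa(a^n,x)v\|$ is sandwiched between positive multiples of $\|v\|$, so that
\[
\lim_{n\to\infty} \tfrac{1}{n}\log \|\pi(a^n)\kappa(a^n,x)v\| = \chi(\log a).
\]
Decomposing an arbitrary $v\in V$ across the weight spaces and isolating the dominant weight then shows that the full Lyapunov spectrum of $\pi(\cdot)\kappa(\cdot,\cdot)|_A$ (with respect to any $A$-invariant ergodic measure) is precisely the set $\{\chi|_\liea : \chi \in \Xi(\pi)\}$.

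The final step is to transfer this conclusion back to $D^F\widetilde{\alpha}$. Since $\psi^F$ intertwines the two cocycles, it suffices to verify that the framing is tempered along $\mu$-a.e.\ $A$-orbit, i.e.\ that $\log^+ \|(\psi^F_x)^{\pm 1}\|$ grows subexponentially under $A$-iteration. This follows by combining the log-integrability of fiberwise $C^1$-norms over Siegel sets (\Cref{prop:integrability}) with the standard measurable-selection procedure in the proof of Zimmer's cocycle superrigidity. Granting temperedness, the fiberwise Lyapunov exponents of $D^F\widetilde{\alpha}$ coincide with those of the conjugated cocycle, and every $\lambda^F \in \Lcal^{\widetilde{\alpha}, F}(\mu)$ is therefore a weight of $\pi$. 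The only delicate point is the temperedness of $\psi^F$; the rest is a linear-algebra computation on $V$ exploiting the commutation relation $[K,\pi(A)]=\{e\}$.
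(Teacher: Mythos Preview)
Your approach is precisely the ``standard argument'' the paper invokes without giving details: compute the Lyapunov spectrum of the model cocycle $\pi(\cdot)\kappa(\cdot,\cdot)$ using that $K$ commutes with $\pi(A)$ and hence preserves each weight space $V_\chi$, then transfer to $D^F\widetilde\alpha$ via the measurable conjugacy $\psi^F$. The weight-space computation is correct.

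The only real problem is your justification of temperedness of $\psi^F$. \Cref{prop:integrability} controls the fiberwise $C^k$-norms of the dynamics---equivalently, the cocycle $D^F\widetilde\alpha$ itself---not the framing $\psi^F$; and nothing in the proof of Zimmer's cocycle superrigidity forces the transfer function it produces to be tempered. Fortunately you do not need temperedness. Both $D^F\widetilde\alpha|_A$ and $(a,x)\mapsto\pi(a)\kappa(a,x)$ are log-integrable cocycles over $(M^\alpha,\mu)$: the first by \Cref{prop:integrability} together with the fact that $p_*\mu$ is Haar (hence has exponentially small mass at $\infty$), the second because $\pi(a)$ is a fixed matrix and $K$ is compact. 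Oseledec therefore applies to each cocycle separately, so for $\mu$-a.e.\ $x$ and every nonzero vector the exponential growth rate exists as a genuine limit. By Lusin's theorem there is a set of positive $\mu$-measure on which $\|(\psi^F_\bullet)^{\pm 1}\|$ is bounded; by Poincar\'e recurrence a.e.\ $A$-orbit returns to this set along a sequence $n_j\to\infty$, and along those return times the two cocycles differ by uniformly bounded factors. Since both limits exist along the full sequence of iterates, they are determined by any subsequence and hence coincide. This is the standard device showing that measurably cohomologous log-integrable cocycles share the same Lyapunov spectrum, and it closes the gap without any appeal to temperedness of $\psi^F$.
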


We assumed that $h_{\mu}(\widetilde{\alpha}(a_{0})\mid\Fsc)>0$ for some $a_{0}\in A$. Thus, $\pi$ cannot be trivial due to Margulis--Ruelle inequality (\Cref{thm:MRineq}).   
By the definition of $n(G)$, the representation $\pi$ is, up to conjugation, either the defining representation or the dual of defining representation (or one of the three triality representations if $G=\SO(4,4)$). In particular, up to conjugacy, the compact subgroup of the centralizer $Z_{\GL(n(G),\Rbb)}(\pi(G))$ of $\pi(G)$ is either $\{\Id\}$ or $\{\pm \Id\}$. Thus, we may assume $K\subset \{\pm  \Id \}$.   

We summarize consequences of the above discussion in the following claim.  

\begin{claim}\label[claim]{claim:abs1}
Fix $G$ as in \Cref{thm:abs}.  
Fix a vector space $V=\Rbb^{n(\lieg)}$ with the standard inner product with a orthonormal basis and a non-trivial representation  
$\pi\colon G\to \SL(V)$ as in \Cref{thm:ZCSRtoabs}.  

\begin{enumerate}
\item  There are $n(\lieg)$ distinct fiberwise Lyapunov exponents $\lambda_i^F\in \Lcal^{\widetilde{\alpha}, F}(\mu)$ for the $A$-action on $(M^{\alpha}, \mu)$, each of which coincides with a weight of $\pi$.   In particular, no two distinct fiberwise Lyapunov functionals are positively proportional.  
\item\label{derivinframe} There exists a measurable framing  $\left\{\psi^F_{x}\colon T_{x}^{F}M^{\alpha}\to V \right\}$ defined for $\mu$-a.e.\ $x$ such that  \[\psi^F_{\widetilde{\alpha}(g)(x)}\circ D_{x}^{F}\widetilde{\alpha}(g) \circ\left(\psi^F_{x}\right)^{-1}=\pm\pi(g),\] for all $g\in G$ and for $\mu$-almost every $x$. 

\item If $V^{\lambda_j^F}$ denotes the weight space of $\pi$ then for $\mu$-a.e.\ $x$,  $\psi_x^F(V^{\lambda_j^F})= E^{\lambda_{i}^{F}, F}(x)$ is  corresponding fiberwise Lyapunov subspace.  In particular, all coarse fiberwise Lyapunov subspaces are   $1$-dimensional space.  
\item There is $a\in A$ such that $\lambda^F_j(a)\neq 0$ for every weight $\lambda^F_j$ of $\pi$.  

\end{enumerate}
\end{claim}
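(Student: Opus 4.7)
The plan is that this claim is essentially an unpacking of the cocycle superrigidity statement (\Cref{thm:ZCSRtoabs}) together with the explicit identification of $\pi$ mentioned in the paragraph immediately preceding. First I would record part (2) directly: the discussion before the claim established that $\pi\colon G\to \SL(V)$ is (up to conjugacy) either the defining representation or its dual (the triality alternative is excluded by the hypothesis that $\lieg=\sl(n,\Rbb)$ or $\sp(2n,\Rbb)$), and for such representations the compact subgroup of the centralizer $Z_{\GL(V)}(\pi(G))$ is $\{\pm\Id\}$. Conjugating the framing $\{\psi_x^F\}$ from \Cref{thm:ZCSRtoabs} by a fixed element of $\GL(V)$ if necessary, we may therefore assume $K\subset\{\pm\Id\}$, and (2) follows.

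Next I would make the weights of $\pi|_{A}$ explicit. For $\pi$ the defining representation (or its dual) of $\SL(n,\Rbb)$, the weights are $\{\pm e_1,\dots,\pm e_n\}$ (with a fixed sign) on a diagonal Cartan, all distinct, all nonzero, and each weight space is one-dimensional; for the defining representation of $\Sp(2n,\Rbb)$ the weights are $\{\pm e_1,\dots,\pm e_n\}$ with the same properties. In either case there are exactly $n(\lieg)=\dim V$ distinct nonzero weights $\{\lambda_j\}_{j=1}^{n(\lieg)}$ and a direct sum decomposition $V=\bigoplus_j V^{\lambda_j}$ into one-dimensional weight spaces.

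For (1) and (3), I would combine this decomposition with the framed cocycle formula from (2). Since $\psi^F_{\widetilde{\alpha}(a)(x)}\circ D_{x}^{F}\widetilde{\alpha}(a)\circ(\psi^F_{x})^{-1}=\pm\pi(a)$ for every $a\in A$, and $\pi(a)$ acts on $V^{\lambda_j}$ by the scalar $\pm e^{\lambda_j(a)}$, the pulled-back subbundles $\psi_x^F(V^{\lambda_j})$ are measurable, $\restrict{D^{F}\widetilde{\alpha}}{A}$-invariant line bundles along which the cocycle grows at rate $\lambda_j$. By uniqueness of the Oseledec decomposition (\Cref{thm:higherrankMET}), each $\lambda_j$ is a fiberwise Lyapunov exponent and $\psi_x^F(V^{\lambda_j})=E^{\lambda_j,F}(x)$. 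Conversely, every fiberwise Lyapunov exponent arises this way by \Cref{thm:lyapZCSR}, which gives the asserted bijection between weights and fiberwise Lyapunov functionals; since the weights are pairwise distinct, no two fiberwise Lyapunov functionals are positively proportional, each coarse fiberwise Lyapunov subspace coincides with a single weight space, and hence has dimension one.

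Finally, part (4) is purely a statement about the (nonzero) weights of a defining (or dual) representation of $\sl(n,\Rbb)$ or $\sp(2n,\Rbb)$ on $\liea$. Since each weight $\lambda_j$ is a nonzero linear functional, its kernel is a codimension-one hyperplane in $\liea$; the complement $\liea\sm\bigcup_{j}\ker\lambda_j$ is open and dense, so any element $a$ in its image under $\exp$ satisfies $\lambda_j^F(a)\ne 0$ for all $j$. There is no substantive obstacle in this argument; the only care required is in keeping the two sign conventions (defining rep versus dual) and the two Lie algebra cases parallel so that the claimed bijection between weights and fiberwise Lyapunov exponents is unambiguous.
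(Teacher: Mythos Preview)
Your proposal is correct and matches the paper's approach: the claim is explicitly presented in the paper as a summary of the preceding discussion (\Cref{thm:ZCSRtoabs}, \Cref{thm:lyapZCSR}, and the identification of $\pi$ and $K\subset\{\pm\Id\}$), so there is no separate proof to compare against, and your unpacking is exactly what is intended. One small point: the inference ``since the weights are pairwise distinct, no two fiberwise Lyapunov functionals are positively proportional'' is not valid as stated (distinct linear functionals can be positively proportional); you should instead observe directly from your explicit lists of weights---$e_1,\dots,e_n$ (or their negatives) for $\sl(n,\Rbb)$ and $\pm e_1,\dots,\pm e_n$ for $\sp(2n,\Rbb)$---that no two are positive scalar multiples of one another.
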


Fix a
 fiberwise Lyapunov exponents $\lambda^F\in \Lcal^{\widetilde{\alpha}, F}(\mu)$.
Let
$\Phi^{\lambda^F, F}_x
\colon   E^{\lambda^F, F}(x)\to W^{\lambda^F, F}(x)$ be the normal forms along leaves of the fiberwise Lyapunov manifolds $\Wcal^{\lambda^F, F}$ in \cref{normalitem}.  
Write $\Psi^{\lambda^F, F}_x
\colon   V^{\lambda^F}\to W^{\lambda^F, F}(x)$ for 
$$\Psi^{\lambda^F, F}_x(v) = \Phi^{\lambda^F, F}_x\circ \psi_x^F(v).$$
Relative to the coordinates $\Psi^{\lambda^F, F}_x$, for $b\in A$ and a.e.\ $x$ and $v\in V^{\lambda^F}$, we have 
\[(\Psi^{\lambda^F, F}_{\tilde {\alpha}(b) {x} })^{-1}\circ \tilde{\alpha}(b)({x})\circ  \Psi^{\lambda^F, F}_x(v) = \pm e^{\lambda^F(b)}v.\]
Let $\mu_{x}^{\lambda_{i}^{F}, F}$ denote the leafwise measure on $W^{\lambda_{i}^{F}, F}(x)$ normalized on the image of the unit ball relative to the coordinates  $\Psi^{\lambda^F, F}_x$.

We will finish the proof of \Cref{thm:abs} assuming the following, \Cref{claim:abs2}. The proof of \Cref{claim:abs2} will be presented in the next subsection, \Cref{sec:abs2}. 
\begin{proposition}\label[proposition]{claim:abs2} Under the assumption in \Cref{thm:abs}, for every fiberwise Lyapunov functional $\lambda_{i}^F$ we have $h_{\mu}(a\mid\Wcal^{\lambda_i^F})>0$ for some $a\in A$.
\end{proposition}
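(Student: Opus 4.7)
The plan is to combine positive fiber entropy with the $G$-invariance of $\mu$ and the Weyl-group symmetry of the weight system of $\pi$ to transfer positive entropy contribution from a single fiberwise Lyapunov foliation to all of them. First, by the product structure of fiber entropy (\Cref{thm:coent}) applied to the hypothesis $h_{\mu}(a_{0}\mid\Fsc)>0$, there exist an index $i_0$ and an element $a\in A$ such that
\[
h_{\mu}(a\mid \Wcal^{\lambda_{i_0}^{F}, F}) >0.
\]

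Next, I exploit the $G$-invariance of $\mu$ through the Weyl group $W = N_G(A)/A$. Fix any $w\in N_G(A)$; then $\wtd\alpha(w)$ is a measure-preserving diffeomorphism of $(M^\alpha,\mu)$ that conjugates $\wtd\alpha(a)$ to $\wtd\alpha(waw^{-1})$ with $waw^{-1}\in A$. By \Cref{claim:abs1}, the fiberwise derivative cocycle of $\wtd\alpha(w)$ equals $\pm\pi(w)$ in the measurable framing $\{\psi^F_x\}$, and $\pi(w)$ permutes the weight spaces of $\pi$ according to the Weyl-group action $[w]$ on weights. Since the framing identifies $E^{\lambda_j^{F},F}(x)$ with the weight space $V^{\lambda_j^{F}}$, the map $\wtd\alpha(w)$ sends the one-dimensional Lyapunov distribution $E^{\lambda_{i_0}^{F},F}$ to $E^{[w]\cdot\lambda_{i_0}^{F},F}$. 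By the dynamical characterization of fiberwise Lyapunov manifolds (as leaves of stable manifolds, or via the normal forms of \Cref{normalitem}), $\wtd\alpha(w)$ maps the entire lamination $\Wcal^{\lambda_{i_0}^{F},F}$ onto $\Wcal^{[w]\cdot\lambda_{i_0}^{F},F}$, and invariance of entropy under measure-preserving conjugacy yields
\[
h_{\mu}\bigl(waw^{-1}\bigm|\Wcal^{[w]\cdot\lambda_{i_0}^{F},F}\bigr)
= h_{\mu}\bigl(a\bigm|\Wcal^{\lambda_{i_0}^{F},F}\bigr) > 0.
\]

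Finally, I invoke Weyl-group transitivity on the weights of $\pi$. Since $\pi$ is, up to conjugacy, the defining representation of $G$ or its dual, for $\lieg=\sl(n,\R)$ the Weyl group $S_n$ acts transitively on $\{e_1,\dots,e_n\}$ (or on $\{-e_1,\dots,-e_n\}$), while for $\lieg=\sp(2n,\R)$ the hyperoctahedral Weyl group acts transitively on $\{\pm e_1,\dots,\pm e_n\}$. Because the weights of $\pi$ coincide with the fiberwise Lyapunov functionals $\{\lambda_j^{F}\}$ (\Cref{claim:abs1}), for every $\lambda_i^{F}$ there is $w\in N_G(A)$ with $[w]\cdot\lambda_{i_0}^{F}=\lambda_i^{F}$, and hence $h_{\mu}(waw^{-1}\mid \Wcal^{\lambda_i^{F},F})>0$ with $waw^{-1}\in A$, as required. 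The main technical point to verify is the covariance statement that $\wtd\alpha(w)$ carries $\Wcal^{\lambda_{i_0}^{F},F}$ to $\Wcal^{[w]\cdot\lambda_{i_0}^{F},F}$; this reduces to the identification $D^F_x\wtd\alpha(w)\,E^{\lambda_{i_0}^{F},F}(x) = E^{[w]\cdot\lambda_{i_0}^{F},F}(\wtd\alpha(w)x)$, which is immediate from the formula $\psi^F_{\wtd\alpha(w)x}\circ D^F_x\wtd\alpha(w)\circ(\psi^F_x)^{-1}=\pm\pi(w)$ in \Cref{claim:abs1} together with the standard intertwining $\pi(w) V^{\lambda}=V^{[w]\cdot\lambda}$.
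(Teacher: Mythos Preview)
Your argument is correct and takes a genuinely different, more economical route than the paper. The paper argues by contradiction: having secured (via \Cref{lem:twoent}) two fiberwise exponents $\lambda_1^F,\lambda_2^F$ with positive entropy contribution, it fixes a putative $\lambda_k^F$ with atomic leafwise measure, identifies the root $\beta=\lambda_k^F-\lambda_1^F$, and derives a contradiction from the $U^\beta$-equivariance of the lamination $\Wcal^{\lambda_1^F\oplus\lambda_k^F,F}$ together with the representation-theoretic fact that $\pi(u)$ moves $V^{\lambda_1^F}$ off itself. Your approach instead exploits the $G$-invariance of $\mu$ through $N_G(A)$: one positive-entropy foliation is transported to all others by the Weyl group, which acts transitively on the weights of the defining representation for both $\sl(n,\R)$ and $\sp(2n,\R)$.

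Two points deserve a line of justification in a polished write-up. First, the identity $\wtd\alpha(w)\,W^{s,F}_a(x)=W^{s,F}_{waw^{-1}}(\wtd\alpha(w)x)$ for $\mu$-a.e.\ $x$ requires the fiberwise Lipschitz constant of $\wtd\alpha(w)$ along the $a$-orbit of $x$ to grow subexponentially; this follows from \eqref{tempered} together with the fact that the height function $h(a^n\cdot p(x))$ is tempered for Haar-a.e.\ base point (a consequence of integrability and the pointwise ergodic theorem). Second, the entropy identity $h_\mu(waw^{-1}\mid\Wcal^{[w]\cdot\lambda_{i_0}^F,F})=h_\mu(a\mid\Wcal^{\lambda_{i_0}^F,F})$ is the standard invariance of conditional entropy under the measure-preserving conjugacy $\wtd\alpha(w)$, once the lamination covariance is in hand. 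What your approach buys is brevity and conceptual clarity; what the paper's root-group argument buys is a template that would survive in settings where the Weyl group does not act transitively on the relevant weights (compare \Cref{remk:why+1} for $\so(n,n)$ and $\so(n,n+1)$, where such transitivity fails).
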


 Fix a fiberwise Lyapunov exponent $\lambda_i^F$ and fix a non-identity $b\in A$ with $\lambda^F(b) = 0$.  Since $\mu$ is $G$-invariant, we may apply Moore's ergodicity theorem ( \cite[Thm.\  2.2.6]{Zimbook}) and conclude that $\wtd\alpha(b)\colon (M^\alpha, \mu)\to (M^\alpha, \mu)$ is ergodic. 
Exactly as in \cite{KK07},  using that $\wtd\alpha(b)$ is isometric relative to the coordinates $\Psi_{x}^{\lambda^F, F}$, \cref{claim:abs2}, and ergodicity of $\wtd \alpha(b)$ implies the following:

\begin{claim}\label[claim]{fibleb}
 $\left((\Psi_{x}^{\lambda^F_i, F})^{-1}\right)_{*}\mu_{x}^{\lambda_i^F, F}$ is equivalent to the Lebesgue measure on $V^{\lambda^F_i}$.
 \end{claim}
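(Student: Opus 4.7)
The plan is to adapt the normal-form measure rigidity argument of Kalinin--Katok (and its analogues in e.g.\ \cite{MR4502594,ABZ}) using an isometric ergodic element in the stabilizer of $\lambda_i^F$. Since $\pi\colon G\to \SL(V)$ is a nontrivial representation, by Moore's ergodicity theorem applied to the $G$-invariant measure $\mu$, there exists a nontrivial $b\in A\cap \ker \lambda_i^F$ such that $\wtd\alpha(b)$ is ergodic on $(M^\alpha,\mu)$. By \Cref{claim:abs1}\eqref{derivinframe} and the fact that $\lambda_i^F(b)=0$, the semisimple element $\pi(b)$ acts on the $1$-dimensional weight space $V^{\lambda_i^F}$ as $\pm 1$. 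Consequently, in the normal-form parametrization, $\wtd\alpha(b)$ acts on the 1-dimensional leaves of $\Wcal^{\lambda_i^F,F}$ by an isometry: for $\mu$-a.e.\ $x$,
\[
\bigl(\Psi^{\lambda_i^F,F}_{\wtd\alpha(b)(x)}\bigr)^{-1}\circ \wtd\alpha(b)\circ \Psi^{\lambda_i^F,F}_{x}(v)=\pm v.
\]

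The second key input is the affine structure on change-of-coordinate maps. Since $\dim V^{\lambda_i^F}=1$ and no other fiberwise Lyapunov functional is positively proportional to $\lambda_i^F$, the uniqueness in \Cref{normalitem} implies that for a.e.\ $x$ and $y\in W^{\lambda_i^F,F}(x)$ with $y=\Psi^{\lambda_i^F,F}_x(t_0)$, the transition map $(\Psi^{\lambda_i^F,F}_y)^{-1}\circ \Psi^{\lambda_i^F,F}_x$ is the translation $v\mapsto v-t_0$ (the multiplicative part is fixed because both normal forms conjugate the dynamics of a fixed $a\in A$ with $\lambda_i^F(a)\neq 0$ to the same linear map $e^{\lambda_i^F(a)}\cdot$). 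Setting $\nu_x:=\bigl((\Psi^{\lambda_i^F,F}_x)^{-1}\bigr)_*\mu_x^{\lambda_i^F,F}$ and normalizing (e.g., on the unit interval), this identifies $\nu_y$ with the translate $T_{-t_0}\nu_x$ up to a scalar, for $t_0$ in the support of $\nu_x$.

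By \Cref{claim:abs2}, $h_\mu(a\mid \Wcal^{\lambda_i^F})>0$ for some $a\in A$, so the leafwise measures $\mu_x^{\lambda_i^F,F}$ are non-atomic for $\mu$-a.e.\ $x$, and the support of $\nu_x$ accumulates at $0$. The decisive step is then to upgrade this ``self-similarity on the support'' to genuine translation invariance of $\nu_x$, and this is where we use ergodicity of the isometric element $b$. Applying Luzin-style compact set arguments together with Poincar\'e recurrence under $\wtd\alpha(b)$ (whose action in normal coordinates is an isometry, hence does not distort the family $x\mapsto \nu_x$), one finds a sequence $n_k\to\infty$ and recurrence points $y_k$ on the same $\Wcal^{\lambda_i^F,F}$-leaf as $x$ whose normal-form coordinates $t_k$ are arbitrarily small and nonzero. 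The relation $\nu_{y_k}=T_{-t_k}\nu_x$ (modulo scalars), combined with the isometric transport along $\wtd\alpha(b^{n_k})$, yields a dense set of approximate translation symmetries of $\nu_x$; a standard closedness argument then produces a one-parameter group of honest translations preserving $\nu_x$. A non-zero Radon measure on $\R$ invariant under translation by a dense subgroup is a scalar multiple of Lebesgue, establishing the claim.

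The main obstacle is the last paragraph: converting the ``self-similarity on the support'' (Step on affine transitions) into actual translation invariance of $\nu_x$. The delicate point is that the affine transition formula alone only asserts that $\nu_x$ and its translates agree as leafwise measures on the common leaf, which a priori constrains only the support, not the mass distribution; bridging this gap requires the ergodic isometric dynamics of $b$ and a careful exponential-drift/recurrence argument in the style of \cite{KK07}, handled here in the setting of fiberwise conditional measures with the a.e.-defined measurable frame from \Cref{thm:ZCSRtoabs}.
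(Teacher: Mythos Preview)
Your approach is essentially the same as the paper's: choose $b\in A\cap\ker\lambda_i^F$ nontrivial, use Moore's ergodicity (via $G$-invariance of $\mu$) to get ergodicity of $\wtd\alpha(b)$, observe that $\wtd\alpha(b)$ acts isometrically in the normal-form coordinates $\Psi_x^{\lambda_i^F,F}$, invoke \Cref{claim:abs2} for non-atomicity, and then run the Kalinin--Katok measure-rigidity argument (\cite{KK07}, see also \cite[Prop.~7.2--7.3]{BRHWnormal}).

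One inaccuracy to fix: the transition map $(\Psi^{\lambda_i^F,F}_y)^{-1}\circ \Psi^{\lambda_i^F,F}_x$ is \emph{affine}, not a pure translation. Your argument for triviality of the linear part is flawed: the normal-form property pins down $\Phi_x^{\lambda_i^F,F}$ on $E^{\lambda_i^F,F}(x)$, but $\Psi_x^{\lambda_i^F,F}=\Phi_x^{\lambda_i^F,F}\circ\psi_x$ involves the measurable framing $\psi_x$, and there is no reason the scalar $\psi_y^{-1}\circ H_{x,y}\circ\psi_x$ equals $1$ along the leaf. (The equivariance you cite relates $L_{x,y}$ to $L_{\wtd\alpha(a)x,\wtd\alpha(a)y}$, not $L_{x,y}$ to itself.) This does not break the argument: the standard KK07 scheme works with affine transitions, producing invariance of $\nu_x$ under a nontrivial closed subgroup of $\mathrm{Aff}(\R)$, and non-atomicity then forces the Lebesgue class. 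Your final paragraph also mischaracterizes the obstacle: the relation $\nu_y\propto L_{x,y\,*}\nu_x$ is a statement about measures, not merely supports; the actual issue is that $\nu_x$ and $\nu_y$ are a priori different measures, and ergodicity of the isometric $b$ is exactly what forces enough of them to agree so that the affine relation becomes an affine self-symmetry of $\nu_x$.
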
 
 (In fact, one can show that $ ((\Psi_{x}^{\lambda^F, F})^{-1} )_{*}\mu_{x}^{\lambda_i^F, F}$ coincides with the Lebesgue measure on $V^{\lambda^F_i}$, up to the choice of normalization, but this will not be used.) 
  Indeed, the proof of \Cref{fibleb} follows from the standard measure rigidity argument (See, for instance, in  \cite[Prop.\  7.2 and 7.3]{BRHWnormal}.  By \Cref{fibleb}, $\mu_{x}^{\lambda_i^F, F}$ is absolutely continuous with respect to the (leafwise) volume on $W^{\lambda_i^F, F}_{x}$ for every $i=1,\dots, n(\lieg)$. 

Since the fiberwise conditional measures $\mu_x^{\Fsc}$ are absolutely continuous along every fiberwise Lyapunov foliation $\Wcal^{\lambda_i^F, F}$ and since there is $a'\in A$ such that $\lambda_i^F(a')\neq 0$ for every fiberwise Lyapunov exponent $\lambda_i^F$, it follows the measure are fiberwise hyperbolic (i.e., no fiberwise Lyapunov exponent is zero).  Exactly as in  \cite[Thm.\  3.1]{KKRH11},  it follows the fiberwise conditional measures $\mu_x^{\Fsc}$ are absolutely continuous.   Indeed, as in \cite{KKRH11}, we can deduce that $\mu_x^{\Fsc}$ is absolutely continuous along leaves of stable and unstable laminations. As in \cite[Cor.\  H]{MR0819557}, we conclude $\mu_{x}^{\Fsc}$ is absolutely continuous.

 \subsection{Proof of \texorpdfstring{\Cref{claim:abs2}}{Proposition 6.16}}\label{sec:abs2}
 To finish the proof of  \Cref{thm:abs}, it remains to establish \Cref{claim:abs2}. We follow the same notation as in \Cref{sec:abs}. 
Recall that each  fiberwise Lyapunov functional $\lambda_{i}^{F}$ coincides with a weight of a nontrivial representiation $\pi\colon G\to \Sl(V)$ (the defining or its dual).  Thus each fiberwise Lyapunov subspace is 1-dimensional and no pair of fiberwise Lyapunov functionals are positively proportional.   In particular, each coarse fiberwise Lyapunov exponent consists of a single linear functional, $\chi_{i}^{F}=\left[\lambda_{i}^{F}\right]$.

Since we assumed that $h_{\mu}(a_0 \mid\Fsc)>0$ for some $a_0 \in A$, as in \Cref{lem:twoent}, there exists at least two $i\neq j$ such that \[h_{\mu}(a_{0}\mid\Wcal^{\chi_{i}^{F}, F})>0 \textrm{ and } h_{\mu}(a_{0}^{-1}\mid\Wcal^{\chi_{j}^{F}, F})>0\] with $\chi_{i}(a_{0})>0$ and $\chi_{j}(a_{0})<0$. 

Up to reindexing, let $\chi_{1}^{F}=\left[\lambda_{1}^{F}\right]$  and  $\chi_{2}^{F}=\left[\lambda_{2}^{F}\right]$ satisfy $h(a_{0}\mid\Wcal^{\chi_{1}^{F}, F})>0$ and  $h(a_{0}^{-1}\mid\Wcal^{\chi_{2}^{F}, F})>0$. 
Then, by \Cref{fibleb}, we can deduce the following.
 \begin{claim}\label[claim]{k1abs} For $\mu$-almost every $x$, the leafwise measures  $\mu_{x}^{\lambda_{1}^{F}, F}$ and  $\mu_{x}^{\lambda_{2}^{F}, F}$ along the fiberwise Lyapunov foliations  $\Wcal^{\lambda_1^F, F}$ and  $\Wcal^{\lambda_2^F, F}$, respectively, are non-atomic and in the Lebesgue class.\end{claim}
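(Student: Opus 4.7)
The plan is to verify the two assertions separately for each $i\in\{1,2\}$. Non-atomicity of $\mu_{x}^{\lambda_{i}^{F}, F}$ is immediate from the leafwise entropy product formula (\Cref{thm:coent}): the positive-entropy hypotheses $h_{\mu}(a_{0}\mid\Wcal^{\chi_{1}^{F}, F})>0$ and $h_{\mu}(a_{0}^{-1}\mid\Wcal^{\chi_{2}^{F}, F})>0$ would be contradicted by atomic leafwise measures along those foliations.

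For absolute continuity, the idea is to deploy a Katok--Kalinin-type measure rigidity argument along each $1$-dimensional fiberwise Lyapunov foliation, following \cite{KK07} and \cite[Prop.\ 7.2--7.3]{BRHWnormal}. For each $i\in\{1,2\}$, select a non-identity element $b_{i}\in \ker\lambda_{i}^{F}\subset\liea$, which exists because $\dim A\ge 2$. By \Cref{claim:abs1}\ref{derivinframe}, after trivializing the fiberwise tangent bundle via the measurable framing $\psi^{F}$, the fiberwise derivative of $\widetilde{\alpha}(b_{i})$ is $\pm\pi(b_{i})$, which acts on the $1$-dimensional weight space $V^{\lambda_{i}^{F}}$ by the scalar $\pm e^{\lambda_{i}^{F}(b_{i})}=\pm 1$. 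In the normal form coordinates $\Psi_{x}^{\lambda_{i}^{F}, F}\colon V^{\lambda_{i}^{F}}\to W^{\lambda_{i}^{F}, F}(x)$, the dynamics of $\widetilde{\alpha}(b_{i})$ restricted to each leaf is then an affine isometry of $\Rbb$. Moreover, since $\mu$ is $G$-invariant, Moore's ergodicity theorem \cite[Thm.\ 2.2.6]{Zimbook} guarantees that $\widetilde{\alpha}(b_{i})$ acts ergodically on $(M^{\alpha},\mu)$.

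The rigidity argument then proceeds as follows: the transformation rule $\widetilde{\alpha}(b_{i})_{*}\mu_{x}^{\lambda_{i}^{F}, F}\propto\mu_{\widetilde{\alpha}(b_{i})(x)}^{\lambda_{i}^{F}, F}$, combined with the fact that $\widetilde{\alpha}(b_{i})$ acts in normal form coordinates as $v\mapsto\pm v$, implies that the normalized push-forwards $\nu_{x}:=\bigl((\Psi_{x}^{\lambda_{i}^{F}, F})^{-1}\bigr)_{*}\mu_{x}^{\lambda_{i}^{F}, F}$ at $x$ and $\widetilde{\alpha}(b_{i})(x)$ differ only by an affine isometry of $\Rbb$. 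Applying Lusin's theorem to select a set of large measure on which $x\mapsto\nu_{x}$ varies continuously, and exploiting Poincar\'e recurrence for the ergodic map $\widetilde{\alpha}(b_{i})$, one obtains a dense subgroup of $\Rbb$ under which a typical $\nu_{x}$ is invariant; together with the non-atomicity already established, this forces $\nu_{x}$ into the Lebesgue class. The principal technical obstacle is adapting this rigidity scheme, classically formulated in compact settings, to the non-compact suspension $M^{\alpha}$ when $\Gamma$ is nonuniform; this is handled using the exponentially small mass of $\mu$ at infinity (which makes the fiberwise Pesin theory and the normal forms of \Cref{normalitem} available) together with the fact that $\widetilde{\alpha}(b_{i})$ is an exact isometry in normal form coordinates, which removes the need for the distortion estimates typically required on leaves.
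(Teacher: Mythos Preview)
Your proposal is correct and follows essentially the same route as the paper: the paper simply invokes \Cref{fibleb}, whose proof uses exactly the ingredients you list---Moore's ergodicity for a non-identity $b\in\ker\lambda_i^F$, the isometric action $v\mapsto\pm v$ in normal form coordinates, non-atomicity from positive leafwise entropy, and the \cite{KK07}-type rigidity argument as in \cite[Prop.~7.2--7.3]{BRHWnormal}---applied to the two specific exponents $\lambda_1^F,\lambda_2^F$ for which positive leafwise entropy has already been established.
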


In order to prove \Cref{claim:abs2}, it is enough to show the following claim: 
\begin{claim}\label[claim]{nonnegcase}
For every fiberwise Lyapunov functional $\lambda_{k}^{F}$, $\mu_{x}^{\lambda_{k}^{F}, F}$ is non-atomic for $\mu$-almost every $x$.
\end{claim}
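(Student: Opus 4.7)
\medskip

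\noindent\textbf{Proof plan for \cref{nonnegcase}.} The plan is to exploit the $G$-invariance of $\mu$ together with the Weyl group symmetry of the fundamental representation $\pi\colon G\to \SL(V)$ identified in \cref{thm:ZCSRtoabs,claim:abs1} in order to transport the non-atomic leafwise measures guaranteed by \cref{k1abs} to \emph{every} fiberwise Lyapunov foliation. Recall from \cref{claim:abs1} that the fiberwise Lyapunov functionals $\lambda_1^F,\dots,\lambda_{n(G)}^F$ coincide with the weights of $\pi$, and that via the measurable framing $\{\psi_x^F\}$ one has $\psi_x^F(V^{\lambda_i^F})=E^{\lambda_i^F,F}(x)$ and $\psi^F_{\widetilde{\alpha}(g)(x)}\circ D_x^F\widetilde{\alpha}(g)\circ (\psi_x^F)^{-1}=\pm\pi(g)$ for all $g\in G$ and $\mu$-a.e.\ $x$.

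\medskip

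The first key observation is that the set of weights of $\pi$ forms a single orbit under the Weyl group $W(G,A)=N_G(A)/Z_G(A)$, which moreover is fully realized by elements of $N_G(A)\subset G$ for the $\R$-split groups under consideration. Indeed, for $\lieg=\sl(n,\R)$ the weights of the standard representation are $\{e_1,\dots,e_n\}$ (or their negatives for the dual) and the Weyl group $S_n$ acts transitively on them via signed permutation matrices; for $\lieg=\sp(2n,\R)$ the weights are $\{\pm e_1,\dots,\pm e_n\}$ and the Weyl group $S_n\ltimes(\Zbb/2)^n$ again acts transitively. Thus, for each fiberwise Lyapunov exponent $\lambda_k^F$, I will choose $w_k\in N_G(A)$ whose corresponding Weyl element $\sigma_k$ sends $\lambda_1^F$ (or $\lambda_2^F$ if $\lambda_1^F$ is not convenient) to $\lambda_k^F$.

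\medskip

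The second step is to transfer non-atomicity. Since $\mu$ is $G$-invariant, $\widetilde{\alpha}(w_k)_*\mu=\mu$, and $\widetilde{\alpha}(w_k)$ maps fibers of $p\colon M^\alpha\to G/\Gamma$ to fibers. In the measurable framing we have $D_x^F\widetilde{\alpha}(w_k)=\pm(\psi^F_{\widetilde{\alpha}(w_k)(x)})^{-1}\circ\pi(w_k)\circ\psi_x^F$; since $\pi(w_k)$ permutes the weight spaces of $\pi$ according to $\sigma_k$, we obtain
\[D_x^F\widetilde{\alpha}(w_k)\colon E^{\lambda_1^F,F}(x)\xrightarrow{\ \sim\ }E^{\lambda_k^F,F}(\widetilde{\alpha}(w_k)(x))\]
for $\mu$-a.e.\ $x$. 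It follows that $\widetilde{\alpha}(w_k)(W^{\lambda_1^F,F}(x))=W^{\lambda_k^F,F}(\widetilde{\alpha}(w_k)(x))$, and by the uniqueness (up to normalization) of the system of leafwise measures on a coarse Lyapunov foliation, we conclude
\[\bigl(\widetilde{\alpha}(w_k)\bigr)_*\mu_x^{\lambda_1^F,F}\ \propto\ \mu_{\widetilde{\alpha}(w_k)(x)}^{\lambda_k^F,F}\quad\text{for $\mu$-a.e.\ }x.\]
Since $\widetilde{\alpha}(w_k)_*\mu=\mu$ and non-atomicity is preserved under diffeomorphisms, \cref{k1abs} then yields $\mu_y^{\lambda_k^F,F}$ non-atomic for $\mu$-a.e.\ $y\in M^\alpha$, completing the proof.

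\medskip

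I expect the only mildly delicate point to be the bookkeeping of the $\pm\id$ ambiguity coming from the compact cocycle $\kappa$ taking values in $\{\pm\Id\}$; this however has no effect on the identification of one-dimensional weight subspaces or on (non-)atomicity of the pushed-forward leafwise measures. A second point requiring a brief verification is that the Weyl group element $\sigma_k$ can indeed be lifted to $N_G(A)\cap G$ rather than only to a double cover, but this is standard for the connected $\R$-split groups $\SL(n,\R)$ and $\Sp(2n,\R)$. Beyond these routine technicalities there is no substantial obstacle, so \cref{nonnegcase}---and hence \cref{claim:abs2} and the remaining assertions of \cref{thm:abs}---will follow.
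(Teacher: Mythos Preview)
Your argument is correct and takes a genuinely different---and shorter---route than the paper. The paper argues by contradiction: assuming $\mu_x^{\lambda_k^F,F}$ is atomic, it uses that $\beta=\lambda_k^F-\lambda_1^F$ is a root and invokes a Ledrappier--Xie vanishing-transverse-entropy result to show the leafwise measure on the two-dimensional lamination $\Wcal^{\lambda_1^F\oplus\lambda_k^F,F}$ is supported on $W^{\lambda_1^F,F}(x)$; the $U^\beta$-equivariance of this family then forces $D_x^F\widetilde\alpha(u)$ to preserve $E^{\lambda_1^F,F}$, contradicting $\pi(u)(V^{\lambda_1^F})\cap V^{\lambda_1^F}=\{0\}$ for $u\neq\Id$. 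You instead use $N_G(A)$-invariance of $\mu$ directly: since $w_k$ normalizes $A$, the diffeomorphism $\widetilde\alpha(w_k)$ permutes the fiberwise coarse Lyapunov laminations according to the Weyl action on weights (your derivative computation shows this at the level of Oseledec subspaces, and the same intertwining $\widetilde\alpha(w_k)\circ\widetilde\alpha(a)=\widetilde\alpha(w_kaw_k^{-1})\circ\widetilde\alpha(w_k)$ applied to the characterization of $W^{\lambda_1^F,F}(x)$ as an intersection of fiberwise stable manifolds gives it for the leaves themselves); uniqueness of leafwise measures then transports non-atomicity. Your worry about lifting is unnecessary: $W(G,A)=N_G(A)/Z_G(A)$ by definition, so representatives always exist in $G$. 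The paper's route isolates exactly which unipotent invariance drives the conclusion and avoids relying on the special feature that $\pi$ has a single Weyl orbit of weights; your route exploits that feature for a much cleaner proof, and in fact yields the stronger conclusion that every $\mu_x^{\lambda_k^F,F}$ is in the Lebesgue class, not merely non-atomic.
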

When $k=1$ or $k=2$, \cref{nonnegcase} follows from \Cref{k1abs}. 
Let $3\le k \le n(\lieg)$ be arbitrary.   Since $\lambda_{1}^{F}$ and $\lambda_{2}^{F}$ are not positively proportional, there exists $j\in \{1,2\}$ such that $\lambda_{k}^{F}$ is not negatively proportional to $\lambda_{j}^{F}$.  Again, up to reindexing, it is with no loss of generality to assume $j=1$.  
We thus suppose that $\lambda_{k}^{F}$ is not negatively proportional to $\lambda_{1}^{F}$ and is distinct from $\lambda_{1}^{F}$.

 For the sake of contradiction, we assume that $h_{\mu}(a\mid\Wcal^{\chi_{k}^{F}, F})=0$. Then, for $\mu$-almost every $x$, the leafwise measure $\mu_{x}^{\lambda_{k}^{F}, F}$ is atomic.

Recall that the set of fiberwise Lyapunov functionals is same as the set of weights of the defining representation or its dual.   As the weights of $\pi$ are and roots of $\lieg$ are explicit, 
since we assumed that $\lambda_{k}^{F}\neq -\lambda_{1}^{F}$ and $\lambda_{k}^{F}\neq \lambda_{1}^{F}$, 
 by direct computation  we have \begin{equation}\label{inroot} \beta=\lambda_{k}^{F}-\lambda_{1}^{F}\in \Phi(A,G).\end{equation}  is a root of $\lieg$.  

Again, by explicit presentation of the weights of the representation $\pi$, we obtain the following:

There exist $a_{1},a_{2}\in \ker \beta \subset A$ such that 
\begin{enumerate}
\item $\lambda_{1}^{F}(a_{1})<\lambda_{k}^{F}(a_{1})<0$, $\lambda_{k}^{F}(a_{2})<\lambda_{1}^{F}(a_{2})<0$, and
\item for all $l$ with $l\neq 1$ and $l\neq k$, either $\lambda_{l}^{F}(a_{1})\ge0$ or $\lambda_{l}^{F}(a_{2})\ge0$.

\end{enumerate}

Moreover, for $u\in U^{\beta}$
\begin{enumerate}[resume]
\item $\pi(u)( V^{\lambda_1^F}\oplus V^{\lambda_k^F})=  V^{\lambda_1^F}\oplus V^{\lambda_k^F}$.  
\item $\pi(u)( V^{\lambda_k^F})= V^{\lambda_k^F}$, and 
\item $\pi(u)(V^{\lambda_1^F})\cap  V^{\lambda_1^F}=\{0\}$ if $u\neq \Id$.  
\end{enumerate}

As an intersection of fiberwise stable foliations of $\widetilde{\alpha}(a_{1})$ and $\widetilde{\alpha}(a_{2})$, $E^{\lambda_{1}^{F}, F}\oplus E^{\lambda_{k}^{F}, F}$ integrates to a measurable lamination which we denote it by $\Wcal^{\lambda_{1}^{F}\oplus \lambda_{k}^{F}, F}$.  
Also, since $\ker(\lambda_{k}^{F}-\lambda_{1}^{F})\subset A$ commutes with $U^{\beta}$, the measurable lamination $\Wcal^{\lambda_{1}^{F}\oplus \lambda_{k}^{F}, F}$ is $\widetilde{\alpha}\left(U^{\beta}\right)$-equivariant; that is, \[\widetilde{\alpha}(u)\left(W^{\lambda_{1}^{F}\oplus \lambda_{k}^{F}, F}(x)\right)=W^{\lambda_{1}^{F}\oplus \lambda_{k}^{F}, F}(\widetilde{\alpha}(u)(x)),\] for all $u\in U^{\beta}$ and for $\mu$-almost every $x$.

Let $\mu_{x}^{\lambda_{1}^{F}\oplus \lambda_{k}^{F}, F}$ denote the leafwise measure (with some choice of normalization)  on $W^{\lambda_{1}^{F}\oplus \lambda_{k}^{F}, F}(x)$ for $\mu$-almost every $x$. 
Adapting the main result of \cite{MR2818693} (for the dynamics of $\wtd \alpha (a_1)$ inside the leaves of the lamination $\Wcal^{\lambda_{1}^{F}\oplus \lambda_{k}^{F}, F}$), it follows that the leafwise measure  
$\mu_{x}^{\lambda_{1}^{F}\oplus \lambda_{k}^{F}, F}$ is supported on $W^{\lambda_{1}^{F}, F}(x)$ inside of $W^{\lambda_{1}^{F}\oplus \lambda_{k}^{F}, F}$.  
In particular, 
for $\mu$-almost every $x$, the leafwise measure  $\mu_{x}^{\lambda_{1}^{F}\oplus \lambda_{k}^{F}, F}$  on the leaf  $W^{\lambda_{1}^{F}\oplus \lambda_{k}^{F}, F}(x)$, is in the Lebesgue class on the smooth embedded curve $W^{\lambda_{1}^{F}, F}(x)$ in $W^{\lambda_{1}^{F}\oplus \lambda_{k}^F, F}(x)$.

To derive a contradiction, since the measure $\mu$ and the lamination $\Wcal^{\lambda_{1}^{F}\oplus \lambda_{k}^{F}, F}$ are $\widetilde{\alpha}(U^{\beta})$-invariant,  
for every $u\in U^{\beta}$ we have the following equivariance of leafwise measures: for $\mu$-a.e.\ $x$,
\begin{equation}\label{uaction} 
\widetilde{\alpha}(u)_{*}\left(\mu_{x}^{\lambda_{1}^{F}\oplus \lambda_{k}^{F}, F}\right) \propto \mu_{\widetilde{\alpha}(u)(x)}^{\lambda_{1}^{F}\oplus \lambda_{k}^{F}, F}.
\end{equation}
Moreover, we have 
\begin{equation}\label{uaction2} 
 \mu_{\widetilde{\alpha}(u)(x)}^{\lambda_{1}^{F}\oplus \lambda_{k}^{F}, F}\propto \mu_{\widetilde{\alpha}(u)(x)}^{\lambda_{1}^{F}, F},\quad \quad   \mu_{x}^{\lambda_{1}^{F}\oplus \lambda_{k}^{F}, F}\propto \mu_{x}^{\lambda_{1}^{F}, F}.
\end{equation}
We also know that, for $\mu$-almost every $x$, \[ 
 D_{x}\widetilde{\alpha}(u)\left(E^{\lambda_{1}^{F}, F}_{x}\right)=T_{x}\left(\widetilde{\alpha}(u)\left(W^{\lambda_{1}^{F}, F}(x)\right)\right).\]
We view $E^{\lambda_{1}^{F}, F}(x)$ as tangent to $\supp\left(\mu_{x}^{\lambda_{1}^{F}, F}\right)$ at $x$.  By \eqref{uaction}, for every $u\in U^{\beta}$ and $\mu$-almost every $x$, 
$D_{x}\widetilde{\alpha}(u)\left(E^{\lambda_{1}^{F}, F}(x)\right)$ is  tangent to $\supp\left(\mu_{\widetilde{\alpha}(u)(x)}^{\lambda_{1}^{F}, F}\right)$ at $\widetilde{\alpha}(u)(x)$.  
Combined with \eqref{uaction} and \eqref{uaction2}, it follows that that \begin{equation}\label{6.4}D_{x}^{F}\widetilde{\alpha}(u)\left(E^{\lambda_{1}^{F}, F}({x})\right)=E^{\lambda_{1}^{F}, F}({\widetilde{\alpha}(u)(x)}).\end{equation}

On the other hand, recall that $V^{\lambda_{1}^{F}, F}$ denotes the weight space 
 weight $\lambda_{1}^{F}$ of the representation $\pi$ in \Cref{claim:abs1}.  
 By \eqref{derivinframe} of  \Cref{claim:abs1},
  Using the item (3) in 
 \Cref{claim:abs1}, the restriction of derivative on $E^{\lambda_{1}^{F}, F}\oplus E^{\lambda_{k}^{F}, F}$ can be written as, for all $u\in U^{\beta}$,
\begin{equation}\label{rep1k}
\psi_{\widetilde{\alpha}(u)(x)}^F \circ D_{x}^{F}\left(\widetilde{\alpha}(u)\right)\circ \left(\psi_{x}^F\right)^{-1}(V^{\lambda_{1}^F}) ={\pi(u)}(V^{\lambda_{1}^{F}, F}).\end{equation} 
If $u\neq \Id$ then 
$${\pi(u)}(V^{\lambda_{1}^{F}, F})\cap V^{\lambda_{1}^{F}, F}=\cap\{0\}.$$
Since $E^{\lambda_{1}^{F}, F}({x})=\left(\psi_{x}^F\right)^{-1}(V^{\lambda_{1}^F})$ and $E^{\lambda_{1}^{F}, F}({\widetilde{\alpha}(u)(x)})=(\psi_{\widetilde{\alpha}(u)(x)}^F )^{-1}(V^{\lambda_{1}^{F}, F})$, we have
 \begin{equation}\label{6.4}D_{x}^{F}\widetilde{\alpha}(u)\left(E^{\lambda_{1}^{F}, F}({x})\right)\cap E^{\lambda_{1}^{F}, F}({\widetilde{\alpha}(u)(x)})=\{0\}\end{equation}
 contradicting \eqref{6.4}.  
 This contradiction finishes the proof of  \Cref{nonnegcase}, and thus, \Cref{claim:abs2}.

\section{Proof of \texorpdfstring{\Cref{thm:SLnZintro}}{Theorem 1.4}: Measurable conjugacy to an affine action} 
Throughout this section, let $\Tbb^{n}\simeq \Rbb^{n}/\Zbb^{n}$ denote the standard torus and let $\leb$ be the normalized Haar measure on $\Tbb^{n}$.  We also write $\Tbb^{n}_{\pm}$ for the infratorus and $Leb_{\pm}$ be the normalized Haar measure on $\Tbb^{n}_{\pm}$.  In this section, we prove the measurable classification theorem \Cref{thm:SLnZintro}.

Throughout this section we fix the following: 
\begin{enumerate}
\item We retain notation and assumptions in \Cref{thm:SLnZintro}.  In particular $G= \SL(n,\R)$ and $\Gamma$ is a lattice in $G$.  
\item Let $\Phi(A,G)$ be the set of roots of $G$ with respect to $A$.
\item Let $\mu$ be the $\widetilde{\alpha}(G)$-invariant ergodic probability measure on $M^{\alpha}$ which is induced by $\nu$.  By \cref{thm:abs}, $\nu$ is necessarily absolutely continuous.  
\item Fix a vector space $V\simeq \Rbb^{n}$ with a standard inner product with orthonormal basis.  
\item Fix a Lebesgue measure $m_V$ on $V$.

\end{enumerate}

In order to prove \Cref{thm:SLnZintro}, we adapt the proof of the main result in \cite{KRHarith}.  Adapting the main arguments in \cite{KRHarith} provides  $A$-equivariant affine structures and  homoclinic groups (candidates for group of deck transformation on $V$ viewed as a covering space of our affine model) along the fibers of the suspension $M^{\alpha}$ at almost every point.  Using Zimmer's cocycle superrigidity theorem and uniqueness of normal forms, we show such affine structures and homoclinic groups are, in fact, $G$-equivariant.  From the $G$-equivariance of such structures, we deduce  \Cref{thm:SLnZintro} in \Cref{sec:hgroup}.

\subsection{Preliminaries} \label{sec:affine}
In this subsection, we adapt several facts from \cite{KRHarith} to the induced action on the suspension space $M^{\alpha}$. Recall that we denote the $G$-action on the suspension $M^{\alpha}$ by $\widetilde{\alpha}$. We denote by $\mu$ the ergodic,  $\widetilde{\alpha}(G)$-invariant  measure  on $M^{\alpha}$ induced by $\nu$.   
We also denote the fiberwise derivative cocycle by $D^{F}\colon (x,g)\mapsto D_{x}^{F}\widetilde{\alpha}(g)$ as before. We adapt Zimmer's cocycle superrigidity  theorem, \Cref{thm:ZCSRG}, to the fiberwise derivative cocycle $D^{F}$.
\begin{theorem}\label{thm:ZCSRtotori}
Retain all notation from \Cref{thm:SLnZintro}.  There exists a homomorphism $\pi\colon G\to \SL(V)$, a compact group $K<\GL(V)$, compact group valued cocycles $\kappa\colon G\times M^{\alpha}\to K$, and a  measurable family of framing $\left\{{\psi}_{x}: T_{x}^{F}M^{\alpha}\to V \right\}$ so that \[{\psi}_{\widetilde{\alpha}(g)(x)}\circ D_{x}^{F}\widetilde{\alpha}(g) \circ\left({\psi}_{x}\right)^{-1}=\pi(g)\kappa(g,x),\] for all $g\in G$ and for $\mu$-almost every $x$.  Moreover, $K$ commutes with $\pi(G)$. 
\end{theorem}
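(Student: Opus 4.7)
The plan is to obtain Theorem~\ref{thm:ZCSRtotori} as an essentially direct application of Theorem~\ref{thm:ZCSRG} (Zimmer's cocycle superrigidity for the fiberwise derivative cocycle) to the ergodic, $\wtd\alpha(G)$-invariant probability measure $\mu$ on $M^\alpha$ induced by $\nu$. The only work beyond citing Theorem~\ref{thm:ZCSRG} is a short argument refining the image of the resulting representation from $\GL(V)$ down to $\SL(V)$.

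First I would verify the integrability hypothesis needed to invoke Theorem~\ref{thm:ZCSRG}. Since $\alpha$ is $C^r$ with $r>1$ and $M$ is compact, the fiberwise derivative $D^F_x\wtd\alpha(g)$ (in the norms fixed in \S\ref{sec:norms}) is controlled by the tempered bound \eqref{tempered}, and the measure $\mu$ on $M^\alpha$ has exponentially small mass at infinity, being induced from a probability measure on the compact fiber $M$. Proposition~\ref{prop:integrability} then supplies the required $\log$-integrability of the cocycle. Applying Theorem~\ref{thm:ZCSRG} produces a continuous representation $\pi_0 \colon G \to \GL(V)$, a compact subgroup $K<\GL(V)$ commuting with $\pi_0(G)$, a $K$-valued measurable cocycle $\kappa \colon G\times M^\alpha \to K$, and a $\mu$-measurable framing $\{\psi_x \colon T^F_x M^\alpha \to V\}$ satisfying
\[
\psi_{\wtd\alpha(g)(x)} \circ D^F_x\wtd\alpha(g)\circ \psi_x^{-1} = \pi_0(g)\,\kappa(g,x)
\]
for every $g\in G$ and $\mu$-a.e.\ $x$. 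This is precisely the asserted statement with $\pi_0$ in place of $\pi$.

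The only remaining point is to upgrade $\pi_0(G)\subset \GL(V)$ to $\pi_0(G)\subset \SL(V)$, so that one may take $\pi:=\pi_0$. Since $G=\SL(n,\R)$ is a connected (topologically) simple Lie group, it is perfect and therefore admits no non-trivial continuous homomorphism into the abelian group $\R^\times$. Applying this to the continuous character $\det \circ \pi_0 \colon G \to \R^\times$ forces $\det \pi_0(g)=1$ for every $g\in G$, so $\pi_0(G)\subset \SL(V)$, as required. I do not anticipate any serious obstacle in this proof: the content is bundled entirely into the cited Theorem~\ref{thm:ZCSRG}, and the $\SL$-refinement is a one-line perfectness argument.
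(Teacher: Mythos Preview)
Your proposal is correct and matches the paper's approach: the paper states Theorem~\ref{thm:ZCSRtotori} as a direct adaptation of Theorem~\ref{thm:ZCSRG} to the fiberwise derivative cocycle, with no separate proof given. Your explicit perfectness argument for the refinement $\pi(G)\subset\SL(V)$ is a detail the paper leaves implicit (Theorem~\ref{thm:ZCSRG} only yields $\pi\colon G\to\GL(\dim M,\R)$), and your verification of $\log$-integrability is likewise more careful than the paper, which simply notes before Theorem~\ref{thm:ZCSRGamma} that this holds automatically since $M$ is compact.
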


Recall we assumed that $h_{\nu}(\alpha(\gamma))>0$ for some $\gamma$.  Thus, the representation $\pi$ is non-trivial by the Margulis--Ruelle inequality (\cref{thm:MRineq}).
Since $\dim M=\dim V=n$, it follows that, up to conjugacy, $\pi$ is  either the defining representation or its dual as in \Cref{sec:abs}. In either case, as $K$ commutes with $\pi(G)$ we have $K=\{\pm I_{V}\}$. 

Recall that we set $\Lcal^{\widetilde{\alpha}}(\mu)\subset \alie^{*}$ to be the  Lyapunov exponent functionals for the action $\restrict{\widetilde{\alpha}}{A}$ on $M^{\alpha}$. Then, we have $\Lcal^{\widetilde{\alpha}}(\mu)=\Lcal^{\widetilde{\alpha}, F}(\mu)\cup \Lcal^{\widetilde{\alpha},B}(\mu)$ where $\Lcal^{\widetilde{\alpha},B}(\mu)$ is the set of Lyapunov functionals (i.e.\ roots) for the $A$-action in the base $G/\Gamma$  and $\Lcal^{\widetilde{\alpha}, F}(\mu)$ is the set of fiberwise Lyapunov functionals.  

 For $\lambda^{F}\in \Lcal^{\widetilde{\alpha}, F}(\mu)$, let $E^{\lambda^{F}, F}$ denote the corresponding fiberwise Lyapunov distribution in $TM^{\alpha}$.  
For $\restrict{\widetilde{\alpha}}{A}$, each fiberwise Lyapunov functional $\lambda^{F}$ is a weight of the representation $\pi$ in \Cref{thm:ZCSRtotori}.   In particular, each associated fiberwise Lyapunov distribution $E^{\lambda_{i}^{F}, F}$ is $1$-dimensional. Furthermore, there are no two $i\neq j$ such that $\lambda_{i}^{F}$ is positively proportional to $\lambda_{j}^{F}$. 

For $j=1,\dots,n(G)$, let $V^{\lambda_{j}^{F}, F}$ denote the weight space of $\pi\colon G\to \SL(V)$ 
with weight $\lambda_{j}^{F}$ with respect to $A$;  that is, 
\[V^{\lambda_{j}^{F}, F}=\left\{v\in V: \pi(b)(v)=\lambda_j^F (b)v\textrm{ for all }b\in A\right\}.\] 
We may assume that there exists an orthonormal basis \begin{equation}\label{basis}\Bsc=\{\widehat{v}_{1},\dots, \widehat{v}_{n}\}\end{equation} of $V$ such that $V^{\lambda_i^F, F}=\Rbb \widehat{v}_i$. 
Finally,  for each fiberwise Lyapunov functional $\lambda_i^F$ we denote by $\Wcal^{\lambda_i^F, F}$ the corresponding Lyapunov measurable lamination and denote by $W^{\lambda_i^F}(x)$ the leaf through $x$ which is a $C^{r}$ injectively immersed submanifold. Since  no pair of  Lyapunov functionals in $\Lcal^{\widetilde{\alpha}}(\mu)$ is positively proportional, each coarse fiberwise Lyapunov functional $\chi_{i}^{F}$ consists of a single simple fiberwise Lyapunov functionals $\chi_{i}^{F}=\left[\lambda_{i}^{F}\right]$ for $i=1,\dots, n$.

We define a (open) \emph{Weyl chamber} to be a connected component in $\liea\sm \left(\bigcup_{i=1}^n\ker \lambda_i^F\right)$.  
We note that our Weyl chambers are defined relative to the weights of the representation $\pi\colon \Sl(n,\R)\to \Gl(V)$ (rather than the relative to the weights of the adjoint representation).  
For each Weyl chamber $\Ccal$ of the representation $\pi$, define  \[V_{\Ccal}^{s, F}=\bigoplus_{i:\lambda_{i}^F (b)<0, b\in\Ccal} V^{\lambda_{i}^F, F }\quad\textrm{and}\quad
V_{\Ccal}^{u, F}=\bigoplus_{i:\lambda_{i}^F (b)>0, b\in\Ccal} V^{\lambda_{i}^F, F }.\] 
Similarly, for each Weyl chamber $\Ccal$, we denote    
$E^{s, F}_{\Ccal}(x)$, $E^{u, F}_{\Ccal}(x)$ be fiberwise stable and unstable subspaces, respectively, for $\mu$-almost every $x\in M^{\alpha}$, that is,
\[E^{s, F}_{\Ccal}(x)=\bigoplus_{i:\lambda_i^F (b)<0, b\in \Ccal} E^{\lambda_i^F, F}(x)\quad\textrm{and}\quad E^{u, F}_{\Ccal}(x)=\bigoplus_{i:\lambda_i^F (b)>0, b\in \Ccal}E^{\lambda_i^F, F}(x).\] 
For each Weyl chamber $\Ccal$, we denote by $\Wcal_{\Ccal}^{s, F}$ and $\Wcal_{\Ccal}^{u, F}$ the fiberwise stable and fiberwise unstable laminations, respectively, for the action by elements in $\Ccal$. We also denote by $W_{\Ccal}^{s, F}(x)$ and $W_{\Ccal}^{u, F}(x)$ the leaf of $\Wcal_{\Ccal}^{s, F}$ and $\Wcal_{\Ccal}^{u, F}$ through $x$, respectively.

We summarize some properties of the above discussion
\begin{enumerate}
\item  Each  fiberwise Lyapunov distribution $E^{\lambda_{i}^{F}, F}$ is $1$-dimensional. Furthermore, there are no two $i\neq j$ such that $\lambda_{i}^{F}$ is positively proportional to $\lambda_{j}^{F}$. 

\item\label{it1} There exists a measurable family of framings $\left\{{\psi}_{x}: T_{x}^{F}M^{\alpha}\to V \right\}$ such that \[{\psi}_{\widetilde{\alpha}(g)(x)}\circ D_{x}^{F}\widetilde{\alpha}(g) \circ\left({\psi}_{x}\right)^{-1}=\pm\pi(g),\] for all $g\in G$ and for $\mu$-almost every $x$.

Moreover, for all $i=1,\dots, n$, 
$E^{\lambda_{i}^{F}, F}=\psi_{x}^{-1}\left(V^{\lambda_{i}^{F}, F}\right)$ 
 and 
for  $v\in V^{\lambda_i^F, F}$,
\[\psi_{\widetilde{\alpha}(x)}\circ D_{x}^{F}\widetilde{\alpha}|_{E^{\lambda_{i}^{F}, F}}(b)\circ{\left( \psi_x\right)}^{-1}(v) =\pm e^{\lambda_i^F (b)} v.\]   
\end{enumerate}

\begin{enumerate}[resume]  
\item\label{it2} There are exactly $2^{n}-2$ Weyl chambers.  For each non-empty, proper subset $\sigma\subset \{1,\dots, n\}$,  there exists a Weyl chamber $\Ccal_{\sigma}$ such that  for all $a\in \Ccal_{\sigma}$,  $\lambda_j^F (a)<0$ for all $i \in \sigma$ and  $\lambda_i^F (a)>0$ for all $i\notin \sigma$.

\begin{enumerate}[resume]
\item Let $k=\mathrm{card}{(\sigma)}$.    There exists $a_0\in A$ such that 
 \begin{equation}\label{conformal}
\text{$\lambda^F_i(a_0) = \frac{-1}{k}$ for every  $i\in \sigma$ and $\lambda^F_j(a_0) = \frac{1}{n-k}$ for every  $j\notin \sigma$.  }\end{equation}

\item { For each $i\in \sigma$, there exists $a_i\in \Ccal_\sigma$ such that $\lambda_i^F(a_i)<\lambda_j^F(a_i)<0$ for all $j\in \sigma\sm\{i\}$}
\end{enumerate}
\item For each non-empty, proper subset $\sigma\subset \{1,\dots, n\}$,  define the following:
\begin{enumerate}
\item $V^{s, F}_\sigma = V^{s, F}_{\Ccal_\sigma, F}$ and $V^u_\sigma = V^{s, F}_{-\Ccal_\sigma}=V^{s, F}_{\{1,\dots, n\}\sm \sigma}$

\item $E^{s, F}_{\sigma}(x)= E^{s, F}_{\Ccal_\sigma}(x)$
\item   The  measurable lamination $\Wcal^{s, F}_{\sigma}=\Wcal_{\Ccal_\sigma}^{s, F}$ whose leaves are tangent to $E^{s, F}_{\sigma}(x)$.  

\end{enumerate}

\end{enumerate}

\subsection{Affine structures on Lyapunov manifolds}

Recall \cref{normalitem} asserts the existence of   normal form coordinates $\Phi_{x}^{i, F}$ along almost every leaf of the lamination by (the 1-dimensional) $W^{i, F}$-leaves.
Recall that given any nonempty proper  subset $\sigma\subset \{1,\dots, n\}$, there is a (open) Weyl chamber $\Ccal_\sigma$ such that $\lambda^F_i(a)<0$ for every $a\in \Ccal_\sigma $ and every  $i\in \sigma$ and 
$\lambda^F_j(a)>0$ for every $a\in \Ccal_\sigma $ and every  $j\notin \sigma$.  
For any such $\sigma$, we have the following fibered version of \cite[Prop.\ 3.1]{KRHarith} which asserts that the coordinates in \cref{normalitem} assemble to give affine coordinates on the associated leaves of the lamination $\Wcal^{s, F}_\sigma(x)$.  The proof follows exactly as in  \cite[Prop.\ 3.1, 3.2]{KRHarith} with only minor notational changes.

\begin{proposition}[{\cite[Prop.\ 3.1, 3.2]{KRHarith}}]\label[proposition]{prop:hol1}  For any  nonempty, proper  subset $\sigma\subset \{1,\dots, n\}$, there exists  full $\mu$-measure subset $R_\sigma \subset M$ such for every $x\in R_\sigma$ there is a unique $C^r$ diffeomorphism 
$$\Phi^{\sigma, F}_x\colon E^{s, F}_\sigma(x)\to W^{s, F}_\sigma(x)$$
with the following properties:
\begin{enumerate}
\item $\Phi^{\sigma, F}_x(0) = 0$ and $D_0\Phi^{\sigma, F}_x = \id$.
\item The family $\{\Phi^{\sigma, F}_x\}$ is measurable in $x$.  
\item $\restrict{\Phi^{\sigma, F}_x}{E^{\lambda_i^F}(x)} = \Phi_{x}^{i, F}$.  
 \item For any $b\in A$, we have  
$$
\left(\Phi_{\widetilde{\alpha}(b)(x)}^{\sigma, F}\right)^{-1}\circ \widetilde{\alpha}(b)\circ\Phi_{x}^{\sigma, F}=D_{x}^{F}\widetilde{\alpha}|_{E_\sigma^{s, F}(x)}(b).$$
\end{enumerate}
Moreover, for $y\in W^{s, F}_\sigma(x)\cap R_\sigma$,
\begin{enumerate}[resume]
\item the map $\left(\Phi^{\sigma, F}_y\right)\inv \circ \Phi^{\sigma, F}_x\colon E^{s, F}_\sigma(x)\to E^{s, F}_\sigma(y)$ is affine with diagonal linear part.  
\end{enumerate}
\end{proposition}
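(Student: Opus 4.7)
The plan is to adapt the construction of Rodriguez Hertz (\cite{KRHarith}, Prop.\ 3.1--3.2) to the fibered setting on the suspension space $M^\alpha$. The crux is that on the stable manifold $W^{s,F}_\sigma(x)$ associated to a Weyl chamber $\Ccal_\sigma$, the fiberwise dynamics is non-resonant: for any $i\in\sigma$ and any collection $\{n_j\}_{j\in\sigma}$ of non-negative integers with $\sum_j n_j\ge 2$, one has $\lambda_i^F\neq \sum_j n_j\lambda_j^F$ on $\Ccal_\sigma$, because the values $\{\lambda_j^F(a)\}_{j\in\sigma}$ are all negative and strictly decreasing in absolute value as $n_j$ grows. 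This absence of resonance is precisely what forces the stable normal form to be linear.

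First, I would fix a generic element $a\in\Ccal_\sigma$ for which the numbers $\{\lambda_i^F(a)\}_{i\in\sigma}$ are pairwise distinct negative values admitting no non-trivial positive integer relation. Applying the smooth normal forms theorem of Guysinsky--Katok, in its measurable (non-uniformly hyperbolic) extension by Kalinin--Katok, to the single transformation $\widetilde\alpha(a)$ along the measurable lamination $\Wcal^{s,F}_\sigma$, I obtain a measurable family of $C^r$ diffeomorphisms $\Phi^{\sigma,F}_x\colon E^{s,F}_\sigma(x)\to W^{s,F}_\sigma(x)$ with $\Phi^{\sigma,F}_x(0)=0$ and $D_0\Phi^{\sigma,F}_x=\id$ that conjugate $\widetilde\alpha(a)$ to its restriction $D^F_x\widetilde\alpha(a)|_{E^{s,F}_\sigma(x)}$ and are uniquely characterized by these three conditions. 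To deal with the non-compactness of $M^\alpha$, I would invoke \cref{prop:integrability} together with the adapted norms of \cref{sec:norms} in order to verify the log-integrability of derivative norms and their inverses required by the normal forms theorem.

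The remaining items reduce to uniqueness. For arbitrary $b\in A$, since $b$ commutes with $a$, the map $(\Phi^{\sigma,F}_{\widetilde\alpha(b)(x)})^{-1}\circ \widetilde\alpha(b)\circ \Phi^{\sigma,F}_x$ satisfies the three defining properties of the intertwining linearization between the normal form coordinates at $x$ and at $\widetilde\alpha(b)(x)$, so it must coincide with $D^F_x\widetilde\alpha(b)|_{E^{s,F}_\sigma(x)}$. Analogously, the restriction $\restrict{\Phi^{\sigma,F}_x}{E^{\lambda_i^F}(x)}$ verifies all the characterizing properties of the $1$-dimensional normal form $\Phi^{i,F}_x$ of \cref{normalitem}, so the two coincide.

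Finally, for $y\in W^{s,F}_\sigma(x)\cap R_\sigma$, the holonomy $H:=(\Phi^{\sigma,F}_y)^{-1}\circ\Phi^{\sigma,F}_x$ conjugates the linear action of $D^F_x\widetilde\alpha(a)|_{E^{s,F}_\sigma(x)}$ to $D^F_y\widetilde\alpha(a)|_{E^{s,F}_\sigma(y)}$; applying the same resonance-free normal form result to $H$ (now viewed as a smooth map between two linearized systems) shows $H$ is affine, and commutativity of the linear part with the $A$-action, which has a pairwise distinct collection of weights on the Lyapunov splitting, forces the linear part to be diagonal in the basis adapted to $\{E^{\lambda_i^F}\}_{i\in\sigma}$. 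The main obstacle, beyond bookkeeping, will be verifying the measurable log-integrability hypotheses of the normal forms theorem in the non-compact fibered setting; this is routine given the framework of \cref{sec:pre} but is where the subexponential decay estimates of \eqref{tempered} are essential.
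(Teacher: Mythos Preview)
The paper does not give its own proof here; it simply records that ``the proof follows exactly as in \cite[Prop.\ 3.1, 3.2]{KRHarith} with only minor notational changes.'' Your outline is a correct adaptation of that argument.

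One small deviation is worth noting. You linearize along $W^{s,F}_\sigma$ using a \emph{generic} $a\in\Ccal_\sigma$ with rationally independent stable exponents and invoke the general Guysinsky--Katok (nonuniform: Kalinin--Katok) non-resonant normal form. The route in \cite{KRHarith}, and the one the paper leans on again in \cref{lem:commute}, instead uses the specific \emph{conformal} element $a_0$ of \eqref{conformal} with $\lambda_i^F(a_0)=-1/k$ for all $i\in\sigma$; a single repeated negative exponent admits no sub-resonances of order $\ge 2$, so the simpler conformal linearization \cite[Thm.\ 4]{KRHarith} already gives the linear chart and its strong uniqueness. Both choices produce the same $\Phi^{\sigma,F}_x$ by uniqueness, but the conformal one makes items (4)--(5) and the later \cref{lem:commute} drop out without any genericity argument. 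Your sentence ``strictly decreasing in absolute value as $n_j$ grows'' is not the right justification for non-resonance; the clean reason is that any proper subset of the weights of the defining representation of $\SL(n,\R)$ is linearly independent in $\liea^*$, so a generic $a$ makes their values rationally independent.
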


Consider any $a\in \Ccal_\sigma  $ and $g\in C_G(a)$.  Since $\wtd \alpha(g)$ commutes with $\wtd \alpha(a)$, it follows that $\wtd \alpha(g)$ intertwines the fiberwise stable and unstable manifolds for $\wtd \alpha(a)$; that is, for $\mu$-a.e.\ $x$, 
$$\wtd \alpha(g) (W^{s, F}_\sigma(x) )=  W^{s, F}_\sigma(\wtd \alpha(g)(x))$$ 
and thus 
$$D_x\wtd \alpha(g) (E^{s, F}_\sigma(x) )=  E^{s, F}_\sigma(\wtd \alpha(g)(x)).$$ 
By uniqueness of the normal forms, a similar property holds relative to the family $\Phi_{x}^{\sigma, F}$ when $a= a_0$ is as in \eqref{conformal}.

\begin{lemma}\label[lemma]{lem:commute}
Let $\sigma$ be as in \cref{prop:hol1} and suppose there are $t>0$ and  $b\in \Ccal_\sigma $   such that $\lambda_i^F(b) =-t$ for every $i\in \sigma$.  Then for every 
$g\in  C_G(b)$ and a.e.\ $x$, the map $$\left(\Phi_{\widetilde{\alpha}(g)(x)}^{\sigma, F}\right)^{-1}\circ \widetilde{\alpha}(g)\circ\Phi_{x}^{\sigma, F}\colon E^{s, F}_\sigma(x)\to E^{s, F}_\sigma(\wtd \alpha(g)(x))$$
coincides with $\restrict{D_x^F\wtd \alpha(g)}{E^{s, F}_\sigma(x)}.$ 
\end{lemma}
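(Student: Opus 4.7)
\medskip

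\noindent\textbf{Proof proposal.} The plan is to exploit the uniqueness of the normal form parametrizations $\Phi^{\sigma, F}_{\bullet}$ produced in \cref{prop:hol1}. Because $b\in \Ccal_\sigma$ acts \emph{conformally} on $E^{s, F}_\sigma(x)$ — indeed, item \eqref{it1} of \Cref{sec:affine} combined with $\lambda_i^F(b)=-t$ for all $i\in\sigma$ gives
\[
\restrict{D^F_x\widetilde\alpha(b)}{E^{s,F}_\sigma(x)} \;=\; \varepsilon(x,b)\, e^{-t}\, \Id, \qquad \varepsilon(x,b)\in\{\pm1\},
\]
the standard Kalinin--Katok style uniqueness for nonstationary normal forms of conformal contractions shows that $\{\Phi^{\sigma,F}_x\}$ is the \emph{unique} measurable family of $C^{r}$ diffeomorphisms $\Phi_x\colon E^{s,F}_\sigma(x)\to W^{s,F}_\sigma(x)$ satisfying $\Phi_x(0)=x$, $D_0\Phi_x=\Id$, and the single intertwining identity $\Phi_{\widetilde\alpha(b)(x)}^{-1}\circ\widetilde\alpha(b)\circ\Phi_x=\restrict{D^F_x\widetilde\alpha(b)}{E^{s,F}_\sigma(x)}$. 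This reduces the lemma to checking that a natural family of candidate coordinates built from $g$ and $\Phi^{\sigma,F}_\bullet$ satisfies the same three normalizing properties.

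Fix $g\in C_G(b)$ and for $\mu$-a.e.\ $y$ write $x=\widetilde\alpha(g)^{-1}(y)$ and $L_g:=\restrict{D^F_x\widetilde\alpha(g)}{E^{s,F}_\sigma(x)}\colon E^{s,F}_\sigma(x)\to E^{s,F}_\sigma(y)$. The candidate family is
\[
\tilde\Phi_{y} \;:=\; \widetilde\alpha(g)\circ \Phi^{\sigma,F}_{x} \circ L_g^{-1}\colon\; E^{s,F}_\sigma(y)\to W^{s,F}_\sigma(y).
\]
A direct computation shows $\tilde\Phi_y(0)=y$ and $D_0\tilde\Phi_y=\Id$, and the family is manifestly measurable. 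It then remains to verify the $\widetilde\alpha(b)$-equivariance: using $bg=gb$ together with the defining intertwining property of $\Phi^{\sigma,F}_\bullet$,
\[
\tilde\Phi_{\widetilde\alpha(b)(y)}^{-1}\circ\widetilde\alpha(b)\circ\tilde\Phi_y \;=\; \tilde L_g\circ \bigl(\Phi^{\sigma,F}_{\widetilde\alpha(b)(x)}\bigr)^{-1}\circ \widetilde\alpha(b)\circ \Phi^{\sigma,F}_x\circ L_g^{-1}
\;=\; \tilde L_g\circ \bigl(\varepsilon(x,b)e^{-t}\Id\bigr)\circ L_g^{-1},
\]
where $\tilde L_g:=\restrict{D^F_{\widetilde\alpha(b)(x)}\widetilde\alpha(g)}{E^{s,F}_\sigma(\widetilde\alpha(b)(x))}$. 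The commutation $bg=gb$ applied to the fiberwise cocycle identity $D^F(\bullet,bg)=D^F(\bullet,gb)$ forces the scalar relation $\tilde L_g=\bigl(\varepsilon(y,b)/\varepsilon(x,b)\bigr)L_g$, so the right-hand side collapses to $\varepsilon(y,b)e^{-t}\Id=\restrict{D^F_y\widetilde\alpha(b)}{E^{s,F}_\sigma(y)}$. Thus $\tilde\Phi_\bullet$ satisfies all three characterizing properties and by uniqueness coincides $\mu$-a.e.\ with $\Phi^{\sigma,F}_\bullet$, which is equivalent to the claimed formula.

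The only point requiring real care is the first paragraph — the uniqueness statement in the conformal setting. One must check that the proof of \cref{prop:hol1} (equivalently, the nonstationary normal-form construction in \cite{KRHarith}) genuinely yields uniqueness from equivariance under a single conformally contracting element, not just from the $A$-equivariance collectively. This is standard once the contraction is a scalar multiple of the identity — it is the classical linearization argument of Sternberg, nonstationary version — but should be invoked explicitly since $g$ need not normalize $A$ and only commutes with the one-parameter subgroup through $b$. Granting this, the lemma is really just a consequence of uniqueness plus the observation that conjugation by $\widetilde\alpha(g)$ preserves the normalizing data exactly because $g\in C_G(b)$.
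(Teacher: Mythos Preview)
Your proposal is correct and follows essentially the same route as the paper: both define a conjugate family of charts (you set $\tilde\Phi_{\widetilde\alpha(g)(x)}=\widetilde\alpha(g)\circ\Phi^{\sigma,F}_x\circ L_g^{-1}$, the paper sets $\widehat\Phi^{\sigma,F}_x=\widetilde\alpha(g^{-1})\circ\Phi^{\sigma,F}_{\widetilde\alpha(g)(x)}\circ D_x\widetilde\alpha(g)$, which is the same identity read at $x$ rather than at $\widetilde\alpha(g)(x)$), verify the three normalizing conditions, and then invoke uniqueness of conformal nonstationary normal forms (the paper cites \cite[Thm.~4]{KRHarith} at exactly the point you flag). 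Your handling of the $\pm$ sign in the $b$-intertwining is slightly more explicit than the paper's, but the argument is otherwise the same.
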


\begin{proof}
\def\what{\widehat}
Consider the  measurable family of maps
$$\what \Phi_{x}^{\sigma, F}:=\widetilde{\alpha}(g\inv )\circ \Phi_{\wtd \alpha(g)(x)}^{\sigma, F}\circ D_x\widetilde{\alpha}(g).$$
Since $b$ and $g$ commute, for $\mu$-a.e.\ $x$ 
\begin{enumerate}
\item $\what \Phi_{x}^{\sigma, F}\colon  E^{s, F}_\sigma(x)\to W^{s, F}_\sigma(x)$ is a $C^r$ diffeomorphism, and 
\item $\what \Phi_{x}^{\sigma, F}(0) = x$ and $D_0\what \Phi_{x}^{\sigma, F} = \id$. 
\end{enumerate}
Moreover, for $\mu$-a.e.\ $x$  we directly verify that 
\begin{enumerate}[resume]
\item $\left(\what \Phi_{\wtd \alpha(b)(x)}^{\sigma, F}\right)\inv \circ \wtd \alpha(b)\circ \what \Phi_{x}^{\sigma, F}=  D_{\widetilde{\alpha}(g)(x)} \widetilde{\alpha}(b)$.
\end{enumerate}
Indeed, using that $b$ and $g$ commute
\begin{align*}
\left(\what \Phi_{\wtd \alpha(b)(x)}^{\sigma, F}\right)\inv & \circ \wtd \alpha(b)\circ \what \Phi_{x}^{\sigma, F}
\\
&=
\left(\what \Phi_{\wtd \alpha(b)(x)}^{\sigma, F}\right)\inv  \circ \wtd \alpha(b)\circ
\widetilde{\alpha}(g\inv )\circ \Phi_{\wtd \alpha(g)(x)}^{\sigma, F}\circ D_x\widetilde{\alpha}(g)
\\
&=
\left(\what \Phi_{\wtd \alpha(b)(x)}^{\sigma, F}\right)\inv  \circ \wtd \alpha(g\inv)\circ
\Phi_{\wtd \alpha(b g)(x)}^{\sigma, F}\circ  D_{\widetilde{\alpha}(g)(x)} \widetilde{\alpha}(b)\circ  D_x\widetilde{\alpha}(g)
\\
&=
\left(D_x\widetilde{\alpha}(g)\right)\inv
\circ 
\left(\Phi_{\wtd \alpha(gb)(x)}^{\sigma, F}\right)\inv\circ 
\Phi_{\wtd \alpha(b g)(x)}^{\sigma, F}\circ  D_{\widetilde{\alpha}(g)(x)} \widetilde{\alpha}(b)\circ  D_x\widetilde{\alpha}(g)
\\
&=
\left(D_x\widetilde{\alpha}(g)\right)\inv\circ
 D_{\widetilde{\alpha}(g)(x)} \widetilde{\alpha}(b)\circ  D_x\widetilde{\alpha}(g)
\\
&=
 D_{\widetilde{\alpha}(g)(x)} \widetilde{\alpha}(b).
\end{align*}
Since the Lyapunov exponents of $\restrict{D\wtd \alpha(b)}{E^{s, F}_\sigma(x)}$ coincide, it follows from \cite[Thm.\ 4]{KRHarith} that $$ \left(\what  \Phi_{x}^{\sigma, F}\right)\inv\circ   \Phi_{x}^{\sigma, F}\colon E^{s, F}_\sigma(x)\to E^{s, F}_\sigma(x)$$ is a linear map;  since $D_0\what  \Phi_{x}^{\sigma, F}= \id=D_0  \Phi_{x}^{\sigma, F}$, we conclude that $$\what  \Phi_{x}^{\sigma, F}=   \Phi_{x}^{\sigma, F}$$
for almost every $x$ and the conclusion follows.  
\end{proof}

 For $1\le i\le n$, write  $$\Psi^{i, F}_x:=  \Phi_{x}^{i, F}\circ  \psi_x\colon V^{\lambda_i^F, F}\to W^{\chi_i^F, F}(x)$$ where $ \psi_x$ is as in \cref{thm:ZCSRtotori}.  
Then for $b\in A$ and $\mu$-a.e.\ $x$ there exists $\epsilon \in \{0, 1\}$ such that for every $i\in \{1, \dots, n\}$ and $v_i\in V^{\lambda_i^F}$,   
 \begin{equation}\label{eq:scale}\left(\Psi_{\widetilde{\alpha}(b)(x)}^{i, F}\right)^{-1}\circ \widetilde{\alpha}(b)\circ\Psi_{x}^{i, F} ( {v}_{i})= (-1)^\epsilon e^{\lambda_i^F(b)}  {v}_{i} .\end{equation}
Similarly, we write $$\Psi_{x}^{\sigma, F}= \Phi_{x}^{\sigma, F}\circ \restrict{ \psi_x} {V^s_{\sigma}}\colon V^{s, F}_\sigma\to W^{s, F}_\sigma(x).$$  
From \cref{thm:ZCSRtotori,prop:hol1,lem:commute} the dynamics  of elements in $A$ relative to the coordinates $\Psi_{x}^{\sigma, F}$ is of particularly nice form.  Moreover, given an element  $b\in A$   acting conformally on $E^{s, F}_\sigma$, elements  in the centralizer of $b$ also take a nice form relative to these coordinates.  
\begin{corollary}\label[corollary]{cor:summary}The coordinates $\Psi_{x}^{\sigma, F}$ satisfy the following:
\begin{enumerate}
\item For $b\in A$ and $\mu$-a.e.\ $x$, there exists $\epsilon \in \{0, 1\}$ such that for every  nonempty proper  subset $\sigma\subset \{1,\dots, n\}$ and $v\in V_\sigma^s$,  
\begin{equation*}\label{eq:scale} 
\left(\Psi_{\widetilde{\alpha}(b)(x)}^{\sigma, F}\right)^{-1}\circ \widetilde{\alpha}(b)\circ\Psi_{x}^{\sigma, F} (v) = (-1)^\epsilon \pi(b) {v}.\end{equation*}
In particular, with respect to the restriction of the basis $\Bsc$ in \eqref{basis} to $V_{\sigma}^{s}$
the matrix of $\left(\Psi_{\widetilde{\alpha}(b)(x)}^{\sigma, F}\right)^{-1}\circ \widetilde{\alpha}(b)\circ\Psi_{x}^{\sigma, F}$  is a diagonal.  
\item Let $s,t>0$  and  $b\in \Ccal_\sigma$ be such that  $\lambda_i^F(b) = -t$ for all $i\in \sigma$ and $\lambda_j^F(b) = s$ for all $j\notin \sigma$.  Then for $g\in C_G(b)$ and $\mu$-a.e.\ $x$, there exists $\epsilon \in \{0, 1\}$ such that writing $\widehat \sigma= \{1,\dots, n \}\sm \sigma$, for $v\in V^{s, F}_\sigma$ and $w\in V^{s, F}_{\widehat\sigma}=V^{u, F}_\sigma$, 

$$\left(\Psi_{\widetilde{\alpha}(b)(x)}^{\sigma, F}\right)^{-1}\circ \widetilde{\alpha}(g)\circ\Psi_{x}^{\sigma, F} (v) = (-1)^\epsilon \pi(g) {v}$$
and 
$$\left(\Psi_{\widetilde{\alpha}(b)(x)}^{\widehat \sigma, F}\right)^{-1}\circ \widetilde{\alpha}(g)\circ\Psi_{x}^{\widehat \sigma, F} (v) = (-1)^\epsilon \pi(g) {w}.$$
\end{enumerate}
\end{corollary}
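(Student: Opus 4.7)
The plan is to assemble three ingredients that are already in place. First, Proposition~\ref{prop:hol1}(4) identifies the change-of-coordinates map
\[
\left(\Phi_{\widetilde{\alpha}(b)(x)}^{\sigma, F}\right)^{-1}\circ \widetilde{\alpha}(b)\circ\Phi_{x}^{\sigma, F}
\]
with the restriction of the fiberwise derivative $D_x^F\widetilde\alpha(b)$ to $E_\sigma^{s,F}(x)$. Second, Theorem~\ref{thm:ZCSRtotori}, combined with the observation (made immediately after its statement) that the compact group $K$ is contained in $\{\pm I_V\}$, yields
\[
\psi_{\widetilde\alpha(b)(x)}\circ D_x^F\widetilde\alpha(b)\circ\psi_x^{-1} = (-1)^{\epsilon}\,\pi(b),
\]
for some $\epsilon=\epsilon(b,x)\in\{0,1\}$ coming from the cocycle value $\kappa(b,x)$. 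Third, the framing $\psi_x$ sends each weight space $V^{\lambda_i^F,F}$ onto $E^{\lambda_i^F,F}(x)$ and hence sends $V_\sigma^s$ onto $E_\sigma^{s,F}(x)$. Unwinding the definition $\Psi_x^{\sigma,F}=\Phi_x^{\sigma,F}\circ\psi_x^{-1}|_{V_\sigma^s}$ and composing these three ingredients immediately gives part~(1); note that $\epsilon$ is automatically uniform in $\sigma$ because it originates from the single scalar $\kappa(b,x)\in\{\pm I_V\}$, which acts by the same sign on every weight space. The diagonality of the matrix in the basis~$\mathcal{B}$ is then immediate, since $\mathcal{B}$ is by construction a basis of simultaneous $\pi(A)$-weight vectors.

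Part~(2) will follow in exactly the same way, with Proposition~\ref{prop:hol1}(4) replaced by Lemma~\ref{lem:commute}. For the first formula, the hypothesis of that lemma is met directly: the element $b\in\Ccal_\sigma$ with $\lambda_i^F(b)=-t$ for every $i\in\sigma$ is conformal with factor $e^{-t}$ on $E_\sigma^{s,F}$, and $g\in C_G(b)$ by assumption. For the second formula, I would observe that $b^{-1}\in\Ccal_{\hat\sigma}$ (where $\hat\sigma:=\{1,\dots,n\}\setminus\sigma$) and $\lambda_j^F(b^{-1})=-s$ for every $j\in\hat\sigma$, so Lemma~\ref{lem:commute} applies to the pair $(\hat\sigma,b^{-1})$, and since $C_G(b^{-1})=C_G(b)$ the same $g$ is admissible. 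The two formulae in part~(2) then follow by the same composition as in part~(1), with the common sign $(-1)^\epsilon$ produced by the single value $\kappa(g,x)\in\{\pm I_V\}$, which again acts by the same scalar on both $V_\sigma^s$ and $V_{\hat\sigma}^s=V_\sigma^u$.

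In terms of difficulty, this corollary is essentially a book-keeping consequence of the measurable normal-form results (Proposition~\ref{prop:hol1}, Lemma~\ref{lem:commute}) together with the framed form of the derivative cocycle coming from Zimmer's cocycle superrigidity, and I do not anticipate any real obstacle. The only point where some care is warranted is verifying that the sign $\epsilon$ can be chosen uniformly across $\sigma$ in part~(1), and uniformly across the pair $(\sigma,\hat\sigma)$ in part~(2); as indicated, this is automatic from $K\subset\{\pm I_V\}$, which forces $\kappa$ to act as a single global scalar rather than a sign that could in principle vary from one weight space to another.
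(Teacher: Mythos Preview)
Your proposal is correct and matches the paper's approach exactly: the paper simply states that the corollary follows from Theorem~\ref{thm:ZCSRtotori}, Proposition~\ref{prop:hol1}, and Lemma~\ref{lem:commute}, and your write-up is a faithful unpacking of that assertion. Your observation that the common sign $(-1)^\epsilon$ in both parts comes from the single scalar $\kappa(\cdot,x)\in\{\pm I_V\}$ is precisely the point, and your application of Lemma~\ref{lem:commute} to $(\hat\sigma,b^{-1})$ for the second formula in part~(2) is the intended reduction.
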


\subsection{The holonomy coordinates and development map}
Given a   nonempty proper  subset $\sigma\subset \{1,\dots, n\}$ and writing $\widehat \sigma= \{1,\dots, n \}\sm \sigma$, we use unstable holonomies to assemble the coordinates $\Psi_{x}^{\sigma, F}$ and $\Psi_{x}^{\widehat \sigma, F}$ along leaves of fiberwise laminations into coordinates defined on a positive measure subset in almost every fiber of $M^\alpha$.

The following summarizes \cite[Prop.\ 3.5]{KRHarith} and nearby discussion.  
\begin{proposition}[{\cite[Prop.\ 3.5]{KRHarith}}]\label[proposition]{prop:hol2}  Fix  a  nonempty, proper  subset $\sigma\subset \{1,\dots, n\}$.  There is a full measure subset $R\subset  R_\sigma \cap R_{\{1,\dots, n\}\sm \sigma}$ such that for every $x\in R$ and 
every $y\in W^{u, F}_\sigma(x)\cap R$,  there is a unique measurable function $\Hol^u_{x,y,\sigma}\colon  W^{s, F}_\sigma(x)\to  W^{s, F}_\sigma(y)$, defined for Lebesgue a.e.\ $z\in W^{s, F}_\sigma(x)$, such that the following hold:
\begin{enumerate}
\item $\Hol^u_{x,y,\sigma}(x) = y$.
\item  $\Hol^u_{x,y,\sigma}(z) \in W^{s, F}_\sigma(y)\cap W^{u, F}_\sigma(z)$ for  Lebesgue a.e.\ $z\in W^{s, F}_\sigma(x)$.
\item The map $ \left(\Psi^{\sigma, F}_y\right)\inv \circ\Hol^u_{x,y,\sigma} \circ  \Psi^{\sigma, F}_x\colon V_{\sigma}^{s, F}\to V_{\sigma}^{s, F}$ is linear and,  moreover, diagonal with respect to the restriction of the basis $\Bsc$ in \eqref{basis} to $V_{\sigma}^{s, F}$.     
\end{enumerate}
\end{proposition}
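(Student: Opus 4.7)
The plan is to construct $\Hol^u_{x,y,\sigma}$ from the local product structure of the two laminations $\Wcal^{s,F}_\sigma$ and $\Wcal^{u,F}_\sigma$ and then to deduce its linearity and diagonality in the normal-form coordinates $\Psi^{\sigma,F}_\bullet$ by exploiting the equivariance of $\Hol^u$ under $A$ together with \Cref{cor:summary}.

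First I would select a full-measure subset $R\subset R_\sigma \cap R_{\{1,\dots,n\}\smallsetminus \sigma}$ on which both families $\Psi^{\sigma,F}_\bullet$ and $\Psi^{\widehat\sigma,F}_\bullet$ are defined and vary measurably, both leafwise measures of the unstable manifolds have full support near the base-point, and Lusin-type continuity of the laminations holds on uniformly large subsets. Pick $a\in \Ccal_\sigma$ with $\lambda_i^F(a)=-t<0$ for every $i\in\sigma$ and $\lambda_j^F(a)=s>0$ for every $j\notin\sigma$ (item \ref{it2}(a) after \eqref{conformal}). For $y\in W^{u,F}_\sigma(x)\cap R$ and for $z\in W^{s,F}_\sigma(x)$ close to $x$ (or more generally, for Lebesgue-a.e.\ $z$ using Poincar\'e recurrence under $\wtd\alpha(a^{-1})$), the subspaces $E^{s,F}_\sigma$ and $E^{u,F}_\sigma$ are transverse and span the whole fiber, so the intersection $W^{s,F}_\sigma(y)\cap W^{u,F}_\sigma(z)$ consists of a single point, which I will take as the definition of $\Hol^u_{x,y,\sigma}(z)$. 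Uniqueness, together with the defining property $\Hol^u_{x,y,\sigma}(z)\in W^{s,F}_\sigma(y)\cap W^{u,F}_\sigma(z)$ and $\Hol^u_{x,y,\sigma}(x)=y$, then follows.

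Next, I would prove linearity. The holonomy is equivariant: for each $b\in A$ we have
\[
\wtd\alpha(b)\circ \Hol^u_{x,y,\sigma}= \Hol^u_{\wtd\alpha(b)(x),\wtd\alpha(b)(y),\sigma}\circ \wtd\alpha(b),
\]
because $\wtd\alpha(b)$ permutes leaves of $\Wcal^{s,F}_\sigma$ and $\Wcal^{u,F}_\sigma$. Transporting to normal-form coordinates via \Cref{cor:summary}(1), the map
\[
T_{x,y}:= \bigl(\Psi^{\sigma,F}_y\bigr)^{-1}\circ \Hol^u_{x,y,\sigma}\circ \Psi^{\sigma,F}_x\colon V^{s,F}_\sigma\to V^{s,F}_\sigma
\]
(defined at Lebesgue-a.e.\ point) satisfies
\[
T_{\wtd\alpha(b)(x),\wtd\alpha(b)(y)}\circ \pi(b)|_{V^{s,F}_\sigma}= \pm \pi(b)|_{V^{s,F}_\sigma}\circ T_{x,y},
\]
with the sign depending only on $b$ and on recurrence choices. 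For the conformal element $a$ above, $\pi(a)|_{V^{s,F}_\sigma}=e^{-t}\Id$, so $T_{x,y}$ commutes with scalar contraction up to conjugation by $T_{\wtd\alpha(a^n)(x),\wtd\alpha(a^n)(y)}$. Choosing times $n_k\to\infty$ along which $(\wtd\alpha(a^{n_k})(x),\wtd\alpha(a^{n_k})(y))$ returns to a uniformly large Lusin set (where $\Psi^{\sigma,F}_\bullet$ is $C^1$-uniform), rescaling by the scalar $e^{-t n_k}$, and passing to the limit, a standard renormalization argument shows that $T_{x,y}$ must agree with its derivative at $0$, i.e.\ it is linear.

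Finally, diagonality follows by exactly the same renormalization trick, but now using elements $b\in A$ for which the restriction $\pi(b)|_{V^{s,F}_\sigma}$ has pairwise distinct eigenvalues on the one-dimensional weight spaces $V^{\lambda_i^F}$, $i\in\sigma$. The equivariance relation above forces the now linear map $T_{x,y}$ to intertwine two diagonal (with distinct eigenvalues) transformations; since the weight spaces $V^{\lambda_i^F}$ for $i\in\sigma$ are in bijection via the induced eigenvalue, $T_{x,y}$ must be diagonal in the basis $\Bsc$. The main technical obstacle will be the first step: since the laminations $\Wcal^{s,F}_\sigma$ and $\Wcal^{u,F}_\sigma$ are only measurable and the total space $M^\alpha$ is noncompact, establishing a \emph{local product structure} on a set of full measure on each stable leaf (so that the holonomy is defined almost everywhere, not merely near $x$) requires careful use of Lusin approximation, absolute continuity of the unstable foliations (which holds because $\mu$ has absolutely continuous fiberwise conditional measures by \Cref{thm:abs}), and Poincar\'e recurrence under contracting $A$-elements to extend the definition globally along each stable leaf.
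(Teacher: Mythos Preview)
The paper does not give its own proof of this proposition; it is stated with the attribution \cite[Prop.~3.5]{KRHarith}, and the sentence introducing it says only that it ``summarizes \cite[Prop.~3.5]{KRHarith} and nearby discussion.'' So there is no argument in the present paper to compare your proposal against---the result is imported wholesale from \cite{KRHarith}, with the notation adapted to the suspension $M^\alpha$.

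Your plan is in the right spirit: $A$-equivariance of the unstable holonomy combined with the diagonal form of the $A$-action in the coordinates $\Psi^{\sigma,F}_\bullet$ from \Cref{cor:summary} is indeed what drives both linearity and diagonality. One point that needs tightening is the linearity step. You invoke ``the derivative of $T_{x,y}$ at $0$'' and a renormalization limit, but the proposition only asserts that $\Hol^u_{x,y,\sigma}$ is a measurable map defined Lebesgue-a.e., so $T_{x,y}$ is a priori neither differentiable nor continuous at the origin, and the limit you describe does not make sense without further input. The needed regularity comes from absolute continuity of the unstable holonomy (available here because the fiberwise conditional measures of $\mu$ are absolutely continuous by \Cref{thm:abs}) together with the one-dimensional affine structures of \Cref{normalitem}; in \cite{KRHarith} the argument is organized so that diagonality and linearity are obtained together by reducing to holonomies along the individual one-dimensional coordinate laminations $\Wcal^{\lambda_i^F,F}$, $i\in\sigma$, rather than by differentiating a multidimensional measurable map. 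This is not a fatal gap in your outline, but it is exactly the place where the sketch is thinnest and where the work in \cite{KRHarith} lies.
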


{
Write $\Fsc(x)=p\inv(p(x))$ for the fiber of $M^\alpha$ through $x$.  
We define the map $H_x\colon V\to \Fsc(x)$ as follows: fix any nonempty proper subset $\sigma\subset \{1,\dots, n\}$ and set $\widehat \sigma = \{1,\dots, n\}\sm \sigma$.  
With respect to the splitting $V= V^{s, F}_\sigma\oplus V^{u, F}_\sigma=V^{s, F}_\sigma\oplus V^{s, F}_{\widehat \sigma}$, define 
$y = \Psi^{\widehat \sigma, F}_x(v_u)\in W^{u, F}_\sigma(x)$ and, assuming that $y\in W^{u, F}_\sigma(x)\cap R$, define
$$H_x(v_s,v_u)= \Hol^u_{x,y}\circ \Psi^{ \sigma, F}_x(v_s).$$

The following summarizes \cite[\S 4.1]{KRHarith}.  
\begin{proposition}\label[proposition]{prop:devel}  There exists a full measure set $X\subset M^{\alpha}$ such that the following holds for all $x\in X$:
\begin{enumerate}
\item $H_x\colon V\to M^F_x$ is defined on a full $\leb$-measure subset of $V$.
\item For $\leb$-a.e.\ $v\in V$, $H_x(v)$ is independent of the choice of (proper, nonempty) $\sigma\subset \{1,\dots, n\}$.
\item For $\leb$-a.e.\ $v\in V$, if $y = H_x(v)$ there is an affine map $L\colon V\to V$ with $L(0) = v$ and $ H_x\circ L = H_y$.  Moreover, with respect to the basis $\Bsc$ in \eqref{basis}, the linear part of $L$ is diagonal.
\item The restriction of $H_x$ to $V^{s, F}_\sigma$ and $V^{u, F}_\sigma$ is a $C^r$ diffeomorphism on to $W^{s, F}_\sigma(s)$ and $W^{s, F}_{\widehat \sigma}(s)$, respectively.  
\item\label{it9} The image of $H_{x}$ is contained in $ \Fsc(x) = p^{-1}(p(x)) $.  Moreover, if $\{\mu_x^\Fsc\}$ denotes a family of conditional measures of $\mu$ with respect to the partition of $M^\alpha$ into fibers, then $\mu^{\Fsc}_{x}(H_{x}(V))>0$

\end{enumerate}
Furthermore, for $b\in A$, there exists $X_{b}\in M^{\alpha}$ with $\mu(X_{b})=1$ such that the following holds:
\begin{enumerate}[resume]
\item\label{item6666} for all $x\in X_{b}$ with $\widetilde{\alpha}(b)(x)\in X_{b}$,  there is a linear map  $L_b=\pm \pi(b) \colon V\to V$ whose matrix, relative to the basis $\Bsc$ in \eqref{basis}, is of the form $$\pm \pi(b) = \pm \diag\left(  e^{\lambda^F_1(b)}, \dots,   e^{\lambda^F_n(b)}\right)$$ and 
$$H_{\wtd \alpha(b)(x)}\circ L_b=  \wtd \alpha(b)\circ H_x.$$ 

\end{enumerate}
\end{proposition}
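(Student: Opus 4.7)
The plan is to adapt the construction of the development map from \cite[\S4.1]{KRHarith} to the fibered setting, with all constructions taking place inside individual fibers $\Fsc(x)$. I would first define the good set $X \subset M^\alpha$ as a countable intersection of full-measure sets: the sets $R_\sigma$ from \Cref{prop:hol1} and \Cref{prop:hol2} for each of the $2^n-2$ proper nonempty subsets $\sigma \subset \{1,\dots,n\}$; a set where the framings $\psi_x$ from \Cref{thm:ZCSRtotori} and the cocycle equation there are well-behaved; and a set of Lyapunov-regular points. All constructions will happen within a single fiber $\Fsc(x)$, so the containment of the image in $\Fsc(x)$ (first part of item (5)) is automatic since each $W^{s,F}_\sigma(x)$ and $W^{u,F}_\sigma(x)$ lies in $\Fsc(x)$.

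For items (1) and (4), one follows the construction: $\Psi^{\sigma,F}_x \colon V^{s,F}_\sigma \to W^{s,F}_\sigma(x)$ is a $C^r$ diffeomorphism by \Cref{prop:hol1}, and the unstable holonomy $\Hol^u_{x,y,\sigma}$ is defined $\leb$-a.e. on $W^s_\sigma(x)$ for $\leb$-a.e.\ $y \in W^u_\sigma(x)$ by \Cref{prop:hol2}; a double Fubini argument gives the full $\leb$-measure domain for $H_x$. Item (2), independence of the choice of $\sigma$, is the crucial compatibility step: for two choices $\sigma, \sigma'$, both formulas produce points in the transverse intersection of appropriate stable and unstable manifolds. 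Since all the $\Psi^{\sigma,F}_x$ are built from the single family of one-dimensional normal forms $\Phi^{i,F}_x$ (whose uniqueness is guaranteed by \Cref{normalitem}), combined with the affine nature of holonomies in these coordinates (\Cref{prop:hol1}(5) and \Cref{prop:hol2}(3)), the two constructions agree on their common domain. Item (3) then follows because $H_y^{-1}\circ H_x$ is a composition of maps each of which is affine with diagonal linear part in the chosen coordinates.

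For item (6), the $A$-equivariance is the central dynamical point. For $b \in A$, \Cref{cor:summary}(1) states $\widetilde{\alpha}(b)\circ \Psi^{\sigma,F}_x = \Psi^{\sigma,F}_{\widetilde{\alpha}(b)(x)} \circ (\pm \pi(b))$ for some sign $\epsilon(b,x) \in \{0,1\}$; this sign is inherited from the framing $\psi_x$ and hence is the same across all $\sigma$. Since $A$ normalizes both stable and unstable foliations and commutes with itself, the unstable holonomies satisfy $\widetilde{\alpha}(b) \circ \Hol^u_{x,y,\sigma} = \Hol^u_{\widetilde{\alpha}(b)(x), \widetilde{\alpha}(b)(y),\sigma} \circ \widetilde{\alpha}(b)$. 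Composing yields $H_{\widetilde{\alpha}(b)(x)} \circ L_b = \widetilde{\alpha}(b) \circ H_x$ with $L_b = (-1)^{\epsilon(b,x)} \pi(b)$, taking $X_b$ to be the set where $\epsilon(b,\cdot)$ is defined. For the positive-measure assertion in item (5), since the $\Psi^{\sigma,F}_x$ are $C^r$ diffeomorphisms onto stable manifolds contained in $\Fsc(x)$, and the unstable holonomies are absolutely continuous on Pesin sets, the map $H_x$ is locally bi-Lipschitz on a neighborhood of $0$; combined with absolute continuity of $\mu^\Fsc_x$ (from \Cref{thm:abs}), this gives $\mu^\Fsc_x(H_x(V)) > 0$.

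The main obstacle will be item (2): establishing independence of $\sigma$ in a way that is simultaneously compatible with the equivariance in item (6) and the affine structure in item (3). The subtlety is that each Weyl chamber $\Ccal_\sigma$ gives a local product structure $W^s_\sigma \times W^u_\sigma$, and matching the resulting parametrizations requires showing that the transition maps between $\Psi^{\sigma,F}_x$ and $\Psi^{\sigma',F}_x$ are governed by the same underlying normal form on each one-dimensional leaf $W^{\lambda_i^F, F}$. This ultimately reduces to the uniqueness clause in \Cref{normalitem}, but the bookkeeping — especially controlling the signs appearing in the cocycle $\kappa$ — is where the argument becomes technical. A secondary obstacle is the absolute continuity needed for the positive-measure statement, which requires a careful Fubini argument combining \Cref{thm:abs} with absolute continuity of unstable holonomies on a positive-measure set of fibers.
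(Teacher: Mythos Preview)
Your proposal is correct and takes essentially the same approach as the paper: the paper does not give an independent proof of this proposition but simply states it as a summary of \cite[\S4.1]{KRHarith}, and your outline is a faithful sketch of how that construction adapts to the fibered setting on $M^\alpha$. Your identification of item~(2) (independence of $\sigma$) and the sign-tracking in item~(6) as the main technical points is accurate, and your use of absolute continuity of $\mu^{\Fsc}_x$ (already established via \cref{thm:abs} at the start of \S7) for the positive-measure assertion is exactly what is needed.
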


\begin{definition} For $x\in X$, we define the set $I_{x}$  to be $I_{x}=\{H_{x}, H_{x}\circ (-\Id)\}$.  \end{definition}
With this notation, conclusion \eqref{item6666}  of \cref{prop:devel} implies  for every $b\in A$ and $\mu$-a.e. $x$ that
\[\wtd{\alpha}(b)\circ \widehat{H}_{x} \circ L_b\inv = \wtd{\alpha}(b)\circ \widehat{H}_{x} \circ \diag(e^{-\lambda^{F}_{1}(b)},\dots, e^{-\lambda^{F}_{n}(g)})\in I_{\wtd{\alpha}(b)(x)}\] for all $\widehat{H}_{x}\in I_{x}$.

We note that the cardinality of $I_x$ is at most 2.  (The cardinality could be $1$ though.   For example, when  $M= \T^n/\{\pm 1\}$ is an infratorus equipped with the standard $\Sl(n,\Z)$-action, $H_x$ and $H_x\circ (-\Id)$ coincide.)

The following is the main new technical  result of this section.  Roughly, \cref{prop:devel} asserts the (at most two possible choices at each point of $X$) development maps $H_x$ are equivariant up to a linear representation for the action of $A\subset G$.  We assert a similar equivariance for the action of  $G$.
\begin{proposition}\label[proposition]{prop:Gaffine}
For each $g\in G$, there exists $X_{g}\subset M^{\alpha}$ such that $\mu(X_{g})=1$ and such that for all $x\in X_{g}$ with $\widetilde{\alpha}(g)(x)\in X_{g}$ and  for all $\widehat{H}_{x}\in I_{x}$, 
\begin{equation}\label{eq:equiv1}\widetilde{\alpha}(g)\circ \widehat{H}_{x}\circ \pi(g^{-1}) \in I_{\widetilde{\alpha}(g)(x)}.\end{equation} 
\end{proposition}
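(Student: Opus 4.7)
The plan is to bootstrap from the $A$-equivariance already recorded in \Cref{prop:devel}(\ref{item6666}) to the full $G$-equivariance \eqref{eq:equiv1} by proving the relation separately for each unipotent root subgroup $U^{\beta}$, $\beta\in\Phi(A,G)$, and then invoking that $G=\SL(n,\Rbb)$ is generated by $A$ together with the root groups $\{U^{\beta}\}$. The relation \eqref{eq:equiv1} is a cocycle identity in $g$ (taking values in the at-most-two-element set $I_{x}$), so once it holds on a generating set it holds on arbitrary products; the coordination of the $\mu$-null exceptional sets $X_{g}$ under word composition is routine given $G$-invariance of $\mu$ and countable-intersection arguments.

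The heart of the matter is therefore verifying \eqref{eq:equiv1} for a single $u\in U^{\beta}$. Writing $\beta=\lambda_{k}^{F}-\lambda_{l}^{F}$ as a difference of two fiberwise Lyapunov exponents, I would select a proper nonempty subset $\sigma\subset\{1,\dots,n\}$ such that $\{k,l\}\subset\sigma$ (possible since $n\ge 3$). A direct inspection of the weight decomposition of $\pi$ then shows that $\pi(u)$ preserves both $V^{s,F}_{\sigma}$ and $V^{u,F}_{\sigma}=V^{s,F}_{\widehat\sigma}$. I would further choose $b\in\ker\beta\cap\Ccal_{\sigma}$ with $\lambda_{i}^{F}(b)=-t$ constant on $\sigma$ and $\lambda_{j}^{F}(b)=s$ constant on $\widehat\sigma$ (with $t>0$ and $s=|\sigma|t/|\widehat\sigma|$ determined by the constraint $\sum_i \lambda_i^F(b)=0$), so that $b$ acts conformally on both $V^{s,F}_{\sigma}$ and $V^{u,F}_{\sigma}$. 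Since $u\in U^{\beta}$ centralizes $\ker\beta\ni b$, the hypothesis of \Cref{lem:commute} is met, and combined with the framing identity of \Cref{thm:ZCSRtotori} it yields
\[
\bigl(\Psi^{\sigma,F}_{\widetilde{\alpha}(u)(x)}\bigr)^{-1}\circ \widetilde{\alpha}(u)\circ \Psi^{\sigma,F}_{x}=\kappa(u,x)\,\pi(u)\big|_{V^{s,F}_{\sigma}}
\]
together with the analogous identity on $\widehat\sigma$, sharing the same sign $\kappa(u,x)\in\{\pm I\}$.

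Setting $\widetilde{H}_{x}:=\widetilde{\alpha}(u^{-1})\circ H_{\widetilde{\alpha}(u)(x)}\circ \pi(u)$, the two displayed identities imply that $\widetilde{H}_{x}$ agrees with $\kappa(u,x)\cdot H_{x}$ after restriction to each of $V^{s,F}_{\sigma}$ and $V^{u,F}_{\sigma}$. To upgrade these restricted equalities to an equality on all of $V$, the decisive point is that the unstable holonomy $\Hol^{u}_{x,y,\sigma}$ entering the construction of $H_{x}$ in \Cref{prop:hol2} is intrinsically defined by the stable and unstable laminations of $b$, and $\widetilde{\alpha}(u)$ commutes with $\widetilde{\alpha}(b)$ and therefore permutes these laminations leafwise. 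The defining intersection property $\Hol^{u}_{x,y,\sigma}(z)\in W^{s,F}_{\sigma}(y)\cap W^{u,F}_{\sigma}(z)$ is consequently preserved under pushforward by $\widetilde{\alpha}(u)$, which glues the two partial identities into the desired conclusion $\widetilde{H}_{x}\in I_{\widetilde{\alpha}(u)(x)}$.

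The main technical obstacle I anticipate lies not in the core computation above but in the bookkeeping of the full-measure sets where \Cref{normalitem,prop:hol1,prop:hol2,prop:devel} apply, especially when propagating the identity from generators to an arbitrary $g=u_{1}\cdots u_{m}$. For each factor the relation fails on some $\mu$-null set, and the composition argument requires all of these sets to be simultaneously translated away from $x$ along the word $u_{1}u_{2}\cdots u_{j}$; this will rely on $G$-invariance of $\mu$ and a finite-intersection argument for each fixed $g$. A minor secondary point is consistency of the signs $\kappa(\cdot,x)\in\{\pm I\}$ across the various intermediate identifications, which is harmless since $\pm I$ is central in $\GL(V)$ and commutes with $\pi(G)$.
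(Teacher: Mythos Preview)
Your proposal is correct and follows essentially the same approach as the paper: reduce to root subgroups $U^{\beta}$ with $\beta=\lambda_{k}^{F}-\lambda_{l}^{F}$, choose the conformal element $b\in\ker\beta\cap\Ccal_{\sigma}$ so that $u\in C_{G}(b)$, apply \Cref{lem:commute} (packaged in the paper as \Cref{cor:summary}) to linearize $\widetilde{\alpha}(u)$ in the $\Psi^{\sigma,F}$ and $\Psi^{\widehat\sigma,F}$ coordinates, and then use that $\widetilde{\alpha}(u)$ intertwines the stable/unstable laminations of $b$ and hence the holonomies. The only cosmetic difference is that the paper takes the minimal choice $\sigma=\{k,l\}$ rather than an arbitrary $\sigma\supset\{k,l\}$, and it dispatches your anticipated null-set bookkeeping with the single sentence ``it suffices to verify \eqref{eq:equiv1} for each root $\beta$ and each fixed $g\in U^{\beta}$.''
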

\begin{proof}
It suffices to verify \eqref{eq:equiv1} for each root $\beta\in \Phi(A,G)$ and each  fixed  $g\in U^\beta$.  

Recall that the roots  $\beta\in \Phi(A,G)$ are of the form $\beta=\lambda_i^F-\lambda_j^F$ for $i\neq j$.  Fix $\beta=  \lambda_i^F-\lambda_j^F$ and $g\in U^\beta$.   Fix $a\in A$ with $\lambda_i^F(a) = \lambda_j^F(a) = -\frac 1 2$ and $\lambda_k^F(a)=\frac {1}{n-2}$ for all $k\notin \{i,j\}$.  Let $\sigma = \{i,j\}$ and $\widehat \sigma = \{1, \dots, n\} \sm \{i, j\}$.   Then $a\in \Ccal_{\sigma}$ and $g\in C_G(a)$.

By \cref{cor:summary}, there is $\epsilon\in \{0, 1\}$ such that  for every $v\in V^{s, F}_{\sigma}$ and $w\in V^{s, F}_{\widehat \sigma}$,  
$$\left(\Psi_{\widetilde{\alpha}(g)(x)}^{\sigma, F}\right)^{-1}\circ \widetilde{\alpha}(g)\circ\Psi_{x}^{\sigma, F} (v)= (-1)^\epsilon \pi(g) v$$
and 
$$\left(\Psi_{\widetilde{\alpha}(g)(x)}^{\widehat \sigma, F}\right)^{-1}\circ \widetilde{\alpha}(g)\circ\Psi_{x}^{\widehat \sigma, F}(w) = (-1)^\epsilon \pi(g) w.$$
Write $V= V^{s, F}_{\sigma}\oplus  V^{s, F}_{\widehat \sigma}$.

Let 
$\hat x= \widetilde{\alpha}(g)(x)$, $ y=  \Psi_{x}^{\widehat \sigma, F}(w)$,  $\hat y=  \widetilde{\alpha}(g)\circ\Psi_{x}^{\widehat \sigma, F}(w)$, $  z =  \Psi_{x}^{\sigma, F} (v)$, and $\hat z =  \widetilde{\alpha}(g)\circ\Psi_{x}^{\sigma, F} (v)$.  Because $\wtd \alpha(g)$ intertwines stable and unstable manifolds for $\wtd \alpha(a)$, it also intertwines the holonomy maps.  Thus, for $\widehat{H}_{x}\in I_{x}$, there is  $\epsilon'\in\{0,1\}$ such that 
\begin{align*}
\wtd \alpha(g)\circ \widehat{H}_x(v,w)&=
\widetilde{\alpha}(g) \circ \Hol^u_{  x,   y,\sigma} (  z)
\\&=
\Hol^u_{\hat x, \hat y,\sigma} (\hat z)\\&=
\widehat{H}_{\widetilde{\alpha}(g)(x)}((-1)^{\epsilon'} \pi(g) v, (-1)^{\epsilon'} \pi(g) w)
\end{align*}
and so 
$\wtd \alpha(g) \circ \widehat{H}_x\circ \pi(g)\inv \in I_{\alpha(g) (x)}$ and the result then follows.
\end{proof}
From \cref{prop:Gaffine} and Fubini's theorem, we can find a conull set $X'\subset G\times M^{\alpha}$ such that for all $(g,x)\in X'$, \begin{equation}\label{eq:equiv}\wtd{\alpha}(g)\circ \widehat{H}_{x}\circ \pi(g^{-1})\in I_{\wtd{\alpha}(g)(x)}\end{equation}for all $\widehat{H}_{x}\in I_{x}$. Using Fubini's theorem again, we can find $X_{M}\subset M^{\alpha}$ such that $\mu(X_{M})=1$ and $\{g\in G:(g,x)\notin X\}$ has a  Haar measure  $0$.  

For  $x\in X_{M}$ and $g\in G$, define $$\wtd{I}_{\wtd{\alpha}(g)(x)}=\wtd{\alpha}(g)\circ I_{x}\circ \pi(g^{-1}).$$  We claim that $\wtd{I}_{y}$ is well defined for every $y$ in the orbit $\wtd{\alpha}(G)(X_{M})\subset M^\alpha$. Indeed, if $g,h\in G$ and $x,y\in X_{M}$ satisfy $\wtd{\alpha}(g)(x)=\wtd{\alpha}(h)(y)$ then, since $y=\wtd{\alpha}(h^{-1}g)(x)\in X_{M}$,  \eqref{eq:equiv} holds and so 
 \[\wtd{\alpha}(g)\circ I_{x}\circ \pi(g^{-1})=\wtd{\alpha}(h)\circ I_{y}\circ \pi(h^{-1}).\] 
Thus, after redefining $I_{x}$ on a measure zero set in $M^\alpha$, we may assume  the following:
\begin{proposition}\label[proposition]{prop:GequivI}
There exists a full $\mu$-measure set $Y\subset M^{\alpha}$ such that the following hold:
\begin{enumerate}
\item $Y$ is $G$-invariant,  and hence projects onto $G/\Gamma$ under the projection $M^{\alpha}\to G/\Gamma$.
\item For every $y\in Y$, every $g\in G$,  and every $\widehat{H}_{x}\in I_{x}$,
\[\wtd{\alpha}(g)\circ \widehat{H}_{x}\circ \pi(g^{-1})\in I_{\wtd{\alpha}(g)(x)}.\]  
\end{enumerate}
\end{proposition}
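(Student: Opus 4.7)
The plan is to take $Y := \widetilde{\alpha}(G)(X_{M})$, the $G$-saturation of the full measure set $X_{M}$ produced by the Fubini argument preceding the proposition. This set is tautologically $G$-invariant, has full $\mu$-measure because $\mu$ is $G$-invariant and $X_{M}$ has full $\mu$-measure, and projects onto $G/\Gamma$ since $p_{\ast}\mu$ is the Haar measure on $G/\Gamma$. On $Y$ I would redefine $I_y$ on the null set $Y\setminus X_M$ using the prescription
\[
\widetilde{I}_{\widetilde{\alpha}(g)(x)} \;:=\; \widetilde{\alpha}(g)\circ I_{x}\circ \pi(g^{-1}), \qquad x\in X_{M},\ g\in G,
\]
which agrees with $I_x$ when $g=e$. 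Once well-definedness is established, the desired $G$-equivariance on $Y$ is a formal consequence: if $y=\widetilde{\alpha}(h)(x)$ with $x\in X_M$, then for any $g\in G$
\[
\widetilde{\alpha}(g)\circ \widetilde{I}_{y}\circ \pi(g^{-1}) = \widetilde{\alpha}(gh)\circ I_{x}\circ \pi((gh)^{-1}) = \widetilde{I}_{\widetilde{\alpha}(g)(y)}.
\]

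All the content is therefore in showing the prescription is well defined, i.e.\ that for $x,y\in X_M$ with $y=\widetilde{\alpha}(g)(x)$ we have $I_{y} = \widetilde{\alpha}(g)\circ I_{x}\circ \pi(g^{-1})$. By Fubini, the sets $G_{x}:=\{k:(k,x)\in X'\}$ and $G_{y}\cdot g:=\{k:(kg^{-1},y)\in X'\}$ each have full Haar measure in $G$, so I may pick $k\in G_{x}\cap(G_{y}\cdot g)$. Applying \eqref{eq:equiv} at both $(k,x)$ and $(kg^{-1},y)$, and using $\widetilde{\alpha}(kg^{-1})(y)=\widetilde{\alpha}(k)(x)$, gives two inclusions into the \emph{same} set of cardinality at most $2$:
\[
\widetilde{\alpha}(k)\circ I_{x}\circ \pi(k^{-1}) \,\subseteq\, I_{\widetilde{\alpha}(k)(x)},\qquad
\widetilde{\alpha}(k)\circ \bigl(\widetilde{\alpha}(g^{-1})\circ I_{y}\circ \pi(g)\bigr)\circ \pi(k^{-1}) \,\subseteq\, I_{\widetilde{\alpha}(k)(x)}.
\]
The map $\widehat{H}\mapsto \widetilde{\alpha}(k)\circ \widehat H\circ \pi(k^{-1})$ is injective on maps $V\to\Fsc(x)$, so the two left-hand sides have cardinalities $|I_x|$ and $|I_y|$ respectively.

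The one place that requires genuine care---and the main (if mild) obstacle---is forcing these two subsets of $I_{\widetilde\alpha(k)(x)}$ to coincide, because a priori $|I_x|\in\{1,2\}$ and similarly for $|I_y|$. I would handle this by a cardinality case analysis. If $|I_x|=2$, the first inclusion must be an equality onto $I_{\widetilde\alpha(k)(x)}$, and in particular $|I_{\widetilde\alpha(k)(x)}|=2$, which via the second inclusion forces $|I_y|=2$ and both subsets to equal $I_{\widetilde\alpha(k)(x)}$. If instead $|I_x|=1$, i.e.\ $H_x\circ(-\Id)=H_x$, then a direct computation using linearity of $\pi(k^{-1})$ shows the unique element $T := \widetilde{\alpha}(k)\circ H_{x}\circ \pi(k^{-1})$ of the image satisfies $T\circ(-\Id)=T$; since $T$ lies in $I_{\widetilde\alpha(k)(x)}=\{H_{\widetilde\alpha(k)(x)},\,H_{\widetilde\alpha(k)(x)}\circ(-\Id)\}$, this forces $H_{\widetilde\alpha(k)(x)}\circ(-\Id)=H_{\widetilde\alpha(k)(x)}$, hence $|I_{\widetilde\alpha(k)(x)}|=1$, and then the second inclusion forces $|I_y|=1$ and equality of the two singletons. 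In every case the two subsets coincide, so injectivity of the conjugation by $(\widetilde\alpha(k),\pi(k^{-1}))$ yields $I_{y}=\widetilde{\alpha}(g)\circ I_{x}\circ \pi(g^{-1})$, completing the well-definedness of $\widetilde I$ and hence the proposition.
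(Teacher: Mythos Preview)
Your proposal is correct and takes the same approach as the paper: set $Y=\wtd{\alpha}(G)(X_{M})$ and verify that $\wtd I_{\wtd\alpha(g)(x)}:=\wtd\alpha(g)\circ I_x\circ\pi(g^{-1})$ is well defined, after which $G$-equivariance is formal. Your auxiliary-$k$ sandwich argument for well-definedness is in fact more careful than the paper's terse one-line justification. One simplification worth noting: the cardinality case analysis is unnecessary, because every $I_\bullet=\{H_\bullet,\,H_\bullet\circ(-\Id)\}$ is by construction closed under post-composition with $-\Id$, and conjugation $\widehat H\mapsto\wtd\alpha(k)\circ\widehat H\circ\pi(k^{-1})$ commutes with this operation (as $\pi(k^{-1})$ is linear); hence both images in $I_{\wtd\alpha(k)(x)}=\{H',H'\circ(-\Id)\}$ are nonempty subsets closed under $\circ(-\Id)$ and therefore automatically equal the whole set.
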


We fix a choice of measurable section on $Y$,   $x\mapsto \wtd{H}_{x}\in I_{x}$.  Then 
\[\widetilde{\alpha}(g)\circ \wtd{H}_{x} = \wtd{H}_{x} \circ (\pm \pi(g))\] for all $g\in G$.
 Since $Y$, $\mu$, and $I_x$ are $G$-invariant (or equivariant) and since $G$ acts transitively on $G/\Gamma$, we may restrict to the fiber over the identity and  to  obtain the following.

\begin{corollary}\label[corollary]{coro:GammaaffineI} 
There exists an 
$\alpha(\Gamma)$-invariant $\nu$-measurable set $Y_0\subset M$ with $\nu(Y_0)=1$ and a measurable family of measurable maps $\{h_{y}\colon V \to M : y\in Y_0\}$ 
such that, for all $y\in Y_0$ the following hold:
\begin{enumerate}
\item\label{it16}  $h_{y}(0)=y$ and $\nu(h_{y}(V))>0$.
\item\label{it17} For Lebesgue almost every $v\in V$,  if $z=h_{y}(v)$ then there exits an affine map $L\in \Aff(V)$ such that $h_{z}\circ L=h_{y}$ for Lebesgue almost everywhere.
\item\label{it19} Conversely, there is a full (Lebesgue) measure set $R\subset V$ such that if $v, v' \in R$ and $h_y (v)=h_y (v')$ then there is an affine map $L\colon V\to V$ such that $h_y \circ L=h_{y}$ almost everywhere.
\item\label{it18} For all $\gamma\in \Gamma$  \[\alpha(\gamma)\circ h_{y} =h_{\alpha(\gamma)(y)}\circ\left(\pm{\pi}(\gamma)\right)\] Lebesgue almost everywhere.   

\end{enumerate}

\end{corollary}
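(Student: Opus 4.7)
The plan is to deduce \cref{coro:GammaaffineI} by restricting the $G$-equivariant structure on the suspension space $M^{\alpha}$ given in \cref{prop:GequivI} to the fiber over the identity coset $\mathbf{1}\Gamma \in G/\Gamma$. Recall from \cref{sss:dictionary measures} that the fiber $\Fsc_0 := p^{-1}(\mathbf{1}\Gamma)$ is canonically identified with $M$ via $x \leftrightarrow [\mathbf{1}, x]$; under this identification, the fiberwise conditional measure $\mu^{\Fsc}_{\mathbf{1}\Gamma}$ corresponds to the $\alpha(\Gamma)$-invariant measure $\nu$ on $M$, and (since $\widetilde{\alpha}(\gamma)\bigl([\mathbf{1}, x]\bigr) = [\mathbf{1}, \alpha(\gamma)(x)]$ for $\gamma\in \Gamma$) the restriction of $\widetilde{\alpha}(\gamma)$ to $\Fsc_0$ coincides with $\alpha(\gamma)$ on $M$.

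Explicitly, I would proceed as follows. Using $G$-invariance of the conull set $Y$ together with $G$-equivariance of the family of conditional measures $\{\mu^{\Fsc}_{g\Gamma}\}$ (after modifying the conditional measures on a $\mu$-null set so that $\wtd\alpha(g)_*\mu^{\Fsc}_{\mathbf{1}\Gamma} = \mu^{\Fsc}_{g\Gamma}$ for every $g\in G$), one sees that the property ``$Y\cap \Fsc(g\Gamma)$ is $\mu^{\Fsc}_{g\Gamma}$-conull'' holds uniformly in $g\Gamma$; in particular the set
\[Y_0 := \{y \in M : [\mathbf{1}, y] \in Y\}\]
is $\nu$-conull in $M$. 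Since $Y$ is $G$-invariant and $\widetilde{\alpha}(\gamma)$ preserves $\Fsc_0$ for $\gamma \in \Gamma$, the set $Y_0$ is $\alpha(\Gamma)$-invariant. For $y \in Y_0$, I then define $h_y : V \to M$ by $h_y(v) := \widetilde{H}_{[\mathbf{1}, y]}(v)$ under the above identification. Property \eqref{it16} follows immediately from \cref{prop:devel}\eqref{it9} together with the identification of $\mu^{\Fsc}_{\mathbf 1\Gamma}$ with $\nu$. Property \eqref{it17} is a direct translation of item (3) of \cref{prop:devel}. Property \eqref{it18} comes from specializing the equivariance in \cref{prop:GequivI} to $g = \gamma \in \Gamma$: since $\widetilde{\alpha}(\gamma)$ restricts to $\alpha(\gamma)$ on the fiber $\Fsc_0$ and since the choice of $\widetilde H_x \in I_x$ absorbs the sign ambiguity into $\pm \pi(\gamma)$.

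The only nontrivial step is property \eqref{it19}, the converse affine relation when $h_y(v) = h_y(v')$. Here the idea is to choose a full-measure set $R\subset V$ on which every almost-everywhere property of $h_y$ coming from \cref{prop:hol2,prop:devel} holds simultaneously, and to note that for $v, v' \in R$ with common image $z := h_y(v) = h_y(v')$, property \eqref{it17} supplies affine maps $L, L'$ with $h_z \circ L = h_y$ and $h_z \circ L' = h_y$ almost everywhere, so that $L^{-1} \circ L'$ is the desired affine self-coincidence of $h_y$. The main technical obstacle is that $L$ and $L'$ are defined almost everywhere and one needs the exceptional null sets to be \emph{independent} of $v, v'$; this is handled by working on a countable dense collection of base points and using that affine maps send conull sets to conull sets to globalize the relation. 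A secondary issue is producing the measurable section $x \mapsto \widetilde H_x \in I_x$ from the at-most-two-valued correspondence $x\mapsto I_x$, which follows from a standard measurable selection theorem once one verifies that $x\mapsto I_x$ is Borel (a consequence of its $G$-equivariance and the measurability of the family $\{H_x\}$ in \cref{prop:devel}).
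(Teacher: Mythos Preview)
Your proposal is correct and follows the same approach as the paper's proof: restrict the $G$-equivariant structure from \cref{prop:GequivI} to the fiber over $\mathbf{1}\Gamma$, identify that fiber with $(M,\nu)$, and set $h_y := \widetilde{H}_{[\mathbf{1},y]}$. Your derivation of item \eqref{it19} by composing the two affine maps supplied by item \eqref{it17} is the natural one; your technical worry about null sets is overcautious (taking $R$ to be the conull set on which \eqref{it17} holds already suffices, since the ``a.e.'' in the conclusion of \eqref{it19} is allowed to depend on the pair $(v,v')$), and the measurable section $x\mapsto\widetilde{H}_x$ is fixed in the paper immediately before the corollary, so you need not produce it yourself.
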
 
\begin{proof}[Proof of \Cref{coro:GammaaffineI}] 
Identify $M$ with the fiber over the identity coset $p\inv (\1 \Gamma)$ in $M^\alpha$.  
By $G$-invariance of $\mu$, we may define a family of fiberwise conditional measures $\mu_{g\Gamma}$ defined for every $g\Gamma\in G/\Gamma$.  Moreover, from the construction of $\mu$, under the identification of $M$ with $p\inv (\1 \Gamma)$, we have $\mu_{\1\Gamma}= \nu$.   

Let $Y_0 = Y\cap p\inv (\1 \Gamma)$.  Then $\nu(Y_0) = 1$.  Given $y\in Y_0$, under the identification of $M$ with $p\inv (\1 \Gamma)$, set $h_y \colon V\to M$ to be $\wtd H_y \colon V\to M^\alpha$.  Given $x\in p\inv (\1 \Gamma)$ and $\gamma\in \Gamma$ we have $\wtd \alpha(\gamma)(x)\in p\inv (\1 \Gamma)$ and, under the identification of $M$ with $p\inv (\1 \Gamma)$, $\wtd \alpha(\gamma)(x)= \alpha(\gamma)(x)$.  

The result then follows.
\end{proof}

\subsection{Proof of \texorpdfstring{\Cref{thm:SLnZintro}}{Theorem 1.4}}\label{sec:hgroup}
In this section, we continue the  proof of  \Cref{thm:SLnZintro}. 
Recall that by \cref{thm:abs}, $\nu$ is absolutely continuous.  
Also recall that in \Cref{coro:GammaaffineI}, we constructed a measurable family of development maps $h_{x}$, defined  for $\nu$-almost every $x\in M$.   

For $x\in Y_0$, write $U_{x}=h_{x}(V)$.  We have the following from  item \eqref{it17} of \Cref{coro:GammaaffineI}:
\begin{claim}\label[claim]{claim:finiteindex1}
For  $\nu$-a.e.\ $y\in U_{x}$, we have $U_x =U_y$. 
\end{claim}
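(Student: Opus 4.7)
The plan is to deduce the claim from items \eqref{it16}--\eqref{it18} of \cref{coro:GammaaffineI}, combined with absolute continuity of the pushforward of Lebesgue measure by each $h_x$.

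First I will establish the inclusion $U_x\subseteq U_y$ for $\nu$-a.e.\ $y\in U_x$. For such $y$ I write $y=h_x(v)$ with $v$ lying in the full Lebesgue-measure subset of $V$ on which item \eqref{it17} of \cref{coro:GammaaffineI} applies; such a choice is possible because $(h_x)_{*}\leb$ is absolutely continuous with respect to $\nu|_{U_x}$, a property inherited from the construction of $h_x$ out of the $C^r$ parametrizations $\Psi^{\sigma,F}_x$ of \cref{cor:summary} and the absolutely continuous unstable holonomies of \cref{prop:hol2}. Item \eqref{it17} then supplies an affine map $L\in\Aff(V)$ with $h_y\circ L=h_x$ Lebesgue-almost everywhere, whence $U_x=h_x(V)=h_y(L(V))\subseteq h_y(V)=U_y$ modulo a $\nu$-null set.

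Next I will argue that $L$ is invertible, which yields the reverse inclusion. If the linear part of $L$ had nontrivial kernel, then $L(V)$ would be a proper affine subspace of $V$, hence Lebesgue-null. Combined with the absolute continuity of $(h_y)_{*}\leb$ with respect to $\nu|_{U_y}$, this would force $\nu\bigl(h_y(L(V))\bigr)=0$; but $h_y(L(V))=h_x(V)=U_x$ has positive $\nu$-measure by item \eqref{it16}, a contradiction. Hence $L$ is invertible in $\Aff(V)$ and $h_y=h_x\circ L^{-1}$ almost everywhere, so $U_y\subseteq U_x$ modulo a $\nu$-null set. Combining both inclusions and discarding the exceptional $\nu$-null subset of $U_x$ gives the claimed equality.

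The main bookkeeping step, and essentially the only point that uses more than the formal properties of $\{h_x\}$ recorded in \cref{coro:GammaaffineI}, is verifying the absolute continuity of $(h_x)_{*}\leb$ with respect to $\nu|_{U_x}$. I expect this to follow by inspecting the construction in \cref{prop:hol1,prop:hol2,prop:devel}: the normal form parametrizations are $C^r$ diffeomorphisms along the stable and unstable subspaces, the unstable holonomies are absolutely continuous in the sense of Pesin theory, and the fiberwise conditional measures of $\nu$ are themselves absolutely continuous along the fiberwise Lyapunov foliations by \cref{thm:abs}.
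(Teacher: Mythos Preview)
Your approach matches the paper's --- both deduce the claim from item~\eqref{it17} of \cref{coro:GammaaffineI} --- but you supply details the paper leaves implicit, and one of them is stated with the wrong direction. What you actually need is not $(h_x)_*\leb \ll \nu|_{U_x}$ but rather that $h_x$ sends $\leb_V$-null sets to $\nu$-null sets (the Luzin~(N) property for $h_x$, combined with $\nu\ll\leb_M$ from \cref{thm:abs}). This is what both steps use: to pass from ``$\leb$-a.e.\ $v\in V$'' to ``$\nu$-a.e.\ $y\in U_x$'' you need $\nu\bigl(U_x\setminus h_x(G)\bigr)=0$ whenever $\leb(V\setminus G)=0$, and for your invertibility argument you need $\leb(L(V))=0\Rightarrow\nu(h_y(L(V)))=0$. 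The relation $(h_x)_*\leb\ll\nu$ instead says $\nu(B)=0\Rightarrow\leb(h_x^{-1}(B))=0$, which is neither of these. Your final paragraph does argue for the correct property via the $C^r$ normal forms and absolutely continuous holonomies, so the fix is only a matter of rephrasing.

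On the invertibility of $L$: your measure-theoretic argument works once the direction above is corrected, but there is a shorter route implicit in the paper. The affine maps in item~\eqref{it17} descend from \cref{prop:devel}(3), where the linear part of $L$ is diagonal in the basis $\Bsc$, with entries built from the coordinate changes of \cref{prop:hol1}(5) and the linear holonomies of \cref{prop:hol2}(3); since both are restrictions of diffeomorphisms, every diagonal entry is nonzero and $L$ is invertible by construction. Then $h_y\circ L=h_x$ and $h_x\circ L^{-1}=h_y$ give $U_x=U_y$ immediately, without any appeal to measures.
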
 
 From \cref{coro:GammaaffineI}, for a.e.\ $x$ and any $\gamma\in \Gamma$, we have either $\nu(\alpha(\gamma)(U_x)\triangle U_x)=0$ (where $\triangle$ denotes the symmetric difference) or $\nu(\alpha(\gamma)(U_x)\cap U_x)=0$.  Since $\nu$ is $\alpha(\Gamma)$-invariant, we conclude that the set $\{U_{x}:x\in Y_0\}$ is finite.   
Fix one choice of element $U_{0}=U_{x}$ for some $x\in Y_0$.  We have $\nu(U_0)>0$.  Set $\nu_0:= \frac{1}{\nu(U_0)}\restrict{\nu}{U_0}.$  
Then the subgroup $\Gamma_{0}<\Gamma$, $$\Gamma_0 := \{\gamma\in \Gamma: \nu(\alpha(\gamma)(U_0)\triangle U_0)=0\},$$
has finite index in $\Gamma$ and $\nu_0$ is an ergodic, $\alpha(\Gamma_0)$-invariant probability measure on $M$.  
Since $\nu$ was assumed absolutely continuous, $\nu_0$ is also absolutely continuous.  

\subsubsection{The homoclinic group}
Given $x\in U_0$, we let \[\Lambda_{x}=\left\{L\in \Aff(V): {h}_{x}\circ L=h_{x}\right\}. \]   Recall that  for $\nu$-almost every $y\in U_{x}$, there exists $L\in \Aff(V)$ such that $h_{x}\circ L=h_{y}$.  Thus, for every $\gamma\in \Gamma_0$ we can find an affine map $\wtd{L}_{\gamma}$ that satisfies following:
\begin{proposition}\label[proposition]{prop:conjh}\label[proposition]{prop:Gammaprop}
For $\nu$-a.e.\ $x\in U_0$ and for every $\gamma\in \Gamma_0$ there exists an affine map $\wtd{L}_{\gamma}\in \Aff(V)$ such that \begin{equation}\label{bananaface}\alpha(\gamma)\circ h_{x}=h_{x}\circ \wtd{L}_{\gamma}.\end{equation}Furthermore, the following hold: 
\begin{enumerate}
\item  $\Lambda_{x}\wtd{L}_{\gamma}=\wtd{L}_{\gamma}\Lambda_{x}$ for every $\gamma\in \Gamma_0$.  
\item There is an identification  of vector spaces $V\simeq \R^n$ relative to which the group of translations by $\Zbb^{n}$ in $\R^n$ is a finite (at most two) index subgroup of $\Lambda_{x}$.  

More precisely, under this identification either $\Lambda_{x}= \Zbb^{n}\rtimes\left\{\pm I\right\}$  or $\Lambda_{x}= \Zbb^{n}$. 
In particular, $V/\Lambda_{x}$ is either a $n$-torus or infra-torus.

\item $h_{x}$ descends to a function $h\colon  V/\Lambda_{x} \to  M$ defined on a Lebesgue-full measure subset of $V/\Lambda_{x}$.  Moreover, $\lambda=(h^{-1})_{*}\nu_{0}$ is the (normalized) Haar measure on  $V/\Lambda_{x}$ and 
 $h \colon (V/\Lambda_{x},\lambda)\to  (M,\nu_{0})$ is a  measurable isomorphism.  

\end{enumerate}\end{proposition}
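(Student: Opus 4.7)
The plan is to verify the four assertions in sequence, starting from the intertwining relation in \cref{coro:GammaaffineI}\eqref{it18}. First I would construct $\wtd L_\gamma$: since $\gamma \in \Gamma_0$ preserves $U_0$ modulo $\nu$-null sets, $\alpha(\gamma)(x) \in U_0$ for $\nu$-a.e.\ $x \in U_0$, and \cref{claim:finiteindex1} gives $U_{\alpha(\gamma)(x)} = U_x = U_0$. Applying \cref{coro:GammaaffineI}\eqref{it17} yields an affine $L \in \Aff(V)$ with $h_{\alpha(\gamma)(x)} = h_x \circ L$, and setting $\wtd L_\gamma := L \circ (\pm \pi(\gamma))$ realizes \eqref{bananaface}; invertibility in $\Aff(V)$ is automatic from the invertibility of the linear part $\pm\pi(\gamma)\cdot(\text{linear part of }L)$.

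For assertion (1), I would substitute directly: for $L \in \Lambda_x$,
\[
h_x \circ \wtd L_\gamma L = \alpha(\gamma) \circ h_x \circ L = \alpha(\gamma) \circ h_x = h_x \circ \wtd L_\gamma,
\]
so $\wtd L_\gamma L \wtd L_\gamma^{-1} \in \Lambda_x$; applying the same reasoning to $\gamma^{-1}$ gives the reverse inclusion.

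The bulk of the work lies in assertion (2), which I would divide into (a) pinning down the possible linear parts and (b) analyzing the translational part $\Lambda_x^0 := \Lambda_x \cap V$. For (a): every $L \in \Lambda_x$ has diagonal linear part with respect to the basis $\Bsc$ of \eqref{basis}, inherited from the diagonal structure of the holonomy coordinate changes in part (5) of \cref{prop:hol1} and part (3) of \cref{prop:hol2} used to build $h_x$. Assertion (1) then forces $\pi(\gamma)A\pi(\gamma)^{-1}$ to remain diagonal for every $\gamma \in \Gamma_0$ and every diagonal linear part $A$ appearing in $\Lambda_x$; since $\Gamma_0$ has finite index in $\Sl(n,\Zbb)$, Borel density makes $\pi(\Gamma_0)$ Zariski dense in $G=\Sl(n,\Rbb)$, so $gAg^{-1}$ is diagonal for every $g \in G$, forcing $A$ to be scalar and then $\pm I$ by volume preservation. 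For (b): $h_x$ restricts to a $C^r$ diffeomorphism on each $V_\sigma^{s,F}$ by part (4) of \cref{prop:devel}, precluding accumulation at $0$ and hence giving discreteness of $\Lambda_x^0$; the essential injectivity (modulo $\Lambda_x^0$) of the descended map onto $U_0$ together with $\nu(U_0)<\infty$ forces $V/\Lambda_x^0$ to have finite Lebesgue volume, hence cocompactness. A choice of basis then identifies $\Lambda_x^0$ with $\Zbb^n$, and $\Lambda_x/\Lambda_x^0 \subset \{\pm I\}$ yields $\Lambda_x = \Zbb^n$ or $\Zbb^n \rtimes \{\pm I\}$, so $V/\Lambda_x$ is $\Tbb^n$ or $\Tbb^n_{\pm}$.

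Finally, for (3), $h_x$ descends to $h \colon V/\Lambda_x \to M$ that is essentially injective onto $U_0$ by definition of $\Lambda_x$ and \cref{coro:GammaaffineI}\eqref{it19}. After normalization, $h_*\leb$ is an absolutely continuous Borel probability on $U_0$; by \eqref{bananaface} and the fact that $\pm\pi(\Gamma_0) \subset \Gl(n,\Zbb)$ preserves Haar, this pushforward is $\alpha(\Gamma_0)$-invariant, and ergodicity of $\alpha(\Gamma_0)$ on $(U_0,\nu_0)$ (which descends from ergodicity of $\alpha(\Gamma)$ on $(M,\nu)$ via the finite-index inclusion) together with absolute continuity forces $h_*\leb = \nu_0$. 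The hard part will be step~(2): establishing diagonality of the linear parts of $\Lambda_x$ from the normal-form and holonomy coordinates and then extracting both discreteness and cocompactness of $\Lambda_x^0$ from the local $C^r$ diffeomorphism properties of $h_x$ combined with finiteness of $\nu$.
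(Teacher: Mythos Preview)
Your construction of $\wtd L_\gamma$ and the verification of item (1) match the paper's proof essentially verbatim. For items (2) and (3) the paper gives no argument at all and simply cites \cite[Prop.\ 4.6 and Cor.\ 4.7]{KRHarith}; what you have written is a sketch of that cited material, and the outline (diagonal linear parts, Borel density forcing scalar, discreteness from the local diffeomorphism on Lyapunov leaves, cocompactness from finiteness of $\nu$) is the right shape.

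Two imprecisions to flag. First, you write ``since $\Gamma_0$ has finite index in $\Sl(n,\Zbb)$'': at this point in the argument this is not known---indeed it is part of the conclusion of \cref{thm:SLnZintro}. What is known is that $\Gamma_0$ has finite index in the lattice $\Gamma\subset\Sl(n,\Rbb)$, and Borel density applies to that. Second, the linear part of $\wtd L_\gamma$ is not $\pm\pi(\gamma)$ but the product of a diagonal matrix (coming from the coordinate change $L$) with $\pm\pi(\gamma)$; the assertion that this lies in $\Gl(n,\Zbb)$ is a \emph{consequence} of item (2) (normalizing $\Lambda_x^0\simeq\Zbb^n$), not an input to it, so your step~(3) is fine once the logical order is made clear. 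Neither of these affects the validity of the approach.
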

\begin{proof} We have $\alpha(\gamma)\circ h_{x}=h_{\alpha(\gamma)(x)}\circ (\pm \pi(\gamma))$. Since $\alpha(\gamma)(x)\in U_{0}$, we can find an affine map $L_0\in \Aff(V)$ so that 
$h_{\alpha(\gamma)(x)}=h_{x}\circ L_0$.
Defining $\wtd{L}_{\gamma}=L_{0}\circ (\pm \pi(\gamma))$,   we  have that $\alpha(\gamma)\circ h_{x}=h_{x}\circ \wtd{L}_{\gamma}$.

For all $L\in \Lambda_{x}$, 
\[h_{x}\circ \wtd{L}_{\gamma}\circ L\circ \wtd{L}_{\gamma}^{-1}=\alpha(\gamma)\circ h_{x}\circ L\circ \wtd{L}_{\gamma}^{-1}=\alpha(\gamma)\circ h_{x}\circ \wtd{L}_{\gamma}^{-1}=h_{x}.\] This shows the first item. The second and third items can be proven exactly same as in \cite[Prop.\ 4.6 and Cor.\ 4.7]{KRHarith}.
\end{proof}

\subsubsection{The affine action}

 By the first item in \Cref{prop:Gammaprop},  $\wtd{L}_{\gamma}\colon V\to V$ induces a map $[\wtd{L}_{\gamma}]\colon V/\Lambda_{x}\to V/\Lambda_{x}$ where the quotient space $V/\Lambda_{x}$ is given by the affine  action of $\Lambda_{x}$ on $V$.  
Applying \eqref{bananaface} for $\gamma_1,\gamma_2\in \Gamma_0$, 
\begin{align*}
h_{x}&=
\alpha(\gamma_2)\inv \circ \alpha(\gamma_1)\inv \circ h_{x}\circ \wtd{L}_{\gamma_1\gamma_2}\\
&=
\alpha(\gamma_2)\inv \circ \left(  h_{x}\circ  \wtd{L}_{\gamma_1} \inv \right) \circ \wtd{L}_{\gamma_1\gamma_2}\\
&=
  h_{x}\circ \wtd{L}_{\gamma_2} \inv \circ  \wtd{L}_{\gamma_1} \inv  \circ \wtd{L}_{\gamma_1\gamma_2}
\end{align*}
and so conclude $$\wtd{L}_{\gamma_2} \inv \circ \wtd{L}_{\gamma_1} \inv  \circ \wtd{L}_{\gamma_1\gamma_2}\in \Lambda_x.$$
In particular,  $\gamma\mapsto [\wtd L_\gamma]$ is a well-defined  action of $\Gamma_0$ on $V/\Lambda_x$  by affine orbifold automorphisms.

\subsubsection{Assembling the proof of \Cref{thm:SLnZintro}}
 When $\Lambda_{x}= \Zbb^{n}$, we also write $L_\gamma = [\wtd{L}_{\gamma}]$ for the induced affine map of the torus $V/\Lambda_{x}= \T^n$.  
When $\Lambda_{x}= \Zbb^{n}\rtimes\left\{\pm I\right\}$, we let $L_\gamma$ be the affine map of $\T^n=\R^n/\Z^n$ induced by the choice of $\wtd L_\gamma\colon \R^n\to \R^n$.  Then $L_\gamma\colon \R^n/\Z^n\to \R^n/\Z^n$ is a lift of the affine orbifold transformation $[\wtd{L}_{\gamma}]\colon V/\Lambda_{x}\to V/\Lambda_{x}$.  Note in the case that $\Lambda_{x}= \Zbb^{n}\rtimes\left\{\pm I\right\}$ that $L_{\gamma_1}\circ L_{\gamma_2} =\pm L_{\gamma_1\gamma_2}$ but we need not have 
$L_{\gamma_1}\circ L_{\gamma_2}=L_{\gamma_1\gamma_2}$.

Let $\gamma_{1},\dots,\gamma_{m}$ be a choice of generators of $\Gamma_{0}$.  Let $\wtd{\Gamma}_{0}$ be the subgroup of $\Aff(\Tbb^{n})$ generated by the choice of $L_{\gamma_{i}}$ for $i=1,\dots, m$. Then there exists a finite group $F$ (in fact, $F=\{\pm \Id\}$) such that the following sequence is exact:
\[1\to F\to \wtd{\Gamma_0} \xrightarrow[r]{} \Gamma_0 \to 1.\]

The affine group of the torus $\Aff(\Tbb^{n})$ is a linear group.  Since $\wtd{\Gamma_0}$ is a subgroup of $\Aff(\Tbb^{n})$, there exists a finite index subgroup $\wtd{\Gamma}_{1}$ in $\wtd{\Gamma_{0}}$ such that $\wtd{\Gamma}_{1}$ is torsion-free. Since $F=\ker(r)$ is torsion,  the restriction $\restrict{r}{\wtd{\Gamma}_{1}}$ is injective and the image $r(\wtd{\Gamma}_{1})$ is a finite index subgroup of $\Gamma_{0}$.

We have that $\Aff(\Tbb^{n})\simeq\Gl(n,\Z)\ltimes \Tbb^n$.  Given $\gamma\in \Aff(\Tbb^{n})$, let $\rho(\gamma)= D\gamma\in \Gl(n,\Z)$ denote the linear part of $\gamma$.

As a consequence of Margulis' superrigidity  theorem   the first cohomology group of $\wtd{\Gamma}_{1}$ with coefficient in the $\wtd{\Gamma}_{1}$-module $V_\rho$ vanishes, 
$H^{1}(\wtd{\Gamma}_{1}, V_{\rho})=0$ (see \cite[Thm.\ 3(iii)]{Mar91}).   
By  \cite[Thm.\  3]{Hurder}, there is a finite index subgroup $\wtd \Gamma' $ of $\wtd \Gamma_{1}<\Aff(\Tbb^{n})$ such that $\wtd{\Gamma}'$ acts on $V/\Zbb^{n}$ by   automorphisms; that is  $\wtd{\Gamma}'$ is a subgroup of $\Aut(\Tbb^{n})\simeq \GL_{n}(\Zbb)$.  
In particular, there is a linear representation $\wtd \rho \colon \wtd{\Gamma}'\to \Gl(n,\Z)$ such that $\gamma\in \Aff(\Tbb^{n})$ coincides with the automorphism $\wtd \rho(\gamma)\in \Aut(\T^n)$ for every $\gamma\in \wtd{\Gamma}'$.

Let $\Gamma_{1}=r(\wtd{\Gamma}_{1})$ and let  $\Gamma'=r(\wtd{\Gamma}')$.  
Then $\Gamma'$ has  finite index in $\Gamma_{1}$  thus has finite index in  $\Gamma_{0}$ and $\Gamma$. This shows that $\Gamma$ contains a finite index subgroup $\Gamma'$ that is isomorphic to a finite index subgroup of $\SL(n, \Zbb)$. 

Finally, let  $h\colon (V/\Lambda_{x},\lambda) \to   (M,\nu_{0})$ be the measurable isomorphism  induced by $h_{x}$ as in \Cref{prop:conjh}.  
The representation  $\wtd  \rho\colon \wtd \Gamma'\to \Gl(n,\Z)$ descends (via the isomorphism $r$) to an affine action $\what \rho\colon \Gamma'\to \Aff(V/\Lambda_x)$, $$\what \rho(\gamma)(x\Lambda_x) = \wtd \rho(\gamma)(x)\Lambda_x.$$   By  \Cref{prop:conjh}, we have
\[ \alpha(\gamma)\circ h= h\circ \what \rho(\gamma)\] for every $\gamma\in \Gamma'$. This shows \Cref{thm:SLnZintro}.

\appendix
\section{Semicontinuity of fiber entropy}\label{app:yomdinnewhouse}

Although our main application is to the dynamics induced by translation by elements $g\in G$ on the fiber bundle $M^\alpha\to G/\Gamma$, we formulate a version of the classical results of Newhouse \cite{MR986792}, following the results of Yomdin \cite{MR0889979}, of upper-semicontinuity of fiber metric entropy in a more general setting of fibered dynamics.  Since such a formulation seems to not appear in the literature, we hope such a formulation will be of use in many other settings.

\subsection{Abstract setting for fiberwise dynamics}\label{s:setup} 
\subsubsection{Fibered space and uniformly bi-Lipschitz Borel trivialization}
We let $Z$ and $Y$ be complete, second countable metric spaces.  
We fix $p\colon Z\to Y$, a proper, continuous, surjective map.

Let $M$ be a compact Riemannian manifold with induced distance $d_M$.  Let $I \colon Y\times M\to Z$ be a Borel bijection 
such that $$p(I(y,x))= y$$ for all $(y,x)\in Y\times M$.
Write $I_y\colon M\to p\inv (y)$ for the map identifying $M$ with the fiber $p\inv(y)$; that is, $$I_y (x) = I(y,x).$$  
We will moreover assume there exists $L>1$ such that for every $y\in Y$, the restriction $I_y\colon M\to  p\inv (y)$ is a bi-Lipschitz homeomorphism  with   \begin{equation}\frac 1 L d _M(x,x')\le  d_Z\bigl(I_y(x), I_y(x')\bigr) \le L d_M(x,x').\label{eq:jasminepearls}\end{equation}

Later, given a probability measure $\mu$ on $Z$, we may need to modify the  trivialization $I$ so that its discontinuity set has $\mu$-measure zero.

\subsubsection{Fibered dynamics}
Fix $r>1$ for the remainder.  
Let $F\colon Z\to Z$ and $g\colon Y\to Y$ be homeomorphisms with the following properties:
\begin{enumerate}
\item The map $g$ is a topological factor of $F$ through $p$: for every $z\in Z$ we have $p(F(z)) = g(p(z))$.
\item For every $y\in Y$, the map $f_y:= I_{g(y)} \inv \circ F\circ I_y\colon M\to M$ is a $C^r$ diffeomorphism.
\end{enumerate}
In particular, $$I(g(y),f_y(x)) = F(I(y,x)).$$

Given $n\ge 1$, we write $$f_y^{(n)}:= f_{g^{n-1}(y)}\circ \dots \circ f_y\colon M\to M$$ and $f_y^{(0)} = \id$ for the iterated fiber dynamics relative to the trivialization $I$.  
 
\subsubsection{The $C^k$ size of a diffeomorphism}
In order to define a $C^k$ size of each diffeomorphism $f_y$ that will be convenient for future estimates, we follow, for example, \cite[\S 3.8]{MR880035} and assume  that $M$ is smoothly embedded in some  $\R^N$.    
All definitions below are independent of Lipschitz change of metric so we may equip $M$ with the restriction of the Euclidean metric.  

Let $NM$ denote the normal bundle to $M$ as a submanifold of $\R^N$.  Fix $\rho>0$ and let $U$ be the neighborhood  in $NM$ of radius $\rho$ centered at the zero section.  Given any $\rho$, we may first  perform a homothetic rescaling of $M$ to ensure the map $U\to \R^N,$ $(x,v)\mapsto x+v$ is injective.  In particular, to use estimates in \cite{MR889980}, we may assume $\rho=2$ or, to use estimates in \cite{MR880035}, we may assume $\rho\ge \sqrt m$ where $m=\dim M$.   We will identify $U\subset NM$ with its image $U\subset \R^N$ in what follows.

With the assumptions on $U$ as above, every $C^k$ map $f\colon M\to M$ extends to a  $C^k$ map $\wtd f\colon U\to M$ by precomposition with orthogonal projection from $U\subset NM$ onto the zero section $M$.

Consider an open set $V\subset \R^N$ and a $C^k$ function $\wtd f\colon V\to \R^{N'}$ for some $N'\in \N$.    Given $1\le s\le k$ and $x\in V$,  we let $D^s_x\wtd f$ denote the unique symmetric $s$-multilinear function
such that \begin{equation}\label{eq:taylor}v\mapsto \wtd f(x) + \sum_{s=1}^k D^s_x\wtd f(v^{\otimes s}) \end{equation}  is the Taylor polynomial of $\wtd f$ at $x$; that is, \eqref{eq:taylor} is tangent to $\wtd f$ up to  order $k$ at $x$.  
We often ignore the $C^0$ part of $\wtd f$ and write $$\|\wtd f\|_{C^k,\ast}= \sup _{x\in V} \max _{1\le s\le k} \left \{ \|D_x^s \wtd f\| \right \}$$
where we equip multilinear maps with their operator norms.  

Returning to the setup $M\subset U\subset \R^N$ as fixed above, given a $C^k$ map $f\colon M\to M$, let $\wtd f\colon U\to M$ denote the unique extension given by precomposition by orthogonal projection.  We then write $\|f\|_{C^k,\ast} := \|\wtd f\|_{C^k,\ast}.$

\subsection{Entropy theory}

Let  $\mu$ be  a Borel probability measure on $Z$.  
Let $\scrA$ be a measurable partition of $Z$ and let $\{\mu_x^\scrA\}$ denote a family of conditional probability measures relative to the partition $\scrA$.  Let  $F\inv \scrA$ denote the partition $F\inv \scrA:=\{F\inv (A): A\in \scrA\}$; we  say $\scrA$ is \emph{$F$-invariant} if $F\inv \scrA=\scrA.$
Let $\calP$ be a finite partition of $Z$; denote by $\calP(x)$ the atom of $\calP$ containing $x$.   The  \emph{entropy} of $\calP$ \emph{conditioned on $\scrA$} is 
\begin{align*}
H_\mu(\calP\mid \scrA) &:= \int - \log (\mu_x^\scrA(\calP(x)) ) \, d  \mu(x)\\
&\phantom{:}= \int -\sum_{P\in \calP} \mu_x^\scrA(P) \log (\mu_x^\scrA(P) ) \, d  \mu(x). 
\end{align*}

We now assume that  $\scrA$  and $\mu$ are $F$-invariant.  Given $n\ge 1$, we write $$\calP^n_F= \calP\vee F\inv( \calP)\vee\dots \vee F^{-(n-1)}(\calP).$$
When the dynamics $F$ is clear from context, we   simply write $\calP^n$ rather than $ \calP^n_F$.  
The \emph{entropy of $F$ relative to $\calP$ conditioned on $\scrA$} is 
\begin{equation}\label{defent}h_\mu(F,\calP\mid \scrA):= \inf_{n\to \infty}  \frac 1 n H_\mu(\calP^n\mid \scrA)= \lim_{n\to \infty}  \frac 1 n H_\mu(\calP^n\mid \scrA).\end{equation}
Finally, the  \emph{$\mu$-metric  entropy of $F$ conditioned on $\scrA$} is 
$$h_\mu(F\mid \scrA):= \sup h_\mu(F,\calP\mid \scrA)$$
where the supremum is taken over all finite partitions $\calP$ of $Z$.

\subsection{Fiber entropy}\label{fiberentsec}
We take $\fol$ to be the partition of $Z$ into preimages under $p\colon Z\to Y$ and observe that $\fol$ is $F$-invariant by $p$-equivariance.  Given an $F$-invariant Borel probability measure $\mu$, the quantity $h_\mu(F\mid \fol)$ is the \emph{fiber $\mu$-metric entropy of $F$}.

\subsection{Borel trivializations adapted to a measure}
Let $\mu$ be a Borel probability measure on $Z$.  We assume for every such  $\mu$ that there exist a Borel bijection $I_\mu \colon Y\times M\to Z$ 
such that  the following hold:
\begin{enumerate}\item $p(I_\mu(y,x))= y$ for all $(y,x)\in Y\times M$.
\item If $I_{\mu,y}\colon M\to p\inv (y)$ denotes the map $I_{\mu,y} (x) = I_\mu(y,x)$, there is $L_\mu>1$ such that for all $y\in Y$, the restriction $I_{\mu,y}\colon M\to  p\inv (y)$ is a bi-Lipschitz homeomorphism with $$\frac 1 {L_\mu} d _M(x,x')\le  d_Z\bigl(I_{\mu,y}(x), I_{\mu,y}(x')\bigr) \le L _\mu d_M(x,x').$$
\item The discontinuity set of $I_\mu$ has $\mu$-measure $0$.
\end{enumerate}

 Let  $\beta$ be a finite partition of $M$.  
Write $\diam \beta = \max \{\diam (B): B\in \beta\}$ and  $\partial \beta =\bigcup _{B\in \beta} \partial B$.
Given a finite partition $\calP$ of $Z$, we similarly write $\partial \calP= \bigcup _{P\in \calP} \partial P$.  Given Borel probability measure $\mu$ on $Z$ and  a finite partition $\beta$ of $M$, we let $\wtd \beta_\mu$ denote the  finite partition of $Z$ given by \begin{equation}\label{eq:pushpart}\wtd\beta_\mu:= \{I_\mu \bigl(Y\times B\bigr): B\in \beta\}.\end{equation}

\begin{claim}
Given any Borel probability measure $\mu$ on $Z$ and $\epsilon>0$, there exists a finite partition $\beta$ of $M$ such that 
\begin{enumerate}
	\item $\diam  \beta<\epsilon$ and 
	\item $\mu (\partial \wtd \beta_\mu)=0$.  
\end{enumerate}
Moreover, there exists  sequence of finite partitions $\{\fol_n\}$ of $Z$ such that 
\begin{enumerate}[resume]
\item $\fol_n\nearrow\fol$ and $\mu(\partial \fol_n)=0$ for every $n$.  
\end{enumerate}
\end{claim}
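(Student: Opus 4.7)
The plan is to reduce both parts to the standard construction of null-boundary partitions on metric measure spaces, using the bi-Lipschitz trivialization $I_\mu$ in part (1) and the continuous factor map $p$ in part (2).

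For part (1), I would push $\mu$ forward to a Borel probability measure $\mu_M$ on $M$ by
\[ \mu_M(B) := \mu(I_\mu(Y \times B)). \]
Since $M$ is a compact metric space, the standard construction produces a finite partition $\beta$ of $M$ with $\diam \beta < \epsilon$ and $\mu_M(\partial \beta) = 0$: cover $M$ by finitely many open balls $B(x_i, r_i)$ of radius at most $\epsilon/2$, choosing each $r_i$ from the co-countable set of radii for which $\mu_M(\partial B(x_i, r_i)) = 0$, and then disjointify to form $\beta$. By construction $\mu\bigl(I_\mu(Y \times \partial \beta)\bigr) = \mu_M(\partial \beta) = 0$, and combined with the hypothesis that the discontinuity set of $I_\mu$ has $\mu$-measure zero, this will give $\mu(\partial \wtd \beta_\mu) = 0$.

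For part (2), let $\mu_Y = p_*\mu$ on the Polish space $Y$. I would choose a countable family $\{U_k\}_{k \geq 1}$ of open balls forming a base for the topology of $Y$ with $\mu_Y(\partial U_k) = 0$ for every $k$; such $U_k$ exist because only countably many radii around any given center have positive-$\mu_Y$-measure spheres. Let $\alpha_n$ be the finite partition of $Y$ generated by $U_1, \ldots, U_n$: then $\alpha_{n+1}$ refines $\alpha_n$ and $\partial \alpha_n \subset \bigcup_{k \leq n} \partial U_k$ is $\mu_Y$-null. Setting $\fol_n := p^{-1}(\alpha_n)$ gives a finite, refining sequence of partitions of $Z$, and continuity of $p$ ensures $\partial \fol_n \subset p^{-1}(\partial \alpha_n)$ has $\mu$-measure zero. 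Since $\{U_k\}$ is a base for the topology, the $\sigma$-algebra generated by $\bigcup_n \alpha_n$ agrees with the Borel $\sigma$-algebra of $Y$, so $\fol_n \nearrow \fol$ in the sense of refining partitions whose limiting $\sigma$-algebra coincides with $\fol$ modulo $\mu$-null sets.

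The main subtlety I expect is in part (1): since $I_\mu$ is only a Borel bijection rather than a homeomorphism, one must verify that the topological boundary $\partial \wtd\beta_\mu$ in $Z$ is $\mu$-essentially contained in $I_\mu(Y \times \partial \beta)$. The key observation will be that at any point $(y,x)$ outside the discontinuity set of $I_\mu$ with $x$ in the interior of an atom of $\beta$, continuity of $I_\mu$ places $I_\mu(y,x)$ in the interior of the corresponding atom of $\wtd\beta_\mu$; consequently every topological boundary point of $\wtd\beta_\mu$ either lies in $I_\mu(Y \times \partial \beta)$ or in the image of the discontinuity set, and both sets have $\mu$-measure zero by construction and hypothesis respectively.
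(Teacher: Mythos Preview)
The paper states this claim without proof, treating it as a standard construction; your proposal supplies exactly the sort of argument one would expect and is essentially correct.

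There is one genuine imprecision in your final paragraph. To conclude that $z=I_\mu(y,x)\in\text{int}\bigl(I_\mu(Y\times B)\bigr)$ when $x\in\text{int}(B)$, you need continuity of $I_\mu^{-1}$ at $z$, not continuity of $I_\mu$ at $(y,x)$. Indeed, taking the open set $U=Y\times\text{int}(B)\ni(y,x)$, continuity of $I_\mu^{-1}$ at $z$ gives an open $V\ni z$ with $I_\mu^{-1}(V)\subset U$, i.e.\ $V\subset I_\mu(Y\times B)$. Continuity of $I_\mu$ at $(y,x)$ runs the wrong way: it controls images of neighborhoods of $(y,x)$, not preimages of neighborhoods of $z$, and by itself does not force $I_\mu(Y\times\text{int}(B))$ to be open near $z$. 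In the paper's intended application (trivializations coming from Siegel fundamental domains) the discontinuity sets of $I_\mu$ and $I_\mu^{-1}$ coincide, both being the preimage of the boundary of the domain, so the hypothesis ``the discontinuity set of $I_\mu$ has $\mu$-measure $0$'' should be read as covering both directions; with that reading your argument goes through.

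Part (2) is fine as written.
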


We enumerate a number of properties of the above definitions and constructions.

\begin{proposition}\label[proposition]{prop:entfacts}
Let $\mu$ be an $F$-invariant Borel probability measure on $Z$.  
\begin{enumerate}
\item\label{basicfact1} $h_\mu(F\mid \fol)= \sup h_\mu(F,\wtd \beta_\mu\mid \fol)$ where the supremum is taken over all finite partitions $\beta$ of $M$.
\item\label{basicfact2}  $h_\mu(F^k\mid \fol)= kh_\mu(F\mid \fol)$ for any $k\ge 1$.  
\item\label{basicfact3} If $\calP$ is a finite partition and  $\scrA$ is a  measurable partition,
then $$H_\mu(\calP\mid \scrA)\le \int \log \card(\restrict{\calP}{ \scrA(x)}) \, d \mu (x).$$
where $\restrict{\calP}{ \scrA(x)}$ denotes the restriction of $\calP$ to the atom $\scrA(x)$ of $\scrA$ containing $x$.  
 In particular if   $\card(\restrict{\calP}{ \scrA(x)})\le k$ for almost every $x$ then $H_\mu(\restrict{\calP}{ \scrA(x)})\le \log k$. 
\item\label{basicfact4} For any finite partition $\calP$ of $Z$,  $H_\mu(\calP\mid \Fol)  = \inf_{n} H_\mu(\calP\mid \Fol_n)$.  
\end{enumerate}
Let $\beta$ be a finite partition of $M$ such that $\mu (\partial \wtd \beta_\mu)=0$ and let 
$\{\fol_m\}$ be a sequence of finite partitions of $Z$ with $\fol_m\nearrow\fol$ and $\mu(\partial \fol_m)=0$ for every $m$.  

\begin{enumerate}[resume]
\item\label{basicfact5} For every $n$ and $m$, the function $\nu\mapsto H_\nu((\wtd \beta_\mu)^n\mid \fol_m)$ is continuous at $\nu=\mu$.  
\item\label{basicfact6} For every $n$, the functions $$\nu\mapsto H_\nu((\wtd \beta_\mu)^n\mid\fol)\quad \text{and}\quad \nu \mapsto h_\nu(F,\wtd \beta_\mu\mid\fol)$$ are upper semicontinuous at $\nu=\mu$.  
\end{enumerate}
\end{proposition}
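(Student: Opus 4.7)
The plan is to treat items \eqref{basicfact1}--\eqref{basicfact4} by essentially standard conditional-entropy arguments and then bootstrap to the semicontinuity statements \eqref{basicfact5}--\eqref{basicfact6}. For \eqref{basicfact1}, I would choose a sequence of finite partitions $\beta_k$ of $M$ with $\diam \beta_k\to 0$; using the bi-Lipschitz trivialization $I_\mu$, the $\sigma$-algebras $\sigma(\wtd{(\beta_k)}_\mu)\vee\sigma(\fol)$ generate the Borel $\sigma$-algebra on $Z$ modulo $\mu$-null sets, since fiberwise diameters shrink to zero uniformly. An abstract Kolmogorov--Sinai generator theorem for conditional entropy then gives $h_\mu(F\mid\fol)=\lim_{k\to\infty}h_\mu(F,\wtd{(\beta_k)}_\mu\mid\fol)$; the reverse inequality is immediate from the definition of the supremum. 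Item \eqref{basicfact2} follows from the identity $(\calP^k_F)^n_{F^k}=\calP^{nk}_F$ and $F$-invariance of $\fol$ (so conditioning commutes with iteration), which gives the standard $h_\mu(F^k,\calP^k_F\mid\fol)=k\,h_\mu(F,\calP\mid\fol)$ and hence $h_\mu(F^k\mid\fol)=k\,h_\mu(F\mid\fol)$ after taking suprema over $\calP$. Item \eqref{basicfact3} is Jensen's inequality applied to $H_\mu(\calP\mid\scrA)=\int H_{\mu_x^\scrA}(\restrict{\calP}{\scrA(x)})\,d\mu(x)$ combined with the bound $H(\calQ)\le\log\card\calQ$ on each atom.

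For \eqref{basicfact4}, I would invoke martingale convergence. Since $\fol_n\nearrow\fol$ is a purely $\sigma$-algebra--theoretic statement, for each atom $P\in\calP$ the conditional expectations $\Exp_\mu(\one_P\mid\fol_n)$ converge pointwise $\mu$-a.e.\ (and boundedly in $[0,1]$) to $\Exp_\mu(\one_P\mid\fol)$. The integrand $x\mapsto-\sum_{P\in\calP}\mu_x^{\fol_n}(P)\log\mu_x^{\fol_n}(P)$ is monotonically nonincreasing in $n$ (conditioning on finer $\sigma$-algebras decreases entropy), uniformly bounded by $\log\card\calP$, and converges pointwise $\mu$-a.e.\ to the analogous integrand for $\fol$; dominated convergence yields $H_\mu(\calP\mid\fol_n)\searrow H_\mu(\calP\mid\fol)$, and monotonicity then upgrades the limit to an infimum.

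For \eqref{basicfact5}, the key input is the portmanteau theorem: if $\mu(\partial E)=0$, then $\nu\mapsto\nu(E)$ is continuous at $\mu$ in the weak-$*$ topology. Every atom of $(\wtd\beta_\mu)^n\vee\fol_m$ is a finite intersection of sets of the form $F^{-i}(B)\cap C$ with $B\in\wtd\beta_\mu$ and $C\in\fol_m$. Since $F$ is a homeomorphism, $\partial F^{-i}(B)=F^{-i}(\partial B)$, and $F$-invariance of $\mu$ combined with $\mu(\partial\wtd\beta_\mu)=0$ and $\mu(\partial\fol_m)=0$ gives $\mu$-null boundary for every atom of the join. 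Writing $H_\nu((\wtd\beta_\mu)^n\mid\fol_m)$ explicitly as a finite sum in the quantities $\nu(A\cap B)$ and $\nu(B)$, and invoking continuity of $x\mapsto -x\log x$ (with the convention $0\log 0=0$), then yields continuity at $\mu$. For \eqref{basicfact6}, by \eqref{basicfact4} applied to $\nu$ in place of $\mu$---valid since the martingale argument uses only the measure-independent inclusions $\sigma(\fol_n)\subset\sigma(\fol_{n+1})$ with $\bigvee_n\sigma(\fol_n)=\sigma(\fol)$---one has $H_\nu((\wtd\beta_\mu)^n\mid\fol)=\inf_m H_\nu((\wtd\beta_\mu)^n\mid\fol_m)$ for every Borel probability $\nu$. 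Each term in the infimum is continuous at $\mu$ by \eqref{basicfact5}, so $\nu\mapsto H_\nu((\wtd\beta_\mu)^n\mid\fol)$ is upper semicontinuous at $\mu$. Finally, $h_\nu(F,\wtd\beta_\mu\mid\fol)=\inf_n\tfrac{1}{n}H_\nu((\wtd\beta_\mu)^n\mid\fol)$ is an infimum of functions upper semicontinuous at $\mu$ and is itself upper semicontinuous there.

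The main subtlety I anticipate is verifying the $\sigma$-algebra approximation $\bigvee_n\sigma(\fol_n)=\sigma(\fol)$ in the choice of the $\{\fol_n\}$, since $\fol$ is typically an uncountable partition and the claim preceding the proposition merely asserts the existence of such a sequence. To justify this, I would exploit the factor map $p\colon Z\to Y$ and the separability of $Y$ to pull back a countable generating algebra for the Borel $\sigma$-algebra of $Y$, approximating each generating set by one with $\mu$-null boundary via inner/outer regularity of $\mu$. A secondary technical point is that the identity in \eqref{basicfact4}, used in \eqref{basicfact6} for a varying measure $\nu$, really is uniform in $\nu$; the martingale argument is $\nu$-dependent only through the $L^1$-convergence statement, and the uniform bound $\log\card\calP$ secures dominated convergence for every $\nu$.
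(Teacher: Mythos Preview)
Your argument is correct and essentially matches the paper's: the paper cites Rohlin for \eqref{basicfact4}, reduces \eqref{basicfact5} to continuity of unconditioned entropy via the chain rule $H_\nu(\calP\mid\fol_m)=H_\nu(\calP\vee\fol_m)-H_\nu(\fol_m)$ (equivalent to your direct expansion in the atom masses), and deduces \eqref{basicfact6} from \eqref{basicfact4} together with the infimum definition \eqref{defent} of entropy, exactly as you do. One minor correction: in your treatment of \eqref{basicfact4}, the pointwise integrand $x\mapsto H_{\mu_x^{\fol_n}}(\calP)$ need not be monotone in $n$---only the integrated quantity $H_\mu(\calP\mid\fol_n)$ is, by concavity of entropy---but dominated convergence alone gives the limit and the integrated monotonicity upgrades it to an infimum, so the argument stands.
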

 \eqref{basicfact4}  above appears as  \cite[5.11]{MR0217258}.  
For  \eqref{basicfact5}, we have $H_\nu((\wtd \beta_\mu)^n\mid \fol_m) = H_\nu((\wtd \beta_\mu)^n\vee \fol_m) - H_\nu(\fol_m)$.
We have $\mu(\partial ((\wtd \beta_\mu)^n\vee \fol_m))=0$ by continuity of $F$, the assumptions on the boundaries of $\beta$ and $ \fol_m$, and that  $\partial ( \fol_m\vee\wtd \beta_\mu)\subset \partial \fol_m\cup \partial\wtd \beta_\mu $.    
\eqref{basicfact6} then follows from \eqref{basicfact4} and \eqref{defent}.

\subsection{Local fiber entropy}
Fix $y\in Y$ and $n\ge 1$.  Define a metric $d_{y,n;F}$ on the fiber $p\inv(y)$ of $Z$ over $y\in Y$ as follows: for $z,z'\in p\inv (y)$ let $$d_{y,n;F}(z,z') = \max \{d_Z(F^j(z), F^j(z')):  0\le j\le n-1\}.$$
Fix $\epsilon>0$, $y\in Y$, and $z\in p\inv (y)$.  We write 
$ B^\Fol(z,\epsilon) = \{z'\in p\inv (y): d_{y} (z,z')<\epsilon\} $ for the metric ball in $p\inv(y)$ centered at $z$ of radius $\epsilon$; given $n\in \N$, we write 
\begin{align*}
B^\Fol_n(z,\epsilon;F)&:= \{z'\in p\inv (y): d_{y,n;F} (z,z')<\epsilon\} \\
&\phantom{:}=\{z'\in p\inv (y): d_Z(F^j(z), F^{j}(z'))<\epsilon \text{ for all $0\le j\le n-1$}\}
\end{align*}
for the \emph{fiberwise $n$-step Bowen ball} at $z$ relative to the dynamics of $F$.

Given $\delta>0$ and  subset $A\subset p\inv (y)$, we say that a set $S\subset p\inv(y)$ 
\emph{$\delta$-spans} $A$ if $A\subset \bigcup _{z\in S} B^\Fol(z,\delta)$.  A set $S$  is said to   \emph{$(n,\delta;F)$-span} $A$ if $A\subset \bigcup_{z\in S} B^\Fol_n(z,\delta;F)$. We write $$N(\delta, n,A;F):= \min \{\card S : \text{$S$   $(n,\delta;F)$-spans $A$} \}.$$

A collection $\calA$ of subsets of  $p\inv (y)$ is  a \emph{\emph{$(\delta,n;F)$}-cover} of $A\subset p\inv (y)$ if 
${d_{y,n;F}}$-$\diam E <\delta$ for every $E\in \calA$ and  $A\subset \bigcup_{E\in \calA} E.$

We similarly let $$\cov(\delta, n,A;F):= \min \{\card \calA : \text{$\calA$ is a $(n,\delta;F)$-cover of $A$} \}.$$
Observe  for any $\delta$, $n$,  and $A\subset p\inv (y)$ that 
\begin{equation}\label{Puzzletable}\cov(2\delta, n,A;F)\le N(\delta, n,A;F)\le \cov(\delta, n,A;F).\end{equation}

Given $y\in Y,$ $z\in p\inv(y)$, $\epsilon>0$, and  $\delta>0$, define
\begin{align*}
r(z, n,\delta, \epsilon;F)&= N\left(\delta, n, B^\Fol_n(z,\epsilon;F);F\right)\\
\wtd r(z, n,\delta, \epsilon;F)&=\cov \left(\delta, n, B^\Fol_n(z,\epsilon;F);F\right).\end{align*}
Define the \emph{local fiber entropy} at \emph{scale $\epsilon$} for the fiberwise dynamics $F$ over the fiber of $y\in Y$ by  \begin{align*}
\overline h_{y,\loc}(F,\epsilon\mid \fol)&:= \lim_{\delta\to 0}\limsup_{n\to \infty} \frac 1 n \log\left( \sup_{z\in p\inv (y)}r(z, n,\delta, \epsilon;F)\right)\\
&\phantom{:}= \lim_{\delta\to 0}\limsup_{n\to \infty} \frac 1 n \log\left( \sup_{z\in p\inv (y)}\wtd r(z, n,\delta, \epsilon;F)\right)\end{align*}
where the equality follows from \eqref{Puzzletable}

Given a collection $\scrM$ of $F$-invariant, Borel probability measures  on $Z$,  define the \emph{local fiber entropy} of $F$ with respect to $\scrM$ to be  \begin{equation}\label{eq:locent}h_{\text{$\scrM$-$\loc$}}(F\mid \fol) = \lim _{\epsilon\to 0} \sup _{\mu\in \scrM} \int \overline h_{y,\loc}(F,\epsilon\mid \fol) \, d (p_* \mu)(y).\end{equation}

\subsection{Local fiber entropy under iteration}
Under reasonable hypothesis on the collection $\scrM$, we expect the local entropy to be additive: 
$$h_{\text{$\scrM$-$\loc$}}(F^k\mid \fol) = kh_{\text{$\scrM$-$\loc$}}(F\mid \fol).$$
For instance, this holds whenever the family $\{f_y:y\in Y\}$ of homeomorphisms is assumed equicontinuous.  

We will need to consider situations where such equicontinuity fails (as happens for the fiber dynamics on $M^\alpha$ when $\Gamma$ is nonuniform ).  As our primary concern will be the defect of upper-semicontinuity, we only require an inequality in the sequel.  We thus provide a detailed proof of the following.
\begin{claim}\label[claim]{claim:submult}\ 
\begin{enumerate}
\item\label{anus} Let $\mu$ be an $F$-invariant Borel probability measure on $Z$ for which  $y \mapsto \log \|f_y\|_{C^1} $ is $L^1( p_*\mu).$
  Then for every $\epsilon>0$ and  $p_*\mu$-a.e. $y\in Y$,
$$  k \overline h_{y,\loc}(\epsilon, F\mid \fol)\le \overline h_{y,\loc}(\epsilon, F^k\mid \fol)$$
\item\label{bnus} Let $\scrM$ be a collection of $F$-invariant, Borel probability measures  on $Z$ such that  for every $\mu\in \scrM$, the function 
$y \mapsto \log^+ \|f_y\|_{C^1} $ is $L^1(p_*\mu)$.  
Then 
$$ kh_{\text{$\scrM$-$\loc$}}(F\mid \fol)\le h_{\text{$\scrM$-$\loc$}}(F^k\mid \fol) .$$

\end{enumerate}
\end{claim}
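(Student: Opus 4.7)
The plan is to establish (1) first and then deduce (2) by integration; since the argument for (1) in fact uses only $\log^+\|f_y\|_{C^1,*}\in L^1(p_*\mu)$, applying (1) $p_*\mu$-a.e.\ for each $\mu\in\scrM$, integrating against $p_*\mu$, taking $\sup_{\mu\in\scrM}$, and then $\lim_{\epsilon\to 0}$ immediately yields (2). Thus the work lies entirely in proving (1).

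To begin, I would control the fiber $C^1$-norms along the $g$-orbit of $y$. The chain rule gives the submultiplicative cocycle bound $\|f^{(r)}_y\|_{C^1,*}\le\prod_{i=0}^{r-1}\|f_{g^i(y)}\|_{C^1,*}$. Combined with $\log^+\|f_y\|_{C^1,*}\in L^1(p_*\mu)$ and the standard fact that $\phi(g^n y)/n\to 0$ $p_*\mu$-a.s.\ for any non-negative $\phi\in L^1(p_*\mu)$ (Borel--Cantelli: $\sum_n p_*\mu(\phi>\varepsilon n)\le E[\phi]/\varepsilon<\infty$ using $g$-invariance and $\sum_n \mathbf{1}_{\phi>\varepsilon n}\le \phi/\varepsilon$), this gives $\tfrac{1}{n}\log C_n(y)\to 0$ for $p_*\mu$-a.e.\ $y$, where $C_n(y):=L_\mu^2\cdot\max_{0\le j\le n,\ 0\le r<2k}\|f^{(r)}_{g^{jk}(y)}\|_{C^1,*}$.

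The key geometric step is the inequality $r(z,kn+r',\delta,\epsilon;F)\le r(z,n,\delta/C_n(y),\epsilon;F^k)$ for every $z\in p^{-1}(y)$, every $0\le r'<k$, and all $n,\delta,\epsilon$. Given an $(n,\delta/C_n(y);F^k)$-spanning set $S$ of $B^\fol_n(z,\epsilon;F^k)\supseteq B^\fol_{kn+r'}(z,\epsilon;F)$, for each $w$ in the smaller Bowen ball there is $s\in S$ with $d_Z(F^{jk}w,F^{jk}s)<\delta/C_n(y)$ for $0\le j<n$; transferring pairs to $M$ via the bi-Lipschitz trivialization $I_\mu$ and applying the $C^1$-bound for $f^{(r'')}_{g^{jk}(y)}$ with $0\le r''<k$ yields $d_Z(F^{jk+r''}w,F^{jk+r''}s)\le L_\mu^2\|f^{(r'')}_{g^{jk}(y)}\|_{C^1,*}\cdot d_Z(F^{jk}w,F^{jk}s)\le\delta$, so $S$ is also $(kn+r',\delta;F)$-spanning. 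Taking $\sup_z$ on both sides gives the same inequality for the supremum quantity appearing in the definition of $\overline h_{y,\loc}$.

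Taking logs, dividing by $m=kn+r'$, using $(kn+r')/n\to k$, and passing to $\limsup_m$ yields $k\limsup_m\tfrac{1}{m}\log\sup_z r(z,m,\delta,\epsilon;F)\le\limsup_n\tfrac{1}{n}\log\sup_z r(z,n,\delta/C'_n(y),\epsilon;F^k)$ for some subexponential $C'_n(y)$. The hardest step---and the main obstacle I anticipate---is that the scale $\delta/C'_n(y)$ may shrink to $0$, so the right-hand side is not immediately bounded by $\overline h_{y,\loc}(F^k,\epsilon\mid\fol)=\sup_{\delta'>0}\limsup_n\tfrac{1}{n}\log\sup_z r(z,n,\delta',\epsilon;F^k)$ (monotonicity in $\delta'$ points the wrong way). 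I would absorb this by a refinement argument in the spirit of Yomdin: for any fixed $\delta_0>0$, each $(n,\delta_0;F^k)$-Bowen ball inside $B^\fol_n(z,\epsilon;F^k)$ can be covered by $(n,\delta/C'_n(y);F^k)$-Bowen balls at a cost $e^{o(n)}$, using volume-counting on $M$ combined with the subexponentiality of $C'_n(y)$ and the bi-Lipschitz property of $I_\mu$ on fibers. This yields $\sup_z r(z,n,\delta/C'_n(y),\epsilon;F^k)\le\sup_z r(z,n,\delta_0,\epsilon;F^k)\cdot e^{o(n)}$; taking $\limsup_n$, then $\sup_{\delta_0>0}$ on the right and $\sup_{\delta>0}$ on the left, produces $k\overline h_{y,\loc}(F,\epsilon\mid\fol)\le\overline h_{y,\loc}(F^k,\epsilon\mid\fol)$, completing (1).
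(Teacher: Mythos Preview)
Your reduction to (1) and the spanning-set inequality $r(z,kn+r',\delta,\epsilon;F)\le r(z,n,\delta/C_n(y),\epsilon;F^k)$ are both correct, and you correctly identify the main obstacle: the resolution $\delta/C_n(y)$ shrinks with $n$, while $\overline h_{y,\loc}(\epsilon,F^k\mid\fol)$ is defined with $\lim_{\delta'\to 0}$ \emph{outside} $\limsup_n$. The gap is in your proposed fix. The claim that an $(n,\delta_0;F^k)$-Bowen ball can be $(n,\delta/C_n(y);F^k)$-covered at cost $e^{o(n)}$ via ``volume-counting on $M$'' does not follow: any volume argument must control the diameter of pieces at \emph{every} time $0\le j<n$, so a naive refinement multiplies a per-step cost of order $(\delta_0/\delta')^m$ (or, more carefully, $(L^2\phi(g^{jk}(y)))^m$) across all $n$ steps, yielding an exponential rate $\sim m\int\log\phi\,d\nu^y>0$ rather than $o(1)$. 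The subexponentiality of $C_n(y)$ does not help, because the exponent in the product is $n$, not $1$. In fact the refinement cost you need is precisely $\sup_{z'}\tilde r(z',n,\delta/C_n,2\delta_0;F^k)$, whose growth rate is bounded below by $\overline h_{y,\loc}(2\delta_0,F^k\mid\fol)$, which has no reason to vanish under only a $C^1$ hypothesis. Invoking Yomdin here would both require higher regularity than the claim assumes and be circular with the later application of the claim in the proof of the Yomdin-type bound.

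The paper avoids the shrinking-$\delta'$ problem altogether. It fixes $0<\delta'<\delta$, starts from an $(n,\delta';F^k)$-cover $\mathcal A_0$ of $B^{\fol}_n(z,\epsilon;F^k)$, and builds inductively an $(\ell,\delta;F)$-cover for each $\ell\le nk$. At step $j$, if $\phi(g^{jk}(y)):=\prod_{i=0}^{k-1}\|f_{g^{jk+i}(y)}\|_{C^1}$ satisfies $\phi(g^{jk}(y))\le \delta/(L^2\delta')$ (a ``good'' time), the existing cover already has $F^{jk}$-diameter $\le\delta'$ (from $\mathcal A_0$), hence $F^{jk+r}$-diameter $\le\delta$ for $0\le r<k$, and no refinement is needed. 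Only at ``bad'' times does one refine by a partition of the $\epsilon$-ball in the fiber into pieces of diameter $\le\delta/(L^2\phi(g^{jk}(y)))$, at multiplicative cost $\le C_{M,\epsilon,\delta}(L^2\phi(g^{jk}(y)))^m$. The logarithmic total cost is then a Birkhoff sum of $\log C+m\log\phi$ over the bad set $\{y:\phi(y)>\delta/(L^2\delta')\}$; by the pointwise ergodic theorem and $\log\phi\in L^1$, this converges to an integral over a set of vanishing measure as $\delta'\to 0$, hence is $o(n)$ after $\delta'\to 0$. Taking $\limsup_n$, then $\delta'\to 0$, then $\delta\to 0$ gives $k\overline h_{y,\loc}(\epsilon,F\mid\fol)\le\overline h_{y,\loc}(\epsilon,F^k\mid\fol)$. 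The essential idea you are missing is to exploit the fixed scale $\delta'$ of the $F^k$-cover so that refinement is only needed at the rare times where $\log\phi$ is large.
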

We remark that equality in  conclusion \eqref{bnus} should hold if the function $y\mapsto \log\|f_y \|_{C^1}$ is uniformly integrable with respect to the collection $\scrM$ (as defined in \cref{unifint} below.) As this won't be needed, we only establish the upper bound.

\begin{proof}[Proof of \cref{claim:submult}]
Conclusion \eqref{bnus} is a direct   consequence of \eqref{anus}.  We thus establish \eqref{anus}.

Fix $k$.  Note by compactness of $M$, we have $\|f_y\|_{C^1}\ge 1$ for every $y\in Y$.  
Let $\phi\colon Y\to [0,\infty)$ be defined as 
$$\phi(y) = \prod_{j=0}^{k-1}  \|f_{g^j(y)}\|_{C^1}.$$  We have $\phi(y)\ge 1$ for every $y$; moreover, by hypothesis, we have  $y\mapsto \log(\phi(y))$ is $L^1(p_*\mu)$.
Fix $0<\delta $ and $0<\delta'<\delta$ sufficiently small.  
Let $$G(\delta') =\left\{y\in Y:  \delta' \le  \tfrac{\delta} {\phi(y)L^2}\right\}$$  
where $L\ge 1$ is the bi-Lipschitz constant in \eqref{eq:jasminepearls}.

Fix $y\in Y$, $z\in p\inv (y)$, and $\epsilon>0$.  Observe for $n\ge 1$ and any $(n-1)k< \ell \le nk$ that 
$$B^\Fol_{\ell}(z,\epsilon;F)\subset B^\Fol_n(z,\epsilon;F^k).$$ 
Write  $A_n = B^\Fol_n(z,\epsilon;F^k).$

Fix $n\ge 1$ and $0<\delta'<\delta$.  Let $\calA = \{E_1,\dots, E_p\}$ be a $(n,\delta';F^k)$-cover of $A_n$.  

We induct on $1\le j\le n$ and claim there exists collections of subsets   $$\calA =\calA_0 \prec  \calA_1\prec   \dots \prec \calA_{n}$$ (where $\prec$ is the natural pre-order on covers) such that 
\begin{enumerate}
\item for each $1\le j\le n$, 
$\calA_{j}$ is $\bigl(jk,\delta;F\bigr)$-cover of  $A_n$, and 
\item for some constant $ C_{M,\epsilon,\delta}$ depending only on $M$, $\epsilon$, and $\delta$,  $$\card \calA_{j} \le \card \calA  \cdot \left( \prod_{\substack{0\le i < j\\g^{ik}(y)\notin G(\delta')}} L^{2m} C_{M,\epsilon,\delta} \cdot ( \phi(g^{ik}(y)))^m\right)$$
where $m= \dim M$.  
\end{enumerate}

Suppose for some $0\le j\le n-1$ that we have constructed a collection $\calA_{j}$; moreover, if $j\ge 1$ suppose that $\calA_{j}$ has the properties enumerated above.  
By the  inductive hypothesis,  we have 
\begin{enumerate}
\item $F^{\ell}(\calA_{j})$ is a $\delta$-cover of $F^{\ell}(A_n)$ for every  $0\le \ell \le jk-1$
\end{enumerate}
and since $\calA_0\prec \calA_j$ and $\calA_0$ is a $(n,\delta';F^k)$-cover of $A_n$, 
\begin{enumerate}[resume]
\item $F^{jk}(\calA_{j})$ is a $\delta'$-cover for  $F^{jk}(A_n)$.  
\end{enumerate}

If $ \delta' \le \frac{\delta} {\phi\left(g^{jk}(y)\right)L^2}$ then, using that $F^{jk}(\calA_{j})$  is $\delta'$-cover for  $F^{jk}(A_n)$, we have that $F^{jk+\ell}(\calA_{j})$  is also a $\delta$-cover for $F^{jk+\ell}(A_n)$ for every  $0\le \ell \le k-1$; then $\calA_{j}$ is a 
$\bigl((j+1)k,\delta;F\bigr)$-cover of  $A_n$ and we may take $\calA_{j+1} = \calA_{j}.$ 

If $ \delta' > \frac{\delta} {\phi\left(g^{jk}(y)\right)L^2}$,  
there exists $C_M$ depending only on $M$ and a simplicial partition $\calP_j$ of  $ B^\Fol(F^{jk}(z),\epsilon)$ such that 

\begin{enumerate}
\item $\diam(P)<\frac {\delta}{ \phi(g^{jk}(y))L^2}$ for every $P\in \calP_j$ and 
\item $\card \calP_j \le C_M \frac {\epsilon^m}{\delta^m} \left (L^2\phi\left(g^{jk}(y)\right)\right)^m$ where $m=\dim M$.
\end{enumerate}
Let $\calA_{j+1}$ be the collection of sets of the form $$\{B\cap P: B\in \calA_{j}, P\in  F^{-jk} (\calP_j)\}.$$
Then for each $E\in \calA_{j+1}$, we have 
\begin{enumerate}
\item $\diam(F^\ell(E))\le \delta$ for all $0\le \ell<jk$
\item $\diam(F^{jk}(E))\le \frac{\delta}{ \phi(g^{jk}(y))L^2}$ whence,
\item $\diam(F^{jk+\ell}(E))\le \delta$ for all $0\le \ell \le  k-1$. 
\end{enumerate}
 Moreover, we have
\begin{enumerate}[resume]
\item$\card \calA_{j+1}\le \card \calA_{j}\cdot \card \calP_j$.  
\end{enumerate}
The existence of such $\calA_1\prec \dots\prec \calA_n$ with the desired properties  thus follows from induction on $j$.

For every $(n-1)k<  \ell\le nk$ we conclude that 
\begin{align*}
 \wtd r(z, \ell, &\delta, \epsilon;F)\\ 
 &= \cov\bigl(\delta, \ell,B^\Fol_{\ell}(z,\epsilon;F);F\bigr)\\
& \le  \cov\left(\delta,\ell,B^\Fol_n(z,\epsilon;F^k);F\right)\\
& \le \wtd r(z, n,\delta', \epsilon;F^k)\cdot  \left( \prod_{\substack{ 0\le i <n\\ g^{ik}(y)\notin G(\delta')}} L^{2m}C_{M,\epsilon,\delta} \cdot (\phi(g^{ik}(y)))^m\right)
\end{align*}

By the pointwise ergodic theorem for $L^1$ functions, for $(p_*\mu)$-a.e.\ $y\in Y$ there is a (ergodic) $g^k$-invariant Borel probability measure $\nu^y$ (the $g^k$-ergodic component of $p_*\mu$ containing $y$) on $Y$ such that $\phi\in L^1(\nu^y)$ and for every rational $\delta'>0$,
\begin{align*}
\frac 1 n 	\sum_{\substack{0\le j\le n-1 \\g^{jk}(y)\notin G(\delta')}} & \left(\log( L^{2m}C_{M,\epsilon,\delta}) + m \log\left( \phi(g^{jk}(y))\right)\right) \
\\ & \to 
 \int_{Y\sm G(\delta')} \log( L^{2m}C_{M,\epsilon,\delta}) + m \log\left( \phi(\cdot )\right) d  \nu^y (\cdot). 
\end{align*}

Given $(n-1)k< \ell \le  nk$, let $n_-(\ell) = n-1$ and $n_+(\ell) = n$.  Then, 
\begin{align*}
k \limsup_{\ell\to \infty}  & \frac 1 \ell \log\left( \sup_{z\in p\inv (y)}\wtd r(z, \ell,\delta, \epsilon;F)\right)\\ 
&\le 	 k\limsup_{\ell\to \infty}  \frac 1 {k n_-(\ell)} \log \left(\sup_{z\in p\inv (y)}\wtd r(z, \ell,\delta, \epsilon;F)\right)\\ 
&\le 	 \limsup_{\ell\to \infty}  \frac 1 {n_-(\ell)} \log\left( \sup_{z\in p\inv (y)}\wtd r(z, n_+(\ell),\delta', \epsilon;F^k)\right)\\ 
&\quad \quad  + \int_{Y\sm G(\delta')} \log( L^{2m}C_{M,\epsilon,\delta}) + m \log\left( \phi(\cdot )\right) d  \nu^y (\cdot)\\
	&= 	\limsup_{n\to \infty} \frac 1 {n-1} \log\left( \sup_{z\in p\inv (y)}\wtd r(z, n,\delta', \epsilon;F^k)\right) \\&\quad \quad  
	\int_{Y\sm G(\delta')} \log( L^{2m}C_{M,\epsilon,\delta}) + m \log\left( \phi(\cdot )\right) d  \nu^y (\cdot). \end{align*}
Taking first $\delta'\to 0$  and then $\delta\to 0$ in the rationals, we obtain $$  k \overline h_{y,\loc}(\epsilon, F\mid \fol)\le \overline h_{y,\loc}(\epsilon, F^k\mid \fol)$$ as desired.  
\end{proof}

\subsection{Sufficient conditions for upper semicontinuity of fiber entropy}\label{unifint}

The main results of  this appendix are the following results, \cref{lem:locentzeromeansUSC} and  \cref{prop:Yomdin}, which provide sufficient criteria for upper semicontinuity of fiber entropy.

For our first result, assuming the local fiber entropy  $h_{\text{$\scrM$-$\loc$}}(F\mid \fol)$ vanishes, we obtain upper semicontinuity of the fiber entropy.

\begin{lemma}\label[lemma]{lem:locentzeromeansUSC}
Let $\scrM$ be a collection of $F$-invariant Borel probability measures on $Z$ such that $h_{\text{$\scrM$-$\loc$}}(F\mid \fol)=0.$ 
Then the function $\nu\mapsto h_\nu(F\mid \fol)$ is upper semicontinuous when restricted to $\scrM$. 
\end{lemma}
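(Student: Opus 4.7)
The plan is to reduce the statement to two ingredients: the upper semicontinuity of $\nu \mapsto h_\nu(F, \wtd\beta_\mu \mid \fol)$ at $\mu$ recorded in item (6) of \cref{prop:entfacts}, and a fibered analogue of the classical Misiurewicz--Newhouse local-to-global entropy inequality. Specifically, I will establish that for any $F$-invariant Borel probability measure $\nu$ on $Z$, any $\epsilon>0$, and any finite partition $\calQ$ of $Z$ whose restriction to each fiber $p^{-1}(y)$ has diameter less than $\epsilon$ in the fiber metric,
\begin{equation}\label{eq:keyineqAp}
h_\nu(F \mid \fol) \le h_\nu(F, \calQ \mid \fol) + \int \overline h_{y,\loc}(F, \epsilon \mid \fol)\, d(p_*\nu)(y).
\end{equation}

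Granting \eqref{eq:keyineqAp}, the lemma follows quickly. Fix $\mu \in \scrM$ and $\epsilon > 0$, and pick a finite partition $\beta$ of $M$ with $\diam \beta < \epsilon/L_\mu$ and $\mu(\partial\wtd\beta_\mu)=0$; by the bi-Lipschitz bound \eqref{eq:jasminepearls} the partition $\wtd\beta_\mu$ then has fiber-diameter less than $\epsilon$. Applying \eqref{eq:keyineqAp} with $\calQ = \wtd\beta_\mu$, together with the trivial bound $h_\nu(F, \wtd\beta_\mu \mid \fol) \le h_\nu(F \mid \fol)$, gives for every $\nu \in \scrM$
$$
h_\nu(F \mid \fol) \le h_\nu(F, \wtd\beta_\mu \mid \fol) + \sup_{\nu' \in \scrM} \int \overline h_{y,\loc}(F, \epsilon \mid \fol)\, d(p_*\nu')(y).
$$
For any sequence $\nu_j \in \scrM$ with $\nu_j \to \mu$ weakly, taking $\limsup_{j \to \infty}$ and invoking the upper semicontinuity of $\nu\mapsto h_\nu(F,\wtd\beta_\mu\mid\fol)$ at $\mu$ from item (6) of \cref{prop:entfacts} yields
$$
\limsup_{j\to\infty} h_{\nu_j}(F \mid \fol) \le h_\mu(F \mid \fol) + \sup_{\nu' \in \scrM} \int \overline h_{y,\loc}(F, \epsilon \mid \fol)\, d(p_*\nu')(y).
$$
Sending $\epsilon \to 0$ along the rationals and invoking the hypothesis $h_{\text{$\scrM$-$\loc$}}(F\mid \fol) = 0$ via the definition \eqref{eq:locent} gives $\limsup_j h_{\nu_j}(F \mid \fol) \le h_\mu(F \mid \fol)$, as required.

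To establish \eqref{eq:keyineqAp}, I will follow the standard fiber-by-fiber template. For any finite partition $\calP$ of $Z$ with mesh less than some $\delta>0$, the identity $H_\nu(\calP^n \vee \calQ^n \mid \fol) = H_\nu(\calQ^n \mid \fol) + H_\nu(\calP^n \mid \calQ^n \vee \fol)$ together with monotonicity yields $\tfrac{1}{n} H_\nu(\calP^n \mid \fol) \le \tfrac{1}{n} H_\nu(\calQ^n \mid \fol) + \tfrac{1}{n} H_\nu(\calP^n \mid \calQ^n \vee \fol)$. Since $\calQ$ has fiber-diameter less than $\epsilon$, the atom of $\calQ^n \vee \fol$ containing $z$ is contained in the fiberwise Bowen ball $B^\fol_n(z,\epsilon;F)$, and any atom of $\calP^n$ meeting this ball has fiberwise $(n,F)$-Bowen diameter at most $\delta$. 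A counting argument then bounds the number of such atoms by $\wtd r(z,n,\delta,\epsilon;F)$ up to a combinatorial factor depending only on $\dim M$, and item (3) of \cref{prop:entfacts} gives
$$
\tfrac{1}{n} H_\nu(\calP^n \mid \calQ^n \vee \fol) \le \tfrac{1}{n} \int \log \sup_{z \in p^{-1}(y)} \wtd r(z, n, \delta, \epsilon; F)\, d(p_*\nu)(y) + o(1)_{n\to\infty}.
$$
Passing $\limsup_n$ inside the integral via reverse Fatou, then letting $\delta \to 0$ and taking the supremum over $\calP$ using item (1) of \cref{prop:entfacts}, yields \eqref{eq:keyineqAp}. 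The main obstacle in the proof will be justifying the reverse Fatou step: this requires uniform $L^1$-domination of the integrand $y \mapsto \tfrac{1}{n}\log\sup_z \wtd r(z,n,\delta,\epsilon;F)$, which in turn follows from the standard crude spanning bound $\wtd r(z,n,\delta,\epsilon;F) \le C(\delta,\epsilon,\dim M)\prod_{j=0}^{n-1}\|f_{g^j(y)}\|_{C^1}^{\dim M}$ together with a mild $\log$-integrability hypothesis on $\|f_y\|_{C^1}$ that is implicit in our setting and that, in the application to $M^\alpha$, is supplied by \cref{prop:integrability}.
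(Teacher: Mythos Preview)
Your reduction of the lemma to the inequality \eqref{eq:keyineqAp} together with item (6) of \cref{prop:entfacts} is exactly what the paper does; that part is fine.  The difficulty is in your sketch of \eqref{eq:keyineqAp}, where there is a real gap in the counting step.  You claim that for an arbitrary finite partition $\calP$ of fiber-mesh $<\delta$, the number of atoms of $\calP^n$ meeting $B^\fol_n(z,\epsilon;F)$ is bounded by $\wtd r(z,n,\delta,\epsilon;F)$ times a constant depending only on $\dim M$.  But $\wtd r$ is the size of a \emph{minimal} $(\delta,n;F)$-cover of the Bowen ball, and there is no reason an \emph{arbitrary} partition $\calP^n$ (restricted to the fiber) should trace the ball with comparably few atoms: a single covering set of small $(n,F)$-diameter can still meet many atoms of $\calP$ at each time step, and those multiplicities compound over $n$.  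At best one gets a factor $C^n$ rather than $C$, which is useless.

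The paper avoids this by reversing the roles of the two partitions and using Newhouse's compact-atom trick.  It keeps $\calP$ as the given partition of small fiber-diameter and introduces, for each $N$, an auxiliary partition $\calQ=\{K_1,\dots,K_\ell,K_{\ell+1}\}$ with $K_1,\dots,K_\ell$ compact and $K_{\ell+1}$ the remainder, chosen so that $h_\mu(F^N,\calQ\mid\fol)$ is within $13$ of $h_\mu(F^N\mid\fol)$.  The point of compactness is that any set of fiber-diameter $<\delta$ (for $\delta$ small enough) meets at most \emph{two} atoms of $\calQ$: at most one $K_i$ with $i\le\ell$, plus possibly $K_{\ell+1}$.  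Hence each of the $\wtd r$ covering sets of $B^\fol_n(z,\epsilon;F)$ meets at most $2^k$ atoms of $\calQ^k_{F^N}$ (where $n=Nk$), and one bounds $H_\mu(\calQ^k_{F^N}\mid\calP^n_F\vee\fol)$ rather than $H_\nu(\calP^n\mid\calQ^n\vee\fol)$.  The extra $\tfrac{\log 2}{N}$ and $\tfrac{13}{N}$ are then killed by $N\to\infty$.  This is precisely the mechanism from Newhouse's corrected argument, and it is what makes the proof go through without extraneous hypotheses.

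Your reverse-Fatou step is a second, separate problem: it imports an $L^1$ assumption on $y\mapsto\log\|f_y\|_{C^1}$ that is not part of the lemma's hypotheses, and even granting it, the dominating function you need is $\sup_n\tfrac{1}{n}\sum_{j=0}^{n-1}(\dim M)\log^+\|f_{g^j(y)}\|_{C^1}$, which the maximal ergodic theorem only places in weak $L^1$, not $L^1$.  The paper's structure with $F^N$ and the finite partition $\calQ$ sidesteps this as well, since $H_\mu(\calQ^k_{F^N}\mid\cdot)$ is a priori bounded by $k\log\card\calQ$.
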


Given  a collection $\scrM$ of Borel probability measures  on $Z$, a Borel function $\phi\colon Z\to \R$ is \emph{uniformly integrable with respect to $\scrM$} if  
$$\lim _{K\to +\infty} \sup_{\mu\in \scrM} \int_{|\phi(z)|\ge K} |\phi(z)| \, d\mu(z) = 0.$$ 

Given $y\in Y$, let $$\Lambda(y) = \limsup_{n\to \infty} \frac 1 n \log (\|f_y^{(n)}\| _{C^1,\ast})= \limsup_{n\to \infty} \frac 1 n \log \left(\sup_{x\in M}\|D_xf_y^{(n)}\|\right).$$
Also write $R_{y,k}:= \|f_y\|_{C^k,\ast}.$  Observe that $R_{y,k}\ge R_{y,1}\ge 1$ for every $y\in Y$.

Our second main result is the following upper bound for the local fiber entropy  $h_{\text{$\scrM$-$\loc$}}(F\mid \fol)$; assuming $r=\infty$, this provides a sufficient condition for vanishing of local fiber entropy.     
\begin{theorem}\label{prop:Yomdin}
Assume  $r\ge k$ and let $\scrM$ be a collection of $F$-invariant, Borel probability measures on $Z$ such that the function $y\mapsto \log R_{y,k}$ is uniformly integrable with respect to $p_*\scrM=\{p_*\mu: \mu\in \scrM\}$. 

Then
$$h_{\text{$\scrM$-$\loc$}}(F\mid \fol)   \le \frac {\dim M}{k} 
\sup _{\mu\in \scrM} \int  \Lambda(y) \, d p_*\mu(y) .$$ 
\end{theorem}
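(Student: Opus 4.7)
The plan is to adapt the classical Yomdin--Newhouse strategy to the fibered setting. By the $L$-bi-Lipschitz trivialization $I_y$, the fiberwise Bowen ball $B^\Fol_n(z,\epsilon;F)\subset p\inv(y)$ corresponds, up to a universal distortion by $L$, to a Bowen ball in $M$ for the non-autonomous system $\{f_y^{(n)}\}$. Using \eqref{Puzzletable}, it suffices to estimate $\wtd r(z,n,\delta,\epsilon;F)$. So fix $\mu\in\scrM$, $\epsilon>0$, $\delta>0$ small, and a reference point $x_0\in M$.

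For a single integer $N\ge 1$, apply the Yomdin--Gromov algebraic lemma (in the form used by Newhouse, \cite{MR986792}) to the $C^k$ diffeomorphism $\Phi_N:=f_y^{(N)}$: for any subset $A\subset M$ of diameter at most $\delta$, and for $\delta>0$,
\[
\cov(\delta,\Phi_N(A))\ \le\ C(M,k)\cdot\Bigl(\max(1,\|\Phi_N\|_{C^1,A})\Bigr)^{\dim M/k}\cdot P\bigl(\|\Phi_N\|_{C^k}/\delta\bigr),
\]
where $P$ is a polynomial of degree depending only on $M$ and $k$. Tiling the Bowen ball in $M$ by $\delta$-pieces and iterating this estimate $[n/N]$ times along the $g^N$-orbit of $y$ yields, for $z\in p\inv(y)$ and $m:=[n/N]$,
\[
\wtd r(z,mN,\delta,\epsilon;F)\ \le\ C(M,k,\epsilon,\delta)^{m}\prod_{j=0}^{m-1}\Bigl(\|f_{g^{jN}(y)}^{(N)}\|_{C^1}\Bigr)^{\dim M/k}\cdot \prod_{j=0}^{m-1} P\!\left(\prod_{i=0}^{N-1} R_{g^{jN+i}(y),k}^{c(k,M)}/\delta\right),
\]
the last product absorbing the $C^k$-corrections. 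Taking logarithms, dividing by $mN$, and applying Birkhoff's theorem (along with a truncation argument on the ``bad'' set $\{y: R_{y,k}\ge T\}$ exactly as in the proof of \cref{claim:submult}), the polynomial error disappears in the limit by uniform integrability of $y\mapsto\log R_{y,k}$ with respect to $p_{*}\scrM$. Letting $\delta\to 0$ (in rationals, say), and using the local entropy definition, gives
\[
\overline h_{y,\loc}(F,\epsilon\mid\Fol)\ \le\ \frac{\dim M}{kN}\int \log \|f_{y'}^{(N)}\|_{C^1}\,d\nu^y(y'),
\]
for $p_*\mu$-a.e.\ $y$, where $\nu^y$ denotes the ergodic component of $p_*\mu$ containing $y$.

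Integrating against $p_*\mu$ and taking the supremum over $\mu\in\scrM$, the right-hand side becomes $\frac{\dim M}{kN}\sup_{\mu\in\scrM}\int \log\|f_y^{(N)}\|_{C^1}\,dp_*\mu(y)$. Since $\log\|f_y^{(N)}\|_{C^1}$ is subadditive along the $g$-orbit, Kingman's subadditive ergodic theorem gives
\[
\frac{1}{N}\int \log\|f_y^{(N)}\|_{C^1}\,dp_*\mu(y)\ \searrow\ \int \Lambda(y)\,dp_*\mu(y)\qquad (N\to\infty),
\]
uniformly in $\mu\in\scrM$ (again using the uniform integrability assumption on $\log R_{\cdot,k}\ge \log R_{\cdot,1}$ to pass the infimum through the supremum). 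Sending $N\to\infty$ and then $\epsilon\to 0$ in the definition \eqref{eq:locent} of $h_{\text{$\scrM$-$\loc$}}(F\mid\Fol)$ yields the desired bound.

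The main obstacle is the clean execution of the two passages to the limit in the non-compact, non-equicontinuous fiber setting: (i) controlling the polynomial Yomdin correction factors, whose logs are sums $\frac{1}{m}\sum_{j}\log R_{g^{jN}(y),k}$ that a priori explode along individual orbits, so that only uniform integrability with respect to $p_{*}\scrM$ (and not mere integrability for each $\mu$) makes the limit $\delta\to 0$ uniform across $\scrM$; and (ii) commuting the Kingman limit in $N$ with the supremum over $\scrM$. Both are handled by the same ``truncation at level $T$'' device used in the proof of \cref{claim:submult}, replacing the role of $\log R_{\cdot,1}$ there with $\log R_{\cdot,k}$ here.
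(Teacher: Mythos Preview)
Your outline follows the right strategy—Yomdin covering estimates plus passage to iterates to replace $\log\|Df\|$ by $\Lambda$—and this is indeed the paper's plan. But the step that separates $C^1$-growth from higher-order corrections is not justified as written. The Yomdin--Gromov lemma in its standard form (the paper's Proposition~\ref{Prop:YomdinGromov}) bounds the number of reparametrizations by $u\cdot\|f\|_{C^k,*}^{m/k}$, not by $\|f\|_{C^1}^{m/k}$ times a polynomial correction; your displayed inequality for $\cov(\delta,\Phi_N(A))$ is therefore not a consequence of any cited result. Even granting that form, the product $\prod_jP(\cdot/\delta)$ contributes, after $\tfrac1{mN}\log$ and Birkhoff, a term of order $\tfrac{\deg P}{N}\bigl(-\log\delta+\text{const}\bigr)$; since the definition of $\overline h_{y,\loc}$ forces $\delta\to0$ \emph{before} any auxiliary parameter such as $N$, this term is $+\infty$ for each fixed $N$, and uniform integrability of $\log R_{\cdot,k}$ does not help here.

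The paper's mechanism is different and essential. It first rescales, setting $f_{y,\epsilon}:=d^{1/\epsilon}\circ f_y\circ d^{\epsilon}$, so that $\|D^sf_{y,\epsilon}\|\le\epsilon^{s-1}R_{y,k}$; on the good set $\{R_{y,k}\le\epsilon^{-1}\}$ the effective Yomdin constant is then $uR_{y,1}^{m/k}$. Crucially, the reparametrization lemma is applied recursively \emph{one step at a time} (Lemma~\ref{lem:abstyomdin}), which keeps the $C^k$-norm of the composed map $\le1$ at every intermediate stage; this yields a covering bound (Corollary~\ref{cor:emtpyshell}) with only a single, non-iterated factor $\delta^{-m}$ that dies under $\tfrac1n\log$ before $\delta\to0$ is taken. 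Letting $\epsilon\to0$ and invoking uniform integrability solely to kill $\int_{\{R_{\cdot,k}\ge\epsilon^{-1}\}}\log R_{\cdot,k}$ gives the $\delta$-free one-step bound \eqref{eq:onestep}. Only then does one replace $F$ by $F^j$, via the already-proved \cref{claim:submult}, cleanly decoupling the limit $j\to\infty$ from the $\delta\to0$ inside the definition of local entropy.
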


\subsection{Proof of \texorpdfstring{\cref{lem:locentzeromeansUSC}}{Lemma A.4}}
We have the following version of \cite[Thm.\ 1, (1.1)]{MR986792}.  
 \begin{proposition}\label[proposition]{SheldonIsClever} 
Fix $\epsilon>0$ and let $\calP$ be a finite partition
of $Z$ such that for every $z\in Z$,
 $$\diam (\calP(z)\cap \fol(z))<\epsilon.$$
  Then  for any $F$-invariant Borel probability measure $\mu$, 
$$h_\mu(F\mid \fol) \le  h_\mu(F,\calP\mid \fol) +  \int \overline h_{y,\loc}(F,\epsilon\mid \fol) \, d (p_* \mu)(y).$$

\end{proposition}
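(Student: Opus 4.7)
My plan is to adapt Newhouse's classical argument (\cite{MR986792}) to the fibered setting. It suffices to show that for any finite partition $\calQ$ of $Z$,
\[
h_\mu(F,\calQ\mid\fol) \le h_\mu(F,\calP\mid\fol) + \int \overline h_{y,\loc}(F,\epsilon\mid \fol) \, d(p_*\mu)(y),
\]
and then take the supremum over $\calQ$. Using that $\calQ^m \vee \calP^m$ refines $\calQ^m$ and the standard identity $H_\mu(\calQ^m\mid\fol) \le H_\mu(\calP^m\mid\fol) + H_\mu(\calQ^m\mid \calP^m\vee\fol)$, dividing by $m$ and passing to the limit yields
\[
h_\mu(F,\calQ\mid\fol) \le h_\mu(F,\calP\mid\fol) + \limsup_{m\to\infty}\tfrac{1}{m} H_\mu(\calQ^m\mid \calP^m\vee \fol).
\]
It thus reduces to bounding the last term by $\int \overline h_{y,\loc}(F,\epsilon\mid \fol) \, d(p_*\mu)(y)$.

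Next I would estimate the number of $\calQ^m$-atoms meeting a $(\calP^m\vee\fol)$-atom. Fix $\delta>0$ smaller than a Lebesgue number of $\calQ$ (on rough metric spaces one first slightly thickens/approximates $\calQ$ by a partition whose atoms lie in open sets with positive Lebesgue cover number, and controls the resulting error via the crude bound in \cref{prop:entfacts}\eqref{basicfact3}). If $A\subset \fol(y)$ has $d_{y,m;F}$-diameter $<\delta$, then for each $0\le j \le m-1$ the set $F^j(A)$ has $d_Z$-diameter $<\delta$ and hence lies in a single $\calQ$-atom; consequently $A$ lies in a single $\calQ^m$-atom. Since $\diam(\calP\cap\fol(y))<\epsilon$ and $F$ preserves $\fol$, the atom $(\calP^m\cap\fol)(z)$ is contained in the fiberwise Bowen ball $B^\fol_m(z,\epsilon;F)$. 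Covering this ball by a minimal $(\delta,m;F)$-cover shows
\[
\#\bigl\{Q\in\calQ^m : Q\cap(\calP^m\cap\fol)(z)\neq\emptyset\bigr\} \le \wtd r(z,m,\delta,\epsilon;F).
\]
Applying \cref{prop:entfacts}\eqref{basicfact3} then yields
\[
\tfrac 1 m H_\mu(\calQ^m\mid \calP^m\vee\fol) \le \tfrac 1 m\int \log \wtd r(z,m,\delta,\epsilon;F)\, d\mu(z).
\]

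Finally, I would take $\limsup_{m\to\infty}$ and pass the $\limsup$ inside the integral. Bounding $\wtd r(z,m,\delta,\epsilon;F)$ by its supremum over the fiber of $p(z)$, the pointwise limsup equals $\overline h_{p(z),\loc}(F,\epsilon\mid \fol)$ after letting $\delta\downarrow 0$ through a rational sequence, and integration with respect to $\mu$ collapses to integration over $p_*\mu$ since this supremum depends only on $p(z)$. This gives the desired bound and completes the proof modulo the interchange of $\limsup_m$ with the integral. Taking the supremum of the resulting inequality over all finite partitions $\calQ$ yields the statement of \cref{SheldonIsClever}.

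The main obstacle is precisely the exchange of $\limsup_m$ with $\int d\mu$, which is not automatic outside the compact setting because one needs a dominating function to invoke reverse Fatou. If the RHS of the proposition is infinite, the inequality is trivial, so I would focus on the case where $\overline h_{y,\loc}(F,\epsilon\mid\fol)$ is $p_*\mu$-integrable; there the uniform tempered growth estimates of the type in \eqref{tempered} (and more generally the hypothesis that $\log R_{y,k}$ has an exponential moment on the families considered in \cref{prop:uscfiberentropy}) provide the necessary $L^1$-majorant $\frac 1 m \log\wtd r(z,m,\delta,\epsilon;F) \le \psi(z)$ for some $\psi\in L^1(\mu)$, allowing the interchange. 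A secondary nuisance is the Lebesgue-number issue for a partition with irregular boundaries, which is handled by the standard approximation-and-perturb trick using continuity of conditional entropy in the partition.
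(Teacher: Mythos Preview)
Your overall architecture is Newhouse's and coincides with the paper's: bound $H_\mu(\calQ^m\mid\calP^m\vee\fol)$ by counting how many $\calQ^m$-atoms meet a $(\calP^m\vee\fol)$-atom, using that the latter sits in a fiberwise Bowen ball. But your assessment of where the difficulty lies is inverted.

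The step you flag as the ``main obstacle''---the exchange of $\limsup_m$ with the integral---is not an obstacle at all and does not require any moment hypothesis on $\log R_{y,k}$. You already have the trivial bound $\card\bigl(\restrict{\calQ^m}{(\calP^m\vee\fol)(z)}\bigr)\le(\card\calQ)^m$, so the integrand $\tfrac1m\log\card(\cdots)$ is dominated by the constant $\log(\card\calQ)$; reverse Fatou applies directly. (Also, since $\wtd r(z,m,\delta,\epsilon;F)$ is nondecreasing as $\delta\downarrow 0$, for any fixed $\delta$ one has $\limsup_m\tfrac1m\log\sup_z\wtd r\le\overline h_{y,\loc}(F,\epsilon\mid\fol)$, so no separate $\delta\to0$ limit is needed.) Invoking \eqref{tempered} or the exponential-moment hypotheses of \cref{prop:uscfiberentropy} here would illegitimately add assumptions not present in the proposition.

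The genuine gap is the ``Lebesgue number'' step, which you dismiss as a secondary nuisance. A finite partition has no Lebesgue number: a set of arbitrarily small diameter can straddle a boundary and meet several atoms. Your proposed fix---replace $\calQ$ by a partition whose atoms sit in open sets with positive Lebesgue number and ``control the error''---is exactly the move that fails in Newhouse's original argument and is the subject of the erratum \cite{MR1043272}. The paper follows that correction: choose $\calQ=\{Q_1,\dots,Q_\ell,Q_{\ell+1}\}$ with $Q_1,\dots,Q_\ell$ pairwise-separated compact sets (in fiber coordinates) and $Q_{\ell+1}$ the residual, pick $\delta$ smaller than half the minimal fiberwise separation, and observe that a set of $d_Z$-diameter $<\delta$ in a fiber meets at most \emph{two} atoms of $\calQ$ (one compact piece plus possibly $Q_{\ell+1}$). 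Hence each element of a $(\delta,n;F)$-cover meets at most $2^k$ atoms of $\calQ^k_{F^N}$, where $n=kN$. The paper absorbs this $2^k$ by running $\calQ$ on the time scale $F^N$ while $\calP$ runs on time scale $F$: the extra $\tfrac{k\log2}{n}=\tfrac{\log2}{N}$ (and the ``13'' from approximating $h_\mu(F^N\mid\fol)$) vanish as $N\to\infty$. Your single-timescale argument has no mechanism to kill such a multiplicity factor; even if you approximate $\calQ$ as you describe, you will pick up a per-step multiplicity $\ge2$ that does not disappear. The two-timescale/compact-plus-remainder trick is the missing idea.
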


 \cref{lem:locentzeromeansUSC} follows immediately from \cref{SheldonIsClever}.  
\begin{proof}[Proof of \cref{lem:locentzeromeansUSC}]
We follow \cite[Proof of Lem.\ 2, (2.1)]{MR986792}.  Fix $\mu\in \scrM$.  Fix $\eta>0$ and $\epsilon>0$ such that $$\sup _{\nu\in \scrM} \int \overline h_{y,\loc}(F,\epsilon\mid \fol) \, d (p_* \nu)(y)<\eta.$$

Fix a finite partition $\beta$ of $M$ with 
\begin{enumerate}
\item $\diam \beta \le \dfrac {\epsilon}{L_\mu}$; 
\item $\mu(\partial \wtd \beta _\mu) =0$.

\end{enumerate}

From \cref{SheldonIsClever}  and the upper semicontinuity of  $\nu\mapsto h_\nu(F,\wtd \beta_\mu\mid \fol)$ at $\nu=\mu$ in \cref{prop:entfacts}, 
we have $$\limsup_{\nu\to \mu} h_\nu (F\mid \fol) \le \limsup_{\nu\to \mu}  h_\nu(F,\wtd \beta_\mu\mid \fol) +\eta \le 
 h_\mu(F,\wtd \beta_\mu\mid \fol) +\eta 
\le h_\mu(F\mid \fol) +\eta. $$ The conclusion  follows immediately from arbitrariness of  $\eta$.
\end{proof}

It remains to establish \cref{SheldonIsClever}.  We follow the proof of \cite[Thm.\ 1, (1.1)]{MR986792}, especially as corrected in \cite{MR1043272}.

\begin{proof}[Proof of \cref{SheldonIsClever}]   
Let $\calP$ be a finite partition of $Z$ with 
\begin{enumerate}
\item  $\diam (\calP(z)\cap \fol(z))<\epsilon$ for every $z\in Z$.  
\end{enumerate}
Fix $N\in \N$.  
Let  $\alpha=\{K_1,\dots, K_\ell, K_{\ell+1}\} $ be a finite partition of $M$ such that 
\begin{enumerate}[resume]
\item   $K_i$ is compact for each $1\le i\le \ell$ and  $K_{\ell+1} = M\sm \bigcup_{i=1}^\ell K_i$;
\item $h_\mu(F^N\mid \fol) \le  h_\mu(F^N,\wtd \alpha_\mu\mid \fol) +13$ (following the notation in \eqref{eq:pushpart}).
  \end{enumerate} 
 For $1\le i\le \ell+1$  let $Q_i=I(Y\times K_i)$  and $\calQ$ be the finite partition $\calQ=\{Q_i: 1\le i\le \ell+1\}$.  
  Fix $$0<\delta<\frac 1 {2 L} \min \{d_M(x,y) : x\in K_i, y\in K_j, 1\le i,j\le \ell, i\neq j\}$$ sufficiently small.  
Then for every $y\in Y$,  \begin{equation}\label{eq:meatloaf}
\delta <\frac 1 2\min \left\{d_Z(z,z') : z\in Q_i\cap \fol(y), z'\in Q_j\cap \fol(y), 1\le i,j\le \ell, i\neq j\right\}.\end{equation}

  Given $k\in \N$, let $n=kN$.  We have 
\begin{align*}
 H_\mu\bigl(\calQ^k_{F^N} \mid \fol \bigr)
&\le   H_\mu\bigl(\calQ^k_{F^N} \vee \calP^n_{F}  \mid \fol \bigr)\\
&=
H_\mu\bigl(\calP^n_{F} \mid \fol \bigr) +  
H_\mu\bigl(\calQ^k_{F^N} \mid \calP^n_{F} \vee \fol \bigr).
 \end{align*}

Fix $y\in Y$ and $z\in p\inv (y)$.  
We have $\bigr(\calP^n_{F} \vee \fol \bigr)(z)\subset B^\Fol_n(z,\epsilon;F)$. 
 Fix $\eta>0$.  Having taken first    $\delta>0$  sufficiently small and then $n=Nk$  sufficiently large,  we may find a $(\delta,n;F)$-cover $\calA_n$ of 
$B^\Fol_n(z,\epsilon;F)$
with 
$$\card \calA_n\le e^{n(\overline h_{y,\loc}(F,\epsilon\mid \fol) +\eta)}.$$

Given $E\in \calA_n$ and $0\le j\le k-1$, by \eqref{eq:meatloaf} we have that $F^{jN}(E)$ meets at most one of the sets in $\left\{Q_1\cap \fol(g^{jN}(y)), \dots, Q_\ell\cap \fol(g^{jN}(y))\right\}$ as well as possibly meeting the set $Q_{\ell+1}\cap \fol(g^{jN}(y))$.

In particular,  each set $E\in \calA_n$ meets at most $2^k$ sets in \[ \restrict{\calQ^k_{F^N}}{ \bigr((\calP^n_F \vee \fol )(z)\bigr)}.\]

Thus 
\begin{align*}
H_\mu\bigl(\calQ^k_{F^N} \mid \calP^n_{F} \vee \fol \bigr)
&\le \int \log  \card \left(\restrict{\calQ^k_{F^N}}{ \bigr((\calP^n_{F} \vee \fol)(z) \bigr)}\right) \, d \mu(z)\\
&\le \int \log \left (2^k e^{n(\overline h_{y,\loc}(F,\epsilon\mid \fol) +\eta)}\right) \, d (p_*\mu)(y)\\
&\le k\log 2 + n \eta + n \int \overline h_{y,\loc}(F,\epsilon\mid \fol)  \, d (p_*\mu)(y).
\end{align*}
Hence ,
\begin{align*}
h_\mu(F\mid \fol)
&= \frac 1 N h_\mu(F^N\mid \fol)\\
& \le \frac 1 N   h_\mu(F^N,\calQ\mid \fol) +\frac {13} N\\
& =\frac {13} N +\frac 1 N \lim  _{k\to\infty } \frac 1 {k} H_\mu\bigl(\calQ^k_{F^N} \mid \fol \bigr) \\
& \le \frac {13} N +\lim  _{k\to\infty } 
\frac 1 {Nk} H_\mu\bigl(\calP^{Nk}_{F} \mid \fol \bigr) 
+\lim  _{k\to\infty }  \frac 1 {Nk} H_\mu\bigl(\calQ^k_{F^N} \mid \calP^{Nk}_{F} \vee \fol \bigr)\\
&= \frac {13} N +\lim  _{n\to\infty } 
\frac 1 {n} H_\mu\bigl(\calP^{n}_{F} \mid \fol \bigr) 
+\lim  _{k\to\infty }  \frac 1 {Nk} H_\mu\bigl(\calQ^k_{F^N} \mid \calP^{Nk}_{F} \vee \fol \bigr)\\
&\le \frac {13} N 
+  h_\mu(F,\calP\mid \fol) + \frac {\log 2}{N} +\eta +  \int \overline h_{y,\loc}(F,\epsilon\mid \fol) \, d (p_*\mu)(y) .
\end{align*}

Taking $N$ sufficiently large and $\eta>0$ sufficiently small, we obtain the desired inequality.  
\end{proof}

\subsection{Yomdin estimates} 
It remains to prove \cref{prop:Yomdin}.  This follows from the following key proposition of Yomdin.  See  \cite[Thm.\ 2.1]{MR0889979} (as well as \cite{MR889980}) and mild reformulation in \cite[Prop.\ 3.3]{MR1469107}.

We write $\bfQ^\ell=[0,1]^\ell$  for the $\ell$-dimensional unit cube in $\R^m$ (oriented along the first $\ell$ basis vectors relative to the standard basis on $\R^m$).   
Given $x\in \R^N$ we write $B(x,r)$ for the Euclidean ball centered at $x$ of radius $r>0$.  
\begin{proposition}\label[proposition]{Prop:YomdinGromov}
Fix $x\in \R^N$ and 
let $\sigma\colon \bfQ^\ell  \to \R^N $ and $f\colon B(x,2)\to \Rbb^N$ be $C^k$ functions with $\im (\sigma)\subset B(x,2)$, $\|\sigma\|_{C^k,\ast}\le 1$,   and $\|f\|_{C^k,\ast}\le R$.

There exists a constant $u=u(k,N,\ell)$ depending only on $k$, $N$, and $\ell$ 
and  at most $\kappa:=u\max\{R, 1\}^{\frac {\ell}{k}}$  maps $\psi_{i}\colon \bfQ^\ell\to \bfQ^\ell$ with the following properties: 
\begin{enumerate}
\item \label{contract} Each $\psi_{i}\colon \bfQ^\ell\to \bfQ^\ell$ is a $C^k$ diffeomorphism onto its image.  Moreover, $\|\psi_i\|_{C^1}\le 1$.  
\item $\sigma\inv\circ f \inv \bigl(B(f(x),1)\bigr )\subset \bigcup \im (\psi_i)$.  
\item for every $i,$ 
$ \im (f\circ \sigma \circ \psi_i)\subset    B(f(x),2)$.
\item  $\| f\circ \sigma\circ \psi_i\|_{C^k,\ast}\le 1$ for every $i$.
\end{enumerate}

\end{proposition}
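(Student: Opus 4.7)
The approach is the classical Yomdin subdivision scheme: partition $\bfQ^\ell$ into a grid of sub-cubes of side length $\lambda$ chosen so that, after rescaling each sub-cube back to $\bfQ^\ell$ via an affine map $\psi_i$, the composition $f\circ\sigma\circ\psi_i$ satisfies $\|f\circ\sigma\circ\psi_i\|_{C^k,\ast}\le 1$. Taking $\lambda\sim R^{-1/k}$ produces $\sim R^{\ell/k}$ sub-cubes, which matches the claimed count $\kappa$. As a first step, I would apply Fa\`a di Bruno's formula to obtain $\|f\circ\sigma\|_{C^s,\ast}\le C_1(k,N,\ell)\,R$ for every $1\le s\le k$: each iterated derivative of order $s$ is a polynomial in derivatives of $f$ and $\sigma$ of order $\le s\le k$, so $\|f\|_{C^k,\ast}\le R$ and $\|\sigma\|_{C^k,\ast}\le 1$ yield this estimate.

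\textbf{Subdivide and reparametrize.} Set $\lambda:=\min\{1,(C_1R)^{-1/k}\}$ and tile $\bfQ^\ell$ by closed axis-aligned sub-cubes of side length $\lambda$. For each such sub-cube, let $\psi_i(t)=v_i+\lambda t$ be the affine map sending $\bfQ^\ell$ onto the sub-cube, where $v_i$ is its lower corner. Each $\psi_i$ is a smooth diffeomorphism onto its image with $\|D\psi_i\|=\lambda\le 1$, giving~(1), and the number of sub-cubes is at most $\lceil \lambda^{-1}\rceil^\ell\le 2^\ell\max\{1,C_1R\}^{\ell/k}$. Retain only those $\psi_i$ whose image meets $\sigma^{-1}\circ f^{-1}(B(f(x),1))$; this collection covers the preimage set, establishing~(2), while preserving the polynomial count.

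\textbf{The main obstacle: intermediate derivatives.} The top-order bound is immediate by scaling: $\|D^k(f\circ\sigma\circ\psi_i)\|_{C^0}\le\|D^k(f\circ\sigma)\|_{C^0}\cdot\lambda^k\le 1$. However, the naive estimate $\|D^s(f\circ\sigma\circ\psi_i)\|_{C^0}\le C_1 R\cdot\lambda^s=(C_1R)^{1-s/k}$ blows up for $s<k$, and this is the technical heart of Yomdin's argument. I would close the gap via a Gagliardo--Nirenberg--Markov interpolation inequality on $\bfQ^\ell$: for $g\in C^k(\bfQ^\ell)$ and $1\le s<k$,
\[\|D^s g\|_{C^0}\le C(k,\ell)\bigl(\|g\|_{C^0}^{1-s/k}\|D^k g\|_{C^0}^{s/k}+\|g\|_{C^0}\bigr).\]
The required $C^0$-bound on $g:=f\circ\sigma\circ\psi_i$ for the retained sub-cubes follows from the selection (some $t_0\in\bfQ^\ell$ has $g(t_0)\in B(f(x),1)$) together with a bootstrapped Lipschitz bound starting from $s=1$, yielding $\|g\|_{C^0}\le 2$. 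Combined with the top-order bound, this gives $\|D^s g\|_{C^0}\le C_2(k,N,\ell)$ for all $1\le s\le k$. Absorbing $C_2$ by performing one further round of subdivision by a factor of $\lceil C_2\rceil$ (multiplying the count by a fixed constant, hence only enlarging $u$) enforces $\|g\|_{C^k,\ast}\le 1$, establishing~(4). Condition~(3) then follows from $\|g\|_{C^1,\ast}\le 1$, the diameter bound on $\bfQ^\ell$, and the fact that $g$ hits $B(f(x),1)$.
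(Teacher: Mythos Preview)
The paper does not prove this proposition; it is quoted from Yomdin \cite[Thm.~2.1]{MR0889979}, \cite{MR889980} with Buzzi's reformulation \cite[Prop.~3.3]{MR1469107}, together with a remark that the contraction property in (1) can be read off from those proofs. So there is no in-paper argument to compare against, but your attempt to reconstruct Yomdin's proof has a genuine gap.

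The bootstrap via interpolation is circular. After the first subdivision you have $\|D^k g\|_{C^0}\le 1$ on each retained sub-cube, but on a bounded domain the Landau--Kolmogorov type inequality you invoke needs the oscillation of $g$ as input, and that oscillation is controlled only by $\|D g\|_{C^0}$ --- precisely the quantity you are trying to bound. Concretely: with $x=0$, $\sigma$ a small affine embedding $\bfQ^\ell\hookrightarrow B(0,2)\subset\R^N$, and $f(y)=Ry_1 e_1$, one has $\|f\|_{C^k,\ast}=R$; on the sub-cube through $0$ of side $(C_1R)^{-1/k}$ the reparametrized map is $g(t)=c\,(C_1R)^{1-1/k}t_1 e_1$, which satisfies $D^kg=0$ and hits $0=f(0)$ (so is retained), yet $\|Dg\|\asymp(C_1R)^{1-1/k}\to\infty$. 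No interpolation inequality can produce an $R$-independent bound on $\|Dg\|$ from $\|D^kg\|\le 1$ and a single value of $g$.

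What Yomdin actually uses is an \emph{algebraic reparametrization lemma}: after bounding $\|D^kg\|\le 1$ one replaces $g$ by its degree-$k$ Taylor polynomial $P$ (uniformly bounded error on $\bfQ^\ell$), and then invokes the fact that for any polynomial map $P$ of degree $\le k$, the semialgebraic set $P^{-1}(B(p,1))\cap\bfQ^\ell$ can be covered by at most $C(k,N,\ell)$ images of $C^k$ maps $\phi_j\colon\bfQ^\ell\to\bfQ^\ell$ with $\|\phi_j\|_{C^k,\ast}\le 1$ and $\|P\circ\phi_j\|_{C^k,\ast}\le 1$, where the bound $C(k,N,\ell)$ is independent of the coefficients of $P$. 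In the example above this set is a thin slab $|t_1|\lesssim(C_1R)^{-(1-1/k)}$, handled by a single anisotropic affine $\phi$; the content of the lemma is that the number of pieces is uniform over all degree-$\le k$ polynomials. This is proved in \cite{MR889980} via semialgebraic geometry (see also \cite{MR880035}); a uniform cubical grid plus interpolation cannot substitute for it.
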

The assertion that each $\psi_i$ can be taken to be a (non-strict) contraction in \eqref{contract} is not explicitly stated in \cite[Thm.\ 2.1]{MR0889979} but the estimate $\|\psi_i\|_{C^k,\ast}\le 1$ can be deduced from the proof.  See also the reformulation in \cite[Prop.\ 3.3]{MR1469107} where it is asserted explicitly that $\psi_i$ are (non-strict) contractions. 

\subsection{Proof of \texorpdfstring{\cref{prop:Yomdin}}{Theorem A.5}}  We roughly follow the idea of proof of  \cite[Thm.\ 1.8]{MR889980} and  \cite[Thm.\ 2.2]{MR1469107}.  

Let $m= \dim M$.  Recall we view $M\subset U\subset \R^N$.   
Fiberwise local entropy \eqref{eq:locent} is invariant under bi-Lipschitz change of coordinates and we thus equip $M$ with the Riemannian metric obtained by restricting the Euclidean metric to $M$.  
Recall we write $B(x,\rho)$ for the Euclidean ball in $\R^N$ centered at $x$ of radius $\rho$.  Given $x\in M$, let $B_M(x, \rho)$ denote the  ball  in $M$ with respect to the induced Riemannian metric centered at $x$ of radius $\rho$.  For  $\rho>0$, note $B_M(x, \rho)\subset B(x, \rho)$.  

Fix the constant $u=u(k,N,m)$ as in \cref{Prop:YomdinGromov}.  
Recall that given $y\in Y$ we write $$R_{y,k}:= \|f_y\|_{C^k,\ast}\ge 1.$$

  Given $\lambda>0$, let $d^\lambda \colon \R^N\to \R^N$ denote dilation by $\lambda>0$; that is, $d^\lambda(v) = \lambda v$.  
  
  Fix $0<\epsilon<1$.  Let $M^\epsilon = d^{1/\epsilon} (M)= \{\epsilon \inv x: x\in M\}$ and $U^\epsilon = d^{1/\epsilon} (U)$ be the rescaled sets.  Given $y\in Y$, let $f_{y,\epsilon}= d^{1/\epsilon}\circ f_y\circ d^{\epsilon}\colon M^\epsilon \to M^\epsilon$; that is, $f_{y,\epsilon} \colon v\mapsto \epsilon \inv f_y (\epsilon v).$
 For $1\le s \le k$ we have $$\|D^s f_{y,\epsilon}\| \le \epsilon ^{s-1} \|D^sf_{y} \|.$$
In particular, $\|f_{y,\epsilon}\|_{C^k,\ast}\le R_{y,k}$; moreover, if $R_{y, k} \le \epsilon \inv$ then $$\|f_{y,\epsilon}\|_{C^k,\ast}\le \|f_{y,\epsilon}\|_{C^1,\ast}= R_{y,1}.$$
Also write $f_{y,\epsilon}^{(0)}=\id$ and for $n\ge 1$ 
$$f_{y,\epsilon}^{(n)}= f_{g^{n-1}(y),\epsilon} \circ \dots \circ f_{y,\epsilon}.$$

For $x\in M$, fix an (ordered) orthonormal basis for $T_xM$.  Let $\bfQ_x= [-\frac 1 2, \frac 1 2]^m$ denote the cube in $T_xM$ of side-length $1$ centered at $0$ relative to this basis.  Let $I_x\colon \bfQ^m\to \bfQ_x$ denote the affine isometry defined relative to this basis.  

Let $\exp_x$ denote the exponential map of $M$ at $x$.  
Fixing $0<\lambda<1$ sufficiently small,  let $\sigma_x\colon \bfQ^m\to M$ be the map $$\sigma_x\colon v\mapsto \exp_x (\lambda I_x (v)).$$
Observe the image of $\sigma_x$ contains $B_M(x, \lambda)$ and is contained in $B_M(x, \lambda\sqrt m)$.  
By compactness there  exists $0<\lambda<\frac 1 {\sqrt m}$ so that $\|\sigma_x\|_{C^k,\ast}\le 1$ for all $x\in M$.

Given $0<\epsilon<1$, we let $\sigma_{x,\epsilon}\colon \bfQ^m\to M^\epsilon$ be $$\sigma_{x,\epsilon}\colon  v\mapsto \epsilon \inv \exp_x (\epsilon \lambda I_x (v)).$$
For $1\le s\le k$ we have $\|D^s \sigma_{x,\epsilon}\|\le\epsilon^{s-1}\|D^s \sigma_{x}\|$
whence $\|\sigma_{x,\epsilon}\|_{C^k,\ast}\le 1$ for all $x\in M$ and $0<\epsilon<1$.
We also have $$ B_{M^\epsilon}(x,\lambda)\subset \im \sigma_{x,\epsilon}\subset B_{M^\epsilon}(d^{1/\epsilon }(  x), \lambda\sqrt m)\subset B(d^{1/\epsilon }(  x), 1).$$

Write 
$$\kappa_{y,\epsilon}:= \begin{cases} u(k,N,m)(R_{y,1})^{\frac {m}{k}} & R_{y,k} \le \epsilon\inv\\
u(k,N,m)(R_{y,k})^{\frac {m}{k}} & R_{y,k} \ge \epsilon\inv.
\end{cases}$$

Fix $y_0\in Y$ and write $y_j = g^j(y)$ for $j\ge 0$.  
Fix $x_0 \in M$ and write  $x_j = f_{y_0}^{(j)}(x_0)$. 
Given $n\ge 1$, set 
\begin{equation}\label{eq:SN}S_n := \{x'\in  {M^\epsilon}:  f^{(j)}_{y_0 , \epsilon} (x') \subset B(d^{1/\epsilon}(x_j),1) \text { for all $0\le j\le n-1$}\}.\end{equation}

Recursive application of \cref{Prop:YomdinGromov} with the maps $f_{y_0,\epsilon}$, $f_{y_1,\epsilon}$, $f_{y_2,\epsilon}$, $\dots$ and the map $\sigma= \sigma_{x_0,\epsilon}$ yields the following.  

\begin{lemma}\label[lemma]{lem:abstyomdin}

Fix $0<\epsilon<1 $.  
For every $n\in \N$, there exist at most  $\prod_{j=0}^{n-1} \kappa_{y_{j},\epsilon}$ maps $\psi_i\colon \bfQ^m\to \bfQ^m$ with the following properties:
\begin{enumerate}
\item Each $\psi_i\colon \bfQ^m\to \bfQ^m$ is a $C^k$ diffeomorphism onto its image.   Moreover, $\|\psi_i\|_{C^1}\le 1$.  
\item $ \sigma_{x_0,\epsilon}\inv (S_n)\subset\bigcup \im ( \psi_i)$.
\item $\im\bigl(f_{y_0,\epsilon}^{(n)}\circ \sigma_{x_0,\epsilon}\circ \psi_i\bigr) \subset B(d^{1/\epsilon}(x_n, 2)$. 
\item $\|f_{y_0,\epsilon}^{(n)}\circ \sigma_{x_0,\epsilon}\circ \psi_i\|_{C^k,\ast} \le 1$ and $\|f_{y_0,\epsilon}^{(j)}\circ \sigma_{x_0,\epsilon}\circ \psi_i\|_{C^1} \le 1$ for each $1\le j\le n$.  
\end{enumerate}
\end{lemma}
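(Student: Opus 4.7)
The proof will proceed by induction on $n$, with the inductive step amounting to one application of the Yomdin--Gromov covering lemma (\cref{Prop:YomdinGromov}) per existing map in the cover.

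The base case $n = 0$ is trivial: take the single map $\psi_0 = \id_{\bfQ^m}$, which is a (non-strict) $C^1$ contraction. Conditions (2)--(4) at $n = 0$ follow immediately from $\|\sigma_{x_0,\epsilon}\|_{C^k,\ast} \le 1$ and $\im \sigma_{x_0,\epsilon} \subset B(d^{1/\epsilon}(x_0), 1) \subset B(d^{1/\epsilon}(x_0), 2)$, and the empty product of the $\kappa_{y_j,\epsilon}$ is $1$.

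For the inductive step, assume the conclusion holds at stage $n$, yielding maps $\{\psi_i^{(n)}\}$ with cardinality at most $\prod_{j=0}^{n-1}\kappa_{y_j,\epsilon}$. For each such $\psi_i^{(n)}$, I apply \cref{Prop:YomdinGromov} with $\ell = m$, with the map $\sigma = f_{y_0,\epsilon}^{(n)}\circ \sigma_{x_0,\epsilon}\circ \psi_i^{(n)}$ (which satisfies $\|\sigma\|_{C^k,\ast}\le 1$ and $\im \sigma \subset B(d^{1/\epsilon}(x_n),2)$ by the inductive hypothesis), with $f = f_{y_n,\epsilon}$, and with basepoint $d^{1/\epsilon}(x_n)$. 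To bound $\|f\|_{C^k,\ast}$ I use the dichotomy built into the definition of $\kappa_{y_n,\epsilon}$: if $R_{y_n,k}\ge \epsilon^{-1}$ then $\|f_{y_n,\epsilon}\|_{C^k,\ast}\le R_{y_n,k}$, while if $R_{y_n,k}\le\epsilon^{-1}$ the dilation bound $\|D^s f_{y_n,\epsilon}\|\le \epsilon^{s-1}\|D^sf_{y_n}\|$ (valid for $1 \le s \le k$) yields $\|f_{y_n,\epsilon}\|_{C^k,\ast}\le R_{y_n,1}$. Either way Yomdin--Gromov produces at most $\kappa_{y_n,\epsilon}$ contractions $\phi_{i,j}\colon \bfQ^m\to \bfQ^m$; I then define $\psi_{i,j}^{(n+1)} := \psi_i^{(n)}\circ \phi_{i,j}$. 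The total count is $\prod_{j=0}^n \kappa_{y_j,\epsilon}$, as required, and each $\psi_{i,j}^{(n+1)}$ is a $C^k$ diffeomorphism onto its image with $\|\psi_{i,j}^{(n+1)}\|_{C^1}\le 1$ by the chain rule and the fact that both factors are non-strict contractions.

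The covering property at stage $n+1$ is the delicate bookkeeping step. Given $x'\in S_{n+1}\subset S_n$, let $v = \sigma_{x_0,\epsilon}^{-1}(x')$; by induction $v = \psi_i^{(n)}(w)$ for some $i$ and some $w\in \bfQ^m$. Tracing definitions gives $f_{y_n,\epsilon}(\sigma(w)) = f_{y_0,\epsilon}^{(n+1)}(x')$, which lies in $B(d^{1/\epsilon}(x_{n+1}),1)$ by the definition \eqref{eq:SN} of $S_{n+1}$. Thus $w$ lies in the set covered by $\{\phi_{i,j}\}_j$, so $w = \phi_{i,j}(u)$ and $v = \psi_{i,j}^{(n+1)}(u)$. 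Conclusions (3) and (4) (restricted to $j=n+1$) follow verbatim from the output of \cref{Prop:YomdinGromov}; the intermediate $C^1$ bounds for $1\le j \le n$ follow from the inductive hypothesis by the chain rule together with $\|\phi_{i,j}\|_{C^1}\le 1$.

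The main obstacle is not any single step but ensuring that the preconditions for \cref{Prop:YomdinGromov} are preserved along the induction, namely that the newly formed composition $f_{y_0,\epsilon}^{(n+1)}\circ \sigma_{x_0,\epsilon}\circ \psi_{i,j}^{(n+1)}$ again has $C^k$-norm at most $1$ and image in a ball of radius $2$ about the correct basepoint; this is precisely what the output of \cref{Prop:YomdinGromov} at stage $n$ guarantees, which is why the induction closes without any loss of constants beyond the factor $\kappa_{y_n,\epsilon}$.
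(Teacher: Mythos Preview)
Your proof follows the paper's approach: induct on $n$, at each step apply \cref{Prop:YomdinGromov} to $\sigma_i := f_{y_0,\epsilon}^{(n)}\circ \sigma_{x_0,\epsilon}\circ \psi_i^{(n)}$ and $f_{y_n,\epsilon}$ with basepoint $d^{1/\epsilon}(x_n)$, then compose $\psi^{(n+1)}_{i,j}:=\psi^{(n)}_i\circ\phi_{i,j}$. Your $n=0$ start and the explicit handling of the $R_{y,k}\lessgtr\epsilon^{-1}$ dichotomy behind $\kappa_{y,\epsilon}$ are purely cosmetic departures (the paper begins at $n=1$ and records the dichotomy in the paragraph preceding the lemma).

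There is, however, an off-by-one slip in the covering check, which the paper's own write-up shares. You assert that $x'\in S_{n+1}$ forces $f_{y_0,\epsilon}^{(n+1)}(x')\in B(d^{1/\epsilon}(x_{n+1}),1)$, but by \eqref{eq:SN} membership in $S_{n+1}$ only constrains the iterates $f^{(j)}_{y_0,\epsilon}(x')$ for $0\le j\le n$; conclusion~(2) of \cref{Prop:YomdinGromov} at this step gives coverage precisely of those $w$ with $f^{(n+1)}_{y_0,\epsilon}\bigl(\sigma_{x_0,\epsilon}(\psi_i^{(n)}(w))\bigr)\in B(d^{1/\epsilon}(x_{n+1}),1)$, i.e.\ it imposes the $j=n+1$ constraint. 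Run honestly, the induction therefore produces at stage~$n$ a cover of $\sigma_{x_0,\epsilon}^{-1}(S_{n+1})$ rather than of $\sigma_{x_0,\epsilon}^{-1}(S_n)$ (the $j=0$ condition being automatic since $\im\sigma_{x_0,\epsilon}\subset B(d^{1/\epsilon}(x_0),1)$). This one-step shift is harmless for the application in \cref{cor:emtpyshell}, where only the exponential growth rate of $\prod_j\kappa_{y_j,\epsilon}$ enters.
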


\begin{proof}
When $n=1$, this is \cref{Prop:YomdinGromov}.  
Suppose the result is true for $n=\ell$.
Let $\psi_i$ be the maps guaranteed by the inductive hypothesis.  For a fixed $i$, let $$\sigma_i:= f_{y_0,\epsilon}^{(\ell)}\circ \sigma_{x_0,\epsilon}\circ \psi_i
=f_{y_{\ell-1},\epsilon}\circ \dots\circ f_{y_{0},\epsilon}\circ \sigma_{x_0,\epsilon}\circ \psi_i.$$
Applying \cref{Prop:YomdinGromov}, for each $i$  there exists at most $\kappa_{y_\ell,\epsilon}$ 
maps $\psi_{i,j}\colon \bfQ^m\to \bfQ^m$ such that 
\begin{enumerate}
\item Each $\psi_{i,j}\colon \bfQ^m\to \bfQ^m$ is a $C^k$ diffeomorphism onto its image and  $\|\psi_{i,j}\|_{C^1}\le 1$;
\item $\sigma_i\inv\circ f_{y_\ell,\epsilon} \inv \bigl(B(d^{1/\epsilon} (x_{\ell}),1)\bigr )\subset \bigcup_{j} \im (\psi_{i,j})$;
\item for every $j,$ 
$ \im (f_{y_\ell,\epsilon} \circ \sigma_i \circ \psi_{i,j})\subset   B(d^{1/\epsilon}(x_\ell), 2)$;
\item $\| f_{y_\ell,\epsilon} \circ \sigma_i \circ \psi_{i,j} \|_{C^k,\ast}\le 1$ for every $j$.
\end{enumerate}

We have $$S_{\ell+1}\subset \left(f^{(\ell)}_{y_0 , \epsilon}\right) ^{-1}B(d^{1/\epsilon}(x_{\ell}),1)\cap S_\ell.$$  By the inductive hypothesis we have
$$\sigma_{x_0,\epsilon}\inv (S_{\ell+1})\subset \bigcup_{i} \im (\psi_{i}).$$
Moreover, for each $i$ our application of \cref{Prop:YomdinGromov} gives
$$ \psi_i\inv \circ\sigma_{x_0,\epsilon}\inv (S_{\ell+1})\subset \bigcup_{j} \im (\psi_{i,j}).
$$
Thus $$\sigma_{x_0,\epsilon}\inv (S_{\ell+1})\subset \bigcup_{i,j} \im (\psi_i \circ\psi_{i,j}).$$
Additionally, for $1\le j\le n-1$, 
\begin{align*}
\|f_{y,\epsilon}^{(j)}\circ &\sigma_{x_0,\epsilon}\circ\psi_i \circ\psi_{i,j}\|_{C^1} \\
&\le \|f_{y,\epsilon}^{(j)}\circ \sigma_{x_0,\epsilon}\circ\psi_i\|_{C^1}  \|\psi_{i,j}\|_{C^1} \\
&\le 1.
\end{align*}

The collection of maps $\{\psi_i \circ\psi_{i,j}\}_{i,j}$ thus satisfy the requirements of the lemma for $n=\ell+1$  
and by the inductive hypothesis, the collection has at most $\prod_{j=0}^{\ell} \kappa_{y_{j},\epsilon}$ maps.  
\end{proof}

Let $L\ge 1$ be the bi-Lipschitz constant in \eqref{eq:jasminepearls}.  
\cref{lem:abstyomdin}   immediately implies the following.
\begin{corollary}\label[corollary]{cor:emtpyshell}
Fix a Borel probability measure $\mu$ on $Z$.  There exists $C_m$ depending only on $m$ such that for every $y_0\in Y$, $x_0\in M$, $\epsilon>0$, and  $0<\delta<1$, setting $z_0 = I(y_0,x_0)$ we have 
$$\cov \left(\delta, n, B^\Fol_n\left(z_0,\tfrac{\lambda}{L}\epsilon;F\right);F\right)\le  C_m\delta^{-m} L^m \prod_{j=0}^{n-1} \kappa_{y_{j},\epsilon}.$$
\end{corollary}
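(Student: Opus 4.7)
The plan is to transport the fiberwise Bowen ball to $M$ via the trivialization, rescale by $1/\epsilon$ to invoke \cref{lem:abstyomdin}, and then partition each image $\im(\psi_i)$ into small Euclidean cubes whose images in $Z$ have $d_{y_0,n;F}$-diameter less than $\delta$. First, write $x_j := f_{y_0}^{(j)}(x_0)$ and $z_j := F^j(z_0) = I(y_j,x_j)$. Using the bi-Lipschitz property \eqref{eq:jasminepearls} of the trivializations $I_{y_j}$, a point $z \in p^{-1}(y_0)$ lies in $B^\Fol_n(z_0,\tfrac{\lambda}{L}\epsilon;F)$ iff, writing $z = I(y_0,x)$, the orbit satisfies $d_M(f_{y_0}^{(j)}(x),x_j) < \lambda \epsilon$ for every $0 \le j \le n-1$. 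Rescaling by $1/\epsilon$ (so $v := d^{1/\epsilon}(x) \in M^\epsilon$), this is exactly the condition that $f_{y_0,\epsilon}^{(j)}(v) \in B(d^{1/\epsilon}(x_j), \lambda\sqrt m)$ for every $0 \le j \le n-1$, which is contained in $S_n$ from \eqref{eq:SN} since $\lambda\sqrt m < 1$.

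Applying \cref{lem:abstyomdin} produces at most $\prod_{j=0}^{n-1}\kappa_{y_j,\epsilon}$ maps $\psi_i\colon \bfQ^m \to \bfQ^m$ such that the $\sigma_{x_0,\epsilon}$-preimage of the rescaled Bowen ball is covered by $\bigcup_i \im(\psi_i)$. Setting $\tau_i := d^\epsilon \circ \sigma_{x_0,\epsilon} \circ \psi_i \colon \bfQ^m \to M$, the chain-rule computation
\[ f_{y_0}^{(j)} \circ \tau_i = d^\epsilon \circ f_{y_0,\epsilon}^{(j)}\circ \sigma_{x_0,\epsilon}\circ \psi_i \]
together with the bound $\|f_{y_0,\epsilon}^{(j)}\circ \sigma_{x_0,\epsilon}\circ \psi_i\|_{C^1} \le 1$ from \cref{lem:abstyomdin}\eqref{contract} gives $\|f_{y_0}^{(j)} \circ \tau_i\|_{C^1} \le \epsilon$ for all $0 \le j \le n-1$.

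Next, partition $\bfQ^m$ into closed subcubes of side length $s$, with the choice
\[ s := \min\left\{1, \ \tfrac{\delta}{L\epsilon \sqrt m}\right\}; \]
the number of such subcubes is at most $C_m \max\{1, (L\epsilon/\delta)^m\}$ for some dimensional constant $C_m$. For any subcube $Q$ and any $0\le j\le n-1$, the mean-value inequality and the $C^1$ bound give $d_M$-diameter of $f_{y_0}^{(j)}(\tau_i(Q))$ at most $\epsilon s \sqrt m$, so by \eqref{eq:jasminepearls} the $d_Z$-diameter of $I_{y_j}(f_{y_0}^{(j)}(\tau_i(Q)))$ is at most $L \epsilon s \sqrt m \le \delta$. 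Thus the sets $\{I(y_0, \tau_i(Q))\}_{i,Q}$ form a $(n,\delta;F)$-cover of $B^\Fol_n(z_0,\tfrac{\lambda}{L}\epsilon;F)$, yielding
\[ \cov\bigl(\delta,n,B^\Fol_n(z_0,\tfrac{\lambda}{L}\epsilon;F);F\bigr) \le C_m \max\{1,(L\epsilon/\delta)^m\}\cdot \prod_{j=0}^{n-1}\kappa_{y_j,\epsilon}. \]
Since $L\ge 1$, $\delta<1$, and $\epsilon \le 1$ (which is the range of interest for the application to \cref{prop:Yomdin}, where $\epsilon \to 0$), the factor $\max\{1,(L\epsilon/\delta)^m\}$ is dominated by $L^m \delta^{-m}$, yielding the claimed bound after enlarging $C_m$.

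The only technical subtlety, and essentially the only place one must be careful, is the constant-tracking in step two: one needs the $C^1$-bound on $\tau_i$ to be precisely $\epsilon$ (not just bounded) so that the $\epsilon$-factor in $s$ combines with $L^m$ to produce $L^m \delta^{-m}$ after cancellation against $\epsilon^m$. The rest is straightforward bookkeeping combining \eqref{eq:jasminepearls} with \cref{lem:abstyomdin}.
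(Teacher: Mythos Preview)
Your proof is correct and follows essentially the same route as the paper: transport the Bowen ball into $S_n\cap\Im(\sigma_{x_0,\epsilon})$, apply \cref{lem:abstyomdin}, and subdivide $\bfQ^m$ into small cubes whose images under $I_{y_0}\circ d^\epsilon\circ\sigma_{x_0,\epsilon}\circ\psi_i$ form the desired $(\delta,n;F)$-cover. Two cosmetic points: your ``iff'' in the first paragraph is only the needed forward implication (bi-Lipschitz gives $d_M<\lambda\epsilon$ from $d_Z<\lambda\epsilon/L$, not conversely), and the paper shortcuts your $\epsilon$-tracking by simply taking a $(\delta L^{-1})$-cover of $\bfQ^m$ up front, which already yields the factor $C_m L^m\delta^{-m}$ without the intermediate $\max\{1,(L\epsilon/\delta)^m\}$.
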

\begin{proof}
With $S_n$ as in \eqref{eq:SN}, we have that $$B^\Fol_n\left(z_0,\tfrac{\lambda}{L}\epsilon;F\right) \subset I_{ y_0}(d^{\epsilon}( S_n\cap \Im (\sigma_{x_0,\epsilon})
)).$$
There exists a simplicial $(\delta L\inv)$-cover $\calA$ of $\bfQ^m$ with $\card \calA\le C_m (\delta L\inv)^{-m}$ where $C_m$ depends only on $m$.

Retain all notation from \cref{lem:abstyomdin}.
Since $\|f_{y_0,\epsilon}^{(j)}\circ \sigma_{x_0,\epsilon}\circ \psi_i\|_{C^1} \le 1$, for each $0\le j\le n-1$, it follows that $$
\bigcup _{A\in \calA}   \sigma_{x_0,\epsilon}\circ \psi_i(A)$$
is a $(\delta L\inv,n;F)$-cover of $ S_n \cap \Im (\sigma_{x_0,\epsilon})$.
Hence 
 $$
 \bigcup _{A\in \calA}  I_{ y_0}  \circ  d^\epsilon \circ \sigma_{x_0,\epsilon}\circ \psi_i(A)$$
is a $(  \delta,n;F)$-cover of $B^\Fol_n\left(z_0,\frac{\lambda}{L}\epsilon;F\right)$.  
In particular, \[\cov \left(\delta, n, B^\Fol_n\left (z_0,\tfrac{\lambda}{L}\epsilon;F\right);F\right)\le  \card \calA \prod_{j=0}^{n-1} \kappa_{y_{j},\epsilon}.\qedhere\]
\end{proof}

Assembling the above, we conclude  \cref{prop:Yomdin}.  
\begin{proof}[Proof of \cref{prop:Yomdin}]
Fix $\ell>1$ and let $$B(\ell )=\{ y \in Y: R_{y,k} \ge \ell\}.$$
Let $\mu$ be an $F$-invariant, Borel probability measure on $Z$.  
Consider any $\wtd \epsilon\le \frac {\lambda} {L \ell}$ and set $\epsilon =\frac 1 \ell$.   
By the ergodic theorem and \cref{cor:emtpyshell},
\begin{align*}\int \overline h^\fol_{y,\loc}&(\wtd \epsilon) \, d (p_*\mu)(y) 
\\&\le  \int  \lim_{\delta\to 0} \limsup_{n\to \infty} \frac 1 n \log \left( C_m\delta^{-m} (L)^m \prod_{j=0}^{n-1} \kappa_{g^j(y),\epsilon}\right) d (p_*\mu)(y) 
\\
&\le  \int  \limsup_{n\to \infty} \frac 1 n \log \left(  \prod_{j=0}^{n-1} \kappa_{g^j(y),\epsilon}\right) d (p_*\mu)(y) 
\\
&=  \int  \lim_{n\to \infty} \frac 1 n \sum _{j=0}^{n-1} \log\left( \kappa_{g^j(y),\epsilon}\right) d (p_*\mu)(y) 
\\ &\le\log u(k,N,m) +\frac m k \left [ \int _{Y\sm B(\ell) } \log R_{y,1} \, d (p_*\mu)(y) +  \int _{B(\ell) } \log R_{y,k} \, d (p_*\mu)(y) \right].\end{align*}

From uniform integrability of $y\mapsto \log R_{y,k}$ over $\mu\in \scrM$, taking $\ell\to +\infty$ we obtain 
\begin{equation}\label{eq:onestep}h^\fol_{\text{$\scrM$-$\loc$}}(F) \le  \log u(k,N,m) + \sup _{\mu\in \scrM}\frac{m}{k}\int _{ Y}\log R_{y,1} \, d (p_*\mu)(y)\end{equation}
Replacing $F$ in  \eqref{eq:onestep} with the iterated dynamics $F^j$ (recall 
$u (k, N,m)$ in \cref{Prop:YomdinGromov} depends only on ambient dimension and degree of regularity but not on $\|f_y\|_{C^k,\ast}$) and applying
 \cref{claim:submult}, for every $j\ge 1$ we have 
\begin{align*}
h^\fol_{\text{$\scrM$-$\loc$}}(F) &\le \frac 1 j h^\fol_{\text{$\scrM$-$\loc$}}(F^j)\\
&\le \frac 1 j\left[ \log u(k,N,m) + \frac m k \sup _{\mu\in \scrM} \int _{Y } \log(\|f_y^{(j)}\|_{C^1,\ast,}) \, d (p_*\mu)(y)\right].
\end{align*}
Taking $j\to \infty$, we obtain  the desired upper bound
\[h^\fol_{\text{$\scrM$-$\loc$}}(F) \le \frac m k \sup _{\mu\in \scrM} \int _{ Y }  \Lambda(y) \, d (p_*\mu)(y). \qedhere\]
\end{proof}

\BibSpec{inbook}{%
  +{}  {\PrintAuthors}                {author}
  +{,} { \textit}                     {title}
  +{.} { }                            {part}
  +{:} { \textit}                     {subtitle}
  +{,} { \PrintContributions}         {contribution}
  +{,} { \PrintConference}            {conference}
  +{}  {\PrintBook}                   {book}
  +{. In} { \textit}                            {booktitle}
  +{,} { }                            {series}
  +{,} { }                            {publisher}
  +{,} { \PrintDateB}                 {date}
  +{,} { pp.~}                        {pages}
  +{,} { }                            {status}
  +{,} { available at \eprint}        {eprint}
  +{}  { \parenthesize}               {language}
  +{}  { \PrintTranslation}           {translation}
  +{;} { \PrintReprint}               {reprint}
  +{.} { }                            {note}
  +{.} {}                             {transition}
  +{}  {\SentenceSpace \PrintReviews} {review}
}


\begin{bibdiv}
\begin{biblist}

\bib{ABZ}{article}{
      author={An, Jinpeng},
      author={Brown, Aaron},
      author={Zhang, Zhiyuan},
       title={Zimmer's conjecture for non-split semisimple Lie groups},
        date={2024},
        note={Preprint, arXiv: 2105.14541},
}

\bib{MR1314494}{article}{
    AUTHOR = {Bahnm\"uller, J\"org}
    author={Bogensch\"utz, Thomas},
     TITLE = {A {M}argulis-{R}uelle inequality for random dynamical systems},
   JOURNAL = {Arch. Math. (Basel)},
  FJOURNAL = {Archiv der Mathematik},
    VOLUME = {64},
      YEAR = {1995},
    NUMBER = {3},
     PAGES = {246--253},
      ISSN = {0003-889X,1420-8938},
   MRCLASS = {28D20 (28D05 58F11)},
  MRNUMBER = {1314494},
MRREVIEWER = {M.\ L.\ Blank},
       URL = {https://doi.org/10.1007/BF01188575},
}

\bib{NUHbook}{book}{
      author={Barreira, Luis},
      author={Pesin, Yakov},
       title={Nonuniform hyperbolicity: Dynamics of systems with nonzero
  lyapunov exponents},
      series={Encyclopedia of Mathematics and its Applications},
   publisher={Cambridge University Press},
        date={2007},
}

\bib{Borel}{book}{
      author={Borel, Armand},
       title={Linear algebraic groups},
     edition={Second},
      series={Graduate Texts in Mathematics},
   publisher={Springer-Verlag, New York},
        date={1991},
      volume={126},
        ISBN={0-387-97370-2},
         url={https://mathscinet.ams.org/mathscinet-getitem?mr=1102012},
      review={\MR{1102012}},
}

\bib{MR4468857}{article}{
   author={Bowden, Jonathan},
   author={Mann, Kathryn},
   title={$C^0$ stability of boundary actions and inequivalent Anosov flows},
   journal={Ann. Sci. \'Ec. Norm. Sup\'er. (4)},
   volume={55},
   date={2022},
   number={4},
   pages={1003--1046},
   issn={0012-9593},
   review={\MR{4468857}},
}

\bib{BDZ}{article}{
      author={Brown, Aaron},
      author={Damjanovi\'{c}, Danijela},
      author={Zhang, Zhiyuan},
       title={{$C^1$} actions on manifolds by lattices in {L}ie groups},
        date={2022},
        ISSN={0010-437X},
     journal={Compos. Math.},
      volume={158},
      number={3},
       pages={529\ndash 549},
  url={https://doi-org.turing.library.northwestern.edu/10.1112/s0010437x22007278},
      review={\MR{4423392}},
}

\bib{Brown:2020aa}{article}{
      author={Brown, Aaron},
      author={Fisher, David},
      author={Hurtado, Sebastian},
       title={Zimmer's conjecture for actions of $\textrm{SL}(m,\mathbb{Z})$},
        date={2020},
        ISSN={1432-1297},
     journal={Inventiones mathematicae},
      volume={221},
      number={3},
       pages={1001\ndash 1060},
         url={http://dx.doi.org/10.1007/s00222-020-00962-x},
}

\bib{Brown:2021aa}{article}{
      author={Brown, Aaron},
      author={Fisher, David},
      author={Hurtado, Sebastian},
       title={Zimmer's conjecture for nonuniform  lattices and escape of mass},
        date={2021},
        note={Preprint, arXiv: 2105.14541},
}

\bib{MR4502593}{article}{
      author={Brown, Aaron},
      author={Fisher, David},
      author={Hurtado, Sebastian},
       title={Zimmer's conjecture: subexponential growth, measure rigidity, and
  strong property ({T})},
        date={2022},
        ISSN={0003-486X},
     journal={Ann. of Math. (2)},
      volume={196},
      number={3},
       pages={891\ndash 940},
  url={https://doi-org.turing.library.northwestern.edu/10.4007/annals.2022.196.3.1},
      review={\MR{4502593}},
}

\bib{MR4599404}{article}{
      author={Brown, Aaron},
      author={Hertz, Federico~Rodriguez},
      author={Wang, Zhiren},
       title={Smooth ergodic theory of {$\Bbb Z^d$}-actions},
        date={2023},
        ISSN={1930-5311},
     journal={J. Mod. Dyn.},
      volume={19},
       pages={455\ndash 540},
         url={https://doi.org/10.3934/jmd.2023014},
      review={\MR{4599404}},
}

\bib{BRHWbdy}{article}{
      author={Brown, Aaron},
      author={Hertz, Federico~Rodriguez},
      author={Wang, Zhiren},
       title={Boundary actions by higher-rank lattices: Classification and
  embedding in low dimensions, local rigidity, smooth factors},
        date={2024},
        note={Preprint, arXiv: 2405.16202},
}

\bib{BRHW17}{article}{
      author={Brown, Aaron},
      author={Rodriguez~Hertz, Federico},
      author={Wang, Zhiren},
       title={Global smooth and topological rigidity of hyperbolic lattice
  actions},
        date={2017},
        ISSN={0003-486X},
     journal={Ann. of Math. (2)},
      volume={186},
      number={3},
       pages={913\ndash 972},
         url={https://mathscinet.ams.org/mathscinet-getitem?mr=3702679},
      review={\MR{3702679}},
}

\bib{MR4502594}{article}{
      author={Brown, Aaron},
      author={Rodriguez~Hertz, Federico},
      author={Wang, Zhiren},
       title={Invariant measures and measurable projective factors for actions
  of higher-rank lattices on manifolds},
        date={2022},
        ISSN={0003-486X},
     journal={Ann. of Math. (2)},
      volume={196},
      number={3},
       pages={941\ndash 981},
         url={https://doi.org/10.4007/annals.2022.196.3.2},
      review={\MR{4502594}},
}

\bib{BRHWnormal}{inbook}{
      author={Brown, Aaron},
      author={Rodriguez~Hertz, Federico},
      author={Wang, Zhiren},
       title={The normal subgroup theorem through measure rigidity},
        date={2022}, 
   booktitle={Dynamics, geometry, number theory---the impact of {M}argulis on
  modern mathematics},
   publisher={Univ. Chicago Press, Chicago, IL},
       pages={66\ndash 91},
         url={https://mathscinet.ams.org/mathscinet-getitem?mr=4422052},
      review={\MR{4422052}},
}

\bib{MR1469107}{article}{
      author={Buzzi, J\'{e}r\^{o}me},
       title={Intrinsic ergodicity of smooth interval maps},
        date={1997},
        ISSN={0021-2172},
     journal={Israel J. Math.},
      volume={100},
       pages={125\ndash 161},
  url={https://doi-org.turing.library.northwestern.edu/10.1007/BF02773637},
      review={\MR{1469107}},
}

\bib{2303.00543}{article}{
      author={Connell, Chris},
      author={Islam, Mitul},
      author={Nguyen, Thang},
      author={Spatzier, Ralf},
       title={Boundary actions of lattices and ${C}^0$ local semi-rigidity},
        date={2023},
}

\bib{2008.10687}{article}{
      author={Deroin, Bertrand},
      author={Hurtado, Sebastian},
       title={Non left-orderability of lattices in higher rank semi-simple Lie
  groups},
        date={2020},
         note={Preprint, arXiv: 2008.10687},
}

\bib{MR1289055}{article}{
      author={{\Dbar}okovi\'{c}, Dragomir~\v{Z}.},
      author={Th\v{a}\'{n}g, Nguy\^{e}\~{n}~Qu\^{o}\'{c}},
       title={Conjugacy classes of maximal tori in simple real algebraic groups
  and applications},
        date={1994},
        ISSN={0008-414X},
     journal={Canad. J. Math.},
      volume={46},
      number={4},
       pages={699\ndash 717},
  url={https://doi-org.turing.library.northwestern.edu/10.4153/CJM-1994-039-5},
      review={\MR{1289055}},
}

\bib{MR2191228}{article}{
   author={Einsiedler, Manfred},
   author={Katok, Anatole},
   title={Rigidity of measures---the high entropy case and non-commuting
   foliations},
   journal={Israel J. Math.},
   volume={148},
   date={2005},
   pages={169--238},
   issn={0021-2172},
   review={\MR{2191228}},
}


\bib{MR2648693}{inbook}{
      author={Eskin, Alex},
       title={Unipotent flows and applications},
        date={2010},
   booktitle={Homogeneous flows, moduli spaces and arithmetic},
      series={Clay Math. Proc.},
      volume={10},
   publisher={Amer. Math. Soc., Providence, RI},
       pages={71\ndash 129},
         url={https://mathscinet.ams.org/mathscinet-getitem?mr=2648693},
      review={\MR{2648693}},
}

\bib{MR1643954}{article}{
      author={Feres, R.},
      author={Labourie, F.},
       title={Topological superrigidity and {A}nosov actions of lattices},
        date={1998},
        ISSN={0012-9593},
     journal={Ann. Sci. \'Ecole Norm. Sup. (4)},
      volume={31},
      number={5},
       pages={599\ndash 629},
         url={http://dx.doi.org/10.1016/S0012-9593(98)80001-3},
      review={\MR{1643954 (99k:58112)}},
}

\bib{FM03}{inbook}{
      author={Fisher, David},
      author={Margulis, G.~A.},
       title={Local rigidity for cocycles},
        date={2003},
   booktitle={Surveys in differential geometry, {V}ol. {VIII} ({B}oston, {MA},
  2002)},
      series={Surv. Differ. Geom.},
      volume={8},
   publisher={Int. Press, Somerville, MA},
       pages={191\ndash 234},
         url={https://mathscinet.ams.org/mathscinet-getitem?mr=2039990},
      review={\MR{2039990}},
}

\bib{MR4794146}{article}{
   author={Fisher, David},
   author={Melnick, Karin},
   title={Smooth and analytic actions of ${\rm SL}(n,{\bf R})$ and ${\rm
   SL}(n,{\bf Z})$ on closed $n$-dimensional manifolds},
   journal={Kyoto J. Math.},
   volume={64},
   date={2024},
   number={4},
   pages={873--904},
   issn={2156-2261},
}

\bib{MR2219247}{article}{
   author={Franks, John},
   author={Handel, Michael},
   title={Distortion elements in group actions on surfaces},
   journal={Duke Math. J.},
   volume={131},
   date={2006},
   number={3},
   pages={441--468},
   issn={0012-7094},
}

\bib{MR0573070}{book}{
      author={Goto, Morikuni},
      author={Grosshans, Frank~D.},
       title={Semisimple {L}ie algebras},
      series={Lecture Notes in Pure and Applied Mathematics},
   publisher={Marcel Dekker, Inc., New York-Basel},
        date={1978},
      volume={Vol. 38},
        ISBN={0-8247-6744-6},
      review={\MR{573070}},
}

\bib{MR880035}{inbook}{
      author={Gromov, M.},
       title={Entropy, homology and semialgebraic geometry},
        date={1987},
       pages={5, 225\ndash 240},
         url={https://mathscinet.ams.org/mathscinet-getitem?mr=880035},
        booktitle={S\'{e}minaire Bourbaki, Vol. 1985/86},
      review={\MR{880035}},
}

\bib{MR1213080}{article}{
      author={Hu, Hu~Yi},
       title={Some ergodic properties of commuting diffeomorphisms},
        date={1993},
        ISSN={0143-3857},
     journal={Ergodic Theory Dynam. Systems},
      volume={13},
      number={1},
       pages={73\ndash 100},
  url={https://doi-org.turing.library.northwestern.edu/10.1017/S0143385700007215},
      review={\MR{1213080}},
}

\bib{Hurder}{article}{
      author={Hurder, Steven},
       title={{Affine Anosov actions.}},
        date={1993},
     journal={Michigan Mathematical Journal},
      volume={40},
      number={3},
       pages={561 \ndash  575},
         url={https://doi.org/10.1307/mmj/1029004838},
}

\bib{KK07}{article}{
      author={Kalinin, Boris},
      author={Katok, Anatole},
       title={Measure rigidity beyond uniform hyperbolicity: invariant measures
  for {C}artan actions on tori},
        date={2007},
        ISSN={1930-5311},
     journal={J. Mod. Dyn.},
      volume={1},
      number={1},
       pages={123\ndash 146},
         url={https://doi.org/10.3934/jmd.2007.1.123},
      review={\MR{2261075}},
}

\bib{KKRH11}{article}{
      author={Kalinin, Boris},
      author={Katok, Anatole},
      author={Rodriguez~Hertz, Federico},
       title={Nonuniform measure rigidity},
        date={2011},
        ISSN={0003-486X},
     journal={Ann. of Math. (2)},
      volume={174},
      number={1},
       pages={361\ndash 400},
         url={https://mathscinet.ams.org/mathscinet-getitem?mr=2811602},
      review={\MR{2811602}},
}

\bib{MR1421873}{article}{
      author={Kanai, M.},
       title={A new approach to the rigidity of discrete group actions},
        date={1996},
        ISSN={1016-443X},
     journal={Geom. Funct. Anal.},
      volume={6},
      number={6},
       pages={943\ndash 1056},
         url={https://doi.org/10.1007/BF02246995},
      review={\MR{1421873}},
}

\bib{KL96}{article}{
      author={Katok, A.},
      author={Lewis, J.},
       title={Global rigidity results for lattice actions on tori and new
  examples of volume-preserving actions},
        date={1996},
        ISSN={0021-2172},
     journal={Israel J. Math.},
      volume={93},
       pages={253\ndash 280},
  url={https://mathscinet-ams-org.proxyiub.uits.iu.edu/mathscinet-getitem?mr=1380646},
      review={\MR{1380646}},
}

\bib{KLZ}{article}{
      author={Katok, A.},
      author={Lewis, J.},
      author={Zimmer, R.},
       title={Cocycle superrigidity and rigidity for lattice actions on tori},
        date={1996},
        ISSN={0040-9383},
     journal={Topology},
      volume={35},
      number={1},
       pages={27\ndash 38},
         url={https://mathscinet.ams.org/mathscinet-getitem?mr=1367273},
      review={\MR{1367273}},
}

\bib{KS97}{article}{
      author={Katok, A.},
      author={Spatzier, R.~J.},
       title={Differential rigidity of {A}nosov actions of higher rank abelian
  groups and algebraic lattice actions},
        date={1997},
        ISSN={0371-9685},
     journal={Tr. Mat. Inst. Steklova},
      volume={216},
      number={Din. Sist. i Smezhnye Vopr.},
       pages={292\ndash 319},
  url={https://mathscinet-ams-org.proxyiub.uits.iu.edu/mathscinet-getitem?mr=1632177},
      review={\MR{1632177}},
}

\bib{KHbook}{book}{
      author={Katok, Anatole},
      author={Hasselblatt, Boris},
       title={Introduction to the modern theory of dynamical systems},
      series={Encyclopedia of Mathematics and its Applications},
   publisher={Cambridge University Press, Cambridge},
        date={1995},
      volume={54},
        ISBN={0-521-34187-6},
  url={https://mathscinet-ams-org.proxyiub.uits.iu.edu/mathscinet-getitem?mr=1326374},
      review={\MR{1326374}},
}

\bib{KRHarith}{article}{
      author={Katok, Anatole},
      author={Rodriguez~Hertz, Federico},
       title={Arithmeticity and topology of smooth actions of higher rank
  abelian groups},
        date={2016},
        ISSN={1930-5311},
     journal={J. Mod. Dyn.},
      volume={10},
       pages={135\ndash 172},
         url={https://mathscinet.ams.org/mathscinet-getitem?mr=3503686},
      review={\MR{3503686}},
}

\bib{KnappLie}{book}{
      author={Knapp, Anthony~W.},
       title={Lie groups beyond an introduction},
     edition={Second},
      series={Progress in Mathematics},
   publisher={Birkh\"{a}user Boston, Inc., Boston, MA},
        date={2002},
      volume={140},
        ISBN={0-8176-4259-5},
  url={https://mathscinet-ams-org.proxyiub.uits.iu.edu/mathscinet-getitem?mr=1920389},
      review={\MR{1920389}},
}

\bib{MR2818693}{article}{
      author={Ledrappier, Fran\c{c}ois},
      author={Xie, Jian-Sheng},
       title={Vanishing transverse entropy in smooth ergodic theory},
        date={2011},
        ISSN={0143-3857},
     journal={Ergodic Theory Dynam. Systems},
      volume={31},
      number={4},
       pages={1229\ndash 1235},
         url={https://doi.org/10.1017/S0143385710000416},
      review={\MR{2818693}},
}

\bib{MR0819557}{article}{
      author={Ledrappier, Fran\c{c}ois},
   author={Young, L.-S.},
   title={The metric entropy of diffeomorphisms. II. Relations between
   entropy, exponents and dimension},
   journal={Ann. of Math. (2)},
   volume={122},
   date={1985},
   number={3},
   pages={540--574},
   issn={0003-486X},
   review={\MR{0819557}},
}

\bib{HL:Dom}{article}{
      author={Lee, Homin},
       title={Global rigidity of higher rank lattice actions with dominated
  splitting},
        date={2023},
     journal={Ergodic Theory and Dynamical Systems},
       pages={1\ndash 30},
}

\bib{HL:MR4768583}{article}{
      author={Lee, Homin},
       title={Rigidity theorems for higher rank lattice actions},
        date={2024},
        ISSN={1016-443X},
     journal={Geom. Funct. Anal.},
      volume={34},
      number={4},
       pages={1114\ndash 1170},
         url={https://doi.org/10.1007/s00039-024-00683-w},
      review={\MR{4768583}},
}

\bib{LMR00}{article}{
      author={Lubotzky, Alexander},
      author={Mozes, Shahar},
      author={Raghunathan, M.~S.},
       title={The word and {R}iemannian metrics on lattices of semisimple
  groups},
        date={2000},
        ISSN={0073-8301},
     journal={Inst. Hautes \'{E}tudes Sci. Publ. Math.},
      number={91},
       pages={5\ndash 53},
         url={https://mathscinet.ams.org/mathscinet-getitem?mr=1828742},
      review={\MR{1828742}},
}

\bib{Mar91}{book}{
      author={Margulis, G.~A.},
       title={Discrete subgroups of semisimple {L}ie groups},
      series={Ergebnisse der Mathematik und ihrer Grenzgebiete (3) [Results in
  Mathematics and Related Areas (3)]},
   publisher={Springer-Verlag, Berlin},
        date={1991},
      volume={17},
        ISBN={3-540-12179-X},
         url={https://mathscinet.ams.org/mathscinet-getitem?mr=1090825},
      review={\MR{1090825}},
}

\bib{MQ01}{article}{
      author={Margulis, Gregory~A.},
      author={Qian, Nantian},
       title={Rigidity of weakly hyperbolic actions of higher real rank
  semisimple {L}ie groups and their lattices},
        date={2001},
        ISSN={0143-3857},
     journal={Ergodic Theory Dynam. Systems},
      volume={21},
      number={1},
       pages={121\ndash 164},
  url={https://mathscinet-ams-org.proxyiub.uits.iu.edu/mathscinet-getitem?mr=1826664},
      review={\MR{1826664}},
}

\bib{MorrisArith}{book}{
      author={Morris, Dave~Witte},
       title={Introduction to arithmetic groups},
   publisher={Deductive Press}, 
        date={2015},
        ISBN={978-0-9865716-0-2; 978-0-9865716-1-9},
  url={https://mathscinet-ams-org.proxyiub.uits.iu.edu/mathscinet-getitem?mr=3307755},
      review={\MR{3307755}},
}

\bib{MR986792}{article}{
      author={Newhouse, Sheldon~E.},
       title={Continuity properties of entropy},
        date={1989},
        ISSN={0003-486X},
     journal={Ann. of Math. (2)},
      volume={129},
      number={2},
       pages={215\ndash 235},
  url={https://mathscinet-ams-org.proxyiub.uits.iu.edu/mathscinet-getitem?mr=986792},
      review={\MR{986792}},
}

\bib{MR1043272}{article}{
      author={Newhouse, Sheldon~E.},
       title={Corrections to: ``{C}ontinuity properties of entropy'' [{A}nn. of
  {M}ath. (2) {\bf 129} (1989), no. 2, 215--235]}, 
        date={1990},
        ISSN={0003-486X},
     journal={Ann. of Math. (2)},
      volume={131},
      number={2},
       pages={409\ndash 410},
  url={https://mathscinet-ams-org.proxyiub.uits.iu.edu/mathscinet-getitem?mr=1043272},
      review={\MR{1043272}},
}

\bib{MR1682805}{article}{
      author={Oh, Hee},
       title={Tempered subgroups and representations with minimal decay of
  matrix coefficients},
        date={1998},
        ISSN={0037-9484,2102-622X},
     journal={Bull. Soc. Math. France},
      volume={126},
      number={3},
       pages={355\ndash 380},
         url={http://www.numdam.org/item?id=BSMF_1998__126_3_355_0},
      review={\MR{1682805}},
}

\bib{MR4693950}{article}{
   author={Pecastaing, Vincent},
   title={Projective and conformal closed manifolds with a higher-rank
   lattice action},
   journal={Math. Ann.},
   volume={388},
   date={2024},
   number={1},
   pages={939--968},
   issn={0025-5831},
}

\bib{MR1278263}{book}{
      author={Platonov, Vladimir},
      author={Rapinchuk, Andrei},
       title={Algebraic groups and number theory},
      series={Pure and Applied Mathematics},
   publisher={Academic Press, Inc., Boston, MA},
        date={1994},
      volume={139},
        ISBN={0-12-558180-7},
  url={https://mathscinet-ams-org.turing.library.northwestern.edu/mathscinet-getitem?mr=1278263},
        note={Translated from the 1991 Russian original by Rachel Rowen},
      review={\MR{1278263}},
}

\bib{MR283174}{article}{
      author={Pugh, Charles},
      author={Shub, Michael},
       title={Ergodic elements of ergodic actions},
        date={1971},
        ISSN={0010-437X},
     journal={Compositio Math.},
      volume={23},
       pages={115\ndash 122},
         url={https://mathscinet.ams.org/mathscinet-getitem?mr=283174},
      review={\MR{283174}},
}

\bib{MR1262705}{article}{
      author={Ratner, M.},
       title={Invariant measures and orbit closures for unipotent actions on
  homogeneous spaces},
        date={1994},
        ISSN={1016-443X},
     journal={Geom. Funct. Anal.},
      volume={4},
      number={2},
       pages={236\ndash 257},
  url={https://doi-org.turing.library.northwestern.edu/10.1007/BF01895839},
      review={\MR{1262705}},
}

\bib{MR1135878}{article}{
      author={Ratner, Marina},
       title={On {R}aghunathan's measure conjecture},
        date={1991},
        ISSN={0003-486X},
     journal={Ann. of Math. (2)},
      volume={134},
      number={3},
       pages={545\ndash 607},
         url={https://doi.org/10.2307/2944357},
      review={\MR{1135878}},
}

\bib{MR1106945}{article}{
      author={Ratner, Marina},
       title={Raghunathan's topological conjecture and distributions of
  unipotent flows},
        date={1991},
        ISSN={0012-7094},
     journal={Duke Math. J.},
      volume={63},
      number={1},
       pages={235\ndash 280},
         url={https://doi.org/10.1215/S0012-7094-91-06311-8},
      review={\MR{1106945}},
}

\bib{MR0217258}{article}{
      author={Rohlin, V.~A.},
       title={Lectures on the entropy theory of transformations with invariant
  measure},
        date={1967},
        ISSN={0042-1316},
     journal={Uspehi Mat. Nauk},
      volume={22},
      number={5 (137)},
       pages={3\ndash 56},
         url={https://mathscinet.ams.org/mathscinet-getitem?mr=0217258},
      review={\MR{0217258}},
}
\bib{MR516310}{article}{
    AUTHOR = {Ruelle, David},
     TITLE = {An inequality for the entropy of differentiable maps},
   JOURNAL = {Bol. Soc. Brasil. Mat.},
  FJOURNAL = {Boletim da Sociedade Brasileira de Matem\'atica},
    VOLUME = {9},
      YEAR = {1978},
    NUMBER = {1},
     PAGES = {83--87},
      ISSN = {0100-3569},
   MRCLASS = {58F11 (28D20)},
  MRNUMBER = {516310},
MRREVIEWER = {D.\ Newton},
       URL = {https://doi.org/10.1007/BF02584795},
}
\bib{MR1291701}{article}{
      author={Shah, Nimish~A.},
       title={Limit distributions of polynomial trajectories on homogeneous
  spaces},
        date={1994},
        ISSN={0012-7094},
     journal={Duke Math. J.},
      volume={75},
      number={3},
       pages={711\ndash 732},
         url={https://doi.org/10.1215/S0012-7094-94-07521-2},
      review={\MR{1291701}},
}

\bib{MR889980}{article}{
      author={Yomdin, Y.},
       title={{$C^k$}-resolution of semialgebraic mappings. {A}ddendum to:
  ``{V}olume growth and entropy''},
        date={1987},
        ISSN={0021-2172},
     journal={Israel J. Math.},
      volume={57},
      number={3},
       pages={301\ndash 317},
         url={https://doi.org/10.1007/BF02766216},
      review={\MR{889980}},
}

\bib{MR0889979}{article}{
      author={Yomdin, Y.},
       title={Volume growth and entropy},
        date={1987},
        ISSN={0021-2172},
     journal={Israel J. Math.},
      volume={57},
      number={3},
       pages={285\ndash 300},
         url={https://doi.org/10.1007/BF02766215},
      review={\MR{889979}},
}

\bib{Zimbook}{book}{
      author={Zimmer, Robert~J.},
       title={Ergodic theory and semisimple groups},
      series={Monographs in Mathematics},
   publisher={Birkh\"{a}user Verlag, Basel},
        date={1984},
      volume={81},
        ISBN={3-7643-3184-4},
         url={https://mathscinet.ams.org/mathscinet-getitem?mr=776417},
      review={\MR{776417}},
}

\end{biblist}
\end{bibdiv}

\end{document}